\newcommand\Cidemp{\mathfrak j}
\newcommand\west{\mathbf w}
\newcommand\east{\mathbf e}
\newcommand\Uweight{\gamma}
\newcommand\Vweight{\delta}
\newcommand\HH{\mathbb H}
\newcommand\cald{\mathcal D}
\newcommand\BigMod{P}
\newcommand\Real{\mathrm{Re}}
\newcommand\Zdown{\Zin}
\newcommand\NearModMatched[2]{{\mathcal{MM}}^{#2}_{\flat;{#1}}}
\newcommand\BigModMatched{{\vardbtilde{\mathcal{MM}}}}
\newcommand\ModMor[1]{{\mathcal{MM}}_{\star;#1}}
\newcommand\dInt[1]{\partial^{(#1)}}
\newcommand\ModInt[1]{\mathcal{MM}_{\natural;\ell}}
\newcommand\ModMatchedChanged{{\mathcal{MM}}_{\sharp}}
\newcommand\ModMatchedX{{\mathcal{MM}}_{\flat}}
\newcommand\ModMatched{{\mathcal{MM}}_{\natural}}
\newcommand\SModMatched{{\mathcal{SM}}_{\natural}}
\newcommand\UnparModMatched{\widehat{\mathcal{MM}}}
\newcommand\ModMatchedTSIC{{\mathcal{MM}}_{\flat;\mathrm{\star}}}
\newcommand\dChanged{\partial^\sharp}
\newcommand\DChanged{\delta^1_\sharp}
\newcommand\dChangedT{\partial^{T;\sharp}}
\newcommand{\vardbtilde}[1]{\widetilde{\raisebox{0pt}[0.85\height]{$\widetilde{#1}$}}}
\newcommand\wpt{\mathrm{w}}
\newcommand\zpt{\mathrm{z}}
\newcommand\Ta{{\mathbb T}_{\alpha}}
\newcommand\dbar{\overline\partial}
\newcommand\EastIn{\widecheck{\East}}
\newcommand\AllPunctIn{\widecheck{\AllPunct}}
\newcommand\oin{\widecheck{o}}
\newcommand\cin{\widecheck{c}}
\newcommand\win{\widecheck{\weight}}
\newcommand\ModEast{\mathcal E}
\newcommand\ModWest{\mathcal N}
\newcommand\UnparModEast{\widehat\ModEast}
\newcommand\UnparModWest{\widehat\ModWest}
\newcommand\chiEmb{\chi_{\mathrm{emb}}}
\newcommand\Tb{{\mathbb T}_{\beta}}
\newcommand\UnparModDeg{\widehat{\mathcal N}}
\newcommand\ModDeg{\mathcal N}
\newcommand\Wall{\mathcal W}
\newcommand\Chamber{\mathcal C}
\newcommand\Brs{{{\mathcal B}_{\{r,s\}}}}
\newcommand\Bjk{{{\mathcal B}_{\{j,k\}}}}
\newcommand\DiagUp{\widehat{\Diag}}
\newcommand\gr{\mathbf{gr}}
\newcommand\gen{K}
\newcommand\ind{\mathrm{ind}}
\newcommand\longchord{\upsilon}
\newcommand\orK{\vec{K}}
\newcommand\Ring{\mathcal{R}}
\newcommand\Hwz{\mathcal{J}}
\newcommand\DmodAlg{\Dmod^{alg}}
\newcommand\cS{\overline S}
\newcommand\cSigma{\overline \Sigma}
\newcommand\Shadow{\mathbf{S}}
\newcommand\Sym{\mathrm{Sym}}
\newcommand\DA{DA}
\newcommand\MatchIn{\widecheck{\Matching}}
\newcommand\Win{\widecheck{W}}
\newcommand\Mout{\widehat{\Matching}}
\newcommand\Zin{\widecheck{Z}}
\newcommand\Zmid{Z^{\parallel}}
\newcommand\Zout{\widehat{Z}}
\newcommand\Mmid{M^\parallel}
\newcommand\Wmid{W^{\parallel}}
\newcommand\Hmid{{\mathcal H}^\parallel}
\newcommand\HmidHalfExt{{\mathcal H}^{\parallel,\frac{x}{2}}}
\newcommand\HmidExt{{\mathcal H}^{\parallel,x}}
\newcommand\alphain{\widecheck{\alpha}}
\newcommand\alphaout{\widehat{\alpha}}
\newcommand\Clgin{\widecheck{\Clg}}
\newcommand\Clgout{\widehat{\Clg}}
\newcommand\Blgin{\widecheck{\Blg}}
\newcommand\nDuAlgin{\widecheck{\nDuAlg}}
\newcommand\Blgout{\widehat{\Blg}}
\newcommand\cBlgout{\Blgout}
\newcommand\cBlgin{\Blgin}
\newcommand\DAmodExt{\DAmod^x}
\newcommand\DAmodP{\DAmod'}
\newcommand\DAmodHalfExt{\DAmod^{\frac{x}{2}}}
\newcommand\jSource{j_\flat}
\newcommand\bSource{S_\flat}
\newcommand\wSource{R_\flat}
\newcommand\compactev{\overline{ev}}
\newcommand\EastSource{\mathcal{T}}
\newcommand\WestSource{\mathcal{R}}
\newcommand\ModFlow{\mathcal M}
\newcommand\ev{\mathrm{ev}}
\newcommand\evB{\mathrm{ev}^{\betas}}
\newcommand\rhos{\boldsymbol{\rho}}
\newcommand\sigmas{\boldsymbol{\sigma}}
\newcommand\UnparModFlow{\widehat{\mathcal M}}
\newcommand\alphas{\boldsymbol{\alpha}}
\newcommand\betas{\boldsymbol{\beta}}
\newcommand\Source{\mathscr{S}}
\newcommand\orbits{\mathrm{orb}}
\newcommand\chords{\mathrm{cho}}
\newcommand\CDisk{\mathbb{D}}
\newcommand\orb{o}
\newcommand\goesto{\mapsto}
\newcommand{\Ground}{R}
\newcommand\bOut{\widehat{b}}
\newcommand\bIn{\widecheck{b}}
\newcommand\Hup{\mathcal{H}^{\wedge}}
\newcommand\Hdown{\mathcal{H}^{\vee}}
\tikzset{cdlabel/.style={above,sloped,
    execute at begin node=$\scriptstyle,execute at end node=$}}
\tikzset{algarrow/.style={->, thick}}
\tikzset{blgarrow/.style={->, thick}}
\tikzset{clgarrow/.style={->, thick}}
\tikzset{tensoralgarrow/.style={double, double equal sign distance, -implies}}
\tikzset{tensorblgarrow/.style={double, double equal sign distance, -implies}}
\tikzset{tensorclgarrow/.style={double, double equal sign distance, -implies}}
\tikzset{modarrow/.style={->, dashed}}
\tikzset{othmodarrow/.style={->, thick}}
\tikzset{Amodar/.style={->, dashed}}
\tikzset{Dmodar/.style={->, dashed}}
\newcommand\op{\mathrm{op}}
\newcommand\opp{\op}
\newcommand\HD{\mathcal H}
\newcommand\TerMin{t\GenMin}
\newcommand\Min{\GenMin}
\newcommand\GenMin{\mho}
\newcommand\KHm{H^-}
\newcommand\Diag{\mathbf{D}}
\newcommand\States{\mathfrak{S}}
\newcommand\HFKm{\mathrm{HFK}^-}
\def\endproof{\relax\ifmmode\expandafter\endproofmath\else
  \unskip\nobreak\hfil\penalty50\hskip.75em\hbox{}\nobreak\hfil\bull
  {\parfillskip=0pt \finalhyphendemerits=0 \bigbreak}\fi}
\def\endproofmath$${\eqno\bull$$\bigbreak}
\def\bull{\vbox{\hrule\hbox{\vrule\kern3pt\vbox{\kern6pt}\kern3pt\vrule}\hrule}}
\def\mathcenter#1{%
  \vcenter{\hbox{$#1$}}%
}
\newcommand\CanonDD{\mathcal K}
\newcommand\North{\mathbf N}
\newcommand\South{\mathbf S}
\newcommand\East{\mathbf E}
\newcommand\West{\mathbf W}
\newcommand\IntPunct{\mathbf{\Omega}}
\newcommand\IntPunctEv{\mathbf{\Omega}_+}
\newcommand\IntPunctOdd{\mathbf{\Omega}_-}
\newcommand\AllPunct{\mathbf{P}}
\newcommand\Pos{\mathcal P}
\newcommand\Neg{\mathcal N}
\newcommand\HFKa{\widehat{\mathrm{HFK}}}
\newtheorem{thm}{Theorem}[section]
\newtheorem{lem}[thm]{Lemma}
\newtheorem{lemma}[thm]{Lemma}
\newtheorem{prop}[thm]{Proposition}
\newtheorem{defn}[thm]{Definition}
\newtheorem{example}[thm]{Example}
\newtheorem{rem}[thm]{Remark}
\newtheorem{remark}[thm]{Remark}
\numberwithin{equation}{section}
\newcommand\OneHalf{\frac{1}{2}}
\newcommand\One{\mathbf{1}}
\newcounter{bean}
\newcommand\IdempRing{I}
\newcommand\RestrictIdempRing{I_0}
\newcommand\Idemp[1]{\mathbf{I}_{#1}}
\newcommand\DT{\boxtimes}
\newcommand\x{\mathbf x}
\newcommand\w{\mathbf w}
\newcommand\y{\mathbf y}
\newcommand\lsup[2]{^{#1}{#2}}
\newcommand\lsub[2]{{}_{#1}{#2}}
\newcommand\cBlg{\Blg^\star}
\newcommand{\Mor}{\mathrm{Mor}}
\newcommand\z{\mathbf z}
\renewcommand{\u}{\mathbf u}
\newcommand{\C}{\mathbb C} \newcommand{\Z}{\mathbb Z}  \newcommand{\Q}{\mathbb Q} \newcommand{\R}{\mathbb R}
\newcommand\HFKsimp{{H}_{\Ring}}
\newcommand\CFKsimp{{C}_{\Ring}}
\newcommand\XX{\mathbf X}
\newcommand\YY{\mathbf Y}
\newcommand\ZZ{\mathbf Z}
\newcommand\Max{\Omega}
\newcommand\Field{\mathbb F}
\DeclareMathOperator{\Id}{Id}
\newcommand\Alg{\mathcal A}
\newcommand\Blg{\mathcal B}
\newcommand\BlgZ{{\mathcal B}_0}
\newcommand\ClgZ{\Clg_0}
\newcommand\Clg{\mathcal C}
\newcommand\Ainf{{\mathcal A}_{\infty}}
\newcommand\Ainfty\Ainf
\newcommand\Zmod[1]{{\mathbb Z}/{#1}{\mathbb Z}}
\newcommand\Opposite{o}
\newcommand\Amod{Q}
\newcommand\Dmod{R}
\newcommand\DAmod{RQ}
\newcommand\Mdown{M^{\vee}}
\newcommand\Mup{M^{\wedge}}
\newcommand\doms{\mathcal D}
\newcommand\MGradingSet{S}
\newcommand\Mgr{\mathbf{m}}
\newcommand\Agr{\mathbf{A}}
\newcommand\AlexGr{\mathbf A}
\newcommand\weight{\mathbcal{w}}
\newcommand\Partition{P}
\newcommand\DuAlg{{\mathcal A}'}
\newcommand\Matching{M}
\newcommand\PartInv{\mathcal Q}
\newcommand\Cwz{\mathcal{C}}
\newcommand\cClg{\Clg^\star}
\newcommand\nDuAlg{\Alg''}
\newcommand\Hpos[1]{{\mathcal H}^+_{#1}}
\newcommand\Hneg[1]{{\mathcal H}^-_{#1}}
\newcommand\Hmin[1]{{\mathcal H}^\cup_{#1}}
\newcommand\Hmax[1]{{\mathcal H}^\cap_{#1}}
\newcommand\Hid{{\mathcal H}^{\natural}}
\newcommand\Iup{\widehat{\mathbf I}}
\newcommand\Idown{\widecheck{\mathbf I}}
\renewenvironment{proof}[1][\proofname]{\par
\pushQED{\qed}%
\normalfont \topsep6\p@\@plus6\p@\relax
\trivlist
\item\relax
{\bf#1\@addpunct{.}}\hspace\labelsep\ignorespaces
}{%
\popQED\endtrivlist\@endpefalse
}
\begin{document}
\title{Algebras with matchings and knot Floer homology}

\author[Peter S. Ozsv\'ath]{Peter Ozsv\'ath}
\thanks {PSO was supported by NSF grant number DMS-1405114 and DMS-1708284.}
\address {Department of Mathematics, Princeton University\\ Princeton, New Jersey 08544} 
\email {petero@math.princeton.edu}

\author[Zolt{\'a}n Szab{\'o}]{Zolt{\'a}n Szab{\'o}}
\thanks{ZSz was supported by NSF grant numbers DMS-1606571 and DMS-1904628.}
\address{Department of Mathematics, Princeton University\\ Princeton, New Jersey 08544}
\email {szabo@math.princeton.edu}

\begin{abstract}
  Knot Floer homology is a knot invariant defined using holomorphic
  curves.  In more recent work, taking cues from bordered Floer
  homology, the authors described another knot invariant, called
  ``bordered knot Floer homology'', which has an explicit algebraic
  and combinatorial construction.  In the present paper, we extend the
  holomorphic theory to bordered Heegaard diagrams for partial knot
  projections, and establish a pairing result for gluing such
  diagrams, in the spirit of the pairing theorem of bordered Floer
  homology.  After making some model calculations, we obtain an
  identification of a variant of knot Floer homology with its
  algebraically defined relative.  These results give a fast algorithm
  for computing knot Floer homology.
\end{abstract}

\maketitle

\section{Introduction}
\label{sec:Intro}

The aim of the present paper is to identify the knot invariant
from~\cite{Bordered2} with a certain specialization of the knot Floer
homology from~\cite{Knots} and~\cite{RasmussenThesis}. 

One version of knot Floer homology associates to a knot $K$, a
bigraded module over $\Ring=\Field[U,V]/UV=0$ (where $\Field=\Zmod{2}$
is the field with two elements), which we denote here by
$\HFKsimp(K)$.  That module in turn is the homology of a chain
complex, denoted here $\CFKsimp(\HD)$, associated to a doubly-pointed
genus $g$ Heegaard diagram $\HD$, whose differential counts
pseudo-holomorphic disks $u$ in $\Sym^g(\Sigma)$, weighted with the
monomial $U^{n_w(u)} V^{n_z(u)}$. Specializing further to $V=0$ (and
then taking homology), we obtain the knot invariant denoted $\HFKm(K)$
in~\cite{Knots}; and setting $U=V=0$, we obtain the knot invariant
$\HFKa(K)$.  (Note that throughout the present paper, we are using
knot Floer homology groups with coefficients mod $2$, though we
suppress this from the notation.)  In~\cite{AltKnots}, we described a
Heegaard diagram associated to a knot projection, where the generators
correspond to Kauffman states for the knot diagram; but the
differentials still remained elusive counts of pseudo-holomorphic
disks.

Taking algebraic clues from the bordered Floer homology
of~\cite{InvPair}, in~\cite{BorderedKnots,Bordered2}, we
defined chain complexes whose generators correspond to Kauffman
states and whose differentials are determined by certain explicit
algebraic constructions; we then verified that their homology groups are knot
invariants. Specifically, the constructions from~\cite{Bordered2}
give an oriented knot invariant $\Hwz(\orK)$, that is a bigraded module
over $\Ring$, whose $V=0$ specialization gives the knot invariant $\KHm(-\orK)$
from~\cite{BorderedKnots}. 

The aim of the present paper is to prove the following:

\begin{thm}
  \label{thm:MainTheorem}
  Bigraded knot Floer homology $\HFKsimp(K)$ (with coefficients mod
  $2$) is identified with the bordered knot invariant $\Hwz(\orK)$
  (again, with coefficients mod $2$) from~\cite{Bordered2}; the
  bigraded knot Floer homology $\HFKm(K)$ is identified with the
  bordered knot invariant $\KHm(\orK)$ from~\cite{BorderedKnots}; the
  bigraded knot Floer homology $\HFKa(K)$ is identified with the
  $U=V=0$ specialization of $\Hwz(\orK)$.
\end{thm}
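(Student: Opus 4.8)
The plan is to pick a Heegaard diagram $\HD$ adapted to a planar projection of the oriented knot $\orK$, cut it into elementary horizontal slices, and show that the holomorphic invariant $\CFKsimp(\HD)$ reassembles from those slices in exactly the way that $\Hwz(\orK)$ is assembled from the combinatorial bimodules of~\cite{Bordered2}. The first step is to extend the definition of $\CFKsimp$ from closed doubly-pointed Heegaard diagrams to \emph{bordered} Heegaard diagrams for partial knot projections: to such a diagram, with its two vertical boundary slices, one associates a multi-module over the ``algebras with matchings'' attached to those slices, whose structure maps count pseudo-holomorphic curves in $\Sym^{g}(\Sigma)$ with prescribed mixed asymptotics at the east and west punctures, weighted by monomials in $U$ and $V$ recording the multiplicities at the two basepoints. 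One then checks that this structure is well defined --- the usual compactness, transversality, and gluing package, adapted to the presence of boundary --- and invariant, up to the appropriate homotopy equivalence, under the moves relating different diagrams for the same partial projection.

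The analytic heart of the argument is a \textbf{pairing theorem}: if $\HD=\HD_1\cup\HD_2$ is obtained by gluing two partial diagrams along a common slice, then
\[
  \CFKsimp(\HD)\;\simeq\;\CFKsimp(\HD_1)\DT\CFKsimp(\HD_2),
\]
the (derived) box tensor product over the boundary algebra with matching, and this equivalence respects the Maslov and Alexander gradings. As in bordered Floer homology~\cite{InvPair}, the proof is a neck-stretching argument: one degenerates $\HD$ along the gluing circle, shows that for a sufficiently long neck the holomorphic curves on $\HD$ are in bijection with matched pairs of curves on $\HD_1$ and $\HD_2$ with agreeing asymptotics, and identifies the ends of the one-dimensional moduli spaces with the differential of the tensor product. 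Iterating over the slices of $\HD$ then expresses $\CFKsimp(\HD)$ as an iterated $\DT$-product of the invariants of the elementary pieces: local maxima, local minima, positive and negative crossings, and trivial strands.

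It then remains to carry out the \textbf{model computations}: for each elementary piece, compute its holomorphic invariant directly --- the moduli spaces in these small diagrams can be enumerated by hand, or compared against known local models --- and identify it, as a bigraded bimodule over the relevant algebras with matchings, with the corresponding combinatorial bimodule of~\cite{Bordered2,BorderedKnots} (the crossing bimodules, the maximum bimodule, and the minimum bimodule). Granting these identifications, $\CFKsimp(\HD)$ and $\Hwz(\orK)$ are the same iterated tensor product of the same elementary bimodules, so taking homology yields $\HFKsimp(K)\cong\Hwz(\orK)$ as bigraded $\Ring$-modules, which is the first assertion. The statements for $\HFKm$ and $\HFKa$ follow by rerunning the identical argument after specializing the ground ring throughout --- setting $V=0$, respectively $U=V=0$ --- which on the holomorphic side passes to the quotient complexes computing $\HFKm(K)$ and $\HFKa(K)$, and on the combinatorial side recovers $\KHm$ from $\Hwz$ as in~\cite{BorderedKnots}, the only extra care being the orientation conventions relating $\KHm(\orK)$ and $\KHm(-\orK)$.

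The hard part will be twofold. First are the analytic foundations of the pairing theorem in this setting: setting up the moduli spaces of holomorphic curves with mixed east/west asymptotics, establishing the corresponding compactness and gluing statements, and showing that the codimension-one degenerations are precisely the broken configurations contributing to the tensor-product differential --- with additional subtleties from the matching data and from curves that limit onto several punctures simultaneously. Second is the bookkeeping in the model computations: matching each elementary holomorphic invariant to its algebraic counterpart demands careful tracking of idempotents, of the Maslov and Alexander gradings, and of the matching structure, and one must rule out unexpected holomorphic configurations beyond those predicted by the combinatorial algebra.
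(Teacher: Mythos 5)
Your high-level strategy — bordered holomorphic invariants for partial diagrams, a neck-stretching pairing theorem, model computations for the elementary pieces, and comparison with the algebraic bimodules of~\cite{Bordered2} — is indeed the route the paper takes. But the proposal glosses over two points that are genuine structural choices, not mere technicalities, and as sketched the argument would not close.

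First, the model-computation step is not "enumerate the holomorphic $DA$ bimodule of each elementary Heegaard diagram by hand and identify it with the combinatorial bimodule." The paper never establishes that $\DAmod(\Hmid)$ for a crossing or a local minimum is homotopy equivalent, as a $DA$ bimodule, to the algebraic $\Pos^i$, $\Neg^i$, or $\Min^c$. What it proves (Theorem~\ref{thm:ComputeDDmods}) is the weaker statement that they become equivalent after boxing with the canonical type $DD$ bimodule $\CanonDD$, i.e. that the corresponding \emph{type $DD$} bimodules over $\cBlg\otimes\nDuAlg$ agree. Because $\DT\,\CanonDD$ is not an equivalence of categories (no Koszul duality is established in this setting), this does not by itself yield the identity of iterated tensor products you need; one must additionally introduce the notion of a ``relevant'' type $D$ structure (Definition~\ref{def:Relevant}) and run an induction (Proposition~\ref{prop:InductiveStep}) showing that both the analytic $\Dmod(\Hup)$ and the algebraic $\DmodAlg(\DiagUp)$ stay within the relevant class at each stage, so that the $DD$-level identifications can be transported. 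Your proposal would need this extra layer to go through. Second, and relatedly, the analytic side is more delicate than a direct bijection between curves on the glued diagram and matched pairs: in the presence of Reeb orbits and $\beta$-boundary degenerations, the matched-pair description is only the starting point, and one must interpolate through a family of ``self-matched'' moduli spaces — deforming the $s$-coordinate of each orbit matching to the boundary, one orbit pair at a time, and only then performing the time dilation — with the boundary degenerations precisely producing the curvature term $\mu_0=\sum U_iU_j$ in the algebra. This is why the whole construction is carried out over \emph{curved} algebras; a pairing theorem stated naively in the uncurved setting would not hold. The passage to $\HFKm$ and $\HFKa$ by specialization is fine as you state it.
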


(As the notation suggests, knot Floer homology is independent of the
choice of orientation on $K$; in view of the above theorem,
$\Hwz(\orK)$ is independent of this choice, as well.)

The bordered knot invariants are defined in terms of a decorated knot
projection. We start from the projection, cut it up into elementary
pieces, associate bimodules to those pieces, and then define the
invariant to be a (suitable) tensor product of the bimodules;
compare~\cite{Bimodules, HFa} for the three-dimensional analogue.

In its original formulation, knot Floer homology can be constructed
using any Heegaard diagram representing a knot. We work here with the
``standard Heegaard diagram'' associated to a decorated knot
projection, as used in~\cite{ClassKnots}.  After degenerating this
Heegaard diagram, we obtain partial Heegaard diagrams, formalized in
Section~\ref{sec:Heegs}, which come in two kinds: the ``lower
diagrams'' and the ``upper diagrams''.  Suitable counts of
pseudo-holomorphic curves in upper diagrams leads to the definition of
the ``type $D$ structure'' associated to an upper diagram
(Section~\ref{sec:TypeD}), and counting pseudo-holomorphic curves in
lower diagrams leads to the definition of the ``type $A$ structure''
associated to the lower diagram (Section~\ref{sec:TypeA}).  The
underlying algebras for these modules are closely related to the ones
defined in~\cite{BorderedKnots,Bordered2}. Specifically, we will be
working with curved modules over the algebra $\Blg(2n,n)$
from~\cite{BorderedKnots} (which we abbreviate here by $\Blg(n)$), with a curvature specified by a matching
$\Matching$ (as described in Section~\ref{sec:Algebra}), which in turn
is  very similar to working over the algebra
$\Alg(n,\Matching)$ from~\cite{Bordered2}.

A suitable tensor product of the type $A$ and type $D$ structures
computes knot Floer homology, by an analogue of the pairing theorem
from~\cite{InvPair}, proved in Section~\ref{sec:Pairing}.  The theory
is generalized to bimodules in Section~\ref{sec:Bimodules}.

With the bordered theory in place, the verification of
Theorem~\ref{thm:MainTheorem}, which is completed in
Section~\ref{sec:Comparison}, rests on some model computations
(see Section~\ref{sec:ComputeDDmods}).

The pseudo-holomorphic curve counting here has a few complications
beyond the material present in~\cite{InvPair}. As in~\cite{TorusMod},
we must account for ``Reeb orbits'' rather than merely Reeb chords.
The present analysis is simpler, though, because our hypotheses on the
partial diagrams exclude boundary degenerations on the
``$\alpha$-side''. Rather, the boundary degenerations that occur here
all happen on the ``$\beta$-side'', and indeed they happen in a
controlled manner that can be accounted for neatly in the algebra.
With this understanding, the proof of the pairing theorem
from~\cite{InvPair} adapts readily.

Theorem~\ref{thm:MainTheorem} can be viewed as providing explicitly
computable models for variants of knot Floer homology. Grid diagrams
also give explicit combinatorial models for the necessary holomorphic
curve counts; see~\cite{MOS,MOST}. While the more algebraic bordered
constructions studied here (and in~\cite{BorderedKnots,Bordered2}) at
present describe a specialized version of these invariants, their
computations are much more efficient; see~\cite{Program}. 

\subsection{Organization}
This paper is organized as follows. In Section~\ref{sec:Heegs}, we
formalize the partial Heegaard diagrams used in the bordered
pseudo-holomophic construction.  Section~\ref{sec:Algebra} contains
the algebraic background, with a formalization of the framework of
curved algebras and the modules over them, which will be used
throughout. Next we recall bordered algebras defined
in~\cite{BorderedKnots}, and show how they can be fit into the curved
framework. The next few sections deal with upper
diagrams, with the ultimate aim of defining their associated type $D$
structures. In Section~\ref{sec:Shadows}, we formalize the shadows of
domains that connect upper Heegaard states, and use these to define a
grading on the upper Heegaard states. In Section~\ref{sec:CurvesD}, we
formulate the pseudo-holomorphic curves that represent these shadows,
and lay out properties of their moduli spaces.  In Section~\ref{sec:TypeD} we define the
type $D$ structure associated to an upper diagram, and verify that it
satisfies the required structural identities, using the results from
Section~\ref{sec:CurvesD}.  In Section~\ref{sec:CurvesA}, we
generalize the material from Section~\ref{sec:CurvesD} to the curves
relevant for the type $A$ modules, which are defined in
Section~\ref{sec:TypeA}. In Section~\ref{sec:Pairing}, we prove a
pairing theorem for the modules associated to upper diagrams with
lower diagrams, expressing knot Floer homology in terms of the modules
associated to the two pieces.  In Section~\ref{sec:Bimodules}, the
material is adapted to the case of type $DA$ bimodules associated to
``middle diagrams''.  A corresponding pairing theorem is established
(Theorem~\ref{thm:PairDAwithD}). In fact, the above material is set up
over a subalgebra of $\Blg(n)$, corresponding to restricting the
idempotent ring. In Section~\ref{sec:ExtendDA}, we extend the $DA$
bimodules over all of $\Blg(n)$.

We would like to understand these bimodules for certain standard
middle diagrams.  For this description, we adapt the
algebraically-defined bimodules from~\cite{Bordered2}; to the curved
context. This adaptation is given in 
Section~\ref{sec:AlgDA}. Bimodules derived from these are identified
with the bimodules associated to certain basic middle diagrams in
Section~\ref{sec:ComputeDDmods}. Combining these model computations
with the pairing theorem for bimodules, we obtain an algebraic
description of the type $D$ modules associated to Heegaard diagrams
for upper knot diagrams, in Section~\ref{sec:computeD}. This is
readily adapted to a proof of Theorem~\ref{thm:MainTheorem} in
Section~\ref{sec:Comparison}.

\subsection{Acknowledgements}

The authors wish to express their gratitude to Robert Lipshitz and
Dylan Thurston. Many of the ideas here were inspired by bordered Floer
homology as described in~\cite{InvPair}; and moreover the use of Reeb
orbits also figured heavily in joint work with the first author which
will appear soon~\cite{TorusMod}.

We would also  like to thank Andy Manion for sharing with us
his work~\cite{Manion1,Manion2}; and  to thank Nate Dowlin, Ian Zemke,
and Rumen Zarev for interesting conversations,

\newcommand\SigmaUp{\widehat{\Sigma}_0}
\newcommand\SigmaDown{\widecheck{\Sigma}_0}

\section{Heegaard diagrams}
\label{sec:Heegs}

\subsection{Upper diagrams}

An {\em upper Heegaard diagram} is the following data:
\begin{itemize}
\item a surface $\Sigma_0$ of genus $g$ and
$2n$ boundary components, labelled $Z_1,\dots,Z_{2n}$, 
\item a collection
of disjoint, embedded arcs $\{\alpha_i\}_{i=1}^{2n-1}$, so that
$\alpha_i$ connects $Z_i$ to $Z_{i+1}$,
\item a collection of
disjoint embedded closed curves $\{\alpha^c_i\}_{i=1}^{g}$
(which are also disjoint from $\alpha_1,\dots,\alpha_{2n-1}$),
\item 
another collection of embedded, mutually disjoint closed curves
$\{\beta_i\}_{i=1}^{g+n-1}$.  
\end{itemize}
We require this data to also satisfy the following properties:
\begin{enumerate}[label=(UD-\arabic*),ref=(UD-\arabic*)]
\item For each $i\in\{1,\dots,2n-1\}$, $j\in \{1,\dots,g\}$, and $k\in\{1,\dots,g+n-1\}$,
$\alpha_i$ and $\alpha_j^c$ curves are transverse to $\beta_k$.
\item
Both sets of $\alpha$-and the
$\beta$-circles consist of homologically linearly
independent curves (in $H_1(\Sigma_0)$).
\item 
\label{UD:TwoBoundariesApiece}
The surface
obtained by cutting $\Sigma_0$ along $\beta_1,\dots,\beta_{g+n-1}$,
which has $n$ connected components, is required to contain exactly
two boundary circles in each component. 
\item
  \label{UD:NoPerDom}
  The subspace of $H_1(\Sigma_0;\Z)$ spanned by the curves
  $\{\alpha_i^c\}_{i=1}^g$ meets transversely the space spanned by
  $\{\beta_i\}_{i=1}^{g+n-1}$.
\item 
  \label{UD:NoHone}
  Letting $\cSigma$ denote the closed surface obtained by filling
  in $\partial \Sigma_0$ with $2n$ disks, 
  the space $H_1(\cSigma;\Z)$ is spanned by the images of the curves
  $\{\alpha_i^c\}_{i=1}^g$ and
  $\{\beta_i\}_{i=1}^{g+n-1}$.
\end{enumerate}

\begin{rem}
  An upper diagram specifies a three-manifold $Y$ whose boundary is a
  sphere, with an embedded collection of $n$ arcs.
  Condition~\ref{UD:NoPerDom} ensures that $H_2(Y;\Z)=0$.
  (Compare~\cite[Proposition~2.15]{HolDisk}.)
  When $g=0$, the three-manifold $Y$ is a three-ball.
  Also, when $g=0$, Condition~\ref{UD:NoHone} is automatically satisfied.
\end{rem}

Condition~\ref{UD:TwoBoundariesApiece} gives a
matching $\Matching$ on $\{1,\dots,2n\}$ (a partition into two-element subsets), where $\{i,j\}\in \Matching$ if $Z_i$ and $Z_j$ 
can be connected by a path that does not cross any $\beta_k$. 

We sometimes abbreviate the data
\[\Hup=(\Sigma_0,Z_1,\dots,Z_{2n},\{\alpha_1,\dots,\alpha_{2n-1}\},\{\alpha^c_1,\dots,\alpha^c_{g}\},
\{\beta_1,\dots,\beta_{g+n-1}\}),\]
and let $\Matching(\Hup)$ be the induced matching.

 \begin{figure}[h]
 \centering
 \input{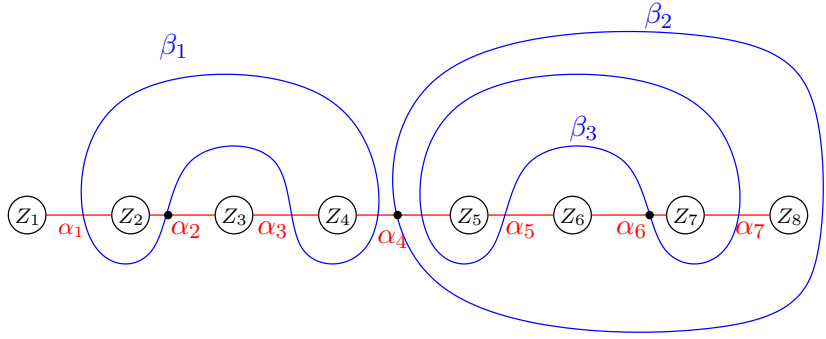}
 \caption{{\bf An upper Heegaard diagram.} The black dots represent an
   upper Heegaard state $\x$ with $\alpha(\x)=\{2,4,6\}$}
 \label{fig:UpperHeeg}
 \end{figure}

\begin{defn}
  \label{def:UpperState}
  An {\em upper Heegaard state} is a subset $\x$ of $\Sigma_0$
  consisting of $g+n-1$ points in the intersection of the various
  $\alpha$-and $\beta$-curves, distributed so that each $\beta$-circle
  contains exactly one point in $\x$, each $\alpha$-circle contains
  exactly one point in $\x$, and no more than one point lies on any
  given $\alpha$-arc.  Each Heegaard state $\x$ determines a subset
  $\alpha(\x)\subset \{1,\dots,2n-1\}$ with cardinality $n-1$
  consisting of those $i\in\{1,\dots,2n-1\}$ with $\x\cap \alpha_i\neq
  \emptyset$.
\end{defn}

\subsection{Lower diagrams}

A {\em lower diagram} $\Hdown$ is an upper diagram, equipped with an
extra pair of basepoints $w$ and $z$ and one
additional $\beta$-circle $\beta_{g+n}$ 
so that exactly two of the  $n+1$ components in $\Sigma_0\setminus\betas$
contains one boundary component of $\Sigma_0$ apiece, and these are marked
by the basepoints $w$ and $z$ (and the remaining $n-1$ components meet
two boundary components of $\Sigma_0$).

Note that a lower diagram also determines an equivalence relation
$\Mdown$ on the boundary circles $Z_1,\dots,Z_{2n}$, where $\{i,j\}\in
\Mdown$ if $Z_i$ and $Z_j$ can be connected by a path that does not
cross any $\beta_k$.

A {\em lower Heegaard state} $\x$ is a set of $g+n$ points, where one
lies on each $\beta$-circle, one lies on each $\alpha$-circle, and no
more than one lies on each $\alpha$-arc. Each lower Heegaard state $\x$
determines a subset $\alpha(\x)$ of $\{1,\dots,2n-1\}$ with
cardinality $n$, once again, consisting of those $i=1,\dots,2n-1$ with
$\x\cap \alpha_i\neq \emptyset$.

 \begin{figure}[h]
 \centering
 \input{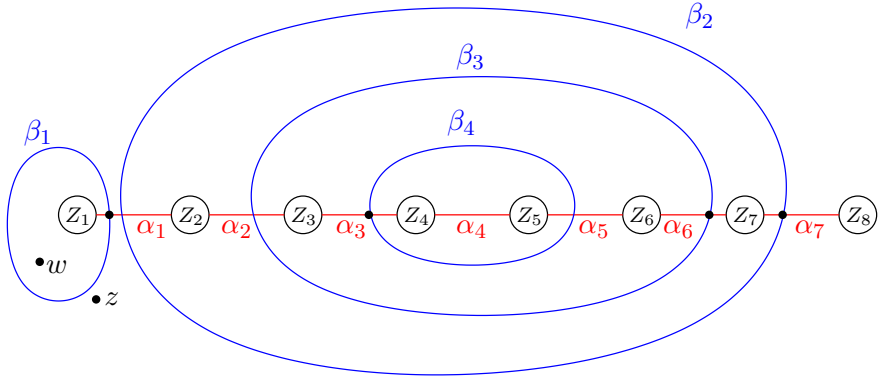}
 \caption{{\bf A lower Heegaard diagram.}  The basepoints $w$ and $z$
   are marked; furthermore, a lower state $\x$ with
   $\alpha(\x)=\{1,3,6,7\}$ is indicated by black dots.}
 \label{fig:LowerHeeg}
 \end{figure}

\subsection{Gluing upper and lower diagrams}
\label{subsec:Gluing}

\begin{defn}
  \label{def:CompatibleMatching}
  Let $M_1$ and $M_2$ be two matchings on $\{1,\dots,2n\}$.  We say
  that $M_1$ and $M_2$ are {\em compatible} if together they generate
  an equivalence relation with a single equivalence class.
\end{defn}

It is convenient to phrase this geometrically in the following terms.

\begin{defn}
\label{def:AssociatedW}
Let $M$ be a matching on a finite set $\{1,\dots,2k\}$. There is an
associated one-manifold $W(M)$, whose boundary components correspond
to the points in $\{1,\dots,2k\}$, and whose components correspond to
$\{i,j\}\in M$; that component connects the boundary component $i$ to
the boundary component $j$. 
\end{defn}

Suppose $\{1,\dots,2k\}$ is given with a matching $M_1$ and another
matching $M_2$ on a subset $S\subset\{1,\dots,2k\}$.  Those two
matchings together induce an equivalence relation $M_3$ on
$\{1,\dots,2k\}\setminus S$. The associated spaces are related by
\[ W(M_3)=W(M_2)\cup_{S} W(M_1).\]

In this language, the compatibility of $M_1$ and $M_2$ is
equivalent to the condition that $W(M_1)\cup_S W(M_2)$ has no closed
components.  See Figure~\ref{fig:AssociateW}.

\begin{defn}
  \label{def:Compatible}
  Suppose that $\Hup$ and $\Hdown$ are upper and lower diagrams with the
  same number $2n$ of boundary circles, and genera $g_1$ and $g_2$.
  We say that
  $\Hup$ and $\Hdown$ are {\em compatible} if the corresponding
  matchings $\Matching(\Hup)$ and $\Matching(\Hdown)$ are compatible,
  in the sense of Definition~\ref{def:CompatibleMatching}
\end{defn}

\begin{example}
  The upper Heegaard diagram $\Hup$ from Figure~\ref{fig:UpperHeeg} determines
  the matching on $\{1,\dots,8\}$, $\Mup=\{\{1,3\},\{2,4\},\{5,7\},\{6,8\}\}$;
  and the lower diagram $\Hdown$ from Figure~\ref{fig:LowerHeeg}
  determines the matching 
  $\Mdown=\{\{2,7\},\{3,6\},\{4,5\}\}$. together, they determine an 
  equivalence relation with two equivalence classes,
  $\{2,4,5,7\}$ and $\{1,3,6,8\}$;
  thus, $\Hup$ and $\Hdown$ are  not
   compatible, in the sense of Definition~\ref{def:Compatible}. 
 \begin{figure}[h]
 \centering
 \input{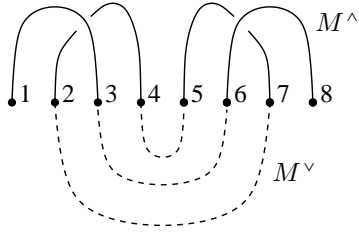}
 \caption{{\bf One-manifold associated to pairs of matchings.}}
 \label{fig:AssociateW}
 \end{figure}
\end{example}

\begin{prop}
  \label{prop:ConstructDiagram}
  Suppose that  $\Hup$ and $\Hdown$ are compatible upper and lower diagrams
  (with underlying surfaces $\SigmaUp$ and $\SigmaDown$ respectively).
  Glue $\SigmaUp$ and $\SigmaDown$ together along their boundaries
  to get a closed surface $\Sigma$ of genus $g=g_1+g_2+2n-1$. Collect
  the $\beta$-circles for $\Hup$ and $\Hdown$  into a $g$-tuple,
  thought of now as circles in $\Sigma$; and form a $g$-tuple
  of $\alpha$-circles consisting of the $\alpha$-circles in $\Hup$ and $\Hdown$
  and further $\alpha$-circles formed by gluing 
  $\alpha$-arcs in $\Hup$ to $\alpha$-arcs in $\Hdown$.
  The result is a doubly-pointed Heegaard diagram.
\end{prop}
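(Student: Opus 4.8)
The plan is to verify, one by one, the defining conditions of a doubly-pointed Heegaard diagram: that $\Sigma$ is a closed, connected, oriented surface of the stated genus; that $\alphas$ and $\betas$ are each a collection of $g$ pairwise disjoint embedded circles, transverse to one another; that $\Sigma\setminus\alphas$ and $\Sigma\setminus\betas$ are connected; and that $w,z\in\Sigma\setminus(\alphas\cup\betas)$. Orientedness and connectedness of $\Sigma$ are immediate, since $\SigmaUp$ and $\SigmaDown$ are connected and are glued, compatibly with their orientations, along the $2n\geq 2$ circles $Z_1,\dots,Z_{2n}$. For the genus, Euler characteristic is additive under gluing along circles, so
\[\chi(\Sigma)=\chi(\SigmaUp)+\chi(\SigmaDown)=(2-2g_1-2n)+(2-2g_2-2n)=4-2g_1-2g_2-4n,\]
and equating with $2-2g$ gives $g=g_1+g_2+2n-1$.

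Next I would count the circles. The $\beta$-system is simply the union of the $g_1+n-1$ curves $\beta_i$ of $\Hup$ with the $g_2+n$ curves of $\Hdown$ (the extra one being $\beta_{g+n}$), so it has $g$ elements; they remain disjoint and embedded because $\SigmaUp$ and $\SigmaDown$ meet only along the $Z_i$, which avoid every $\beta$-circle. The $\alpha$-system consists of the $g_1+g_2$ closed curves $\alpha^c$ from the two pieces together with the circles built from the arcs: by construction each arc $\alpha_i$ of $\Hup$ is glued to the arc $\alpha_i$ of $\Hdown$ along both of their now-identified endpoints on $Z_i$ and $Z_{i+1}$, yielding $2n-1$ disjoint embedded circles, for a total of $g_1+g_2+(2n-1)=g$ curves. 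Transversality of the $\alpha$'s and $\beta$'s holds piece-by-piece, by the transversality conditions built into the definitions of upper and lower diagrams, and is preserved by the gluing.

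The substantive step, which I expect to be the main obstacle, is to show that $\Sigma\setminus\alphas$ and $\Sigma\setminus\betas$ are connected; since each system consists of exactly $g$ disjoint circles on a genus-$g$ surface, this is equivalent to each being linearly independent in $H_1(\Sigma;\Z)$. I would run the Mayer--Vietoris sequence for $\Sigma=\SigmaUp\cup\SigmaDown$, with $\SigmaUp\cap\SigmaDown=Z_1\cup\dots\cup Z_{2n}$: the overlap contributes precisely the relations identifying the classes $[Z_i]$ as read from the two sides, while within each $\beta$-cut piece of $\SigmaUp$ (respectively $\SigmaDown$) the classes of the two boundary circles lying in that piece are related exactly as recorded by $\Matching(\Hup)$ (respectively $\Matching(\Hdown)$). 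The compatibility hypothesis (Definition~\ref{def:Compatible}), in the geometric form that $W(\Matching(\Hup))\cup_S W(\Matching(\Hdown))$ has no closed component and hence is a single arc (Definition~\ref{def:AssociatedW}), is exactly what prevents these relations from conspiring: it rules out a nonzero combination of the closed curves $\alpha^c$ together with the arc-circles (respectively, of the $\beta$-circles) becoming null-homologous in $\Sigma$. Feeding this into Mayer--Vietoris, together with the homological independence of the $\alpha$- and $\beta$-curves within each piece and conditions~\ref{UD:NoPerDom} and~\ref{UD:NoHone} for $\Hup$ and their analogues for $\Hdown$ (including the lower-diagram condition isolating the $w$- and $z$-components), should give both independence statements. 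Finally, $w$ and $z$ lie in $\Sigma\setminus(\alphas\cup\betas)$ by construction, so the last condition is free; and the same single-arc property yields the bonus observation that the resulting doubly-pointed diagram represents a knot, not a link.
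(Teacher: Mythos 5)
Your argument takes essentially the same route as the paper's: compute the genus via Euler characteristic, count the $\alpha$- and $\beta$-systems, and reduce to showing homological linear independence, with the compatibility hypothesis supplying the independence of the $\beta$-circles via connectedness of the complement. The paper's own proof is terser than yours (it asserts the genus count and $\alpha$-independence as ``straightforward'' and only explicitly invokes compatibility for the $\beta$-circles), so your Mayer--Vietoris elaboration fills in detail rather than diverging in strategy.

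One small imprecision worth flagging: your parenthetical ``(respectively, of the $\beta$-circles)'' suggests compatibility is what rescues both independence statements. In fact compatibility is only needed for the $\beta$-side. The $\alpha$-arcs always link $Z_1,\dots,Z_{2n}$ in a single chain, so the arc-circles together with the $\alpha^c$'s are independent in $H_1(\Sigma)$ by the conditions already imposed on upper and lower diagrams, independently of whether $\Mup$ and $\Mdown$ are compatible; the paper reflects this by treating $\alpha$-independence as immediate and reserving compatibility for $\beta$. Beyond that, your proof is correct, though the final Mayer--Vietoris step is stated at the level of a plan (``should give both independence statements'') rather than carried out.
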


\begin{proof}
  It is straightforward to see that $\Sigma$ has the stated genus, and
  that the $\alpha$-circles are homologically linearly independent.
  The compatibility condition guarantees that the complement of the
  $\beta$-circles is connected in $\Sigma$, and hence these circles
  are also linearly independent.
\end{proof}

We denote the doubly-pointed Heegaard diagram constructed in Proposition~\ref{prop:ConstructDiagram}
$\Hup\#\Hdown$.

\subsection{Examples}
\label{subsec:Examples}

The {\em standard upper diagram} is the diagram $\Hup(n)$ equipped with a
planar surface,  $\alpha$-arcs as in Figure~\ref{fig:StandardUpperDiagram}
(with $n=4$),
$\beta$-circles arranged as
in the figure
(after deleting $\beta_4$),
and no $\alpha$-circles.
(That figure also contains two basepoints $\wpt$ and $\zpt$ which should be
disregarded.)
Note that the standard upper diagram
has a single state $\x$ with $\alpha(\x)=\{2,\dots,2n-2\}$.
 \begin{figure}[h]
 \centering
 \input{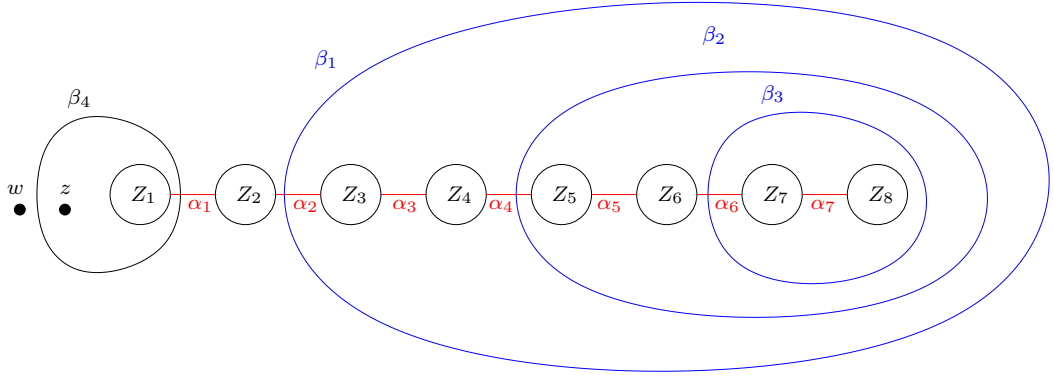}
 \caption{{\bf Standard Heegaard diagrams $n=4$.}  This is  the
   standard lower diagram; the standard upper diagram is obtained by
   removing the basepoints $w$ and $z$ and the circle $\beta_4$.}
 \label{fig:StandardUpperDiagram}
 \end{figure}

 The {\em standard lower diagram} is obtained from the standard upper
 diagram by adding an extra $\beta$-circle around $Z_1$, and two adjacent 
 basepoints $w$ and $z$ as shown
 in Figure~\ref{fig:StandardUpperDiagram}.

 Let $\Hup$ be any upper diagram drawn with Heegaard surface
 $\Sigma_0$, and fix an orientation preserving diffeomorphism
 $\phi\colon \Sigma_0 \to \Sigma_0$. There is a new upper Heegaard
 diagram
 \[ \phi(\Hup)=(\Sigma_0,\alphas,\phi(\betas))\cong (\Sigma_0,\phi^{-1}(\alphas),\betas).\]  We have
 illustrated this new upper diagram in the case where $\phi$ is a half
 twist switching $Z_2$ and $Z_3$ in
 Figure~\ref{fig:BraidGroupAction}.
 \begin{figure}[h]
 \centering
 \input{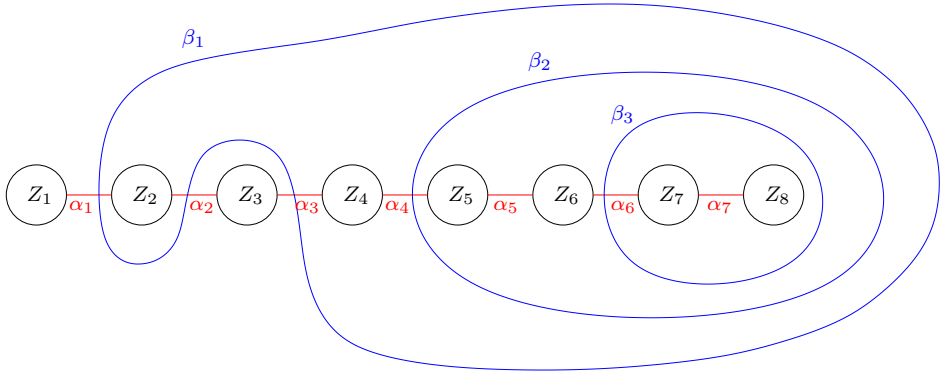}
 \caption{{\bf Acting on the standard upper diagram by a half twist switching
   $Z_2$ and $Z_3$}.
 \label{fig:BraidGroupAction}}
 \end{figure}

 Let $\Hup$ be an upper diagram on a surface $\Sigma_0$ of genus zero.
 Every mapping class of $\C$ punctured at points
 $\{p_1,\dots,p_{2n}\}$ can be represented by a diffeomorphism
 $\phi_0$ of $\Sigma_0$ to itself. Letting $\phi_0$ act on $\Hup$, we
 obtain an action of the mapping class group of
 $\C\setminus\{p_1,\dots,p_{2n}\}$ (which in turn is identified with
 the quotient of the braid group on $2n$ elements by the cyclic
 subgroup generated by a full twist; see~\cite{Birman}) on genus zero
 upper diagrams modulo isotopy.  The braid group is generated by
 consecutive switches $\sigma_i$; their corresponding mapping classes
 are represented by half twists: suppose that $\gamma_i$ is a curve
 that encloses exactly two puncture points $p_i$ and $p_{i+1}$, the
 half twist along $\gamma_i$ is the mapping class $\tau_{i,+}$
 supported in the compact component of $\C\setminus\gamma_i$ which
 switches $p_i$ and $p_{i+1}$, and whose square is a Dehn twist along
 $\gamma_i$.

 Gluing the standard upper diagram to $\phi$ applied to the standard
 lower diagram induces a Heegaard diagram representing some knot in
 $S^3$, provided that the permutation $\sigma$ induced by $\phi$ has
 the property that the product of permutations 
 \[ \sigma\cdot
 ((1,2),(3,4),\dots,(2n-1,2n))\cdot \sigma^{-1}\cdot
 ((1,2),(3,4),\dots,(2n-1,2n))\]
 has order $n$.
 
 Consider a decorated knot projection, equipped with a height
 function, so that the marked edge contains the global minimum for the height
 function.  Consider the associated Heegaard diagram
 (following~\cite{AltKnots}).  Slicing the knot projection along a
 generic horizontal slice corresponds to decomposing the Heegaard
 diagram as the gluing of some upper and lower Heegaard diagrams; see
 Figure~\ref{fig:SliceProjection}.

 As usual, the Heegaard surface $\Sigma$ is oriented as the boundary
 of the $\alpha$-handlebody.

 \begin{figure}[h]
 \centering
 \input{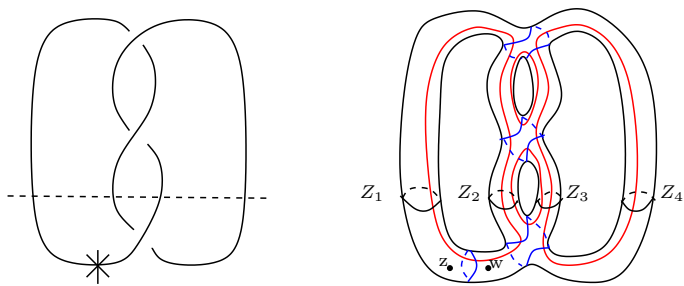}
 \caption{{\bf Slicing the Heegaard diagram corresponding
     to a decorated knot projection.}
   Cutting the decorated projection on the left along the dotted line
   corresponds to slicing the Heegaard diagram along the right along the
   circles $Z_1\cup Z_2\cup Z_3\cup Z_4$. }
 \label{fig:SliceProjection}
 \end{figure}

\subsection{Middle diagrams}

In this paper, we will define algebraic objects  using
holomorphic curve counts associated to upper and lower diagrams. Knot
Floer homology then will be expressed as a pairing between the
invariants of the upper and lower diagrams. To compute these
individual objects, it will help to be able to decompose the
knot diagram into further pieces, the {\em middle diagrams} we introduce presently.

\begin{defn}
  \label{def:MiddleDiagram}
A {\em middle diagram} is a Heegaard diagram obtained from an upper
diagram with $2m+2n$ boundary components, by deleting the arc
$\alpha_{2m}$ which would have connected $Z_{2m}$ with $Z_{2m+1}$.
Thus, the $\alpha$-arcs are labelled
$\alpha_i$ for 
\[ i\in\{1,\dots,2m-1,2m+1,\dots,2m+2n-1\}.\]
Think of some boundary components of $\Sigma_0$
as ``incoming'' boundary and other components  as ``outgoing'' boundary. 
The incoming boundary components are labelled as 
$\Zin_i=Z_i$ for $i=1,\dots,2m$; and the outgoing boundary components 
$\Zout_i=Z_{2m+i}$
for $i=1,\dots,2n$; $\alphain_i=\alpha_i$ for $i=1,\dots,2m-1$ and
$\alphaout_i=\alpha_{2m+i}$ for $i=1,\dots,2n-1$.  The $\beta$-circles
are labelled $\{\beta_i\}_{i=1}^{g+m+n-1}$.  We abbreviate the
resulting data
\begin{align*} \Hmid=(\Sigma_0,(\Zin_1,&\dots,\Zin_{2m}),(\Zout_1,\dots,\Zout_{2n}),
\{\alphain_1,\dots,\alphain_{2m-1}\},
\{\alphaout_1,\dots,\alphaout_{2n-1}\},\\
&\{\alpha^c_1,\dots,\alpha^c_{g}\},
\{\beta_1,\dots,\beta_{g+m+n-1}\}).
\end{align*}
\end{defn}

We will be primarily interested in five model middle diagrams: the
{\em identity diagram} $\Hid=\Hid(n)$ (Figure~\ref{fig:IdDiag}), the
{\em middle diagram of a local maximum} $\Hmax{c,n}$
(Figure~\ref{fig:MaximumHeeg}), the {\em middle diagram of a local
  mimum} $\Hmin{c,n}$ (Figure~\ref{fig:MinimumHeeg}), the {\em middle
  diagram of a positive crossing} $\Hpos{i,n}$, and the {\em middle
  diagram of a negative crossing} $\Hneg{i,n}$ (both in
Figure~\ref{fig:CrossDiag}).

We explain the interpretations of these Heegaard diagrams in terms of
partial knot projection; though this connection will become important
for us only much later (in Section~\ref{sec:ComputeDDmods}).  For
$\Hmax{c,n}$, where $2n$ denotes the number of input strands (so there are
$2n+2$ outputs), and $c\in 1,\dots,2n+1$ is chosen so that strands $c$ and
$c+1$ come out of the local maximum.  By contrast, for $\Hmin{c,n}$,
$2n$ is the number of output strands and, $c$ is choosen so that
the output strands $c$ and $c+1$ connect to the local maximum.  The diagrams for
$\Hpos{i,n}$ and $\Hneg{i,n}$ both represent $2n$ strands, so that the
$i$ and $i+1^{st}$ ones cross. Orienting each strand upwards, the
crossing is positive for $\Hpos{}$ and negative for $\Hneg{}$.  (In
practice, the strands in a crossing region will not all be oriented
pointing upward.)

\begin{figure}[h]
 \centering
 \input{IdDiagram.pstex_t}
 \caption{{\bf Middle diagram of the identity $\Hid(n)$.} 
 We have labelled a middle Heegaard state, with $|\alphain(\x)|=2$.}
 \label{fig:IdDiag}
 \end{figure}

 \begin{figure}[h]
 \centering
 \input{Maximum.pstex_t}
 \caption{{\bf Middle diagram of a local maximum $\Hmax{c,n}$.}  We have drawn here the case when $n=1$ and $c=2$.
   \label{fig:MaximumHeeg}}
 \end{figure}

 \begin{figure}[h]
 \centering
 \input{Minimum.pstex_t}
 \caption{{\bf Middle diagram of a local minimum $\Hmin{c,n}$.}  We have drawn here the case where $n=2$ and $c=1$.
 \label{fig:MinimumHeeg}}
 \end{figure}

 \begin{figure}[h]
 \centering
 \input{CrossingDiagram.pstex_t}
 \caption{{\bf Middle diagram of crossings $\Hpos{i,n}$ and $\Hneg{i,n}$.} 
  $\Hneg{2,2}$ is above and $\Hpos{1,2}$ is below.}
 \label{fig:CrossDiag}
 \end{figure}

A {\em middle Heegaard state} is a set of $g+m+n-1$ points of $\Sigma_0$ in
the locus where the various $\alpha$- and $\beta$-curves intersect,
distributed so that one lies on each $\beta$-circle, one lies on each
$\alpha$-circle, and no more than one lies on each $\alpha$-arc. For
each middle Heegaard state, let $\alphain(\x)\subset \{1,\dots,2m-1\}$
consists of those $i$ for which $\x\cap\alphain_i\neq
\emptyset$; and $\alphaout(\x)\subset\{1,\dots,2n-1\}$ consist of
those $j$ for which $\x\cap \alphaout_{j}\neq \emptyset$.
Obviously,
\begin{equation}
  \label{eq:alpaInOut}
  |\alphain(\x)|+|\alphaout(\x)|=m+n-1.
\end{equation}
We will be primarily interested in middle states with $|\alphain(\x)|=m$.

An upper diagram can be thought of as a middle diagram with $m=0$.

The $\beta$-circles induce an equivalence relation $\Mmid$  on 
the components of $\partial\Sigma_0$.

We should say a word about orientation conventions in pictures such
as Figure~\ref{fig:CrossDiag}. According to our
conventions from~\cite{HolDisk}, the Heegaard surface is oriented as
the boundary of the $\alpha$-handlebody. Thus, the Heegaard diagram in
picture in Figure~\ref{fig:CrossDiag} is oriented opposite to the
orientation it inherits as the boundary of a neighborhood of the
crossing apparent in the figure.

\subsection{Gluing middle diagrams to upper diagrams}

Let $\Hup_1$ be an upper diagram with $2m$ outgoing boundary
components, and $\Hmid$ a middle diagram with $2m$ incoming boundary
components and $2n$ outgoing ones.  Combine the equivalence relation
$\Matching(\Hup_1)$ with the equivalence relation $\Matching(\Hmid)$
to obtain an equivalence relation on all the boundary components of
$\Hmid$. If every equivalence class contains an outgoing boundary
component (of $\Hmid$), then we say that $\Hup_1$ and $\Hmid$ are {\em
  compatible}.

If $\Hup_1$ and $\Hmid$ are compatible, we can glue the boundary of
$\Hup_1$ to the incoming boundary of $\Hmid$ to form a new upper diagram
$\Hup_2=\Hup_1\# \Hmid$. Note that $\Matching(\Hup_2)$ is the matching
on $\{1,\dots,2n\}$, thought of as labelling the outgoing boundary components of $\Hmid$
induced by restricting $\Matching(\Hup_1)\cup\Matching(\Hmid)$ to the outgoing boundary of $\Hmid$.
Each upper state on $\Hup_2$ can be
restricted to give an upper state on $\Hup_1$ and a middle state in
$\Hmid$.  By Equation~\eqref{eq:alpaInOut}, for any middle state
obtained in this manner, $|\alphain(\x)|=m$.

We say that two upper diagrams $\Hup_1$ and $\Hup_2$ are {\em
  equivalent} if we can obtain $\Hup_2$ from $\Hup_1$ by isotopies,
handle slides, and stabilizations/destabilizations. Handleslides here
involve two $\beta$ circles, two $\alpha$ circles, or an $\alpha$-arc
slid over an $\alpha$-circle.

Observe that if $\Hid$ is the standard diagram for the identity with
$2n$ outgoing boundary components, then for $\Hup_2=\Hup_1\#\Hid$, we
can slide each new outgoing $\alpha$-arc over its corresponding
newly-formed $\alpha$-circle, and then destabilize that $2n-1$ new
$\beta$-circles with their newly-formed $\alpha$-circles, to get back
the original Heegaard diagram $\Hup_1$; i.e. $\Hup_1$ and $\Hup_2$ are
equivalent. 

\subsection{The Heegaard diagram of a knot projection}

Fix the sphere with four boundary circles, and four $\alpha$-arcs
connecting them in a circular configuration. Equip with a further
$\beta$-circle. When the $\beta$-circle meets each of the 
four arcs exactly once, we can think of the configuration as specifying a crossing,
where the type of the crossing depends on how the $\beta$-circle meets
the $\alpha$-arcs. We denote the two diagrams $H_+$ and $H_-$; see Figure~\ref{fig:CrossingPieces}.  
(As in the case of $\Hpos{}$ and $\Hneg{}$, the sign of the crossing using the usual conventions from knot theory agrees with
the sign of $H_{\pm}$ when both strands are oriented upwards.)

 \begin{figure}[h]
 \centering \input{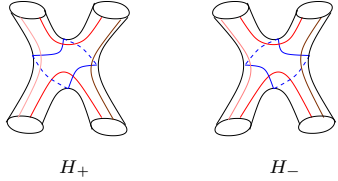}
 \caption{{\bf Crossing pieces.} 
 \label{fig:CrossingPieces}}
 \end{figure}

 We can glue $H_+$ or $H_-$to two consecutive output circles $Z_i$ and
 $Z_{i+1}$ in an upper diagram $\Hup_1$ to get new upper diagrams
 $\Hup_1\cup_{i} H_+$ or $\Hup_1\cup_{i}H_-$.  These diagrams be
 easily seen to be equivalent to the diagram obtained by gluing
 $\Hup_1$ to the middle diagrams $\Hmid_{+,i}$ and $\Hmid_{-,i}$.

We can relate this construction with the action of the mapping class
group on the upper diagram discussed in Section~\ref{subsec:Examples}.

Let $\tau_{i,+}$ be the half twist whose square is a positive Dehn
twist around a curve that encircles $Z_i$ and $Z_{i+1}$.  We have seen
that $\Hup\cup_{i} H_{+}$ is equivalent to $\Hup\cup\Hmid_{+,i}$ We
claim these two diagrams are in turn equivalent to
$\tau_{i,+}(\Hup)$. This can be seen by applying three handleslides to
$\Hup\cup_i H_+$, so that the newly-formed $\alpha$-circle and the
$\beta$-circle supported in $H_+$ meet in a single
point. Destabilizing the resulting diagram, we obtain a new diagram
which is diffeomorphic to $\tau_{i,+}(\Hup)$; see
Figure~\ref{fig:Handleslides}. An analogous computation shows the
equivalence between $\Hup\#\Hneg{i,n}$ and $\tau_{i,-}(\Hup)$.

 \begin{figure}[h]
 \centering
 \input{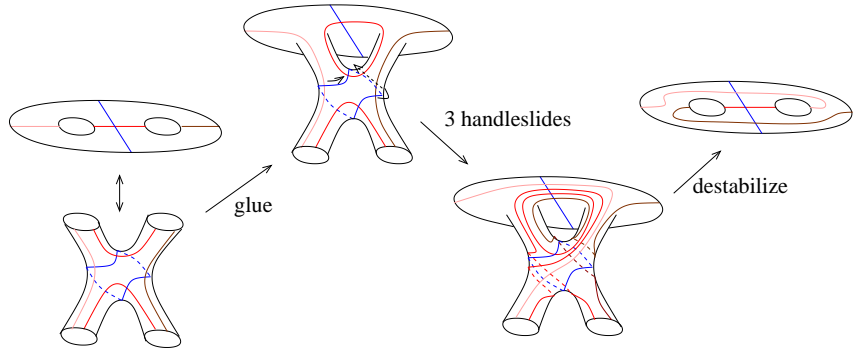}
 \caption{{\bf Attaching $H_+$ is equivalent to acting on a diagram by
     a half twist.}  At the left, we have a portion of an upper
   diagram (with two consecutive boundary components separated by an
   arc; in general, they are separated by a collection of parallel
   arcs), which is glued to $H_+$. Performing three handleslides (the
   first two of which are indicated) and destabilizing is
   diffeomorphic to the original upper diagram and acting on its
   $\alpha$-curves by a half twist.}
 \label{fig:Handleslides}
 \end{figure}

 Suppose that $K$ is a knot projection all of whose local maxima are
 global maxima, and whose local minima are global minima. The
 crossings correspond to the factorization of some mapping class
 $\phi$ as a product of half twists. From the above discussion, it is
 clear that the Heegaard diagram of the knot projection is
 equivalent to the Heegaard diagram obtained by gluing the standard
 upper and lower diagrams $\Hup(n)$ and $\Hdown(n)$, twisted by
 $\phi$; i.e.  $\phi(\Hup(n))\# \Hdown(n)$.

\section{Algebra}
\label{sec:Algebra}

We recall the algebraic preliminaries used in this paper, starting in
Section~\ref{subsec:Framework} with an algebraic framework familiar
from bordered Floer homology~\cite{InvPair}, applied to the case of
algebras with curvature.  In Subsection~\ref{subsec:BorderedAlgebras},
we recall the bordered algebras from~\cite{BorderedKnots}
and~\cite{Bordered2}, and explain how to fit them into this framework.

\subsection{Algebraic framework}
\label{subsec:Framework}

For the purposes of the present paper, we find it convenient to work
with smaller algebras than those from~\cite{Bordered2}, but equipped
with further algebraic structure, in the form of a {\em curvature}
element.  Such curved algebras have long played a role in Floer
homology; see for example~\cite{FOOO}. They have also played a role in
categorified knot invariants; see for example~\cite{KhovanovRozansky}.
Using curved algebras allows us to work with somewhat smaller
algebras, and the curve counting involves fewer types of
pseudo-holomorphic curves; see also~\cite{TorusMod}.

The following is a straightforward adaptation of the algebraic
material familiar in bordered Floer homology (see
especially~\cite[Chapter~2]{InvPair} and~\cite[Chapter~2]{Bimodules})
to the curved setting.

Fix a ground ring $\Ground$. A {\em curved algebra} is a graded
$\Ground$-bimodule $B$, equipped with an associative multiplication
\[ \mu_2\colon B\otimes_{\Ground}B \to B \]
(with unit)
and a preferred central element
$\mu_0\in B$. We denote the triple $(B,\mu_0,\mu_2)$ by $\cBlg$.

A {\em curved module} over $\cBlg$ is a right $\Ground$-module $M$,
equipped with right module maps $m_1\colon M \to M$
and $m_2\colon M\otimes_{\Ground} B \to M$ 
satisfying the structure relations
\begin{align}
  \label{eq:CurvedDiff}
  m_1\circ m_1 + m_2\circ (\Id_M \otimes \mu_0) &= 0 \\
  m_1\circ m_2 + m_2\circ (m_1\otimes \Id_{B})&= 0 \\
  m_2\circ (m_2\otimes \Id_{B})=m_2\circ (\Id_M\otimes \mu_2)
\end{align}

This has a generalization to the $\Ainf$ context, as follows.
A (curved) $\Ainf$ module $M_{\cBlg}$ consists of a right $\Ground$-module 
$M$, equipped with right $\Ground$-module homomorphisms
\[ m_i\colon M\otimes_\Ground \overbrace{B\otimes_R\dots \otimes_R
  B}^{i-1}\to B,\] satisfying a sequence of $\Ainf$ relations, indexed
by an integer $k\geq 0$, an element $x\in M$,
and a (possibly empty) sequence $b_1,\dots,b_k$ of algebra
elements:
\begin{align*}
  0 &=\sum_{j=0}^k m_{k-j+1}(m_{j+1}(x,b_1,\dots,b_j),b_{j+1},\dots,b_k) \\
&+ \sum_{i=1}^{k-1} m_{k}(x,b_1,\dots,b_{i-1},\mu_2(b_i,b_{i+1}),b_{i+2},\dots,b_k) \\
&+ \sum_{i=1}^{k+1}
  m_{k+2}(x,b_1,\dots,b_{i-1},\mu_0,b_{i},\dots,b_{k}).
\end{align*}
For example, when $k=0$, the above relation gives Equation~\eqref{eq:CurvedDiff}.

A {\em (curved) type $D$ structure} is a left $\Ground$-module $X$ equipped
with a left $\Ground$-module homomorphism
$\delta^1\colon X \to B\otimes_{\Ground} X$,
satisfying the structure relation
\[ 0 = \mu_0\otimes \Id_{X} + (\mu_2\otimes \Id_X)\circ(\Id_B\otimes
\delta^1)\circ \delta^1,\] thought of as maps $X\to B\otimes_{\Ground}
X$.  We will denote this data by $\lsup{\cBlg}X$.  (This notion is
equivalent to the notion of the {\em matrix factorizations} considered
in~\cite{KhovanovRozansky}.)

The definition of the tensor product $M_{\cBlg}\DT \lsup{\cBlg}X$ is
the same as in the uncurved context. The verification that this is a
chain complex is the same as in the uncurved case (see for
example~\cite[Section~2.4]{InvPair}), except that the curved type $D$
structure relations give rise to extra terms which are cancelled by
the extra terms in the curved $\Ainf$ relation.

Bimodules also can be generalized to the curved case in a
straightforward way.  For example, let $(B_1,\mu^{B_1}_0)$ and
$(B_2,\mu^{B_2}_0)$ be two curved algebras over ground rings $\Ground_1$ and $\Ground_2$ respectively.  A {\em curved type $DA$
  bimodule} is a $\Ground_2$-$\Ground_1$-bimodule $X$, equipped
with operations
\[ \delta^1_{\ell+1}\colon X \otimes_{\Ground_1} \overbrace{B_1\otimes_{\Ground_1}\dots\otimes_{\Ground_1}\otimes B_1}^{\ell}\to B_2\otimes_{\Ground_2} X,\]
satisfying the structural relations for any $x\in X$,
\begin{equation}
  \label{eq:CurvedDAbimoduleRelation1}
0 =(\mu^{B_2}_2\otimes\Id_X)\circ (\Id_{B_2}\otimes \delta^1_1)\circ \delta^1_1(x) \\
+ \delta^1_{2}(x,\mu_0^{B_1})
+ \mu_0^{B_2} \otimes x;
\end{equation}
and, for each sequence $b_1,\dots,b_k$ in $B_1$ (with $k\geq 1$),
\begin{align}
  0 &=\sum_{j=0}^k \mu_2^{B_2}\circ(\Id\otimes \delta^1_{k-j+1})\circ
(\delta^1_{j+1}(x,b_1,\dots,b_j),b_{j+1},\dots,b_k) 
   \label{eq:CurvedDAbimoduleRelation2}
\\
&+ \sum_{i=1}^{k-1} \delta^1_{k}(x,b_1,\dots,b_{i-1},\mu_2^{B_1}(b_i,b_{i+1}),b_{i+2},\dots,b_k) \nonumber \\
&+ \sum_{i=1}^{k+1}
  \delta^1_{k+2}(x,b_1,\dots,b_{i-1},\mu_0^{B_1},b_{i},\dots,b_{k}). \nonumber
\end{align}
(Compare~\cite{Bimodules} in the uncurved case.)
Note that the curvature on $B_2$ appears in Equation~\eqref{eq:CurvedDAbimoduleRelation1}, but not Equation~\eqref{eq:CurvedDAbimoduleRelation2}.

\begin{example}
  Fix curved algebras $(B_1,\mu_0^{B_1})$ and $(B_2,\mu_0^{B_2})$ over
  $\Ground$, and suppose that $\phi\colon B_1\to B_2$ is an
  $\Ground$-algebra homomorphism with the property that
  $\phi(\mu_0^{B_1})=\mu_0^{B_2}$. Then $\phi$ induces a type $DA$
  bimodule $\lsup{B_2}[\phi]_{B_1}$, whose underlying
  $\Ground$-bimodule is isomorphic to
  $\lsub{\Ground}\Ground_{\Ground}$, and whose
  operations are specified by
  $\delta^1_2(x,b)=\phi(b)\otimes x$ (where $x$ corresponds to
  $\One\in\lsub{\Ground}\Ground_{\Ground}$) and $\delta^1_k\equiv 0$
  for $k\neq 2$.
\end{example}

The above notions have obvious generalizations to the case where $B$
is a differential graded algebra, i.e. it is equipped with a
differential $\mu_1$ (which satisfies a Leibniz rule) so that
$\mu_1(\mu_0)=0$.

In practice, we will often have another DGA $\nDuAlg$ over $\Ground$,
and will consider various bimodules over $\cBlg$ and $\nDuAlg$.  For
example, thinking of $\nDuAlg\otimes\cBlg$ as curved, with curvature
$\One\otimes \mu_0$, we define a type $DD$ bimodule $X$ over $\nDuAlg$ and
$\cBlg$ to be a curved type $D$ structure over $\nDuAlg\otimes\cBlg$.

\subsection{The bordered algebras}
\label{subsec:BorderedAlgebras}
In~\cite{BorderedKnots}, to each pair of integers $(m,k)$ with
$0\leq k\leq m+1$, 
we associated an algebra
$\Blg(m,k)$. We recall the construction presently.

$\Blg(m,k)$ is constructed as the quotient of a larger algebra,
$\BlgZ(m,k)$, associated to $(m,k)$. The base ring of $\BlgZ(m,k)$ is the polynomial
algebra $\Field[U_1,\dots,U_m]$. Idempotents correspond to $k$-element
subsets $\x$ of $\{0,\dots,m\}$ called {\em idempotent states}. We think of
these as generators of a ring of idempotents $\IdempRing(m,k)$.

Given idempotents states $\x,\y$, the $\Field[U_1,\dots,U_m]$-module
$\Idemp{\x}\cdot\BlgZ(m,k)\cdot\Idemp{\y}$ is identified with
$\Field[U_1,\dots,U_m]$, given with a preferred generator $\gamma_{\x,\y}$.
To specify the product, we proceed as follows. Each idempotent state
$\x$ has a {\em weight vector} $v^\x\in \Z^m$, with components $i=1,\dots,m$
given by
$v^\x_i=\#\{x\in \x\big|x\geq i\}$.
The multiplication is specified by 
\[ \gamma_{\x,\y}\cdot \gamma_{\y,\z}=U_1^{n_1}\cdots U_m^{n_m} \cdot \gamma_{\x,\z},\] where
\[ n_i = \frac{1}{2}(|v_i^\x-v_i^\y|+|v_i^\y-v_i^\z|-|v_i^\x-v_i^\z|).\]

For $i=1,\dots,m$, let $L_i$ be the sum of $\gamma_{\x,\y}$, taken
over all pairs of idempotent states $\x, \y$ so that there is some integer $s$
with $x_s= i$ and $y_s=i-1$ and $x_t=y_t$ for all $t\neq
s$. Similarly, let $R_i$ denote the sum of all the $\gamma_{\y,\x}$
taken over the same pairs of idempotent states as
above. 

Under the identification $\Idemp{\x}\cdot \BlgZ(m,k)\cdot
\Idemp{\y}\cong \Field[U_1,\dots,U_m]$, the elements that correspond to monomials
in the $U_1,\dots,U_m$ are called {\em pure algebra elements in $\BlgZ(m,k)$}.
These elements are  specified by their idempotents $\x$ and $\y$, and their
{\em relative weight vector} $w(b)\in
\Q^m$, which in turn is uniquely characterized by
\[
   \weight_i(\gamma_{\x,\y})=\OneHalf|v_i^\x-v_i^\y| \qquad
   \weight_i(U_j\cdot b) = \weight_i(b)+\left\{\begin{array}{ll}
      0 &{\text{if $i\neq j$}} \\
      1 &{\text{if $i=j$.}} 
      \end{array}\right.
\]

Let ${\mathcal J}\subset \BlgZ(m,k)$ be the two-sided ideal generated by
$L_{i+1}\cdot L_i$, $R_{i}\cdot R_{i+1}$ and, for all choices of
$\x=\{x_1,...,x_k\}$ with $\x\cap \{j-1,j\}=\emptyset$,
the element 
$\Idemp{\x}\cdot U_j$.
Then, 
\[ \Blg(m,k)=\BlgZ(m,k)/{\mathcal J}.\]
The pure algebra elements in $\Blg(m,k)$ are those elements that
are images of pure algebra elements in $\BlgZ(m,k)$ under the above quotient map.

Idempotents $\x=x_1<\dots<x_k$ and $\y=y_1<\dots<y_k$ are said to be {\em too far} 
if for some $t\in \{1,\dots,k\}$, $|x_t-y_t|\geq 2$. If $\x$ and $\y$ are too far,
then $\Idemp{\x}\cdot\BlgZ(m,k)\cdot\Idemp{\y}\in {\mathcal J}$; i.e.
$\Idemp{\x}\cdot\Blg(m,k)\cdot\Idemp{\y}=0$.

We restate here the concrete description of the ideal ${\mathcal J}$ given in~\cite{BorderedKnots}:

\begin{prop}\cite[Proposition~3.7]{BorderedKnots}
  \label{prop:Ideal}
  Suppose that $b=\Idemp{\x}\cdot b\cdot \Idemp{\y}$ is a pure algebra element in 
  ${\mathcal J}$. Then, either $\x$ and $\y$ are too far, or there is 
  a pair of integers $i<j$ so that
  \begin{itemize}
  \item $i,j\in\{0,\dots,m\}\setminus \x\cap\y$
  \item for all $i<t<j$, $t\in\x\cap\y$
  \item $\weight_t(b)\geq 1$ for all $t=i+1,\dots,j$
  \item $\#(x\in \x\big| x\leq i)=\#(y\in \y\big| y\leq i)$.
  \end{itemize}
\end{prop}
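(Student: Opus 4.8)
The plan is to reduce the statement to a divisibility question about monomials and then prove the stated implication together with its converse, so that the two give the full concrete description of $ {\mathcal J}$ in each idempotent block. For the reduction, recall that $\BlgZ(m,k)$ is graded by pairs of idempotent states, with each summand $\Idemp{\x}\cdot\BlgZ(m,k)\cdot\Idemp{\y}$ a free $\Field[U_1,\dots,U_m]$-module of rank one on $\gamma_{\x,\y}$. Each listed generator of $ {\mathcal J}$ becomes, after extracting idempotent-homogeneous components, a sum of pure algebra elements; and if $c,d$ are pure and $g$ is such a sum, then every idempotent-homogeneous component of $c\cdot g\cdot d$ is again pure, because sandwiching between $\Idemp{\x}$ and $\Idemp{\y}$ kills all but one term of $g$. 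Hence $ {\mathcal J}\cap\bigl(\Idemp{\x}\cdot\BlgZ(m,k)\cdot\Idemp{\y}\bigr)$ is a monomial ideal of $\Field[U_1,\dots,U_m]$, generated by the pure components of the elements $c\cdot g\cdot d$. So it suffices to decide which pure elements $b=U_1^{a_1}\cdots U_m^{a_m}\cdot\gamma_{\x,\y}$ lie in $ {\mathcal J}$, and such a $b$ lies in $ {\mathcal J}$ exactly when it is divisible by one of those pure components.

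For the converse (that the displayed conditions force $b\in {\mathcal J}$), assume $\x,\y$ are not too far and a pair $i<j$ with the four properties exists. A direct weight-vector computation shows that the second and fourth bullets force $v_t^\x=v_t^\y$ for every $t\in\{i+1,\dots,j\}$ (both equal $k$ minus the common value $\#\{x\in\x:x\le i\}$ minus $t-1-i$, or minus $j-1-i$ when $t=j$), so the third bullet collapses to $a_t\ge 1$ for $t=i+1,\dots,j$. Since $ {\mathcal J}$ is an ideal, it then suffices to show $U_{i+1}\cdots U_j\cdot\gamma_{\x,\y}\in {\mathcal J}$. This follows from a short case analysis on whether $i$ and $j$ lie in $\x$ and in $\y$, knowing $i,j\notin\x\cap\y$ and $\{i+1,\dots,j-1\}\subset\x\cap\y$: at an endpoint missing from both one peels off a factor $\Idemp{\x'}\cdot U_t$ with $\x'\cap\{t-1,t\}=\emptyset$, while in the remaining configurations realizing the windings $U_{i+1},\dots,U_j$ is impossible without moving a single bead twice in the same direction across consecutive positions, which peels off an $L_{t+1}L_t$ or an $R_tR_{t+1}$.

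For the stated implication I would argue the contrapositive: if $\x,\y$ are not too far and no pair $i<j$ with the four properties exists, then the image of $b$ in $\Blg(m,k)$ is nonzero. For this I would build an explicit representation of $\BlgZ(m,k)$ on the $\Field[U_1,\dots,U_m]$-module spanned by idempotent states — the ``bead motion'' model, in which $L_t,R_t$ slide a single bead between positions $t-1$ and $t$ and $U_j$ acts by multiplication only on states with a bead at $j-1$ or $j$ and by $0$ otherwise — verify that $L_{t+1}L_t$, $R_tR_{t+1}$, and $\Idemp{\x'}U_j$ (for $\x'$ avoiding $\{j-1,j\}$) all act as $0$, so that the representation descends to $\Blg(m,k)$, and then exhibit a sequence of bead motions from $\x$ to $\y$ realizing the prescribed weights $\weight_t(b)$ that never stalls, giving a nonzero matrix coefficient for $b$. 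The main obstacle is precisely this last point: showing that the absence of the proposition's obstruction pattern is exactly what permits such a non-stalling path, which is where the combinatorics of $\{L_{t+1}L_t,\,R_tR_{t+1},\,\Idemp{\x'}U_j\}$ has to be matched carefully against the inequalities $\weight_t(b)\ge 1$ and the balance condition $\#\{x\in\x:x\le i\}=\#\{y\in\y:y\le i\}$.
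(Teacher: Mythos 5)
The paper does not actually prove this statement; it cites \cite[Proposition~3.7]{BorderedKnots} and merely restates it, so there is no in-paper argument to compare against. Your reduction step is sound: each summand $\Idemp{\x}\cdot\BlgZ(m,k)\cdot\Idemp{\y}$ is a rank-one free $\Field[U_1,\dots,U_m]$-module, pure elements multiply to pure elements, and the generators of $\mathcal J$ decompose into pure idempotent-homogeneous pieces, so $\mathcal J\cap\bigl(\Idemp{\x}\cdot\BlgZ(m,k)\cdot\Idemp{\y}\bigr)$ is indeed a monomial ideal; likewise your observation that the four bullet conditions force $v^\x_t=v^\y_t$ for $t=i+1,\dots,j$, so the weight hypothesis becomes $a_t\ge 1$ there, is a correct computation.

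The genuine gap is in the contrapositive of the forward implication, and it is more serious than you flag. The ``bead motion'' model you describe --- idempotent states as a basis, $L_t$ and $R_t$ sliding a single bead between $t-1$ and $t$, $U_j$ acting by multiplication when a bead sits at $j-1$ or $j$ and by $0$ otherwise --- does \emph{not} annihilate $L_{t+1}L_t$ or $R_tR_{t+1}$, so it does not descend to a $\Blg(m,k)$-module. Concretely, if $\z$ is a state containing $t-1$ but neither $t$ nor $t+1$, then left action by $L_t$ sends $\Idemp{\z}$ to $\Idemp{\z\setminus\{t-1\}\cup\{t\}}$, after which left action by $L_{t+1}$ sends that to $\Idemp{\z\setminus\{t-1\}\cup\{t+1\}}\neq 0$; the same failure occurs for the right-module version, and for $R_tR_{t+1}$. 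In $\BlgZ$ the products $L_{t+1}L_i$ carry no $U$ factors (the weight-vector arithmetic gives $n_t=n_{t+1}=0$), so these are honest nonzero elements that must be killed by any $\Blg(m,k)$-module; a representation that fails to kill them is not a representation. Repairing this requires a module that remembers more than the current idempotent state --- in effect tracking where each bead started, with a ``double-crossing'' rule --- and at that point the construction is essentially the cyclic module $\Idemp{\x}\cdot\Blg(m,k)$ itself, whose basis is exactly what the proposition is trying to determine, making the argument circular unless one supplies an independent combinatorial description. That independent description is precisely the ``non-stalling path'' combinatorics you acknowledge as unfinished, so even after fixing the module the heart of the proof is still outstanding. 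The converse direction's ``short case analysis'' of how $i,j$ sit in $\x$ and $\y$ is plausible but is not spelled out to the point of being checkable, though it is also not part of what the proposition asserts.
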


We specialize to the case of $\Blg(n)=\Blg(2n,n)$, 
which we think of as an algebra over 
the idempotent ring $\IdempRing(n)=\IdempRing(2n,n)$.

We will typically consider a subalgebra
$\Clg(n)\subset \Blg(2n,n)$
given by
\[ \Clg(n)= \left(\sum_{\{\x\mid \x\cap \{0,
    2n\}=\emptyset\}} \Idemp{\x}\right)\cdot \Blg(2n,n)\cdot 
\left(\sum_{\{\x\mid \x\cap \{0, 2n\}=\emptyset\}}
  \Idemp{\x}\right).\] In particular, the elements $L_1$, $R_1$,
$L_{2n}$, $R_{2n}$ are not in this subalgebra; but $U_1$ and $U_{2n}$ are.
Let $\RestrictIdempRing(n)\subset \Clg(n)$ denote the subring
spanned by the idempotents $\Idemp{\x}$ where $\x\cap \{0,2n\}=\emptyset$.
Thus, 
\[ \Clg(n)=\RestrictIdempRing(n)\cdot\Blg(n)\cdot\RestrictIdempRing(n).\]
Sometimes, we also consider
\[ \ClgZ(n)=\RestrictIdempRing(n)\cdot\BlgZ(n)\cdot\RestrictIdempRing(n).\]

A {\em matching} is a partition of $\{1,\dots,2n\}$ into $2$-element subsets.
The matching $\Matching$ specifies a central algebra element
in $\Blg(n)$,
\begin{equation}
  \label{eq:CurvatureOfB}
  \mu_0^{\Matching}=\sum_{\{i,j\}\in\Matching} U_i U_j,
\end{equation}
which we
think of as specifying a curvature for
$\cBlg(n)=(\Blg(n),\mu^{\Matching}_0)$ or for 
\begin{equation}
  \label{eq:DefcClg}
  \cClg(n)=(\Clg(n),\mu^{\Matching}_0).
\end{equation}

We 
compare this with the algebraic set-up from~\cite{Bordered2}. In that paper, we
defined an algebra $\Alg(n,\Matching)$ containing $\Blg(n)$, with
new variables $C_{i,j}$ for each $\{i,j\}\in\Matching$ satisfying
$d C_{i,j}=U_i U_j$ and
$C_{i,j}^2=0$. 

\begin{defn}
  \label{def:Transformer}
  The {\em $\Blg$-to-$\Alg$ transformer} is the 
  type $DA$ bimodule $\lsup{\Alg}T_{\cBlg}$
  which, as a bimodule over $\IdempRing(2n,n)$, is identified with
  $\IdempRing(2n,n)$, and with operations specified by
  \begin{align*}
    \delta^1_1(\One)&=\sum_{(i,j)\in\Matching} C_{i,j}\otimes \One \\
    \delta^1_2(\One,b)&= b\otimes \One \\
    \delta^1_\ell(\One,b_1,\dots,b_{\ell-1})&= 0 \quad\text{for $\ell>2$}.
  \end{align*}
\end{defn}

Thus, a curved type $D$ structure over $\cBlg$
naturally gives rise to a type $D$ structure over $\Alg(n,\Matching)$,
$\lsup{\cBlg}X\to \lsup{\Alg}T_{\cBlg}\DT~\lsup{\cBlg}X$.

\subsection{Gradings}
\label{subsec:Gradings}

Our algebras are equipped with two types of gradings: an Alexander
grading, with values in some Abelian group, which is preserved by the
algebra operations; and a homological grading, with values in $\Z$, so
that $\mu_i$ shifts by $i-2$ (and in particular, the element $\mu_0$ has
Alexander grading zero and homological grading $-2$).

The weight function induces a grading on the algebra $\Blg(n)$ with
values in $(\OneHalf\Z)^{2n}\subset \Q^{2n}$. 
Choose for each $\{i,j\}\in\Matching$ a preferred ordering $(i,j)$ of the integers $i$ and $j$.
There is an induced {\em Alexander vector}
$\AlexGr\colon \Matching\to \Q$ 
defined by
\begin{equation}
  \label{eq:DefAgrAlg}
  {\mathbf A}_{\{i,j\}}(a)=\weight_i(a)-\weight_j(a), 
\end{equation}
where $(i,j)$ is the ordering on $\{i,j\}$.
Of course, this can be thought of as a grading with values in $\Q^n$.
Since $\AlexGr(\mu_0)=0$, the Alexander function induces a
well-defined (Alexander-type) $\Q^n$-grading on the curved algebra $\cBlg$.
Furthermore, there is an induced $\Q$-valued Alexander grading specified by
the function on homogeneous algebra elements
$A=\sum_{\{i,j\}\in\Matching} \AlexGr_{\{i,j\}}$.

Sometimes, when we wish to distinguish this from Alexander gradings on modules,
we write $\Agr^{\Matching}$.

More abstractly, we can think of the matching as giving rise to a
one-manifold $W=W(\Matching)$, consisting of $n$ arcs and boundary the
points $Y$ in $\{1,\dots,2n\}$ (i.e. each pair $\{i,j\}\in\Matching$
determines an arc connecting $i$ and $j$).  The weight of a given
algebra element gives an element of $H^0(Y)$; and the Alexander
grading can be thought of as an element of the cokernel $H^0(W)\to
H^0(Y)$, which is identified with $H^1(W,\partial W)\cong \Q^n$.  A
choice of isomorphism above is equivalent to an orientation on $W$.

We have also a homological $\Delta$-grading, determined by
\begin{equation} 
  \label{eq:DefDeltaAlg}
  \Delta(a)=-\sum_{i} \weight_i(a) 
\end{equation}
if $a\in \Blg$.  Note that $\Delta(\mu_0)=-2$, as required.

\subsection{Adapted bimodules}

We follow the algebraic set-up from~\cite[Section~2]{Bordered2} with
slight modifications. 

We can think of $\Blg(n)$ as an algebra associated to the
zero-manifold $Y_2$, which consists of $2n$ points. A matching on
$Y_2$, which we think of as a one-manifold $W_2$ with $\partial
W_2=Y_2$, specifies a curvature $\mu_0\in\Blg(Y_2)$. There is an
induced grading on $\Agr_{W_2}$ on $\Blg(Y_2)$ by
$H^1(W_2,Y_2)$, for which $\Agr_{W_2}(\mu_0)=0$.
Thus, we think of $\cBlg(Y_2,W_2)$ as graded by $\Agr_{W_2}$.

Fix cobordism $W_1$ from $Y_2$ to $Y_1$, and let $W=W_1\cup_{Y_2}
W_2$. If $X$ is a $H^1(W_1,\partial W_1)$-graded vector space, then
$X\otimes \cBlg(Y_2,W_2)$ inherits a grading by $H^1(W,\partial W)$,
using the natural map 
\begin{equation}
  \label{eq:MayerVietoris}
   H^1(W_1,\partial W_1)\oplus H^1(W_2,\partial
W_2)\to H^1(W,\partial W).
\end{equation}

\begin{defn}
  Suppose that $\cBlg_2$ is an algebra graded by $H^1(W_2,Y_2)$.
  Fix a cobordism $W_1\colon Y_2\to Y_1$.   A curved type $DA$
  bimodule $\lsup{\cBlg_1}X_{\cBlg_2}$ is called {\em adapted to $W_1$} if it
  is equipped with the following  additional data:
  \begin{itemize}
  \item a grading of $X$ by $H^1(W_1,\partial W_1)$, satisfying the
    following compatibility condition: if
    $a_1,\dots,a_{\ell-1}$ are $H^2(W_2,\partial W_2)$-homogenous elements,
    and $\x$ is an $H^1(W_1,\partial W_1)$-homogenous element, then
    $\delta^1_{\ell}(\x,a_1,\dots,a_{\ell-1})$ is 
    $H^1(W,\partial W)$-homogeneous, where $W=W_1\cup W_2$, 
    with 
    grading given by 
    \[\gr(x) + \Agr(a_1)+\dots + \Agr(a_\ell),\]
    viewed as an
    element of $H^1(W,\partial W)$ using the Mayer-Vietoris maps
    from Equation~\eqref{eq:MayerVietoris}).
  \item 
    a grading of $X$ by $\Q$, so that if $\x$, $a_1,\dots,a_{\ell-1}$
    are homogeneous, then  $\delta^1_{\ell}(\x,a_1,\dots,a_{\ell-1})$ 
    is homogeneous of degree
    \[ \Delta_X(\x) + \Delta(a_1)+\dots+\Delta(a_{\ell-1})-\ell+2.\]
  \item $X$ is a finite-dimensional $\Field$-vector space.
  \end{itemize}
\end{defn}

By contrast, recall that the adapted bimodules in the uncurved case
(\cite[Section~2]{Bordered2}) were graded by $H^1(W_1,\partial)$,
rather than $H^1(W_1\cup W_2,\partial)$. This causes no additional
difficulties. In particular, we have the following straightforward
modification
of~\cite[Proposition~\ref{BK1:prop:AdaptedTensorProducts}]{BorderedKnots}:

\begin{prop}
  Let $W_3$ be an oriented one-manifold with $Y_3=\partial W_3$. Fix
  also $W_2\colon Y_3\to Y_2$ and $W_1\colon Y_2\to Y_1$. If
  $\lsup{\cBlg_2}Y_{\cBlg_3}$ is adapted to $W_2$ and
  $\lsup{\cBlg_1}X_{\cBlg_2}$ is adapted to $W_1$, 
  and $W_1\cup W_2$ has no closed components; then we can form
  their tensor product $\lsup{\cBlg_1}X_{\cBlg_2}\DT \lsup{\cBlg_2}
  Y_{\cBlg_3}$ to get a curved DA bimodule $\lsup{\cBlg_1}(X\DT
  Y)_{\cBlg_3}$ adapted to $W_1\cup W_2$. \qed
\end{prop}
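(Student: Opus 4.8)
The plan is to follow the proof of~\cite[Proposition~\ref{BK1:prop:AdaptedTensorProducts}]{BorderedKnots}, adding the bookkeeping needed to track the three curvatures $\mu_0^{\cBlg_1}$, $\mu_0^{\cBlg_2}$, $\mu_0^{\cBlg_3}$. First I would recall the construction: the underlying bimodule of $\lsup{\cBlg_1}(X\DT Y)_{\cBlg_3}$ is $X\otimes_{\Ground_2}Y$, and its operations are given by the usual box-tensor formula, in which $\delta^1_{\ell+1}(x\otimes y,b_1,\dots,b_\ell)$ is the sum, over $k\geq 0$ and over all decompositions of $(b_1,\dots,b_\ell)$ into $k$ consecutive (possibly empty) blocks, of the output obtained by applying the operations $\delta^1$ of $Y$ to $y$ and these blocks successively, recording the resulting sequence $c_1,\dots,c_k$ of $\cBlg_2$-outputs, and then feeding $(x;c_1,\dots,c_k)$ into $\delta^1_{k+1}$ of $X$. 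The first thing to check is that this sum is finite: at most $\ell$ of the blocks are non-empty, and each empty block corresponds to an application of $\delta^1_1$ of $Y$, which --- by the $\Q$-grading clause in the definition of ``adapted'' and the fact that the relevant algebra elements have non-positive $\Delta$-grading --- strictly raises the $\Delta$-grading on the finite-dimensional module $Y$, so only finitely many empty blocks can occur. This is the one step in which the hypothesis that $W_1\cup W_2$ has no closed components is used (to ensure the relevant gradings are effective), exactly as in~\cite{BorderedKnots}.

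Next I would verify the curved type $DA$ relations~\eqref{eq:CurvedDAbimoduleRelation1} and~\eqref{eq:CurvedDAbimoduleRelation2} for $X\DT Y$. Expanding each relation using the box-tensor formula yields the same pairing-of-terms as in the uncurved setting (compare~\cite[Chapter~2]{InvPair} and~\cite[Chapter~2]{Bimodules}): contributions in which two consecutive operations of $Y$, or two of $X$, or one of each are composed cancel in pairs, using the relations~\eqref{eq:CurvedDAbimoduleRelation2} of $X$ and $Y$ and associativity of the multiplication of $\cBlg_2$. The new point is the curvature. The relation~\eqref{eq:CurvedDAbimoduleRelation1} for $X$ contributes a term in which $\mu_0^{\cBlg_2}$ is fed into $X$; the relation~\eqref{eq:CurvedDAbimoduleRelation1} for $Y$ contributes a term $\mu_0^{\cBlg_2}\otimes y$ which, propagated through the box-tensor formula, is fed into $X$ in precisely the same way. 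These two contributions are equal, hence cancel over $\Field=\Zmod{2}$. What remains is the term $\mu_0^{\cBlg_1}\otimes(x\otimes y)$ coming from~\eqref{eq:CurvedDAbimoduleRelation1} for $X$, together with the term $\delta^1_2(x\otimes y,\mu_0^{\cBlg_3})$ coming from~\eqref{eq:CurvedDAbimoduleRelation1} for $Y$ --- which is exactly relation~\eqref{eq:CurvedDAbimoduleRelation1} for $X\DT Y$, with the two outer curvatures.

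Finally I would check that $X\DT Y$ is adapted to $W_1\cup W_2$. Finite-dimensionality of $X\otimes_{\Ground_2}Y$ is immediate. The grading of $X\otimes_{\Ground_2}Y$ by $H^1(W_1\cup_{Y_2}W_2,\partial(W_1\cup W_2))$ is induced from the $H^1(W_1,\partial W_1)$- and $H^1(W_2,\partial W_2)$-gradings of $X$ and $Y$ via the Mayer--Vietoris map~\eqref{eq:MayerVietoris}, and the $\Q$-grading additively. That $\delta^1_{\ell+1}$ is homogeneous of the prescribed degrees then follows by summing, over the constituents of a term of the box-tensor formula, the grading-compatibility statements for the operations of $X$ and of $Y$, and using that the Mayer--Vietoris maps for the pairs $(W_1,W_2)$, $(W_2,W_3)$ and for the triple are compatible --- functoriality for $(W_1\cup W_2)\cup W_3=W_1\cup(W_2\cup W_3)$ --- so that the $H^1$-degree of the output is $\gr(x\otimes y)+\sum_i\Agr(b_i)$; the various $\Delta$-grading shifts of the form $-(\text{block length})+2$ telescope to the single shift $-\ell+2$. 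I expect the only real obstacle to be the finiteness of $\DT$ addressed in the first paragraph; granting that, the rest is a transcription of the uncurved argument with the curvature cancellation of the second paragraph inserted.
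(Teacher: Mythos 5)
Your proposal is correct and fills in the proof the paper leaves to the reader: the paper's ``proof'' is a bare \qed, deferring to the cited uncurved statement as a routine modification. The three ingredients you identify --- finiteness of the box tensor product via the $\Delta$-grading (each $\delta^1_1$ of $Y$ strictly raises $\Delta_Y$ because algebra elements have $\Delta\leq 0$, and $Y$ is finite-dimensional), cancellation of the inner curvature $\mu_0^{\cBlg_2}$ between the $X$-relations (where it is fed in) and the propagated output of $Y$'s relation~\eqref{eq:CurvedDAbimoduleRelation1}, and transfer of the $H^1$-grading via the Mayer--Vietoris maps of Equation~\eqref{eq:MayerVietoris} --- are exactly what is needed, and the leftover terms $\mu_0^{\cBlg_1}\otimes(x\otimes y)$ and $\delta^1_2(x\otimes y,\mu_0^{\cBlg_3})$ assemble into relation~\eqref{eq:CurvedDAbimoduleRelation1} for $X\DT Y$ as you say. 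One small quibble: your $\Delta$-grading finiteness argument actually works without invoking that $W_1\cup W_2$ has no closed components; that hypothesis is better thought of as needed so that $W_1\cup W_2$ is a genuine cobordism and so that ``adapted to $W_1\cup W_2$'' (and the matching on $Y_1$ given by $W_1\cup W_2\cup W_3$) are even meaningful, rather than as the engine of the boundedness estimate.
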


\section{Gradings on upper Heegaard states}
\label{sec:Shadows}

Throughout this section, we will fix an  upper diagram $\Hup$ with $2n$ boundary circles
\[\Hup=(\Sigma_0,Z_1,\dots,Z_{2n},\{\alpha_1,\dots,\alpha_{2n-1}\},\{\alpha^c_1,\dots,\alpha^c_{g}\},
\{\beta_1,\dots,\beta_{g+n-1}\})\] throughout. Let $\Matching$ be the
matching on $\{1,\dots,2n\}$ induced by $\Hup$.

In Section~\ref{sec:TypeD}, we will explain how to associate a type
$D$ structure to $\Hup$, over the curved algebra $\cClg$ from
Equation~\eqref{eq:DefcClg}. This structure has a differential, which
is defined by counting pseudo-holomorphic curves.  Here, we explain
the data needed to specify gradings on these structures.

\subsection{Preliminaries: filling in the Heegaard surface}
\label{sec:FillSurface}

Before proceeding to the main material in this section, we introduce
some notation which will be used throughout the paper.

Recall that the Heegaard surface  $\Sigma_0$ for $\Hup$ is an  oriented, connected
two-manifold of genus $g$ with boundary
$\partial{\Sigma_0}=Z=Z_1\cup\dots\cup Z_{2n}$.  By attaching
infinite cylinders $Z\times [0,\infty)$ to $\Sigma_0$, we obtain an oriented
two-manifold $\Sigma$ with punctures $p_1,\dots,p_{2n}$. Filling in
these punctures, we obtain a compact surface, denoted $\cSigma$. 

Extend $\alphas$ in $\Sigma$, by attaching two rays in each
cyclinder $Z_i\times [0,\infty)$ for $i=2,\dots,2n-1$ and a single ray
  inside each of $Z_1\times [0,\infty)$ and $Z_{2n}\times[0,\infty)$.
      In the filled surface $\cSigma$
      the union of $\alpha$-arcs  completes to
form a single closed interval. Let  ${\overline\alphas}\subset \overline\Sigma$
denote the subspace which is the union of the above defined interval and the union of curves $\{\alpha_i^c\}_{i=1}^g$.

\subsection{Gradings}
To each upper Heegaard state $\x$
for $\Hup$, there is an associated idempotent in
$\RestrictIdempRing(n)$ (the ring generated by the idempotent states in $\Clg(n)\subset
\Blg(n)$), defined by the formula
\[ \Iup(\x)=\Idemp{\{1,\dots,2n-1\}\setminus\alpha(\x)},\]
where $\alpha(\x)$ is defined as in Definition~\ref{def:UpperState}.

The complement of ${\overline\alphas}\cup\betas$ inside $\cSigma$ can be written
as a disjoint union of connected open sets called {\em elementary
  domains}. 

\begin{defn}
  \label{def:TwoChainFromXtoY}
Given upper states $\x$ and $\y$, a {\em two-chain from $\x$ to $\y$}
is a formal integral combination $\phi$ of the elementary domains in
$\cSigma$, with the following property. If $\partial_{\alpha}(\phi)$
resp.  $\partial_\beta(\phi)$ denotes the portion of the boundary of
$\phi$ contained in $\alphas$ resp. $\betas$, we require that
$\partial (\partial_{\alpha}(\phi))=\y-\x$ (and hence $\partial
(\partial_{\beta}(\phi))=\x-\y$).  Let $\doms(\x,\y)$ denote the space
of two-chains from $\x$ to $\y$.
\end{defn}

Given $\phi\in\doms(\x,\y)$ and $\psi\in\doms(\y,\z)$,
their sum can be viewed as  an element $\phi*\psi\in\doms(\x,\z)$.

\begin{defn}
  Let $\phi\in\doms(\x,\y)$.  Define $b_0(\phi)$ to be the element
  $b\in \BlgZ(2n,n)$ characterized by the following two properties that
  \begin{itemize}
    \item 
      $\Iup(\x)\cdot b\cdot \Iup(\y) = b$; and
    \item for all $i=1,\dots,2n$,
      $\weight_i(b)$ is the average of the local multiplicities of $\phi$ in the
      two elementary domains adjacent to $Z_i$.
  \end{itemize}
  We will also write $\weight(b)=\sum_{i=1}^{2n}\weight_i(b)$.
\end{defn}

\begin{lemma}
  \label{lem:MultUnderJuxtaposition}
  $b_0(\phi*\psi)=b_0(\phi)\cdot b_0(\psi)$,
  where the right hand side is multiplication in $\BlgZ(2n,n)$.
\end{lemma}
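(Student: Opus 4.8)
The plan is to unwind the definition of $b_0$ and reduce the claim to a local, coordinate-by-coordinate check of weight vectors together with a check that the juxtaposition is compatible with the idempotents. First I would recall that $b_0(\phi)$ is the unique element $b\in\BlgZ(2n,n)$ with $\Iup(\x)\cdot b\cdot\Iup(\y)=b$ and with $\weight_i(b)$ equal to the average of the two local multiplicities of $\phi$ in the elementary domains adjacent to $Z_i$, for each $i=1,\dots,2n$. Since $\BlgZ(2n,n)$ is, idempotent-block by idempotent-block, a free rank-one module $\Idemp{\x}\cdot\BlgZ\cdot\Idemp{\y}\cong\Field[U_1,\dots,U_{2n}]$ generated by $\gamma_{\x,\y}$, an element with fixed left and right idempotents is uniquely determined by its relative weight vector $\weight(b)\in\Q^{2n}$ (this is exactly the characterization of pure algebra elements recalled before Proposition~\ref{prop:Ideal}). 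So $b_0(\phi)$ is well-defined, and likewise $b_0(\psi)$ and $b_0(\phi*\psi)$, and the assertion becomes an equality of three pure algebra elements in $\BlgZ(2n,n)$.

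Next I would verify the two data that pin down the product $b_0(\phi)\cdot b_0(\psi)$. For the idempotents: $b_0(\phi)$ has left idempotent $\Iup(\x)$ and right idempotent $\Iup(\y)$, and $b_0(\psi)$ has left idempotent $\Iup(\y)$ and right idempotent $\Iup(\z)$, so the product is nonzero only when the middle idempotents match — which they do — and then it lies in $\Iup(\x)\cdot\BlgZ(2n,n)\cdot\Iup(\z)$, the same block containing $b_0(\phi*\psi)$. For the weight vectors: by the multiplication rule in $\BlgZ(2n,n)$, recalled in Section~\ref{subsec:BorderedAlgebras}, one has $\weight_i(b_0(\phi)\cdot b_0(\psi)) = \weight_i(b_0(\phi)) + \weight_i(b_0(\psi))$ for every $i$ (weights are additive under multiplication: $\weight_i(U_j b)=\weight_i(b)+\delta_{ij}$, and the $\weight_i$-contribution of the generator $\gamma$ pieces also adds, since the "too far" cases are excluded here because consecutive upper states differ along the boundary only by the passage through a single $\alpha$-arc/$\beta$-arc). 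Meanwhile, $\weight_i(b_0(\phi*\psi))$ is the average of the local multiplicities of $\phi*\psi=\phi+\psi$ in the two elementary domains adjacent to $Z_i$; since local multiplicities are additive in the two-chain, this average is the sum of the corresponding averages for $\phi$ and for $\psi$, i.e. $\weight_i(b_0(\phi))+\weight_i(b_0(\psi))$. Hence the weight vectors of $b_0(\phi*\psi)$ and $b_0(\phi)\cdot b_0(\psi)$ agree.

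Having matched both the idempotents and the full relative weight vector, I would conclude by the uniqueness part of the characterization of pure algebra elements in $\BlgZ(2n,n)$ that $b_0(\phi*\psi)=b_0(\phi)\cdot b_0(\psi)$. The only genuinely delicate point — the "main obstacle" — is making sure that the product $b_0(\phi)\cdot b_0(\psi)$ is itself a \emph{pure} element with weight vector the honest sum, i.e. that no "$\gamma$-side" contribution is dropped. Concretely, this is the claim that for the idempotent states $\Iup(\x),\Iup(\y),\Iup(\z)$ arising from upper Heegaard states, the integers $n_i=\tfrac12(|v_i^{\x}-v_i^{\y}|+|v_i^{\y}-v_i^{\z}|-|v_i^{\x}-v_i^{\z}|)$ in the multiplication formula contribute exactly what is needed so that $\weight_i(\gamma_{\x,\y}\gamma_{\y,\z})=\weight_i(\gamma_{\x,\y})+\weight_i(\gamma_{\y,\z})$; equivalently that $v_i^{\y}$ lies between $v_i^{\x}$ and $v_i^{\z}$ whenever the corresponding averaged multiplicities force it, which follows because the weight vector coordinates $\weight_i(b_0(\phi))=\tfrac12|v_i^{\x}-v_i^{\y}|$ are read off from genuine (nonnegative) local multiplicities of the two-chains and these add. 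I would spell this out at the level of the single boundary circle $Z_i$, noting that the averaging presentation of $\weight_i$ makes the monotonicity automatic, and that completes the proof.
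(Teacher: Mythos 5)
Your overall structure matches the paper's (very terse) argument: both $b_0(\phi*\psi)$ and $b_0(\phi)\cdot b_0(\psi)$ are pure elements in the same idempotent block $\Iup(\x)\cdot\BlgZ(2n,n)\cdot\Iup(\z)$, so it suffices to match the weight vectors; and the weight vector of $b_0(\phi*\psi)$ is the sum of those of $b_0(\phi)$ and $b_0(\psi)$ because local multiplicities of two-chains are additive under $*$. That is the content of the paper's one-line proof, and your first two paragraphs capture it correctly.

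The third paragraph, however, manufactures an obstacle that does not exist and then resolves it with an invalid argument. The identity
\[
\weight_i\bigl(\gamma_{\x,\y}\cdot\gamma_{\y,\z}\bigr)=\weight_i(\gamma_{\x,\y})+\weight_i(\gamma_{\y,\z})
\]
holds \emph{identically}, for every triple of idempotent states and every $i$: by definition $\gamma_{\x,\y}\cdot\gamma_{\y,\z}=U^{\mathbf n}\gamma_{\x,\z}$ with $n_i=\tfrac12\bigl(|v_i^\x-v_i^\y|+|v_i^\y-v_i^\z|-|v_i^\x-v_i^\z|\bigr)$, and then
\[
\weight_i(U^{\mathbf n}\gamma_{\x,\z})=n_i+\tfrac12|v_i^\x-v_i^\z|=\tfrac12|v_i^\x-v_i^\y|+\tfrac12|v_i^\y-v_i^\z|,
\]
so additivity of $\weight_i$ under multiplication in $\BlgZ$ is built into the formula and requires no monotonicity of the $v_i$'s. (The condition ``$v_i^{\y}$ lies between $v_i^{\x}$ and $v_i^{\z}$'' is the condition $n_i=0$, which is irrelevant here.) Likewise, ``too far'' has no bearing: that is a relation in the quotient $\Blg$, whereas the lemma is entirely in $\BlgZ$, where no products vanish. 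Finally, the appeal to ``genuine (nonnegative) local multiplicities'' is not available: $\phi$ and $\psi$ are arbitrary elements of $\doms(\x,\y)$ and $\doms(\y,\z)$, not positive domains, so their local multiplicities can be negative. None of this damages the conclusion — the lemma is true for the reasons in your first two paragraphs plus the algebraic identity just displayed — but the last paragraph misdiagnoses where the work lies and should be replaced by the one-line verification that weight is additive under $\BlgZ$-multiplication by definition.
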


\begin{proof}
  This is clear from the additivity of the local multiplicities 
  under juxtaposition.
\end{proof}

Each elementary domain ${\mathcal D}_i$ in $\cSigma$ has an {\em Euler
  measure}, which is the integral of $1/2\pi$ times the curvature of a
metric for which the boundary consists of geodesics meeting at
$90^\circ$ angles along the corners.

For $\phi\in\doms(\x,\y)$, we define its {\em point measure} $P(\phi)$
to be the sum $n_{\x}(\phi)+n_{\y}(\phi)$, so that each elementary
domain ${\mathcal D}$ contributes $1/4$ times the number of components
of $\x$ and $\y$ contained as corners of ${\mathcal D}$.   The
{\em Maslov grading of $\phi$} is defined by the formula:
\begin{equation}
  \label{eq:DefineMgr}
\Mgr(\phi)=e(\phi)+P(\phi).
\end{equation}

\begin{lemma}
  \label{lem:GradingsWellDefined}
  If $\doms(\x,\y)$ is non-empty, then for any $\phi\in\doms(\x,\y)$,
  the integers $\Mgr(\phi)-\weight(b_0(\phi))$ and
  $\weight_i(\phi)-\weight_j(\phi)$ (with $\{i,j\}\in \Matching$) are independent of the
  choice of $\phi$.
\end{lemma}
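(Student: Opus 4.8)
The plan is to reduce the statement to a property of periodic domains, and then to exploit the fact (emphasized in the introduction) that these partial diagrams admit no boundary degenerations on the $\alpha$-side. Fix $\phi,\phi'\in\doms(\x,\y)$. Since $\doms(\x,\x)$ is independent of $\x$, and since $\phi+\mathcal{P}\in\doms(\x,\y)$ precisely when $\partial(\partial_\alpha\mathcal{P})=0$, the difference $\mathcal{P}:=\phi-\phi'$ lies in the group $\Pi:=\doms(\x,\x)$ of periodic two-chains. Each of the functions $\phi\mapsto e(\phi)$, $\phi\mapsto n_\x(\phi)$, $\phi\mapsto n_\y(\phi)$ (with $\x,\y$ held fixed) and $\phi\mapsto\weight_i(b_0(\phi))$ is given by a formula linear in the coefficient vector of $\phi$, hence extends to an additive function on all two-chains. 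Therefore $\Mgr(\phi)-\Mgr(\phi')=e(\mathcal{P})+n_\x(\mathcal{P})+n_\y(\mathcal{P})$ and $\weight(b_0(\phi))-\weight(b_0(\phi'))=\weight(b_0(\mathcal{P}))$, and it suffices to prove that for every $\mathcal{P}\in\Pi$ one has (a) $\weight_i(b_0(\mathcal{P}))=\weight_j(b_0(\mathcal{P}))$ whenever $\{i,j\}\in\Matching$, and (b) $e(\mathcal{P})+n_\x(\mathcal{P})+n_\y(\mathcal{P})=\weight(b_0(\mathcal{P}))$.

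The crucial step is to show that $\partial_\alpha\mathcal{P}=0$ for all $\mathcal{P}\in\Pi$. The space $\overline{\alphas}$ is the disjoint union of the closed curves $\alpha^c_1,\dots,\alpha^c_g$ with the completed arc $\overline\alpha_0$; since $\overline\alpha_0$ is homeomorphic to an interval it supports no nonzero cellular $1$-cycle, so the cycle $\partial_\alpha\mathcal{P}$ has the form $\sum_k a_k\alpha^c_k$. Writing also $\partial_\beta\mathcal{P}=\sum_l c_l\beta_l$, the cycle $\partial\mathcal{P}=\sum_k a_k\alpha^c_k+\sum_l c_l\beta_l$ bounds $\mathcal{P}$ and is hence null-homologous in $\cSigma$. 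By Condition~\ref{UD:NoHone}, the homomorphism $\psi\colon\Z^g\oplus\Z^{g+n-1}\to H_1(\cSigma)$, $(a,c)\mapsto\sum_k a_k[\alpha^c_k]+\sum_l c_l[\beta_l]$, is surjective, so $\ker\psi$ is free of rank $n-1$. On the other hand, for each of the $n$ components $R_\ell$ of $\Sigma_0\setminus\betas$ (Condition~\ref{UD:TwoBoundariesApiece}) the relation $[\partial R_\ell]=0$ in $H_1(\Sigma_0)$ combined with $[Z_i]=0$ in $H_1(\cSigma)$ gives an element of $\ker\psi$ supported on the $\Z^{g+n-1}$-summand; these span a subgroup of rank exactly $n-1$ (the dual graph of the decomposition $\Sigma_0\setminus\betas$ is connected). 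If some $(a,c)\in\ker\psi$ had $a\neq 0$ it would be independent of these relations, forcing $\ker\psi$ to have rank $\geq n$; so $a=0$ and $\partial_\alpha\mathcal{P}=0$.

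It follows that $\mathcal{P}$ has vanishing local-multiplicity jump across every $\alpha$-edge, so (as $\betas$ separates $\cSigma$ into the pieces $\hat R_\ell$, the components of $\cSigma\setminus\betas$) $\mathcal{P}$ is constant on each $\hat R_\ell$; write $\mathcal{P}=\sum_\ell m_\ell[\hat R_\ell]$. Claim (a) is then immediate: the two elementary domains adjacent to $Z_i$ lie in the component $\hat R_{\ell(i)}$ containing $Z_i$ and both have coefficient $m_{\ell(i)}$, so $\weight_i(b_0(\mathcal{P}))=m_{\ell(i)}$, and matched boundary circles share a component by the definition of $\Matching$. For claim (b), additivity reduces us to $\mathcal{P}=[\hat R_\ell]$: then $\weight(b_0([\hat R_\ell]))=\#\{i:Z_i\in R_\ell\}=2$ by Condition~\ref{UD:TwoBoundariesApiece}; on the other side, as a combined region $\hat R_\ell$ has smooth boundary contained in $\betas$ and no acute corners, so $e([\hat R_\ell])=\chi(\hat R_\ell)$, while the unique point of $\x$ on each $\beta$-circle contributes $\tfrac14$ times the number of its local quadrants inside $\hat R_\ell$, summing to $\tfrac12$ per boundary circle, so $n_\x([\hat R_\ell])=n_\y([\hat R_\ell])=\tfrac12\,\#\partial\hat R_\ell$. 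Finally, from $\sum_\ell\chi(\hat R_\ell)=\chi(\cSigma)=2-2g$, $\sum_\ell\#\partial\hat R_\ell=2(g+n-1)$, and there being $n$ components, every $\hat R_\ell$ has genus $0$; hence $\chi(\hat R_\ell)=2-\#\partial\hat R_\ell$ and $e([\hat R_\ell])+n_\x([\hat R_\ell])+n_\y([\hat R_\ell])=\chi(\hat R_\ell)+\#\partial\hat R_\ell=2$, proving (b).

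The step I expect to be the main obstacle is the vanishing $\partial_\alpha\mathcal{P}=0$ in the second paragraph, which is the homological manifestation of the absence of $\alpha$-boundary degenerations for upper diagrams; a secondary technical point is getting the Euler-measure and point-measure bookkeeping in the proof of (b) exactly right. The remaining steps are routine.
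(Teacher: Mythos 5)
Your argument for the well-definedness (independence of $\phi$) is correct, but it takes a genuinely different route for the crucial step, and it also omits one piece of the lemma's content.

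On the key step $\partial_\alpha\mathcal{P}=0$: you deduce this from Condition~(UD-5) (surjectivity of $\psi$) together with the rank-$(n-1)$ subgroup of $\ker\psi$ coming from the $\beta$-boundary relations of the $n$ components. The paper instead invokes Condition~(UD-4) directly: the intersection of the span of $\{\alpha^c_k\}$ with the span of $\{\beta_l\}$ in $H_1(\cSigma)$ is trivial, so $\sum a_k[\alpha^c_k]=-\sum c_l[\beta_l]$ forces both sides to vanish, and linear independence of the $\alpha^c_k$ then gives $a=0$. Both arguments are valid, and are in fact morally equivalent (each reflects the absence of $\alpha$-boundary in periodic domains), but the paper's is shorter because the hypothesis was engineered for exactly this purpose. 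Your route has the small advantage of deriving the fact from (UD-5) and (UD-3) without invoking (UD-4), which is worth knowing. On the arithmetic of claim (b): you compute $e(\hat R_\ell)=2-b_\ell$ and $n_\x+n_\y=b_\ell$, which is the correct general statement; the paper asserts $e(\mathcal D)=1$ and $P(\mathcal D)=1$ for each component, which is only literally true when $b_\ell=1$ (i.e.\ when the component is a disk), though its conclusion $e+P=2$ is the same as yours. So your bookkeeping is actually more careful than what appears in the paper.

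However, the lemma claims that $\Mgr(\phi)-\weight(b_0(\phi))$ and $\weight_i(\phi)-\weight_j(\phi)$ are \emph{integers}, not merely well-defined rational numbers, and your proof does not address this. The paper devotes its opening paragraph to an elementary argument (via Riemann--Hurwitz and a branched cover $F\to\Sigma$) establishing that $\Mgr(\phi)\in\Z$. That integrality claim is genuinely separate from the independence claim—$\Mgr(\phi)$ is a priori only in $\frac14\Z$, and $\weight(b_0(\phi))$ is a priori only in $\frac12\Z$—and your periodic-domain argument, being an argument about \emph{differences}, cannot by itself produce it. You should either reproduce the branched-cover argument (or cite the Maslov-index interpretation, as the paper mentions as an alternative) before proceeding with the independence argument.
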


\begin{proof}
  Before verifying the independence of the choice of $\phi$, we start
  by verifying that $\Mgr(\phi)$, which is evidently a rational
  number, is in fact an integer. This could be seen either by the
  interpretation of $\Mgr(\phi)$ as a Maslov index, but instead we
  recall here a more elementary argument. Since
  $\Mgr(\phi+\Sigma)=\Mgr(\phi)+2$, it suffices to verify the
  integrality of $\Mgr(\phi)$ for positive $\phi$. If $\phi$ is
  positive, it is elementary to construct a surface with corners $F$
  (cf.~\cite[Lemma~2.17]{HolDisk}) at $\x$ and $\y$; this surface is
  equipped with a branched covering to $\Sigma$ with branching at the
  intersection points of the $\alpha$- and the
  $\beta$-circles. Suppose for simplicity that each elementary domain
  is topologically a disk, so that the $\alpha$ and $\beta$-arcs and
  circles give $\Sigma$ the structure of a $CW$ complex. The surface
  $F$ has a $CW$ complex structure, obtained by pulling back this $CW$
  complex structure. Consider the function $f$ on subcomplexes of $F$
  that associates to each $2$-cell $1$, to each edge $-1/2$, and to
  each vertex $1/4$. Clearly, $f(\cald)=e(\cald)$ for each elementary
  domain. Moreover, since each interior edge is contained in two
  domains, and each interior vertex is contained in four elementary
  domains, it follows that
  \[ e(F)=\chi(F)+ \OneHalf \#(e\subset \partial F)+ \sum_{p}
  \left(\frac{N_p}{4}-1\right)=\chi(F)+ \sum_{p}
  \left(\frac{N_p-2}{4}\right),\] where $N_p$ denotes the number of
  elementary domains that meet at a $0$-cell in $F$.  The integrality of
  $\Mgr(\phi)$ follows from the observation that at each corner point
  $p$, $\frac{N_p-2}{4}+n_{p\cap \x}(\phi)+n_{p\cap \y}(\phi)$ is an integer.
  This argument can be easily adapted also to the case where
  the elementary domains are not disks.

  If $\phi,\phi'\in\doms(\x,\y)$ then $\phi-\phi'$ can be written as a
  formal sum of components ${\mathcal D}$ of
  $\cSigma\setminus\betas$.  This is true since
  $\alpha_1^c,\dots,\alpha_g^c$ are linearly independent in
  $H_1(\cSigma)$ and the intersection of their span with the
  span of $\beta_1,\dots,\beta_{g+n-1}$ is trivial (by
  Condition~\ref{UD:NoPerDom}). Each of those latter components has
  $e({\mathcal D})=1$ and $P({\mathcal D})=1$, contributing
  $2$ to $\Mgr(\phi)$; similarly, the addition of ${\mathcal D}$
  contributes $2$ to $\weight(b_0(\phi))$.  To complete the lemma, note
  that if $i$ and $j$ are matched, then $\weight_i({\mathcal
    D})=\weight_j({\mathcal D})$ for any ${\mathcal D}$ of
  $\cSigma\setminus\betas$.
\end{proof}

Given $\phi\in\doms(\x,\y)$, let $\bOut(\phi)\in\Idemp{\x}\cdot
\Blg\cdot \Idemp{\y}$ denote the image of $b_0(\phi)$ under the
quotient map $\BlgZ(n)\to\Blg(n)$. Clearly,
$\bOut(\phi)\in\Clg(n)\subset\Blg(n)$.

\begin{prop}
  \label{prop:DefineBigradingD}
  There is a function $\Mgr\colon \States(\Hup)\to \Z$
  that is uniquely characterized up to an overall constant by the property that
  \begin{equation}
    \label{eq:DefMgr}
    \Mgr(\x)-\Mgr(\y)=\Mgr(\phi)-\weight(b_0(\phi)),
  \end{equation}
  for any $\phi\in\doms(\x,\y)$.  Similarly, given an orientation for
  the one-manifold $W$ specified by the matching $\Matching(\Hup)$,
  there is another function $\Agr\colon \States(\Hup)\to \OneHalf\Z^n$
  with components $\Agr_{\{i,j\}}$ cooresponding to each
  $\{i,j\}\in\Matching$, characterized by the following condition,
  uniquely up to overall translation by some vector in
  $\OneHalf\Z^n\subset\Q^n$:
\begin{equation}
  \label{eq:DefAgr}
  \Agr_{\{i,j\}}(\x)-\Agr_{\{i,j\}}(\y)=
  \weight_i(b_0(\phi))-\weight_j(b_0(\phi)),
\end{equation}
for any choice of $\phi\in\doms(\x,\y)$;
i.e. $\Agr(\x)-\Agr(\y)=\Agr(b_0(\phi))$, where the right-hand-side uses the $\Q^n$-valued Alexander vector grading on the algebra.
\end{prop}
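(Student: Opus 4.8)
The plan is to produce $\Mgr$ and $\Agr$ as potential functions for suitable $1$-cocycles on the set of states, with Lemmas~\ref{lem:GradingsWellDefined} and~\ref{lem:MultUnderJuxtaposition} supplying exactly the input needed. For a pair of upper states $\x,\y$ with $\doms(\x,\y)\neq\emptyset$, I would set $d(\x,\y):=\Mgr(\phi)-\weight(b_0(\phi))$ for any $\phi\in\doms(\x,\y)$; by Lemma~\ref{lem:GradingsWellDefined} this is a well-defined integer, independent of the choice of $\phi$, hence a function on the set of two-chain-connected pairs of states. The relation ``$\doms(\x,\y)\neq\emptyset$'' is reflexive (the zero two-chain lies in $\doms(\x,\x)$), symmetric (negate a two-chain), and transitive (juxtapose), so it partitions $\States(\Hup)$ into equivalence classes.

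The key step is to check that $d$ is a coboundary, i.e.\ that $d(\x,\x)=0$ and the cocycle identity $d(\x,\y)+d(\y,\z)=d(\x,\z)$ hold. The first is immediate: for $\phi=0$ one has $\Mgr(0)=e(0)+P(0)=0$ and $b_0(0)=\Iup(\x)$, which has total weight $0$. For the second, given $\phi\in\doms(\x,\y)$ and $\psi\in\doms(\y,\z)$, the concatenation $\phi*\psi$ lies in $\doms(\x,\z)$; Lemma~\ref{lem:MultUnderJuxtaposition} gives $b_0(\phi*\psi)=b_0(\phi)\cdot b_0(\psi)$, and since the weight vector is additive under multiplication in $\BlgZ(2n,n)$ (directly from the defining formula for that product), $\weight(b_0(\phi*\psi))=\weight(b_0(\phi))+\weight(b_0(\psi))$. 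Combined with the additivity of the Maslov grading under juxtaposition of two-chains (the combinatorial form of the additivity of the Maslov index under gluing; cf.~\cite{HolDisk}), this yields the cocycle identity, and in particular $d(\y,\x)=-d(\x,\y)$.

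Granting this, one finishes by choosing a base state $\x_0$ in each equivalence class and setting $\Mgr(\x):=-d(\x_0,\x)$ (allowing an arbitrary additive constant on each class); the cocycle identity then gives $\Mgr(\x)-\Mgr(\y)=d(\x,\y)$ for every $\phi\in\doms(\x,\y)$, which is Equation~\eqref{eq:DefMgr}, and uniqueness up to an overall additive constant follows since any two solutions of \eqref{eq:DefMgr} differ by a function constant on each class. The construction of $\Agr$ is word-for-word the same, carried out componentwise over $\{i,j\}\in\Matching$ with the $\Q$-valued cochain $d^{\Agr}_{\{i,j\}}(\x,\y):=\weight_i(b_0(\phi))-\weight_j(b_0(\phi))$ --- equivalently $\AlexGr(b_0(\phi))$ for the Alexander vector of Equation~\eqref{eq:DefAgrAlg} --- whose well-definedness is again Lemma~\ref{lem:GradingsWellDefined} and whose cocycle property is again Lemma~\ref{lem:MultUnderJuxtaposition} applied componentwise. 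Here the chosen orientation on $W=W(\Matching(\Hup))$ is exactly what fixes, for each matched pair, the ordered pair $(i,j)$ appearing in \eqref{eq:DefAgrAlg}, and hence the sign of each component of $d^{\Agr}$; the resulting $\Agr\colon\States(\Hup)\to\OneHalf\Z^n$ is unique up to translation by a vector in $\OneHalf\Z^n$ and satisfies Equation~\eqref{eq:DefAgr}.

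I expect the only genuine obstacle to be the verification of the cocycle identity, and within it the additivity $\Mgr(\phi*\psi)=\Mgr(\phi)+\Mgr(\psi)$ --- the weight half being handed to us for free by Lemma~\ref{lem:MultUnderJuxtaposition}. Everything else (the equivalence-relation bookkeeping, the passage from a coboundary to a potential function, and the uniqueness claim) is routine.
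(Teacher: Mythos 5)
Your approach is the same as the paper's: define the difference $d(\x,\y)=\Mgr(\phi)-\weight(b_0(\phi))$, invoke Lemma~\ref{lem:GradingsWellDefined} for well-definedness, and use additivity under juxtaposition (from Lemma~\ref{lem:MultUnderJuxtaposition} for the weight and from the known additivity of the combinatorial Maslov index for $\Mgr$) to make $\Mgr$ and $\Agr$ into potential functions for the resulting cocycle.

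However, there is one genuine gap, and it is located exactly where you dismiss the remaining work as ``routine.'' You treat ``$\doms(\x,\y)\neq\emptyset$'' as an equivalence relation, allow a free additive constant on each equivalence class, and then conclude ``uniqueness up to an overall additive constant follows since any two solutions differ by a function constant on each class.'' That inference is only valid when there is a \emph{single} equivalence class. If $\States(\Hup)$ broke into several two-chain-connected components, a function constant on each class would not be an overall constant, and Equation~\eqref{eq:DefMgr} would not pin down $\Mgr$ up to a single constant. The missing ingredient is Condition~\ref{UD:NoHone}, which is imposed on upper diagrams precisely so that $\doms(\x,\y)\neq\emptyset$ for \emph{every} pair of upper Heegaard states, i.e.\ so that there is exactly one class; the paper's proof opens by invoking it. Without this you have constructed \emph{a} function $\Mgr$ but not established the uniqueness clause of the proposition. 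The same remark applies to the $\Agr$ argument.
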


\begin{proof}
  Fix $\x$ and $\y$.  Condition~\ref{UD:NoHone} ensures that for any
  two upper states $\x$ and $\y$, there is some $\phi\in\doms(\x,\y)$.
  Thus, according to Lemma~\ref{lem:GradingsWellDefined}, given $\x$
  and $\y$, the right-hand-side of Equation~\eqref{eq:DefMgr} is
  well-defined. To see that it can be written as $\Mgr(\x)-\Mgr(\y)$,
  it suffices to observe that the right hand side of
  Equation~\eqref{eq:DefMgr} is additive under juxtaposition. This is
  mostly straightforward; see~\cite[Theorem 3.3]{SarkarWhitney} for an
  elementary proof of the additivity of $\Mgr$ under juxtapositions.

  The corresponding statement for $\Agr$ follows similarly.
\end{proof}

\begin{rem}
  \label{rem:OurDomains}
  We could have chosen to work instead with elementary domains
  ${\mathcal D}_i^0$, which are the closures of the components of
  $\Sigma_0\setminus\alphas\cup\betas$. 
  Clearly each elementary domain
  ${\mathcal D}_i$ in $\cSigma$ is obtained
  from some elementary domain ${\mathcal D}^0_i$ in $\Sigma_0$ by
  attaching half disks to each boundary component of ${\mathcal D}_i$
  obtained as $Z_j\cap {\mathcal D}_i$ with $j\neq 1$ or $2n$; and
  attaching disks along the boundary components $Z_1\cap{\mathcal
    D}_i$ and $Z_{2n}\cap{\mathcal D}_i$.  
  Thus, the Euler measure of
  each elementary doman in ${\overline \Sigma}$ equals the Euler
  measure of the corresponding domain in $\Sigma_0$ plus $1/2$ for
  each boundary component induced by $Z_j$ with $j\neq 1$ or $2n$ and
  $1$ for the boundary components coming from $Z_1$ or $Z_{2n}$.
  We could work with  $\doms_0(\x,\y)$, which are domains in $\Sigma_0$.
  The Euler measure on elementary domains can be extended linearly to obtain an Euler measure
  of any $\phi_0\in\doms_0(\x,\y)$.
  If $\phi_0\in\doms_0(\x,\y)$, and $\phi\in \doms(\x,\y)$ is 
  the corresponding domain in $\cSigma$, then
  \[ e(\phi)=e(\phi_0)+\weight(b_0(\phi)).\]
  With these conventions, then, 
  Lemma~\ref{lem:GradingsWellDefined} says that
  and 
  \[ \Mgr(\phi)-\weight(b_0(\phi))=e(\phi_0)+P(\phi_0)\]
  is independent of the choice of $\phi_0\in\doms_0(\x,\y)$.
\end{rem}

\section{Holomorphic curves used for type $D$ structures}
\label{sec:CurvesD}

We will describe now the holomorphic curves that go into the
construction of the type $D$ structure associated to an upper diagram,
returning to the type $A$ structure in Section~\ref{sec:CurvesA}. The
curves counted in the present work are similar to the curves counted in~\cite{InvPair}.  Since our context here is slightly
different, we recall material from~\cite{InvPair}, with an emphasis on
the differences.

Fix some upper
diagram \[\Hup=(\Sigma_0,Z_1,\dots,Z_{2n},\{\alpha_1,\dots,\alpha_{2n-1}\},\{\alpha^c_1,\dots,\alpha^c_{g}\},
\{\beta_1,\dots,\beta_{g+n-1}\}).\] 
Filling in the boundary of $\Sigma_0$ as explained in Section~\ref{sec:FillSurface},
we get $\Sigma_0\subset \Sigma\subset \cSigma$.

We will use  Lipshitz's reformulation
of Heegaard Floer homology~\cite{LipshitzCyl}, where the
pseudo-holomorphic curve counting takes place in
$\Sigma\times[0,1]\times \R$. To this end, we will use the class of
almost-complex structures appearing there (see
also~\cite[Definition~5.1]{InvPair}), which we recall presently.

There are two projection maps
\[
\pi_{\Sigma}\colon \Sigma\times [0,1]\times \R \to \Sigma\qquad\text{and}\qquad
\pi_{\CDisk}\colon \Sigma\times [0,1]\times \R \to [0,1]\times \R.
\]
The last projection map $\pi_{\CDisk}$ can be further decomposed into
its components
\[ s\colon \Sigma\times [0,1]\times \R \to [0,1] \qquad{\text{and}}\qquad
t\colon \Sigma\times[0,1]\times \R\to \R. \]

\begin{defn}
  \label{def:AdmissibleAlmostCx}
  An almost complex structure $J$ on ${\overline\Sigma}\times [0,1]\times \R$ is
  called {\em admissible} if 
  \begin{itemize}
  \item The projection $\pi_\CDisk$ is
  $J$-holomorphic.
  \item 
    $J$ preserves the 
    subspace $\ker(d_p \pi_\Sigma)\subset T_p(\Sigma\times [0,1]\times \R)$.
  \item The $\R$-action is $J$-holomorphic.
  \item The
    complex structure is split in some $\R$-invariant neighborhood of
    \[\{p_1,\dots,p_{2n}\}\times [0,1]\times \R,\] where the $p_i$ are the punctures.
    \end{itemize}
\end{defn}

We will consider $J$-holomorphic curves
\[ u\colon (S,\partial S)\to (\Sigma\times[0,1]\times \R,
(\alphas\times\{1\}\times \R)\cup(\betas\cup\{0\}\times \R)),\] with
certain asymptotic conditions. To state those, we view $\Sigma\times
[0,1]\times \R$ as having three kinds of infinities, $\Sigma\times
[0,1]\times\{+\infty\}$, $\Sigma\times [0,1]\times\{-\infty\}$, and
$\{p_i\}\times[0,1]\times \R$; the first two of these are
referred to as $+\infty$ and $-\infty$ respectively.  
Let 
\begin{equation}
  \label{eq:DefineMultUpper}
  d=g+n-1.
\end{equation}
The asymptotics
of the holomorphic curves we consider are as follows:
\begin{itemize}
  \item At $\pm\infty$, $u$ is asymptotic to a $d$-tuple of
    chords of the form $x_i\times[0,1]\times \{\pm \infty\}$,
    where $\x=\{x_i\}_{i=1}^{d}$ is an upper Heegaard state.
  \item For boundary punctures $p_i$,
    at $\{p_i\}\times [0,1]\times \R$, the curve $u$ is
    asymptotic to a collection of Reeb chords
    $\rho_i\times 1 \times t_i$; where $\rho_i$ is a Reeb chord in
    $\partial\Sigma_0=Z_1\cup\dots\cup Z_{2n}$ with endpoints on
    ${\mathbf a}=\alphas\cap Z$. These ends are called {\em east
      infinity} boundaries of $u$, and $t_i$ is called its {\em
      height}.
  \item For interior punctures $p_i$, 
    at $\{p_i\}\times [0,1]\times \R$, the curve $u$ is
    asymptotic to a collection of Reeb orbits $\{\orb_i\}\times
    s_i\times t_i$ for $0<s_i<1$, where $\orb_i$ is the simple Reeb orbit
    corresponding to the puncture $p_i$.  These ends are called {\em
      middle infinity}, and the values $t_i$ are also called their
    {\em height}.
\end{itemize}

We give the details  in this section.

\subsection{Naming Reeb chords}
\label{subsec:NamingChords}

A Reeb chord is an arc $\rho$ in some boundary component $Z_i$, with
endpoints on the intersection points between $Z_i$ and
$\alpha_{i-1}\cup\alpha_i$. As such, it has an initial point $\rho^-$
and a terminal point $\rho^+$.

When describing Reeb chords, we will use the following notation.  Each
circle boundary component $Z_i$ with $i=2,\dots,2n-1$ meets two
$\alpha$-curves $\alpha_{i-1}$ and $\alpha_{i}$, and so the boundary
circle is divided into two Reeb chords.  Label $L_i$ the chord that
goes from $\alpha_{i-1}$ to $\alpha_{i}$ with respect to the boundary
orientation of the circle, and $R_i$ the one which goes from
$\alpha_{i}$ to $\alpha_{i-1}$; see Figure~\ref{fig:ChordNames}. All
Reeb chords on $Z_i$ can thus be represented as words in the $L_i$ and
$R_i$ that alternate between the two letters. In particular, the two
Reeb chords that cover the circle once can be written as $L_i R_i$ and
$R_i L_i$; moreover, $L_i R_i$ starts and ends at $\alpha_{i-1}$ and
$R_i L_i$ starts and ends at $\alpha_{i}$. 

The boundary component $Z_1$ meets only one $\alpha$-arc, $\alpha_1$;
and hence all Reeb chords are multiples of the same Reeb chord from
$\alpha_1$ to itself.  For consistency with the above, we label this
basic Reeb chord, that covers $Z_1$ once, $R_1 L_1$ (although
independently, $R_1$ and $L_1$ do not make sense); similarly, we label
the Reeb chord that covers $Z_{2n}$ once $L_{2n} R_{2n}$.

 \begin{figure}[h]
 \centering
 \input{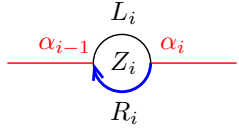}
 \caption{{\bf Names of Reeb chords.}  
   The Reeb chord $R_i$ is indicated by the oriented half circle.}
 \label{fig:ChordNames}
 \end{figure}

\subsection{Pre-flowlines}
We start with a more precise formulation of the asymptotic conditions for our holomorphic curves.

\begin{defn}
  \label{def:DecoratedSource}
  A {\em decorated source} $\Source$ is the following collection of data:
\begin{enumerate}
\item a smooth oriented surface $S$ with boundary and punctures
  (some of which can be on the boundary)
\item a labeling of each boundary puncture of $S$ by one of $+$, $-$, or $\east$
\item a labeling of each $\east$ puncture on $S$ by a Reeb chord
\item a labeling of each interior puncture by a Reeb orbit.
\end{enumerate}
Let $\East(\Source)\subset S$ denote the set of punctures marked $\east$;
$\IntPunct(\Source)\subset S$ denote the set of interior punctures,
and $\AllPunct(\Source)=\East(\Source)\cup\IntPunct(\Source)$.
\end{defn}

\begin{remark}
  In Section~\ref{subsec:CurvesAtWest}, our decorated sources will
  also include boundary punctures marked by $w$. Unlike the $\east$
  punctures, such punctures are not labelled by Reeb chords.
\end{remark}

We recall what it means for a map to be asymptotic to a given Reeb
chord $\rho$ at $q$.  Suppose that $S$ is a decorated source with a
puncture $q$ on its boundary, thought of as a point in $\cS$.
$f\colon S \to \Sigma$ is a smooth map with a continuous extension
${\overline f}\colon \cS\to \cSigma$, so that $f(q)=z_i$.  Let
\[ \CDisk^+=\{z=x+iy\in\C\mid |z|\leq 1, y\geq 0\}\] and let
$\phi\colon \CDisk^+ \to {\overline\Source}$ be a holomorphic
parameterization around the puncture $q$, i.e.  so that
$\phi(0)=q$. If the chord $\rho$ is supported in $Z_i$, consider the
identification of the corresponding puncture in $\Sigma$,
\[ \psi\colon S^1 \times [0,\infty) \to \Sigma, \] with image
$Z_i\times [0,\infty)\subset \Sigma$.
We say that {\em $u$ is asymptotic to $\rho$ at the
puncture $q$} if the family of function $[0,1]\to S^1$ indexed
by $r\in (0,1]$ specified by 
\[ \theta\mapsto \pi_{S^1} \circ \psi^{-1}\circ \pi_{\Sigma} \circ u\circ
\phi(r e^{\pi i\theta}) \] converges to $\rho$ as
${\mathcal C}^{\infty}$ functions from $[0,1]$ to $S^1$, as $r\goesto 0$.

This definition has a straightforward adaptation to 
interior punctures $q$ in $\Sigma$, where $\orb$ is some Reeb orbit.
In that case, we choose a parameterization around $q$ by the punctured disk
$\{z\in \C\setminus\{0\}\big||z|\leq 1\}$ about $q$, and 
we require that the ${\mathcal C}^{\infty}$ functions
from $S^1\to S^1$  indexed by $r\in (0,1]$ defined by
\[ \theta\mapsto \pi_{S^1} \circ \psi^{-1}\circ \pi_{\Sigma} \circ u\circ
\phi(r e^{2 \pi i\theta}) \]
converge to the given  Reeb orbit as $r\goesto 0$.

A {\em generalized upper Heegaard state} is a $d$-element subset of
points $\x=\{x_i\}_{i=1}^d$ in $\Sigma_0$, each of which is contained
in the intersection of the various $\alpha$-and $\beta$-curves,
distributed so that each $\beta$-circle contains exactly one point in
$\x$, each $\alpha$-circle contains exactly one some point in $\x$,
and no more than one point lies on any given $\alpha$-circle. Note
that a generalized upper Heegaard state can have more than one element
on the same $\alpha$-arc.

Analogous to~\cite[Section~5.2]{InvPair}, 
given a decorated source $\Source$, we consider maps as follows:
\begin{defn}
  \label{def:GenFlow}
  A {\em pre-flowline} is a map
\[ u\colon (\Source,\partial\Source)\to
(\Sigma\times[0,1]\times\R,(\alphas\times\{1\}\times \R)\cup(\betas\times\{0\}\times\R))\]
subject to the constraints:
\begin{enumerate}[label=(${\mathcal M}$-\arabic*),ref=(${\mathcal M}$-\arabic*)]
  \item 
    \label{property:First}
    $u\colon \Source\to \Sigma\times[0,1]\times \R$ is proper.
  \item 
    \label{property:ProperTwoa}
    $u$ extends to a proper map ${\overline u}\colon
    {\bSource}' \to {\overline{\Sigma}}\times[0,1]\times \R$,
    where $\bSource'$ is obtained from $\Source$ by filling in the interior
    and the $\east$
    punctures
    (so $\bSource\subset \bSource'\subset \cS$)
  \item
    \label{prop:BrCover}
    $\pi_{\CDisk}\circ u$ is a $d$-fold branched cover
    (with $d$ as in Equation~\eqref{eq:DefineMultUpper}).
  \item At each $-$-puncture $q$ of $\Source$,
    $\lim_{z\goesto q}(t\circ u)(z)=-\infty$.
  \item At each $+$-puncture $q$ of $\Source$,
    $\lim_{z\goesto q}(t\circ u)(z)=+\infty$.
  \item 
    At each $\east$-puncture $q$ of $\Source$,
    $\lim_{z\to q}(\pi_{\Sigma}\circ u)(z)$ is the Reeb
    chord $\rho$ labeling $q$. 
    The same holds for
    middle infinity punctures $q$, with limits
    to the corresponding Reeb orbit.
  \item \label{prop:FiniteEnergy}
    There are a generalized upper Heegaard states $\x$ and $\y$
    with the property that as $t\goesto - \infty$,
    $\pi_{\Sigma}\circ u$ is asympotic to $\x$  and
    as $t\goesto +\infty$, $\pi_{\Sigma}\circ u$ is asymptotic
    to $\y$.
    \setcounter{bean}{\value{enumi}}
  \item 
    \label{prop:WeakBoundaryMonotone}
    For each $t\in \R$ and $i=1,\dots,d$, 
    $u^{-1}(\beta_i\times\{0\}\times\{t\})$ consists of exactly one point.
    Similarly, for each $t\in \R$ and each $i=1,\dots,g$,
    $u^{-1}(\alpha_i^c\times\{1\}\times\{t\})$ consists of exactly one point.
\end{enumerate}
\end{defn}

By the definition of branched covers of manifolds with boundary, Condition~\ref{prop:BrCover}
ensures that $\pi_{\CDisk}\circ u$ (and indeed $t\circ u$) has no critical points over
$\partial [0,1]\times \R$.

\begin{defn}
  Let $\x$ and $\y$ be generalized upper Heegaard states, and $u$ a pre-flowline
  that connects them.
  The {\em Reeb asymptotics} of $u$ is  the ordered partition of Reeb chords
  $\vec{P}=(P_1,\dots,P_{\ell})$ appearing in the asymptotics of $u$,
  ordered by the value of $t\circ u$.
\end{defn}

Property~\ref{prop:WeakBoundaryMonotone} is called {\em weak boundary
  monotonicity}. We will often consider curves satisfying the
following additional condition, called {\em strong boundary
  monotonicity}:
\begin{enumerate}[label=(${\mathcal M}$-\arabic*s),ref=(${\mathcal M}$-\arabic*s)]
\setcounter{enumi}{\value{bean}}
\item
  \label{property:StronglyBoundaryMonotone}
  For each $t\in \R$ and $i=1,\dots 2n-1$,
  $u^{-1}(\alpha_i\times\{1\}\times \{t\})$ consists of at most
  one point.
\setcounter{bean}{\value{enumi}}  
\end{enumerate}

If $u$ satisfies this stronger condition, then $u$ is asymptotic to 
upper Heegaard states $\x$ and $\y$ over $-\infty$ and $+\infty$ respectively.

\begin{rem}
  \label{rem:FlowsAsDisks}
  Pre-flows can be thought of as Whitney disks in
  $\Sym^{g+n-1}(\cSigma)$, mapping one of the boundary arcs into the
  smooth torus $\beta_1\times\dots\times \beta_{g+n-1}$, and the other
  boundary arc $a$ into the singular space
  $\alpha^c_1\times\dots\times \alpha^c_g\times \Sym^{n-1}(I)$, where
  $I={\overline\alpha}_1\cup\dots\cup{\overline\alpha}_{2n-1}$.  The
  strong monotonicity condition guarantees that the arc $a$ in fact
  maps into a smooth part of
  $\alpha_1^c\times\dots\times\alpha_g^c\times \Sym^{n-1}(I)$.
\end{rem}

\subsection{On boundary monotonicity}
\label{subsec:BoundaryMonotone}

If $\rhos=\{\rho_1,\dots,\rho_m, \orb_1,\dots,\orb_k\}$ is a set of
Reeb chords and orbits, let $\rhos^-=\{\rho_1^-,\dots,\rho_m^-\}$ be
the multi-set (i.e. set with repeated entries) of initial points of
the Reeb chords, and $\rhos^+=\{\rho_1^+,\dots,\rho_m^+\}$ be the
multi-set of terminal points.

Strong boundary monotonicity can be
formulated in terms of the initial generalized Heegaard state $\x$ and
the Reeb asymptotics.

\begin{defn}
  \label{def:StronglyBoundaryMonotone}
  Let $\x$ be a generalized upper Heegaard state and
  $(\rhos_1,\dots,\rhos_\ell)$, a sequence of sets of Reeb chords and
  orbits.  We formulate {\em strong boundary monotonicity} of
  $(\x,\rhos_1,\dots,\rhos_\ell)$ inductively in $\ell$; at the same
  time, we also define the {\em terminal $\alpha$-set} of a strongly
  boundary monotone sequence
  $\alpha(\x,\rhos_1,\dots,\rhos_\ell)\subset \{1,\dots,2n-1\}$, as
  follows. When $\ell=0$, $(\x)$ is called strongly boundary monotone
  if $\x$ is an upper Heegaard state; and its terminal $\alpha$-set is
  defined to be the set of $i=1,\dots,2n-1$ so that $\x\cap
  \alpha_i\neq \emptyset$. (Note that this definition of $\alpha(\x)$
  coincides with the earlier definition given in
  Definition~\ref{def:UpperState}.)  For $\ell\geq 1$, we say that
  $(\x,\rhos_1,\dots,\rhos_{\ell})$ is strongly boundary monotone if
  all of the following conditions hold:
  \begin{itemize}
  \item The sequence $(\x,\rhos_1,\dots,\rhos_{\ell-1})$ is strongly boundary monotone.
  \item No two points in $\rhos_{\ell}^-$
  lies on the same $\alpha$-arc, and no two points in $\rhos_{\ell}^+$
  lies on the same $\alpha$-arc.
\item Letting $A_-$ resp. $A_+\subset \{1,\dots,2n-1\}$
  consist of all $i$ so that $\rhos_\ell^-\cap \alpha_i\neq \emptyset$ resp. 
   $\rhos_\ell^+\cap \alpha_i\neq \emptyset$, we require that
  \[ A_-\subset \alpha(\x,\rhos_1,\dots,\rhos_{\ell-1}).\]
\item   
  The set
  \[\alpha(\x,\rhos_1,\dots,\rhos_{\ell})=A_+\cup\Big(\alpha(\x,\rhos_1,\dots,\rhos_{\ell-1})\setminus A_-\Big)\]
  consists of $n-1$ elements.
\end{itemize}
\end{defn}

Note that Condition~\ref{prop:BrCover} for a pseudo-holomorphic
flowline follows from the other conditions, as follows. It is clear
that $\pi_{\CDisk}\circ u$ is a pseudo-holomorphic map from $\Source$
to $[0,1]\times \R$.  Since $\Source$ has positive and negative
punctures, 
$t\circ u$ is not constant, so $\pi_{\CDisk}\circ u$ is a branched cover.
The degree of the branching is determined by Property~\ref{prop:FiniteEnergy}.

The following is a variant of~\cite[Lemma~5.53]{InvPair}:

\begin{lemma}
  \label{lem:SBB}
  Suppose that $u$ is a weakly boundary monotone
  flowline from $\x$ to $\y$ with asymptotics specified by $\vec{\rhos}$.
  Then, $(\x,\vec{\rhos})$ is
  strongly boundary monotone if and only if $u$ is strongly boundary monotone.
\end{lemma}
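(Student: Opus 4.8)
The plan is to unwind both notions of boundary monotonicity (the inductive combinatorial definition, Definition~\ref{def:StronglyBoundaryMonotone}, versus the geometric injectivity condition~\ref{property:StronglyBoundaryMonotone}) and show they match slice-by-slice in the $\R$-coordinate. First I would fix a weakly boundary monotone flowline $u$ from $\x$ to $\y$ with Reeb asymptotics $\vec{\rhos} = (\rhos_1,\dots,\rhos_\ell)$, and choose generic heights $t_1 < \dots < t_\ell$ at which the various Reeb chords and orbits in $\rhos_1,\dots,\rhos_\ell$ occur, together with intermediate regular values $\tau_0 < t_1 < \tau_1 < \dots < t_\ell < \tau_\ell$. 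For each regular value $\tau$, the slice $u^{-1}(\Sigma\times[0,1]\times\{\tau\})$ together with the boundary behavior of $u$ near the $\alpha$-arcs determines an $\alpha$-set, namely the collection of $i\in\{1,\dots,2n-1\}$ such that $u$ meets $\alpha_i\times\{1\}\times\{\tau\}$. Below the lowest chord height this $\alpha$-set is $\alpha(\x)$ and above the highest it is $\alpha(\y)$, by Property~\ref{prop:FiniteEnergy} and the definition of the asymptotics of $u$ at $\pm\infty$.

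The key step is to track how this geometric $\alpha$-set changes as $\tau$ crosses a single height $t_k$ at which the chords in $\rhos_k$ appear. A Reeb chord $\rho$ in $\rhos_k$ runs, on the boundary circle $Z_i$, from $\rho^-$ on some $\alpha_a$ to $\rho^+$ on some $\alpha_b$ (possibly $a=b$, and with the $Z_1$, $Z_{2n}$ conventions from Section~\ref{subsec:NamingChords}); near such a puncture, $\pi_\Sigma\circ u$ limits to $\rho$, so the boundary arc of $S$ through that puncture transitions from meeting $\alpha_a$ just below $t_k$ to meeting $\alpha_b$ just above $t_k$. Summing over all chords in $\rhos_k$, the geometric $\alpha$-set just above $t_k$ equals $A_+ \cup (A_{\text{below}} \setminus A_-)$, with $A_\pm$ as in Definition~\ref{def:StronglyBoundaryMonotone}. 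Here weak boundary monotonicity (Property~\ref{prop:WeakBoundaryMonotone}, which controls the $\beta$-circles and $\alpha^c$-circles) guarantees the count of points on $\alpha$-circles is always exactly one apiece, so the only thing that can vary slice-to-slice, and the only potential failure of the $\alpha(\x,\rhos_1,\dots)$ cardinality-$(n-1)$ condition or the ``no two points on one arc'' condition, concerns the $\alpha$-arcs — and these are precisely the conditions appearing in Definition~\ref{def:StronglyBoundaryMonotone}.

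With this dictionary in place, the equivalence follows by induction on $\ell$. If $u$ is strongly boundary monotone in the geometric sense~\ref{property:StronglyBoundaryMonotone}, then at every regular value $\tau$ the preimage of each $\alpha_i\times\{1\}\times\{\tau\}$ is at most one point, which forces $\x$ to be an honest upper Heegaard state (the $\ell=0$ case), forces no two points of $\rhos_k^\pm$ onto a common arc, and forces each intermediate $\alpha$-set to have the right cardinality; so $(\x,\vec{\rhos})$ satisfies Definition~\ref{def:StronglyBoundaryMonotone}. Conversely, if $(\x,\vec{\rhos})$ is strongly boundary monotone combinatorially, then the slice-by-slice $\alpha$-set computed above is well-defined with exactly $n-1$ arcs occupied and at most one point per arc at every height; since between consecutive heights $t_k$ the geometric $\alpha$-set is constant (no chords appear there to change it) and the slice count cannot jump without a chord, $u$ meets each $\alpha_i\times\{1\}\times\{t\}$ in at most one point for all $t$, i.e.\ $u$ is strongly boundary monotone. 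I expect the main obstacle to be the careful bookkeeping at a height $t_k$ where several chords (and possibly orbits) occur simultaneously on different boundary circles, and in particular verifying that the transition formula $A_+ \cup (A_{\text{below}}\setminus A_-)$ holds on the nose rather than only up to the already-excluded degenerate configurations; this is essentially the content of~\cite[Lemma~5.53]{InvPair} adapted to our setting, and the presence of Reeb orbits (which do not touch the $\alpha$-arcs) is harmless since they contribute nothing to any $A_\pm$.
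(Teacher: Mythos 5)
Your argument is correct and takes essentially the same approach as the paper's: both track the geometric slice-by-slice set $\alpha(u,\tau)$, use that $t\circ u$ is strictly monotone along the boundary arc through each $\east$-puncture (so a chord trades $\alpha_i$ for $\alpha_j$ as $\tau$ crosses its height), and then identify $\alpha(u,\tau)$ with $\alpha(\x,\rhos_1,\dots,\rhos_k)$ by induction on the level $k$. The concern you flag about the transition formula is handled exactly by that induction, as in \cite[Lemma~5.53]{InvPair}.
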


\begin{proof}
  Fix $\tau\in\R$, so $(t\circ u)^{-1}(\tau)$ contains none of the punctures of $\Source$, and
  let
  \[ \alpha(u,\tau)=\{i\in\{1,\dots,2n-1\}\big| u^{-1}(\alpha_i\times
  \{1\}\times \{\tau\})\neq \emptyset\}.\] Let $q$ be some puncture on
  $\Source$ labelled by $\rho$, a Reeb chord with $\rho^-$ on
  $\alpha_i$ and $\rho^+$ on $\alpha_j$.  Since $t\circ u$ is strictly
  monotone on the arc through $q$ (in view of
  Property~\ref{prop:BrCover}), it follows that for all sufficiently
  small $\epsilon>0$, $i\in\alpha(u,t(q)-\epsilon)$ and
  $j\in\alpha(u,t(q)+\epsilon)$.  In fact, by continuity (and
  induction on $\ell$), we see that
  $\alpha(\x,\rhos_1,\dots,\rhos_{\ell-1})=\alpha(u,\tau)$ for all
  $\tau$ with $t_{\ell-1}<\tau<t_\ell$, where $t_i$ denotes the
  $t$-value of the punctures labelled by $\rhos_i$, and $t_0=-\infty$. 
  It follows easily that the two 
  formulations of boundary monotonicity coincide:
  strong boundary monotonicity on $u$ is a condition on 
  $\alpha(u,\tau)$ and strong boundary monotonicity of $(\x,\vec{\rhos})$
  is the corresponding condition on the $\alpha(\x,\rhos_1,\dots,\rhos_\ell)$.
\end{proof}

The following result will allow us to restrict attention to moduli
spaces containing only strongly boundary monotone sequences:

\begin{prop}
  \label{prop:SBD}
  Suppose that $\x$ and $\y$ are upper Heegaard states. If $u$ is a weakly,
  but not strongly boundary monotone pre-flowline representing
  $\phi\in\doms(\x,\y)$, then $b_0(\phi)$ is in the
  ideal ${\mathcal J}\subset \BlgZ$; i.e. its image in
  $\Blg$ vanishes.
\end{prop}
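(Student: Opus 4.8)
The plan is to translate the failure of strong boundary monotonicity into a structural feature of the shadow $\phi$, and then invoke the concrete description of the ideal ${\mathcal J}$ from Proposition~\ref{prop:Ideal}. First I would fix the pre-flowline $u$ representing $\phi\in\doms(\x,\y)$, and pick a generic $\tau\in\R$. By Lemma~\ref{lem:SBB}, the sequence $(\x,\vec{\rhos})$ is weakly but not strongly boundary monotone, so at some height $t_\ell$ one of the strong-monotonicity conditions of Definition~\ref{def:StronglyBoundaryMonotone} fails: either two initial points of the Reeb chords $\rhos_\ell^-$ lie on the same $\alpha$-arc, or two terminal points $\rhos_\ell^+$ do, or the inclusion $A_-\subset\alpha(\x,\rhos_1,\dots,\rhos_{\ell-1})$ fails (equivalently, by weak monotonicity, the cardinality of $\alpha(\x,\rhos_1,\dots,\rhos_\ell)$ is not $n-1$). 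The key point is that in each case there is a height $\tau_0$ at which $\alpha(u,\tau_0)$ has fewer than $n-1$ elements: two chords landing on the same $\alpha$-arc at one instant means that just after that instant (or just before, on the $\alpha_j$-side), two of the $d$ strands of the branched cover $\pi_\CDisk\circ u$ occupy positions indexing the \emph{same} arc, so the $\alpha$-arc occupancy set $\alpha(u,\tau_0)$ for $\tau_0$ slightly past $t_\ell$ has at most $n-2$ elements.

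Next I would extract algebraic consequences of having such a ``deficient'' slice. Let $\weight_i = \weight_i(b_0(\phi))$ denote the weights, which by Definition record the averaged local multiplicities of $\phi$ near $Z_i$. On the deficient slice at $\tau_0$, there is an index pair of arcs $\alpha_i,\dots,\alpha_j$ (with $i<j$) that are ``skipped'' — the $\alpha$-set misses some consecutive block — while the portion of $u$ crossing these arcs forces the local multiplicities of $\phi$ on the domains between them to be positive; this translates to $\weight_t(b_0(\phi))\geq 1$ for all $t=i+1,\dots,j$. Simultaneously, the idempotents $\Iup(\x)=\Idemp{\{1,\dots,2n-1\}\setminus\alpha(\x)}$ and $\Iup(\y)$ agree on the skipped block (both contain the indices in $\{i,\dots,j-1\}$ that sit between the skipped arcs), and the count of strands to the left of position $i$ is the same for $\x$ and $\y$ — this is exactly the condition $\#(x\in\x\mid x\leq i)=\#(y\in\y\mid y\leq i)$ in Proposition~\ref{prop:Ideal}, which holds because the branched cover $\pi_\CDisk\circ u$ conserves the number of strands on each side of a fixed $\alpha$-arc not in the occupancy set. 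The alternative is that $\x$ and $\y$ are too far, which also puts $b_0(\phi)$ in ${\mathcal J}$.

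Then I would simply apply Proposition~\ref{prop:Ideal}: the bulleted conditions I have verified — either $\x,\y$ too far, or the existence of $i<j$ with $i,j\notin\x\cap\y$, all intermediate $t$ in $\x\cap\y$, $\weight_t(b_0(\phi))\geq 1$ on $(i,j]$, and equal strand counts below $i$ — are precisely the characterization of pure algebra elements lying in ${\mathcal J}$, so $b_0(\phi)\in{\mathcal J}$ and its image $\bOut(\phi)$ in $\Blg$ vanishes. The main obstacle I anticipate is the second step: carefully tracking how the branched-cover structure of $\pi_\CDisk\circ u$ near the failure height produces the precise index pair $(i,j)$ and certifies both the positivity of the intermediate weights and the left-count equality. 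This is where one has to argue that a ``collision'' of two strands on an arc, or a jump in the occupancy set, is genuinely recorded by the averaged multiplicities $\weight_t$; I would handle it by looking at the $\beta$-monotone strand bookkeeping (Remark~\ref{rem:FlowsAsDisks}, viewing $u$ as a Whitney disk in $\Sym^d(\cSigma)$) and a continuity/degree argument in the arc $I=\overline\alpha_1\cup\dots\cup\overline\alpha_{2n-1}$, analogous to the proof of Lemma~\ref{lem:SBB}.
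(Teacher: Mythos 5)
Your proposal takes a genuinely different route from the paper, and the route has a gap.

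The paper does not apply Proposition~\ref{prop:Ideal} to the global element $b_0(\phi)$. Instead it takes $\tau_0$ to be the infimum of the set of heights at which strong boundary monotonicity fails, picks $\epsilon>0$ small, and factors $b_0(\phi)=b_{\tau_0-\epsilon}\cdot c_{\tau_0-\epsilon}$ where the second factor has initial idempotent $\Idemp{\x_{\tau_0-\epsilon}}$. The crucial local observation is that both $i-1$ and $i$ lie \emph{outside} $\x_{\tau_0-\epsilon}$ (a chord ending on $\alpha_i$ is arriving, and a second strand is already there, so both $\alpha_{i-1}$ and $\alpha_i$ are occupied just before $\tau_0$). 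Then a two-case analysis on $\weight_i(c_{\tau_0-\epsilon})$ puts $c_{\tau_0-\epsilon}$, and hence $b_0(\phi)$, in ${\mathcal J}$: if $\weight_i\geq 1$ one uses the generator $\Idemp{\x_{\tau_0-\epsilon}}\cdot U_i\in{\mathcal J}$, and if $\weight_i=1/2$ the coordinate that crosses position $i$ must jump by at least $2$ (because $i\not\in\x_{\tau_0-\epsilon}$), giving a ``too far'' factor. The factorization is what lets one use the local constraint on the slice idempotent; the global idempotents $\Iup(\x)$ and $\Iup(\y)$ need not themselves exhibit anything useful.

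Your approach, by contrast, attempts to read the conditions of Proposition~\ref{prop:Ideal} directly off the global $b_0(\phi)$. The specific step that fails is the claim that a deficient slice forces $\weight_t(b_0(\phi))\geq 1$ on a consecutive block. This is not true in general: if the colliding strand passes through and continues on (e.g.\ a single chord $L_i$ carries a strand onto an already-occupied $\alpha_i$ and then a later chord $L_{i+1}$ carries it away), the weight of $b_0(\phi)$ at $Z_i$ can be exactly $1/2$, so no $(i,j)$-block with weights $\geq 1$ exists. In that case the correct alternative is that $\Iup(\x)$ and $\Iup(\y)$ are too far, but your argument does not establish this either --- it only mentions ``the alternative'' in passing without tracking which of the two mutually exclusive alternatives occurs, or why one of them must. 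The dichotomy (weights $\geq 1$ on a block versus ``too far'') is precisely the content that has to be proven, and it is most naturally established by the paper's factorization at $\tau_0-\epsilon$, where the extra constraint $i-1,i\not\in\x_{\tau_0-\epsilon}$ is available. A secondary issue: Proposition~\ref{prop:Ideal} as stated gives only a necessary condition for membership in ${\mathcal J}$; you use it in the sufficient direction without justification (this direction is true, but is not what was cited). I would suggest abandoning the attempt to verify the global characterization and instead isolating the failure in a right factor of $b_0(\phi)$, as the two-sided-ideal structure of ${\mathcal J}$ then does the rest.
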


\begin{proof}
    Let $X\subset \R$ denote the set of points $\tau\in\R$ for which
    $u^{-1}(\alpha_i\times\{1\}\times \{\tau\})$ consists of more than one
    point for some $i$.  The set $X$ is bounded below since $u$ is asymptotic to
    an upper Heegaard state $\x$ as $t\goesto -\infty$.  Thus, it has an infimum
    $\tau_0$. There must be some puncture $p\in
    \partial\cS$ asymptotic to a Reeb chord $\rho$ that ends on
    $\alpha_i$, with $t(u(p))=\tau_0$, and another point
    $q\in\partial{\cS}$ with
    $\pi_{\Sigma}(u(q))\in{\overline\alpha}_i$ and $t(u(q))=\tau_0$.  The
    initial point of $\rho$ cannot be on $\alpha_i$, for that would
    violate boundary monotonicity for the portion of the curve 
    in
    values $<\tau_0$. 

    Given a pre-flowline $u$ and generic $\tau<\tau_0$, we construct certain pure
    algebra elements $b_\tau,c_\tau\in \BlgZ$ with $\Iup(\x)\cdot
    b_{\tau} =b_{\tau}$, and
    $b_0(\phi) = b_\tau\cdot c_\tau$.

    The algebra element $b_\tau$ for any $\tau<\tau_0$ is specified by
    its initial idempotent $\Iup(\x)$, and its weight, which is given
    by the sum of the weights of all the Reeb chords and orbits in
    $(t\circ u)^{-1}((-\infty,\tau))$. Note that $b_\tau =
    \Iup(\x)\cdot b_\tau\cdot\Idemp{\x_\tau}$, where
    \[ \x_\tau= \{1,\dots,2n-1\}\setminus \{i\mid \pi_{\Sigma}(u(1,\tau))\cap \alpha_i\neq \emptyset \}, \] 
    and $\Idemp{\x_\tau}$ denotes its corresponding idempotent.

    Assume that the initial point of $\rho$ is on $\alpha_{i-1}$, so
    that $\rho$ is of the form $L_i (R_i L_i)^k$. (The case where the
    initial point of $\rho$ is on $\alpha_{i+1}$ will follow
    similarly.) We could write
    $b_0(\phi)=b_{\tau_0-\epsilon}\cdot c_{\tau_0-\epsilon}$, where $\Iup(\x)\cdot
    b_{\tau_0-\epsilon} \cdot \Idemp{\x_{\tau_0-\epsilon}}=b_{\tau_0-\epsilon}$ and
    $i-1,i\not\in\Idemp{\x_\tau}$. 
    
    There are two cases. Either $\weight_i(c_{\tau_0-\epsilon})\geq 1$, in
    which case $c_{\tau_0-\epsilon}=U_i\cdot c'$ for some algebra
    element $c'$; so $c_{\tau_0-\epsilon}\in{\mathcal J}$.  If
    $\weight_i(c_{\tau_0-\epsilon})=1/2$, then $c_{\tau_0-\epsilon}$ moves one
    of its coordinates from $\geq i+1$ to $\leq i-1$, so once again
    $c_{\tau_0-\epsilon}\in{\mathcal J}$.
\end{proof}

\subsection{Pseudo-holomorphic flows}

\begin{defn}
  \label{def:HolFlow}
  A {\em pseudo-holomorphic flowline} is a
  pre-flow satisfying the following further hypothesis:

\begin{enumerate}[label=(${\mathcal M}$-\arabic*h),ref=(${\mathcal M}$-\arabic*h)]
\setcounter{enumi}{\value{bean}}
\item
  \label{property:Holomorphic}
  The map $u$ is $(j,J)$-holomorphic with respect to some fixed admissible
  almost-complex structure $J$ (Definition~\ref{def:AdmissibleAlmostCx})
  and complex structure $j$ on $\Source$.
\end{enumerate}
\end{defn}

Recall that $\cSigma$ is equipped with $2n$ points $z_1, \dots,
z_{2n}$.  If $u$ is a pseudo-holomorphic flow, then
$f=\pi_{\Sigma}\circ u$ is a local branched cover over $z_i$, with
branching specified by the Reeb chords.

Generalized pseudo-holomorphic flowlines can be collected into
homology classes.  Specifically, if $u$ is a pre-flowline from $\x$ to
$\y$, then the projection to $\Sigma$ induces a two-chain from $\x$ to
$\y$, in the sense of Definition~\ref{def:TwoChainFromXtoY}, obtained
from assembling the local multiplicities of $\pi_{\Sigma}\circ u$. We
call the two-chain so obtained $\Shadow(u)$, the {\em shadow} of $u$.

Fix an admissible almost-complex structure $J$.
We will consider moduli spaces $\ModFlow^B(\x,\y;\Source;\vec{P})$ of
curves from a decorated source 
asymptotic to $\x$ and $\y$ at
$-\infty$ and $+\infty$ respectively, with given shadow
$B\in \doms(\x,\y)$, and respecting the partition ${\vec{P}}$.
We will typically take
the quotient of these moduli spaces by the natural $\R$ action, to get
moduli spaces
\[  \UnparModFlow^B(\x,\y;\Source;\vec{P})=\ModFlow^B(\x,\y;\Source;\vec{P})/\R.\]

\begin{example}
  Consider the top picture in Figure~\ref{fig:AlgebraRelations},
  showing a shaded domain $B$ connecting
  upper states $\x$ and $\y$. (We have illustrated only two components of each
  Heegaard state; assume for all $i>2$, $x_i=y_i$.) The two holomorphic
  disks crossing $L_i$ and $L_{i+1}$ can be translated relative to one other
  to obtain a one-parameter family of holomorphic curves in 
  $\ModFlow^B(\x,\y,(\{L_i\},\{L_{i+1}\}))$ (which are not boundary monotone) and 
  $\ModFlow^B(\x,\y,(\{L_{i+1}\},\{L_i\}))$ (which are boundary monotone),
  and a single curve in $\ModFlow^B(\x,\y,(\{L_i,L_{i+1}\}))$.
  Note that $b_0(B)=\Iup(\x)\cdot L_{i+1} L_{i}\in{\mathcal J}$.
\end{example}

\begin{example}
  Consider the second line in Figure~\ref{fig:AlgebraRelations}. Here,
  the shaded domain supports holomorphic curves in six different
  moduli spaces, $\ModFlow^B(\x,\y,\vec{\rhos})$,
  with partitions $\vec{\rhos}=(\{L_i\},\{R_i\})$, 
  $(\{R_i\},\{L_i\}))$, $(\{L_i\cdot R_i\})$, 
  $(\{R_i\cdot L_i\})$, $(\orb_i)$, and $(\{R_i,L_i\})$.
  The first two are not boundary
  monotone, and the four are. (The first five are two-dimensional
  moduli spaces and the last one is one-dimensional.) Now, 
  $b_0(B)=\Iup(\x)\cdot U_i \cdot \Iup(\y)$ (noting that $\Iup(\x)=\Iup(\y)$,
  and $i-1,i\not\in\{1,\dots,2n-1\}\setminus \alphas(\x)$).
\end{example}

\begin{figure}[h]
 \centering
 \input{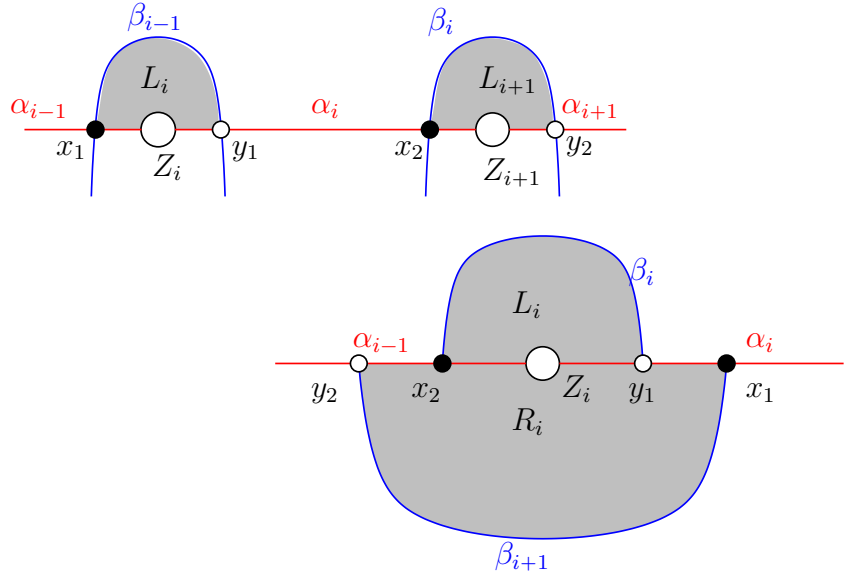}
 \caption{{\bf Some moduli spaces.}}
 \label{fig:AlgebraRelations}
\end{figure}

\subsection{The expected dimension of the moduli spaces}

\begin{defn}
  A {\em Reeb sequence} $\vec{\rho}=(\rho_1,\dots,\rho_{\ell})$ is an ordered
  sequence of Reeb orbits and chords. 
\end{defn}

A Reeb sequence $\vec{\rho}=(\rho_1,\dots,\rho_{\ell})$ gives rise to
a partition of Reeb chords, where each term consists of one
element sets, $\vec{\rhos}=(\{\rho_1\},\dots,\{\rho_{\ell}\})$. We
call such a partition {\em simple}. We will use interchangeably a Reeb
sequence with its associated simple partition; e.g.  we say that
$(\x,\vec{\rho})$ is boundary monotone if $\x$, together with the
simple partition associated to $\vec{\rho}$ is.  Similarly, given a
Reeb sequence $\vec{\rho}$, when we write
$\ModFlow^B(\x,\y,\Source,\vec{\rho})$, we mean the moduli space with
associated simple partition.

Fix $(B,{\vec\rho})$ with $B\in\doms(\x,\y)$, and $\vec\rho$ is a
sequence of Reeb chords and orbits. We say that
$(B,{\vec\rho})$ is {\em compatible} if the sum of the weights of ${\vec\rho}$
agree with the local multiplicities of $B$ around the boundary, and
$(B,{\vec\rho})$ is strongly boundary monotone.

\begin{defn}Let $|\orb(\vec\rho)|$ be the number of 
Reeb orbits appearing in 
$\vec\rho$, and $|\chords(\vec\rho)|$ be the number of chords.
If $(B,{\vec\rho})$ is compatible, we can define the {\em embedded
  Euler characteristic}, the {\em embedded index}, and the {\em
  embedded moduli space}:
\begin{align}
  \chiEmb(B)&= d + e(B)-n_\x(B)-n_\y(B) \label{eq:ChiEmb} \\
  \ind(B,\x,\y;\vec{\rho})&=e(B)+n_\x(B)+n_\y(B)
  + 2|\orb(\vec\rho)|+|\chords(\vec\rho)|
  -2 \weight_\partial(B) \label{eq:IndEmb}\\
  \ModFlow^B(\x,\y,\vec{\rho})&=
  \bigcup_{\chi(\Source)=\chiEmb(B)}
  \ModFlow^B(\x,\y,\Source;\vec{\rho}),
  \label{eq:EmbedMod}
\end{align}
where $\weight_\partial(B)$ is the total weight of $B$ at the boundary; i.e.
\[\weight_\partial(B)=\sum_{\rho_i}\weight(\rho_i).\]
\end{defn}

Note also that $\ind(B,\x,\y;\vec\rho)=\Mgr(B)$, in cases where each Reeb
chord in $\vec\rho$ has length $1/2$ and each orbit has length $1$.

\begin{rem}
  \label{rem:EulerMeasures}
  When comparing the above formulas with, for
  example,~\cite[Section~5.7.1]{InvPair}, bear in mind that
  there, the Euler measure of $B$ is defined in terms of
  the Heegaard surface with boundary ($\Sigma_0$); whereas here we
  think of it as the Euler measure in $\overline\Sigma$ instead.
\end{rem}

The following is a straightforward adaptation 
of~\cite[Proposition~5.29]{InvPair}:
\begin{prop}
  \label{prop:ExpectedDimension}
  If $\ModFlow^B(\x,\y,\Source;\vec\rho)$ is represented by some
  pseudo-holomorphic $u$, then $\chi(\Source)=\chiEmb(B)$ if and only
  if $u$ is embedded. In this case, the expected dimension of the
  moduli space is computed by $\ind(B,\x,\y;\vec\rho)$.  Moreover, if a
  strongly monotone moduli space $\ModFlow^B(\x,\y,\Source;\vec\rho)$
  has a non-embedded holomorphic representative, then its expected
  dimension is $\leq \ind(B,\x,\y;\vec\rho)-2$.
\end{prop}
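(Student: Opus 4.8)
The plan is to transcribe the proof of~\cite[Proposition~5.29]{InvPair}, keeping track of the three features in which the present setting differs: the Euler measure is computed in $\cSigma$ rather than in $\Sigma_0$ (as in Remark~\ref{rem:EulerMeasures}); the curves may have Reeb \emph{orbit} ends at middle infinity, which enter the Fredholm index with weight equal to their length, as in~\cite{TorusMod}; and all boundary degenerations here occur on the $\beta$-side. First I would record the Riemann--Roch computation of the Fredholm index of the linearized $\dbar$-operator at a pseudo-holomorphic flowline $u$ with decorated source $\Source$, shadow $B\in\doms(\x,\y)$, and Reeb asymptotics $\vec\rho$; the asymptotic operators at the $\east$- and middle-infinity punctures are the standard ones because $J$ is split near the punctures (Definition~\ref{def:AdmissibleAlmostCx}), and the orbit ends contribute exactly as in~\cite{TorusMod}. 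The payoff to extract from this bookkeeping is the identity
\[ \dim\ModFlow^B(\x,\y,\Source;\vec\rho)\;=\;\ind(B,\x,\y;\vec\rho)\;-\;\bigl(\chi(\Source)-\chiEmb(B)\bigr),\]
with $\ind$ and $\chiEmb$ as in~\eqref{eq:IndEmb} and~\eqref{eq:ChiEmb}. This reduces the whole statement to a topological claim: for any holomorphic $u$ one has $\chi(\Source)\ge\chiEmb(B)$, with equality precisely when $u$ is embedded, together with the refinement that $\chi(\Source)-\chiEmb(B)\ge 2$ when $u$ is strongly boundary monotone and not embedded.

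Next I would prove the embeddedness criterion by the adjunction argument underlying~\cite[Proposition~5.29]{InvPair} (compare also~\cite{LipshitzCyl}). One assembles from $u$ a generalized self-intersection number $\iota(u)\ge 0$ out of three ingredients: positivity of intersections at interior double points of $u$; the writhes of the braids traced out by $\partial u$ in small neighborhoods of $\x$ and of $\y$; and the local models at the $\east$- and middle-infinity ends, where splitness of $J$ presents each end as a (branched) cover of a trivial cylinder over the labeling Reeb chord or orbit, so that its contribution is read off combinatorially from $\vec\rho$. One then checks the identity $\chi(\Source)-\chiEmb(B)=2\iota(u)$. Since $\iota(u)$ vanishes exactly when $u$ is embedded, this, together with the index identity of the previous paragraph, gives parts (1) and (2) of the proposition.

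For part (3), a non-embedded $u$ has $\iota(u)\ge 1$, so $\chi(\Source)-\chiEmb(B)=2\iota(u)\ge 2$ and the expected dimension drops by at least $2$. The one step that genuinely uses strong boundary monotonicity---and the one I expect to be the main obstacle---is verifying that this defect is honestly even, i.e.\ that $u$ has no boundary double points, which would instead be resolved by a boundary connect sum and could shave only a single unit off $\chi(\Source)$. Here I would argue from the boundary behavior: weak boundary monotonicity (Property~\ref{prop:WeakBoundaryMonotone}) forces $u^{-1}(\beta_i\times\{0\}\times\{t\})$ to be a single point for every $t$, and strong boundary monotonicity~\ref{property:StronglyBoundaryMonotone} forces $u^{-1}(\alpha_i\times\{1\}\times\{t\})$ to be at most a single point for every $t$; by Remark~\ref{rem:FlowsAsDisks} this is exactly the condition that makes $\partial\Source$ inject on each $t$-slice, so there are no boundary double points on either the $\alpha$- or the $\beta$-side, and (with split $J$ fixed) the $\east$- and middle-infinity ends contribute to $\iota(u)$ only in integer amounts. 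Getting this boundary-and-ends accounting exactly right in the presence of the Reeb orbits is the bulk of the work; once it is in place the rest of the argument of~\cite[Proposition~5.29]{InvPair} transcribes without change, and the fact that our hypotheses on partial diagrams exclude $\alpha$-side boundary degenerations is what keeps the argument clean.
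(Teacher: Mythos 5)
Your proposal is correct and takes essentially the same route as the paper, which also adapts \cite[Proposition~5.29]{InvPair}: Riemann--Hurwitz plus the adjunction/intersection-number bookkeeping for the $\chiEmb$ characterization, Rasmussen's Maslov-index formula with model computations for the chord and orbit end corrections, and boundary monotonicity to exclude boundary double points. One detail the paper isolates explicitly, which you would discover while verifying your identity $\chi(\Source)-\chiEmb(B)=2\iota(u)$, is that here $u\cdot\tau_t(u)$ is genuinely $t$-independent---the linking-number correction of \cite{InvPair} from sliding Reeb chords past one another vanishes in the present setting---which is exactly why $\chiEmb(B)$ in Equation~\eqref{eq:ChiEmb} carries no $\iota(\vec\rho)$-type term.
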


\begin{proof}
  The proof is as in~\cite[Proposition~5.29]{InvPair}, which in turn follows~\cite{LipshitzCyl}.

  Suppose that $u$ is a weakly boundary monotone pre-flow. 
  Let $b_\Sigma$ be the
  ramification number of $\pi_\Sigma\circ u$, defined so that each
  interior branch point contributes $1$; each boundary branched points
  contribute $1/2$.  We think of $\cS$ as a manifold with corners, one
  for each $\pm\infty$ puncture (but the Reeb orbit punctures fill in
  to give ordinary boundary). Let $e(\cS)$ denote the corresponding
  Euler measure.

  By the
  Riemann-Hurwitz formula,
  \begin{equation}
    \label{eq:RiemannHurwitz}
    \chi(\cS)=e(\cS)+\frac{d}{2}=
    e(B)+\frac{d}{2}-b_\Sigma.
  \end{equation} Let $\tau_R(u)$ denote a copy
  of $u$ translated by $R$ units in the $\R$-direction.  
  Since $u$ is
  embedded, for small $\epsilon$, $u$ and $\tau_\epsilon(u)$ intersect
  only near branch points of $f=\pi_\Sigma\circ u$;
  and since both are
  pseudo-holomorphic, their algebraic intersection number is precisely
  $b_\Sigma$.  When $R$ is large, 
  \[u\cdot
  \tau_R(u)=n_\x(B)+n_\y(B)-\frac{d}{2}.\]  As in~\cite{LipshitzCyl},
  the intersection number $u\cap\tau_t(u)$ is independent of $t$, so
  in particular
  \begin{equation}
    \label{eq:InParticular}
  u \cdot \tau_R(u)=u\cdot \tau_\epsilon(u).
  \end{equation}
  (In~\cite{InvPair}, the intersection number is not independent of
  $t$; rather, there are possible correction terms when Reeb chords
  are slid past one another. This contribution takes the form of a
  linking number near the boundary which, in the present context
  vanishes.)  
  Thus, Equation~\eqref{eq:InParticular} shows that
  \[ b_\Sigma = n_\x(B)+n_\y(B)-\frac{d}{2}.\] Substituting this back
  into Equation~\eqref{eq:RiemannHurwitz} shows that
  \[ \chi(\cS)=d+e(B)-n_\x(B)-n_\y(B) = \chiEmb(B),\]
  in the case where $u$ is embedded.  

  When $u$ is pseudo-holomorphic,
  but not embedded, it has $s>0$ (positive) double points
 (and no negative double-points). By boundary monotonicity, do
  not occur on the boundary.  In this case
  $u\cdot\tau_\epsilon(u)=b_\Sigma + 2 s$,
  Equation~\eqref{eq:InParticular} shows that 
  $b_\Sigma+ 2s = n_\x(B)+n_\y(B)-\frac{d}{2}$, and so
  \[ \chi(\cS)=\chiEmb(B)+2s>\chiEmb(\cS).\]

  Suppose once again that $u$ is embedded. Thinking of 
  of $\cS$ as a branched cover of the disk with branching $b_\CDisk$,
  we have that
  \[ b_{\CDisk}=d-\chi(\cS)=n_\x(u)+n_\y(u)-e(B).\]
  From the point of view of the symmetric product, since $u$ is embedded,
  $b_{\CDisk}$ is the
  intersection number of the disk corresponding to $u$ with the diagonal locus
  in the symmetric product.
  In the case where the sequence $\vec{\rho}$ is empty, $\ind(u)$ is
  computed by a Maslov index, which, according a result of
  Rasmussen~\cite{RasmussenThesis}, equals $2e(\Shadow(u))+b_{\CDisk}$.
  Thus, in this case where $\ell=0$,
  \[ \ind(u)= e(B)+n_\x(B)+n_\y(B).\]
  
  In general, each Reeb chord and orbit gives a correction to the
  above formula for the index. If a Reeb chord has weight $w$,
  its correction is $1-2w$. This can be seen by looking, for example,
  at a model computation as shown on the right in
  Figure~\ref{fig:IndexModel}.  In this example, we have arranged for
  the source to be a $\Source$ is a disk with a single Reeb chord with
  weight $w$; $B$ has $e(B)=2w$; and the moduli space of
  pseudo-holomorphic representatives (modulo $\R$) is rigid, and hence
  has index $1$. For the Reeb orbit with weight $w$, a similar rigid
  solution can be found with $e=\frac{w+1}{2}$, $n_\x(B)+n_\y(B)=\frac{w-1}{2}$.
 \begin{figure}[h]
 \centering
 \input{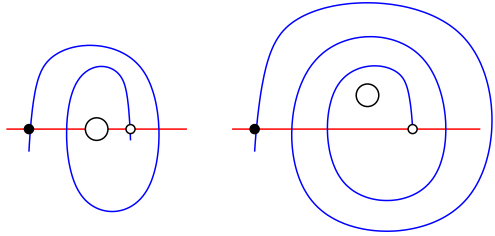}
 \caption{{\bf Model computations for the index.} }
 \label{fig:IndexModel}
 \end{figure}
  
 Finally, when $u$ is not embedded, and has $s$ double points, the
 intersection number with the diagonal corresponds to $b_\CDisk+2s$, so
 $\ind(u)=\ind(B,\x,\y;\rhos)-2s$.
\end{proof}

\subsection{Boundary degenerations}

We formalize the notion of $\beta$-boundary degenerations.

\begin{defn}
  \label{def:BoundaryDegenerations}
  Let $\HH\subset \C$ (the ``lower half plane'') consist of
  $\{x+iy\big| x,y\in \R, y\leq 0\}$, so $\partial\HH=\R$.  A {\em
    boundary degeneration} consists of the following data.
  \begin{itemize}
  \item a smooth, oriented surface $\wSource$ with boundary and punctures,
    exactly $d$ of which are on the boundary.
  \item a labelling of the interior punctures of $\wSource$ by Reeb orbits
  \item a complex structure structure $\jSource$ on $\wSource$. 
  \item 
    a smooth map                       
    $w\colon (\wSource,\partial \wSource)\to (\Sigma\times \HH,\betas\times\R\times\{0\})$.
  \item a constant $\tau\in\R$.
      \end{itemize}
  These data are required to satisfy the following conditions:
  \begin{enumerate}
  \item The map $w\colon \wSource\to\Sigma\times\HH$ is proper, and it extends to
    a proper map 
    ${\overline w}\colon
    {\wSource}' \to {\overline{\Sigma}}\times\HH$,
    where $\wSource'$ is obtained from $\wSource$ by filling in the $\east$ punctures
    (so $\wSource\subset \wSource'\subset \overline{\wSource}$).
  \item The map $\pi_\HH\circ w$ is a $d$-fold branched cover.
  \item At each interior puncture of $\wSource$, $\pi_\Sigma\circ w$ is asymptotic
    to the Reeb orbits which labels the puncture.
  \item For each $t\in \R=\partial\HH$ and $i=1,\dots,d$, $w^{-1}(\beta_i\times
    \{t\})$ consists
    of exactly one point.
  \item The map $w$ is holomorphic, with respect to $\jSource$ on 
    the domain and the split complex structure
    $j\times j_\HH$ on the range.
  \end{enumerate}
\end{defn}

The map $w\colon \wSource\to \Sigma\times \HH$
extends to a map
\[ {\overline{w}}\colon {\overline\wSource}\to \cSigma\times {\overline\HH}.\]

\begin{defn}
  \label{def:evB}
  Think of the boundary punctures in $\wSource$ as a $d$-tuple of points
  in ${\overline\wSource}$.  The images of these points under
  ${\overline{\pi_\HH\circ w}}$ gives a point, denoted $\evB(w)$, in
  $\beta_1\times\dots\times\beta_d=\Tb$.
\end{defn}

The boundary degeneration induces a map $u\colon
(\wSource,\partial\wSource)\to (\Sigma\times [0,1]\times
\R,\betas\times \R)$, so that $\pi_\Sigma\circ u=\pi_\Sigma \circ w$,
$s\circ u = 0$, and $t\circ u\equiv \tau$.

Such a boundary degeneration $w$ has a shadow which is a two-chain $B$
which is a formal linear combination of the components of
$\Sigma\setminus\betas$.

For $\{r,s\}\in\Matching$, there is a two-chain $\Brs$ corresponding
to the component of $\Sigma\setminus \betas$ containing $z_r$ and
$z_s$.  Let $\UnparModDeg_j^{\Brs}$ denote the moduli space of
boundary degenerations with shadow $\Brs$ as above modulo the (real
two-dimensional) group of automorphisms of $\HH$.

\begin{prop}
  \label{prop:SmoothBoundaryDeg}
  For  a generic complex structure $j$ on $\Sigma$, the moduli
  space $\ModDeg_j^{\Brs}$ of boundary degeneration
  is a smooth manifold of dimension $d=g+n-1$.
\end{prop}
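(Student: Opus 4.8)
The plan is to adapt the transversality argument for holomorphic curves in $\Sigma \times [0,1] \times \mathbb{R}$ (following Lipshitz~\cite{LipshitzCyl} and~\cite{InvPair}) to the much simpler setting of boundary degenerations, where the domain is $\Sigma \times \HH$ and the only boundary condition is $\betas \times \R$. First I would observe that for a boundary degeneration $w$ with shadow $\Brs$, the projection $\pi_\HH \circ w$ is a $d$-fold branched cover of $\HH$ by the smooth surface $\wSource$; the branching data together with the Reeb orbit asymptotics and the shadow $\Brs$ determine the topological type of $\wSource$ via Riemann--Hurwitz, exactly as in Equation~\eqref{eq:RiemannHurwitz}. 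So the moduli space before quotienting by $\mathrm{Aut}(\HH)$ decomposes as a union over finitely many topological types of source, and on each stratum we can set up a Fredholm problem.

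Next I would compute the expected dimension. The linearized $\dbar$-operator at a boundary degeneration splits, because the target complex structure $j \times j_\HH$ is split and $\pi_\HH \circ w$ is holomorphic: the $\HH$-component contributes the deformation theory of branched covers of the disk (which is unobstructed and whose index is governed by $e(\Brs)$ and the number of branch points, as in the $\chiEmb$ computation), and the $\Sigma$-component contributes a term controlled by the local multiplicities of $\Brs$ and the Maslov index of the disk mapping into $\Tb = \beta_1 \times \dots \times \beta_d$. Since $\Brs$ is a fixed topological object — the component of $\Sigma \setminus \betas$ containing $z_r$ and $z_s$, which by condition~\ref{UD:TwoBoundariesApiece} has a standard form — this index evaluates to a definite number; I expect it to come out to $d + 2 = g+n+1$ before quotienting, so that after dividing by the real two-dimensional group $\mathrm{Aut}(\HH)$ of automorphisms of the lower half plane, the dimension is $d = g+n-1$. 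This matches the analogous boundary-degeneration dimension counts in~\cite{TorusMod} and in the Heegaard Floer literature (cf.~\cite{HolDisk}).

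Then I would invoke a standard Sard--Smale argument: for a generic (path of) complex structure(s) $j$ on $\Sigma$ in the relevant class, the universal moduli space is a smooth Banach manifold (the $\Sigma$-component of the linearization is surjective after perturbing $j$, since the boundary conditions $\betas$ are fixed and embedded, and the curve is somewhere injective away from the branch locus — here I may need to restrict to the main stratum of simple boundary degenerations, or argue that non-simple ones live in higher codimension), hence $\ModDeg_j^{\Brs}$ is cut out transversally and is a smooth manifold of the expected dimension $d$ for generic $j$. The main obstacle I anticipate is the transversality/somewhere-injectivity step: boundary degenerations are by nature branched multiple covers of the half plane, so one must either perturb within the class of admissible almost-complex structures (which are constrained to make $\pi_\CDisk$ holomorphic) or use a domain-dependent perturbation argument, and one must check that multiply-covered components — e.g. a degeneration factoring through a branched cover $\HH \to \HH$ — do not obstruct the count; here the key point is that the shadow is exactly $\Brs$ with multiplicity one in each elementary domain of that component, which should force the underlying curve to be somewhere injective. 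Once transversality is in hand, the dimension count above completes the proof.
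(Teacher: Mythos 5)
Your approach---set up a Fredholm problem, compute the index via Riemann--Hurwitz, invoke Sard--Smale---is the standard one, and it is essentially what the paper's (one-line) proof cites. The transversality input the paper actually uses is stronger and cleaner than the one you propose: the moduli space of boundary degenerations is \emph{somewhere injective near the boundary} (Lipshitz~\cite[Prop.~3.9, Lem.~3.10]{LipshitzCyl}; see also Oh~\cite{OhBoundary} and~\cite[Prop.~3.14]{HolDisk}). Each component of $\wSource$ has its boundary mapped monotonically onto a distinct $\beta_i\times\R$ (this is Condition~4 of the definition of a boundary degeneration), so the curve is an embedding in a neighborhood of $\partial\wSource$; for holomorphic curves with totally-real boundary conditions that are somewhere injective near the boundary, generic domain-independent perturbations of $j$ suffice for transversality, with no need for domain-dependent perturbations or for a separate analysis of branched covers $\HH\to\HH$ --- such covers are already precluded by the requirement that distinct components hit distinct $\beta_i$'s.

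Your proposed substitute --- that the shadow being exactly $\Brs$ with multiplicity one forces global somewhere-injectivity --- has a gap: it only addresses the one non-trivial component. The other $d-1$ components are constant in the $\Sigma$-direction and have trivial shadow; two of them could in principle map to the same point of $\Sigma$, in which case the map $w$ is not globally somewhere-injective, whereas injectivity near the boundary still holds because their boundaries lie on distinct $\beta_i$'s. This is exactly why the ``near the boundary'' refinement is the right tool here. Separately, your dimension count ($d+2$ for the parametrized moduli, $d$ after quotienting by the two-dimensional automorphisms of $\HH$) is internally reasonable, but the proposition as written concerns $\ModDeg_j^{\Brs}$ rather than $\UnparModDeg_j^{\Brs}$; you should reconcile this with the paper's conventions (and with the requirement in Lemma~\ref{lem:BoundaryDegenerationsDegree1} that $\evB\colon\ModDeg_j^{\Brs}\to\Tb$ have a well-defined mod-$2$ degree) before asserting the final answer.
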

\begin{proof}
  This follows from the fact that the corresponding moduli space is
  somewhere injective near the boundary; see~\cite[Proposition~3.9 and
  Lemma~3.10]{LipshitzCyl}; see also~\cite[Proposition~3.14]{HolDisk}
  and~\cite{OhBoundary}.
\end{proof}

There is an evaluation map $\evB\colon\ModDeg_j^{{\mathcal
    B}_{\{r,s\}}} \to \Tb$. Given $\x\in\Tb$, let
\[ \ModDeg_j^{\Brs}(\x)=(\evB)^{-1}(\x).\]

The following result will  be important:

\begin{lem}
  \label{lem:BoundaryDegenerationsDegree1}
  The evaluation map $\evB\colon \ModDeg_j^{{\mathcal B}_{\{r,s\}}}\to \Tb$ has odd degree.
\end{lem}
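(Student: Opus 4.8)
The plan is to compute the mod-$2$ degree of $\evB\colon \ModDeg_j^{\Brs}\to \Tb$ by reducing to an explicit, topologically simple model and invoking the standard fact that $\beta$-boundary degenerations in a region bounded by a single configuration of $\beta$-circles are counted by a Lefschetz-type fixed-point computation. First I would observe that by Proposition~\ref{prop:SmoothBoundaryDeg} the moduli space $\ModDeg_j^{\Brs}$ is a smooth $d$-manifold, so the degree of $\evB$ is well defined mod $2$ (and one should check properness of $\evB$, i.e.\ that no sequence of boundary degenerations with bounded shadow $\Brs$ escapes to a broken configuration whose evaluation image is missed — the only possible degenerations split off a closed holomorphic sphere or a disk bubble, and since the shadow $\Brs$ is the fixed region of $\Sigma\setminus\betas$ containing $z_r,z_s$ and is minimal among positive shadows with that property, no nonconstant bubbling can occur without changing the homology class). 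Granting properness, $\deg_2(\evB)$ is independent of the generic point $\x\in\Tb$ chosen and of the generic $j$.

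Next I would localize: the region $\Brs\subset\cSigma$, together with the $\beta$-circles bounding it, is (after cutting) a subsurface of $\cSigma$ whose closure is obtained from a disk (the image of $\overline{\HH}$) by the branched cover $\pi_\Sigma\circ w$; by Condition~\ref{UD:TwoBoundariesApiece} each component of $\Sigma_0\setminus\betas$ contains exactly two boundary circles $Z_r,Z_s$, so the component of $\cSigma\setminus\betas$ in question, call it $P_{\{r,s\}}$, is a genus-$g'$ surface with two marked points $z_r,z_s$, whose boundary consists of some of the $\beta_i$. The key structural step is that a boundary degeneration with shadow $\Brs$ is exactly a holomorphic map from a disk (with interior punctures limiting to the two Reeb orbits around $z_r,z_s$) onto $P_{\{r,s\}}$ with the prescribed branching, and the evaluation $\evB$ records where the $d$ boundary marked points land on $\Tb=\beta_1\times\dots\times\beta_d$. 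This is precisely the situation analyzed for $\alpha$- (or $\beta$-) boundary degenerations in closed Heegaard Floer theory: in \cite{HolDisk} (and \cite{LipshitzCyl}) one shows that for a subsurface of $\cSigma$ cut out by a collection of circles, the moduli space of such degenerations, evaluated at the Lagrangian torus, has odd degree — the prototype being that the ``top'' domain in a genus-$g$ diagram contributes $1$ to $\partial^2$ in the relevant boundary-degeneration count. Concretely I would deform $j$ and the diagram within the region so that $P_{\{r,s\}}$ is a standard genus-$g'$ surface in which the $\beta$-circles sit in a standard handlebody position; then the count of degenerations through a generic point of $\Tb$ is, by a Riemann–Hurwitz and dimension bookkeeping (identical to Equation~\eqref{eq:RiemannHurwitz} and the index computation in Proposition~\ref{prop:ExpectedDimension}), a single transverse point, plus possibly others that cancel in pairs.

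I expect the main obstacle to be pinning down the model computation rigorously — i.e.\ reducing the general $P_{\{r,s\}}$ to a standard one and extracting the odd count — since this is where the genus of the region enters and where one must rule out or pair up the non-generic contributions. The cleanest route is to avoid an explicit count and instead argue by a cobordism/degeneration invariance: stretch the almost complex structure along a separating curve in $P_{\{r,s\}}$ that reduces the genus by one, and show the degree is preserved under this neck-stretching (the broken configurations that appear come in pairs, or reduce to a lower-genus boundary-degeneration count glued to a rigid piece of known odd degree), so by induction on $g'$ one is reduced to the genus-zero case. In genus zero, $P_{\{r,s\}}$ is an annulus (a disk with the two points $z_r,z_s$ and boundary a single $\beta$-circle after filling, i.e.\ $d$ is small and explicit), where the degeneration is the obvious one and $\evB$ is manifestly a degree-one map. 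This inductive structure, together with the properness established at the outset, yields that $\evB$ has odd degree. The one technical point to be careful about in the induction is transversality of the stretched moduli spaces and the matching of index data across the neck, both of which are handled exactly as in \cite[§3]{LipshitzCyl} and \cite[§3]{HolDisk}, to which I would appeal rather than reproving.
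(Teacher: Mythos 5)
Your overall strategy — establish the degree in a base case and reduce the higher-genus case to it by a stabilization/degeneration argument, appealing to \cite{LipshitzCyl} and \cite{HolDisk} — is in the same spirit as the paper's proof, which treats $g=0$ directly and then handles $g>0$ by ``gluing spheres'' (the stabilization-invariance mechanism from \cite[\S 12]{LipshitzCyl}, \cite[\S 10]{HolDisk}). However, your description of the base case is incorrect in a way that matters, and the ``Lefschetz-type fixed-point'' framing you lead with is not the mechanism in play.

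The issue with your base case: you assert that in genus zero, $P_{\{r,s\}}$ ``is an annulus (a disk with the two points $z_r,z_s$ and boundary a single $\beta$-circle after filling, i.e.\ $d$ is small and explicit).'' Neither part of this is right. Even when the Heegaard surface $\Sigma_0$ has genus $0$, the component of $\cSigma\setminus\betas$ containing $z_r$ and $z_s$ is a planar surface whose boundary can consist of \emph{several} $\beta$-circles, not one; it is a disk only in special cases, and it is never an annulus once you have filled in $Z_r$ and $Z_s$. Moreover $d = g+n-1 = n-1$, which is not small. So the claimed ``manifest degree one'' doesn't follow from your picture. The paper's actual $g=0$ argument is more elementary and more robust than a degree count: the boundary degeneration splits as a union of $d-1$ \emph{constant} disk components (one on each $\beta$-circle not meeting $\Brs$) together with one nonconstant component covering $\Brs$ with degree one; the evaluation $\evB$ is then seen to be a homeomorphism directly, because one may freely slide the $d-1$ constants over their $\beta$-circles (giving a $(d-1)$-dimensional torus factor of $\Tb$) and independently reparameterize the nonconstant component to sweep out the remaining coordinate. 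No transversality, Lefschetz count, or Riemann--Hurwitz bookkeeping is needed for this base case. Your neck-stretching induction on the genus of $P_{\{r,s\}}$ is close in spirit to the cited sphere-gluing argument, but as written it would need the correct base case and a careful accounting of the broken limits; the paper sidesteps that work by citing the stabilization references directly for $g>0$.
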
 
\begin{proof}
  Consider first the case where $g=0$. In this case, $\evB$ is clearly a
  homeomorphism: the boundary degeneration consist of $d-1$ constant
  disks, and one disk that maps to $\Brs$ with degree one. We can
  move the constants around
  on at $d-1$ dimensional portion of $\Tb$ and reparameterize the remaining component
  to obtain the claimed homeomorphism. In cases where $g>0$, gluing spheres gives the desired
  degree statement~\cite[Section~12]{LipshitzCyl}; see also~\cite[Section~10]{HolDisk}.
\end{proof}

\subsection{Regular moduli spaces of embedded curves}

\begin{defn}
  \label{def:Typical}
  A Reeb sequence $(\rho_1,\dots,\rho_\ell)$ is called {\em{typical}}
  if each chord $\rho_i$ appearing in the sequence covers half of some
  boundary circle (i.e. it is one of $L_i$ or $R_i$), and each Reeb
  orbit covers some boundary circle exactly once.
\end{defn}

\begin{thm}
  \label{thm:GeneralPosition}
  Choose a generic $J$.  Let $B$ be a shadow with $\Mgr(B)\leq 2$ and
  $\vec{\rho}$ is a typical Reeb sequence. Then, the moduli spaces
  $\ModFlow^B(\x,\y,\Source;\vec{\rho})$ is a smooth manifold of
  dimension given by $\Mgr(B)$.  Moreover, if $\Mgr(B)\leq 1$ and
  $\vec{\rho}$ is not typical, then
  $\ModFlow^B(\x,\y,\Source;\vec{\rho})$ is empty.
\end{thm}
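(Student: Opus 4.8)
The plan is to prove Theorem~\ref{thm:GeneralPosition} as a standard transversality statement in Lipshitz's cylindrical setup~\cite{LipshitzCyl}, adapting the argument of~\cite[Section~5.7]{InvPair} in the same spirit as Proposition~\ref{prop:ExpectedDimension}; the one genuinely new ingredient is the presence of interior Reeb\-/orbit asymptotics, which are handled as in~\cite{TorusMod}. First I would reduce to embedded curves, taking $\chi(\Source)=\chiEmb(B)$ (cf.\ Equation~\eqref{eq:EmbedMod}); by Proposition~\ref{prop:ExpectedDimension} every $(j,J)$\-/holomorphic representative of $\ModFlow^B(\x,\y,\Source;\vec{\rho})$ with this Euler characteristic is then embedded in $\overline{\Sigma}\times[0,1]\times\R$, while for sources violating this the same Proposition produces only strata of strictly smaller dimension. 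An embedded curve is in particular somewhere injective, so the usual Sard--Smale machinery applies: the parametrized moduli space over the space of admissible almost\-/complex structures is a Banach manifold, the projection to the $J$\-/space is Fredholm, and for $J$ in a residual set $\ModFlow^B(\x,\y,\Source;\vec{\rho})$ is a smooth manifold of dimension equal to the Fredholm index of the associated $\overline{\partial}$\-/operator. The point requiring attention is that admissible $J$ is a constrained class (it fibers over $\pi_{\CDisk}$, preserves $\ker d\pi_{\Sigma}$, is $\R$\-/invariant, and is split near the punctures), so one must verify---as is done in~\cite{LipshitzCyl,TorusMod}---that perturbing $J$ within this class, away from the split regions, still suffices to achieve transversality, including at the interior Reeb\-/orbit ends.

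It then remains to identify the Fredholm index. By Equation~\eqref{eq:IndEmb} and the remark immediately following it, the index equals $\ind(B,\x,\y;\vec{\rho})$, and $\ind(B,\x,\y;\vec{\rho})=\Mgr(B)$ precisely when every chord in $\vec{\rho}$ has length $\tfrac12$ and every orbit has length $1$---that is, when $\vec{\rho}$ is typical. This gives the first assertion for $\Mgr(B)\le 2$. For the second assertion I would combine the index computation with the free $\R$\-/action. Using $\Mgr(B)=e(B)+P(B)$ (Equation~\eqref{eq:DefineMgr}) and reading off Equation~\eqref{eq:IndEmb}, each chord of length $w\ge\tfrac12$ contributes $1-2w$ and each orbit of length $w\ge 1$ contributes $2-2w$ to $\ind(B,\x,\y;\vec{\rho})-\Mgr(B)$; each such contribution is $\le 0$, and is $\le -1$ (resp.\ $\le -2$) unless $w=\tfrac12$ for the chord (resp.\ $w=1$ for the orbit). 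Hence if $\vec{\rho}$ is not typical, then $\ind(B,\x,\y;\vec{\rho})\le \Mgr(B)-1\le 0$. On the other hand, translation in the $\R$\-/direction acts on $\ModFlow^B(\x,\y,\Source;\vec{\rho})$, and this action is free: by Property~\ref{prop:BrCover}, $\pi_{\CDisk}\circ u$ is a nonconstant branched cover, so no flowline is $\R$\-/invariant. Thus a nonempty, regular $\ModFlow^B(\x,\y,\Source;\vec{\rho})$ has dimension at least $1$, contradicting the index bound; so for generic $J$ it is empty.

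I expect the main obstacle to be precisely the transversality step for embedded curves carrying a mixture of Reeb\-/chord and interior Reeb\-/orbit asymptotics while remaining inside the admissible class of almost\-/complex structures: this is where the somewhere\-/injectivity of embedded cylindrical curves, the Sard--Smale argument, and the behavior of the perturbation near the split puncture regions must be assembled carefully from~\cite{LipshitzCyl,InvPair,TorusMod}. By contrast, once that input is granted both halves of the theorem are essentially formal---the typical case is the index identity $\ind=\Mgr(B)$, and the non\-/typical case is the elementary inequality above together with the free $\R$\-/action. The hypotheses $\Mgr(B)\le 2$ and $\Mgr(B)\le 1$ are exactly the ranges in which these moduli spaces are subsequently used to build and check the type $D$ and type $DA$ structure relations, and in which their compactifications---broken flowlines and the $\beta$\-/boundary degenerations of Proposition~\ref{prop:SmoothBoundaryDeg}---stay under control, so there is no loss in restricting to them.
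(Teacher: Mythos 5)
Your proof is correct and takes essentially the same route as the paper: transversality for the moduli spaces with unordered asymptotics (citing the bordered Floer machinery), then reading off the dimension from the embedded index formula, with $\ind(B,\x,\y;\vec\rho)=\Mgr(B)$ exactly when each chord has length $\tfrac12$ and each orbit length $1$. The paper organizes the transversality step a little differently---it first establishes smoothness of $\ModFlow^B(\x,\y,\Source)$ by citing~\cite[Proposition~5.6]{InvPair} and then cuts out the ordered moduli spaces as complements of diagonals in $\R^E$ via the evaluation map, rather than appealing directly to embeddedness and Sard--Smale---but this is a presentational choice, and you are right that the Euler-characteristic reduction $\chi(\Source)=\chiEmb(B)$ and the per-chord/per-orbit index deficit are the content. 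You also spell out the deduction of emptiness for non-typical $\vec\rho$ via the free $\R$-action, which the paper's proof leaves implicit; this is a genuine (if routine) gap-filling step. One small imprecision: you justify freeness of the $\R$-action by ``Property~\ref{prop:BrCover} says $\pi_{\CDisk}\circ u$ is a nonconstant branched cover,'' but that property is satisfied by the constant flowline as well, which \emph{is} $\R$-invariant. The correct reason is that a non-typical $\vec\rho$ is in particular nonempty, so any representative has East or interior punctures, hence is not a trivial strip, hence is not fixed by translation.
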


\begin{proof}
  Consider first the moduli space $\ModFlow^B(\x,\y,\Source)$, where
  the order of the punctures is left unspecified. Standard arguments
  show that, for generic $J$, the corresponding moduli space is a
  manifold transversely cut out by the $\dbar$ operator;
  see~\cite[Proposition~5.6]{InvPair}. Moreover, the evaluation map
  at the punctures gives a map from the moduli space to $\R^{E}$,
  where here $E=E(\Source)$ denotes the number of east punctures of
  $\Source$. There is a dense set of $J$ for which the evaluation map
  is transverse to the various diagonals in $\R^{E}$, so that their
  preimages give submanifolds of $\ModFlow^B(\x,\y,\Source)$; the
  moduli spaces $\ModFlow^B(\x,\y,\Source;\vec{\rho})$ are the
  complement of these submanifolds.

  This transversality argument shows that
  $\ModFlow^B(\x,\y,\Source;\vec\rho)$ is a smooth manifold with
  dimension (as computed in
  Proposition~\ref{prop:ExpectedDimension}) given by
  \begin{align*}
    \ind(B,\x,\y;\vec{\rho})&=e(B)+n_\x(B)+n_\y(B)
  + 2|\orb(\vec\rho)|+|\chords(\vec\rho)|
  -2 \sum_{\rho_i}\weight(\rho_i)\\
  &= \Mgr(B)-\sum_{\orb\in \orb(\vec\rho)} (2\weight(\orb)-2) -
  \sum_{\rho\in \chords(\vec\rho)} (2\weight(\rho)-1) \\
  & \leq \Mgr(B), 
  \end{align*}
  with equality exactly when each orbit has length one and each chord has length $1/2$.
\end{proof}

\begin{rem}
  The above general position can be seen from the point of view of the
  symmetric product as follows.  As in Remark~\ref{rem:FlowsAsDisks},
  we think of our pseudo-holomorphic curves as giving
  pseudo-holomorphic disks in $\Sym^d(\overline\Sigma)$.  The subspace
  $\alpha_1^c\times\dots\times \alpha_g^c\times \Sym^{n-1}(I)$ is
  equipped with codimension one walls of the form $\{p_i\}\times\Sym^{n-2}(I)$,
  with  $p_i\in\cSigma$.
  Typical sequences arise for holomorphic disks that are transverse to
  these walls; chords or orbits with larger weight occur when
  the curves have higher order contact with the submanifolds.
\end{rem}

\subsection{Ends of one-dimensional moduli spaces}

We will consider ends of one-dimensional moduli spaces
$\UnparModFlow^B(\x,\y,\Source;\vec{P})$. These will include ends that
consist of two-story buildings. Another kind of end consists of the
formation of an ``orbit curve'' at east infinity.  This occurs when
some Reeb orbit constraint $\orb_i$ in $\vec{P}$ slides off to $s=1$.
(See Figure~\ref{fig:OrbitEnd}.)  Other ends occur when two consecutive
parts in $\vec{P}$ collide. We call these {\em collision ends}.

In formulating these ends, we use the following terminology from~\cite{InvPair}:

\begin{defn}
  \label{def:ComposableChords}
  An ordered pair of Reeb chords $\rho$ and $\sigma$ are called {\em
    weakly composable} if $\rho^+$ and $\sigma^-$ are contained on the same $\alpha$-arc.  Moreover, if $\rho$ and $\sigma$ weakly
  composable, they are further called {\em strongly composable} if
  $\rho^+=\sigma^-$. If $\rho$ and $\sigma$
  are strongly composable we can join them to get a new Reeb chord
  $\rho\uplus\sigma$.
\end{defn}

Thus, $L_i$ and $L_{i+1}$ are weakly but not strongly composible,
while $L_i$ and $R_i$ are strongly composable.

\begin{defn}
  A collision end is called {\em invisible} if $\rho_i$ and
  $\rho_{i+1}$ are the same Reeb orbit. Otherwise, it is called {\em visible}.
\end{defn}

\begin{thm}
  \label{thm:DEnds}
  Let $\Hup$ be an upper diagram and $\Matching$ its associated
  matching.  Fix upper Heegaard states $\x$ and $\y$ and a typical
  Reeb sequence $\vec{\rho}$.  Suppose moreover that $(\x,\vec{\rho})$
  is strongly boundary monotone, and
  $B\in\doms(\x,\y)$ has vanishing local multiplicity somewhere.
  Fix $\Source$ and $\vec{\rho}$ so that
  $\ind(B,\x,\y;\Source,\vec{\rho})=2$. Let
  $\UnparModFlow=\UnparModFlow^B(\x,\y;\Source;{\vec{\rho}})$. The total
  number of ends of $\UnparModFlow$ of the following types are even in
  number:
  \begin{enumerate}[label=(DE-\arabic*),ref=(DE-\arabic*)]
  \item
    \label{typeDE:2StoryEnd} 
    Two-story ends, which are of the form
    \[ \UnparModFlow^{B_1}(\x,\w;\Source_1;\rho_1,\dots,\rho_i)\times 
    \UnparModFlow^{B_2}(\w,\y;\Source_2;\rho_{i+1},\dots,\rho_{\ell}),\]
    taken over all upper Heegaard states $\w$ and choices of $\Source_1$ and $\Source_2$ so that
    $\Source_1\natural \Source_2=\Source$, and $B_1\natural B_2=B$.
  \item 
    \label{typeDE:OrbitEnd}Orbit curve ends,
    of the form $\UnparModFlow^B(\x,\y,\Source;\rho_1,\dots,\rho_{i-1},\longchord_j,\rho_{i+1},\dots,\rho_{\ell})$,
    where some Reeb orbit component $\rho_i=\orb_j$ slides off to $s=1$
    and is replaced by a Reeb chord $\longchord_j$ that
    covers $Z_j$ with multiplicity $1$. (When $j\neq 1$ or $2n$, 
    there are two possible choices:  $\longchord_j=R_j L_j$ or $L_j R_j$.)
  \item
    \label{typeDE:CollisionWithOrbit} Visible collision ends where at least one of $\rho_i$ or $\rho_{i+1}$ is a Reeb orbit.
  \item 
    \label{typeDE:ChordChord}
    Collision ends where $\rho_i$ and $\rho_{i+1}$ are Reeb chords,
    where one of the two conditions are satisfied:
    $\rho_i$ and $\rho_{i+1}$ are not weakly composable, or they are strongly composable.
  \item 
    \label{typeDE:BoundaryDegeneration}
    An end consisting of a boundary degeneration that meets a constant
    flowline.  In this special case, $\vec{\rho}$ consists of exactly two
    constraints, which are matched Reeb orbits, and $\x=\y$. Moreover,
    if $\{r,s\}\in\Matching$, then the number of boundary degeneration
    ends of the union
    \[ \ModFlow(\x,\x,\{e_r\},\{e_s\})\cup\ModFlow(\x,\x,\{e_s\},\{e_r\})\]
    is odd.
  \end{enumerate}
\end{thm}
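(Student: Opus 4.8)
The plan is to apply Gromov compactness to the one-dimensional moduli space $\UnparModFlow=\UnparModFlow^B(\x,\y;\Source;\vec{\rho})$, obtaining a compactification $\overline{\UnparModFlow}$ which is a compact one-manifold with boundary. Since a compact one-manifold has an even number of boundary points, it then suffices to show that the boundary points of $\overline{\UnparModFlow}$ are exactly the configurations of types \ref{typeDE:2StoryEnd}--\ref{typeDE:BoundaryDegeneration}; the parenthetical parity assertion inside \ref{typeDE:BoundaryDegeneration} is an additional, finer claim that I will extract from Lemma~\ref{lem:BoundaryDegenerationsDegree1}. The argument follows the model of~\cite{InvPair}, the one genuinely new feature being the $\beta$-boundary degeneration ends, which must be identified and counted.

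First I would classify the possible limits of a sequence $u_k\in\UnparModFlow$ with no convergent subsequence in $\UnparModFlow$: (i) the domains break along a level of the $\R$-factor; (ii) an east or interior puncture runs to east/middle infinity, either by a Reeb orbit sliding to $s=1$ or by two consecutive constraints colliding in the $t$-coordinate; (iii) part of the curve degenerates onto the $\beta$-side, producing a boundary degeneration in the sense of Definition~\ref{def:BoundaryDegenerations}; or (iv) a sphere or disk bubbles off in the interior of $\Sigma$. Case (iv) is excluded for generic admissible $J$ by the standard dimension count together with the hypothesis that $B$ has vanishing local multiplicity somewhere (so $B$ has no closed component), and $\alpha$-boundary degenerations are excluded by the homological hypotheses~\ref{UD:NoPerDom} on an upper diagram, exactly as in the discussion preceding Section~\ref{sec:Heegs}. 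In case (i) the index adds, so each level has index $\geq 1$ after modding by $\R$; hence each level is rigid and, by Theorem~\ref{thm:GeneralPosition}, carried by a typical Reeb sequence, and strong boundary monotonicity (via Lemma~\ref{lem:SBB} and Proposition~\ref{prop:SBD}) forces upper-Heegaard-state asymptotics at the break — this is type \ref{typeDE:2StoryEnd}. In case (ii), an orbit sliding to $s=1$ is replaced by a chord $\longchord_j$ covering $Z_j$ once (type \ref{typeDE:OrbitEnd}); a collision in which one of $\rho_i,\rho_{i+1}$ is an orbit gives a boundary point exactly when the collision is visible, since an invisible collision of equal orbits is an interior point of the moduli space (type \ref{typeDE:CollisionWithOrbit}); and a chord-chord collision gives a boundary point precisely when the two chords are not weakly composable or are strongly composable, the latter limit carrying the joined chord $\rho_i\uplus\rho_{i+1}$ (type \ref{typeDE:ChordChord}). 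The weakly-but-not-strongly composable chord-chord collisions do not contribute, as the corresponding degenerate configuration (a ``shuffle'' at east infinity) occurs in codimension at least two; here I would import the analysis of~\cite{InvPair} essentially verbatim, since no $\beta$-side phenomena interfere.

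Case (iii) is the main new ingredient. A boundary degeneration appearing in a limit of curves in $\UnparModFlow$ has a shadow which is a positive combination of the components of $\Sigma\setminus\betas$; by Condition~\ref{UD:TwoBoundariesApiece} each such component contains exactly the two boundary circles $Z_r,Z_s$ of one matched pair $\{r,s\}\in\Matching$, and matching the east-infinity data with the complementary piece of the limit forces the shadow of the boundary degeneration to be exactly $\Brs$ for a single matched pair. An index bookkeeping using Proposition~\ref{prop:SmoothBoundaryDeg} (so $\ModDeg_j^{\Brs}$ has dimension $d=g+n-1$) together with the total index $2$ of $\UnparModFlow$ then forces the complementary piece to be a constant flowline; hence $\x=\y$ and $\vec{\rho}$ consists of exactly the two matched Reeb orbits $e_r,e_s$. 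Finally, such a boundary-degeneration end is pinned, via the evaluation constraint from the constant flowline, to a point of the fibre $(\evB)^{-1}(\x)\subset\ModDeg_j^{\Brs}$, and Lemma~\ref{lem:BoundaryDegenerationsDegree1} says $\evB$ has odd degree, so this fibre has an odd number of points; this yields type \ref{typeDE:BoundaryDegeneration} together with its parity statement for $\ModFlow(\x,\x,\{e_r\},\{e_s\})\cup\ModFlow(\x,\x,\{e_s\},\{e_r\})$.

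I expect the main obstacle to be the bookkeeping in case (iii): verifying that no other positive shadow can appear (in particular, no proper sum of two or more components of $\Sigma\setminus\betas$, and no component attached to a \emph{non}-constant flowline), that the gluing at a $\beta$-boundary degeneration is unobstructed so that these configurations are genuinely boundary points of $\overline{\UnparModFlow}$ rather than interior, and that the odd-degree statement of Lemma~\ref{lem:BoundaryDegenerationsDegree1} passes correctly through the fibre product with the constant flowline. A secondary subtlety is checking throughout that the relevant shadows do not lie in the ideal ${\mathcal J}\subset\BlgZ$, so that all limiting configurations have honest upper-Heegaard-state asymptotics at $\pm\infty$; this follows from Lemma~\ref{lem:SBB} and Proposition~\ref{prop:SBD}.
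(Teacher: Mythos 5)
Your proposal is correct and follows the same approach as the paper, whose proof of Theorem~\ref{thm:DEnds} is a short reduction to the proof of Theorem~\ref{thm:AEnds}: classify the Gromov limits of the one-dimensional moduli space, identify the codimension-one strata as the five listed types, and deduce the parity statement in type~\ref{typeDE:BoundaryDegeneration} from the odd-degree statement of Lemma~\ref{lem:BoundaryDegenerationsDegree1}, with the index drop $\ind(B+\Brs)=\ind(B)+2$ forcing the complementary piece to be constant. One small misattribution: you exclude the weakly-but-not-strongly composable chord collisions by invoking shuffle curves as a codimension-two phenomenon, whereas the mechanism used in the paper's proof of Theorem~\ref{thm:AEnds} (which Theorem~\ref{thm:DEnds} refers back to) is that boundary monotonicity together with the nonexistence of $\alpha$-boundary degenerations forbids such collisions outright; the conclusion you draw is nonetheless the correct one.
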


\begin{figure}[h]
 \centering
 \input{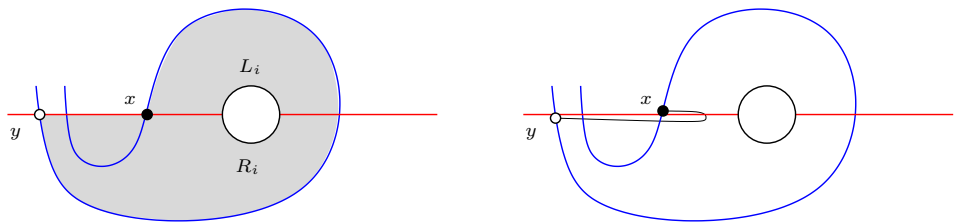}
 \caption{{\bf Orbit curve end.}  Consider the moduli space
   $\UnparModFlow^B(x,y,\{\orb_i\})$, where $B$ is shaded on the left.
   This one-dimensional moduli space has an end which is a two-story
   building, and another which is an orbit curve end with $\longchord_i=L_i R_i$.}
 \label{fig:OrbitEnd}
\end{figure}

The above result is proved in Section~\ref{sec:CurvesA}.

\section{Type $D$ modules}
\label{sec:TypeD}

Let $\Hup$ be an upper diagram, and let $\Matching$ be its
corresponding matching. Let $W$ be the one-manifold associated to
$\Matching$ (as in Definition~\ref{def:AssociatedW}), and fix an
orientation on $W$; i.e. a choice of preferred $i$ for each
$\{i,j\}\in\Matching$. (This latter data is needed to define the
$\Q^n$-valued Alexander grading.)

Choose an admissible almost-complex structure $J$ over
$\Hup$ (as in Definition~\ref{def:AdmissibleAlmostCx}). 
Let $\Dmod(\Hup)$ be the vector
space generated over $\Field$ by the upper states. 
For $\alpha(\x)$ as in Definition~\ref{def:UpperState}, let 
\begin{equation}
  \label{eq:IdempOfUpper}
  \Iup(\x)=\Idemp{\{1,\dots,2n-1\}\setminus \alpha(\x)}
\end{equation}
thought of as an idempotent in the algebra $\Clg(n)$.  The left action of
the idempotent subalgebra of $\Clg(n)$  is specified by the condition that
$\Iup(\x)\cdot \x = \x$.

Consider the functions $\Mgr\colon \States(\Hup)\to \Z$ and
$\Agr\colon \States(\Hup)\to \OneHalf \Z^n$ defined in
Equations~\eqref{eq:DefMgr} and~\ref{eq:DefAgr} above. These endow
$\Dmod(\Hup)$ with a $\Z$-valued $\Delta$-grading and a $\OneHalf
\Z^n$-valued Alexander grading, denoted $\Agr$.

Let $\x,\y$ be upper Heegaard states, and $B\in\doms(\x,\y)$.  Let
$\ModFlow^B(\x,\y)$ be the union of $\ModFlow^B(\x,\y;\vec\rho)$ (as
in Equation~\eqref{eq:EmbedMod}), taken over all typical sequences
$\vec\rho$ of Reeb chords and orbits that are compatible with $B$.
Recall that by Theorem~\ref{thm:GeneralPosition}, $\ModFlow^B(\x,\y)$
has expected dimension $\Mgr(B)$ (independent of the typical sequence
$\vec\rho$).

Define the operation
\[ \delta^1\colon \Dmod(\Hup)\to \Clg(n)\otimes \Dmod(\Hup)  \]
by
\begin{equation}
  \label{eq:TypeDOperation}
  \delta^1(\x) = \sum_{\{\y\in \States, B\in\doms(\x,\y)\big|
  \Mgr(B)=1\}} \#\UnparModFlow^B(\x,\y) \cdot \bOut(B)\otimes  \y,
\end{equation}
 where
$\bOut(B)\in\Clg(n)\subset\Blg(2n,n)$ is as in
Section~\ref{sec:Shadows}. 
\begin{prop}
  The sum appearing on the right of Equation~\eqref{eq:TypeDOperation}
  is finite. 
\end{prop}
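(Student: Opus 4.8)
The plan is to show that for each fixed upper state $\x$, only finitely many pairs $(\y, B)$ with $B \in \doms(\x,\y)$ and $\Mgr(B) = 1$ can contribute a nonzero term to $\delta^1(\x)$, and moreover that each such term carries a well-defined algebra coefficient. First I would observe that since $\States(\Hup)$ is a finite set (each upper state is a choice of one intersection point on each of the finitely many $\beta$-circles, subject to the $\alpha$-constraints), it suffices to fix $\y$ as well and bound the number of relevant $B \in \doms(\x,\y)$. Next I would note that if $\UnparModFlow^B(\x,\y)$ is non-empty, then $B$ must admit a pseudo-holomorphic representative, hence $B$ is \emph{positive}: all local multiplicities of $B$ are non-negative. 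This is the standard positivity-of-intersections input (the shadow of a holomorphic curve has non-negative local multiplicities), and it immediately restricts attention to positive domains.

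The key step is then to bound the set of positive $B \in \doms(\x,\y)$ with $\Mgr(B) = 1$. Here I would use the structure established in Lemma~\ref{lem:GradingsWellDefined} and Remark~\ref{rem:OurDomains}: any two elements of $\doms(\x,\y)$ differ by a $\Z$-linear combination of the components $\mathcal{D}$ of $\cSigma \setminus \betas$, and adding such a component $\mathcal{D}$ changes $\Mgr$ by $2$ and changes $\weight(b_0)$ by $2$ as well. Fixing one reference domain $B_0 \in \doms(\x,\y)$, every $B$ with $\Mgr(B) = 1$ is of the form $B_0 + \sum_j c_j \mathcal{D}_j$ with $\sum_j c_j \big(\text{contribution of } \mathcal{D}_j \text{ to } \Mgr\big)$ fixed; since each such $\mathcal{D}_j$ is a positive domain contributing $2$ to $\Mgr$, the constraint $\Mgr(B) = 1$ together with positivity of $B$ forces the coefficients $c_j$ to lie in a finite set — indeed the total "area" $\Mgr(B) - \Mgr(B_0)$ is pinned down, so only boundedly many non-negative combinations are allowed once we also use that $B$ itself must be non-negative. (Concretely: write $B = B_0' + P$ where $B_0'$ is a minimal positive representative in $\doms(\x,\y)$ and $P$ is a non-negative periodic-type class supported on $\cSigma \setminus \betas$; the condition $\Mgr(B) \le 1$ bounds the multiplicities of $P$, hence bounds $B$, hence leaves finitely many choices.)

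Finally I would remark that for each of these finitely many $(\y, B)$, the coefficient $\bOut(B) \in \Clg(n)$ is a single well-defined algebra element (by the definition in Section~\ref{sec:Shadows}, using Lemma~\ref{lem:MultUnderJuxtaposition} and the fact that $b_0(B)$ depends only on the local multiplicities of $B$), and $\#\UnparModFlow^B(\x,\y)$ is the mod-$2$ count of a compact zero-dimensional moduli space (compactness being the content of the Gromov-type compactness results underlying Theorem~\ref{thm:DEnds}), hence finite. Summing the finitely many nonzero terms gives a finite sum in $\Clg(n) \otimes \Dmod(\Hup)$. The main obstacle I anticipate is making the finiteness of positive domains with bounded Maslov grading fully rigorous: one must rule out "thin" positive domains of controlled $\Mgr$ but large support. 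This is handled by the observation that the span of the $\{\alpha_i^c\}$ meets the span of the $\{\beta_i\}$ transversally (Condition~\ref{UD:NoPerDom}), so there are no nontrivial periodic domains of grading zero, and hence the grading $\Mgr$ together with positivity is a genuinely effective bound on the domain.
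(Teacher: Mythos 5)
Your proposal is correct and takes essentially the same route as the paper: fix a reference domain $B_0$, use Condition~\ref{UD:NoPerDom} to write any $B\in\doms(\x,\y)$ as $B_0$ plus an integer combination of the regions $\Brs$, observe that positivity of a holomorphic shadow bounds the coefficients from below while the normalization $\Mgr(B)=1$ (since each $\Brs$ shifts $\Mgr$ by $2$) fixes their sum, and conclude finiteness. The one spot where you diverge slightly — invoking a ``minimal positive representative'' $B_0'$ — is unnecessary and slightly fragile (such a minimal element need not exist or be unique); the paper simply fixes an arbitrary $B_0$ and lets the lower bound from positivity together with the fixed sum from the Maslov constraint do all the work, which is cleaner.
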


\begin{proof}
  The non-zero terms arise from $B\in\doms(\x,\y)$ with $\Mgr(B)=1$,
  which have pseudo-holomoprhic representatitves. We must show that there are only finitely many
  such $B$. To this end,
  fix some $B_0\in\doms(\x,\y)$.
  Our hypothesis on upper diagrams (Property~\ref{UD:NoPerDom})
  ensures that for any other $B\in\doms(\x,\y)$, 
  we can find integers $n_{\{r,s\}}$ so that
  \[ B=B_0 + \sum_{\{r,s\}\in\Matching} n_{\{r,s\}} \cdot \Brs.\] If $B$ has  a holomorphic representative, then
  all of its local multiplicities must be non-negative, giving a lower bound on each
  $n_{\{r,s\}}$. Since
  \[ \Mgr(B)=\Mgr(B_0)+ 2 \sum_{\{r,s\}\in\Matching} n_{\{r,s\}},\]
  condition that $\Mgr(B)=1$ also places an upper bound on all the $n_{\{r,s\}}$, proving the desired
  finiteness statement.
\end{proof}

\begin{prop}
  The map $\delta^1$ respects (relative) gradings in the following sense:
  if $b\otimes \y$ appears with non-zero multiplicity in $\delta^1(\x)$,
  then 
  \begin{align*}
    \Mgr(\x)-1=\Delta(b)+\Mgr(\y) \\
    \Agr(\x)=\Agr(b)+\Agr(\y).
  \end{align*}
\end{prop}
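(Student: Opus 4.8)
The plan is to read both identities directly off the definition of $\delta^1$ in Equation~\eqref{eq:TypeDOperation}, using the grading formulas for two-chains from Proposition~\ref{prop:DefineBigradingD} and Lemma~\ref{lem:GradingsWellDefined}. Suppose $b\otimes\y$ appears with nonzero multiplicity in $\delta^1(\x)$. Then by Equation~\eqref{eq:TypeDOperation} there is a two-chain $B\in\doms(\x,\y)$ with $\Mgr(B)=1$, with $\#\UnparModFlow^B(\x,\y)\neq 0$, and with $b=\bOut(B)$ the image of the pure algebra element $b_0(B)\in\BlgZ(2n,n)$ under the quotient map onto $\Blg(2n,n)$ (in fact landing in $\Clg(n)$). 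Since $b\neq0$, the weight vector of $b_0(B)$ survives this quotient, so $\weight_i(b)=\weight_i(b_0(B))$ for each $i$, and consequently the $\Q^n$-valued Alexander vector grading satisfies $\Agr(b)=\Agr(b_0(B))$; recall from Section~\ref{subsec:Gradings} that both the weight grading and the Alexander vector grading on $\Blg(2n,n)$ are induced from the corresponding data on $\BlgZ(2n,n)$.

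For the Alexander grading I would apply the final assertion of Proposition~\ref{prop:DefineBigradingD} with $\phi=B$, which gives $\Agr(\x)-\Agr(\y)=\Agr(b_0(B))=\Agr(b)$, i.e., $\Agr(\x)=\Agr(b)+\Agr(\y)$. For the homological grading I would apply Equation~\eqref{eq:DefMgr} with $\phi=B$: since $\Mgr(B)=1$ this reads $\Mgr(\x)-\Mgr(\y)=\Mgr(B)-\weight(b_0(B))=1-\weight(b_0(B))$, with $\weight(b_0(B))=\sum_{i=1}^{2n}\weight_i(b_0(B))$. On the other hand Equation~\eqref{eq:DefDeltaAlg} gives $\Delta(b)=-\sum_{i=1}^{2n}\weight_i(b)=-\weight(b_0(B))$. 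Subtracting, $\Mgr(\x)-\Mgr(\y)=1+\Delta(b)$, which is the asserted $\Mgr(\x)-1=\Delta(b)+\Mgr(\y)$. That these relations do not depend on the particular $B$ representing the pair $(b,\y)$ is exactly Lemma~\ref{lem:GradingsWellDefined}, which says $\Mgr(\phi)-\weight(b_0(\phi))$ and the differences $\weight_i(\phi)-\weight_j(\phi)$ depend only on $\x$ and $\y$.

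The argument is purely formal, so there is no substantial obstacle; the only point worth a word of justification is that the weight vector (hence $\Delta$ and $\Agr$) is well defined on nonzero pure elements of $\Blg(2n,n)$, i.e., descends from $\BlgZ(2n,n)$. This is immediate from the construction of $\Blg(2n,n)=\BlgZ(2n,n)/{\mathcal J}$: the ideal ${\mathcal J}$ is generated by $L_{i+1}L_i$, $R_iR_{i+1}$, and the $\Idemp{\x}U_j$, and a two-sided ideal generated by sums of pure elements restricts, on each slice $\Idemp{\x}\,\BlgZ(2n,n)\,\Idemp{\y}\cong\Field[U_1,\dots,U_{2n}]$, to a monomial ideal; hence a nonzero pure element of $\Blg(2n,n)$ retains the weight vector of any of its pure representatives. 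All the genuine content of the statement is thus already contained in Proposition~\ref{prop:DefineBigradingD} and Lemma~\ref{lem:GradingsWellDefined}.
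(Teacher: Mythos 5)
Your proof is correct and is essentially an unpacking of the paper's (very terse) argument, which simply cites Equations~\eqref{eq:DefMgr}, \eqref{eq:DefAgr}, \eqref{eq:DefDeltaAlg}, and \eqref{eq:DefAgrAlg}. Your one addition — the remark that the weight vector descends to nonzero pure elements of $\Blg(2n,n)$ because ${\mathcal J}$ restricts to a monomial ideal on each slice $\Idemp{\x}\BlgZ\Idemp{\y}$ — is a reasonable clarification but is implicitly part of the paper's setup rather than a genuinely different route.
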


\begin{proof}
  The above equations follow at once from
  Equations~\eqref{eq:DefMgr},~\eqref{eq:DefAgr}, and the definitions
  of the gradings on the algebra, Equations~\eqref{eq:DefDeltaAlg}
  and~\eqref{eq:DefAgrAlg}.
\end{proof}

\begin{prop}
  \label{prop:CurvedTypeD}
  The map $\delta^1$ defined above
  satisfies a curved type $D$ structure relation,
  with curvature $\mu_0=\sum_{\{r,s\}\in\Matching} U_r\cdot U_s$.
\end{prop}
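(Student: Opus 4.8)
The strategy is the standard one for establishing structural identities in bordered Floer theory: one considers the ends of all one-dimensional moduli spaces $\UnparModFlow^B(\x,\y)$ with $\Mgr(B)=2$, and matches them against the terms that appear in the type $D$ structure relation. By Theorem~\ref{thm:DEnds}, the ends of such a one-dimensional moduli space, for $B$ with vanishing local multiplicity somewhere, come in five flavors, and the total count of all of them is even. First I would write out the type $D$ relation to be verified, namely
\[ 0 = \mu_0^{\Matching}\otimes \Id + (\mu_2\otimes\Id)\circ(\Id_{\Clg}\otimes\delta^1)\circ\delta^1, \]
and expand the composite term: its coefficient of $b\otimes\y$ is a sum over intermediate states $\w$ and factorizations $B=B_1\natural B_2$ and $b=\bOut(B_1)\cdot\bOut(B_2)$ of the products of moduli space counts $\#\UnparModFlow^{B_1}(\x,\w)\cdot\#\UnparModFlow^{B_2}(\w,\y)$. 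By Lemma~\ref{lem:MultUnderJuxtaposition} and the definition of $\bOut$, these are exactly the two-story ends of type~\ref{typeDE:2StoryEnd} (once one sums over all typical $\vec\rho$ compatible with $B$ and uses that $b_0$ is multiplicative). The $\mu_0^{\Matching}\otimes\Id$ term should match the boundary-degeneration ends of type~\ref{typeDE:BoundaryDegeneration}: for $\x=\y$ and $\vec\rho$ two matched Reeb orbits $\{e_r\},\{e_s\}$, the shadow is $\Brs$, which maps to $U_rU_s$ under $\bOut$, and Theorem~\ref{thm:DEnds}\ref{typeDE:BoundaryDegeneration} says the number of such ends in $\ModFlow(\x,\x,\{e_r\},\{e_s\})\cup\ModFlow(\x,\x,\{e_s\},\{e_r\})$ is odd, contributing a single $U_rU_s\otimes\x$ for each $\{r,s\}\in\Matching$, i.e. exactly $\mu_0^{\Matching}\otimes\x$.

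The remaining work is to argue that the other three families of ends — orbit-curve ends \ref{typeDE:OrbitEnd}, visible collision ends with an orbit \ref{typeDE:CollisionWithOrbit}, and chord-chord collision ends \ref{typeDE:ChordChord} — contribute nothing net to the relation. For this I would invoke the usual cancellation mechanism: when a Reeb orbit $\orb_j$ slides to $s=1$ it is replaced by a length-one chord $\longchord_j$, and when $j\neq 1,2n$ there are two choices $R_jL_j$ and $L_jR_j$; since $\bOut$ sends both of these to $U_j$ (indeed the shadow is unchanged), these orbit-curve ends come paired with the invisible collision ends of two equal orbits, or else cancel against each other in pairs, so their contribution to the $b\otimes\y$ coefficient vanishes mod $2$. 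For the chord-chord collision ends: if $\rho_i,\rho_{i+1}$ are not weakly composable the corresponding degeneration does not represent a curve with the same shadow that survives (the relevant algebra element $b_0$ lands in $\mathcal J$, by Proposition~\ref{prop:SBD}-type reasoning, so $\bOut=0$); and if they are strongly composable, the two orderings $(\dots,\rho_i,\rho_{i+1},\dots)$ and $(\dots,\rho_i\uplus\rho_{i+1},\dots)$ give the same algebra element $\bOut$ and these ends pair up. One must also note that the non-composable and visible-orbit-collision cases are precisely those already excluded from the honest $\delta^1$ sum because the corresponding shadow's $b_0$ lies in $\mathcal J$: these are "invisible to the algebra." Assembling: the even total count from Theorem~\ref{thm:DEnds}, after discarding the internally-paired families, forces the coefficient of each $b\otimes\y$ in $\mu_0^{\Matching}\otimes\Id + (\mu_2\otimes\Id)(\Id\otimes\delta^1)\delta^1$ to be zero mod $2$.

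The main obstacle I anticipate is the careful bookkeeping in passing between the moduli-space-theoretic statement of Theorem~\ref{thm:DEnds} (which is phrased for a \emph{fixed} typical sequence $\vec\rho$) and the algebraic statement, where $\delta^1$ already incorporates a sum over all typical $\vec\rho$ compatible with $B$ via $\ModFlow^B(\x,\y)$. One has to check that grouping the ends by the \emph{output algebra element} $\bOut(B)$ rather than by $\vec\rho$ is compatible with the pairings: precisely, that the orbit/collision ends that are supposed to cancel do so because they share the same $\bOut$, and that all ends contributing a nontrivial algebra element are accounted for by either two-story ends or boundary degenerations. This is where one uses that $\bOut$ factors through $\Blg(n)=\BlgZ/\mathcal J$ together with the description of $\mathcal J$ in Proposition~\ref{prop:Ideal}: any "bad" end (non-composable chords, collisions that fail boundary monotonicity) produces a shadow whose $b_0$ lies in $\mathcal J$, hence is killed, so it does not spoil the count. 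Once that dictionary is set up, the verification of the grading statement is immediate from the preceding proposition, and the proof is complete.
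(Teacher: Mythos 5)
Your overall strategy matches the paper's: you correctly identify that the ends of type~\ref{typeDE:2StoryEnd} account for the composite term, that the boundary-degeneration ends of type~\ref{typeDE:BoundaryDegeneration} produce $\mu_0^{\Matching}\otimes\Id$, and that the remaining families must cancel. You also correctly flag the bookkeeping issue of passing from a fixed $\vec\rho$ to the union over compatible $\vec\rho$. However, the details of your cancellation argument are confused, and this is the real content of the step. The paper's pairings are:
\begin{itemize}
\item Type~\ref{typeDE:CollisionWithOrbit} ends (visible collision, at least one orbit) cancel against the corresponding collision ends of the moduli space with $\rho_i$ and $\rho_{i+1}$ permuted; same shadow $B$, hence same $\bOut(B)$.
\item Type~\ref{typeDE:ChordChord} ends with $\rho_i,\rho_{i+1}$ not weakly composable also cancel by permutation, not because $\bOut=0$. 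Your claim that ``$b_0$ lands in $\mathcal{J}$'' is incorrect: the shadow $B$ of the one-dimensional moduli space is unchanged by passing to its end, so $\bOut(B)$ is the same nonzero element contributing to the ambient moduli-space count. The boundary-monotonicity failure only obstructs an extension of the limiting configuration, it does not kill the algebra element.
\item Type~\ref{typeDE:ChordChord} ends with $\rho_i,\rho_{i+1}$ strongly composable cancel against type~\ref{typeDE:OrbitEnd} orbit-curve ends. Your pairing of $(\dots,\rho_i,\rho_{i+1},\dots)$ against $(\dots,\rho_i\uplus\rho_{i+1},\dots)$ does not work because $\rho_i\uplus\rho_{i+1}$ is a length-one chord, so the latter is not a \emph{typical} sequence and has no corresponding moduli space in the sum defining $\delta^1$. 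The correct statement is that the strongly-composable collision end of $\UnparModFlow^B(\x,\z;\dots,\{\rho_i\},\{\rho_{i+1}\},\dots)$ and the orbit-curve end of $\UnparModFlow^B(\x,\z;\dots,\{\orb_j\},\dots)$ converge to the same degenerate configuration (with $\longchord_j=\rho_i\uplus\rho_{i+1}$ at $s=1$), as illustrated in Figure~\ref{fig:OrbitEnd}.
\item Your statement that orbit-curve ends pair with ``invisible collision ends of two equal orbits'' is wrong on its face: Theorem~\ref{thm:DEnds} explicitly restricts to \emph{visible} collisions in type~\ref{typeDE:CollisionWithOrbit}, so invisible ones do not appear in the accounting at all.
\end{itemize}
One small omission: the paper discards the case where $B$ covers all of $\Sigma$ (since then the composite term vanishes identically, because no nonzero algebra element has everywhere-positive weight), which is needed to invoke Theorem~\ref{thm:DEnds} (it requires $B$ to have vanishing local multiplicity somewhere). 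You should add this reduction before appealing to the ends theorem.
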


\begin{proof}
  Choose $\x$ and $\z$ so that there is some
  $B\in\doms(\x,\y)$ with $\Mgr(B)=2$.
  Consider the ends of the moduli spaces
  $\UnparModFlow^B(\x,\z,\Source;\vec{P})$, where we take
  the union over all choices of typical Reeb sequences
  $[\vec{P}]=(\rho_1,\dots,\rho_\ell)$ and all choices of source
  $\Source$.  We can assume without loss of generality that the homology
  class $B$ of the curve does not cover all of $\Sigma$; for otherwise,
  the corresponding term in 
  $(\mu_2\otimes \Id_{\Dmod(\Hup)})\circ (\Id_{\Clg}\otimes \delta^1)\circ\delta^1$ vanishes;
  there are no non-zero algebra elements with positive weight everywhere.

  These moduli spaces are one-dimensional according to
  Theorem~\ref{thm:GeneralPosition}.  Next we appeal to
  Theorem~\ref{thm:DEnds}, observing cancellations of the counts of
  various ends cancel.  The various collision ends where at least one
  of $\rho_i$ or $\rho_{i+1}$ is a Reeb orbit cancel with one
  another. Specifically, consider a typical Reeb sequence
  $(\rho_1,\dots,\rho_i,\rho_{i+1},\dots,\rho_\ell)$ with a visible
  collision end for $\rho_i$ and $\rho_{i+1}$, where at least one of
  the two is a Reeb orbit (i.e. in the terminology of
  Theorem~\ref{thm:DEnds}, this is an end of
  type~\ref{typeDE:CollisionWithOrbit}). These ends correspond to the
  corresponding collision end of the moduli space where the order of
  $\rho_i$ and $\rho_{i+1}$ are permuted.  (This is a different moduli
  space, since the collision is visible.)  Similarly, if $\rho_i$ and
  $\rho_{i+1}$ are two Reeb chords that are not weakly composable (a
  subcase of~\ref{typeDE:ChordChord}), we can permute them to get
  another moduli space with a corresponding end.  When $\rho_i$ and
  $\rho_{i+1}$ are strongly composable chords, the corresponding ends
  cancel against orbit ends (Type~\ref{typeDE:OrbitEnd}).

  The two types of ends left unaccounted for are the two-story ends
  (Type~\ref{typeDE:2StoryEnd}) and the boundary degeneration ends
  (Type~\ref{typeDE:BoundaryDegeneration}). The fact there is an even
  number of remaining ends gives the type $D$ structure relation.
\end{proof}

The above three propositions can be summarized as follows: the vector
space $\Dmod(\Hup)$, with differential as in
Equation~\eqref{eq:TypeDOperation} is a curved type $D$ structure,
with a homological grading induced by $\Mgr$ and Alexander gradings
$\Agr$.

The following invariance property of this curved type $D$ structure
will be important for us:

\begin{prop}
  If $J_0$ and $J_1$ are any two generic almost-complex structures,
  there is a type $D$ structure quasi-isomorphism of graded type $D$ structures
  \[\Dmod(\Hup,J_0)\simeq \Dmod(\Hup,J_1).\]
\end{prop}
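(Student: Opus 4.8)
The plan is to run the standard continuation-map argument from bordered Floer homology (cf.~\cite{InvPair}), adapted to the present curved, cylindrical setting. First I would choose a generic path $\{J_s\}_{s\in[0,1]}$ of admissible almost-complex structures (Definition~\ref{def:AdmissibleAlmostCx}) from $J_0$ to $J_1$, and count the resulting path-dependent flowlines to define a type $D$ morphism $F\colon \Dmod(\Hup,J_0)\to\Dmod(\Hup,J_1)$. Concretely, $F$ is encoded by a map $F^1\colon\Dmod(\Hup,J_0)\to\Clg(n)\otimes\Dmod(\Hup,J_1)$ where, for upper states $\x,\y$ and $B\in\doms(\x,\y)$ with $\Mgr(B)=0$, one forms the moduli space $\ModFlow^B_{\{J_s\}}(\x,\y)$ of $\{J_s\}$-holomorphic flowlines (\emph{not} quotiented by $\R$, since translation invariance is broken), summing over typical Reeb sequences compatible with $B$, and sets $F^1(\x)=\sum \#\ModFlow^B_{\{J_s\}}(\x,\y)\cdot\bOut(B)\otimes\y$. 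Transversality for generic paths, Gromov compactness, and the index computation all go through as in Theorems~\ref{thm:GeneralPosition} and~\ref{thm:DEnds}; finiteness of the sum follows exactly as in the finiteness proof for $\delta^1$, using Property~\ref{UD:NoPerDom} together with the fact that $\Mgr(B)=0$ bounds the coefficients $n_{\{r,s\}}$ of the $\Brs$.

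Second, I would show $F$ satisfies the curved type $D$ morphism relation by analyzing the ends of the one-dimensional moduli spaces $\ModFlow^B_{\{J_s\}}(\x,\z;\Source;\vec{\rho})$ with $\Mgr(B)=1$. The path-dependent analogue of Theorem~\ref{thm:DEnds} holds: collision ends, orbit-curve ends, and non-weakly-composable chord ends cancel in pairs exactly as in the proof of Proposition~\ref{prop:CurvedTypeD}; the two-story ends now split either as $\ModFlow^{B_1}_{J_0}(\x,\w)\times\ModFlow^{B_2}_{\{J_s\}}(\w,\z)$ or as $\ModFlow^{B_1}_{\{J_s\}}(\x,\w)\times\ModFlow^{B_2}_{J_1}(\w,\z)$ (breaking at $-\infty$ resp.\ $+\infty$), which assemble to $\delta^1_{J_1}\circ F^1 + (\Id\otimes F^1)\circ\delta^1_{J_0}$ after applying $\mu_2$; and the boundary-degeneration ends (Type~\ref{typeDE:BoundaryDegeneration}) contribute, via Lemma~\ref{lem:BoundaryDegenerationsDegree1}, precisely the term $\mu_0\otimes F^1(\x)$ needed to promote the chain-map relation to the curved type $D$ morphism relation. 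Grading preservation is immediate: $\ind$ depends only on $B$ and $\vec{\rho}$, not on the almost-complex data, so $F$ preserves the relative $\Delta$- and $\Agr$-gradings.

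Third, to see $F$ is a quasi-isomorphism (i.e.\ a homotopy equivalence of curved type $D$ structures, hence an isomorphism after any pairing), I would construct $G\colon\Dmod(\Hup,J_1)\to\Dmod(\Hup,J_0)$ from the reversed path, and then use a generic two-parameter family of admissible almost-complex structures interpolating between the concatenated path and the constant path $J_0$ to produce a type $D$ homotopy exhibiting $G\circ F\simeq \Id_{\Dmod(\Hup,J_0)}$; the symmetric argument gives $F\circ G\simeq\Id$. This ``path of paths'' bookkeeping, together with the algebra of compositions and homotopies of type $D$ morphisms over a curved algebra, is routine once the end analysis of the previous step is in place.

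The main obstacle is the treatment of the $\beta$-boundary degenerations in the continuation setting: one must verify that Lemma~\ref{lem:BoundaryDegenerationsDegree1} and the odd-count statement of Theorem~\ref{thm:DEnds}\ref{typeDE:BoundaryDegeneration} survive for the path $\{J_s\}$ --- which they do, since a boundary degeneration occurs at a single fixed time slice and is controlled by the same somewhere-injectivity input as in Proposition~\ref{prop:SmoothBoundaryDeg} --- and, more importantly, that their contribution is exactly the $\mu_0$-term required by the curved type $D$ morphism relation and does not acquire spurious corrections (there are no $\alpha$-side degenerations by the hypotheses on upper diagrams, so this is clean). Everything else --- transversality, compactness, and the homological algebra of type $D$ morphisms --- is standard.
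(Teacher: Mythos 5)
Your overall strategy --- a continuation map built from a generic $t$-dependent family of admissible almost-complex structures, the chain-map relation extracted from ends of one-dimensional moduli spaces, and a homotopy inverse via the reversed path together with a generic two-parameter family --- is the same as the paper's. But the point you single out as ``the main obstacle'' is exactly where the argument goes wrong, and the error is substantive.

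There are two intertwined mistakes. First, the morphism condition for curved type $D$ structures over $\cClg$ is not itself curved: the differential $d$ on $\Hom\bigl(\Dmod(\Hup,J_0),\,\Clg\otimes\Dmod(\Hup,J_1)\bigr)$ induced by $\delta^1_{J_0}$ and $\delta^1_{J_1}$ has $d^2(f^1)=\mu_0\cdot f^1+\mu_0\cdot f^1=0$ over $\Field$, since the curvature is central, so the Hom space is an honest chain complex and a type $D$ morphism is just a cycle for $d$. There is no $\mu_0\otimes F^1(\x)$ term to ``promote'' to; the chain-map relation $(\mu_2\otimes\Id)(\Id\otimes\delta^1_{J_1})\circ F^1+(\mu_2\otimes\Id)(\Id\otimes F^1)\circ\delta^1_{J_0}=0$ is already the full morphism relation. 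Second, and compatibly with this, the boundary-degeneration ends of Type~\ref{typeDE:BoundaryDegeneration} do not occur in the $\{J_t\}$-moduli spaces relevant to the chain-map relation. The shadow $\Brs$ of a boundary degeneration has $\Mgr(\Brs)=2$ (plug into Equation~\eqref{eq:IndEmb}), and because the $\R$-translation is broken for the $t$-dependent family, the corresponding $\ModFlow^{\Brs}(\x,\x;\{J_t\})$ is two-dimensional rather than one-dimensional; it simply does not enter the index-one end analysis. This is precisely what the paper observes: for the continuation moduli spaces, ends of Type~\ref{typeDE:BoundaryDegeneration} connect a generator to itself and count curves of index $2$, so they do not exist in the relevant dimension. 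The only remaining ends are two-story buildings, which directly yield the (uncurved) morphism relation. As written, your argument has boundary degenerations producing a term that is not present in the relation --- the two errors would cancel were either true, but neither is, so the proof fails at the step you correctly flagged as delicate. Fixing it requires no new geometric input, only the observation that the boundary-degeneration ends are in the wrong dimension.
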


\begin{proof}
  As usual, one must show that a path $\{J_t\}_{t\in[0,1]}$ induces a
  type $D$ morphism.  This is done via the straightforward
  modification of Theorem~\ref{thm:DEnds} to varying $\{J_t\}$, and
  considering moduli spaces between generators $\x$ and $\y$ where the
  with index $1$.
  Specifically, fix a generic one-parameter family
  $\{J_t\}_{t\in[0,1]}$ of almost-complex structures, and consider the
  moduli space $\ModFlow^B(\x,\y;\{J_t\})$ of $J_t$-holomorphic curves,
  where $t$ is the second parameter in the projection to $[0,1]\times
  \R$ (parameterized by pairs $(s,t)$).  For such moduli spaces, the
  ends of Type~\ref{typeDE:BoundaryDegeneration} do not exist (such
  moduli spaces connect a generator to itself; and indeed they count
  curves with index $2$). With this remark in place, the above proof
  of Proposition~\ref{prop:CurvedTypeD} shows that there is an even
  number of two-story ends. This immediately shows that the map
  \[ h^1(\x)=\sum_{\y\in\States,B\in\doms(\x,\y)}
  \bOut(B)\otimes\#\ModFlow^B(\x,\y;\{J_t\})\cdot \y.\] gives a type
  $D$ morphism
  \[ h^1\colon \Dmod(\Hup,J_0)\to \Clg(n)\otimes \Dmod(\Hup,J_1). \]

  Homotopies of paths of complex structures induce homotopies of type
  $D$ morphisms and the identity path induces the identity map as
  usual; so it follows that $h^1$ is a type $D$ quasi-isomorphism.

  Verifying that the maps are graded is straightforward.
\end{proof}

\begin{remark}
  \label{rem:NoInvariance}
  The above proposition shows that the quasi-isomorphism type of the
  type $D$ structure of an upper Heegaard diagram is independent of
  the analytical choices (of almost-complex structures) made. One
  could aim for more invariance.  One could think of an upper Heegaard
  diagram as in fact representing an upper knot diagram, and then try
  to prove dependence of the type $D$ structure only on the upper knot
  diagram; and indeed one could try to show that it is an invariant of
  the tangle represented by the diagram. We do not pursue this route,
  in the interest of minimizing the road to
  Theorem~\ref{thm:MainTheorem}.
\end{remark}

\subsection{Examples}
\label{subsec:ExampleTypeDs}

\begin{figure}[h] 
  \centering \input{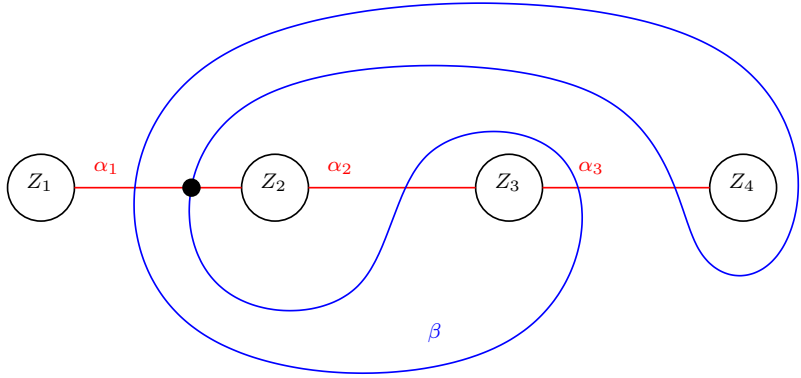} 
  \caption{{\bf Upper Heegaard
      diagram.} This diagram has five Heegaard states, corresponding
    to the five intersection points of $\beta$ with
    $\alpha_1\cup\alpha_2\cup\alpha_3$. One of these ($x_2$ in the text)
    is indicated in
    black.} \label{fig:UpperHeegTref} \end{figure}

We start with a simple example. Consider the upper Heegaard diagram
from Figure~\ref{fig:UpperHeegTref}.  This has five Heegaard states, which
we label from left to right, $x_1$, $x_2$, $t$, $y_1$, $y_2$. Since we
have $d=1$, the curve counting is straightforward, and we find that
the type $D$ structure is as indicated in the following diagram:

\begin{equation}
  \label{eq:Dmod}
  \begin{tikzpicture}[scale=1.8]
  \node at (0,0) (x1) {$x_1$}   ;
  \node at (1,0) (x2) {$x_2$} ;
  \node at (2,0) (t) {$t$} ;
  \node at (3,0) (y1) {$y_1$} ;
  \node at (4,0) (y2){$y_2$} ;
    \draw[->] (x1) [bend left=15] to node[above,sloped] {\tiny{$U_4$}}  (x2)  ;
    \draw[->] (x2) [bend left=15] to node[below,sloped] {\tiny{$U_3$}}  (x1)  ;
    \draw[->] (x2) [bend left=15] to node[above,sloped] {\tiny{$L_2 U_1$}}  (t)  ;
    \draw[->] (t) [bend left=15] to node[below,sloped] {\tiny{$R_2$}}  (x2)  ;
    \draw[->] (t) [bend left=15] to node[above,sloped] {\tiny{$L_3$}}  (y1)  ;
    \draw[->] (y1) [bend left=15] to node[below,sloped] {\tiny{$R_3 U_4$}}  (t)  ;
    \draw[->] (y1) [bend left=15] to node[above,sloped] {\tiny{$U_2$}}  (y2)  ;
    \draw[->] (y2) [bend left=15] to node[below,sloped] {\tiny{$U_1$}}  (y1)  ;
    \draw[->] (x2) [bend left=40] to node[above,sloped] {\tiny{$L_2 L_3$}} (y2) ;
    \draw[->] (y1) [bend left=40] to node[below,sloped] {\tiny{$R_3 R_2$}} (x1) ;
  \end{tikzpicture}
\end{equation}

We give a (very simple) family of examples which will play a
fundamental role in our future computations.  

  For $n>1$, consider the
  matching $M=\{\{1,2\},\{3,4\},\dots\{2i-1,2i\},\dots,\{2n-1,2n\}\}$ on
  $\{1,\dots,2n\}$ where $2i-1$ is matched with $2i$ for
  $i=1,\dots,n$. Let ${\mathbf s}=\{2i-1\}_{i=1}^{n}$, and
  $\mu_0=\sum_{i=1}^n U_{2i-1} U_{2i}$.  The type $d$ structure
  with a single generator $\x$ satisfying
  $\Idemp{\mathbf s}\cdot \x = \x$ and $\delta^1(\x)=0$ can be viewed
  as a curved module over $\Clg(n)$ since $\Idemp{\mathbf s}\cdot
  \mu_0=0$. We write this type $D$ structure $\lsup{\cClg}k$,

\begin{lemma}
  \label{lem:StandardTypeD}
For any $n>1$, let $\Hup$ denote the standard upper diagram
with $2n$ local maxima (pictured in
Figure~\ref{fig:StandardUpperDiagram}; after deleting 
$\beta_4$ and the basepoints $\wpt$ and $\zpt$).
There is an identification of type $D$ structures
\[ \Dmod(\Hup)\cong~\lsup{\cClg}k. \]
\end{lemma}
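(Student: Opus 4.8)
The plan is to check that the standard upper diagram is combinatorially so simple that $\Dmod(\Hup)$ literally equals $\lsup{\cClg}k$: it carries a single generator with idempotent $\Idemp{\mathbf s}$, and its differential is forced to vanish for grading-parity reasons, so that the identity on generators is the desired isomorphism.

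First I would record the relevant combinatorics of $\Hup=\Hup(n)$. It is planar ($g=0$), has no $\alpha$-circles, and its $n-1$ $\beta$-circles are arranged (as in Figure~\ref{fig:StandardUpperDiagram}, with $\beta_4$ deleted) so that the $n$ components of $\cSigma\setminus\betas$ are the regions containing $\{z_{2i-1},z_{2i}\}$ for $i=1,\dots,n$. Hence $\Matching(\Hup)=M=\{\{1,2\},\dots,\{2n-1,2n\}\}$, and the curvature of $\cClg$ determined by $\Hup$ via Equation~\eqref{eq:CurvatureOfB} is exactly $\mu_0=\sum_{i=1}^n U_{2i-1}U_{2i}$, i.e.\ the curvature used to define $\lsup{\cClg}k$. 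As noted in Section~\ref{subsec:Examples}, $\Hup$ has a single upper Heegaard state $\x$, with $\alpha(\x)=\{2,4,\dots,2n-2\}$ (the $n-1$ even elements of $\{1,\dots,2n-1\}$); I would reconfirm uniqueness by direct inspection of the diagram. Consequently $\Iup(\x)=\Idemp{\{1,\dots,2n-1\}\setminus\alpha(\x)}=\Idemp{\mathbf s}$ with $\mathbf s=\{1,3,\dots,2n-1\}$, so that as a graded $\RestrictIdempRing(n)$-module $\Dmod(\Hup)$ is free of rank one on a generator with idempotent $\Idemp{\mathbf s}$ — the same underlying data as $\lsup{\cClg}k$.

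The next step is to show $\delta^1(\x)=0$. Since $\x$ is the only generator, every term of $\delta^1(\x)$ has the form $\#\UnparModFlow^B(\x,\x)\cdot\bOut(B)\otimes\x$ with $B\in\doms(\x,\x)$ and $\Mgr(B)=1$. Because $\Hup$ has no $\alpha$-circles, Property~\ref{UD:NoPerDom} holds trivially, and (exactly as in the proof of finiteness of $\delta^1$ above) every $B\in\doms(\x,\x)$ can be written as $B=\sum_{\{r,s\}\in M} n_{\{r,s\}}\Brs$ with $n_{\{r,s\}}\in\Z$; moreover $\Mgr(B)=2\sum_{\{r,s\}} n_{\{r,s\}}$, which is always even. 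Therefore no $B\in\doms(\x,\x)$ has $\Mgr(B)=1$, the defining sum for $\delta^1(\x)$ is empty, and $\delta^1(\x)=0$.

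Finally, $\Dmod(\Hup)$ and $\lsup{\cClg}k$ are now seen to be one and the same rank-one curved type $D$ structure over $\cClg$: a generator with idempotent $\Idemp{\mathbf s}$ and vanishing $\delta^1$ (which is a legitimate curved type $D$ structure since $\Idemp{\mathbf s}\cdot\mu_0=0$). The identity on generators is the asserted isomorphism, and the grading statement is automatic, since a one-generator type $D$ structure carries its $\Mgr$- and $\Agr$-gradings only up to an overall shift. I do not anticipate a genuine obstacle: the only care needed is the bookkeeping in the first step — that $\Hup(n)$ really has exactly one state, that its matching is $M$, and that its idempotent is $\Idemp{\mathbf s}$ — after which vanishing of the differential is immediate from the parity of $\Mgr$ on $\doms(\x,\x)$.
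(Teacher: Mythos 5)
Your proof is correct and is essentially the fleshing-out of the paper's (very terse) proof, which asserts the unique Heegaard state, its idempotent, and the vanishing of $\delta^1$ without elaboration. Your parity argument for the vanishing of $\delta^1$ — that with $g=0$ and no $\alpha$-circles, every $B\in\doms(\x,\x)$ is an integer combination of the components $\Brs$ of $\cSigma\setminus\betas$, each of which has $\Mgr(\Brs)=2$, so $\Mgr(B)$ is always even and never $1$ — is precisely the justification the authors have in mind but leave implicit.
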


\begin{proof}
  The diagram has exactly one Heegaard state $\x$; it has
  $\Idemp{{\mathbf s}}\cdot \x = \x$; and there are no holomorphic
  curves that can induce $\delta^1$-actions.
\end{proof}

\newcommand\ClosedModFlow{\overline\ModFlow}
\newcommand\ConstrPack{\mathbf R}
\newcommand\Packets{\mathcal P}
\newcommand\inv{\mathrm{inv}}
\section{More holomorphic curves}
\label{sec:CurvesA}

We describe here the pseudo-holomorphic curves used in the
constructions of the type $A$ modules for a given lower diagram
$\Hdown$.  The moduli spaces we consider here are similar to the ones
considered in Section~\ref{sec:CurvesD}, except that now the limiting
values as $t\goesto \pm \infty$ are lower (rather than upper) Heegaard
states; moreover, the partitions we consider here
$\Partition=(\rhos_1,\dots,\rhos_\ell)$ of the east and interior
punctures need not be simple
(c.f. Definition~\ref{def:DecoratedSource}): each term in the
partition $\rhos_i$ is some (non-empty) subset of chords and orbits.
More formally:

\begin{defn}
  \label{def:ConstraintPacket}
  a {\em constraint packet} $\rhos$ is a pair consisting
  of a set of Reeb orbits, denoted $\orbits(\rhos)$; and a set of Reeb
  chords, denoted $\chords(\rhos)$.  
\end{defn}

Thus, in our curve counting, for
the partition $(\rhos_1,\dots,\rhos_{\ell})$, each term $\rhos_i$ is a
constraint packet.  Sometimes, we find it convenient to generalize
this slightly: a {\em generlized constraint packet}, consisting of
{\em multi-sets} of Reeb orbits and chords (i.e. the same chord or
orbit can occur with positive multiplicity).

The differential and, indeed, all the module actions (of sequences of
algebra elements on the module) will count pseudo-holomorphic curves
with constraint packets specified by algebra elements; and these
constraint packets will have a rather special form
(cf. Definition~\ref{def:CompatiblePacket}).  In particular, each
constraint packet in the algebra action definition will contain at
most one Reeb orbit.

We will prove an $\Ainfty$ relation for modules, which will involve
analyzing ends of one-dimensional moduli spaces. Two story building
degenerations correspond to terms in the $\Ainfty$ relation involving
two applications of the module actions.  In~\cite{InvPair}, these
moduli spaces have another kind of end, called {\em join curve ends},
which correspond to the differential in the bordered algebra.  By
contrast, in the present case, join curve ends of the various moduli
spaces which we consider cancel in pairs. (One could construct a
larger algebra than the one considered here, equipped with a
differential, so that the join curve ends of moduli spaces correspond
to terms in the differential of the algebra. The present algebra can
be thought of as the homology of this larger algebra.)  See
Figure~\ref{fig:JoinEnd} for an illustration.  As~\cite{InvPair},
there are also {\em collision ends}, which correspond to
multiplication in the algebra. There is another new kind of end,
corresponding to the formation of $\beta$-boundary degenerations. Some
of these cancel against orbit curve ends; the remaining orbit ends are
accounted for by the curvature of our algebra.

Our goal here is to formulate the moduli spaces we consider precisely,
and to describe the ends of one-dimensional moduli spaces. The
algebraic interpretations of the counts of these ends described above
will be given in detail in Section~\ref{sec:TypeA}.

\subsection{Pseudo-holomorphic flows in lower diagrams}

We will need to name Reeb chords for lower diagrams as shown in Figure~\ref{fig:ChordNamesA}; i.e. switched from the conventions from
the type $D$ side; c.f. Figure~\ref{fig:ChordNames}). This will be
useful for the gluing of diagrams. As a point of comparison: the chord
labelled $L_i$ on the $D$ side has initial point on $\alpha_{i-1}$ and
terminal point on $\alpha_{i}$; while the chord labelled $L_i$ on the
$A$ side has initial point on $\alpha_i$ and terminal point on
$\alpha_{i-1}$. 

\begin{figure}[h]
 \centering
 \input{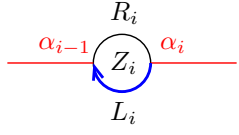}
 \caption{{\bf Names of Reeb chords for lower diagrams.}  The Reeb
   chord $L_i$ is indicated by the oriented half circle.}
 \label{fig:ChordNamesA}
\end{figure}

Strong boundary monotonicity is a closed condition, in the following sense:

\begin{lemma}
  \label{lem:StrongMonotoneClosed}
  Suppose that $u$ is a pseudo-holomorphic flowline which appears in
  the Gromov limit of a sequence of strongly boundary boundary
  monotone curves. Then $u$, is strongly boundary monotone, as well.
\end{lemma}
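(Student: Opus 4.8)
The plan is to argue by contradiction, using the fact that the failure of strong boundary monotonicity is controlled by a discrete combinatorial quantity that cannot jump in a Gromov limit. Suppose $u$ is a Gromov limit of strongly boundary monotone flowlines $u_j$, but $u$ itself is only weakly, not strongly, boundary monotone. Since $u$ is weakly boundary monotone, it represents some domain $\phi\in\doms(\x,\y)$ connecting generalized upper (here, lower) Heegaard states, and by Lemma~\ref{lem:SBB} the failure of strong boundary monotonicity for $u$ is equivalent to the failure of strong boundary monotonicity for the pair $(\x,\vec{\rhos})$, where $\vec{\rhos}$ is the Reeb asymptotics of $u$.

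First I would recall the structure of the Gromov limit: a sequence of strongly boundary monotone curves $u_j$ converges to a (possibly nodal, possibly multi-story) holomorphic building whose top-level components, glued together, recover $u$ after forgetting lower-level pieces. The key point is that the homology class $B=\Shadow(u_j)$ is locally constant along the sequence (domains are discrete), so $\Shadow(u)$ together with the shadows of any bubbled-off pieces sum to $B$; and the Reeb asymptotics of the building, read off in order, refine those of each $u_j$. In particular the sequence of $\alpha$-sets $\alpha(u_j,\tau)$, which by the proof of Lemma~\ref{lem:SBB} are determined by the data $(\x,\rhos_1,\dots,\rhos_{k})$ as $\tau$ passes the heights of the Reeb chords, vary only through the combinatorics of $(\x,\vec{\rhos})$. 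Strong boundary monotonicity of $u_j$ says precisely that each such $\alpha$-set has exactly $n-1$ (resp. $n$) elements and that the composability constraints $A_-\subset \alpha(\x,\rhos_1,\dots,\rhos_{\ell-1})$ hold at every level.

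Next I would observe that these conditions are \emph{closed} under taking the limit. Concretely: for each fixed generic $\tau$ avoiding the (finitely many) Reeb heights of $u$ and of all nearby $u_j$, the set $u^{-1}(\alpha_i\times\{1\}\times\{\tau\})$ is a limit (in the sense of intersection with a fixed slice) of the sets $u_j^{-1}(\alpha_i\times\{1\}\times\{\tau\})$; since each of the latter is a single point (or empty), the limiting count is at most one, so $\alpha(u,\tau)$ inherits the property that $u^{-1}(\alpha_i\times\{1\}\times\{\tau\})$ is at most one point for every $i$ and every generic $\tau$. Because strong boundary monotonicity of $u$ is, by Property~\ref{property:StronglyBoundaryMonotone}, exactly the statement that $u^{-1}(\alpha_i\times\{1\}\times\{\tau\})$ has at most one point for all $t$ and all $i$, this forces $u$ to be strongly boundary monotone, contradicting our assumption. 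The only subtlety is that in a Gromov limit, intersection points can escape to $\pm\infty$ or into a node; but an intersection with a fixed finite-height slice $\alpha_i\times\{1\}\times\{\tau\}$ cannot escape to a $t$-infinity, and boundary nodes on the $\alpha$-side are excluded by the hypotheses on our partial diagrams (boundary degenerations occur only on the $\beta$-side). Thus multiplicities of $\alpha$-slice intersections are genuinely continuous along the sequence.

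The main obstacle is making the ``intersection with a fixed $\alpha$-slice is continuous in the Gromov limit'' argument fully rigorous: one must choose $\tau$ generic not merely for $u$ but uniformly for a tail of the sequence $u_j$ (possible since the Reeb heights of $u_j$ converge to those of $u$), and one must rule out the scenario where two $\alpha_i$-slice intersection points of $u$ both arise as limits of points lying on a single component of some $u_j$ that breaks — but this last case is exactly when the height-ordering forces an intermediate $\alpha$-set of the wrong cardinality, which the strong boundary monotonicity of $u_j$ already forbids. Once that book-keeping is set up, the contradiction is immediate, so I would present this as a short argument citing Lemma~\ref{lem:SBB} and the compactness results underlying the moduli-space theory, rather than redoing the analysis in detail.
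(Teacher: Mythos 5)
Your argument arrives at essentially the right observation — that the only way strong boundary monotonicity can fail in a Gromov limit is via an $\alpha$-boundary degeneration, which must be excluded — but the route is considerably more circuitous than the paper's, and one step is stated imprecisely enough to count as a gap. The paper's proof (following \cite[Lemma~5.55]{InvPair}) rephrases strong boundary monotonicity directly as the statement that $t\circ u$ is \emph{strictly} monotone on each $\alpha$-boundary arc of the source. Strict monotonicity is an open condition whose degenerate limit is a constant arc, and a constant $\alpha$-boundary arc is precisely an $\alpha$-boundary degeneration; that degeneration is then excluded by Condition~\ref{prop:CurvesNotToCoverA} (that $n_\wpt(u)=0$ or $n_\zpt(u)=0$), since an $\alpha$-boundary degeneration for a lower diagram necessarily covers both basepoints. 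This formulation makes the closedness immediate, without any discussion of slice-intersection counts.

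Two concrete issues with your write-up. First, your opening paragraph invoking Lemma~\ref{lem:SBB} and the ``discrete combinatorial quantity'' is not actually used in the argument that follows; all the work happens in the third paragraph, so that material is dead weight and obscures where the real content is. Second, your justification for excluding the problematic degeneration — ``boundary nodes on the $\alpha$-side are excluded by the hypotheses on our partial diagrams (boundary degenerations occur only on the $\beta$-side)'' — repeats a sentence from the introduction as if it were a standing hypothesis, when it is in fact a consequence of the homological constraint~\ref{prop:CurvesNotToCoverA}, which is what one must actually cite here. Finally, the assertion that ``multiplicities of $\alpha$-slice intersections are genuinely continuous along the sequence'' is exactly what needs proof and what fails in the degenerate case; the multiplicity can jump \emph{up} precisely when a constant arc (boundary degeneration) appears, so invoking continuity of these counts is circular unless the degeneration has already been ruled out. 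The monotonicity formulation sidesteps all of this, and I'd recommend it.
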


\begin{proof}
  This follows exactly as in~\cite[Lemma~5.55]{InvPair}: strong
  boundary monotonicity can be phrased in terms of the monotonicity of
  the function $t\circ u$ restricted to each boundary component.
  Monotone functions can limit to constant functions, but in that
  case, there is an $\alpha$-boundary degeneration component, which in
  turn is ruled out by Condition~\ref{prop:CurvesNotToCoverA}.
\end{proof}

Given a lower diagram $(\Sigma,\alphas,\betas,w,z)$,
the operations on the type $A$ module are defined by counting
$J$-holomorphic curves
\[ u\colon (S,\partial S)\to (\Sigma\times[0,1]\times \R,
(\alphas\times\{1\}\times \R)\cup(\betas\cup\{0\}\times \R),\] 
subject to the constraints~\ref{property:First}-\ref{prop:FiniteEnergy}
and~\ref{prop:WeakBoundaryMonotone}, with the understanding that presently, we set
\begin{equation}
  \label{eq:dUp}
  d=g+n. 
\end{equation}

We will make the following
further hypothesis:
\begin{enumerate}[label=(A${\mathcal M}$-\arabic*),ref=(A${\mathcal M}$-\arabic*)]
\setcounter{enumi}{\value{bean}}
\item \label{prop:CurvesNotToCoverA}
  At least one of $n_\wpt(u)$ or $n_\zpt(u)$ vanishes.
\end{enumerate}

The strong boundary monotonicity condition on such a map $u$ looks
exactly as it did earlier
(c.f. Condition~\ref{property:StronglyBoundaryMonotone}).  As in
Section~\ref{sec:CurvesD}, this condition can be formulated in terms
of combinatorial data. Specifically, given a lower generator $\x$ and
a sequence of sets of Reeb chords and orbits
$(\rhos_1,\dots,\rhos_\ell)$, define the terminal $\alpha$-set and
strong boundary monotonicity of $(\x,\rhos_1,\dots,\rhos_\ell)$ as in
Definition~\ref{def:StronglyBoundaryMonotone}, with the understanding
that now $\x$ is a lower, rather than upper, Heegaard state.

Given lower Heegaard states $\x,\y$, a marked source $\Source$ a
strongly boundary monotone sequence
$\rhos_1,\dots,\rhos_\ell=\vec{\rhos}$, we can form moduli spaces of
such pseudo-holomorphic flows, denoted
$\UnparModFlow(\x,\y,\Source,\rhos_1,\dots,\rhos_\ell)$
or simply $\UnparModFlow(\x,\y,\Source;\vec{\rhos})$.

Lemma~\ref{lem:SBB} has the following straightforward adaptation:

\begin{lemma}
  \label{lem:SBA}
  If $(\x,\vec{\rhos})$ is strongly boundary monotone and $u\in
  \ModFlow^B(\x,\y,\Source,\vec{\rhos})$, then $u$ is strongly boundary
  monotone. Conversely, if $u\in\ModFlow^B(\x,\y,\Source,\vec{\rhos})$
  is strongly boundary monotone, then $(\x,\vec{\rhos})$ is strongly
  boundary monotone.\qed
\end{lemma}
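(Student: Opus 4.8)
The plan is to run the proof of Lemma~\ref{lem:SBB} essentially verbatim, adjusting only the bookkeeping: now $\x$ and $\y$ are lower Heegaard states (so $d=g+n$ as in Equation~\eqref{eq:dUp}) and the partition $\vec{\rhos}=(\rhos_1,\dots,\rhos_\ell)$ need not be simple. Note first that any $u\in\ModFlow^B(\x,\y,\Source,\vec{\rhos})$ is automatically weakly boundary monotone, by Condition~\ref{prop:WeakBoundaryMonotone}, so both assertions are really the two directions of the equivalence in Lemma~\ref{lem:SBB}. For a generic $\tau\in\R$ whose level set $(t\circ u)^{-1}(\tau)$ misses the punctures of $\Source$, set
\[
\alpha(u,\tau)=\{i\in\{1,\dots,2n-1\}\mid u^{-1}(\alpha_i\times\{1\}\times\{\tau\})\neq\emptyset\}.
\]
The goal is to show that $\alpha(u,\tau)$ is locally constant in $\tau$ away from the $t$-values $t_1<\dots<t_\ell$ at which the packets occur, and that crossing $t_k$ it transforms by exactly the rule appearing in Definition~\ref{def:StronglyBoundaryMonotone}.

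The mechanism is the same as before. By Condition~\ref{prop:BrCover}, $\pi_{\CDisk}\circ u$ has no critical points over $\partial[0,1]\times\R$, so $t\circ u$ is strictly monotone along each boundary arc of $\Source$ running into an $\east$-puncture $q$; if $q$ carries a Reeb chord $\rho\in\rhos_k$ with $\rho^-$ on $\alpha_i$ and $\rho^+$ on $\alpha_j$, then for all small $\epsilon>0$ we have $i\in\alpha(u,t_k-\epsilon)$ and $j\in\alpha(u,t_k+\epsilon)$. Interior punctures asymptotic to Reeb orbits do not meet the $\alpha$-arcs at all, so they do not affect $\alpha(u,\tau)$. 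The one genuinely new feature relative to Lemma~\ref{lem:SBB} is that several chords and orbits may share the common $t$-value $t_k$; but the clauses ``no two points in $\rhos_k^-$ lie on the same $\alpha$-arc'' and ``no two points in $\rhos_k^+$ lie on the same $\alpha$-arc'' in Definition~\ref{def:StronglyBoundaryMonotone} guarantee that these contributions are disjoint, so they may be combined independently. Hence, by continuity and induction on $k$, whenever the inductive quantities are defined one gets $\alpha(u,\tau)=\alpha(\x,\rhos_1,\dots,\rhos_k)$ for all $\tau\in(t_k,t_{k+1})$ (with $t_0=-\infty$, $t_{\ell+1}=+\infty$).

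With this identification in hand the equivalence is immediate. Strong boundary monotonicity of $u$ (Condition~\ref{property:StronglyBoundaryMonotone}) says precisely that for generic $\tau$ and every $i$ the fiber $u^{-1}(\alpha_i\times\{1\}\times\{\tau\})$ is at most a point, which (given weak monotonicity) is exactly the combinatorial statement that the sets $A_\pm$ attached to each $\rhos_k$ satisfy the containment and cardinality conditions of Definition~\ref{def:StronglyBoundaryMonotone} relative to $\alpha(\x,\rhos_1,\dots,\rhos_{k-1})$. If $(\x,\vec{\rhos})$ is strongly boundary monotone, every partial sequence is, so the induction of the previous paragraph runs and forces $u$ to be strongly boundary monotone. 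Conversely, if $u$ is strongly boundary monotone then $\x$ and $\y$ are honest lower Heegaard states (as in the remark following Condition~\ref{property:StronglyBoundaryMonotone}), so $(\x)$ is strongly boundary monotone, and inductively reading off $\alpha(u,\tau)$ across each $t_k$ yields the conditions making $(\x,\rhos_1,\dots,\rhos_k)$ strongly boundary monotone. I expect the only point requiring any care to be the handling of non-simple packets in the middle paragraph, i.e.\ checking that the distinct chords (and orbits) at a common height act on disjoint $\alpha$-arcs; everything else is literally the argument of Lemma~\ref{lem:SBB}. \qed
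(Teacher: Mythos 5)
Your proof is correct and is precisely the ``straightforward adaptation'' of Lemma~\ref{lem:SBB} that the paper invokes (the paper closes this lemma with \qed and no written argument, relying on the preceding sentence). The adjustments you make — replacing $d=g+n-1$ by $d=g+n$, observing that interior orbit punctures do not touch the $\alpha$-arcs, and using the disjointness clauses of Definition~\ref{def:StronglyBoundaryMonotone} to handle non-simple packets — are exactly the points one needs to check.
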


Let $\rhos$ be a constraint packet consisting entirely of chords (i.e.
it contains no orbits. Think of each $\rho\in \rhos$ as a path in
$[0,1]\times Z$.  Let $\inv(\rhos)$ denote the minimal number of
crossings between the various chords.  Given any point $p\in Z$, let
$m([\rhos],p)$ denote the multiplicity with which the Reeb chords in
$\rhos$ cover $p$, with the convention that if $p=\alpha_i\cap Z_i$ or
$\alpha_{i-1}\cap Z_i$, then $m([\rhos],p)$ is the average of
$m([\rhos],p')$ over the two nearby choices $p'\in Z$ on both sides of
$p$.  For example, if $p=\alpha_i\cap Z_i$ or $\alpha_{i-1}\cap Z_i$,
then $m(L_i,p)=1/2$.

Let 
\begin{equation}
  \label{eq:DefIota}
  \iota(\rhos)=\inv(\rhos)-m([\rhos],\rhos^-).
\end{equation}
(Note that $m([\rhos],\rhos^-)=m([\rhos],\rhos^+)$.)

\begin{example}
  \label{eq:IotaOfChord}
  If $\rho$ is a single Reeb chord, then 
  $\iota(\{\rho\})=-\weight(\rho)$.
\end{example}

\begin{example}
  \label{ex:JoinCurveEnd}
  Fix integers $a,b\geq 0$, $i\in \{2,\dots,2n-1\}$, and 
  let $\rho_1=(L_i R_i)^\alpha L_i$, 
  $\rho_2=(R_i L_i)^\beta R_i$, and $\rhos=\{\{\rho_1\},\{\rho_2\}\}$.
  Then,
  \begin{align*}
    \inv(\rho_1,\rho_2)&=|\alpha-\beta| \\
    m([\{\rho_1,\rho_2\}],\{\rho_1,\rho_2\}^-)&=
    2(\alpha+\beta+1).
  \end{align*}
  Note that $\iota(\{(L_i R_i)^{\alpha+\beta+1}\})=-\alpha-\beta-1$, so
  \[ \iota(\rhos)-\iota(\{(L_i R_i)^{\alpha+\beta+1}\})
  = |\alpha-\beta|-\alpha-\beta-1\leq -1 ,\]
  with equality iff $a=0$ or $b=0$.
\end{example}

\begin{example}
  \label{ex:CollisionEnd}
  Fix integers $a,b\geq 1$, $i\in\{2,\dots,2n-1\}$;
  let $\rho_1=(L_i R_i)^a$, $\rho_2=(R_i L_i)^b$,
  and $\rhos=\{\rho_1,\rho_2\}$.
  Then
  \begin{align*}
    \inv(\{\rho_1\},\{\rho_2\})&= |a-b| \\
    m([\{\rho_1,\rho_2\}],\{\rho_1,\rho_2\}^-)&= 2a+2b.
  \end{align*}
  Note that  $\iota(\rho_1)=-a$, $\iota(\rho_2)=-b$; so
  \[ \iota(\{\rho_1,\rho_2\})
  = |a-b|-2a-2b <-a-b
  =\iota(\{\rho_1\})+\iota(\{\rho_2\}).\]
\end{example}

\subsection{The chamber structure on $\Tb$}
We will be interested in some further structure on $\Tb$ induced by
the boundary degenerations, as defined  in
Definition~\ref{def:BoundaryDegenerations} (using $d$ now
as in Equation~\eqref{eq:dUp}). 

For each $\{j,k\}\in\Mdown$, there is a corresponding component of
$\Bjk$ of $\Sigma_0\setminus \betas$, which contains the two
boundary components $Z_j$ and $Z_k$; or equivalently, component
$\Bjk\subset \Sigma\setminus\betas$ which contains
the two punctures corresponding to $Z_j$ and $Z_k$.

Let $t\colon {\mathbb H}\cong [0,\infty)\times \R \to \R$ denote the
projection to the second factor.  Given
$w\in\ModDeg_J^\Bjk$, we have two punctures $q_1$ and $q_2$, labelled
by orbits $j$ and $k$ respectively.  Let $\delta(w)=t\circ
w(q_1)-t\circ w(q_2)$. Note that $\delta(w)$ is not invariant under
conformal automorphisms of ${\mathbb H}$; but the sign of $\delta(w)$
is.

\begin{lemma}
  \label{lem:Walls}
  For generic $J$, the subspace of $d$-dimensional torus $\Tb$ 
  \[ \Wall^{\orb_j=\orb_k}=\{\x\in\Tb\big| \exists v\in\UnparModDeg^{\Bjk}(\x)~\text{so that}~\delta(v)=0\}\]
  is the image under a smooth map of a smooth manifold of dimension $d-1$.
\end{lemma}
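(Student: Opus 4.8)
The plan is to identify $\Wall^{\orb_j=\orb_k}$ as the vanishing locus of the real-valued function $\delta$ on the total space of boundary degenerations, and then apply the implicit function theorem together with a Sard--Smale transversality argument. First I would set up the parametrized moduli space: consider the moduli space $\UnparModDeg^{\Bjk}$ of boundary degenerations with shadow $\Bjk$, modulo reparametrization by the (two-dimensional) group of conformal automorphisms of $\HH$, but retaining the information of the two interior punctures $q_1, q_2$ labelled by $\orb_j$ and $\orb_k$. By Proposition~\ref{prop:SmoothBoundaryDeg}, for generic $J$ the unquotiented moduli space $\ModDeg_j^{\Bjk}$ is a smooth manifold of dimension $d = g+n-1$; since the automorphism group of $\HH$ acts on this with a $1$-dimensional stabilizer along the $\R$-translations being already implicitly handled, I should be careful about exactly which quotient is being taken. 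In fact $\Wall^{\orb_j=\orb_k}$ is defined directly as a subset of $\Tb$ via the evaluation map $\evB$, so the cleanest route is: let $\mathcal{N} = \ModDeg_j^{\Bjk}$, which carries the evaluation map $\evB\colon \mathcal{N}\to \Tb$ and the function $\delta\colon \mathcal{N}\to\R$ measuring the $t$-separation of the two orbit punctures.

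The key steps, in order: (1) Observe that $\delta$ is a smooth function on $\mathcal{N}$ once we have fixed a conformal normalization of $\HH$ (e.g. fixing the positions of the boundary chord punctures, or normalizing so that one orbit puncture sits at a standard location); this uses that the asymptotic height of a puncture depends smoothly on the curve. (2) Show that for generic $J$, the value $0$ is a regular value of $\delta$ restricted to each component of $\mathcal{N}$ (or, if $\delta$ is identically $0$ on some component, treat that component separately — but such a component would force the two orbits to always be at equal height, which one can exclude by the somewhere-injectivity near the boundary, exactly as in the cited Lipshitz results). The regularity of $0$ is the step requiring a genericity argument: one perturbs $J$ and applies Sard--Smale to the universal moduli space of pairs $(w, J)$ with $\delta(w)=0$. (3) Conclude that $\delta^{-1}(0)\subset\mathcal{N}$ is a smooth manifold of dimension $d-1$. (4) Finally, $\Wall^{\orb_j=\orb_k}$ is by definition the image of $\delta^{-1}(0)$ under $\evB$, so it is the image under a smooth map of a smooth $(d-1)$-manifold, which is exactly the assertion.

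The main obstacle I expect is step (2): arranging transversality so that $0$ is a regular value of $\delta$. The subtlety is that $\delta$ is not a holomorphic quantity and not the index of an elliptic operator, so standard Fredholm transversality does not immediately apply; instead one wants to show that varying $J$ (or an abstract perturbation) already moves the configuration of orbit heights freely, which is where one invokes that boundary degenerations are somewhere injective near the boundary (the reference to \cite[Proposition~3.9 and Lemma~3.10]{LipshitzCyl} and \cite{OhBoundary} used already in Proposition~\ref{prop:SmoothBoundaryDeg}). Concretely, near a point of $\mathcal{N}$ one wants to produce a variation of $J$ supported near an injective point of the curve lying on the part of the domain near one orbit puncture but not the other, changing $\delta$ to first order; standard gluing/perturbation technology then upgrades this to the statement that the universal moduli space $\{(w,J) : \delta(w)=0\}$ is cut out transversally, and Sard--Smale gives a comeager set of good $J$. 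The rest — smoothness of $\delta$, and the identification of $\Wall$ as an image under $\evB$ — is routine.
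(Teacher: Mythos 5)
Your proof follows essentially the same route as the paper's: identify $\Wall^{\orb_j=\orb_k}$ as the image under $\evB$ of the vanishing locus $\delta^{-1}(0)$ inside the $d$-dimensional moduli space $\ModDeg_J^{\Bjk}$ (whose smoothness comes from Proposition~\ref{prop:SmoothBoundaryDeg}), use transversality to make $0$ a regular value for generic $J$, and conclude that $\delta^{-1}(0)$ is a smooth $(d-1)$-manifold whose image is the wall. The only point to note is that your discussion of fixing a conformal normalization of $\HH$ and ruling out components with $\delta\equiv 0$ is slightly more careful than the paper, which simply invokes "transversality arguments" and then passes to the quotient by $\mathrm{Aut}(\HH)$; these amount to the same thing, since $\evB$ factors through that quotient and the sign of $\delta$ is well-defined on it.
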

\begin{proof}
  The map $\delta\colon \ModDeg^{\Bjk}(\x)\to \R$ is a smooth
  map. By transversality arguments, for generic $J$, $0$ is a regular
  value, so $\delta^{-1}(0)$ is a submanifold. Similarly, if we take the quotient
  by the automorphism of group of ${\mathbb H}$, the quotient of $\delta^{-1}(0)$
  is a codimension one submanifold of $\UnparModDeg^{\Bjk}(\x)$. 
  Now, $\Wall^{\orb_j=\orb_k}$ is the image of this submanifold under the evaluation map $\evB$ from Definition~\ref{def:evB}.
\end{proof}

We have the following analogue of 
Lemma~\ref{lem:BoundaryDegenerationsDegree1} for lower diagrams:

\begin{lem}
  \label{lem:BoundaryDegenerationsDegree1A}
  The evaluation map $\evB\colon \ModDeg_J^{{\mathcal B}_{\{r,s\}}}\to \Tb$ has odd degree.
\end{lem}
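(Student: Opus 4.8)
The plan is to reduce this statement to its analogue for type $D$ diagrams, Lemma~\ref{lem:BoundaryDegenerationsDegree1}, whose proof is already in hand. The only difference between the $D$-side and $A$-side settings here is the value of $d$: on the $A$-side we have $d = g+n$ (Equation~\eqref{eq:dUp}) rather than $d = g+n-1$, and the diagram $\Hdown$ carries an extra $\beta$-circle $\beta_{g+n}$ together with the basepoints $w$ and $z$. Neither of these features interacts with the analysis of a $\beta$-boundary degeneration living over the component $\Bjk$ of $\Sigma\setminus\betas$: such a degeneration is a branched cover of $\HH$ of the appropriate degree $d$, and the argument is purely local to $\cSigma\times\HH$ and to the combinatorics of how $\betas$ separates the punctures. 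So the proof is essentially a remark that the argument of Lemma~\ref{lem:BoundaryDegenerationsDegree1} goes through verbatim.

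Concretely, I would first recall that $\{r,s\}\in\Mdown$ picks out the component $\Brs$ of $\Sigma_0\setminus\betas$ containing the two punctures $z_r$ and $z_s$ (this uses the lower-diagram axiom that each component of the $\beta$-complement contains exactly two boundary circles, except for the two marked by $w$ and $z$ which contain one apiece — but $\{r,s\}\in\Mdown$ precisely names a two-puncture component, so $\Brs$ is well-defined exactly as on the $D$-side). Then I would treat the genus-zero case: when $g=0$, the moduli space $\ModDeg_J^{\Brs}$ consists of $d-1$ constant disks mapped to the $\beta$-circles not bounding $\Brs$, together with a single non-constant disk covering $\Brs$ with degree one; the evaluation map $\evB$ to $\Tb = \beta_1\times\cdots\times\beta_d$ is then a homeomorphism (one may move the constant disks freely along $d-1$ of the factors and reparametrize the remaining component), hence has degree $1$. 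For $g>0$, one passes from the genus-zero model to the general case by gluing in handles (spheres), which does not change the degree of $\evB$ mod $2$; this is exactly the gluing argument of~\cite[Section~12]{LipshitzCyl}, also~\cite[Section~10]{HolDisk}.

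The one point requiring a word of care is that the extra $\beta$-circle $\beta_{g+n}$ on the lower diagram must be among the ``other'' circles that receive constant disks in the genus-zero model; since $\beta_{g+n}$ by hypothesis bounds a one-puncture component of $\Sigma_0\setminus\betas$ (marked by $w$ or $z$), it certainly does not bound $\Brs$, so this is automatic. Likewise Proposition~\ref{prop:SmoothBoundaryDeg} (smoothness of $\ModDeg_J^{\Brs}$, now of dimension $d = g+n$) applies with the updated value of $d$, so the degree of the map $\evB\colon\ModDeg_J^{\Brs}\to\Tb$ between manifolds of the same dimension $d$ is well-defined. I do not anticipate a genuine obstacle here: the content is entirely that nothing in the proof of Lemma~\ref{lem:BoundaryDegenerationsDegree1} used $d = g+n-1$ rather than $d = g+n$, nor the absence of the extra $\beta$-circle and basepoints; the proof is a short invocation of that lemma's argument with $d$ reinterpreted.

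\begin{proof}
  The proof is identical to that of Lemma~\ref{lem:BoundaryDegenerationsDegree1}, now with $d = g+n$ as in Equation~\eqref{eq:dUp}. For $\{r,s\}\in\Mdown$, the two-chain $\Brs$ is the component of $\Sigma\setminus\betas$ containing the punctures $z_r$ and $z_s$; the extra $\beta$-circle $\beta_{g+n}$ of the lower diagram bounds a component of $\Sigma_0\setminus\betas$ marked by $w$ or $z$, hence is disjoint from $\Brs$. When $g=0$, a boundary degeneration with shadow $\Brs$ consists of $d-1$ constant disks mapped to the $\beta$-circles not bounding $\Brs$, together with one non-constant disk covering $\Brs$ with degree one, and $\evB\colon\ModDeg_J^{\Brs}\to\Tb$ is a homeomorphism: one translates the constant disks along the corresponding $d-1$ factors of $\Tb$ and reparametrizes the remaining component. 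Thus $\evB$ has degree one in this case. For $g>0$, gluing spheres produces the general case from the genus-zero model without changing the mod-$2$ degree of $\evB$; see~\cite[Section~12]{LipshitzCyl} and~\cite[Section~10]{HolDisk}.
\end{proof}
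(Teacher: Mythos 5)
Your proposal is correct, and it is exactly the argument the paper has in mind: the paper in fact offers no separate proof of Lemma~\ref{lem:BoundaryDegenerationsDegree1A}, having just stated it as the ``analogue for lower diagrams'' of Lemma~\ref{lem:BoundaryDegenerationsDegree1}, whose proof (genus-zero homeomorphism via $d-1$ constants plus one degree-one disk on $\Brs$, then gluing spheres for $g>0$) you correctly carry over with $d=g+n$ replacing $d=g+n-1$. Your added remark that $\beta_{g+n}$ bounds a one-puncture component and hence does not bound $\Brs$ — so that the extra circle simply absorbs one more constant disk in the genus-zero model — is a genuinely useful clarification that the paper leaves tacit.
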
 

It follows that the complement of $\Wall^{\orb_j=\orb_k}$ in $\Tb$ 
consists of two (disjoint) chambers:
\begin{align*}
  \Chamber^{\orb_j>\orb_k}&=\{\x\in\Tb\big| \#(w\in\ModDeg^{\Bjk}(\x)~\text{so that}~\delta(w)>0)\equiv 1 \pmod{2}\} \\
  \Chamber^{\orb_j<\orb_k}&=\{\x\in\Tb\big| \#(w\in\ModDeg^{\Bjk}(\x)~\text{so that}~\delta(w)<0)\equiv 1 \pmod{2}\}
\end{align*}

\subsection{Smooth moduli spaces}

\begin{defn}
  \label{def:IndexTypeA}
  Let $\x$ and $\y$ be lower states, 
  suppose that $(B,\rhos_1,\dots,\rhos_\ell)=(B,\vec{\rhos})$ is strongly boundary monotone.
  Define
  \begin{align}
    \chiEmb(B,\vec{\rhos})&= d+ e(B) - n_\x(B)-n_\y(B) -\sum_{i=1}^{\ell} \Big(\iota(\chords(\rhos_i))+\weight(\chords(\rhos_i))\Big)
    \label{eq:ChiEmbA} \\
    \ind(B,\x,\y;\vec{\rhos}) &= e(B)+n_\x(B)+n_\y(B)+\ell   \label{eq:EmbIndA}
\\
  & \qquad -\weight(\vec{\rhos})+\iota(\chords(\vec{\rhos}))+\sum_{o\in\orb(\vec\rhos)} (1-\weight(o)), \nonumber
  \end{align}
  where
  \[ \iota(\vec{\rhos})=\sum_{i=1}^{\ell} \iota(\chords(\rhos_i))
  \qquad{\text{and}} \qquad
  \weight(\rhos)=\sum_{i=1}^{\ell} \weight(\rhos_i),
  \] 

\end{defn}

\begin{rem}
Note that in the special case where each packet in $\vec{\rhos}$
contains a single chord (so we write $\vec{\rhos}=\vec{\rho}$),
Example~\ref{eq:IotaOfChord} shows that $\iota(\vec{\rho})=-\weight(\vec{\rho})$;
so the above definition of the embedded index is consistent with
Equation~\eqref{eq:ChiEmb}.
\end{rem}

We have the following analogue of Proposition~\ref{prop:ExpectedDimension}

\begin{prop}
  \label{prop:ExpectedDimensionA}
  Suppose that $(\x,\vec\rhos)$ is strongly boundary monotone.
  If $\ModFlow^B(\x,\y,\Source;\vec\rhos)$ is represented by some
  pseudo-holomorphic $u$, then $\chi(\Source)=\chiEmb(B)$ if and only
  if $u$ is embedded. In this case, the expected dimension of the
  moduli space is computed by $\ind(B,\x,\y;\vec\rhos)$.  Moreover, if a
  strongly monotone moduli space $\ModFlow^B(\x,\y,\Source;\vec\rhos)$
  has a non-embedded holomorphic representative, then its expected
  dimension $\leq \ind(B,\x,\y;\vec\rhos)-2$.
\end{prop}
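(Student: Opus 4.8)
The plan is to run the argument of Proposition~\ref{prop:ExpectedDimension} (which in turn follows~\cite{LipshitzCyl} and~\cite[Section~5.7]{InvPair}), inserting the corrections forced by the fact that here the partition $\vec{\rhos}=(\rhos_1,\dots,\rhos_\ell)$ is into packets rather than singletons, so several Reeb chords may share a $t$-level and hence link one another. First I would establish the embeddedness criterion. Writing $f=\pi_\Sigma\circ u$ and letting $b_\Sigma$ be its ramification number (interior branch points counted with weight $1$, boundary branch points with weight $\tfrac12$), the Riemann--Hurwitz formula gives $\chi(\cS)=e(\cS)+\tfrac d2=e(B)+\tfrac d2-b_\Sigma$, now with $d=g+n$ as in~\eqref{eq:dUp}; and for $R\gg 0$ one still has $u\cdot\tau_R(u)=n_\x(B)+n_\y(B)-\tfrac d2$, since that number depends only on the behaviour at $\pm\infty$.

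The point of departure from the type $D$ setting is that $u\cdot\tau_t(u)$ is no longer independent of $t$: as $t$ crosses the height of a packet $\rhos_i$, chords of $u$ and chords of $\tau_t(u)$ lying over a common boundary circle slide past one another, and the jump in the intersection number is governed by $\inv(\chords(\rhos_i))$. This is the ``linking number near the boundary'' of~\cite[Section~5.7]{InvPair}, which vanished in the type $D$ case only because the partitions there were simple (Example~\ref{eq:IotaOfChord}: a one-chord packet contributes nothing). Bookkeeping these jumps as in~\cite{InvPair} gives
\[ u\cdot\tau_\epsilon(u)=b_\Sigma+2s-\sum_{i=1}^{\ell}\bigl(\iota(\chords(\rhos_i))+\weight(\chords(\rhos_i))\bigr), \]
where $s\ge 0$ is the number of double points of $u$, which are positive and (by strong boundary monotonicity, which rules out $\alpha$-boundary degenerations, cf.\ Condition~\ref{prop:CurvesNotToCoverA}) interior. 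Equating with the large-$R$ value and substituting into Riemann--Hurwitz yields $\chi(\cS)=\chiEmb(B,\vec{\rhos})+2s$ (cf.\ Equation~\eqref{eq:ChiEmbA}), which is the first assertion: $\chi(\Source)=\chiEmb(B,\vec{\rhos})$ precisely when $s=0$, i.e.\ $u$ is embedded.

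For the dimension count, view $\cS$ as a branched cover of the disk with branching number $b_\CDisk=d-\chi(\cS)$; when $u$ is embedded this is the intersection number, in $\Sym^{d}(\cSigma)$, of the associated disk with the diagonal. In the chord- and orbit-free case, Rasmussen's index formula~\cite{RasmussenThesis} gives $\ind(u)=2e(\Shadow(u))+b_\CDisk=e(B)+n_\x(B)+n_\y(B)$. Each packet then contributes a correction to the Fredholm index of the linearized $\dbar$ operator — one counted as $+1$ for the level, plus further contributions from the chords and orbits it contains — and, by the model computations indicated in Figure~\ref{fig:IndexModel} together with Examples~\ref{ex:JoinCurveEnd} and~\ref{ex:CollisionEnd} (which are designed precisely to pin these down), the total correction assembles to $\ell-\weight(\vec{\rhos})+\iota(\chords(\vec{\rhos}))+\sum_{\orb\in\orb(\vec{\rhos})}(1-\weight(\orb))$, so that the expected dimension of $\ModFlow^B(\x,\y,\Source;\vec{\rhos})$ for an embedded $u$ is exactly $\ind(B,\x,\y;\vec{\rhos})$ of~\eqref{eq:EmbIndA}. (Specializing to one-chord packets recovers the per-chord correction $1-2w$ from Proposition~\ref{prop:ExpectedDimension}, using Example~\ref{eq:IotaOfChord}.) Finally, if $u$ is not embedded with $s>0$ double points, then $\chi(\Source)$ exceeds $\chiEmb(B,\vec{\rhos})$ by $2s$, hence $b_\CDisk$ drops by $2s$ and the expected dimension of the moduli space containing $u$ is $\ind(B,\x,\y;\vec{\rhos})-2s\le\ind(B,\x,\y;\vec{\rhos})-2$.

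The main obstacle — and the only genuine departure from Proposition~\ref{prop:ExpectedDimension} — is the two linking bookkeeping steps: identifying the correction to $u\cdot\tau_\epsilon(u)$ coming from sliding intra-packet chords as $\sum_i\bigl(\iota(\chords(\rhos_i))+\weight(\chords(\rhos_i))\bigr)$, and matching the $\iota$-contributions that surface in the index computation with those appearing in $\chiEmb$. Once these are verified against the explicit model computations in Examples~\ref{ex:JoinCurveEnd} and~\ref{ex:CollisionEnd}, the remainder is a line-by-line transcription of the type $D$ argument.
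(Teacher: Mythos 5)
Your proposal takes essentially the same route as the paper's proof. Both run the Riemann--Hurwitz / intersection-number argument of Proposition~\ref{prop:ExpectedDimension}, both identify the new correction to $u\cdot\tau_\epsilon(u)$ as $-\sum_i\bigl(\iota(\chords(\rhos_i))+\weight(\chords(\rhos_i))\bigr)$ coming from pairs of chords in a single packet linking past one another, and both verify this against Examples~\ref{ex:JoinCurveEnd} and~\ref{ex:CollisionEnd}. The one step you defer — pinning down the linking contribution — is exactly the step the paper spells out explicitly, via the local model $\tau\mapsto\tau^{2a}$ versus $\tau\mapsto c\,\tau^{2b}$ and the discriminant computation giving $\min(4a,4b)$ double points, so what you flag as the ``main obstacle'' is precisely where the paper's content lies; were you to write this up you would need to carry out that model computation rather than cite the examples as self-evident. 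The only other difference is presentational: you fold the $2s$ double-point count into $u\cdot\tau_\epsilon(u)$ from the start, whereas the paper handles the embedded case first and then adds the $2s$ correction, a trivial reorganization.
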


\begin{proof}
  To deduce Equation~\eqref{eq:ChiEmbA}, we apply the proof of
  Proposition~\ref{prop:ExpectedDimension}. As in that proof, we
  compare the intersection number $u\cap
  \tau_R(u)=n_\x(B)+n_\y-\frac{d}{2}$ with $u\cap
  \tau_\epsilon(u)$. In that argument, we used the fact that
  $\tau_\epsilon(u)=b_\Sigma$. In the present case, however, there are additional
  intersection points from $u\cap \tau_\epsilon(u)$ that come from the
  double points at the boundary (arising from the constraint packets).
  Thus, 
  \[u \cap \tau_\epsilon(u)=b_\Sigma + \sum_{i=1}^\ell N(\chords(\rhos_i)),\] where
  $N(\rhos_i)$ is the  number of intersection points of
  $u\cap \tau_\epsilon(u)$ that come from the constraint packet $\rhos_i$.
  We will show that 
  \begin{equation}
    \label{eq:ComputeCorrection}
    N(\chords(\rhos))=-\iota(\chords(\rhos))-\weight(\chords(\rhos)).
  \end{equation}

  To see this, note that contributions arise only for pairs of chords
  in the packet that are contained in some fixed boundary component
  $\Zout_j$. Suppose then that there are exactly two chords $\rho_1$
  and $\rho_2$ in $\rhos$ that are contained in $\Zout_j$. Suppose
  that the length of $\rho_1$ is $a$ and the length of $\rho_2$ is
  $b$. (Here, we normalize so that the whole boundary has length $1$;
  so in particular $L_i$ has length $\OneHalf$.) By boundary
  monotonicity, $a+b$ is an integer. In our local model, the surface
  has a component where $f$ is modelled on $\tau\mapsto \tau^{2a}$ and another
  modeled on $\tau\mapsto c\cdot \tau^{2b}$, for $\tau\in\C$ with
  $\Real(\tau)\geq 0$, and some $c\in \R^{<0}$. To count
  double points, we can halve the number of double points on the maps
  $\tau\mapsto \tau^{2a}$ and $\tau\mapsto c \cdot \tau^{2b}$ for
  $\tau\in \C$. Counting double points there is equivalent to counting
  the intersection number of the quadratic function $(z-\tau^{2a})(z-c
  \tau^{2b})$ with the discriminant locus, which in turn is equivalent
  to the order of vanishing of the function
  \[ (\tau^{2a}+c \tau^{2b})^2-4c \tau^{2(a+b)}=(\tau^{2a}-c  \tau^{2b})^2=\min(4a,4b),\] which, by Examples~\ref{ex:JoinCurveEnd}
  and~\ref{ex:CollisionEnd}, verifies
  Equation~\eqref{eq:ComputeCorrection}.  (Note that the when $a$ has
  fractional length, we are using Example~\ref{ex:JoinCurveEnd} with
  $a=\alpha+\OneHalf$, $b=\beta+\OneHalf$.) 
  
  Having verified Equation~\eqref{eq:ComputeCorrection},
  Equation~\eqref{eq:ChiEmbA} follows at once.

  Deducing the index from the Euler characteristic as in
  proof of Proposition~\ref{prop:ExpectedDimension}, noting that 
  \[   \ind=2e+b_{\CDisk}-\sum_{i=1}^{\ell}(|\rhos_i|-1)+\sum_{\rho\in\chords(\rhos_i)}(1-2\weight(\rho))+\sum_{o\in\orbits(\rhos_i)}(2-2\weight(o)).
  \]
\end{proof}

\begin{example}
  Consider the shadow in Figure~\ref{fig:DoublepointFree}.  This
  shadow occurs for four different boundary monotone moduli spaces:
  $\UnparModFlow^B(\x,\y,(\{\orb_i\};\Source_1))$, where $\Source_1$ is a
  disk with four boundary punctures, and a single orbit puncture;
  $\UnparModFlow^B(\x,\y,(\{L_iR_i\};\Source_2))$, where $\Source_2$ is a
  disk with five boundary punctures (one of which is an East infinity
  boundary puncture, labelled by $L_iR_i$);
  $\UnparModFlow^B(\x,\y,(\{R_iL_i\};\Source_3))$, similar to the above
  moduli space, and finally,
  $\UnparModFlow^B(\x,\y,(\{L_i,R_i\};\Source_4))$, where $\Source_4$ is a
  disjoint union of two three-punctured disks (one of which has an
  East infinity puncture labelled by $L_i$ and the other of which has
  a puncture labelled by $R_i$).  The dimensions of these moduli
  spaces are $2$, $1$, $1$, and $0$ respectively.
  \begin{figure}[h]
    \centering
    \input{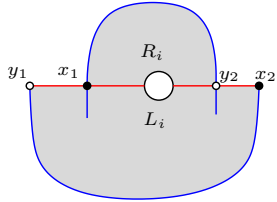}
    \caption{{\bf Moduli spaces with given shadow.}
      \label{fig:DoublepointFree}}
  \end{figure}
\end{example}

There are evaluation maps $\evB_i\colon
\ModFlow(\x,\y,\rhos_1,\dots,\rhos_\ell)\to \Tb$, obtained as
follows. Suppose that the punctures at level $\rhos_i$ are mapped via
$t$ to $\tau\in \R$; then 
\[ \evB_i(v)=\pi_{\Sigma}^{-1}(\{(0,\tau)\}).\]
These maps descend to give maps
\begin{equation}
  \label{eq:EvaluationAtStage}
  \evB_i\colon
  \UnparModFlow(\x,\y,\rhos_1,\dots,\rhos_\ell)\to \Tb.
\end{equation}

The following is a mild elaboration on Theorem~\ref{thm:GeneralPosition}
(see~\cite[Proposition~5.6]{InvPair}):
\begin{thm}
  \label{thm:GeneralPositionA}
  Choose a generic $\{J_t\}$. Suppose that $(\x,\vec{\rhos})$ is
  boundary monotone.  If $\ind(B,\x,\y;\vec{\rhos})\leq 2$, then the
  moduli space $\ModFlow^B(\x,\y,\Source;\vec{\rhos})$ is a smooth
  manifold of dimension given by $\ind(B,\x,\y;\vec{\rhos})$.  Moreover,
  $\evB_i$ are transverse to all of the walls
  $\Wall^{\orb_j=\orb_k}$ for all $\{r,s\}\in\Mdown$; in particular, all the
  disks $v$ appearing in the zero-dimensional moduli spaces 
  $\UnparModFlow^B(\x,\y,\Source,\vec{\rhos})$ with
  $\ind(B,\x,\y;\vec{\rhos})=1$ have
  $\evB_i\in \Chamber^{\orb_j>\orb_k}$ or $\Chamber^{\orb_j<\orb_k}$.
\end{thm}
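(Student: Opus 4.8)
The plan is to follow the standard transversality template used to prove Theorem~\ref{thm:GeneralPosition} (and more broadly~\cite[Proposition~5.6]{InvPair}), but to carry along a family of almost-complex structures $\{J_t\}$ and to keep track of the extra evaluation maps $\evB_i$ at the boundary degeneration levels. First I would fix the source $\Source$ and the constraint packets $\vec\rhos$, and consider the unconstrained moduli space $\ModFlow^B(\x,\y,\Source)$ consisting of $\{J_t\}$-holomorphic curves with no partition specified. By a Sard-Smale argument applied to the universal moduli space, for a comeager set of families $\{J_t\}$ this is cut out transversely by the $\dbar$-operator; this uses somewhere-injectivity of the relevant curves, which follows exactly as in~\cite[Proposition~5.6]{InvPair}, because our hypotheses on partial diagrams exclude the $\alpha$-boundary degenerations that would otherwise obstruct injectivity (the $\beta$-boundary degenerations are a separate phenomenon already isolated in Section~\ref{sec:CurvesD}). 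The evaluation map at the east and interior punctures lands in $\R^E$, and one arranges (again on a comeager set of $\{J_t\}$) that it is transverse to all the diagonals cutting out the partition $\vec\rhos$; the preimage is then $\ModFlow^B(\x,\y,\Source;\vec\rhos)$, a smooth manifold whose dimension, by the gluing/index computation of Proposition~\ref{prop:ExpectedDimensionA}, equals $\ind(B,\x,\y;\vec\rhos)$. The hypothesis $\ind(B,\x,\y;\vec\rhos)\le 2$ is exactly what lets us ignore strata with non-embedded representatives, since by Proposition~\ref{prop:ExpectedDimensionA} those have expected dimension $\le\ind-2\le 0$, and generically empty for $\ind<2$.

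Next I would address the transversality of $\evB_i$ to the walls $\Wall^{\orb_j=\orb_k}$. Here the key point is Lemma~\ref{lem:Walls}: for generic $J$, $\Wall^{\orb_j=\orb_k}$ is the image of a smooth map from a smooth $(d-1)$-manifold, so it has the ``right'' codimension $1$ in $\Tb$. One then incorporates the maps $\evB_i$ from \eqref{eq:EvaluationAtStage} into the universal moduli space and invokes a parametric transversality argument: on a further comeager set of $\{J_t\}$, each $\evB_i$ is transverse to (the smooth source of) each $\Wall^{\orb_j=\orb_k}$, for all $\{j,k\}\in\Mdown$. Dimension counting then gives that the locus in $\UnparModFlow^B(\x,\y,\Source,\vec\rhos)$ where some $\evB_i$ lands on some wall has codimension $\ge 1$; in particular, when $\ind(B,\x,\y;\vec\rhos)=1$ the unparametrized moduli space is $0$-dimensional and this locus is empty, so every disk $v$ in it has $\evB_i$ lying in one of the two open chambers $\Chamber^{\orb_j>\orb_k}$ or $\Chamber^{\orb_j<\orb_k}$.

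One subtlety I would be careful about: the evaluation maps $\evB_i$ are defined only after choosing the time-value $\tau$ at which the $i$-th packet sits, and they are not conformally invariant before quotienting; I would verify (as in the proof of Lemma~\ref{lem:Walls}) that the \emph{sign} of $\delta(w)$, hence membership in a chamber, descends to $\UnparModFlow$. A second routine-but-necessary check is that the two intersecting families — the moduli space $\ModFlow^B(\x,\y,\Source;\vec\rhos)$ and the wall $\Wall^{\orb_j=\orb_k}$ — are genuinely independent in the universal setting, i.e. that varying $J$ (or $\{J_t\}$) moves the curve and the boundary degeneration independently enough to achieve transversality; this is where one uses that boundary degenerations are supported near $\partial\Sigma$ and are somewhere-injective there (Proposition~\ref{prop:SmoothBoundaryDeg} and its proof, via~\cite[Proposition~3.9, Lemma~3.10]{LipshitzCyl}).

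The main obstacle I anticipate is precisely this last parametric-transversality step for $\evB_i$ relative to the walls: unlike the classical evaluation-to-diagonal transversality (which only needs to perturb near an interior point of the curve, away from the east-infinity ends), here one must simultaneously perturb to make the main curve transverse, make the boundary-degeneration moduli space transverse (so that $\Wall^{\orb_j=\orb_k}$ is smooth), and make $\evB_i$ transverse to the resulting wall — all with a single family $\{J_t\}$, which is admissible and hence split near the punctures, limiting where perturbations can be placed. I expect this is handled by combining the somewhere-injectivity of boundary degenerations near $\partial\Sigma_0$ with the freedom to perturb $J$ in the interior of $\Sigma$ away from a neighborhood of the punctures, exactly as in~\cite[\S5.3]{InvPair} and~\cite{LipshitzCyl}; the remaining work is bookkeeping of which comeager sets to intersect.
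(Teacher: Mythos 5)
Your proposal is correct and takes essentially the same approach as the paper's proof, which likewise cuts out the partition constraints as preimages of diagonals under the $t$-evaluation and then argues that both $\ev_P\colon\ModFlow^B(\x,\y;\Source)\to\R^P$ and the $\beta$-evaluations $\evB_i\colon\ModFlow^B(\x,\y;\Source)\to\Tb$ are submersions in the universal parametric setting (where $\{J_t\}$ varies), so that a Sard--Smale argument yields transversality to the diagonals in $\R^P$ and to the walls $\Wall^{\orb_j=\orb_k}$ for generic $\{J_t\}$, citing \cite[Proposition~5.6]{InvPair} and \cite[\S 3.4]{McDuffSalamon}. The ``simultaneous perturbation'' difficulty you flag is exactly where the paper leans on the somewhere-injectivity input from Proposition~\ref{prop:SmoothBoundaryDeg} and Lemma~\ref{lem:Walls}; the paper's proof is terser than your sketch but the logical content is the same.
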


\begin{proof}
  For generic $\{J_t\}$, the moduli spaces $\ModFlow^B(\x,\y;\Source)$
  are transversely cut out by the $\dbar$ operator.  The $t$
  evaluations of the various punctures give a map
  from this moduli
  space to $\R^{P}$, where the set $P$ corresponds with the
  interior and $\east$-punctures of $\Source$.
  This map $\ev_P\colon \ModFlow^B(\x,\y;\Source)\to \R^P$
  is a submersion.
  The moduli space $\ModFlow^B(\x,\y;\Source;\vec\rhos)$
  can be thought of as the preimage under this evaluation map of a
  suitable diagonal $\Delta_P$ in $\R^P$. (For example, if
  some subset $\{p_1,\dots,p_k\}$
  of punctures are assigned to the same constraint packet,
  then the corresponding diagonal in $\R^P$ consists of
  those $P$-tuples whose components at $p_1,\dots,p_k$ coincide.)
  It follows readily that
  $\ModFlow^B(\x,\y,\Source;\vec{\rhos})$ is a smooth
  manifold of dimension given by $\ind(B,\x,\y;\vec{\rhos})$.

  At each puncture $p$, we also have a corresponding evaluation map
  $\ev_p^\beta\colon \ModFlow^B(\x,\y;\Source)$ defined by
  $\ev_p^\beta(v)=\pi_\Sigma^{-1}(\{(0,t(v(p)))\})$;
  taking the product over each puncture gives a submersion
  \[ \ev_P^\beta\colon \ModFlow^B(\x,\y;\Source)\to (\Tb)^{P}.\]
  It follows that for generic $\{J_t\}$, the evaluations
  are transverse to the diagonals in $\R^P$ and the walls
  $\Wall^{\orb_j=\orb_k}$ from Lemma~\ref{lem:Walls};
  in particular, for generic $\{J_t\}$,
  $\ev^\beta_i$ on
  $\UnparModFlow(\x,\y,\rhos_1,\dots,\rhos_\ell)$
  (which is obtained by evaluating $\ev^\beta_p$ at
  any puncture $p$ belonging to the $i^{th}$ packet)
  is transverse to the codimension one walls
  $\Wall^{\orb_j=\orb_k}$.
  (Compare~\cite[Proposition~5.6]{InvPair};
  see also~\cite[Section~3.4]{McDuffSalamon}.)
\end{proof}

If a holomorphic curve $v$ represents a point in
$\UnparModFlow^B(\x,\y;\Source;\vec{\rhos})$ with
$\evB_i\in \Chamber^{\orb_j>\orb_k}$ or
$\Chamber^{\orb_j<\orb_k}$, we write $v\in \Chamber^{\orb_j>\orb_k}_i$ or
$\Chamber^{\orb_j<\orb_k}_i$.

\subsection{Ends of one-dimensional moduli spaces}

We set up some preliminaries used in the description of the ends of one-dimensional moduli spaces.

\begin{defn}
  \label{def:OrbitMarking}
  Observe that there are two {\em special} Reeb orbits which are not
  matched with any other Reeb orbit. If $\orb_j$ is one of these
  orbits, recall that the corresponding component of
  $\Sigma\setminus\betas$, which we denote ${\mathcal B}_{\{j\}}$,
  contains one of the two basepoints $\wpt$ or $\zpt$.  We call the other
  orbits {\em non-special}.  An {\em orbit marking} is a partition of
  the orbits into two types, the {\em even} ones and the
  {\em odd} ones, so that the following conditions hold:
  \begin{itemize}
    \item each even one is matched with an odd one in
      $\Mdown$.
    \item there is exactly one even special orbit and one odd one.
  \end{itemize}
\end{defn}

\begin{defn}
  \label{def:Allowed}
  Fix an orbit marking. A constraint packet is called {\em allowed by
    the orbit marking}, or simply {\em allowed}, if it satisfies the
  following properties:
  \begin{enumerate}[label=($\rhos$-\arabic*),ref=($\rhos$-\arabic*)]
  \item Each of the chords appearing in $\rhos$ are disjoint from one another.
  \item \label{rhos:OneOrbit} 
    It contains at most one orbit, and that orbit is
    simple. If it contains no orbits, it is called {\em orbitless}.
  \item If $\rhos$ contains an even type orbit, then it contains exactly one
    Reeb chord, as well; and the chord is disjoint from the orbit.
  \item If $\rhos$ contains an odd type orbit, then it contains
    no Reeb chords.
  \end{enumerate}
\end{defn}

Given two constraint packets $\rhos_1$ and
$\rhos_2$, a {\em contained collision} is a new (possibly generalized)
constraint packet
$\sigmas$, where
$\orbits(\sigmas)=\orbits(\rhos_1)\cup\orbits(\rhos_2)$ as multi-sets
(i.e. it might consist of the same orbit with multiplicity $2$), and
$\chords(\sigmas)$ is the union of the following three sets:
\begin{itemize}
  \item those  Reeb chords in $\rhos_1$ that cannot be prepended onto any Reeb chord in $\rhos_2$
  \item those  Reeb chords in $\rhos_2$ that cannot be appended to any Reeb chord in $\rhos_1$
  \item the joins $\rho_1\uplus\rho_2$ 
    of all possible pairs of joinable (i.e. 
    ``strongly composable'') Reeb chords 
    $\rho_1\in \chords(\rhos_1)$ and $\rho_2\in \chords(\rhos_2)$.
\end{itemize}
The contained collision is called {\em visible} if no orbit in
$\orbits(\sigmas)$ is contained in both $\orbits(\rhos_1)$ and
$\orbits(\rhos_2)$.  The collision is called {\em strongly composable}
if whenever the chords $\rho_1\in \chords(\rhos_1)$ and $\rho_2\in
\chords(\rhos_2)$ are weakly composable (as in
Definition~\ref{def:ComposableChords}), they are in fact strongly
composable.

\begin{rem}
  A collision between two constraint packets $\rhos_1$ and $\rhos_2$
  might be merely a a generalized constraint packet. For example, both
  $\rhos_1$ and $\rhos_2$ may contain the same orbit, and their
  collision can contain the same orbit with multiplicity two. When the
  collision is contained and visible this does not occur: the
  multi-set of orbits in the collision is in fact a set. (Indeed, if
  the collision occurs in a boundary monotone sequence, it is also
  easy to see that the multi-set of chords is also a set.)
\end{rem}

\begin{defn}
  \label{def:BoundaryDegCollision}
  Suppose that $\rhos_1$ and $\rhos_2$ are allowed constraint packets
(in the sense of Definition~\ref{def:Allowed}),
which also have the property that there are $\{j,k\}\in\Mdown$ so that
$\orb_j\in\orbits(\rhos_1)$ and $\orb_k\in\orbits(\rhos_2)$. In this
case, $\chords(\rhos_1)\cup\chords(\rhos_2)$ consists of a single
chord, which we denote $\sigma$. We say that the constraint packet is
$\{\sigma\}$ (i.e. with the two orbits removed) is the {\em boundary
  degeneration collision} of $\rhos_1$ and $\rhos_2$.
\end{defn}

With these remarks in place, we state the following analogue
of~\cite[Theorem~5.61]{InvPair}, which will be used in
Section~\ref{sec:TypeA} in the verification of the $\Ainf$ relation:

\begin{thm}
  \label{thm:AEnds}
  Let $\Hdown$ be a
  lower diagram and $\Mdown$ the induced relation among
  $\{1,\dots,2n\}$. Choose also an orbit marking
  as in Definition~\ref{def:OrbitMarking}.
  Fix a lower Heegaard state $\x$ and a sequence of 
  constraint packets $\vec{\rhos}$ with the following properties:
  \begin{itemize}
  \item $(\x,\vec{\rhos})$ is strongly
    boundary monotone.
  \item Each constraint packet $\rhos_i$ is allowed,
    in the sense of Definition~\ref{def:Allowed}
  \end{itemize}
  Let $\y$ be a
  lower Heegaard state, and $B\in\pi_2(\x,\y)$, whose local multiplicity
  vanishes either at $\wpt$ or $\zpt$ (or both). Choose $\Source$ and
  $\vec{P}$ so that $[\vec{P}]=(\rhos_1,\dots,\rhos_\ell)$ and
  so that $\chi(\Source)=\chiEmb(B)$;
  and suppose that $\ind(B,\x,\y;\vec{\rhos})=2$, and abbreviate
  $\UnparModFlow=\UnparModFlow^B(\x,\y;\Source;{\vec{\rhos}})$. The total
  number of ends of $\UnparModFlow$ of the following types are even in
  number:
  \begin{enumerate}[label=(AE-\arabic*),ref=(AE-\arabic*)]
  \item \label{endA:2Story}
    Two-story ends, which are of the form
    \[ \UnparModFlow(\x,\w;\Source_1;\rhos_1,\dots,\rhos_i)\times 
    \UnparModFlow(\w,\y;\Source_2;\rhos_{i+1},\dots,\rhos_{\ell}),\]
    taken over all lower Heegaard states $\w$ and choices of $\Source_1$ and $\Source_2$ so that
    $\Source_1\natural \Source_2=\Source$, and $B_1\natural B_2=B$.
  \item 
    \label{endA:Orbit}
    Orbit curve ends,
    of the form $\UnparModFlow^B(\x,\y,\Source';\rhos_1,\dots,\rhos_{i-1},\sigmas,\rhos_{i+1},\dots,\rhos_{\ell})$,
    where $\orbits(\sigmas)=\orbits(\rhos_i)\setminus\{\orb_r\}$, $\chords(\sigmas)=\chords(\rhos_i)\cup\{\longchord_r\}$
    where $\longchord_r$ is a Reeb chord that covers the  boundary component $Z_r$ with multiplicity $1$.
  \item 
    \label{endA:ContainedCollisions}
    Contained collision ends for two consecutive packets $\rhos_i$ and
    $\rhos_{i+1}$, which correspond to points in
    $\UnparModFlow^{B'}(\x,\y,\Source;\rhos_1,\dots,\rhos_{i-1},\sigmas,\rhos_{i+2},\dots,\dots,\rhos_\ell)$
    with the following properties:
    \begin{enumerate}[label=(C-\arabic*),ref=(C-\arabic*)]
    \item The collision is visible.
    \item The packets $\rhos_i$ and $\rhos_{i+1}$ are strongly composable.
    \item The packet $\sigmas$ is a contained collision of $\rhos_i$
      and $\rhos_{i+1}$
    \item \label{c:Disjoint}
      The chords in $\sigmas$ are disjoint from one
      another.
    \end{enumerate}
  \item
    \label{endA:Join}
    Join ends, of the form
    $\UnparModFlow^B(\x,\y,\Source';\rhos_1,\dots,\rhos_{i-1},\sigmas,\rhos_{i+1},\dots,\rhos_{\ell})$,
    $\orbits(\sigmas)=\orbits(\rhos_i)$, and the following conditions hold:
    \begin{enumerate}[label=(J-\arabic*),ref=(J-\arabic*)]
      \item $(\x,\rhos_1,\dots,\rhos_{i-1},\sigmas,\rhos_{i+1},\dots,\rhos_{\ell})$
        is strongly boundary monotone.
      \item There is some $\rho\in \chords(\rhos_i)$ with the property
        that $\rho=\rho_1\uplus \rho_2$, and
        $\chords(\sigmas)=(\chords(\rhos_i)\setminus \{\rho\})\cup
        \{\rho_1,\rho_2\}$.
      \item \label{OneIsShort} In the above decomposition, at least one of $\rho_1$ and 
        $\rho_2$ covers only half of a boundary component.
      \end{enumerate}
  \item \label{endA:BoundaryDegeneration}
    Boundary degeneration collisions $\sigmas$
    between two consecutive packets $\rhos_i$ and $\rhos_{i+1}$;
    when
    $\orb_j\in\orbits(\rhos_i)$, $\orb_k\in\orbits(\rhos_{i+1})$
    and $\{j,k\}\in\Mdown$; these correspond to points in
    $\ModFlow^{B'}(\x,\y,\Source';\rhos_1,\dots,
    \rhos_{i-1},\sigmas,\rhos_{i+1},\dots,\rhos_\ell)$
    in the chamber $\Chamber^{\orb_j<\orb_k}$.
    The homology class $B'$ 
    is obtained from $B$ by removing 
    a copy of $\Brs$.
  \item \label{endA:SpecialBoundaryDegeneration}
    Special boundary degeneration ends,
    when $\rhos_i$ contains a special Reeb orbit $\orb_k$.
    When $\sigmas=\rhos_i\setminus\{\orb_k\}$ is non-empty,
    these are identified with
    \[ \UnparModFlow^{B'}(\x,\y,\Source';\rhos_1,\dots,\rhos_{i-1},\sigmas,\rhos_{i+2},\dots,
    \rhos_\ell)\] for    $B=B'+{\mathcal B}_{\{k\}}$;
    when $\rhos_i=\{\orb_k\}$, then $\ell=1$, $\x=\y$, $B={\mathcal B}_{\{k\}}$, and the end is unique.
  \end{enumerate}
\end{thm}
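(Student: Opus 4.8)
The plan is to follow the scheme of~\cite[Chapter~5, Theorem~5.61]{InvPair} (itself modeled on~\cite{LipshitzCyl}): compactify the one-dimensional moduli space $\UnparModFlow=\UnparModFlow^B(\x,\y;\Source;\vec\rhos)$ by Gromov convergence, and then identify every stratum of its boundary. By Proposition~\ref{prop:ExpectedDimensionA} and Theorem~\ref{thm:GeneralPositionA} (for a generic $\{J_t\}$), $\UnparModFlow$ is a smooth $1$-manifold, so its Gromov compactification $\overline{\UnparModFlow}$ is a compact $1$-manifold with boundary and $\#\partial\overline{\UnparModFlow}$ is even. The proof then breaks into: (i) enumerating all possible Gromov limits of a sequence in $\UnparModFlow$; (ii) showing each limit is either of one of the listed types~\ref{endA:2Story}--\ref{endA:SpecialBoundaryDegeneration}, or else occurs in a canceling pair of limits \emph{not} of those types; and (iii) a gluing argument showing conversely that each configuration of one of the listed types arises as a genuine end. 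Given (i)--(iii), the count of listed ends equals $\#\partial\overline{\UnparModFlow}$ minus the canceling pairs, hence is even. Steps (i) and (iii) are standard once the list of phenomena is catalogued; the content is in (ii). (The type $D$ statement Theorem~\ref{thm:DEnds} is the special case of simple partitions, with the lower-diagram features — the extra $\beta$-circle and the basepoints $\wpt,\zpt$ — absent, and is established by the same argument.)

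For the \emph{classical} degenerations — breaking in the $\R$-direction and breaking of the Reeb data at east infinity — the analysis is essentially that of~\cite[Chapter~5]{InvPair}. A sequence in $\UnparModFlow$ either converges to a two-story building (type~\ref{endA:2Story}; here Lemma~\ref{lem:StrongMonotoneClosed} guarantees both stories stay strongly boundary monotone), or develops an east-infinity degeneration. Using the index formula~\eqref{eq:EmbIndA} together with Examples~\ref{ex:JoinCurveEnd} and~\ref{ex:CollisionEnd} to compute the codimension of each east-infinity stratum — the inequalities there show that the ``long'' splittings are codimension $\geq 2$ — one finds that the codimension-one east-infinity degenerations are exactly: orbit curve ends, where a Reeb orbit slides to $s=1$ and is replaced by a length-one chord (type~\ref{endA:Orbit}); contained collision ends of two consecutive allowed packets subject to being visible, strongly composable, and yielding disjoint chords (type~\ref{endA:ContainedCollisions}); and join ends in which a chord $\rho=\rho_1\uplus\rho_2$ splits with at least one factor a half-circle chord (type~\ref{endA:Join}). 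The remaining east-infinity phenomena — invisible collisions (two copies of the same orbit), collisions of weakly-but-not-strongly composable chords, and the two orderings of a collision of non-composable packets — are either not genuine boundary points or occur in canceling pairs; this ``extra terms cancel'' bookkeeping is already present in~\cite[Chapter~5]{InvPair} and carries over almost verbatim, the only change being that $\iota$ of a packet replaces the single-chord weight in the dimension count.

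The genuinely new input is the appearance of $\beta$-boundary degenerations, which I would treat with Proposition~\ref{prop:SmoothBoundaryDeg}, Lemma~\ref{lem:BoundaryDegenerationsDegree1A}, and the chamber structure of Lemma~\ref{lem:Walls}. A sequence in $\UnparModFlow$ may converge to a main curve $v$ together with a $\beta$-boundary degeneration $w$ of shadow some $\Brs$ bubbled off along $\betas\times\{0\}\times\{\tau\}$; the finite-dimensionality of boundary degenerations (Proposition~\ref{prop:SmoothBoundaryDeg}, of dimension $d$) together with the index bookkeeping forces $v$ to carry all but one unit of dimension, so this is again codimension one, with homology class $B'$ satisfying $B=B'+\Brs$. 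Two cases arise according to the orbit marking of Definition~\ref{def:OrbitMarking}. If $\Brs=\Bjk$ for a matched pair $\{j,k\}\in\Mdown$ of non-special orbits, the degeneration forms precisely when an even-type packet containing $\orb_j$ collides with the adjacent odd-type packet containing $\orb_k$; by Theorem~\ref{thm:GeneralPositionA} the evaluation $\evB_i(v)$ avoids $\Wall^{\orb_j=\orb_k}$ and lies in one of the two chambers, and one shows that the $\Chamber^{\orb_j<\orb_k}$ side produces these boundary-degeneration ends (type~\ref{endA:BoundaryDegeneration}) while the $\Chamber^{\orb_j>\orb_k}$ side is matched against the orbit curve ends of type~\ref{endA:Orbit}. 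If $\Brs=\mathcal{B}_{\{k\}}$ contains a special orbit, Condition~\ref{prop:CurvesNotToCoverA} (at least one of $n_\wpt,n_\zpt$ vanishes) guarantees $B$ can shed the corresponding $\mathcal{B}_{\{k\}}$, giving type~\ref{endA:SpecialBoundaryDegeneration}; in the extreme case $\rhos_i=\{\orb_k\}$ the curve degenerates entirely, forcing $\x=\y$, $\ell=1$, $B=\mathcal{B}_{\{k\}}$, and Lemma~\ref{lem:BoundaryDegenerationsDegree1A} makes the number of such configurations odd, which is the asserted ``unique end'' contribution. Finally one must rule out $\alpha$-boundary degenerations: any such would force an everywhere-positive periodic domain on the $\alpha$-side, forbidden by the homological independence conditions on the lower diagram (inherited via~\ref{UD:NoPerDom} and~\ref{UD:NoHone}), exactly as flagged in the introduction.

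The main obstacle will be step (ii) in the $\beta$-boundary-degeneration regime: matching the boundary-degeneration ends against the orbit-curve ends correctly, and proving — via a gluing theorem for a curve meeting a boundary degeneration transversally in the sense of the chamber decomposition — that precisely one chamber contributes a genuine end while the other end is glued off. This requires care with the orbit marking (checking the even/odd bookkeeping stays consistent along a boundary-monotone sequence and is compatible with ``allowedness'' in Definition~\ref{def:Allowed}) and with the degree computation of $\evB$ in Lemma~\ref{lem:BoundaryDegenerationsDegree1A}, which is what ultimately converts the geometric count of ends into the stated parity statement. Everything else — Gromov compactness, the index/dimension count, and the gluing for the classical strata — is routine adaptation of~\cite{LipshitzCyl} and~\cite[Chapter~5]{InvPair}.
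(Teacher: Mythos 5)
Your high-level scaffolding is right and matches the paper's approach: Gromov-compactify, enumerate boundary strata, show each is either one of the listed types or cancels in pairs, and appeal to the standard gluing results (two-story, join, orbit, collision strata from \cite[Chapter~5]{InvPair}, plus Propositions~\ref{prop:BoundaryDegenerationNbd},~\ref{prop:WestInftyEnd} for the new boundary-degeneration ends, Lemma~\ref{lem:BoundaryDegenerationsDegree1A} for the degree, Lemma~\ref{lem:Walls} for the chamber structure). However, there are two concrete errors exactly where the genuine novelty of the proof sits.

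First, the $\beta$-boundary-degeneration bookkeeping is misstated. You write that ``the $\Chamber^{\orb_j>\orb_k}$ side is matched against the orbit curve ends of type~\ref{endA:Orbit},'' and flag ``matching the boundary-degeneration ends against the orbit-curve ends'' as the main obstacle. That matching does not happen in Theorem~\ref{thm:AEnds}. Orbit curve ends (a Reeb orbit sliding to $s=1$ and being replaced by a length-one chord, counted with odd multiplicity by Proposition~\ref{prop:OrbitCurve}) and boundary-degeneration collision ends (counted with odd multiplicity by Propositions~\ref{prop:BoundaryDegenerationNbd},~\ref{prop:WestInftyEnd}) are simply listed as two separate even contributions; the cancellation between them is an artifact of combining the two moduli spaces with $\rhos_i,\rhos_{i+1}$ in opposite orders, and occurs only later in the proof of the $\Ainfty$ relation (Proposition~\ref{prop:CurvedTypeA}). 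What the chamber condition $\Chamber^{\orb_j<\orb_k}$ in~\ref{endA:BoundaryDegeneration} actually encodes is which moduli space the glued-up family lives in: the sign of $t\circ u(p_1)-t\circ u(p_2)$ determines whether the boundary degeneration bubbles off with $\orb_j$ in the earlier or later packet, i.e.\ whether $(w,u_0)$ is an end of $\UnparModFlow^B(\x,\y;\vec\rhos)$ or of the moduli space with $\rhos_i$ and $\rhos_{i+1}$ permuted (this is the content of the degree statement in Proposition~\ref{prop:WestInftyEnd}). If you try to implement your plan as stated — pairing the wrong-chamber curves against orbit-curve ends within the same moduli space — the count will not close up.

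Second, the exclusion of $\alpha$-boundary degenerations is justified incorrectly. You invoke an ``everywhere-positive periodic domain on the $\alpha$-side, forbidden by the homological independence conditions'' citing~\ref{UD:NoPerDom} and~\ref{UD:NoHone}. Those conditions concern the $\alpha^c,\beta$ spans and do not forbid positive regions of $\cSigma\setminus\alphas$; indeed such regions exist. The actual reason (cf.\ Lemma~\ref{lem:StrongMonotoneClosed} and the first line of the paper's proof) is Condition~\ref{prop:CurvesNotToCoverA}: an $\alpha$-boundary degeneration in a lower diagram necessarily covers both $\wpt$ and $\zpt$, contradicting the hypothesis that $B$ has vanishing local multiplicity at one of them. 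The remainder of your sketch — the east-infinity dimension counts using Examples~\ref{ex:JoinCurveEnd} and~\ref{ex:CollisionEnd} to isolate conditions~\ref{c:Disjoint} and~\ref{OneIsShort}, the pairwise cancellation of invisible collisions, and the special boundary degeneration split into $\sigmas\neq\emptyset$ (via Proposition~\ref{prop:WestInftyEnd}) versus $\sigmas=\emptyset$ (constant flowline glued to an odd number of boundary degenerations via Lemma~\ref{lem:BoundaryDegenerationsDegree1}) — agrees with the paper.
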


\begin{rem}
  In the above statement, some of the sources $\Source'$ are different
  from the original source $\Source$. We have not spelled out the
  precise relationship between $\Source'$ and $\Source$; it is clear
  from the context.
\end{rem}

\begin{rem}
  The packets $\sigmas$ that appear in the join curve ends are not
  {\em allowed} in the sense of Definition~\ref{def:Allowed}; moreover,
  packets appearing in contained collision ends need not be allowed.
\end{rem}

Let
\[ \UnparModFlow^B(\x,\y,\vec{P})=
\bigcup_{\Source}\UnparModFlow^B(\x,\y,\Source;\vec{P}).\]

See Figure~\ref{fig:JoinEnd} for a picture of a join curve end.

 \begin{figure}[h]
 \centering
 \input{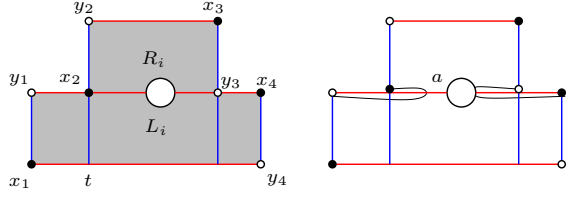}
 \caption{{\bf Join curve end.} 
 The moduli space corresponding to the shaded homotopy class, from $\x=\{x_1,x_2,x_3,x_4\}$
 to $\y=\{y_1,y_2,y_3,y_4\}$ with the Reeb chord $L_i R_i$ has a join curve end as the cut parameter $a\mapsto 0$.
 (There is another end which is a two-story building, corresponding to a flow from $\{x_1,x_2,x_3,x_4\}$ to
 $\{y_1,t,x_3,x_4\}$, followed by the flow from $\{y_1,t,x_3,x_4\}$ to $\{y_1,y_2,y_3,y_4\}$ that crosses
 $L_i R_i$).)}
 \label{fig:JoinEnd}
 \end{figure}

\begin{example}
  Suppose that $\x$ and $\y$ are generators. Let $c$ denote the number of
  points in $\UnparModFlow^B(\x,\y,(\{L_i,  R_i\}))$.
  Let $a$ be the number of two-story ends of
  $\UnparModFlow^B(\x,\y,(\{L_i  R_i\}))$; and $b$ be the number of
  two-story ends of $\UnparModFlow^B(\x,\y,(\{R_i  L_i\}))$.
  The above theorem applied to $\UnparModFlow^B(\x,\y,(\{L_i  R_i\}))$
  implies that $a+c\equiv 0\pmod{2}$; and applied to
  $\UnparModFlow^B(\x,\y,(\{R_i  L_i\}))$ gives $b+c\equiv 0\pmod{2}$.
\end{example}

\begin{example}
  Suppose that $\x$ and $\y$ are generators. 
  The above theorem implies that the number of two-story ends
  of $\UnparModFlow^B(\x,\y,\{L_i R_i L_i\})$ is even; in particular,
  there are no join curve ends because the constraint packet
  $\{L_i, R_i L_i\}$ is not
  part of a boundary monotone sequence.
\end{example}

\begin{example}
  Suppose that $\x$ and $\y$ are generators. 
  Let $c_1$ denote the number of points in 
  $\UnparModFlow(\x,\y,\{L_i,R_i L_i R_i\})$, 
  $c_2$ denote the number of points in 
  $\UnparModFlow(\x,\y,\{R_i,L_i R_i L_i\})$, 
  $a$ denote the number of two-story ends of 
  $\UnparModFlow(\x,\y,\{L_i R_i L_i R_i\})$
  and $b$ the number of two-story ends of 
  $\UnparModFlow(\x,\y,\{R_i L_i R_i L_i \})$.
  The above theorem implies that 
  \begin{align*}
    a+c_1+c_2& \equiv 0\pmod{2} \\
    b+c_1+c_2& \equiv 0\pmod{2}.
  \end{align*}
\end{example}

\begin{example}
  The above theorem shows that there is an even number of two-story
  ends of $\UnparModFlow(\x,\y,(\{L_{i+1}\},\{L_i\}))$, since the collision
  between $L_{i+1}$ and $L_{i}$ is weakly, but not strongly, composable.
  (Note that there are three types of such two-story ends.)
\end{example}

\begin{example}
  Consider the moduli space
  $\UnparModFlow(\x,\y,\{L_i R_i\},\{ L_i R_i\})$. Let $a$ denote the number of two-story ends
  (again, of three possible types); $b$ denote the number of join curve ends
  (of two types, corresponding to the sequence
  $(\{L_i,R_i\},\{L_i R_i\})$ or the sequence
  $(\{L_i R_i\},\{L_i, R_i\})$);
  and $c$ be the number of points in 
  $\UnparModFlow(\x,\y,\{L_i R_i L_i R_i\})$.
  Then, $a+b+c\equiv 0\pmod{2}$.
\end{example}

\begin{example}
  Let $a$ be the number of two-story ends of
  the moduli space $\UnparModFlow^B(\x,\y,\{\orb_i,R_{i+1}\})$,
  and $b_1$ denote the number of points in $\UnparModFlow^B(\x,\y,\{L_i R_i, R_{i+1}\})$,
  and $b_2$ denote the number of points in $\UnparModFlow^B(\x,\y,\{R_i L_i, R_{i+1}\})$.
  Then,
  $a+b_1+b_2\equiv 0\pmod{2}$.
\end{example}

\subsection{Curves at East infinity}

We recall (with very minor adaptation) the material
from~\cite[Section~5.3]{InvPair}.  Let $Z=\bigcup_{i=1}^{2n} Z_i$ be
the boundary of $\Sigma_0$.  Let ${\mathbf a}=Z\cap
\bigcup_{i=1}^{2n-1}\alpha_i$. We consider moduli spaces of
holomorphic curves in $\R \times Z \times [0,1] \times \R$. The ends
of the first $\R$ factor are called east and west infinity; the ends
of the second $\R$ factor are called $\pm\infty$.  There is an
$\R\times \R$-action on $\R \times Z \times [0,1] \times \R$,
projection maps $\pi_{\R\times Z}$ (onto the first two factors), $s$
(to $[0,1]$) and $t$ (to the last $\R$ factor).  Fix a split complex
structure $J$ on $\R\times Z\times [0,1]\times \R$.

\begin{defn}
  An {\em east source} $\EastSource$ is:
  \begin{itemize}
  \item a smooth two-manifold $T$ with boundary
    and punctures
  \item a labeling of each puncture of $T$ by $\east$ or $\west$
  \item a labeling of each $\west$ or $\east$ puncture $q$ by a Reeb orbit, if
    the $q$ is in the interior of $T$, and a labeling of $q$ by a
    chord in $(Z,{\mathbf a})$ if the puncture is on the boundary of
    $T$.
  \end{itemize}
\end{defn}

Given a east source $\EastSource$, we consider maps:
\[ v\colon (T,\partial T)\to (\R\times Z \times [0,1]\times \R,
\R \times {\mathbf a}\times \{1\}\times \R) \]
satisfying:
\begin{enumerate}[label=(E-\arabic*),ref=(E-\arabic*)]
\item \label{EC1} $v$ is $(j,J)$-holomorphic with respect to some almost-complex
  structure $j$ on $T$.
\item $v$ is proper.
\item $(s,t)\colon v \to [0,1]\times \R$ is constant.
\item At each $\west$ puncture $q$ of $T$ labeled by $\rho$ (a chord or orbit), 
  $\lim_{z\goesto q}\pi_\Sigma\circ v(z)$ is 
  $\rho\subset \{-\infty\}\times Z$.
\item\label{ECn} At each $\east$ puncture $q$ of $T$ labeled by $\rho$ (a chord or orbit), 
  $\lim_{z\goesto q}\pi_\Sigma\circ v(z)$ is 
  $\rho\subset \{+\infty\}\times Z$.
\end{enumerate}

Note that if $T$ has non-empty boundary, then $s\circ v=1$.

\begin{defn}
  Let $\ModEast(\EastSource)$ denote the moduli space of 
  holomorphic maps from $T$ satisfying Properties~\ref{EC1}-\ref{ECn} above.
\end{defn}

For each puncture $q$ in $\EastSource$, there is a corresponding
evaluation $\ev_q\colon \ModEast(\EastSource)\to \R$ which computes
the $t\circ v$ on the component of $T$ containing $q$. There are
evaluation maps 
\begin{align*}\ev_\west&=\prod_{q\in\West(\EastSource)} \ev_q \colon
\ModEast(\EastSource)\to ([0,1]\times \R)^{|\West(\EastSource)|} \\
  \ev_\east&=\prod_{q\in\East(\EastSource)} \ev_q \colon
  \ModEast(\EastSource)\to ([0,1]\times \R)^{|\East(\EastSource)|}. 
\end{align*}

Consider the $t$-projection of the evaluation map; e.g.  $t\circ
\ev_{\west}\to \R^{|\West(\EastSource)|}$ Given an east source
$\EastSource$, and ordered partitions $P_\west$ and $P_\east$, we let
$\ModEast(\EastSource;P_\west,P_\east)\subset \ModEast(\EastSource)$
be the subspaces obtained by cutting down by the partial diagonals
associated
to $t\circ \ev_\west$ and $t\circ \ev_\east$.

The following is~\cite[Proposition~5.14]{InvPair}; compare~\cite[Section~3.3]{McDuffSalamon}:
\begin{prop}
  If $\EastSource$ has the property that all of the components of $T$ are topological disks,
  then $\ModEast(\EastSource)$ is transversely cut out by the $\dbar$ equation for any
  split complex structure on $\R\times Z\times [0,1]\times \R$.
\end{prop}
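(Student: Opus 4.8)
The statement is a transversality result for holomorphic curves in $\R\times Z\times[0,1]\times\R$ whose domain components are all disks, and it is essentially~\cite[Proposition~5.14]{InvPair} transcribed to the present setting. The plan is to reduce to the standard automatic-transversality argument for somewhere-injective holomorphic disks, using the product structure of the target.

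First I would observe that the target $\R\times Z\times[0,1]\times\R$ carries the split complex structure $j_{\R\times Z}\times j_{[0,1]\times\R}$, and since $(s,t)\circ v$ is constant on each component (by the defining property of an east source), each component of $T$ maps into a slice $\R\times Z\times\{(s_0,t_0)\}$, i.e.\ into the product of $(\R\times Z)$ with a point. So the relevant $\dbar$-operator splits: the $[0,1]\times\R$-component of any infinitesimal deformation is constrained to be constant, and the $\R\times Z$-component is governed by the linearization of $\dbar$ for maps of a disk (with boundary on $\R\times\mathbf{a}$ and the prescribed Reeb-chord/orbit asymptotics) into the two-dimensional target $\R\times Z$. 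Next I would appeal to the fact that a nonconstant holomorphic map from a disk into a \emph{two-real-dimensional} target is automatically somewhere injective (a nonconstant holomorphic map between Riemann surfaces has isolated critical points and generically injective image), and to the standard index computation showing that for such maps into a surface the relevant deformation operator is surjective — this is the automatic transversality phenomenon; see~\cite[Proposition~3.9 and Lemma~3.10]{LipshitzCyl} and the references~\cite[Section~3.3]{McDuffSalamon} already cited in the surrounding text. Constant components contribute trivially, and the Reeb-orbit punctures, being interior punctures asymptotic to embedded orbits covering a circle once, do not affect somewhere-injectivity. Assembling the contributions over the finitely many components of $T$ gives surjectivity of the full linearized operator, hence $\ModEast(\EastSource)$ is cut out transversally, and this holds for \emph{every} split $J$ (not merely a generic one), which is the content of the proposition.

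The one subtlety I would treat carefully is the presence of boundary punctures labelled by Reeb chords of arbitrary length (multiples of $L_i$, $R_i$): one must check that the weighted Sobolev setup at these punctures still yields a surjective operator on a disk component. This is handled exactly as in~\cite{InvPair}: the asymptotic operators at such punctures are nondegenerate for the chosen class of almost-complex structures, and the index contribution is compatible with the automatic-transversality count for disks in a surface. I expect this bookkeeping — matching the index of the punctured-disk $\dbar$-operator in the two-dimensional target against the Fredholm index so as to conclude surjectivity from the disk hypothesis — to be the only real point requiring care; everything else is a direct citation of~\cite[Proposition~5.14]{InvPair} and the cylindrical-Floer transversality machinery of~\cite{LipshitzCyl}.
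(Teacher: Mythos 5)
The paper does not actually give a proof of this proposition---it is simply cited as~\cite[Proposition~5.14]{InvPair} with a pointer to~\cite[Section~3.3]{McDuffSalamon}---so your task was effectively to reconstruct the proof in LOT. Your outline matches what that proof does: use the constancy of $(s,t)\circ v$ to see that each disk component maps into the two-dimensional target $\R\times Z$, so that the linearized $\dbar$-operator becomes a Cauchy--Riemann operator on a complex line bundle over a disk with totally real boundary conditions, and then invoke automatic transversality for such operators to get surjectivity for \emph{every} split $J$.

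One point deserves a sharper statement. Your appeal to ``somewhere injectivity'' of nonconstant holomorphic maps between Riemann surfaces is a red herring here: somewhere injectivity is the hypothesis under which one obtains transversality for a \emph{generic} choice of $J$, whereas the proposition asserts transversality for \emph{all} split $J$. What actually does the work is the automatic transversality phenomenon for Cauchy--Riemann operators on complex line bundles over disks (going back to Hofer--Lizan--Sikorav type index arguments): the cokernel of such an operator vanishes whenever the Fredholm index is nonnegative, independently of the complex structure. You do in fact invoke this via ``the standard index computation showing that $\ldots$ the relevant deformation operator is surjective,'' so the argument lands in the right place; but it would be cleaner to drop the somewhere-injectivity remark altogether, since it invites the reader to think a genericity statement is being proved. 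The bookkeeping you flag at the boundary punctures---checking that the weighted Sobolev setup at chords of arbitrary length still produces a nonnegative-index line-bundle problem on each disk component---is indeed the only substantive thing to verify, and it is handled exactly as in~\cite{InvPair}.
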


Let $\UnparModEast(\EastSource)=\ModEast(\EastSource)/\R\times \R$.

The following particular components are illustrated in Figure~\ref{fig:EastInfinity}.

A {\em trivial component} is a component of $\EastSource$ with exactly
two punctures, one $\east$ and one $\west$, both labelled by the same Reeb chord.
The holomorphic map to $\R\times Z$ is invariant under translation by $\R$.

A {\em join component} is a component of $\EastSource$ which is a
topological disk with two west boundary punctures and one east
boundary puncture. Labeling, in counterclockwise order $(e,\rho_e)$,
$(w,\rho_1)$, and $(w,\rho_2)$.  There is a holomorphic map from such
a component if and only if $\rho_e=\rho_2\uplus\rho_1$; if it exists,
it is unique up to translation. The puncture $(w,\rho_1)$ is called
the {\em top puncture} and $(w,\rho_2)$ is called the {\em bottom
  puncture}.  A {\em join curve} is a curve that consists of one join
component and a collection of trivial components.

An {\em split component} is defined similarly, only now there the
punctures in counterclockwise order, are $(w,\rho_w)$, $(e,\rho_1)$
and $(e,\rho_2)$. Again, there is a holomorphic map if and only if
$\rho_w=\rho_1\uplus\rho_2$. The puncture $(e,\rho_2)$ is called the
{\em top puncture} and $(e,\rho_1)$ is called the {\em bottom
  puncture}.  

\begin{remark}
  Note that there are join and split curves that cover the cylinder with
  arbitrarily large multiplicity; on the left of
  Figure~\ref{fig:EastInfinity}, we have illustrated a split curve
  that covers the cylinder with multiplicity one, and these are the
  split curves that will occur in our considerations for type $D$
  structures.  When considering type $A$ modules, though, we will be
  forced to consider join and split curve ends that cover the boundary
  cylinder with higher multiplicity.
\end{remark}

An {\em orbit component} is a disk with a single boundary puncture,
labelled $(w,\rho)$, and a single orbit puncture $(e,\orb)$ in its
interior, so that $\orb$ is a simple orbit.  There is a holomorphic
map from such a component if and only if $\rho$ is one of the two
chords that covers the boundary component containing $\orb$ with
multiplicity one.  

\begin{rem}
  There are other components with disk sources one might consider.
  For example, the ``shuffle curves'' from~\cite[Section~5.3]{InvPair}
  have a natural analogue, which one might call ``orbit-shuffle
  curves''. These have a west puncture that is an orbit, another west
  puncture labelled by a chord $\rho$ on the same boundary component
  as $\orb$, and a single east puncture labelled by $\rho\uplus u$,
  where $u$ is one of the two curves that covers the boundary once.
  The map to $\R\times Z$ has a single branch point in the
  interior. These curves, however, will not enter our
  considerations. This is because in an allowed constraint packet (in
  the sense of Definition~\ref{def:Allowed}), the orbits are disjoint
  from the chords. These other curves would enter if we were to try to
  define the theory over $\Alg$ from~\cite{Bordered2}, which we can
  avoid by some algebraic considerations; see
  Section~\ref{sec:Comparison}.
\end{rem}

 \begin{figure}[h]
 \centering
 \input{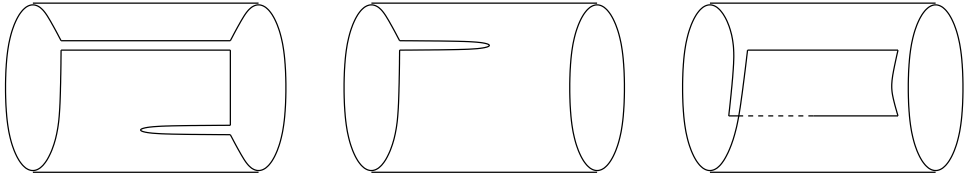}
 \caption{{\bf Curves at East infinity.}  The boundary on the left is
   glued to the source, and the boundary to the right is the ``east
   infinity'' portion.  We have illustrated from left to right: a
   split curve, an orbit curve, and an orbit shuffle curve.  The join
   curve is obtained by reflecting the leftmost picture through  a vertical axis.}
 \label{fig:EastInfinity}
 \end{figure}

\subsection{Curves at West infinity}
\label{subsec:CurvesAtWest}

Unlike in~\cite{InvPair}, generalized flowlines can degenerate also
at west infinity.  In formulating this degeneration, we need
to generalize slightly the
notion of a decorated source and pre-flowline, as follows:

\begin{defn}
  A {\em decorated source with West punctures} is a decorated source
  as in Definition~\ref{def:DecoratedSource} equipped with a further
  set of boundary punctures that are labelled $\west$.  A {\em pre-flowline
  with west punctures} is a map
  \[ u\colon (\Source,\partial\Source)\to
  (\Sigma\times[0,1]\times\R,(\alphas\times\{1\}\times \R)\cup(\betas\times\{0\}\times\R))\]
  where $\Source$ is equipped with boundary punctures $\West(\Source)$ 
  with the following  properties:
  \begin{itemize}
  \item for each west puncture $q$, 
    $\lim_{z\goesto q} u(z)$ converges to a point in
    $\Sigma\times 0\times \R$;
    i.e. if we fill in the west punctures to form a source curve 
    $\Source'$
    (without West punctures), $u$ extends 
    uniquely to a  $u'\colon \Source'
    \to \Sigma\times [0,1]\times \R$
  \item The extension $u'$ obtained as above is a pre-flowline
    in the sense of Definition~\ref{def:GenFlow}.
  \item The set of west punctures in $\Source$ 
    comes with an ordered partition into $d$-tuples $q_1,\dots,q_d$,
    so that 
    \[ \pi_{\CDisk}\circ u(q_1)=\dots=\pi_{\CDisk}\circ u(q_d).\]
    We call this the  {\em disk partition} of the punctures at West infinity.
  \end{itemize}
  A pseudo-holomorphic flowline with West punctures is a pre-flowline
  with West punctures which, when filled in, give a pseudo-holomorphic
  flowline in the sense of Definition~\ref{def:HolFlow}.
\end{defn}

Obviously, when $\West(\Source)=\emptyset$, the above definition agrees
with the usual definition of a source curve and pre-flowline
(Definitions~\ref{def:DecoratedSource} and~\ref{def:GenFlow} respectively).

For a pre-flowline $u$, let $[\West(\Source)]$ denote the set of
$d$-tuples in the disk partition; equivalently, it is the set of
equivalence classes of West punctures on $\Source$, modulo the equivalence
relation $q\sim q'$ if $t\circ u(q)=t\circ u(q')$. In particular,
\[ |[\West(\Source)]|=|\West(\Source)|/d.\]

Fix a pre-flowline $u$ with west punctures, and ${\mathbf
  q}=\{q_1,\dots,q_d\}\in[\West(\Source)]$. There is a corresponding
evaluation
\[ \evB_{\mathbf q}(u)=\pi_\Sigma(u(q_1))\times \dots\times \pi_\Sigma(u(q_d)).\]
By boundary monotonicity, 
$\evB_{\mathbf q}(u)\in  \beta_1\times \dots\times \beta_g$

Taking the product over $[\West(\Source)]$, we obtain a map
\[ \evB_\east\in (\Tb)^{[\West(\Source)]}\]

We consider moduli spaces of holomorphic curves in $\Sigma\times
{\mathbb H}$, generalizing the boundary degenerations of
Definition~\ref{def:BoundaryDegenerations}.

\begin{defn}
  A {\em boundary degeneration with West punctures}
  is a map
  \[ w\colon
  (\wSource,\partial\wSource)\to(\Sigma\times\HH,\betas\times\R)\]
  whose $d$ punctures over the point at infinity in $\HH$ are called
  {\em east punctures}; and equipped with additional punctures in $\partial\wSource$, called {\em west punctures} with the following properties:
  \begin{itemize}
  \item for each west puncture $q$, $\lim_{z\goesto q} u(z)$ converges
    to a point in $\Sigma\times \R\subset \Sigma\times \HH$; i.e. if
    we fill in in the west punctures to form a source curve $\wSource'$,
    then $\west$ extends uniquely to a continuous map 
    \[ w'\colon (\wSource',\partial\wSource')\to
    (\Sigma\times\HH,\Sigma\times \R).\]
  \item The extension $w'$ obtained as above is a boundary
    degeneration as in Definition~\ref{def:BoundaryDegenerations}.
  \item The set of west punctures in $\wSource$ comes with a partition
    into $d$-tuples 
    $q_1,\dots,q_d$ so that 
    \[\pi_\HH\circ u(q_1)=\dots=\pi_\HH\circ u(q_d).\]
  \end{itemize}
\end{defn}

\begin{defn}
  A {\em boundary degeneration level} is a finite union of boundary
  degenerations with West punctures. We can think of its source curve
  $\WestSource$ (which has possibly many components) as marked with 
  a set of East punctures $\East(\WestSource)$ and west punctures $\West(\WestSource)$;
  punctures of each type come in  $d$-tuples (again, referred to as the disk partition).
\end{defn}

For a boundary degeneration level, let $[\East(\WestSource)]$ resp.
$[\West(\WestSource)]$ and 
denote
the set of $d$-tuples in the disk partition of $\East(\WestSource)$
resp. $\West(\WestSource)$.

For a boundary degeneration level, we have, as before, evaluations
\[ \evB_\east(w)\in (\Tb)^{[\East(\WestSource)]}
\qquad \text{and}\qquad
\evB_\west(w)\in (\Tb)^{[\West(\WestSource)]}.\]

\subsection{Compactness}

As in~\cite[Section~5.4]{InvPair}, we use the Eliashberg-Givental-Hofer compactness~\cite{EGH}.

\begin{defn}
  \label{def:HolomorphicStory}
  A {\em holomorphic story}  is the following data:
  \begin{itemize}
  \item a sequence $(w_{\ell},\dots,w_{1},u,v_1,\dots,v_k)$ for some
    $\ell\geq 0$, $k\geq 0$ where $u$ is a pseudo-holomorphic flowline
    with West punctures, with source $\Source$; $\{w_i\}_{i=1}^{\ell}$
    is a sequence of boundary degeneration levels,
    where $w_i$ has source $\WestSource_i$; $\{v_i\}_{i=1}^{k}$ is a
    sequence of curves at East infinity.
  \item One-to-one correspondences between the following objects:
    $\West(\Source)$ and $\East(\WestSource_{1})$;
    $\West(\WestSource_i)$ and $\East(\WestSource_{i+1})$
    for $i=1,\dots,\ell-1$ (which respect the disk partition);
    $\East(\Source)$ and $\West(\EastSource_1)$;
    $\East(\EastSource_i)$ and $\West(\EastSource)_{i+1}$ (for 
    $i=1,\dots,k-1$),
  \end{itemize}
  so that the following conditions hold:
  \begin{itemize}
    \item $u\in\ModFlow^B(\x,\y;\Source)$
    \item $v_i\in\ModEast(\EastSource_i)$
    \item $w_{i}\in\ModWest(\WestSource_{i})$
    \item $\ev_\east(u)=\ev_\west(v_1)$ in
      $\R^{\East(\Source)}/\R\cong \R^{\West(\EastSource_1)}/\R$
      (here, and in the next few conditions, we use
      the isomorphism of product spaces
      induced by the one-to-one correspondence between punctures).
    \item $\ev_\east(v_{i})=\ev_\west(v_{i+1})$ in
      $\R^{\East(\EastSource_{i})}/\R\cong
      \R^{\West(\EastSource_{i+1})}/\R$
      for $i=1,\dots,k-1$.
    \item  $\evB_\west(u)=\evB_\east(w_{1})$ in
      $(\Tb)^{[\West(\Source)]}\cong(\Tb)^{[\East(w_1)]}$
    \item $\evB_\west(v_w)=\evB_\east(w_{i+1})$ in
      $(\Tb)^{[\West(\WestSource_i)]}\cong
      (\Tb)^{[\East(\WestSource_{i+1})]}$
      for $i=1,\dots,\ell-1$.
    \end{itemize}
    A holomorphic story with $\{k,\ell\}=\{0,1\}$ is called a {\em
      simple holomorphic comb}.  A {\em holomorphic comb of height
      $N$} is a sequence
    $(w_{j,\ell_j},\dots,w_{j,1},u_j,v_{j,1},\dots,v_{j,k_j})$ for
    $j=1,\dots,N$ of holomorphic stories with $u_j$ a stable curve in
    $\ModFlow^{B_j}(\x_j,\x_{j+1};\Source_j)$ for some sequence of
    generalized generators $\x_1,\dots,\x_{N+1}$.
\end{defn}

\begin{rem}
  In the above statement, we require the curves $u_j$ to be stable. This excludes
  where all the components of the source $\Source_j$ are disks with exactly two punctures:
  one at $+\infty$, another at $-\infty$, and $\pi_{\Sigma}\circ u_j$ is a constant map
  (to $\x_j=\x_{j+1}$).
\end{rem}

As in~\cite[Section~5.4]{InvPair} (following~\cite{EGH}), the space of
holomorphic combs can be used to construct a compactification
of $\UnparModFlow(\x,\y;\Source)$, denoted
$\ClosedModFlow(\x,\y;\Source)$; and similarly
a compactification
$\ClosedModFlow(\x,\y;\Source;\vec{\rhos})$
of $\UnparModFlow(\x,\y;\vec{\rhos})$.
This is a compactification in the following sense:

\begin{prop}
  \label{prop:Compactness}
  If $\{U_n\}$ is a sequence of holomorphic combs in a fixed homology 
  class, then $\{U_n\}$ has a subsequence which converges to a (possibly)
  nodal holomorphic curve in the same homology class. 
\end{prop}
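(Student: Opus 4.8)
\textbf{Proof proposal for Proposition~\ref{prop:Compactness}.}
The plan is to deduce compactness from the SFT compactness theorem of Eliashberg--Givental--Hofer~\cite{EGH}, exactly along the lines of~\cite[Section~5.4]{InvPair}, and then to promote the SFT limit (a priori a holomorphic comb, possibly with nodal degenerations) back to the homology class we started with. First I would recall that all the curves $u_n$ making up the combs $U_n$ have uniformly bounded energy: the energy (Hofer energy for the cylindrical-at-the-ends almost complex structures of Definition~\ref{def:AdmissibleAlmostCx}) is a topological quantity determined by the homology class $B\in\doms(\x,\y)$ and the fixed Reeb asymptotics, via the relation $e(B)+\text{(boundary weights)}$; since the homology class is fixed this bound is uniform. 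Likewise the genus and number of punctures of the sources $\Source_n$ are controlled: by Proposition~\ref{prop:ExpectedDimensionA} (and Proposition~\ref{prop:ExpectedDimension}), a holomorphic representative forces $\chi(\Source_n)=\chiEmb(B,\vec\rhos)$, which is again fixed, and the number of east/interior punctures is bounded by the total weight of $B$ along $\partial\Sigma_0$; so the sources live in a compact part of the moduli of bordered Riemann surfaces.

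The second step is to invoke the Eliashberg--Givental--Hofer compactness theorem in the form adapted to the target $\Sigma\times[0,1]\times\R$ with its splittings at $\pm\infty$, at east infinity $\{p_i\}\times[0,1]\times\R$, and at the interior punctures (Reeb orbits). This produces a subsequence converging in the SFT sense to a broken holomorphic building; in our language (Definition~\ref{def:HolomorphicStory}) this building is precisely a holomorphic comb of some height $N$, whose main levels $u_j$ are pseudo-holomorphic flowlines (with West punctures), whose outer levels split off as curves at east infinity $v_{j,k}$ (trivial, join, split, orbit components) and as boundary degeneration levels $w_{j,\ell}$ at west infinity, with all the matching conditions on the $t$- and $\beta$-evaluation maps automatically satisfied because they are the SFT gluing/matching conditions. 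The strong boundary monotonicity of the limit follows from Lemma~\ref{lem:StrongMonotoneClosed} (using Condition~\ref{prop:CurvesNotToCoverA} to rule out $\alpha$-boundary degeneration components), and SFT compactness also allows for a finite collection of nodes where a sphere bubble or a disk--sphere degeneration occurs; collecting all of these, the limit is a (possibly nodal) holomorphic curve.

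The third step is the homological bookkeeping: the shadows of all the pieces of the limiting comb, together with the shadows of the bubbled-off spheres, add up in $H_2(\cSigma, \text{constraints})$ to the shadow of the original class $B$. This is because SFT convergence is a $C^\infty_{loc}$ convergence away from the breaking and bubbling loci together with no loss of area (the energy/area is the topological quantity above), so local multiplicities pass to the limit; the area that escapes into a sphere bubble or a boundary-degeneration level is recorded in that component's shadow, and the sum of shadows is continuous under the convergence. Hence the limit lies in the same homology class, which is the assertion. The main obstacle I anticipate is not the SFT application itself but the careful matching of the SFT notion of a limiting building with the rather elaborate combinatorial notion of a holomorphic comb in Definition~\ref{def:HolomorphicStory} --- in particular verifying that the degenerations at west infinity that SFT allows are exactly boundary degeneration levels (so that no $\alpha$-side boundary degeneration sneaks in, which is where Condition~\ref{prop:CurvesNotToCoverA} and Lemma~\ref{lem:StrongMonotoneClosed} do the work), and that the disk partitions of the west punctures are respected in the limit. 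Once that dictionary is in place the proof is the standard reference to~\cite{EGH} as in~\cite[Section~5.4]{InvPair}.
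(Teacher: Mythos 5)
Your approach is essentially the one the paper takes: the subsection opens by announcing the use of Eliashberg--Givental--Hofer compactness, and the paper's proof of this proposition is just two sentences of citation --- it invokes~\cite[Proposition~5.24]{InvPair} for the compactness at East infinity (the level breaking and the curves at East infinity), and then handles West infinity separately by a ``more standard Gromov compactification, rescaling from the $\CDisk$ factor to $\HH$,'' citing~\cite[Lemma~3.82]{LipshitzCyl}. Your longer write-up reconstructs most of what those citations carry.

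One conceptual correction is worth flagging. You treat the West infinity ($\beta$-boundary) degenerations as part of the SFT limit --- ``SFT compactness also allows for a finite collection of nodes where a sphere bubble or a disk--sphere degeneration occurs.'' In the paper's picture these are \emph{not} obtained from the cylindrical breaking of EGH, which governs the $t\to\pm\infty$ levels and the East infinity cylinder over $\partial\Sigma_0$. The West infinity boundary degenerations are a separate Gromov-type bubbling phenomenon: the curve concentrates near the $\betas$-side ($s=0$), and one obtains the boundary degeneration component by rescaling the strip $[0,1]\times\R$ to the half-plane $\HH$. This is why the paper cites~\cite{LipshitzCyl} rather than~\cite{EGH} for that half of the argument. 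Conflating the two mechanisms is where your ``main obstacle'' paragraph loses its footing: the worry about $\alpha$-side degenerations is correctly resolved by Condition~\ref{prop:CurvesNotToCoverA}, but the disk partition and $\evB$ matching at West punctures are constraints on the \emph{Gromov} rescaling, not on the SFT levels. A secondary small inaccuracy: Proposition~\ref{prop:ExpectedDimensionA} gives $\chi(\Source)=\chiEmb(B)$ only when the curve is embedded; what you actually get for free is the two-sided bound on $\chi$ coming from Riemann--Hurwitz and positivity of the ramification number, which suffices for the bounded-topology-of-the-source step.
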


\begin{proof}
  See~\cite[Proposition~5.24]{InvPair} for the compactness result at
  east infinity. The west infinity curves are extracted by a more
  standard Gromov compactification, rescaling from the $\CDisk$ factor
  to $\HH$, as in the proof of~\cite[Lemma 3.82]{LipshitzCyl}.
\end{proof}

Let $([0,1]\times\R)^{\vec\rhos}$ denote the space of functions
from all of the punctures on the source to $[0,1]\times \R$.
Note that  if two punctures $p$ and $q$
correspond to the same level $\rhos_i$, then 
their $t$ values coincide.
The evaluation maps
\[ 
\ev\colon \UnparModFlow^B(\x,\y,\vec{\rhos})\to ([0,1]\times
\R)^{\vec\rhos}/\R \] extend continuously to the space of holomorphic
stories, where now we evaluate on all east-most punctures.

Each curve at west infinity $w$ has a shadow $B$, which determines the
total multiplicity of the Reeb orbits. When the shadow is $\Bjk$,
there are exactly two Reeb orbits, $\orb_j$ and $\orb_k$; in this
case, we call the curve at west infinity a {\em simple boundary
  degeneration component}. There are also two special orbits that are
not matched, with corresponding shadows ${\mathcal B}_{\{j\}}$
containing no other orbit. We also call the corresponding boundary
degeneration components simple.
 A {\em simple boundary degeneration} is a
curve all of whose components are either trivial or simple west
components.

When $w$ is a simple boundary degeneration, and $\ev(w)\in
\Chamber^{\orb_k>\orb_j}$, we call $\orb_k$ the {\em top orbit} and
$\orb_j$ the {\em bottom orbit}; see
Figure~\ref{fig:ExtractBoundaryDegeneration}.

\begin{figure}[h]
 \centering
 \input{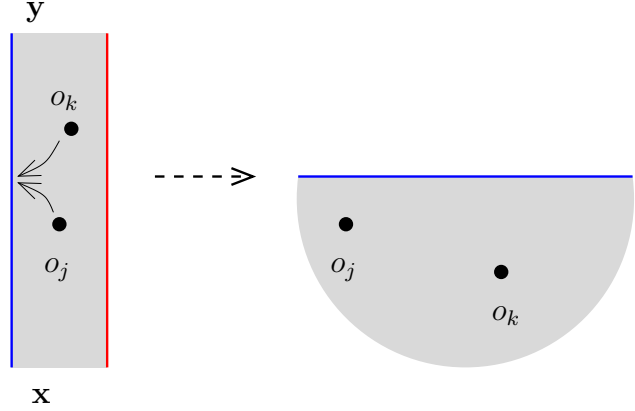}
 \caption{{\bf Extracting a boundary degeneration.}
   As the two orbits on the left come together, we can rescale
   (and rotate) to construct a boundary degeneration as pictured on the right.}
 \label{fig:ExtractBoundaryDegeneration}
\end{figure}

\subsection{Gluing}

The ends of two-dimensional moduli spaces stated in
Theorem~\ref{thm:AEnds} are modeled by certain gluing results, which
we collect here; compare~\cite[Section~5.5]{InvPair}.

The following is~\cite[Proposition~5.39]{InvPair}. To state it, use
the following terminology from there. If $(u,v)$ is a simple
holomorphic comb, with $v\in\ModEast(\EastSource)$,
a {\em smeared neighborhood} of $(u,v)$ in
$\ClosedModFlow^{B}(\x,\y,\Source,P)$ is an open neighborhood of
\[\{ (u,v')|v'\in\ModEast(\EastSource),
(u,v')\in\ClosedModFlow^B(\x,\y;\Source;P)\}.\] Also, given
$p,q\in\EastSource$, let $\compactev_{p,q}\colon
\ClosedModFlow(\x,\y;\Source)\to [-\infty,\infty]$ be the
continuous extension of $\ev_{p,q}$.

\begin{prop}
  Suppose that $(u,v)$ is a simple holomorphic comb in
  ${\ClosedModFlow}^B(\x,\y;\Source, \Partition)$.  Assume
  that $v$ is a split curve and there are two parts $P_1$ and $P_2$
  such that for each split component of $\EastSource$, its bottom
  puncture belongs to $P_1$ and its top puncture belongs to
  $P_2$. Assume that $\ind(B,\x,\y;\Source,P)=2$. Let
  $q_1\in \Partition_2$ and $q_2\in \Partition_1$ be the top and
  bottom punctures, respectively, on one of the split components of
  $\EastSource$. Then for generic $J$, there is a smeared neighborhood
  $U$ of $(u,v)$ in
  ${\ClosedModFlow}^B(\x,\y;\Source, \Partition)$ so that
  $\compactev_{q_1,q_2}\colon U \to \R_+$ is proper near $0$ and of
  odd degree near $(u,v)$.
\end{prop}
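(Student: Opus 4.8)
The plan is to run the standard gluing construction, following~\cite[Proposition~5.39]{InvPair} almost verbatim; the only structural difference from that reference is the presence of interior Reeb‑orbit punctures, and I would argue that these are inert in the degeneration under consideration.

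First I would set up the pregluing. Each split component of $\EastSource$ carries one west puncture asymptotic to a chord $\rho_w$, and the matching east puncture of $\Source$ on the main curve $u$ is asymptotic to the same $\rho_w$; every other component of $\EastSource$ is a trivial strip. For each large gluing length $R$ — together, when a split component covers its boundary circle with multiplicity greater than one, with the finitely many auxiliary parameters recording the positions of the branch points — I would splice an $R$‑translated copy of $v$ into a neighborhood of the relevant east punctures of $u$ to obtain an approximately holomorphic map $\hat u_R$ with source $\Source'$ (obtained by identifying $\East(\Source)$ with $\West(\EastSource)$) and refined partition $\Partition'$, in which $\rho_w$ has been replaced by its two factors $\rho_1\uplus\rho_2=\rho_w$, with $\rho_1\in\Partition_1$ and $\rho_2\in\Partition_2$.

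Next I would correct $\hat u_R$ to a genuine $J$‑holomorphic curve by a Newton iteration. This requires the linearized $\dbar$‑operator at $\hat u_R$ to be surjective with a uniformly bounded right inverse as $R\to\infty$, which I would obtain by gluing right inverses at $u$ and at $v$: the former exists because $\ModFlow^B(\x,\y;\Source)$ is transversely cut out for generic $J$ by Theorem~\ref{thm:GeneralPositionA}, and the latter because the moduli space $\ModEast(\EastSource)$ of split curves, all of whose components are disks, is transversely cut out for any split complex structure (the transversality statement in the ``Curves at East infinity'' subsection, following~\cite[Proposition~5.14]{InvPair}). The resulting solution $\tilde u_R$ is unique in a small ball, giving a map $R\mapsto[\tilde u_R]$ which, by the usual estimates, is a homeomorphism from a neighborhood of $+\infty$ onto a neighborhood of $(u,v)$ inside the stratum of $\ClosedModFlow^B(\x,\y;\Source,\Partition)$ with no further east‑infinity breaking. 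Here I would emphasize that the Reeb‑orbit constraints do not interfere: the split‑curve bubbling occurs in the chordal region $\R\times Z\times[0,1]\times\R$ near east infinity, disjoint from the interior punctures carrying the orbit labels, so the orbit asymptotics of $u$ are simply inherited by $\tilde u_R$.

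Finally I would analyze $\compactev_{q_1,q_2}$ on the smeared neighborhood $U$ so produced. At the broken configuration the top and bottom punctures $q_1,q_2$ of the split component sit at a common $t$‑level, so $\compactev_{q_1,q_2}(u,v)=0$; along the glued family $\compactev_{q_1,q_2}(\tilde u_R)=\delta(R)>0$ with $\delta(R)\to0$ as $R\to\infty$, and $\delta$ is, to leading order, a strictly monotone function of $R$. Properness near $0$ then follows from Gromov compactness (Proposition~\ref{prop:Compactness}): any sequence in $U$ with $\compactev_{q_1,q_2}\to0$ subconverges to a broken configuration with a split‑curve east‑infinity component, and for $U$ small the only such limit is $(u,v)$. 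For the degree statement I would count, for a small regular value $\epsilon>0$, the preimages of $\epsilon$; when the split component is a single cover of its boundary circle this count is $1$, and in general the several gluings attached to the branched cover contribute a total that is $1$ modulo $2$, exactly as in~\cite[Section~5.5]{InvPair}. I expect the main obstacle to be precisely this last point — the uniformity of the gluing estimates in the auxiliary branch‑point parameters and the count modulo $2$ of multiple gluings for higher‑multiplicity split components — but it is handled identically to~\cite{InvPair} and~\cite{LipshitzCyl}, since, as noted, the orbit constraints play no part.
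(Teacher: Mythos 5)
Your proposal is correct and follows essentially the same approach as the paper, which simply cites \cite[Proposition~5.39]{InvPair} for this result; your sketch is a faithful expansion of that gluing argument, correctly identifying the one new feature of the present setting (the interior orbit punctures) and correctly arguing that they are inert because the split-curve degeneration is localized at east infinity, away from the orbit asymptotics.
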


The above is proved in~\cite[Proposition~5.39]{InvPair}.

The following result is also used in~\cite{InvPair} (see especially
the proof of ~\cite[Theorem~5.61]{InvPair}):

\begin{prop}
  \label{prop:JoinCurve}
  Let $\Source$ be a source curve equipped with some ordered partition
  $\Partition$, and $\Partition'$ be the ordered partition
  where consecutive packets $P_j$ and $P_{j+1}$ in $P$ collide.
  Suppose that $\ind(B,\x,\y;\Source',\Partition')=2$.
  Suppose that $(u,v)$ is a simple holomorphic comb  in
  $\ClosedModFlow^B(\x,\y;\Source',\Partition')$, so that $u\in
  \ModFlow^B(\x,\y;\Source,\Partition)$ is a smooth point and $v$
  is a join curve.
  Let $\Delta_P\subset \R^{\Partition}$ denote the diagonal
  that specifies the collision of levels $j$ and $j+1$, and suppose that
  $u$ is a smooth, isolated point in $\ModFlow(\x,\y;\Source,\Partition)\times_{\ev_{P}}\Delta_P$.
  Then, there is a smeared neighborhood $U$ of $(u,v)\in\ClosedModFlow^B(\x,\y,\Source';\Partition')$
  with the property that $U\times_{\ev_{P'}}\Delta_{P'}$
  is homeomorphic to $[0,1)$.
\end{prop}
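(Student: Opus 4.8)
The plan is to treat this as a standard gluing statement in symplectic field theory, modeled on the proof of the analogous result~\cite[Section~5.5]{InvPair}, but with the extra care needed because $v$ is a \emph{join curve} covering the boundary cylinder with possibly higher multiplicity. First I would set up the relevant local model: near the join component of $v$, the source $\Source'$ is obtained from $\Source$ by replacing a single $\east$-puncture labelled by $\rho=\rho_1\uplus\rho_2$ with two $\east$-punctures labelled by $\rho_1$ and $\rho_2$ (together with trivial strips on the remaining punctures of $v$). The ``cut parameter'' is the reciprocal of the length of the neck being inserted between $u$ and the join component; as this parameter goes to $0$ we recover the broken configuration $(u,v)$, and the claim is that this parametrizes a half-open arc $[0,1)$ of genuinely $J$-holomorphic flowlines.

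The key steps, in order, would be: (1) Fix the complex structure on $\Source'$ degenerating to the nodal curve $\Source \cup_{\text{node}} (\text{join component})$, with a gluing parameter; use the standard pregluing construction to produce an approximately holomorphic curve $u_R$ for each large gluing length $R$. (2) Set up the linearized $\dbar$ operator at $u_R$ and verify uniform invertibility of a right inverse: here one uses that $u$ is a smooth isolated point of the fiber product $\ModFlow(\x,\y;\Source,\Partition)\times_{\ev_P}\Delta_P$ — this is precisely the transversality hypothesis — together with automatic transversality of the join component (a disk with all-disk components, regular by the analogue of~\cite[Proposition~5.14]{InvPair}), to see that the glued linearized operator is surjective with uniformly bounded right inverse. (3) Apply the Newton–Picard iteration / implicit function theorem to correct $u_R$ to a true solution $\widetilde u_R$, and show the correction is small and unique, giving an embedding of $[R_0,\infty)$ (equivalently $[0,1)$ in the cut parameter, after reparametrizing) into $\ClosedModFlow^B(\x,\y;\Source';\Partition')$. (4) Show surjectivity of the gluing map onto a smeared neighborhood: any sequence in $\ClosedModFlow^B(\x,\y;\Source';\Partition')$ converging to $(u,v)$ must, by the compactness result Proposition~\ref{prop:Compactness} and the uniqueness in the implicit function theorem, eventually lie in the image. (5) Intersect with the diagonal $\Delta_{P'}$: since the partition $\Partition'$ merely records that levels $j$ and $j+1$ have collided, and the gluing already produces curves respecting this collision, the constraint $U\times_{\ev_{P'}}\Delta_{P'}$ cuts the one-parameter family down to the arc claimed, with the non-proper end at cut parameter $0$ corresponding to the broken limit $(u,v)$; hence the homeomorphism with $[0,1)$.

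The main obstacle I expect is the uniform invertibility estimate in step (2) in the presence of \emph{higher-multiplicity} coverings of the boundary cylinder by the join component. Unlike in the type $D$ setting where one only sees multiplicity-one split curves, here (as the paper notes in the Remark following the definition of split components) the join curve may wrap the cylinder several times, so the asymptotic operator at the node has a nontrivial kernel/cokernel coming from the covering; one must check that the hypothesis ``$\rho=\rho_1\uplus\rho_2$ with at least one of $\rho_1,\rho_2$ covering only half a boundary component'' (condition~\ref{OneIsShort} in Theorem~\ref{thm:AEnds}, which is where this proposition is applied) is exactly what kills the extra kernel and makes the gluing a genuine $[0,1)$ rather than a higher-dimensional stratum. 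Concretely, one reduces the cokernel estimate to the order-of-vanishing computation already carried out in Examples~\ref{ex:JoinCurveEnd} and~\ref{ex:CollisionEnd}, which show that the join end contributes codimension exactly one precisely in the ``one short chord'' case. Beyond that, the argument is a routine, if technical, transcription of~\cite[Proposition~5.39]{InvPair} and the surrounding gluing analysis, together with the $\R$-translation bookkeeping needed to pass to the unparametrized moduli spaces $\UnparModFlow$.
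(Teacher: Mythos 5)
Your approach matches the paper's in outline, but the paper's proof is much shorter because it leans directly on the gluing results already imported from~\cite{InvPair}: it notes that $v$ is a smooth point of $\ModEast$ because its source is planar (citing the analogue of~\cite[Proposition~5.16]{InvPair}, which holds regardless of the covering multiplicity), then observes that the hypothesis ``$u$ is a smooth isolated point of the fibered product'' already packages the needed transversality-of-evaluations condition, so gluing identifies a smeared neighborhood with $U_u\times_{\ev}U_v\times[0,1)$, identifies $\ev(U_v)$ with $\Delta_P$, and reduces to the single point $u$. You spell out pregluing, the uniformly bounded right inverse, and Newton iteration explicitly, which is a legitimate from-scratch version of the same argument.

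Your ``main obstacle'' is, however, a red herring in the context of this proposition. The regularity of the join component does \emph{not} depend on Condition~\ref{OneIsShort}: because the source $\EastSource$ has all components planar disks, $\ModEast(\EastSource)$ is transversally cut out for any split $J$, independently of how many times the join component wraps the cylinder. Condition~\ref{OneIsShort} plays a role only in the index/dimension computation (Examples~\ref{ex:JoinCurveEnd} and~\ref{ex:CollisionEnd}) that determines \emph{for which} join configurations the fibered product $\ModFlow(\x,\y;\Source,\Partition)\times_{\ev_P}\Delta_P$ is zero-dimensional. In the present proposition that dimension count has already been absorbed into the hypothesis that $u$ is a smooth \emph{isolated} point of that fibered product; nothing further needs to be checked about short versus long chords inside the gluing argument itself. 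So the gluing gives a $[0,1)$ end whenever the hypotheses hold, and the ``one short chord'' filtering happens upstream, in Theorem~\ref{thm:AEnds}, not here.
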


\begin{proof}
  Note first that since the domain of $v$ is a planar surface, $v$
  represents a smooth point in its moduli space
  $\ModEast$. (See~\cite[Proposition~5.16]{InvPair}.)  The hypothesis
  that $u$ is a smooth point includes the statement that the
  evaluation map is transverse to the diagonal where the two chords to
  be joined are mapped to the same $t$-position. Thus, $u$ and $v$ have neighborhoods
  $U_u$ and $U_v$, so that $(u,v)$ is a 
  transverse intersection point of $\ev\colon U_u\to \R^{\East(\Source)}$ and
  $\ev\colon U_v\to \R^{\West(\EastSource)}$. Thus, gluing gives an
  identification of a neighborhood of $(u,v)$ that is identified with
  $U_u\times_{\ev} U_v\times [0,1)$. In fact, the image of $\ev(U_v)$
    is the diagonal $\Delta_P$ (that determines the portion of
    $\ModFlow^B(\x,\y,\Source;P')$ where the two consecutive packets
    $P_j$ and $P_{j+1}$ collide). Thus, gluing identifies a
    neighborhood of $(u,v)$ with $(U_u\times_{\ev_P} \Delta_P)\times
    [0,1)$. By assumption, $U_u\times_{\ev_P}\Delta_P$ consists of the point $u$.
\end{proof}

In a similar vein, we have the following analogue for orbit curves. 
To state it, fix some $q_0\in \Source$, and let $p\in \Source$ be a puncture marked by an orbit.
The map
\begin{equation}
  \label{eq:EvAtP}
  s\circ \ev_{\{p\}} \colon \ModFlow(\x,\y;\Source)\to (0,1)
\end{equation}
specified by $s\circ \ev_p(u)=s\circ u(q)$, extends continuously to a map
\[ {\overline {s\circ \ev_{\{p\}}}}\colon \ClosedModFlow(\x,\y;\Source) \to [0,1].\]

\begin{prop}
  \label{prop:OrbitCurve}
  Suppose that $(u_0,v)$ is a simple holomorphic comb in
  ${\ClosedModFlow}^B(\x,\y;\Source)$, and assume that $v$ is an orbit
  curve with orbit $\orb_j$. Let $\Source_0$ denote the source for $u_0$;
  it has a distinguished boundary puncture $p_0$ labelled by some
  length one chord $\longchord_j$; and $\Source$ is obtained by 
  replacing this boundary puncture with an interior puncture $p$ labelled by 
  $\orb_j$.  Choose also the following data:
  \begin{itemize}
  \item a set of punctures $P$ on $\Source$; and let $P_0$ be
    the corresponding collection of punctures on $\Source_0$
    (i.e. with $p$ replaced by $p_0$)
  \item a smooth manifold $M$
  \item a smooth map $\phi\colon M\to \R^{P}\cong \R^{P_0}$
  \item a point $u_0\times m_0\in \ModFlow(\x,\y;\Source_0)\times_{\ev_{P_0}} M$.
  \end{itemize}
  Suppose that:
  \begin{itemize}
    \item 
      $\phi$ is transverse to
      $\ev_{P}\colon \ModFlow^B(\x,\y;\Source_0)\to \R^{P_0}=\R^{P}$
    \item $\phi$ is transverse to
      $\ev_{P}\colon
      \ModFlow^B(\x,\y;\Source)\to \R^{P}$
    \item the fibered product
      $\ModFlow^B(\x,\y;\Source_0)\times_{\ev_{P_0}} M$ is
      two-dimensional.
    \item
      $u_0\times m_0$ is an (isolated) point in
      $\ModFlow^B(\x,\u;\Source_0)\times_{\ev_{P_0}} M$.
  \end{itemize}
  Then, there is a smeared
  neighborhood $U$ of $(u_0,v)$ in $\ClosedModFlow^B(\x,\y;\Source)$
  so that 
  $U\times_{\compactev_{P_0}} M$ is homeomorphic to
  $u_0,\times m_0 \times (0,1]=(0,1]$.  Moreover, 
  \[\begin{tikzpicture}
  \node at (0,0) (X) {$u_0\times m_0\times (0,1] \subset
  \ClosedModFlow^B(\x,\y;\Source)\times_{\compactev_{P_0}} M$};
    \node at (5,0) (Y) {$[0,1]$};
    \draw[->] (X) to node[above]{\begin{tiny} $\overline{s\circ \ev}_{\{p\}}$\end{tiny}} (Y);
  \end{tikzpicture}\]
  is proper and of odd degree near
  $1$.
\end{prop}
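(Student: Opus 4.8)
The plan is to prove this gluing statement by adapting the orbit-curve analysis of \cite[Section~5.5]{InvPair}, isolating the one new ingredient—the presence of the auxiliary manifold $M$ and the map $\phi$—and showing it interacts with the gluing construction in the expected transverse fashion. First I would recall the structure of the comb $(u_0,v)$: the curve $v$ is an orbit curve, consisting of one orbit component (a disk with a single boundary puncture labelled $\longchord_j$ and one interior puncture labelled $\orb_j$) together with trivial strips, and hence, since its source is planar, $v$ is automatically a smooth, rigid (modulo $\R\times\R$) point of $\ModEast$ by \cite[Proposition~5.16]{InvPair}. The gluing parameter for attaching $v$ to $u_0$ along $\{p_0\}\leftrightarrow\{p\}$ is the length (or ``gluing length'') parameter $R\in(0,1]$, and as $R\to 1$ the glued curve converges back to the comb. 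Standard gluing (as in \cite[Section~5.5]{InvPair}, following \cite{LipshitzCyl}) produces, for generic $J$, a smeared neighborhood $U\subset\ClosedModFlow^B(\x,\y;\Source)$ of $(u_0,v)$ together with a homeomorphism of $U$ onto an open set of $\ModFlow^B(\x,\y;\Source_0)\times(0,1]$, under which the map $\overline{s\circ\ev}_{\{p\}}$ corresponds, up to a change of coordinates that is a homeomorphism near $1$, to the gluing parameter $R$; in particular $\overline{s\circ\ev}_{\{p\}}$ is proper and of odd degree near $1$ on this piece (the oddness being inherited from the degree count for the orbit component, exactly as in the split-curve case of \cite[Proposition~5.39]{InvPair}).

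Next I would incorporate the fibered product with $M$. The two transversality hypotheses—$\phi$ transverse to $\ev_P$ on $\ModFlow^B(\x,\y;\Source_0)$ and on $\ModFlow^B(\x,\y;\Source)$—guarantee that both fibered products $\ModFlow^B(\x,\y;\Source_0)\times_{\ev_{P_0}}M$ and $\ModFlow^B(\x,\y;\Source)\times_{\ev_P}M$ are cut out transversely, the first being two-dimensional by hypothesis and the second then one-dimensional (it has one fewer dimension, since passing the length-one chord puncture to an interior orbit puncture drops the expected dimension by one—this is exactly the index bookkeeping reflected in Equation~\eqref{eq:EmbIndA} via the $(1-\weight(o))$ term). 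Under the gluing homeomorphism $U\cong(\text{open in }\ModFlow^B(\x,\y;\Source_0))\times(0,1]$, the evaluation $\ev_P$ on $U$ is $C^0$-close to (and for the transversality argument, may be taken as) the composite of the projection to $\ModFlow^B(\x,\y;\Source_0)$ with $\ev_{P_0}$, since the orbit component and trivial strips contribute nothing to the $\R^P\cong\R^{P_0}$-valued evaluation at the punctures in $P$. Hence $U\times_{\compactev_{P_0}}M$ is homeomorphic to $(\ModFlow^B(\x,\y;\Source_0)\times_{\ev_{P_0}}M)\times(0,1]$ near the point $(u_0\times m_0,1)$, and since $u_0\times m_0$ is an isolated point of that fibered product, the neighborhood is homeomorphic to $\{u_0\times m_0\}\times(0,1]=(0,1]$, as claimed. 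The map $\overline{s\circ\ev}_{\{p\}}$ on this $(0,1]$ is identified with the gluing parameter, so it is proper and of odd degree near $1$.

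The step I expect to be the main obstacle is making precise the claim that, under the gluing homeomorphism, the evaluation map $\ev_P$ on the glued region genuinely agrees (not merely $C^0$-approximates) with the pullback of $\ev_{P_0}$ well enough for the transversality of $\phi$ to be inherited. In \cite{InvPair} this is handled by the gluing estimates that show the relevant evaluation maps converge in $C^1$ on compact sets as the gluing parameter ranges in $(0,1]$, together with the observation that the orbit component itself carries no puncture of $P$; I would follow that template, citing the $C^1$-convergence of \cite[Section~5.5]{InvPair} and \cite[Lemma~3.82, Proposition~3.9]{LipshitzCyl}, and invoke the openness of transversality to conclude that $\phi$ remains transverse to $\ev_P$ restricted to a sufficiently small smeared neighborhood. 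A secondary technical point is to verify the odd-degree assertion near $1$: this follows because the orbit component's contribution to the degree is the same local model computed in the proof of Proposition~\ref{prop:ExpectedDimensionA} (the rigid orbit solution with $e=\tfrac{w+1}{2}$), which contributes a factor of odd degree, while the remaining trivial strips contribute degree one—precisely the mechanism already used for split curves in \cite[Proposition~5.39]{InvPair}, so no genuinely new estimate is needed beyond bookkeeping.
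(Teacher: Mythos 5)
Your proposal is correct and takes essentially the same approach as the paper: glue the orbit curve $v$ to $u_0$ to obtain a smeared neighborhood homeomorphic to $U_0\times(0,1]$, identify the fibered product with $M$ as $(U_0\times_{\ev_{P_0}}M)\times(0,1]$ (using that $u_0\times m_0$ is isolated), and observe that $\overline{s\circ\ev}_{\{p\}}$ parameterizes the gluing. The one place your argument diverges is in the odd-degree step: rather than routing it through the index model for the orbit component (Proposition~\ref{prop:ExpectedDimensionA}), the paper argues directly that since $p$ is an interior puncture one has $s\circ\ev_p<1$ on the interior of the glued moduli space while $s\circ\compactev_p=1$ at the comb, so the map to $[0,1]$ is proper near $1$ and of odd degree there; this is more economical than the degree bookkeeping you sketch and avoids conflating index with evaluation degree.
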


\begin{proof}
  This proof is similar to the proof of 
  Proposition~\ref{prop:JoinCurve}.  Again,
  the moduli space $\ModEast$ for an orbit curve is smooth.  In this
  case, the evaluation map 
  \[ \ev\colon \ModEast(\EastSource,P)\to \R^{P} \] is a
  diffeomorphism near $v$, so gluing gives a smeared neighborhood of
  $(u_0,v)$ in $\overline{\ModFlow}(\x,\y;\Source)$
  which is homeomorphic to $U_0\times (0,1]$, where $U_0$ is a neighborhood
    of $u_0$ in $\ModFlow(\x,\y;\Source_0)$.
    Similarly,
    $(U_0\times (0,1])\times_{\ev_{P_0}}M$ is identified with
  $(U_0\times_{\ev_{P_0}} M)\times (0,1]$.
  Since $p$ is an interior puncture, it follows that 
  $s\circ \ev_{p}(u\times m\times t)<0$ for $t<1$; 
  it is straightforward also to see that 
  $s\circ \compactev_{p}(u\times m\times 1)=1$. The degree statement follows.
\end{proof}

The following is similar to Proposition~\ref{prop:OrbitCurve}.  To
state it, we generalize the notion of smeared neighborhood in the
following straightforward manner to simple combs $(w,u)$ where $w$ is
a curve at West infinity.

Let $(w,u)$ be a simple holomorphic curve with $w\in\ModWest(\WestSource)$,
and $u$  a pseudo-holomorphic flowline with west punctures.
A {\em smeared neighborhood} of $(w,u)$ in is an open neighborhood
in $\ClosedModFlow^B(\x,\y,\Source)$ of the set
\[ \{(w,u')\in\ModWest(\WestSource),(w,u)\in\ClosedModFlow(\x,\y,\Source)\}\]
In this neighborhood, $u$ is to be thought of as a curve with West punctures
(over the $t$-value where it meets $w$). 

\begin{prop}
  \label{prop:BoundaryDegenerationNbd}
  Suppose that $(w,u_0)$ is a simple holomorphic comb in
  ${\ClosedModFlow}^B(\x,\y;\Source)$, and assume that $w$ is a simple
  boundary degeneration with some puncture $p$ labelled by an orbit
  $\orb_j$. Let ${\mathbf q}$ denote the $d$-tuple of west punctures in $u_0$.
  Fix also the following data:
  \begin{itemize}
    \item a smooth manifold $M$
    \item a smooth map $\phi\colon M\to \R^{P_0}$, where $P_0$ denotes the
      set of punctures which do not appear in the boundary degeneration
      component
    \item an auxiliary puncture $q_0\in P_0$ with $t\circ u_0(q_0)=t \circ u_0(q)$
      for all $q\in {\mathbf q}$.
    \end{itemize}
    Assume that:
  \begin{enumerate}[label=(W-\arabic*),ref=(W-\arabic*)]
  \item $\phi$ is transverse to 
    \[ \ev_{P_0}\colon \ModFlow^{B_0}(\x,\y;\Source_0)\to \R^{P_0}, \]
    where $\Brs+B_0=B$ and $\Brs$ is the shadow of $w$.
  \item 
    $\phi\times 0 \colon M \to \R^{P_0}\times \R$ is transverse to 
    \[ \ev_{P_0} \times (\ev_{p_1}-\ev_{q_0}) \colon \ModFlow^{B}(\x,\y;\Source)\to \R^{P_0}\times \R\]
  \item 
    the fibered product
    $\ModFlow^{B}(\x,\y;\Source)\times_{\ev_{P_0}\times
      \ev_{p_1}-\ev_{q_0}} (M\times 0)$ is two-dimensional,
  \item 
    $(w\times u_0)\times m_0$ is 
    a point in $\ClosedModFlow(\x,\y;\Source)\times_{\ev_{P_0}}M$.
  \end{enumerate}
  Then there is a smeared neighborhood $U$ of $(w,u_0)$ in 
  $\ClosedModFlow^B(\x,\y;\Source)$ with the following properties:
  \begin{itemize}
  \item The map ${\overline{s\circ \ev_{p}}}\colon U\times_{\ev_{P_0}} M \to [0,1]$
    is proper near $0$.
  \item The above map has  odd degree near $(w,u_0)$.
    \end{itemize}
\end{prop}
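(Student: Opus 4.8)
The plan is to mimic the gluing arguments for join curves (Proposition~\ref{prop:JoinCurve}) and orbit curves (Proposition~\ref{prop:OrbitCurve}), with the extra feature that now the degeneration happens at \emph{west} infinity, so the rescaled piece $w$ is a simple boundary degeneration rather than a curve at east infinity. The key input is that, since $w$ is a simple boundary degeneration with shadow $\Brs$ and only two orbit punctures, the relevant moduli space $\ModDeg_J^{\Brs}$ is smooth of dimension $d$ by Proposition~\ref{prop:SmoothBoundaryDeg}, and hence $w$ is a smooth point in its (unparametrized) moduli space; moreover the evaluation map $\evB\colon \ModDeg_J^{\Brs}\to\Tb$ has odd degree by Lemma~\ref{lem:BoundaryDegenerationsDegree1A}.

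First I would set up the gluing. As in the proof of Proposition~\ref{prop:OrbitCurve}, the transversality hypotheses \ref{W-1}--\ref{W-3} guarantee that $u_0$ and $w$ each represent transversely cut-out points of their respective moduli spaces (after imposing the diagonal constraints coming from $\phi$ and from matching the $t$-values of $\mathbf q$ with $q_0$). A standard pregluing/Newton-iteration argument then produces a smeared neighborhood $U$ of $(w,u_0)$ in $\ClosedModFlow^B(\x,\y;\Source)$, together with a gluing parameter; because the boundary degeneration piece $w$ is attached along a $d$-tuple of west punctures whose $\pi_\Sigma$-images lie on $\Tb$ and whose $t$-values all agree, the gluing region is parametrized by a single real parameter, measuring how far the rescaled $w$ has ``scaled out'' (c.f.\ Figure~\ref{fig:ExtractBoundaryDegeneration}). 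Concretely, the $s$-coordinate of the orbit puncture $p$ in the glued curve tends to $0$ as we approach $(w,u_0)$, so $\overline{s\circ\ev_p}$ is the relevant collar coordinate, exactly as in Equation~\eqref{eq:EvAtP}.

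Next I would identify the fibered product. Because $w$ is smooth in $\ModWest(\WestSource)$ and $\evB_\east$ restricted to the component of interest has odd degree onto $\Tb$ (Lemma~\ref{lem:BoundaryDegenerationsDegree1A}), the gluing map identifies $U$ with (a neighborhood $U_0$ of $u_0$ in the appropriate moduli space with west punctures) $\times\,(0,\epsilon)$, up to an odd-degree covering coming from the multiplicity of $\evB$. Intersecting with the constraint $\phi\colon M\to\R^{P_0}$ and using hypothesis~\ref{W-4} that $(w\times u_0)\times m_0$ is an isolated point of the fibered product, one gets that $U\times_{\ev_{P_0}}M$ is (after possibly shrinking) a union of half-open arcs $\cong (0,1]$, with the endpoint corresponding to $s\circ\ev_p=1$; properness near $0$ of $\overline{s\circ\ev_p}$ is then the statement that no further degeneration of the glued family occurs in this regime (which is exactly the content of the compactness Proposition~\ref{prop:Compactness} together with the index count that forbids two-story breaking in a $1$-dimensional model), and oddness of the degree near $(w,u_0)$ is the count coming from the odd degree of $\evB$.

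The main obstacle I expect is the bookkeeping for the west-infinity gluing: one must check that pregluing a boundary degeneration level (which lives in $\Sigma\times\HH$, with its own real two-dimensional automorphism group) to a flowline with west punctures (living in $\Sigma\times[0,1]\times\R$) produces the claimed one-parameter family, i.e.\ that after quotienting by the automorphisms of $\HH$ and by the overall $\R$-translation, exactly one gluing parameter survives. This is the analogue of~\cite[Lemma~3.82]{LipshitzCyl} for the rescaling from the $\CDisk$-factor to $\HH$ referenced in the proof of Proposition~\ref{prop:Compactness}, and the degree statement ultimately rests on Lemma~\ref{lem:BoundaryDegenerationsDegree1A}; the remaining transversality and Newton-iteration steps are routine adaptations of~\cite[Section~5.5]{InvPair}.
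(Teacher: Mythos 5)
Your proposal correctly identifies the two essential inputs (the gluing result for boundary degenerations and the odd degree of the evaluation map $\evB$ from Lemma~\ref{lem:BoundaryDegenerationsDegree1}), and the overall structure resembles the paper's proof, but there are two genuine gaps where the argument would fail as written.

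First, you contradict yourself on where the degeneration occurs: you correctly state early on that ``the $s$-coordinate of the orbit puncture $p$ in the glued curve tends to $0$ as we approach $(w,u_0)$,'' but then claim the fibered product is a union of arcs $\cong(0,1]$ ``with the endpoint corresponding to $s\circ\ev_p=1$.'' The latter is a carry-over from Proposition~\ref{prop:OrbitCurve} (east infinity, where $s\to 1$), but boundary degenerations live over $\betas\times\{0\}\times\R$, so the relevant boundary value here is $s=0$, consistent with the proposition's assertion of properness \emph{near $0$}. With $s=1$ the degree claim would be vacuous.

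Second, and more seriously, you do not use the auxiliary puncture $q_0$, which is what makes the degree count work. After gluing, the smeared neighborhood is \emph{two}-dimensional: one factor is the gluing scale in $[0,1)$, the other is the $t$-coordinate along which the boundary degeneration is attached to $u_0$, parametrized by an interval $(-1,1)$. The constraint $\ev_{P_0}=\phi(m)$ alone does not cut this down to a $1$-dimensional set, because the attachment height of the west punctures is invisible to the punctures in $P_0$. So $U\times_{\ev_{P_0}}M$ is (an odd number of copies of) $[0,1)\times(-1,1)$, not a union of half-open arcs. The paper's proof then studies the pair $F=(F_1,F_2)$ with $F_1=s\circ\ev_p$ and $F_2=\ev_p-\ev_{q_0}$: the restriction of $F_2$ to the boundary $\{0\}\times(-1,1)$ has odd degree near $0$, $F_1>0$ off that boundary and vanishes on it, so $F\colon[0,1)\times(-1,1)\to\R^{\geq 0}\times\R$ has odd degree near the origin; the desired odd-degree statement for $s\circ\ev_p$ is then recovered on $F_2^{-1}(0)$. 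Without $q_0$ and $F_2$ you have no way to slice the $2$-dimensional gluing neighborhood down to the $1$-dimensional locus on which the degree of $s\circ\ev_p$ is computed, so the final step of your argument does not go through.
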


\begin{proof}
  Each simple boundary degeneration $w$ is transversely cut out in the
  moduli space $\UnparModWest^{\Bjk}(\x;\WestSource)$. We are assuming
  that $u$ is also transversely cut out in
  $\UnparModFlow^{B_0}(\x,\y;\Source;P_2)$.  It follows from standard
  gluing results (see~\cite[Proposition~5.31]{InvPair}; see
  also~\cite[Section~5.3]{Bourgeois}) that for sufficiently small open
  neighborhoods $U_w$ and $U_{u_0}$ of $w$ and $u_0$, there is an open
  neighborhood in
  $\ClosedModFlow^B(\x,\y;\WestSource\natural\Source_0;P)$ which is
  homeomorphic to $(U_w\times_{\evB} U_{u_0})\times [0,1)\times
  (-1,1)$.  Here, the interval $[0,1)$ parameterizes the gluing
  parameter, and the interval $(-1,1)$ parameterizes the $t$ value of
  the $d$-tuple of west punctures over $u_0$.  Since $\evB\colon
  U_w\to {\mathbb T}_{\betas}$ has odd degree
  (Lemma~\ref{lem:BoundaryDegenerationsDegree1}), this neighborhood is
  identified with an odd number of copies of $U_{u_0}\times
  [0,1)\times (-1,1)$.  Choosing $U_{u_0}$ sufficiently small that
  $U_{u_0}\times_{\ev_{P_0}} M$ consists of the single point
  $(u_0,m_0)$, we have that the smeared neighborhood $U$ of $(w,u_0)$
  has the property that is $U\times_{\ev_{P_0}} M$ is identified with
  an odd number of copies of $[0,1)\times (-1,1)$.

  Pick one such component ${\overline {\mathcal M}}$, and
  let ${\mathcal M}$ denote its interior.
  Consider the
  maps 
  \[ F_1=s\circ \ev_{p} \colon {\mathcal M}\to (0,1)
  \qquad{\text{and}}\qquad F_2 =\ev_{p}-\ev_{q_0}\colon {\mathcal M}\to \R. \]
  The maps $F_1$ and $F_2$ extend 
  continuously to give maps 
  ${\overline F}_i \colon {\overline{\mathcal M}} \to \R$ for $i=1,2$;
  and in fact $F_1\colon {\overline{\mathcal M}}\to [0,1)$ is proper near $0$.

  Clearly, the restriction of ${\overline F}_2$ to
  $0\times (-1,1)$ has odd degree near
  $0$.  Moreover, since $p$ is an interior puncture, $F_1(s,t)>0$ for
  all $s>0$ and $t\in (-1,1)$. Also, ${\overline F}_1(0,t)=0$. It
  follows that $F\colon [0,1)\times (-1,1)\to \R\times \R$ has odd
  degree.  Clearly, the neighborhood of
  \[ {\mathcal M}\times_{\ev_{P_0}\times
    (\ev_{p}-\ev_{q_0})} (M\times 0) \] is given as $F_2^{-1}(0)$, and
  the restriction of ${s\circ\ev_p}$ to this set is $F_1$. It follows
  that the degree of $s\circ \ev_p$ to $U\times_{\ev_{P_0}} M$ agrees
  with an odd multiple of the degree of $F_1|_{F_2^{-1}(0)}$, which
  agrees with the degree of $F$, which we have shown to be odd.
\end{proof}

In the above proposition, we measured the distance of the boundary
degeneration by measuring $s\circ u(p)$, where $p$ is the puncture
which goes into the boundary degeneration. In the next very similar
proposition, we measure the distance to the boundary by $t\circ
u(p_1)-t\circ u(p_2)$, where now $\{p_1,p_2\}$ are the two punctures
which go into the boundary degeneration.

\begin{prop}
  \label{prop:WestInftyEnd}
  Suppose that $(w,u_0)$ is a simple holomorphic comb in
  ${\ClosedModFlow}^B(\x,\y;\Source)$, and assume that $w$ is a simple
  boundary degeneration with exactly two punctures $p_1$ and $p_2$,
  labelled by orbits
  $\orb_j$ and $\orb_k$ respectively. Fix also the following data:
  \begin{itemize}
    \item a smooth manifold $M$
    \item a smooth map $\phi\colon M\to \R^{P_0}$, where $P_0$ denotes the
      set of punctures which do not appear in the boundary degeneration
      component
    \item an auxiliary puncture $q_0\in P_0$ with $t\circ u(q_0)=t \circ u_0(q)$
      for all $q\in {\mathbf q}$.
    \end{itemize}
    Assume that:
  \begin{itemize}
  \item $\phi$ is transverse to 
    \[ \ev_{P_0}\colon \ModFlow^{B_0}(\x,\y;\Source_0)\to \R^{P_0}, \]
    where $\Brs+B_0=B$ and $\Brs$ is the shadow of $w$.
  \item $\phi$ is transverse to 
    \[ \ev_{P_0}\times (\ev_{p_1}-\ev_{q_0})\colon
    \ModFlow^{B}(\x,\y;\Source)\to \R^{P_0}\times \R, \]
    where $\Brs+B_0=B$ and $\Brs$ is the shadow of $w$.
  \item 
    the fibered product
    $\ModFlow^{B}(\x,\y;\Source)\times_{\ev_{P_0}\times
      \ev_{p_1}-\ev_{q_0}} (M\times 0)$ is two-dimensional,
  \item 
    $(w\times u_0)\times m_0$ is 
    a point in $\ClosedModFlow(\x,\y;\Source)\times_{\ev_{P_0}}M$.
  \item The map  $\evB_{\mathbf q}\colon \ModFlow(\x,\y;\Source)\to \Tb$
    is transverse to the wall $\Wall^{\orb_j=\orb_k}$.
  \end{itemize}
  Then there is a smeared neighborhood $U$ of $(w,u_0)$ in 
  $\ClosedModFlow^B(\x,\y;\Source)$ with the following properties:
  \begin{itemize}
    \item Let $f\colon U\times_{\ev_{P_0}\times (\ev_{p_1}-q_0)} (M\times 0)
      \to \R$ be defined by
      $u\mapsto t\circ u(p_1)-t\circ u(p_2)$ is proper,
      sending the portion of $U$ in the interior $\ModFlow^B(\x,\y;\Source)$
      to $\R\setminus 0$, and mapping ideal combs to $0$.
    \item
      if $\evB_{\mathbf q}(u_0)$ is in the $\Chamber^{\orb_j>\orb_k}$, then
      $f$ has odd degree near $(w,u_0)$
      for $\epsilon>0$ and even degree for $\epsilon<0$;
      if $\evB_{\mathbf q}(u_0)$  is in $\Chamber^{\orb_j<\orb_k}$
      $f$ has even degree for $\epsilon>0$ and odd degree for $\epsilon<0$.
  \end{itemize}
\end{prop}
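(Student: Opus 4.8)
The plan is to adapt the proof of Proposition~\ref{prop:BoundaryDegenerationNbd}; the only essential change is that the simple boundary degeneration $w$ now carries exactly two matched orbit punctures $\orb_j,\orb_k$, and that the approach to the degeneration is recorded by the relative height $f(u)=t\circ u(p_1)-t\circ u(p_2)$ rather than by $s\circ u(p)$. As there, $w$ is transversely cut out in $\ModDeg_J^{\Brs}$ (Proposition~\ref{prop:SmoothBoundaryDeg}) and $u_0$ is transversely cut out in $\ModFlow^{B_0}(\x,\y;\Source_0)$, while the hypotheses on $\phi$ make the relevant fibered products clean; we work, as the statement presupposes, in the case where $\evB_{\mathbf q}(u_0)$ lies off the wall $\Wall^{\orb_j=\orb_k}$, so that it belongs to exactly one of the chambers $\Chamber^{\orb_j>\orb_k}$, $\Chamber^{\orb_j<\orb_k}$ (transversality of $\evB_{\mathbf q}$ to $\Wall^{\orb_j=\orb_k}$ makes this an open, hence generic, condition).

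Next I would run the standard gluing theorem of~\cite[Proposition~5.31]{InvPair} (see also~\cite[Section~5.3]{Bourgeois}), exactly as in Proposition~\ref{prop:BoundaryDegenerationNbd}. Taking $U_{u_0}$ a small neighbourhood of $u_0$ and $U_w=(\evB)^{-1}(V)\subset\ModDeg_J^{\Brs}$ for a small ball $V\ni\evB_{\mathbf q}(u_0)$, one obtains a smeared neighbourhood of $(w,u_0)$ in $\ClosedModFlow^B(\x,\y;\Source)$ homeomorphic to $(U_w\times_{\evB}U_{u_0})\times[0,1)\times(-1,1)$, where $[0,1)$ is the gluing parameter and $(-1,1)$ records the $t$-value of the $d$-tuple $\mathbf q$ of west punctures in the glued curve; in a glued curve the two orbit punctures $p_1,p_2$ sit near this $t$-value. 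Since $\evB\colon U_w\to V$ has odd degree by Lemma~\ref{lem:BoundaryDegenerationsDegree1A}, $U_w\times_{\evB}U_{u_0}$ is an odd number of sheets, one per boundary degeneration $w'$ over $\evB_{\mathbf q}(u_0)$, each mapping homeomorphically onto a neighbourhood of $u_0$; on each, $\delta(w')=t\circ w'(q_1)-t\circ w'(q_2)\ne 0$ since $\evB_{\mathbf q}(u_0)$ is off the wall. Shrinking $U_{u_0}$ so that $U_{u_0}\times_{\ev_{P_0}}M=\{(u_0,m_0)\}$ and using transversality of $\phi\times 0$, the constraint $\ev_{p_1}=\ev_{q_0}$ pins down the $(-1,1)$ translation, and $U\times_{\ev_{P_0}\times(\ev_{p_1}-\ev_{q_0})}(M\times 0)$ becomes, modulo the overall $\R$-translation under which $f$ is invariant, an odd number of half-open intervals parameterised by the gluing parameter, all meeting at the ideal comb $(w,u_0)\times m_0$ at parameter $0$.

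The main obstacle is the analytic input describing $f$ along a sheet. I would need to extract from the gluing model that, on the sheet of $w'$ and for small gluing parameter $r>0$, the glued curve has $f\to 0$ as $r\to 0$ with $\mathrm{sgn}(f)=\mathrm{sgn}(\delta(w'))$ constant; in other words, the glued curve remembers the relative $t$-positions of the two orbit ends of $w'$. This requires some care because $\delta$ is not invariant under the conformal automorphisms of $\HH$ (only its sign is), so one must fix the gluing region explicitly, in the spirit of~\cite{LipshitzCyl}. Granting this, $f$ extends continuously over the ideal point $(w,u_0)\times m_0$ with value $0$, carries the interior of each sheet into $\R\setminus 0$, and is proper near $0$: by the compactness statement Proposition~\ref{prop:Compactness}, a sequence with $f\to 0$ subconverges, and with the $M$- and $\ev_{p_1}=\ev_{q_0}$-constraints preserved its limit near $(w,u_0)$ must be a comb of the form $(w',u_0)\times m_0$.

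Finally, the parity count. For small $\epsilon\ne 0$, the preimage $f^{-1}(\epsilon)$ near $(w,u_0)$ consists of exactly one point on each sheet whose $\delta(w')$ has the same sign as $\epsilon$ and is empty on the other sheets, so the mod-$2$ degree of $f$ over $\epsilon$ equals the number of boundary degenerations $w'$ over $\evB_{\mathbf q}(u_0)$ with $\mathrm{sgn}(\delta(w'))=\mathrm{sgn}(\epsilon)$. Since the total number of such $w'$ is odd (Lemma~\ref{lem:BoundaryDegenerationsDegree1A}), the definition of the chambers now gives the claim: if $\evB_{\mathbf q}(u_0)\in\Chamber^{\orb_j>\orb_k}$ the count with $\delta>0$ is odd and with $\delta<0$ is even, i.e.\ $f$ has odd degree for $\epsilon>0$ and even degree for $\epsilon<0$; if instead $\evB_{\mathbf q}(u_0)\in\Chamber^{\orb_j<\orb_k}$ the two parities are interchanged.
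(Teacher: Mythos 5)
Your proposal is correct and follows essentially the same route as the paper: a smeared neighborhood is built by gluing, Lemma~\ref{lem:BoundaryDegenerationsDegree1A} gives an odd number of sheets indexed by boundary degenerations $w'$ over $\evB_{\mathbf q}(u_0)$, each sheet carries the sign $\mathrm{sgn}\bigl(\delta(w')\bigr)$ (which persists to nearby glued curves because $\evB_{\mathbf q}(u_0)$ is off the wall), and the parity of $f^{-1}(\epsilon)$ is read off from the chamber. The step you flag as ``the main obstacle''—that $\mathrm{sgn}\bigl(t\circ u(p_1)-t\circ u(p_2)\bigr)$ is constant equal to $\mathrm{sgn}\bigl(\delta(w')\bigr)$ on each sheet for small gluing parameter—is handled in the paper by the same continuity/shrinking argument you propose, just stated more tersely; and where you pin down the $(-1,1)$ translation by the constraint $\ev_{p_1}=\ev_{q_0}$ and then read off the degree of $f$, the paper equivalently packages $F=(F_1,F_2)=(t\circ\ev_{p_1}-t\circ\ev_{p_2},\ \ev_{p_1}-\ev_{q_0})$ on each sheet and identifies the degree of $F_1|_{F_2^{-1}(0)}$ with that of $F$. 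These are the same computation.
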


\begin{proof}
  As in the proof of Proposition~\ref{prop:BoundaryDegenerationNbd},
  we can choose a smeared neighborhood $U$ so that
  $U\times_{\ev_{P_0}} M$ is identified with an odd number of copies
  of $[0,1)\times (-1,1)$, indexed by simple boundary degenerations
  $\{w_i\}$.  Since $\evB_{\mathbf q}$ is not on a wall, it follows
  that that for each such boundary degeneration, $t\circ w_i(p_1)-t
  \circ w_i(p_2)$ is non-zero.  Thus, we can give each component of
  $U\times_{\ev_{P_0}} M$ an {\em associated sign}, which is positive
  or negative, according to the sign of the difference $t\circ
  w_i(p_1)-t\circ w_i(p_2)$.

  Choosing $U$ sufficiently small, we can ensure
  that $t\circ u(p_1)-t\circ u(p_2)$ is non-zero; and in fact,
  on each component ${\mathcal M}$, the sign of $t \circ u(p_1)-t \circ(p_2)$
  is determined by the associated sign of the component.

  Pick any component ${\overline {\mathcal M}}$ with positive sign,
  and let ${\mathcal M}$ denote its interior. 

  Consider the maps
  \[ F_1=t \circ \ev_{p_1} - t\circ \ev_{p_2}={\mathcal M}\to \R^{>0} \qquad{\text{and}}\qquad F_2
  =\ev_{p_1}-\ev_{q_0}\colon {\mathcal M}\to \R. \]  The maps $F_1$ and
  $F_2$ extend continuously to ${\overline{\mathcal M}}$ 
  to give maps ${\overline F}_1  \colon {\overline{\mathcal M}} \to \R^{\geq 0}$ and
  ${\overline F}_2\colon {\overline{\mathcal M}}\to \R$.

  Clearly, the restriction of ${\overline F}_2$ to 
  \[ 0\times
  (-1,1)=\partial {\overline{\mathcal M}}\] 
  has odd degree near
  $0$.  Moreover, $F_1(s,t)>0$ for all $s>0$ and $t\in (-1,1)$,
  with ${\overline F}(0,t)=0$. It follows that 
  ${\overline F}\colon {\overline{\mathcal M}}\to [0,1)\times (-1,1)$
  has odd degree; and this is the same as the degree of the restriction
  $F_1|_{F_2^{-1}(0)}$. 
  
  If the component ${\overline{\mathcal M}}$ has negative associated sign,
  the map $F_1=t\circ \ev_{p_1}-t\circ \ev_{p_2}$ maps to 
  $\R^{<0}$, and indeed 
  the above argument shows that
  \[ {\overline F}\colon {\overline{\mathcal M}}\to \R^{\leq 0}\times \R \]
  has odd degree over the origin.

  Now, if $\evB_{\mathbf q}(u_0)\in \Chamber^{\orb_j>\orb_k}$, there
  is an odd number of positive associated components and an even
  number of negative associated ones; whereas if $\evB_{\mathbf
    q}(u_0)\in \Chamber^{\orb_j<\orb_k}$, there is an odd number of
  negative associated components and an even number of positive ones.
  The degree statement now follows.
\end{proof}

\subsection{Ends of one-dimensional moduli spaces}

\begin{proof}[Proof of Theorem~\ref{thm:AEnds}]  
  Consider the limiting comb in the end of the one-dimensional moduli space
  ${\widehat{\mathcal M}}$, whose existence is guaranteed by 
  Gromov compactness (cf. Proposition~\ref{prop:Compactness}).
  This limit contains no $\alpha$-boundary degenerations, because such
  a degeneration covers both basepoints $\wpt$ and $\zpt$.

  Suppose next that a $\beta$-boundary degeneration occurs in the
  limit.  By the dimension formula, the boundary degeneration must be
  simple (otherwise, the remaining curve has codimension greater than
  one). There are two cases, according to whether or not the boundary
  degeneration is special.

  Suppose first that boundary degeneration is not special. Then there
  are two different orbits $\orb_j$ and $\orb_k$ on the boundary
  degeneration, and, by the hypotheses on allowed constraint packets,
  it follows that the boundary degeneration occurs as two packets
  collide.  Moreover, since $\orb_j$ and $\orb_k$ are matched in
  $\Mdown$, and our packets are allowed, one of the two orbits is
  even, and so it is constrained to lie at the same level as a
  chord. Thus, the collision is a ``boundary degeneration collision''
  in the sense of Definition~\ref{def:BoundaryDegCollision}. Moreover,
  the hypotheses of Proposition~\ref{prop:BoundaryDegenerationNbd} are
  satisfied, using for $M$ the diagonal which specifies the chord
  packets. That proposition now ensures that there is an odd number of
  such ends, and they are all in the chamber as specified
  in~\ref{endA:BoundaryDegeneration}.

  Suppose that the boundary degeneration is special. There are two
  subcases, in the notation
  of~\ref{endA:SpecialBoundaryDegeneration}, according to whether or not
  $\sigmas=\rhos_i\setminus\{o_k\}$ is empty. 
  If $\sigmas$ is non-empty, we can 
  apply Proposition~\ref{prop:WestInftyEnd}, to find an odd number of
  ends.  If $\sigmas$ is empty, the dimension formula ensures that the
  limiting curve lies in a moduli space with expected dimension
  zero. This moduli space is empty, unless it corresponds to the
  constant flowline. That is the case where $\ell=1$, $\x=\y$, and
  $B={\mathcal B}_{\{k\}}$. These ends occur with an odd multiplicity,
  where we glue the constant flowline at $\x=\y$ to the odd number of
  simple boundary degenerations $w$ with $\evB(w)=\x$, 
  according to Lemma~\ref{lem:BoundaryDegenerationsDegree1}.
  This completes the cases where $\beta$-boundary
  degenerations occur.

  The classification of the remining ends follows very similarly to  the proof
  of~\cite[Theorem~5.61]{InvPair}.
  In the present case, we have the possibility of orbit curves forming.
  Each such end  appears with odd multiplicity by Proposition~\ref{prop:OrbitCurve}.

  The fact that the chords in the collision are disjoint from one
  another (Condition~\ref{c:Disjoint}) follows from the dimension
  formula: combining the index formula the computation from
  Example~\ref{ex:CollisionEnd}, it follows that boundary-monotone
  collisions between non-disjoint constraint packets occur in
  codimension greater than one.  

  As in~\cite{InvPair}, boundary monotonicity and the non-existence
  of $\alpha$-boundary degenerations implies that in every collision,
  the joined chords are strongly composable.

  We argue that collisions that are not visible occur with even
  multiplicity.  To see this, observe that if the limiting curve has
  two punctures $q_1$ and $q_2$, both of which are marked by the same
  orbit $\orb_i$, and $t(u(q_1))=t(u(q_2))$, then that curve can be
  obtained as a limit of curves in
  $\ModFlow(\x,\y,\Source,\rhos_1,\dots,\rhos_\ell)$, so that $q_1$
  belongs to the packet $\rhos_i$ (and $q_2$ to $\rhos_{i+1}$); or it
  can be obtained as a limit of curves in the same moduli space but
  where $q_2$ belongs to $\rhos_{i+1}$.  Thus, these two ends cancel.

  For the join curve ends as in Case~\ref{endA:Join}, the
  decomposition must satisfy Property~\ref{OneIsShort}; for other
  decompositions occur in larger codimension than one. (See the
  computation from Example~\ref{ex:JoinCurveEnd}.) 
\end{proof}

\begin{proof}[Proof of Theorem~\ref{thm:DEnds}]
  This follows as in the proof of Theorem~\ref{thm:AEnds}.
  The key difference is that in the present case, orbits are never constrained
  to lie in the same level as other Reeb chords;
  and indeed if $j$ and $k$ are matched in $\Mup$,
  i.e. both appear in a boundary degeneration, only one of $\orb_j$ or $\orb_k$
  is allowed to appear in a constraint packet. It follows that the only boundary
  degenerations that can appear are special ones. The dimension formula once
  again ensures that the remaining curve, in this case, is a constant.
\end{proof}

\newcommand\dom{\mathcal D}
\section{Type $A$ modules}
\label{sec:TypeA}

Let $\Hdown$ be a lower diagram, and $\Matching$ a matching on
$\{1,\dots,2n\}$. Let $\Mdown$ be the equivalence relation
on $\{1,\dots,2n\}$ induced
by $\Hdown$. Together, $\Matching$ and $\Mdown$ generate
an equivalence relation on
$\{1,\dots,2n\}$.
The matching $M$ is called {\em compatible} with $\Hdown$ if the
equivalence relation has one equivalence class in it.

Our aim here is to  prove the following:

\begin{thm}
  \label{thm:DefTypeA}
  Fix a lower diagram $\Hdown$ and a matching $M$ on $\{1,\dots,2n\}$
  compatible with $\Mdown$. Fix also a generic almost-complex
  structure for $\Hdown$. Let $\Amod(\Hdown,\Matching)$ be the free
  $\Field[U,V]/UV$-module generated by lower states. This can be endowed
  with the following further structures:
  \begin{itemize}
    \item A rational-valued Alexander grading $\Agr$ (Equation~\eqref{eq:DefAgr} below)
    \item An integer-valued relative grading $\Mgr$ (Equation~\eqref{eq:MgrA})
    \item A collection of maps
      \[ m_{1+\ell}\colon \Amod\otimes\overbrace{\Blg\otimes\dots\otimes\Blg}^\ell\to \Amod \]
      for $\ell\in\Z^{\geq 0}$,
      defined by counting pseudo-holomorphic 
      flows (Equation~\eqref{eq:DefAction}).
  \end{itemize}
  The result is a curved $\Ainfty$ module over $\Blg$,
  with curvature $\sum_{\{i,j\}\in\Matching} U_i U_j$,
  which has the following grading properties:
  \begin{itemize}
    \item $U$ drops $\Agr$ by $1$; $V$ raises $\Agr$ by $1$; and
      the operations $m_{1+\ell}$ preserve $\Agr$
    \item $U$ drops $\Mgr$ by $1$; $V$ drops $\Mgr$ by $1$, and 
      the operations $m_{1+\ell}$ respect $\Mgr$,
      in the sense that 
      if $\x\in\Amod$ is a homogeneous module element,
      and $a_1,\dots,a_\ell$ is a sequence of homogeneous algebra elements, then 
      $m_{1+\ell}(\x,a_1,\dots,a_\ell)$ is also homogeneous, with grading given by
      \begin{equation}
        \label{eq:MgrTypeA}
        \gr(m_{1+\ell}(\x,a_1,\dots,a_\ell))=\gr(\x)+\ell -1+ \sum_{i=1}^{\ell} \Mgr(a_i). 
      \end{equation}
    \end{itemize}
\end{thm}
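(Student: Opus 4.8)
The plan is to follow the template of the type $D$ construction (Propositions~\ref{prop:CurvedTypeD} and the surrounding material), replacing the shadow moduli spaces from Section~\ref{sec:CurvesD} with the richer moduli spaces for lower diagrams from Section~\ref{sec:CurvesA}, and in particular using Theorem~\ref{thm:AEnds} as the geometric input in place of Theorem~\ref{thm:DEnds}. First I would fix an orbit marking on the lower diagram (Definition~\ref{def:OrbitMarking}); this is possible because $\Matching$ is compatible with $\Mdown$, so the one-manifold $W(\Matching)\cup_{\{1,\dots,2n\}} W(\Mdown)$ has a single component, which lets one consistently two-color the orbits. Then I would define, for a sequence of pure algebra elements $a_1,\dots,a_\ell$ in $\Blg(n)$, an associated sequence of allowed constraint packets $\vec\rhos=\vec\rhos(a_1,\dots,a_\ell)$ (roughly: replace each $U_i$-power factor by the orbit $\orb_i$ together with the appropriate chords, using the orbit marking to decide which orbit absorbs a chord), and set
\begin{equation}
  \label{eq:DefAction}
  m_{1+\ell}(\x,a_1,\dots,a_\ell)=\sum_{\{\y,B\mid \ind(B,\x,\y;\vec\rhos)=1\}} \#\UnparModFlow^B(\x,\y;\vec\rhos)\cdot (\text{monomial})\cdot \y,
\end{equation}
where the monomial in $U,V$ records the special-orbit multiplicities $n_\wpt(B)$ and $n_\zpt(B)$ (recall Condition~\ref{prop:CurvesNotToCoverA} guarantees at least one of these vanishes, so the monomial lands in $\Field[U,V]/UV$). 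Finiteness of each sum follows, exactly as in the type $D$ case, from Property~\ref{UD:NoPerDom} (applied to the underlying upper diagram) together with the index/grading constraint, which bounds the coefficients $n_{\{r,s\}}$ expressing a general $B$ in terms of a fixed one plus multiples of the $\Brs$.

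Next I would establish the grading statements. The Alexander grading $\Agr$ and the Maslov-type grading $\Mgr$ on lower states are defined by the same formulas as on the type $D$ side (Equations~\eqref{eq:DefAgr} and the analogue of~\eqref{eq:DefMgr}, with $d=g+n$ now), and one checks that $\ind(B,\x,\y;\vec\rhos)$ equals $\Mgr(B)$ minus the same weight corrections as in Theorem~\ref{thm:GeneralPosition} (now incorporating the $\iota$-terms from Definition~\ref{def:IndexTypeA}), so that a rigid flow contributing to $m_{1+\ell}$ forces exactly the grading shift in Equation~\eqref{eq:MgrTypeA}; the behavior of $U$ and $V$ on $\Agr$ and $\Mgr$ is then immediate from how the monomial is read off from $n_\wpt(B)$, $n_\zpt(B)$ and the definitions of the algebra gradings in Equations~\eqref{eq:DefDeltaAlg},~\eqref{eq:DefAgrAlg}. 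This part is essentially bookkeeping once the index formula is in hand.

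The heart of the proof is the curved $\Ainfty$ relation, which I would verify by counting the ends of the one-dimensional moduli spaces $\UnparModFlow^B(\x,\z;\Source;\vec\rhos)$ with $\ind=2$, summed over all sources and all $\vec\rhos$ compatible with a fixed sequence $a_1,\dots,a_k$ of algebra elements (and over ways of grouping them). By Theorem~\ref{thm:GeneralPositionA} these are smooth $1$-manifolds, and Theorem~\ref{thm:AEnds} enumerates their ends as types \ref{endA:2Story}--\ref{endA:SpecialBoundaryDegeneration}. The two-story ends~\ref{endA:2Story} give the compositions $m_{\dots}(m_{\dots}(\x,\dots),\dots)$, i.e.\ the first line of the $\Ainfty$ relation. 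The contained-collision ends~\ref{endA:ContainedCollisions} reassemble into the multiplication terms $m_k(\x,\dots,\mu_2(a_i,a_{i+1}),\dots)$: here one must check that the combinatorial rule $a\mapsto \vec\rhos(a)$ intertwines $\mu_2$ in $\Blg(n)$ with the "contained collision" operation on allowed packets, which is where the concrete description of the ideal $\mathcal J$ (Proposition~\ref{prop:Ideal}) and the definition of $L_i,R_i$ enter — a chord product that lands in $\mathcal J$ must correspond to a collision that is not strongly composable and hence to an end that cancels in pairs (as in the last paragraph of the proof of Proposition~\ref{prop:CurvedTypeD}). The join ends~\ref{endA:Join} cancel in pairs among themselves (two orderings of the split factors), as do the invisible collisions and the non-weakly-composable chord collisions; the orbit-curve ends~\ref{endA:Orbit} cancel against strongly-composable chord collisions. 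Finally the boundary-degeneration ends~\ref{endA:BoundaryDegeneration} and the special boundary-degeneration ends~\ref{endA:SpecialBoundaryDegeneration} are precisely what produce the curvature term $\sum_{\{i,j\}\in\Matching} m_{k+2}(\x,\dots,\mu_0,\dots)$ and, via the monomial bookkeeping and Lemma~\ref{lem:BoundaryDegenerationsDegree1A}, the identification of the curvature as $\sum_{\{i,j\}\in\Matching}U_iU_j$. Summing, all ends are accounted for, so the total count is zero mod $2$, which is exactly the curved $\Ainfty$ relation of Section~\ref{subsec:Framework}.

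The main obstacle I expect is the bookkeeping that matches the algebra side to the geometry: setting up the map $a\mapsto \vec\rhos(a)$ from (sequences of) algebra elements to allowed constraint packets so that (i) multiplication $\mu_2$ corresponds to contained collisions, (ii) the curvature $\mu_0$ corresponds exactly to the boundary-degeneration ends in the correct chamber $\Chamber^{\orb_j<\orb_k}$, and (iii) the $U,V$-monomials are tracked correctly through special orbits — all while the orbit marking is used consistently. The analytic inputs (transversality, gluing, compactness) are already packaged in Theorems~\ref{thm:GeneralPositionA} and~\ref{thm:AEnds} and Propositions~\ref{prop:JoinCurve}--\ref{prop:WestInftyEnd}, so the residual difficulty is combinatorial/algebraic rather than geometric; it is a direct but somewhat intricate adaptation of~\cite[Chapter~5]{InvPair} together with the curvature discussion specific to this paper.
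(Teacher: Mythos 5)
Your proposal follows the paper's overall plan --- define the gradings, define the actions via counts of pseudo-holomorphic flows with allowed constraint packets (using an orbit marking), verify finiteness, and prove the curved $\Ainfty$ relation via Theorem~\ref{thm:AEnds} --- so the scaffolding is right, and the citations (to Theorems~\ref{thm:GeneralPositionA},~\ref{thm:AEnds}, Propositions~\ref{prop:OrbitCurve},~\ref{prop:WestInftyEnd}) are the correct ones. But there are three places where the details, as written, would break.

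First, the monomial weighting. You say the monomial records $n_\wpt(B)$ and $n_\zpt(B)$. In fact the $U$-exponent must be $\gamma = n_\wpt(B) + (\text{number of packets consisting of a single odd orbit})$; see Definition~\ref{def:UweightVweight} in the paper. This extra term is not cosmetic: when an odd-orbit end of a one-dimensional moduli space cancels against a boundary-degeneration collision end (or against a special boundary degeneration end), the two cancelling ends live over \emph{different} homology classes $B$ and $B'$ differing by a region $\Brs$ with $n_\wpt(\Brs)=n_\zpt(\Brs)=0$. The only way their $U$-exponents agree is that the odd-orbit count changes by one to compensate. Without the odd-orbit contribution to $\gamma$, the signed (mod-$2$) count of ends does not telescope to zero, and the $\Ainfty$ relation fails.

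Second, the attribution of the curvature. You write that the boundary-degeneration ends (Types~\ref{endA:BoundaryDegeneration} and~\ref{endA:SpecialBoundaryDegeneration}) produce the curvature term $\sum m_{k+2}(\x,\dots,\mu_0,\dots)$. That is backwards. In the paper's proof of Proposition~\ref{prop:CurvedTypeA}, the curvature term comes from \emph{orbit-curve ends with an even orbit}: when the orbit $\orb_j$ in a packet $\{\orb_j,\longchord_k\}$ slides off to East infinity it is replaced by a length-one chord $\longchord_j$, and the resulting pair $\{\longchord_j,\longchord_k\}$ reads off as the algebra element $U_j U_k$, i.e.\ a summand of $\mu_0$. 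The boundary-degeneration ends do not produce curvature at all; they cancel against the \emph{odd} orbit-curve ends. Your statement that orbit-curve ends cancel against strongly-composable chord collisions is also not what happens here --- that is the type-$D$ pattern (Proposition~\ref{prop:CurvedTypeD}), not the type-$A$ pattern. In the type-$A$ case strongly composable orbitless collisions are contained collisions and give the multiplication terms $m_k(\x,\dots,\mu_2(a_i,a_{i+1}),\dots)$.

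Third, the finiteness argument. You claim it is ``exactly as in the type $D$ case, from Property~\ref{UD:NoPerDom}.'' The type-$D$ argument bounds $n_{\{r,s\}}$ from the Maslov index after expressing $B=B_0+\sum n_{\{r,s\}}\Brs$. For the type-$A$ module that does not suffice, because a given sequence of algebra inputs is compatible with infinitely many constraint-packet sequences (adding more odd-orbit packets). The paper's Lemma~\ref{lem:FiniteSum} instead traverses the oriented arc $W(\Matching)\cup W(\Mdown_*)$ using the ordering $f$ to pin down all the weights $\weight_i(B)$ in terms of $n_\wpt(B)$, $n_\zpt(B)$, and the odd-orbit count, and then uses $UV=0$ to bound these quantities. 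You would need this genuinely different argument.

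Smaller inaccuracies: the Alexander grading on the type-$A$ side is not given by the same formula as on the type-$D$ side; the formula in Lemma~\ref{lem:AgrDom} explicitly involves $n_\wpt$, $n_\zpt$, and the alternating signs $(-1)^{f(i)}$, not just weight differences within matched pairs.
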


Constraint packets are related to algebra elements in $\Clg$ in
Section~\ref{subsec:AlgebraicConstraints}. Once that is complete, the
proof will be given in the end of
Section~\ref{subsec:ConstructA}. Invariance properties will be dealt
with in Subsection~\ref{subsec:VaryCx}, and examples will be given in
Subsection~\ref{subsec:ExA}

\subsection{Algebra elements and constraints}
\label{subsec:AlgebraicConstraints}

An ingredient to constructing type $A$ modules to lower diagrams is
the relationship between constraint packets
(c.f. Definition~\ref{def:ConstraintPacket}) and the algebra $\Clg$, which we formulate presently.

If $\rho$ is a Reeb chord for a lower diagram, then it has a
corresponding algebra element $\bIn(\rho)$ obtained by multiplying
together the letters that represent the chord, as in
Figure~\ref{fig:ChordNamesA}.  For example, the chord $\rho$ that
covers $Z_i$ with multiplicity one and which starts and ends at
$\alpha_i$ has
\[a(\rho)=L_i \cdot R_i = \left(\sum_{\{{\mathbf{s}}\big| i\in {\mathbf{s}}, i-1\not\in
    {\mathbf{s}}\}}\Idemp{\mathbf{s}}\right)\cdot U_i \]
We generalize this construction to certain kinds of chord packets, as follows.

If $\rho$ is a Reeb chord, let $\rho^-$ denote its initial $\alpha$-arc, and $\rho^+$ denote its
terminal $\alpha$-arc.

\begin{defn}
\label{def:AlgebraicPacket}
A set of Reeb chords 
$\{\rho_1,\dots,\rho_j\}$ is called {\em algebraic} if
for any pair of distinct chords $\rho_a$ and $\rho_b$,
\begin{itemize}
  \item the chords $\rho_a$ and $\rho_b$ are on different boundary components $Z_i$ and $Z_j$,
  \item the initial points $\rho_a^-$ and $\rho_b^-$ are on different $\alpha$-curves; and
  \item the terminal points $\rho_a^+$ and $\rho_b^+$ are on different $\alpha$-curves.
\end{itemize}
\end{defn}

Let $\rhos$ be a set of Reeb chords that is algebraic, in the above sense,
we can associate an algebra element to $\rhos$, defined as follows.
Let $\alpha(\rho^+)$ be the curve
$\alpha_i$ with $\rho^+\in \alpha_i$; define $\alpha(\rho^-)$ similarly.
Let 
\[ I^-(\rhos)=\sum_{\{{\mathbf{s}}\big| \{\alpha(\rho_1^-),\dots,\alpha(\rho_j^-)\}\subset
\{\alpha_i\}_{i\in{\mathbf{s}}}\}} I_{\mathbf{s}}
\qquad\text{and}\qquad
I^+(\rhos)=\sum_{\{{\mathbf{s}}\big| \{\alpha(\rho_1^+),\dots,\alpha(\rho_j^+)\}\subset
\{\alpha_i\}_{i\in{\mathbf{s}}}\}} I_{\mathbf{s}}
\]
Then, $\bIn_0(\rhos)$ be the algebra element $a_0\in\BlgZ(2n,n)$ with
\[ a=I^- \cdot a\cdot I^+\] and whose weight $w_i(a)$ is the average local
multiplicity at $Z_i$. Let $\bIn(\rhos)$ be the image of $\bIn_0(\rhos)$
in $\Clg(n)$.

\subsection{Constructing the $\Ainfty$ module}
\label{subsec:ConstructA}

We will fix throughout a lower diagram $\Hdown$ and a compatible $\Matching$ on $\{1,\dots,2n\}$.

Sometimes it is useful to enlarge the equivalence relation to include
the points $\wpt$ and $\zpt$, as follows. We extend $\Mdown$ to a
matching $\Mdown_*$ on the set $\{\wpt,\zpt,1,\dots,2n\}$, so that $i$
is matched with $\wpt$ resp. $\zpt$ if $\Zdown_i$ can be connected to
$\wpt$ resp. $\zpt$ without crossing any $\alpha$-circles.  Then, $M$
and $\Mdown_*$ extended in this manner define an equivalence relation
on $\{\wpt,\zpt,1,\dots,2n\}$. If $M$ and $\Mdown$ are compatible, the
associated one-manifold $W(M)\cup W(\Mup_*)$ is homeomorphic to an
interval from $\wpt$ and $\zpt$. 

Traversing the arc $W(M)\cup W(\Mup_*)$ (starting at $\wpt$ and ending at
$\zpt$), we encounter placemarkers for the orbits $\{1,\dots,2n\}$ in
some order. Let $f_0\colon \{1,\dots,2n\}$ denote the order in which
the placemarkers are encountered along this arc. We will define a function
\[ f\colon \{1,\dots,2n\}\to \{1,\dots,2n\},\]
obtained by post-composing $f_0$ with the involution on $\{1,\dots,2n\}$
that switches $2i-1$ and $2i$ for $i=1,\dots,n$. Thus, 
the resulting function has $f(i)=1$ if the orbit $\orb_i$ is the second orbit we encounter
on the arc; $f(i)=2$ if $\orb_i$ is the first; 
$f(i)=3$ if $\orb_i$ is the fourth, and $f(i)=4$ if $\orb_i$ is the third, etc.
See Figure~\ref{fig:LabelOrbits}.

\begin{defn}
  \label{def:InducedOrdering} The function $f\colon \{1,\dots,2n\}$ is 
  called the {\em induced ordering} on the boundary components,
  induced by $\Mdown$ and $\Matching$.
\end{defn}

Observe that if
$i$ is chosen so that $Z_i$ and $\wpt$ are contained in the same
component of $\Sigma\setminus\betas$, then $f(i)=2n-1$.  We call the
Reeb orbit $\orb_i$ {\em even} resp. {\em odd} if $f(i)$ is even
resp. odd; i.e. $f$ induces an orbit marking, in the sense of
Definition~\ref{def:OrbitMarking}.

\begin{remark}
  The function $f$ can be thought of as a relabeling of the Reeb orbits,
  which might seem somewhat artificial at the moment. It is, however,
  a reordering which very convenient for the purpose of the pairing theorem;
  cf. Section~\ref{sec:Pairing}.
\end{remark}

 \begin{figure}[h]
 \centering
 \input{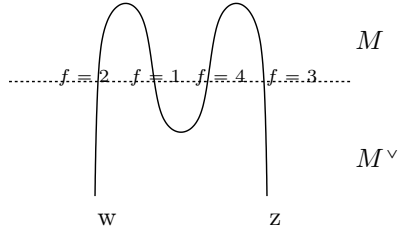}
 \caption{{\bf Labeling the orbits.} 
 The matchings $\Mdown$, $\Matching$, and the basepoint $\zpt$ order the
 Reeb orbits as indicated.}
 \label{fig:LabelOrbits}
 \end{figure}

 We will give $\Amod(\Hdown,\Matching)$ the structure of a bimodule,
 which is a right module over the idempotent ring
 $\RestrictIdempRing(n)\subset \Clg(n)$ and a left module over
 $\Ring=\Field[U,V]/U V=0$. As a left $\Ring$-module it is freely
 generated by all lower states.  The idempotent in $\Clg(n)$
 associated to a lower state is defined be
\[ \Idown(\x)=\Idemp{\alpha(\x)},\]
where $\alpha(\x)$ is as in Definition~\ref{def:UpperState}.  (Note
that this is different from the idempotent for upper states, as in
Equation~\eqref{eq:IdempOfUpper}.)  The right action of the idempotent
subalgebra of $\Clg(n)$ is specified by the condition that $\x\cdot
\Idown(\x)=\x$.

\begin{lemma}
  \label{lem:AgrDom}
  Given $\x,\y\in\States$, the quantity 
  \[ \Agr(B)=n_\zpt(B)-n_\wpt(B)+
  \sum_{i=1}^{2n} (-1)^{f(i)} \weight_i(B)\]
  is independent of the choice of $B\in\doms(\x,\y)$. 
\end{lemma}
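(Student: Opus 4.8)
The plan is to reduce the claim to the difference of any two elements of $\doms(\x,\y)$, and then to a finite generating set of such differences. First I would recall from the proof of Lemma~\ref{lem:GradingsWellDefined} (using Condition~\ref{UD:NoPerDom}, exactly as in Proposition~\ref{prop:CurvedTypeD} above) that if $B,B'\in\doms(\x,\y)$ then $B-B'$ can be written as a formal integral combination $\sum_{\{r,s\}\in\Mdown} n_{\{r,s\}}\cdot\Brs$ of the components of $\Sigma_0\setminus\betas$, together possibly with a contribution from $[\Sigma]$ (equivalently, periodic domains); for the lower diagram these components are the $\Brs$ with $\{r,s\}\in\Mdown$ and the two special components ${\mathcal B}_{\{j\}}$ containing $\wpt$ and $\zpt$ respectively. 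Hence, since $\Agr$ is visibly additive (it is $\Z$-linear in $B$), it suffices to check that $\Agr(\Brs)=0$ for each $\{r,s\}\in\Mdown$, that $\Agr$ vanishes on the difference of the two special components, and that $\Agr([\Sigma])=0$.

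Next I would compute $\Agr(\Brs)$ directly. The component $\Brs$ is bounded entirely by $\beta$-circles and contains exactly the two punctures $z_r$ and $z_s$; so $n_\zpt(\Brs)=n_\wpt(\Brs)=0$ (neither $\wpt$ nor $\zpt$ lies in $\Brs$, as those are in the special components), and $\weight_i(\Brs)=0$ for $i\notin\{r,s\}$ while $\weight_r(\Brs)=\weight_s(\Brs)=\tfrac12$ (the local multiplicity of $\Brs$ is $1$ near $Z_r$ and $Z_s$ and $0$ on the other side of each, so the boundary average is $\tfrac12$). Therefore
\[
\Agr(\Brs)=(-1)^{f(r)}\tfrac12+(-1)^{f(s)}\tfrac12 ,
\]
and the key point is that $r$ and $s$ are matched in $\Mdown$, so by construction of $f$ (Definition~\ref{def:InducedOrdering}, obtained by post-composing $f_0$ with the involution swapping $2i-1\leftrightarrow 2i$) the values $f(r)$ and $f(s)$ are consecutive integers $\{2t-1,2t\}$ for some $t$; in particular they have opposite parities, so $(-1)^{f(r)}+(-1)^{f(s)}=0$ and $\Agr(\Brs)=0$. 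The same parity bookkeeping handles the two special components: the component containing $\wpt$ has $f$-value $2n-1$ on its orbit and that containing $\zpt$ has $f$-value $2n$ (as noted after Definition~\ref{def:InducedOrdering}), and for the difference $\mathcal{B}_{\{\text{$\wpt$-orbit}\}}-\mathcal{B}_{\{\text{$\zpt$-orbit}\}}$ one gets $(-n_\wpt + 0) - (0 - n_\zpt)$ from the basepoint terms — wait, here I need to be careful, since only one basepoint lies in each special component — plus $(-1)^{2n-1}\tfrac12 - (-1)^{2n}\tfrac12$ from the weight terms; I would verify that these cancel, the basepoint contributions being designed precisely to offset the odd-index weight contribution. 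Finally $\Agr([\Sigma])$: here $n_\zpt=n_\wpt=1$ and $\weight_i([\Sigma])=1$ for all $i$, so $\Agr([\Sigma])=\sum_{i=1}^{2n}(-1)^{f(i)}$, which is $0$ since $f$ is a bijection onto $\{1,\dots,2n\}$ and there are equally many even and odd values.

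The main obstacle, I expect, is not any single computation but getting the sign and basepoint conventions for the two \emph{special} components exactly right, since $\wpt$ and $\zpt$ are not symmetric in the definition of $\Agr$ and the adjustment by the involution on $f$ is there precisely to make the odd-index weight terms cancel against the $n_\zpt - n_\wpt$ term. I would therefore carry out that case with explicit local multiplicities, reading off from Figure~\ref{fig:StandardUpperDiagram} (the standard lower diagram) the orientation of the $\beta$-circle $\beta_{g+n}$ around the basepoints, and confirm that the two special basis domains, while not individually having $\Agr=0$, differ by an amount that is itself expressible in the $\Brs$'s (or simply verifying the combination that actually arises as $B-B'$ has $\Agr=0$). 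Once all generators of the lattice $\{B-B' : B,B'\in\doms(\x,\y)\}$ are shown to have $\Agr=0$, additivity finishes the proof.
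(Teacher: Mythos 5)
Your overall strategy is the same as the paper's: the paper's proof is the one-liner ``$\Agr(B)$ vanishes on each component $B$ of $\Sigma\setminus\betas$,'' relying on Lemma~\ref{lem:GradingsWellDefined} for the reduction. So your reduction step is correct. However, the verification that $\Agr$ vanishes on the individual components contains errors that would derail the argument if carried through literally.

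First, the weight is wrong: for a component $\Brs$ of $\cSigma\setminus\betas$ one has $\weight_r(\Brs)=1$, not $\tfrac12$. The two elementary domains adjacent to $z_r$ are separated by $\alpha$-arcs, not $\beta$-curves, and both lie inside $\Brs$ (which is a full $\beta$-complement component); so both local multiplicities are $1$ and the average is $1$. This error is invisible in the $\Brs$ case (the answer is $(-1)^{f(r)}+(-1)^{f(s)}$ up to an overall factor, which vanishes either way by opposite parity), but it is fatal for the special components, where the weight term must exactly cancel the basepoint term. Also, your reason for the opposite parity is slightly off: $\{r,s\}\in\Mdown$ gives $f$-values differing by $3$, not consecutive values $\{2t-1,2t\}$ (that is what $M$-matched pairs give). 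What you should cite is Definition~\ref{def:OrbitMarking}, which states directly that $\Mdown$ matches even with odd.

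Second, and more seriously, the special-component check has to be done for each of ${\mathcal B}_{\wpt}$ and ${\mathcal B}_{\zpt}$ separately; these are linearly independent generators of the lattice of periodic domains, and nothing constrains them to appear only as a difference. Your proposed fallback (``verifying the combination that actually arises has $\Agr=0$'') is therefore vacuous. The individual checks are $\Agr({\mathcal B}_{\wpt}) = -n_{\wpt}({\mathcal B}_{\wpt}) + (-1)^{f(a)}\weight_a({\mathcal B}_{\wpt}) = -1 + (-1)^{f(a)}$ and $\Agr({\mathcal B}_{\zpt}) = n_{\zpt}({\mathcal B}_{\zpt}) + (-1)^{f(b)}\weight_b({\mathcal B}_{\zpt}) = 1 + (-1)^{f(b)}$, and both vanish precisely when the $\wpt$-orbit is \emph{even} and the $\zpt$-orbit is \emph{odd}. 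That is exactly what the post-composition of $f_0$ with the pair-swapping involution is designed to achieve: the $\wpt$-orbit has $f_0=1$, hence $f=2$, and the $\zpt$-orbit has $f_0=2n$, hence $f=2n-1$. Your choice of $f$-values $\{2n-1,2n\}$ for the two special orbits is wrong on both counts, and is not what the text says: the remark you cite assigns $2n-1$ to the $\wpt$-side only, says nothing about $\zpt$, and in any case is inconsistent with the definition of $f$ (it should read $f(i)=2$, or it should refer to $\zpt$); you should not take it at face value when it contradicts the construction and would make the lemma false.
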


\begin{proof}
  This follows from the fact that $\Agr(B)$  vanishes on 
  each component $B$
  of $\Sigma\setminus\betas$. (Compare the proof of Lemma~\ref{lem:GradingsWellDefined}.)
\end{proof}

Since $\Agr(B_1*B_2)=\Agr(B_1)+\Agr(B_2)$, there is a 
function $\Agr\colon \States\to \Q$, uniquely characterized up to 
an overall additive indeterminacy, so that
\begin{equation}
  \label{eq:DefAgrGen}
  \Agr(\x)-\Agr(\y)=\Agr(B) 
\end{equation}
for $B\in\doms(\x,\y)$.
This induces a $\Q$-valued grading on $\Amod(\Hdown,\Matching)=\bigoplus_{s\in\Q} \Amod(\Hdown,\Matching,s)$, 
with the convention that 
\begin{align*}
U\colon &\Amod(\Hdown,\Matching,s)\to \Amod(\Hdown,\Matching,s-1) \\
V\colon &\Amod(\Hdown,\Matching,s)\to \Amod(\Hdown,\Matching,s+1).
\end{align*}

Chose a generic admissible almost-complex structure for $\Hdown$.
We use this to  endow $\Amod(\Hdown,\Matching)$
with the structure of a right $\Ainfty$ module over
$\Clg(n)$, as follows.

\begin{defn}
  \label{def:CompatiblePacket}
  Fix a Heegaard state $\x$ and a sequence $\vec{a}=(a_1,\dots,a_\ell)$ of pure
  algebra elements in $\BlgZ(2n,n)$.
  A sequence of constraint packets $\rhos_1,\dots,\rhos_k$ is called
  \em{$(\x,\vec{a})$-compatible} if  there is a sequence 
  $1\leq k_1<\dots<k_\ell\leq k$ so that the following conditions hold:
  \begin{itemize}
  \item the constraint packets $\rhos_{k_i}$ are algebraic, in the sense
    of Definition~\ref{def:AlgebraicPacket}
  \item 
    $\Idown(\x)\cdot \bIn_0(\rhos_{k_1})\otimes\dots\otimes \bIn_0(\rhos_{k_\ell})=
    \Idown(\x)\cdot a_1\otimes\dots\otimes a_{\ell}$,
    as elements of $\Amod(\Hdown,\Matching)\otimes \ClgZ(n)^{\otimes \ell}$
  \item
    for each $t\not\in \{k_1,\dots,k_\ell\}$,
  the constraint packet $\rhos_t$ is either of the form $\{\orb_i\}$
  where $f(i)$ is odd; or it is of the form
  $\{\orb_i,\longchord_j\}$, where
  \begin{itemize}
    \item $f(i)$ is even
    \item $\{i,j\}\in\Matching$
    \item $\longchord_j$ is one of the two Reeb chords
      that covers $Z_j$ with multiplicity one. 
    \end{itemize}
\end{itemize}
Constraint packets $\rhos_j$ with $j\in\{k_1,\dots,k_\ell\}$ are called
{\em orbitless}.
Let $\llbracket \x,a_1,\dots,a_\ell\rrbracket$ 
be the set of all sequences of constraint packets $\rhos_1,\dots,\rhos_h$
that are $(\x,\vec{a})$-compatible.
\end{defn}

When considering boundary-monotone sequences, we can use
$\Clg(n)$ instead of $\ClgZ(n)$, according to the following:

\begin{lemma}
  \label{lem:NonZeroAlgElts}
  If $(\x,\rhos_1,\dots,\rhos_k)$ is a strongly boundary monotone sequence
  which is $\vec{a}=(a_1,\dots,a_{\ell})$-compatible for some sequence
  of algebra elements in $\BlgZ$;
  then the projection of 
  $\x\otimes a_1\otimes\dots\otimes a_\ell$ in $\Amod\otimes\Clg(n)^{\otimes \ell}$
  is non-zero.
\end{lemma}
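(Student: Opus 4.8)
\textbf{Proof proposal for Lemma~\ref{lem:NonZeroAlgElts}.}

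The plan is to argue by contradiction: suppose the projection of $\x\otimes a_1\otimes\dots\otimes a_\ell$ in $\Amod\otimes\Clg(n)^{\otimes\ell}$ vanishes. Since $\Amod(\Hdown,\Matching)$ is free on the lower states and the right action of the idempotent ring sends $\x$ to $\x\cdot\Idown(\x)$, vanishing of the projection means that at least one of the following happens: either some $\Idown(\x)\cdot a_i$ maps to zero in $\Clg(n)$ once we account for the idempotent bookkeeping along the sequence, or a pair of consecutive idempotents forced by the $a_i$'s is ``too far'' in the sense recalled before Proposition~\ref{prop:Ideal}. By Definition~\ref{def:CompatiblePacket}, the $a_i$ arise as $a_i = \Idown(\x)\cdot\bIn_0(\rhos_{k_i})$ for the algebraic orbitless packets in the $(\x,\vec a)$-compatible sequence, so the claim reduces to showing that the image of each $\bIn_0(\rhos_{k_i})$ in $\Clg(n)$, read along the boundary-monotone sequence, is nonzero. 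First I would use Proposition~\ref{prop:Ideal}, which characterizes pure algebra elements of $\BlgZ(2n,n)$ lying in the ideal ${\mathcal J}$: such an element $b=\Idemp{\x}\cdot b\cdot\Idemp{\y}$ is in ${\mathcal J}$ exactly when $\x,\y$ are too far, or there is a pair $i<j$ outside $\x\cap\y$ with all intermediate indices in $\x\cap\y$, with $\weight_t(b)\geq 1$ for $t=i+1,\dots,j$, and with $\#\{x\in\x\mid x\le i\}=\#\{y\in\y\mid y\le i\}$.

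The key step is then to translate the strong boundary monotonicity of $(\x,\rhos_1,\dots,\rhos_k)$ (Definition~\ref{def:StronglyBoundaryMonotone}, in its lower-diagram form) into the statement that the idempotents appearing at each stage are never too far, and that the forbidden ``weight $\geq 1$ between trapped indices'' configuration of Proposition~\ref{prop:Ideal} cannot occur. Concretely: strong boundary monotonicity forces the terminal $\alpha$-set to change by exactly the $\alpha$-arcs hit by $\rhos_\ell^-$ and $\rhos_\ell^+$, and each such change moves one index by at most one slot (this is the content of the condition $A_-\subset\alpha(\x,\rhos_1,\dots,\rhos_{\ell-1})$ together with the cardinality condition). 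So the idempotent states of $\x$ and of $\x$ after absorbing the orbitless packets differ coordinatewise by at most $1$ in each matched-up coordinate — i.e., they are never too far. For the second clause of Proposition~\ref{prop:Ideal}, I would observe that the algebraic condition (Definition~\ref{def:AlgebraicPacket}) on each orbitless packet $\rhos_{k_i}$ — distinct boundary components, distinct initial $\alpha$-curves, distinct terminal $\alpha$-curves — combined with boundary monotonicity, prevents any single $\bIn_0(\rhos_{k_i})$ from having weight $\geq 1$ on a run of indices that are simultaneously occupied before and after: a weight-$\geq 1$ coordinate at some $Z_t$ would mean the chord on $Z_t$ covers the whole circle, hence its endpoints sit on the same $\alpha$-arc, contradicting the disjointness required both by ``algebraic'' and by the packet being a set of mutually disjoint chords in a strongly boundary monotone sequence. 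Thus $\bIn_0(\rhos_{k_i})\notin{\mathcal J}$, so its image $\bIn(\rhos_{k_i})$ in $\Clg(n)$ is nonzero.

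Finally I would assemble these local statements into the global one: by Lemma~\ref{lem:MultUnderJuxtaposition}-type additivity of weights and idempotents under juxtaposition, the composite $\Idown(\x)\cdot\bIn_0(\rhos_{k_1})\otimes\dots\otimes\bIn_0(\rhos_{k_\ell})$ has, at each tensor slot, nonzero image in $\Clg(n)$, and the idempotent matching between consecutive slots is exactly the one dictated by the boundary-monotone sequence, which we have just shown is never ``too far''; hence no slot collapses and no adjacency relation forces the product into ${\mathcal J}\otimes\dots$. Since $\x$ itself is a free generator of $\Amod$, the tensor $\x\otimes a_1\otimes\dots\otimes a_\ell$ is nonzero in $\Amod\otimes\Clg(n)^{\otimes\ell}$, contradicting our assumption.

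The main obstacle I expect is the second clause of Proposition~\ref{prop:Ideal}: ruling out not just ``too far'' but the subtler trapped-index-with-high-weight configuration, and in particular making sure that the interaction \emph{across} different tensor factors (an index that is occupied, released, and re-occupied over several packets) cannot conspire to produce an element of ${\mathcal J}$. Handling this cleanly will require carefully tracking the terminal $\alpha$-set through the whole sequence and using the ``algebraic'' hypothesis (which segregates the orbitless packets onto distinct boundary circles and distinct $\alpha$-arcs) to control weights circle-by-circle; this is exactly the point where the definition of $\llbracket\x,a_1,\dots,a_\ell\rrbracket$ is engineered to make the argument go through, and I would lean on Example~\ref{eq:IotaOfChord} and the explicit weight formulas to pin down the inequalities.
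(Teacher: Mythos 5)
Your setup is right: you reduce to showing each algebraic packet's pure algebra element survives in $\Clg(n)$, and you correctly identify Proposition~\ref{prop:Ideal} as the tool and boundary monotonicity as the reason the idempotents are never ``too far.'' This matches the paper.

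However, your argument for ruling out the second clause of Proposition~\ref{prop:Ideal} (the trapped-index, weight $\geq 1$ configuration) has a real gap. You assert that a weight-$\geq 1$ coordinate at some $Z_t$ forces the corresponding chord to have both endpoints on the same $\alpha$-arc and that this contradicts the ``algebraic'' condition (Definition~\ref{def:AlgebraicPacket}) or the mutual disjointness of chords in a packet. Neither implication holds. First, a chord with weight $\geq 1$ need not return to its starting arc --- for example a chord of length $3/2$ on $Z_t$ goes from $\alpha_{t-1}$ to $\alpha_t$. Second, even when a single chord does have both endpoints on the same arc (weight exactly $1$, as in the chords producing the $m_2(x,U_i)$ actions), this is perfectly compatible with the algebraic condition: Definition~\ref{def:AlgebraicPacket} constrains only \emph{pairs} of \emph{distinct} chords in the packet, and the disjointness condition is pairwise as well; a single full-circle chord is allowed and in fact occurs throughout the theory. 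So the contradiction you invoke is not there. The paper's actual argument at this point is an induction along the run of trapped indices $i+1,\dots,j$: using the weight hypothesis, that $\alpha_i$ is absent from one of $\Idemp{\x_\pm}$, and boundary monotonicity, one propagates along the run to conclude that the chord $\rho_j$ on $Z_j$ is forced to have both endpoints on $\alpha_j$, whence $j$ lies in both idempotent states --- contradicting the requirement $j\notin\x\cap\y$ from Proposition~\ref{prop:Ideal}. That propagation step is the missing idea, and it is not a consequence of the pairwise constraints in Definition~\ref{def:AlgebraicPacket}.
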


\begin{proof}
  We use Proposition~\ref{prop:Ideal}.  Consider the packet at time
  $t$, $\rhos_t$, and let $\Idemp{\x_-}$ and $\Idemp{\x_+}$ be the
  idempotents immeddiately before and after. 

  We claim that $\x_-$ and $\x^+$ cannot be too far.  This follows
  from the fact that $\rhos$ contains two chords $\rho_1$ and $\rho_2$
  so that the terminal point of $\rho_1$ is the initial point of
  $\rho_2$, then the initial point of $\rho_2$ also appears in
  $I_{\x_-}$. This is immediate.

  It remains to exclude the other possibility from Proposition~\ref{prop:Ideal}.
  That can be excluded by the following reasoning. Suppose that
  $\rhos$ contains a sequence of chords $\rho_i,\dots,\rho_j$
  with the following properties:
  \begin{itemize}
  \item $\rho_t$ is supported on $\Zin_t$ for $t=i,\dots,j$
  \item either $\Idemp{\x_-}$ or $\Idemp{\x_+}$ does not contain $\alpha_i$.
  \item $\weight(\rho_t)\geq 1$ for $t=i+1,\dots,j-1$
  \end{itemize}
  A straightforward proof by induction, using boundary monotonicity,
  shows that endpoints of $\rho_j$
  are on $\alpha_j$.

  From Proposition~\ref{prop:Ideal},
  it follows that the pure algebra element 
  $\Idemp{\x_-}\cdot b_0(\rhos)\Idemp{\x_+}$ is not in ${\mathcal J}$;
  i.e. its projection to $\Blg$ is non-zero.
\end{proof}

\begin{defn}
  \label{def:UweightVweight}
  Given $(B,\rhos_1,\dots,\rhos_h)$, the {\em{$U$-weight}}
  is the quantity $\gamma$ which is be the multiplicity
  of $B$ at $\wpt$ plus the number of constraint packets among
  the $\rhos_1,\dots,\rhos_h$ consisting of a single Reeb orbit
  (or, equivalently, the total count of odd Reeb orbits appearing in the
  $\rhos_1,\dots,\rhos_h$); and the {\em{$V$-weight}}
  $\delta=\delta(B,\rhos_1,\dots,\rhos_h)$ is the
  local multiplicity of the domain $B$ at the basepoint $\zpt$.
\end{defn}

\begin{lemma}
  \label{lem:MgrDefined}
  If $\doms(\x,\y)$ is non-empty, then for $B\in\doms(\x,\y)$,
  the integer
  \begin{equation}
    \label{eq:MgrA}
    \Mgr(B)=e(B)+P(B)-\weight_\partial(B)-n_\wpt(B)-n_\zpt(B)
  \end{equation}
  is independent of $B$.
\end{lemma}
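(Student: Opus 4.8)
The statement to prove is that the integer $\Mgr(B)=e(B)+P(B)-\weight_\partial(B)-n_\wpt(B)-n_\zpt(B)$ depends only on the endpoints $\x,\y$, not on the choice of $B\in\doms(\x,\y)$. First I would settle integrality: the argument is identical to the one already given in the proof of Lemma~\ref{lem:GradingsWellDefined} (the $CW$-complex / surface-with-corners computation showing that $e(\phi)+P(\phi)$ has the right fractional part at the corners coming from $\x$ and $\y$), modified only by subtracting $\weight_\partial(B)+n_\wpt(B)+n_\zpt(B)$, which is a sum of half-integers with controlled fractional parts; in fact it is cleanest to note that $\Mgr(B)$ for a lower diagram is built from the same local pieces as on the type $D$ side, and to invoke Remark~\ref{rem:OurDomains} to recognize $e(B)-\weight_\partial(B)$ as the Euler measure $e(B_0)$ of the corresponding domain in $\Sigma_0$, so that $\Mgr(B)=e(B_0)+P(B_0)-n_\wpt(B)-n_\zpt(B)$, which is manifestly the expression whose integrality is classical (cf.~\cite{HolDisk,SarkarWhitney}).

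The core of the argument is independence of $B$. Fix $B,B'\in\doms(\x,\y)$. Then $B-B'$ is a two-chain with $\partial_\alpha=\partial_\beta=0$, hence a $\Z$-linear combination of the connected components of $\cSigma\setminus(\overline\alphas\cup\betas)$ that are in fact unions of components of $\cSigma\setminus\betas$; this uses Condition~\ref{UD:NoPerDom} exactly as in Lemma~\ref{lem:GradingsWellDefined} (the span of the $\alpha_i^c$ meets the span of the $\beta_i$ transversely). For the lower diagram there is one extra $\beta$-circle $\beta_{g+n}$, so the components of $\cSigma\setminus\betas$ are: the $n-1$ "middle" regions $\Brs$ for $\{r,s\}\in\Mdown$ each carrying two punctures $z_r,z_s$, the region $\mathcal B_{\{w\}}$ carrying $z$-type puncture together with the $w$ basepoint wait — more precisely the two special regions, one containing $\wpt$ and one containing $\zpt$. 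So it suffices to show $\Mgr$ vanishes on each such component $\mathcal D$. For a component $\mathcal D$ equal to some $\Brs$: it is closed (no $\alpha$-boundary), so $P(\mathcal D)=0$, $\weight_\partial(\mathcal D)=0$, $n_\wpt(\mathcal D)=n_\zpt(\mathcal D)=0$, and $e(\mathcal D)$ — the Euler measure of a component of $\cSigma\setminus\betas$ — must be shown to vanish. Here I would argue: by Condition~\ref{UD:TwoBoundariesApiece}/the lower-diagram analogue, each such component is a surface with exactly two punctures (the fillings of $Z_r,Z_s$) and the rest of its boundary on $\betas$; since the $\beta$-circles are homologically independent and the component is obtained by cutting along them, and since (after filling the two punctures $z_r,z_s$) each such component is a disk or an annulus glued appropriately, its Euler measure is $0$. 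Actually the cleanest route is: for the type $D$ analogue (Lemma~\ref{lem:GradingsWellDefined}) the authors noted $e(\mathcal D)=1$, $P(\mathcal D)=1$ for components of $\cSigma\setminus\betas$, contributing $2$ to $\Mgr$; but there $P$ counted corners from $\x,\y$ lying inside $\mathcal D$, and a component of $\cSigma\setminus\betas$ can contain such corners because $\x,\y$ have points on $\alpha$-curves interior to it. In our lower-diagram setting, for the component $\Brs$, I would instead directly compute $e(\Brs)+P(\Brs)-\weight_\partial(\Brs)$ and show it equals $n_\wpt(\Brs)+n_\zpt(\Brs)=0$; since $\Brs$ contains no basepoint and (for $\Brs$ with $\{r,s\}\in\Mdown$ not the special one) might still contain interior $\alpha^c$-circles and points of $\x,\y$, the identity $e(\Brs)+P(\Brs)=2$ from before combined with $\weight_\partial(\Brs)=2$ (the two punctures $z_r,z_s$ each contribute, weighted appropriately since in the filled surface the domain covers a neighborhood of each with multiplicity $1$) gives $\Mgr(\Brs)=0$. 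For the two special components $\mathcal B_{\{w\}}$ and $\mathcal B_{\{z\}}$: each contains one basepoint and one puncture (the filling of $Z_1$, say, for $\mathcal B_{\{w\}}$ in the standard picture), and a parallel bookkeeping gives $e+P-\weight_\partial = n_\wpt + n_\zpt$ on it as well, namely $2 - 1 = 1 = n_\wpt(\mathcal B_{\{w\}})+n_\zpt(\mathcal B_{\{w\}})$.

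Finally, additivity under juxtaposition: once independence of $B$ is known, the assignment $\x,\y\mapsto \Mgr(B)$ for any $B\in\doms(\x,\y)$ is well-defined, and the fact that it is additive ($\Mgr$ on $\phi*\psi$ equals $\Mgr(\phi)+\Mgr(\psi)$) follows because $e$, $P$, $\weight_\partial$, $n_\wpt$, $n_\zpt$ are each additive under $*$; for $P$ this is the additivity of point measures under juxtaposition, for which I would cite~\cite[Theorem~3.3]{SarkarWhitney} as in Proposition~\ref{prop:DefineBigradingD}. I expect the main obstacle to be the careful bookkeeping in the special-component case: correctly accounting for the contributions of $\weight_\partial$, $n_\wpt$, $n_\zpt$ on the components $\mathcal B_{\{w\}}$, $\mathcal B_{\{z\}}$ that contain exactly one boundary circle of $\Sigma_0$ together with one basepoint, and checking that the extra $\beta$-circle $\beta_{g+n}$ does not spoil the "$B-B'$ is supported on components of $\cSigma\setminus\betas$" step — i.e. verifying the lower-diagram analogue of Condition~\ref{UD:NoPerDom} holds (the $\alpha^c_i$-span meets the span of all $g+n$ $\beta$-circles transversely), which it does because a lower diagram is by definition an upper diagram with one extra $\beta$-circle and the extra circle still satisfies the independence hypothesis.
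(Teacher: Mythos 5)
Your proof is correct and follows the same route as the paper: decompose $B-B'$ into components of $\cSigma\setminus\betas$ (licensed by the same transversality argument as in Lemma~\ref{lem:GradingsWellDefined}) and check that $e+P-\weight_\partial-n_\wpt-n_\zpt$ vanishes on each such component, distinguishing the $n-1$ components $\Brs$ ($e+P=2$, $\weight_\partial=2$) from the two basepoint-carrying components ($e+P=2$, $\weight_\partial=1$, $n_\wpt+n_\zpt=1$). You briefly assert in passing that the Euler measure of a component is $0$ before correcting yourself to the right bookkeeping ($e+P=2$); the self-correction lands on the correct accounting, so the argument goes through.
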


\begin{proof}
  This follows as in Lemma~\ref{lem:GradingsWellDefined},
  with the observation that now
  $e(\cald)+P(\cald)-\weight_\partial(\cald)-n_\wpt(\cald)-n_\zpt(\cald)$ vanishes
  components $\cald$ of $\Sigma\setminus\betas$.
\end{proof}

Correspondingly, we have a function $\Mgr\colon \States\to \Z$
uniquely characterized up to an overall additive constant by the property that
\begin{equation}
  \label{eq:DefMgrGen}
  \gr(\x)-\gr(\y)=\Mgr(B) 
\end{equation}
for any $B\in\doms(\x,\y)$.

\begin{lemma}
  \label{lem:CompatWithMgr}
  Fix $\x,\y\in\States$, a sequence of pure algebra elements
  $\vec{a}=(a_1,\dots,a_\ell)$, an $(\x,\vec{a})$-compatible sequence
  of constraint packets $\rhos_1,\dots,\rhos_h$, and
  $B\in\doms(\x,\y)$.  If there is a pre-flowline $u$ whose shadow is
  $B$ and whose  packet sequence is $(\rhos_1,\dots,\rhos_\ell)$,
  then
  \[ \gr(\x)+\ell-\sum_i\weight_{i=1}^{\ell}(a_i)=\gr(\y)-\Uweight(B)-\Vweight(B)+
  \ind(B,\x,\y,\rhos_1,\dots,\rhos_h).\]
\end{lemma}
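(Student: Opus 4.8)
\textbf{Proof plan for Lemma~\ref{lem:CompatWithMgr}.}
The plan is to reduce the claimed identity to the already-established grading bookkeeping for the algebra (Equations~\eqref{eq:DefDeltaAlg} and~\eqref{eq:DefAgrAlg}, together with the definition of $\bIn_0$ on constraint packets in Section~\ref{subsec:AlgebraicConstraints}) and to the definition of $\Mgr(B)$ in Equation~\eqref{eq:MgrA}. First I would unwind both sides. The right-hand side is
$\gr(\y)-\Uweight(B)-\Vweight(B)+e(B)+n_\x(B)+n_\y(B)+h-\weight(\vec{\rhos})+\iota(\chords(\vec{\rhos}))+\sum_{o\in\orb(\vec\rhos)}(1-\weight(o))$,
using the formula~\eqref{eq:EmbIndA} for $\ind(B,\x,\y;\vec{\rhos})$; and by~\eqref{eq:DefMgrGen} the difference $\gr(\x)-\gr(\y)$ equals $\Mgr(B)=e(B)+P(B)-\weight_\partial(B)-n_\wpt(B)-n_\zpt(B)$, where $P(B)=n_\x(B)+n_\y(B)$. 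Substituting, the $e(B)$ and $n_\x(B)+n_\y(B)$ terms cancel, and the identity to be proved collapses to a purely combinatorial statement relating $\ell$, the $U$-weights $\weight_i(a_i)$ of the algebra elements, the number $h$ of packets, the total chord/orbit weight $\weight(\vec\rhos)$, the $\iota$-correction $\iota(\chords(\vec\rhos))$, the $U$- and $V$-weights $\Uweight(B),\Vweight(B)$, and the boundary weight $\weight_\partial(B)$ together with $n_\wpt(B),n_\zpt(B)$.

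Next I would verify that combinatorial identity packet-by-packet, exploiting the structure of an $(\x,\vec a)$-compatible sequence from Definition~\ref{def:CompatiblePacket}. The packets split into the $\ell$ ``orbitless'' (algebraic) packets $\rhos_{k_1},\dots,\rhos_{k_\ell}$ carrying the algebra elements $a_1,\dots,a_\ell$, and the remaining $h-\ell$ packets, each of which is either $\{\orb_i\}$ with $f(i)$ odd (contributing to $\Uweight(B)$) or $\{\orb_i,\longchord_j\}$ with $f(i)$ even and $\{i,j\}\in\Matching$. For an algebraic packet $\rhos_{k_a}$ one has, by Example~\ref{eq:IotaOfChord} applied chord-by-chord (the chords in an algebraic packet lie on distinct boundary circles, hence do not interact and $\iota$ is additive over them), $\iota(\chords(\rhos_{k_a}))=-\weight(\chords(\rhos_{k_a}))=-\weight(\rhos_{k_a})$; and since $\bIn_0(\rhos_{k_a})$ has $\weight_i$ equal to the average local multiplicity of the packet at $Z_i$, we get $\sum_i\weight_i(a_a)=\weight(\rhos_{k_a})$. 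Thus each algebraic packet contributes $\iota+\weight=0$ to the net of ``$\ell-\sum\weight_i(a_i)$'' on the left versus ``$+h-\weight(\vec\rhos)+\iota(\chords(\vec\rhos))+\sum(1-\weight(o))$'' on the right, leaving the count $\ell$ of such packets to match the ``$+1$ per packet'' from $h$ among the algebraic ones. For a $\{\orb_i\}$-packet with $f(i)$ odd, $\weight(o)=1$ (the orbit is simple, covering its circle once), so $1-\weight(o)=0$, and the packet contributes $1$ to $h$; this single unit is precisely cancelled against the single unit such a packet contributes to $\Uweight(B)$ (an odd orbit increments $\gamma$ by one in Definition~\ref{def:UweightVweight}), after accounting for the corresponding boundary-weight and basepoint-multiplicity bookkeeping of $B$. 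The $\{\orb_i,\longchord_j\}$-packets are handled similarly: the orbit contributes $1-\weight(o)=0$, the length-one chord $\longchord_j$ has $\weight(\longchord_j)=1$ and $\iota(\{\longchord_j\})=-1$ by Example~\ref{eq:IotaOfChord}, so $\iota+\weight=0$ for its chord part, and the ``$+1$ per packet'' from $h$ is matched against the local multiplicity that $B$ necessarily acquires at the matched circles $Z_i$ and $Z_j$, i.e.\ against $\weight_\partial(B)$, $n_\wpt(B)$, $n_\zpt(B)$, $\Uweight(B)$, and $\Vweight(B)$ as appropriate depending on whether $Z_i$ or $Z_j$ is the $\wpt$- or $\zpt$-carrying component.

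The main obstacle I anticipate is precisely this last matching: keeping careful track of how the local multiplicities of $B$ at the boundary circles $Z_1,\dots,Z_{2n}$ are partitioned into $\weight_\partial(B)=\sum_{\rho_i}\weight(\rho_i)$ over the chords actually appearing in $\vec{\rhos}$, versus the contributions bundled into $\Uweight(B)$ (multiplicity at $\wpt$ plus odd-orbit count) and $\Vweight(B)$ (multiplicity at $\zpt$). In particular, the multiplicity of $B$ at the $\wpt$- and $\zpt$-bearing components is \emph{not} recorded by any Reeb chord constraint (those components carry no $\alpha$-arcs in the relevant local picture), so it appears separately in $\Mgr(B)$ via the $-n_\wpt(B)-n_\zpt(B)$ terms, and one must check these are exactly accounted for by $-\Uweight(B)-\Vweight(B)$ once the odd-orbit packets have been matched. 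This is a finite, if fiddly, reconciliation: I would organize it by writing, for each boundary circle $Z_i$, the identity ``(average local multiplicity of $B$ at $Z_i$) $=$ (total weight of all chords and orbits in $\vec\rhos$ supported on $Z_i$) $+$ (possible extra multiplicity if $Z_i$ bears $\wpt$ or $\zpt$)'', which follows from compatibility of $(B,\vec\rhos)$ (the sum of weights of $\vec\rhos$ agrees with the local multiplicities of $B$ around the boundary), and then summing over $i$. With that ledger in hand the displayed equality falls out by cancellation, and the integrality of both sides is already guaranteed by Lemma~\ref{lem:MgrDefined} and Proposition~\ref{prop:ExpectedDimensionA}.
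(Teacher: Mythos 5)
Your proposal is correct and follows essentially the same route as the paper's proof: compute $\gr(\x)-\gr(\y)=\Mgr(B)$, specialize the index formula using $\iota(\chords(\rhos_i))=-\weight(\chords(\rhos_i))$ and the fact that all orbits in a compatible sequence are simple, take the difference, and reconcile $-h+\sum\weight(\chords(\rhos_i))$ against $-\ell-\#(\text{odd orbits})+\sum\weight(a_i)$ by sorting packets into orbitless, odd-orbit, and even-orbit types.

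One remark on the ``obstacle'' you flag at the end. The ledger you propose to keep, organized by boundary circle $Z_i$ with a ``possible extra multiplicity if $Z_i$ bears $\wpt$ or $\zpt$,'' isn't quite right and also isn't needed: the basepoints $\wpt,\zpt$ sit in the interior of $\Sigma_0\setminus\betas$, not on any boundary circle, so compatibility of $(B,\vec\rhos)$ gives the clean equality $\weight(\vec\rhos)=\weight_\partial(B)$ with no correction. The interior multiplicities $n_\wpt(B),n_\zpt(B)$ then enter only through $\Mgr(B)$ and the definitions $\Uweight(B)=n_\wpt(B)+\#(\text{odd orbits})$, $\Vweight(B)=n_\zpt(B)$, and cancel immediately; this is exactly the third equality in the paper's chain of displayed lines. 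So the reconciliation is more direct than your ``fiddly'' final paragraph suggests, but the structure of the argument you laid out is the right one.
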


\begin{proof}
  By the definition of $\gr$ and $\Mgr$,
  \[ \gr(\x)-\gr(\y)=e(B)+n_\x(B)+n_\y(B)-\weight_\partial(B)-n_\wpt(B)-n_\zpt(B).\]
  Since all the orbits in appearing in $\rhos_i$ have weight $1$, the index formula gives
  \[ \ind(B,\vec{\rhos_i})=e(B)+n_\x(B)+n_\y(B)+h-\weight_\partial(B)+\sum \iota(\chords(\rhos_i)).\]
  Indeed, $\iota(\chords(\rhos_i))=-\weight(\chords(\rhos_i))$, so
  taking the difference of these two equations, we find that
  \begin{align*}
    \gr(\x)-\gr(\y)-\ind(B,\vec{\rhos})&=-h-n_\wpt(B)-n_\zpt(B)+\sum_{i=1}^h \weight(\chords(\rhos_i)) \\
    &= -\ell-n_\wpt(B)-n_\zpt(B)-\#(\text{odd orbits})+\sum_{i=1}^{\ell}\weight(a_i) \\
    &=-\ell-\gamma(B)-\delta(B)+\sum_{i=1}^{\ell}\weight(a_i).
  \end{align*}
  Going from the second to the third line uses the fact that the packets containing
  an even orbit also contain a weight one chord.
\end{proof}

Fix a lower Heegaard state $\x$ and a sequence of pure
algebra elements $a_1,\dots,a_\ell$ so that 
$\Idown(\x)\cdot a_1\otimes \dots\otimes
  a_\ell\neq 0$.
Define
\begin{equation}
\label{eq:DefAction}
m_{1+\ell}(\x,a_1,\dots,a_\ell)=
\sum_{\{\y\in\States, 
(\rhos_1,\dots,\rhos_h)\in \llbracket \x, a_1,\dots,a_\ell\rrbracket\}} 
U^{\gamma}V^{\delta}\#\ModFlow(\x,\y,\rhos_1,\dots\rhos_h)\cdot \y,
\end{equation}
where $\gamma=\gamma(\rhos_1,\dots,\rhos_h)$ is the multiplicity at
$\wpt$ of the domain $B$ determined by the sequence
$\x,\y,\rhos_1,\dots,\rhos_h$ plus the number of constraint packets
among the $(\rhos_1,\dots,\rhos_h)$ consisting of a single Reeb orbit
  When
$\Idown(\x)\otimes a_1\otimes\dots\otimes a_\ell=0$, we define the
action $m_{1+\ell}(\x,a_1,\dots,a_\ell)=0$.

For example, the $\y$ coefficient of $m_1(\x)$ is computed by counting
points in 
\[ \ModFlow(\x,\y,\rhos_1,\dots,\rhos_h),\] where each
$\rhos_t$ is either an odd Reeb orbit, or it is the constraint packet
consisting of some even Reeb orbit covering some boundary component once
together with a Reeb chord that covers its matching (using $M$) boundary component
exactly once.

As another example, the $\y$ coefficient of $m_2(\x,U_2)$ is computed
by counting points in $\ModFlow(\x,\y;\rhos_1,\dots,\rhos_h)$, where
exactly one of $\rhos_t$ is Reeb chord $R_2 L_2$ or the Reeb chord
$L_2 R_2$, and all other constraints packets have an orbit or
an orbit and a chord as above.

We claim that the sum appearing in Equation~\eqref{eq:DefAction} is
finite, according to the following:

\begin{lemma}
  \label{lem:FiniteSum}
  Given $(\x,a_1,\dots,a_\ell)$ and $\y$, there are only finitely many
  homology classes $B$ of holomorphic disks that can represent
  $\ModFlow(\x,\y,\rhos_1,\dots,\rhos_h)$, where
  $\rhos_1,\dots,\rhos_h$ is $(\x,\vec{a})$-compatible,
  and for which one of $\Uweight=0$ or $\Vweight=0$.
\end{lemma}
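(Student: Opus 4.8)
The plan is to adapt the argument used to prove finiteness of the type $D$ differential (the proposition following Equation~\eqref{eq:TypeDOperation}), now keeping track of the basepoints $\wpt$ and $\zpt$. First I would fix a homology class $B_0\in\doms(\x,\y)$. As in the proof of Lemma~\ref{lem:GradingsWellDefined}, any $B\in\doms(\x,\y)$ differs from $B_0$ by a $\Z$-linear combination of the $n+1$ components of $\cSigma\setminus\betas$: the two special components $\mathcal B_{\wpt}$ and $\mathcal B_{\zpt}$ carrying the basepoints, and the domains $\Brs$ for the $n-1$ non-special pairs $\{r,s\}\in\Mdown$. Thus every $B$ of interest is of the form
\[ B = B_0 + a\,\mathcal B_{\wpt} + b\,\mathcal B_{\zpt} + \sum_{\{r,s\}\in\Mdown} c_{\{r,s\}}\,\Brs, \qquad a,b,c_{\{r,s\}}\in\Z. \]
If $B$ carries a pseudo-holomorphic flowline then all of its local multiplicities are non-negative, which, compared with the finitely many multiplicities of the fixed $B_0$, bounds each of $a$, $b$, $c_{\{r,s\}}$ from below. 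The task therefore reduces to producing upper bounds on these coefficients.

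For the upper bounds I would combine the grading constraint with the hypothesis on $\Uweight,\Vweight$. The moduli spaces entering Equation~\eqref{eq:DefAction} are rigid, so $\ind(B,\x,\y;\rhos_1,\dots,\rhos_h)$ is pinned to a fixed value, and Lemma~\ref{lem:CompatWithMgr} then gives
\[ \Uweight(B)+\Vweight(B) = \ind(B,\x,\y;\rhos_1,\dots,\rhos_h)-\gr(\x)+\gr(\y)-\ell+\textstyle\sum_{i=1}^{\ell}\weight(a_i), \]
whose right side depends only on $\x$, $\y$ and the fixed sequence $(a_1,\dots,a_\ell)$; call it $N$. Since $\Uweight(B),\Vweight(B)\ge 0$, both are at most $N$. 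Now $\Vweight(B)=n_{\zpt}(B)$ and $\Uweight(B)=n_{\wpt}(B)+\#\{\text{odd-orbit packets among }\rhos_1,\dots,\rhos_h\}$. Adding $\mathcal B_{\zpt}$ raises $n_{\zpt}(B)$ by $1$; adding $\mathcal B_{\wpt}$ raises $n_{\wpt}(B)$ by $1$; and adding $\mathcal B_{\wpt}$ or $\Brs$ raises the local multiplicity of $B$ along the boundary circle carrying the odd orbit of that region by the corresponding coefficient — which, since the orbitless packets are determined by $(a_1,\dots,a_\ell)$ and an odd orbit can occur only in a singleton packet, forces exactly that many extra odd-orbit packets into $(\rhos_1,\dots,\rhos_h)$. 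Hence $\Vweight(B)=b+(\text{const})$ and $\Uweight(B)\ge a+\sum_{\{r,s\}}c_{\{r,s\}}+(\text{const})$, with constants depending only on $B_0$ and $(a_1,\dots,a_\ell)$. If $\Vweight(B)=0$ this pins $b$ and forces $\Uweight(B)=N$, bounding $a$ and all the $c_{\{r,s\}}$ above; if $\Uweight(B)=0$ it pins $a$ and all the $c_{\{r,s\}}$, while $\Vweight(B)=N$ pins $b$. Either way only finitely many $B$ survive.

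The step I expect to be the main obstacle is the last one: showing precisely that raising the multiplicity of $B$ over a region $\Brs$ (or $\mathcal B_{\wpt}$) forces a proportional increase in the number of odd-orbit constraint packets. This rests on the combinatorial structure of $(\x,\vec{a})$-compatible packet sequences (Definition~\ref{def:CompatiblePacket}) and the constraints on allowed packets (Definition~\ref{def:Allowed}): the orbitless packets realize the fixed algebra elements $a_1,\dots,a_\ell$, so their coverage of each boundary circle is predetermined, while every odd orbit $\orb_i$ occurs only in the packet $\{\orb_i\}$; thus the count of odd-orbit packets supported on $Z_i$ equals $\weight_i(b_0(B))$ minus a predetermined amount. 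Comparing weights circle by circle, and using boundary monotonicity to control the orbitless coverage, yields the required linear estimate. With that in hand the rest is a routine transcription of the type $D$ argument.
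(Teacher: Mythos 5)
Your approach differs from the paper's in two places; the second has a genuine gap. The first divergence is benign: you bound $\Uweight+\Vweight$ via Lemma~\ref{lem:CompatWithMgr}, which requires fixing the index (at $1$, the relevant value for Equation~\eqref{eq:DefAction}), whereas the paper's proof shows, by a combinatorial propagation of weights along the arc $W(\Matching)\cup W(\Mdown_*)$ using the ordering $f$, that the \emph{difference} $\Vweight-\Uweight$ is a constant depending only on $(\x,\y,\vec{a})$, with no reference to the index. Either way, $UV=0$ then bounds $\Uweight$ and $\Vweight$.

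The gap is the inequality $\Uweight(B)\ge a+\sum_{\{r,s\}}c_{\{r,s\}}+\mathrm{const}$. Once $B$ and $\vec{a}$ are fixed the packet counts are determined: if $e_j$ is the number of even-orbit packets containing $\orb_j$ ($f(j)$ even) and $o_i$ is the number of odd-orbit packets $\{\orb_i\}$ ($f(i)$ odd), then $e_j=\weight_j(B)-\mathrm{const}$, and --- because the chord $\longchord_i$ in each packet containing $\orb_{i'}$ with $\{i,i'\}\in\Matching$ also covers $Z_i$ --- one has $o_i=\weight_i(B)-\weight_{i'}(B)-\mathrm{const}$ for the $\Matching$-partner $i'$ of $i$. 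Summing over $f(i)$ odd gives $\#\{\text{odd-orbit packets}\}=\sum_{f(i)\text{ odd}}\weight_i(B)-\sum_{f(j)\text{ even}}\weight_j(B)+\mathrm{const}$. Since a region $\Brs$ with $\{r,s\}\in\Mdown$ covers exactly one circle of each parity, adding it contributes $1-1=0$ to this sum: $\Uweight$ is completely insensitive to $\sum c_{\{r,s\}}$, and your inequality fails. The sketch you offered to close this step (``the count of odd-orbit packets supported on $Z_i$ equals $\weight_i(b_0(B))$ minus a predetermined amount'') is exactly where it goes wrong: it forgets the chord contributions of even-orbit packets at the odd circles. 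What actually bounds $\sum c_{\{r,s\}}$ is the positivity $o_i\ge 0$ propagated stepwise along the arc --- precisely the content of Equation~\eqref{eq:WeightBounds}, which yields $\weight_{f^{-1}(2i-1)}(B)\le c_i+\Uweight$ for every $i$, hence a uniform bound on all boundary weights and hence finiteness. So even after bounding $\Uweight$ you still need the paper's telescoping argument in full.
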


\begin{proof}
  Fix $\x,\y$, and let $B\in\doms(\x,\y)$ representing some
  $(\x,\vec{a})$ compatible sequence $(\rhos_1,\dots,\rhos_h)$. 
  The following bounds are immediate:
  \begin{enumerate}[label=(b-\arabic*),ref=(b-\arabic*)]
  \item
    \label{b:EvenOdd}
  For
  $\{i,j\}\in\Mdown$, the quantity $\weight_i(B)-\weight_j(B)$ is
  independent of the choice of $B$ (depending only on $\x$ and $\y$).
  \item 
    \label{b:Two}
    $\weight_{f^{-1}(2)}(B)-n_\wpt(B)$ is independent of $B$.
  \item 
  $\weight_{f^{-1}(2n-1)}(B)-n_\zpt(B)$ is independent of $B$. 
  \item while for
  $\{i,j\}\in M$ with $f(i)$ odd,
  \[\weight_i(B)-\weight_j(B)-\#(\orb_i\in \rhos_1,\dots,\rhos_h)\]
  depends only on $\x$, $\y$, and $\vec{a}$.
  \end{enumerate}
  
  Thus, for fixed $(\x,\vec{a})$ and $\y$, 
  there are constants $c_i$ 
  so that
  \begin{equation}
    c_{i} = \weight_{f^{-1}(2i-1)}(B)-n_\wpt(B)-
    \# (\orb_{k}\in\rhos_1,\dots,\rhos_h
    ~\text{with $f(k)$ odd and $f(k)\leq 2i-1$}). \label{eq:WeightBounds} 
  \end{equation}
  The case $i=n$, together with the fact that $\weight_{f^{-1}(2n-1)}(B)-n_\zpt(B)$,
  is independent of $B$ shows that
  \[ n_\zpt(B)-n_\wpt(B)-\# (\orb_{k}\in\rhos_1,\dots,\rhos_h
  ~\text{with $f(k)$ odd})=\delta-\gamma\] is
  independent of the choice of $B$.
  Since $U V=0$, we obtain an upper bound on the $U$-power $\gamma$ appearing on any term in
  $m_{1+\ell}(\x, a_1,\dots, a_\ell)$.

  Equation~\eqref{eq:WeightBounds} implies that for any odd $i$,
  $\weight_{f^{-1}(i)}(B)\leq c_i + \gamma$.  By~\ref{b:Two}
  and~\ref{b:EvenOdd}, we can conclude a similar bound for even $i$,
  as well.  Since $\gamma$ is bounded above as above, we obtain a universal
  bound on $\weight_i(B)$ for any $B$ that contributes to
  $m_{\ell}(\x,a_1,\dots,a_\ell)$. It is elementary to see that there
  are only finitely many non-negative homology classes of $B\in
  \doms(\x,\y)$ with a universal bound on $\weight_i(B)$ bounded for
  all $i$.
\end{proof}

\begin{prop}
  \label{prop:CurvedTypeA}
  The operations defined above endow $\Amod(\Hdown,\Matching)$ with
  the structure of right $\Ainfty$ module over $\Clg(n)$ (which is
  also a free left module over $\Ring$). 
\end{prop}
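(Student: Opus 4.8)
The plan is to verify the axioms of a curved $\Ainfty$ module over $\cClg(n)$ from Section~\ref{subsec:Framework}, following the scheme of~\cite{InvPair}, with the extra bookkeeping forced by Reeb orbits, the basepoints $\wpt,\zpt$, and the curvature $\mu_0=\sum_{\{i,j\}\in\Matching}U_iU_j$. First I would dispose of the preliminary points. That each $m_{1+\ell}$ is well-defined (the sum in Equation~\eqref{eq:DefAction} is finite) is Lemma~\ref{lem:FiniteSum}; Lemma~\ref{lem:NonZeroAlgElts} guarantees that the dictionary $\rhos\mapsto\bIn(\rhos)$ of Section~\ref{subsec:AlgebraicConstraints} takes values in $\Clg(n)$ and not in the ideal ${\mathcal J}$, so that no term is secretly zero. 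One also has to see that the monomial $U^{\Uweight}V^{\Vweight}$ attached to each contributing curve is a legitimate element of $\Ring=\Field[U,V]/UV$; this uses Condition~\ref{prop:CurvesNotToCoverA} (at least one of $n_\wpt(u)$, $n_\zpt(u)$ vanishes) together with the relations among $\Uweight$, $\Vweight$, and the boundary multiplicities of the shadow recorded in the proof of Lemma~\ref{lem:FiniteSum}.

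The substance is the curved $\Ainfty$ relation. Fix lower states $\x,\y$, a sequence $\vec a=(a_1,\dots,a_\ell)$ of pure algebra elements with $\Idown(\x)\cdot a_1\otimes\dots\otimes a_\ell\neq 0$, and $B\in\doms(\x,\y)$ whose local multiplicity vanishes at $\wpt$ or $\zpt$. I would form the union, over all $(\x,\vec a)$-compatible packet sequences $\vec\rhos=(\rhos_1,\dots,\rhos_h)\in\llbracket\x,a_1,\dots,a_\ell\rrbracket$ and all sources $\Source$ with $\chi(\Source)=\chiEmb(B,\vec\rhos)$, of the moduli spaces $\UnparModFlow^B(\x,\y;\Source;\vec\rhos)$ with $\ind(B,\x,\y;\vec\rhos)=2$; these are $1$-manifolds by Theorem~\ref{thm:GeneralPositionA} and are compactified by the ends listed in Theorem~\ref{thm:AEnds} (via Proposition~\ref{prop:Compactness}). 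Counting ends modulo $2$, I would match each end type with a term of the relation:
\begin{itemize}
\item two-story ends \ref{endA:2Story}, with $B=B_1\natural B_2$ and $\vec\rhos$ split at an orbitless packet $\rhos_{k_j}$ (hence $\vec a$ split as $(a_1,\dots,a_j)$ and $(a_{j+1},\dots,a_\ell)$, with $\Uweight,\Vweight$ additive), give $\sum_{j}m_{\ell-j+1}(m_{j+1}(\x,a_1,\dots,a_j),a_{j+1},\dots,a_\ell)$;
\item contained-collision ends \ref{endA:ContainedCollisions} between two orbitless packets coming from consecutive $a_i,a_{i+1}$ realize $\mu_2(a_i,a_{i+1})$ (using $U_i=L_iR_i=R_iL_i$ for the collisions that involve a non-orbitless chord packet), giving $\sum_i m_\ell(\x,a_1,\dots,\mu_2(a_i,a_{i+1}),\dots,a_\ell)$;
\item orbit-curve ends \ref{endA:Orbit}, boundary-degeneration collisions \ref{endA:BoundaryDegeneration}, and special boundary-degeneration ends \ref{endA:SpecialBoundaryDegeneration} together produce the curvature terms $\sum_i m_{\ell+2}(\x,a_1,\dots,a_{i-1},\mu_0,a_i,\dots,a_\ell)$: an even-orbit packet $\{\orb_r,\longchord_s\}$ with $\{r,s\}\in\Matching$ resolves under the orbit end to the chord packet $\{\longchord_r,\longchord_s\}$, which is $U_rU_s$; a single-orbit odd packet contributes a power of $U$ that turns into a length-one chord under the orbit end, and the residual odd-orbit and special-orbit ends pair up or assemble into the remaining summands of $\mu_0$;
\item join-curve ends \ref{endA:Join} and invisible collision ends cancel in pairs, exactly as in~\cite{InvPair}: a join chord is strongly composable and is reabsorbed by the adjacent packet, and permuting two equal orbits produces a second invisible collision end.
\end{itemize}
The evenness of the total count then yields the curved $\Ainfty$ relation for $(\x,a_1,\dots,a_\ell)$; summing over $B$ is legitimate by Lemma~\ref{lem:FiniteSum}, and the $k=0$ case recovers Equation~\eqref{eq:CurvedDiff}.

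The grading assertions are then quick. For $\Mgr$: Lemma~\ref{lem:CompatWithMgr} applied to an $\ind=1$ flowline gives $\gr(\x)+\ell-\sum_{i}\weight(a_i)=\gr(\y)-\Uweight-\Vweight+1$, and since $U,V$ each drop $\Mgr$ by one and $\Mgr(a_i)=\Delta(a_i)=-\weight(a_i)$, the coefficient $U^{\Uweight}V^{\Vweight}\y$ lies in degree $\gr(\x)+\ell-1+\sum_i\Mgr(a_i)$, which is Equation~\eqref{eq:MgrTypeA}. For $\Agr$: Lemma~\ref{lem:AgrDom} and Equation~\eqref{eq:DefAgrGen} give that the module operations preserve the Alexander grading once $U$ drops it and $V$ raises it by one, using $\AlexGr_{\{i,j\}}(a)=\weight_i(a)-\weight_j(a)$ from Equation~\eqref{eq:DefAgrAlg} together with the $(-1)^{f(i)}$ weighting that defines $\Agr$ on states.

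The step I expect to be the real obstacle is the third bullet: matching the combinatorics of orbit-curve, boundary-degeneration, and special boundary-degeneration ends against the single algebraic quantity $\mu_0=\sum_{\{r,s\}\in\Matching}U_rU_s$, and checking that the monomials $U^{\Uweight}V^{\Vweight}$ and the idempotents $I^{\pm}(\rhos)$ transform correctly under $\mu_2$ and $\mu_0$ (in particular that the $f$-induced reordering makes even and odd orbits pair as in $\Matching$). Everything else — finiteness, the join and invisible cancellations, and the reduction of compactness to Theorem~\ref{thm:AEnds} — is routine given the results already established.
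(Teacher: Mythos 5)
Your proposal follows the paper's overall strategy — enumerating the ends from Theorem~\ref{thm:AEnds} and matching each type to a term of the curved $\Ainfty$ relation — and gets the two-story and orbitless-collision contributions right. But your third bullet, which you yourself flag as the hard part, is where the accounting comes out differently from what actually works. The curvature term $\mu_0$ is produced \emph{entirely} by the orbit-curve ends of even-orbit packets $\{\orb_j,\longchord_k\}$ (with $\{j,k\}\in M$, $f(j)$ even): the end replaces $\orb_j$ by a length-one chord, leaving $\{\longchord_j,\longchord_k\}$, which is $U_jU_k$. The odd orbit-curve ends, the boundary-degeneration collisions of Type~\ref{endA:BoundaryDegeneration}, and the non-$\zpt$ special boundary-degeneration ends of Type~\ref{endA:SpecialBoundaryDegeneration} do \emph{not} feed into $\mu_0$ at all — they cancel among themselves, in pairs, after a careful comparison of the associated $U,V$-exponents across \emph{different} homology classes (removing or adding a boundary degeneration changes $B$, so one has to check the monomials agree). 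The one special boundary-degeneration end involving the $\zpt$-adjacent orbit ($f(j)=1$) contributes $0$ for a different reason: its shadow crosses $\zpt$ while the odd-orbit count already forces a factor of $U$, and $UV=0$. Your phrasing ``pair up or assemble into the remaining summands of $\mu_0$'' is ambiguous at best and wrong at worst, and this is precisely the accounting the proof needs to be explicit about.

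There is a second genuine omission: you treat only collisions between two orbitless packets (giving $\mu_2$), but the argument must also dispose of collisions where exactly one packet has an orbit, and those where both have orbits that are $\Mdown$-unmatched. These do not correspond to any $\Ainfty$ term; they must cancel in pairs by exchanging the two packets, and there is a nontrivial special case in which the exchanged packet requires a different long chord $\longchord'$ satisfying $\longchord\uplus\rho=\rho\uplus\longchord'$ (the paper illustrates this in Figure~\ref{fig:CollisionCancels}). Without this, the end count doesn't close up. Finally, your claim that the join-curve cancellation is ``exactly as in \cite{InvPair}'' is not right: there, join ends encode the algebra differential, while here $\Clg$ has none, so the join ends must cancel among themselves via the pairing $\rho=\rho_1\uplus\rho_2 \leftrightarrow \rho'=\rho_2\uplus\rho_1$ (one of $\rho_1,\rho_2$ covering only half a boundary component), which is a new argument specific to this setting.
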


\begin{proof}
  Let $\x$ be an lower Heegaard state, and $a_1,\dots,a_\ell$ be a
  sequence of algebra elements so that $\x\otimes
  a_1\otimes\dots\otimes a_\ell\neq 0$.  As usual, the $\Ainfty$
  relations are proved by looking at ends of one-dimensional moduli
  spaces. Specifically, we consider $\UnparModFlow(\x,\z,\rhos_1,\dots\rhos_h)$, where we take
  the union over all sequences of algebraic constraint packets
  $(\rhos_1,\dots,\rhos_h)\in\llbracket \x,a_1,\dots,a_\ell\rrbracket$.
  
  The condition that $\Idown(\x)\cdot a_1\otimes \dots\otimes
  a_\ell\neq 0$ ensures that the corresponding compatible constraint
  packets $(\x,\rhos_1,\dots,\rhos_h)$ are strongly boundary monotone. (See
  Lemma~\ref{lem:SBA}.)  The homology classes that have non-zero
  contribution cannot have positive multiplicity at both $\wpt$ and
  $\zpt$, since we have the relation in our algebra that $UV=0$. 
  It is easy to see that each constraint packet is 
  allowed, in the sense of Definition~\ref{def:Allowed}. 
  
  Contributions of the two-story ends (Type~\ref{endA:2Story})
  correspond to the terms 
  \[m_{\ell-i+1}(m_{i+1}(\x,a_1,\dots,a_{i}),a_{i+1},\dots,a_\ell)\]
  appearing in the $\Ainfty$ relation.

  Orbit curve ends (Type~\ref{endA:Orbit}) occur in two types. When
  the constraint packet is of the form $\{\orb_j,\longchord_k\}$ (so that
  $f(j)$ is even), its corresponding orbit curve ends corresponds to a
  term in
  \begin{equation}
    \label{eq:DiffAinf}
    m_{\ell+2}(\x,a_1,\dots,a_{i},\mu_0,a_{i+1},\dots,a_\ell).
  \end{equation}
  If the constraint packet is of the form $\{\orb_j\}$ (so that $f(j)$ is odd),
  we call the corresponding orbit curve end an {\em odd orbit curve end}.
  In the course of this proof, we will find other ends that cancel these  odd orbit curve ends.

  We turn now to visible collision ends (Type~\ref{endA:ContainedCollisions}),
  first considering visible collision ends where the two constraint packets
  $\rhos_i$ and $\rhos_{i+1}$ are orbitless. These contribution
  correspond to the terms in the $\Ainfty$ relation of the form
  \[ m_{\ell}(\x,a_1,\dots,a_{i-1},a_{i}\cdot a_{i+1},a_{i+2},\dots,a_\ell).\]

  Consider next collision ends where exactly one of $\rhos_i$ or
  $\rhos_{i+1}$ is orbitless.  In this case, we can find an
  alternative choice of $i^{th}$ and $(i+1)^{st}$ constraint packet
  $\rhos_i'$ and $\rhos_{i+1}'$, so that the corresponding ends of
  $\ModFlow(\x,\y,\rhos_1,\dots,\rhos_{i-1},\rhos_i,\rhos_{i+1},\rhos_{i+2},\dots,\rhos_\ell)$
  and
  $\ModFlow(\x,\y,\rhos_1,\dots,\rhos_{i-1},\rhos_i',\rhos_{i+1}',\rhos_{i+2},\dots,\rhos_\ell)$
  cancel. We choose $\rhos_i'=\rhos_{i+1}$ and $\rhos_{i+1}'=\rhos_i$
  except in the special case where one of $\rhos_i$ or $\rhos_{i+1}$
  (which we can assume without loss of generality is $\rhos_i$)
  consists of an even orbit $\orb_j$ and matching chord $\longchord$
  that covers $Z_k$ with multiplicity $1$, and $\rhos_{i+1}$ also
  contains some chord $\rho$ supported in $Z_k$.  In that case, there
  is a unique, possibly different long chord $\longchord'$ covering
  $Z_k$ with multiplicity one, so that $\longchord\uplus
  \rho=\rho\uplus \longchord'$.  Then,
  $\rhos_{i+1}'=\{\orb_j,\longchord'\}$, and $\rhos_{i}'=\rhos_{i+1}$;
  see Figure~\ref{fig:CollisionCancels}.  (We are cancelling here
  contributions corresponding to ends of different moduli spaces; but
  the domains $B$ each pair of moduli spaces is the same, as are the
  total number of odd orbits in the corresponding Reeb sequences; so
  the $U$ and $V$ exponents for the contributions are the same, and
  the cancellation occurs.)

 \begin{figure}[h]
 \centering
 \input{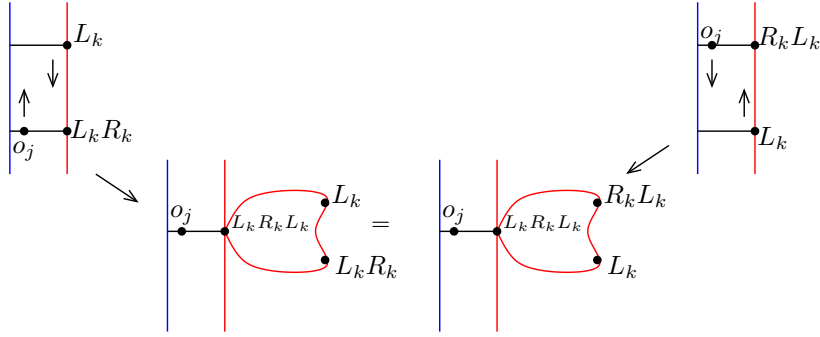}
 \caption{{\bf Cancellations of collision ends.}  
   For this collision cancellation, we use the two decompositions
   of $L_k R_k L_k$ as
   $L_k\uplus R_k L_k = L_k R_k \uplus L_k$.
   (In the picture, we assume that
   $f(k)$ and $f(j)$ are consecutive integers,
   and $f(k)$ is odd.)}
 \label{fig:CollisionCancels}
 \end{figure}

  Similar
  cancellations occur when both $\rhos_i$ and $\rhos_{i+1}$ contain
  orbits, but the orbit in $\rhos_i$ is not matched (via $\Mdown$) with the one in
  $\rhos_{i+1}$.
  
  When the orbit in $\rhos_i$ is matched with the orbit in
  $\rhos_{i+1}$ there are two kinds of collision ends: those that are
  contained (Type~\ref{endA:ContainedCollisions}), and those that are
  not (Type~\ref{endA:BoundaryDegeneration}); see
  Figure~\ref{fig:OrbitCancels}. Ends where the collision is contained
  once again cancel corresponding ends of moduli spaces where the
  order of the two packets is permuted.

  The total number of remaining (i.e. uncontained) collision ends of this
  moduli space and the one obtained by permuting $\rhos_i$ and
  $\rhos_{i+1}$ counts points in
  \[\ModFlow(\x,\z,\rhos_1,\dots,\rhos_{i-1},\sigmas,\rhos_{i+2},\dots,\rhos_\ell),\]
  where $\sigmas=\{\longchord_k\}$ is one of the two Reeb chords
  that covers the component $Z_k$ with  $f(k)$
  odd. These cancel against the odd orbit curve
  ends described above. 
  See Figure~\ref{fig:OrbitCancels}.
  Note that in this case, the homology classes $B$ and $B'$ of the curves representing the two 
  cancelling ends do not coincide: an uncontained collision end removes a boundary degeneration.
  Nonethless, the boundary degenerations considered here have $n_\wpt(v)=n_\zpt(v)=0$;
  and the total number of odd orbits remains unchanged, so the $U$ and $V$ exponents
  of the two ends coincide.

  The same cancellation occurs for ends of
  Type~\ref{endA:SpecialBoundaryDegeneration} (the ``special boundary
  degenerations''), where the even unmatched Reeb orbit (i.e. $\orb_j$
  with $f(j)=2$) is removed: it cancels with a corresponding odd
  orbit end. (See Figure~\ref{fig:SpecialCancellation}.) Cancellation
  occurs because the moduli space with the boundary degeneration end
  contributes the same $U$ and $V$ powers as the moduli space with the
  odd orbit end.  To see this, let $B$ denotes the homology class with
  the special boundary degeneration; let
  $(\rhos_1,\dots,\rhos_{\ell})$ denote the Reeb sequence for the
  moduli space containing the special boundary degeneration end; let
  $(\rhos_1,\dots,\rhos_{i-1},\sigmas,\rhos_{i+1},\dots,\rhos_{\ell})$
  be the sequence associated with the boundary degeneration end; and
  let $B'$ be the homology class where the boundary degeneration is
  removed (and the one where the corresponding odd orbit end
  occurs). Clearly, $n_\wpt(B')=n_\wpt(B)-1$, while the odd orbit end
  is associated with the sequence
  $(\rhos_1,\dots,\rhos_{i-1},\{\orb_j\},\rhos_{i+1},\dots,\rhos_\ell)$,
  which has one odd orbit more than the original sequence moduli space
  that has one additional odd orbit in its interior than
  $\rhos_1,\dots,\rhos_{\ell}$. Thus, the $U$ exponents of both moduli
  spaces coincide.  Moreover, since $n_\zpt(B)=n_\zpt(B')$, the $V$
  exponents coincide, as well.
 \begin{figure}[h]
 \centering
 \input{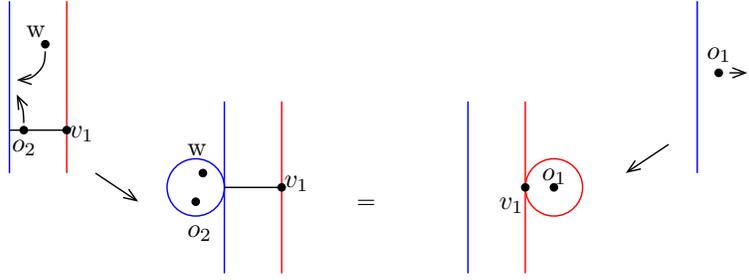}
 \caption{{\bf Special boundary degeneration cancelling 
     an odd orbit end.}  Orbits here are labelled by their $f$-values.}
 \label{fig:SpecialCancellation}
 \end{figure}

  By contrast, ends of Type~\ref{endA:SpecialBoundaryDegeneration},
  involving the other unmatched Reeb orbit $\orb_j$ (with $f(j)=1$) do not contribute.
  This is true because
  ${\mathcal B}_{\{j\}}$ also crosses the $\zpt$ basepoint, so the homology class
  contributes a multiple of $V$.
  By Equation~\eqref{eq:DefAction}, moduli spaces containing this orbit
  are counted with a multiple of $U$.  
  Since we have specialized to $UV=0$,
  this end does not contribute.  

 \begin{figure}[h]
 \centering
 \input{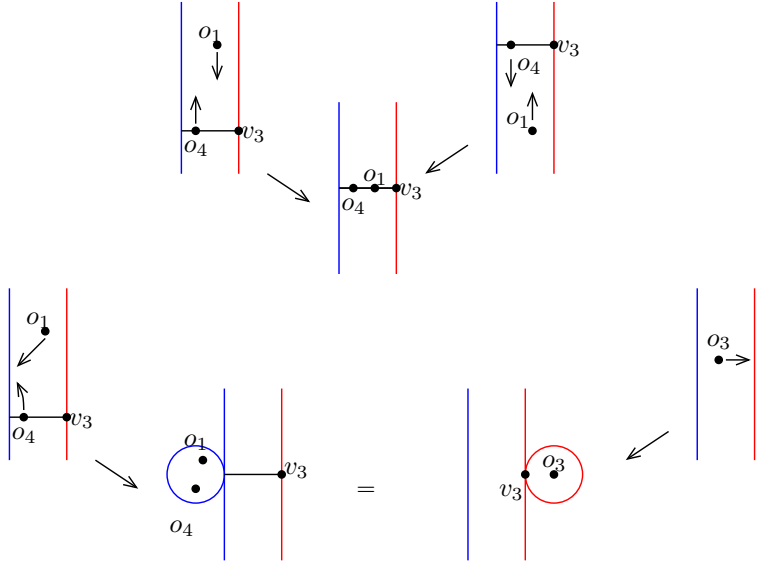}
 \caption{{\bf Cancellations when two packets contain orbits.}  The top row indicates
   cancellations of two contained collision ends.  The bottom row
   indicates the cancelation of an uncontained collision end with an
   odd orbit end. In this picture, we subscript each orbit by its $f$-value.}
 \label{fig:OrbitCancels}
 \end{figure}

  Consider next the join ends (Type~\ref{endA:Join}). We argue
  ultimately that these all count with even multiplicity, as follows.

  Suppose that  $\rhos_i$ contains some chord $\rho$ supported in
  $Z_i$, which we can split as
  $\rho=\rho_1\uplus \rho_2$. We investigate the
  corresponding join ends
  $\rhos_1,\dots,\rhos_{i-1},\sigmas,\rhos_{i+1},\dots,\rhos_\ell$,
  where
  \[\chords(\sigmas)=(\chords(\rhos_i)\setminus \{\rho\})\cup \{\rho_1,\rho_2\}.\]

  If the two endpoints of $\rho$ are distinct, then this corresponding
  join curve end cannot appear in a moduli space of boundary monotone
  curves. That is, suppose that $\rho$ has two distinct endpoints and
  it has a decomposition as $\rho=\rho_1\uplus \rho_2$. Then it is
  elementary to see that either $\rho_1^+=\rho_2^+$ or
  $\rho_1^-=\rho_2^-$; i.e. the chords $\rho_1$ and $\rho_2$ cannot
  appear in the same constraint packet, for a boundary monotone sequence.

  Consider next the case where the two endpoints of $\rho$ are the
  same, and consider there is a join curve end corresponding to a
  splitting $\rho=\rho_1\uplus \rho_2$. Such ends appear boundary
  monotone moduli spaces only when the two endpoints of $\rho_1$ (and
  hence also of $\rho_2$) are distinct. For example, we know
  $\rho_1^-=\rho_2^+$, so if $\rho_1^-=\rho_1^+$, then
  $\rho_2^+=\rho_1^+$, so $\rho_1$ and $\rho_2$ cannot appear in the
  same constraint packet.

  We are left with the case where $\rho=\rho_1\uplus\rho_2$, and the
  two endpoints of $\rho_1$ are distinct, as are the two endpoints of
  $\rho_2$.  We can then form $\rho'=\rho_2\uplus\rho_1$. 
  When $\rho_1$ or $\rho_2$ covers only half of the
  corresponding boundary component $Z_i$, this
  splitting of $\rho'$ gives rise to a join curve end in moduli space
  \[ \UnparModFlow(\rhos_1,\dots,\rhos_{i-1},\rhos_i',\rhos_{i+1},\dots,\rhos_h),\]
  where $\orbits(\rhos_i')=\orbits(\rhos_i)$ and 
  \[ \chords(\rhos_i')=(\chords(\rhos_i)\setminus \{\rho_1\uplus
  \rho_2\})\cup \{\rho_2\uplus \rho_1\}.\] Clearly,
  both  $(\rhos_1,\dots,\rhos_{i-1},\rhos_i',\rhos_{i+1},\dots\rhos_h)$ 
  is also consistent with $(\x,a_1,\dots,a_h)$.
  Moreover, this join curve end occurs with the same multiplicity as
  the corresponding join curve of
  \[
  \UnparModFlow(\rhos_1,\dots,\rhos_{i-1},\rhos_i,\rhos_{i+1},\dots,\rhos_h),\]
  corresponding to the splitting $\rho=\rho_1\uplus \rho_2$.

  We have thus set up a one-to-one correspondence between pairs of
  join curve ends of different moduli spaces that are consistent with
  the same algebra actions, so the join curve ends of the moduli
  spaces counted in the $\Ainfty$ relations cancel in pairs. 
  See
  Figure~\ref{fig:JoinsCancel} for some examples.
  \begin{figure}[h]
    \centering
    \input{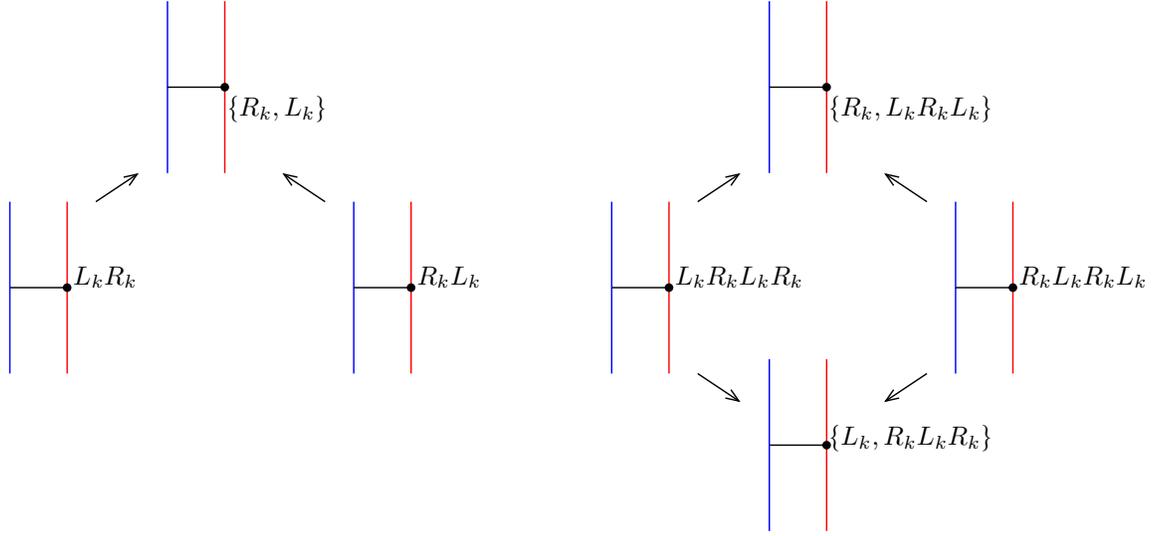}
    \caption{{\bf Cancellations of join ends.}  The two one-dimensional
      moduli spaces shown at the left (containing Reeb chords $L_k R_k$
      and $R_k L_k$) have the same ends. The two one-dimensional moduli
      spaces on the right (containing length two Reeb chords $L_k R_k L_k
      R_k$ and $R_k L_k R_k L_k$ and) each now have two different kinds
      of join curve ends; but again these two ends of the two moduli spaces
      are in one-to-one correspondence with each other.}
    \label{fig:JoinsCancel}
  \end{figure}
  (Observe that join curve ends can appear for splittings of the chord
  $(L_k R_k)^m$ or $(L_k R_k)^m$ for arbitrarily large $m$. In this
  case, the join curve at East infinity covers the cylinder with
  multiplicity $m$. Moreover, according to Theorem~\ref{thm:AEnds}
  there are only two codimension one join ends corresponding to
  splittings of $(L_k R_k)^m$; and they are
  $\{L_k,(R_k L_k)^{m-1} R_k\}$ and $\{R_k,(L_k R_k)^{m-1} L_k\}$.
  The same two ends appear when the 
  chord is replaced by $(R_k L_k)^m$.
  Figure~\ref{fig:JoinsCancel} illustrates $m=1$ and $2$.)

  This cancellation completes the verification of the $\Ainfty$ relation.
\end{proof}

We can synthesize these parts to give the following:

\begin{proof}[Proof of Theorem~\ref{thm:DefTypeA}]
  The Alexander grading was defined in Equations~\eqref{eq:DefAgrGen}
  (which is valid thanks to Lemma~\ref{lem:AgrDom}. Actions were
  defined in Equation~\eqref{eq:DefAction}, which was shown to be a
  valid definition in Lemma~\ref{lem:FiniteSum}.  The grading $\Mgr$
  was defined in Equation~\eqref{eq:DefMgrGen} (which is valid thanks to 
  Lemma~\ref{lem:MgrDefined}). The $\Ainfty$ relations were verified in 
  Proposition~\ref{prop:CurvedTypeA}. The fact that the operations
  preserve $\Agr$ is obvious from Lemma~\ref{lem:AgrDom}.
  The fact that they respect $\Mgr$ (Equation~\eqref{eq:MgrTypeA}) follows
  readily from Lemma~\ref{lem:CompatWithMgr}.
\end{proof}

\subsection{Invariance properties}
\label{subsec:VaryCx}

The following is an adaptation of~\cite[Proposition~7.19]{InvPair}:

\begin{prop}
  If $J_0$ and $J_1$ are any two generic paths of almost-complex structures,
  there is an $\Ainfty$ homotopy equivalence
  \[ \Amod(\Hdown,\Matching,J_0)\simeq \Amod(\Hdown,\Matching,J_1).\]
\end{prop}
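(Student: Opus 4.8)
The plan is to follow the standard ``continuation map'' argument from bordered Floer homology (as in \cite[Proposition~7.19]{InvPair}), adapted to the curved $\Ainfty$ setting developed above. Fix a generic path $\{J_t\}_{t\in[0,1]}$ of admissible almost-complex structures interpolating between $J_0$ and $J_1$, and build a morphism of curved $\Ainfty$ modules
\[ F\colon \Amod(\Hdown,\Matching,J_0)\to \Amod(\Hdown,\Matching,J_1) \]
by counting $J_t$-holomorphic flowlines in the one-parameter family, i.e. by setting
\[ F_{1+\ell}(\x,a_1,\dots,a_\ell)=\sum U^{\gamma}V^{\delta}\,\#\ModFlow(\x,\y;\{J_t\};\rhos_1,\dots,\rhos_h)\cdot\y, \]
where the moduli spaces are now cut out using the family $\{J_t\}$ (with $t$ an extra parameter recorded alongside $s$), and the sum runs over $\y$ and over $(\x,\vec a)$-compatible sequences of constraint packets with $\ind=0$ in the family (equivalently, index $1$ before quotienting by the family parameter). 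Finiteness of this sum follows verbatim from Lemma~\ref{lem:FiniteSum}, since the weight bounds used there do not depend on the choice of $J$.

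The first main step is to verify that $F$ satisfies the curved $\Ainfty$ morphism relations. This is done exactly as in the proof of Proposition~\ref{prop:CurvedTypeA}, by examining the ends of the one-dimensional moduli spaces $\ModFlow(\x,\z;\{J_t\};\rhos_1,\dots,\rhos_h)$ in the family: Theorem~\ref{thm:AEnds} applies with essentially no change (one adds the family parameter to the dimension count), and all the collision, join, orbit-curve, and non-special boundary-degeneration ends cancel or get absorbed into the algebra action / curvature terms just as before. The one genuinely new feature, noted already in the proof of the invariance statement for type $D$ structures: the ends of type~\ref{endA:SpecialBoundaryDegeneration} arising from a \emph{constant} flowline with $\x=\y$ do not occur here, because such a configuration would force the family-index-$1$ curve to be a constant connecting a generator to itself, which has the wrong index in the family. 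The remaining special boundary degeneration ends (where $\sigmas$ is nonempty) contribute via Proposition~\ref{prop:WestInftyEnd} and cancel the odd-orbit ends exactly as in Proposition~\ref{prop:CurvedTypeA}. The upshot is that $F$ is a morphism of curved $\Ainfty$ modules over $\cClg(n)$, and the grading bookkeeping (that $F$ preserves $\Agr$ and shifts $\Mgr$ appropriately) follows from Lemma~\ref{lem:CompatWithMgr} as in the proof of Theorem~\ref{thm:DefTypeA}.

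The second step is to show $F$ is a homotopy equivalence. For this one runs the argument one dimension up: a generic two-parameter family of paths $\{J_{s,t}\}$ connecting the constant path at $J_0$ to the composite of $\{J_t\}$ with a reverse path induces, by counting curves in the associated moduli spaces, a homotopy between $F'\circ F$ and the identity morphism (where $F'$ is the continuation map for the reverse path), and symmetrically on the other side. Equivalently, one invokes the usual three formal facts: (i) the continuation map for the constant path is (homotopic to) the identity; (ii) continuation maps compose up to homotopy; (iii) homotopic paths induce homotopic continuation maps. Each of these is proved by the same ends-of-moduli-space analysis, now for the relevant two-parameter families, with Theorem~\ref{thm:AEnds} (suitably re-indexed) controlling the ends. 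Combining (i)--(iii) gives that $F$ is invertible up to homotopy, hence an $\Ainfty$ homotopy equivalence.

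The main obstacle is bookkeeping rather than anything conceptual: one must check that every type of end enumerated in Theorem~\ref{thm:AEnds} still has an algebraic partner in the family setting, paying particular attention to the $U$- and $V$-exponents attached to each end (so that cancellations occur over $\Ring=\Field[U,V]/UV$, not just mod $2$) and to the fact that boundary-degeneration ends change the homology class $B$ while preserving $n_\wpt$, $n_\zpt$, and the parity count of odd orbits --- precisely the delicate points already handled in the proof of Proposition~\ref{prop:CurvedTypeA}. Once one is satisfied that these verifications transfer, the rest is the formal homotopy-category argument above.
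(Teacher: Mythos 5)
Your proposal matches the paper's proof in all essentials: you construct the continuation map by counting curves in a one-parameter family interpolating from $J_0$ to $J_1$, verify the curved $\Ainfty$-morphism relation via the same end analysis as in Proposition~\ref{prop:CurvedTypeA} (two-story ends giving $f\circ m + m'\circ f$, even orbit ends giving the $\mu_0$-input terms, contained collisions giving $\mu_2$, and the remaining ends cancelling as before), and establish homotopy-equivalence by the standard reverse-the-family/two-parameter-homotopy argument. Your added index remark about the constant-flowline sub-case of special boundary degeneration ends is a valid observation (directly parallel to the remark the paper makes in the type~$D$ invariance proof), though the paper's sketch does not single it out; that particular end is already killed by the $UV=0$ relation as in Proposition~\ref{prop:CurvedTypeA}, so both arguments lead to the same conclusion.
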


\begin{proof}
  This is mostly standard; so we sketch the proof.
  A morphism 
  \[ f\colon \Amod(\Hdown,\Matching,J_0)\to \Amod(\Hdown,\Matching,J_1) \]
  is constructed by counting points in moduli spaces
  $\ModFlow^B(\x,\y;\rhos_1,\dots,\rhos_h)$, where now the moduli
  spaces are taken with respect to a path of paths of almost-complex
  structures that interpolate from $J_0$ to $J_1$. The proof that this
  morphism satisfies the structure equation of an $\Ainfty$
  homomorphism is similar to the proof of
  Proposition~\ref{prop:CurvedTypeA}. 
  An analogue
  of Theorem~\ref{thm:AEnds} identifies ends of these moduli spaces,
  and cancellations are analogous to the ones in
  Proposition~\ref{prop:CurvedTypeA}: two-story ends correspond to
  terms of the form
  \[ \sum_{i=1}^{1+\ell} f_{\ell-i+1}(m_i(\x,a_1,\dots,a_{i-1}),a_i,\dots,a_\ell) + 
  m_{\ell-i+1}(f_{i}(\x,a_1,\dots,a_{i-1}),a_i,\dots,a_\ell); \]
  even orbit ends contribute terms of the form 
  \[ \sum_{i=1}^\ell
  f_{\ell+2}(\x,a_1,\dots,a_{i-1},\mu_0,a_i,\dots,a_\ell); \] while
  odd orbit ends cancel with boundary degeneration collisions. Join
  curve ends cancel in pairs, and contained collisions correspond to
  terms of the form
  \[ \sum_{i=1}^{\ell} f_{\ell}(\x,\dots,a_{i-1},\mu_2(a_i,a_{i+1}),a_{i+2},\dots,a_\ell).\]
  Adding up these contributions give the $\Ainfty$ homomorphism relation.

  The homotopy inverse $g$ is constructed by turning the one-parameter
  family of paths around, and the homotopy relation
  \[ f\circ g \simeq \Id \] is verified by considering a variation of one-parameter families.
\end{proof}

More topological invariance properties of this module can be established by adapting methods from~\cite[Section~7.3]{InvPair};
the above result is the only one necessary for the purposes of this paper.

\subsection{Examples}
\label{subsec:ExA}

Consider the algebra $\cClg(1)$, using the only matching on the single
pair of strands. 
There is an isomorphism
\begin{equation}
  \label{eq:IsoClg1}
  \Psi\colon \cClg(1)\to \Ring=\Field[U,V]/UV, 
\end{equation}
with $\Psi(U_1)=U$ and $\Psi(U_2)=V$. Let $\lsup{\Ring}[\Psi]_{\Clg(1)}$ be the associated bimodule.
Note that
$\lsup{\Ring}[\Psi]_{\Clg(1)}=\lsup{\Ring}[\Psi]_{\cClg(1)}$, as the curvature in $\cClg(1)$ vanishes.

 \begin{figure}[h]
 \centering
 \input{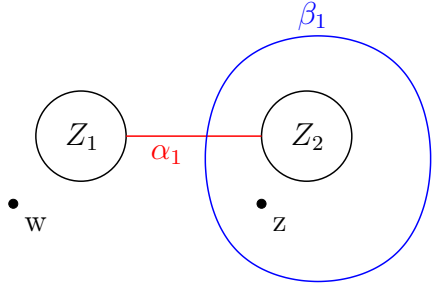}
 \caption{{\bf Standard lower diagram with $n=1$.}}
 \label{fig:SmallLower}
 \end{figure}

\begin{lemma}
  \label{lem:GlobalMinimum}
  For the standard lower diagram $\Hdown$ with two strands,
  there is an identification
  $\Amod(\Hdown)=\lsup{\Ring}[\Psi]_{\cClg(1)}$.
\end{lemma}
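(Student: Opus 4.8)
The strategy is to compute everything explicitly for the standard lower diagram $\Hdown$ with $n=1$ pictured in Figure~\ref{fig:SmallLower}. First I would identify the generators: since $g=0$, $d = g+n = 1$, and a lower Heegaard state consists of one point on the single $\beta$-circle $\beta_1$ lying on an $\alpha$-arc or $\alpha$-circle; there are no $\alpha$-circles, only the single $\alpha$-arc $\alpha_1$, so there is exactly one lower state $\x$, with $\alpha(\x) = \{1\}$. Thus $\Amod(\Hdown)$ is a free rank-one $\Ring$-module, matching the underlying module of $\lsup{\Ring}[\Psi]_{\cClg(1)}$, and the idempotent $\Idown(\x) = \Idemp{\{1\}}$ corresponds under $\Psi$ to the unit of $\Ring$. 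I would also check the grading normalizations line up, but the essential content is the identification of the $\Ainfty$ operations.

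Next I would determine all the holomorphic curves that can contribute to $m_{1+\ell}(\x, a_1,\dots,a_\ell)$. Because $d=1$, a pseudo-holomorphic flowline projects to $\Sigma$ as an honest (unbranched away from the diagonal, but here $d=1$ so no diagonal) map, so the count reduces to counting immersed disks in $\Sigma$ with the prescribed Reeb asymptotics, exactly as in the model example of Equation~\eqref{eq:Dmod}. The key point is that the diagram has only two boundary components $Z_1$ and $Z_2$, matched by both $\Matching = \{\{1,2\}\}$ and $\Mdown = \{\{1,2\}\}$; the induced ordering $f$ from Definition~\ref{def:InducedOrdering} has $f^{-1}(2) = $ (the component containing $\wpt$) $= Z_1$ and $f^{-1}(1) = Z_2$, so $\orb_1$ is even (the special even orbit) and $\orb_2$ is odd (the special odd orbit), and neither is matched with any \emph{other} orbit. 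Since a constraint packet in the type $A$ action containing an even orbit must also contain a chord covering the \emph{matching} boundary component, and here the matching partner of $Z_1$ is $Z_2$, the only orbit-carrying packets that arise in $\llbracket\x,a_1,\dots,a_\ell\rrbracket$ are $\{\orb_2\}$ (odd special) and $\{\orb_1, \longchord_2\}$ (even special, with $\longchord_2$ covering $Z_2$ once). By Theorem~\ref{thm:AEnds} and the fact that $\Hdown$ has genus zero, I would show the relevant moduli spaces of positive index are either empty or consist of a single curve wrapping a bigon around one of the two components $\Bjk = \Sigma \setminus \beta_1$; in particular $m_{1+\ell}$ vanishes for $\ell \geq 2$, and $m_1 = 0$, $m_2(\x, b) = \Psi(b)\cdot\x$. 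The reasoning here mirrors Lemma~\ref{lem:StandardTypeD}: the standard diagram is rigid enough that the only nontrivial actions are the ones forced by the algebra structure (and the curvature), so the module is precisely $\lsup{\Ring}[\Psi]_{\cClg(1)}$ as defined in the Example preceding the Lemma.

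The main obstacle, I expect, is carefully verifying that no "extra" holomorphic curves contribute — i.e.\ ruling out higher products $m_{1+\ell}$ with $\ell\ge 2$ and confirming $m_2$ is exactly multiplication by $\Psi$ on the nose (not up to homotopy). Concretely, for any $B \in \doms(\x,\x)$ with a holomorphic representative, Property~\ref{UD:NoPerDom} forces $B = n_{\{1,2\}}\cdot\mathcal{B}_{\{1,2\}}$ plus possibly copies of the special components $\mathcal{B}_{\{1\}}, \mathcal{B}_{\{2\}}$, and one must check which combinations of multiplicities are compatible with an allowed constraint-packet sequence and with the index being $1$ (after accounting for the shift $\ell - 1$). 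The grading constraints from Lemma~\ref{lem:CompatWithMgr} and the weight bookkeeping from Lemma~\ref{lem:FiniteSum} should pin this down: any nontrivial $m_{1+\ell}$ output with $\ell \geq 2$ would have to carry a mismatched $U$-$V$ power or violate the Alexander grading. Once that is verified, matching up $m_2(\x, U_1) = U\cdot\x$ and $m_2(\x, U_2) = V\cdot\x$ against $\Psi(U_1) = U$, $\Psi(U_2) = V$ completes the identification $\Amod(\Hdown) = \lsup{\Ring}[\Psi]_{\cClg(1)}$.
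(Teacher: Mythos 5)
Your plan is essentially the paper's argument, just phrased more elaborately: identify the unique generator, understand the homology classes $\doms(\x,\x)$, determine which index-one curves exist, and read off the operations. The paper gets there directly by stating that an index-one curve with shadow $k\cdot\dom_1$ (resp.\ $k\cdot\dom_2$) carries exactly one length-$k$ Reeb chord and no interior punctures, has a unique holomorphic representative, and therefore yields precisely $m_2(\x,U_1^k)=U^k\x$ and $m_2(\x,U_2^k)=V^k\x$. You instead propose to reach the same conclusion via grading constraints (Lemma~\ref{lem:CompatWithMgr}), which also works: for a sequence $a_1,\dots,a_\ell$ of $U_1$-monomials the lemma forces $\ell=\ind=1$ for a rigid count, so $m_{1+\ell}$ vanishes for $\ell\ge 2$, and domains covering both $\wpt$ and $\zpt$ are killed by $UV=0$. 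This is a legitimate, if more roundabout, route.

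A few cautions worth flagging. First, Theorem~\ref{thm:AEnds} is the wrong tool here: it describes the ends of one-dimensional moduli spaces and is used to prove the $\Ainfty$ relations, not to enumerate the rigid curves that define the operations; for the latter one needs the direct count the paper carries out. Second, your orbit-parity bookkeeping for $n=1$ has a sign ambiguity (whether $\orb_1$ is even or odd following Definition~\ref{def:InducedOrdering}), but this turns out to be immaterial: both $Z_1$ and $Z_2$ are \emph{special} orbits here (their components contain $\wpt$ and $\zpt$ respectively), $\Mdown=\emptyset$, and the relation $UV=0$ removes every mixed domain, so either labelling leads to $m_1=0$. Relatedly, your appeal to $\mathcal{B}_{\{1,2\}}$ is spurious---there is no non-special component of $\Sigma\setminus\beta_1$ when $n=1$. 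Finally, the plan still defers the one genuinely analytic input, namely that the index-one moduli space in $k\cdot\dom_1$ with a single length-$k$ chord has an odd count; the paper asserts uniqueness of the holomorphic representative, and you would need to say the same (it is the usual Riemann-mapping / polygon count when $d=1$) rather than merely citing ``the model example.''
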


\begin{proof}
  Consider the diagram pictured in Figure~\ref{fig:SmallLower}.  It
  has a unique generator $x$.  Homology classes of disks are multiples
  of the two components $\dom_1$ and $\dom_2$ of $\Sigma\setminus
  \beta_1$, labelled so that $\dom_i$ contains the boundary component
  $Z_i$. A curve representing the homology class 
  $k \cdot \dom_1$ has index one if and only if contains a single Reeb
  chord on its boundary with length $k$ (and no internal
  punctures). In that case, there is a unique holomorphic
  representative, giving rise to the action $m_2(x,U_1^k)=U^k\otimes
  x$. Arguing similarly on the other side, we find that
  $m_2(x,U_2^k)=V^k\otimes x$. This completes the verification.
\end{proof}

 \begin{figure}[h]
 \centering
 \input{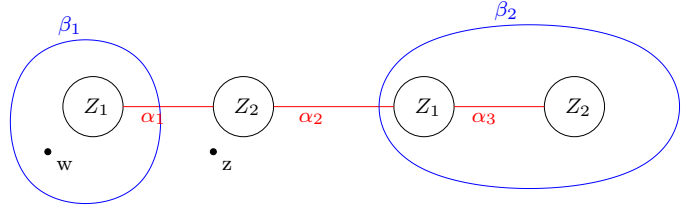}
 \caption{{\bf Standard lower diagram with $n=2$.}}
 \label{fig:Lower2}
 \end{figure}

 We consider a less trivial example; the standard lower diagram with
 $n=2$, and (compatible) matching
 $\Matching=\{\{1,4\},\{2,3\}\}$. (See Figure~\ref{fig:Lower2}.)  
 For
 simplicity, we consider the $V=0$ specialization. In this case, the
 module has a simple description, described by the following graph:

\begin{equation}
  \label{eq:MinGraph}
  \begin{tikzpicture}
    \node at (3,0) (YR2) {${\mathbf x}$} ;
    \node at (0,2) (Y) {$*$} ;
    \node at (0,-2) (X) {$*$} ;
    \draw[->] (Y) [bend left=30] to node[above,sloped] {\tiny{$U^{a+b} \otimes U_1^a U_3^{b+1}$}} (X) ;
    \draw[->] (YR2) [bend left=15] to node[below,sloped] {\tiny{$U^{a+b}\otimes 
U_1^a U_3^b R_3$}} (X) ;
    \draw[->] (X) [bend left=30] to node[above,sloped] {\tiny{$U^a\otimes U_1^a U_4$}} (Y) ;
    \draw[->] (Y) [bend left=15] to node[above,sloped] {\tiny{$U^{a+b}\otimes U_1^a U_3^{b} L_3$}} (YR2) ;
    \draw[->] (YR2) [loop right] to node[above,sloped] {\tiny{$U^{a+b}\otimes U_1^a U_3^{b}$}} (YR2);
    \end{tikzpicture}
\end{equation}
Each path from $x$ to itself (and choices of $a$ and $b$ for each edge)
gives an ${\mathcal A}_\infty$ action. The inputs are obtained by concatenating
the second tensor factor on each edge, and the output is obtained by 
multiplying together the first tensor factor in each edge. So, for example, we have
actions
\begin{align*}
  m_2(x,U_1)&= U\otimes x \\
  m_4(x,R_3,U_4,L_3)&=1\otimes x.
\end{align*}
The above two actions can be seen by looking at embedded disks in the Heegaard diagram.
Indeed, all of the other above actions are determined by the existence of the
above two actions, and the (curved) ${\mathcal A}_\infty$ structure relations. 
For example, applying the above two actions, and the ${\mathcal A}_{\infty}$
relation with inputs $R_3, U_4, L_3, U_1$, we can conclude that
\[ m_4(x,R_3, U_4, L_3 U_1)= U \otimes x.\]
Applying the ${\mathcal A}_{\infty}$ relations with input sequence
$R_3, U_4, U_1, L_3$ gives
\[ m_4(x,R_3, U_1 U_4, L_3)= U \otimes x.\]
Finally, applying the input sequence
$R_3, L_3$, we can conclude that
\[ m_2(x, U_3)= U\otimes x.\]

It is straightforward, if a little cumbersome, to generalize to the
$V$-unspecialized case; compare~\cite[Section~\ref{BK2:sec:Min}]{Bordered2}.

\section{The pairing theorem}
\label{sec:Pairing}

Start with a doubly-pointed Heegaard diagram $\HD$ for an oriented
knot $\orK$ in $S^3$, together with a decomposition along a union $Z$
of $2n$ circles along which $\HD$ decomposes as a union of a lower and
an upper diagram, $\HD=\Hdown\cup_Z \Hup$. For example, we could
consider the doubly-pointed Heegaard diagram for a knot projection as
in~\cite{AltKnots}, sliced in two along $2n$ circles corresponding to
a horizontal cut of the diagram, arranged so that the distinguished
edge is in the bottom; see for example
Figure~\ref{fig:SliceProjection}.
Letting $\Mup$ and $\Mdown$ be the matchings
induced by $\Hup$ and $\Hdown$ respectively, the hypothesis
ensures that $\Mup$ is compatible with $\Mdown$, in the sense
of Definition~\ref{def:CompatibleMatching}.

Let $C_\Ring(\HD)$ denote the Heegaard-Floer complex of the doubly-pointed diagram $\HD$:
i.e. if $C$ is the free $\Ring$-module generated by Heegaard states of $\HD$,
equipped with the differential specified by
\begin{equation}
  \label{eq:OriginalComplex}
  \partial \x =\sum_{\y\in\States(\HD)}\sum_{B\in\doms(\x,\y)}
  \# \UnparModFlow^B(\x,\y)\cdot U^{n_\wpt(B)}V^{n_\zpt(B)} \cdot  \y.
\end{equation} The homology
of this complex $C_\Ring(\HD)=(C,\partial)$, thought of as a bigraded $\Ring$-module, is the knot
invariant $\HFKsimp(K)$ discussed in the introduction.

Our aim here is to prove the following pairing theorem, describing
knot Floer homology in terms of type $D$ and type $A$ structures.

\begin{thm} 
  \label{thm:PairAwithD}
  For $\HD=\Hdown\cup_Z\Hup$ as above
  there is a quasi-isomorphism of bigraded
  chain complexes over $\Ring$
  \[\CFKsimp(\HD)\simeq \Amod(\Hdown)\DT\Dmod(\Hup).\]
\end{thm}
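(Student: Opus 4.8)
The plan is to follow the model of the pairing theorem from~\cite{InvPair}, adapted to the present curved setting. First I would set up the ``glued'' moduli space picture: starting from the Heegaard diagram $\HD=\Hdown\cup_Z\Hup$, one stretches the almost-complex structure along the separating set of circles $Z$, producing for each large neck length a correspondence between holomorphic curves in $\HD$ and pairs consisting of a curve in the lower piece and a curve in the upper piece, matched along their Reeb chord and Reeb orbit asymptotics at east/west infinity. This is the standard neck-stretching argument; the only new feature here is the presence of Reeb orbits in addition to Reeb chords, which was already handled in the setup of Sections~\ref{sec:CurvesD} and~\ref{sec:CurvesA}, together with the $\beta$-boundary degenerations, which are accounted for by the curvature term $\mu_0=\sum_{\{i,j\}\in\Matching}U_iU_j$ on both sides.

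Next I would identify the tensor product $\Amod(\Hdown)\DT\Dmod(\Hup)$ explicitly as a chain complex: its underlying $\Ring$-module is freely generated by pairs $(\x_A,\x_D)$ of a lower Heegaard state and an upper Heegaard state whose idempotents are complementary (i.e. $\Idown(\x_A)=\Iup(\x_D)$, which forces $\alpha(\x_A)\sqcup\alpha(\x_D)=\{1,\dots,2n-1\}$ in the appropriate sense), and such pairs are in natural bijection with the Heegaard states of $\HD$ via the gluing construction of Proposition~\ref{prop:ConstructDiagram}. The differential on the tensor product is $\sum_\ell (m_{1+\ell}\otimes\Id)\circ(\Id\otimes(\delta^1)^{\ell})$, and I would show term-by-term that a pair $(B_A,B_D)$ of domains in the two pieces that agree along $Z$ assembles to a domain $B$ in $\HD$ with $n_\wpt(B)=n_\wpt(B_A)$, $n_\zpt(B)=n_\zpt(B_A)$, and that the $U,V$-weights match: the $U$-weight $\gamma$ from Definition~\ref{def:UweightVweight} (multiplicity at $\wpt$ plus number of single-odd-orbit packets) is precisely the total $U$-exponent $n_\wpt(B)$ coming from the odd orbits that glue to interior punctures of $\Dmod(\Hup)$; this is exactly why the relabeling function $f$ of Definition~\ref{def:InducedOrdering} was arranged as it is. The gradings match by combining Proposition~\ref{prop:DefineBigradingD}, Lemma~\ref{lem:AgrDom}, Lemma~\ref{lem:MgrDefined}, and the Mayer--Vietoris description of the Alexander grading.

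The core analytic step is then a gluing/degeneration argument identifying the differential~\eqref{eq:OriginalComplex} on $\CFKsimp(\HD)$, for a sufficiently stretched almost-complex structure, with the tensor product differential. Counting $\UnparModFlow^B(\x,\y)$ with $\Mgr(B)=1$ (the index-one moduli spaces of the glued diagram) reduces, under neck-stretching and Gromov compactness (Proposition~\ref{prop:Compactness} and its analogue for the $D$-side), to matched pairs where the lower-diagram piece carries a constraint packet sequence compatible with the algebra element $\bOut(B_D)$ read off the upper piece, exactly as in the definitions~\eqref{eq:TypeDOperation} and~\eqref{eq:DefAction}. One checks that the boundary degenerations that appear on the $\beta$-side contribute, via Lemma~\ref{lem:BoundaryDegenerationsDegree1} and Lemma~\ref{lem:BoundaryDegenerationsDegree1A}, precisely the curvature terms that cancel between the two sides (the $\mu_0$-term in the curved $D$-structure relation against the $\mu_0$-term in the curved $\Ainfty$ relation), so that the glued count is the tensor-product differential. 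Conversely, for small neck length the count is the honest $\CFKsimp(\HD)$ differential, and a standard ``continuation'' argument (a one-parameter family of neck lengths) produces the quasi-isomorphism; in fact since the underlying modules are identified and the differentials agree for the stretched structure, one gets an isomorphism of complexes for that structure and then invokes invariance of $\CFKsimp$ under change of almost-complex structure.

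The main obstacle I expect is bookkeeping the Reeb-orbit contributions and the $\beta$-boundary degenerations correctly across the gluing: one must verify that the odd/even orbit marking induced by $f$ on the lower diagram is compatible with the matching $\Matching(\Hup)$ coming from the upper diagram, that each interior puncture of a glued curve is asymptotic to an orbit that is ``odd'' on exactly one side and gets absorbed into the $U$-power, and that the various collision, join, and orbit-curve ends enumerated in Theorems~\ref{thm:DEnds} and~\ref{thm:AEnds} glue to the ends predicted by the tensor-product differential with no spurious leftover terms. In particular the special (unmatched) orbits corresponding to $\wpt$ and $\zpt$ must be checked to produce exactly the $U$ and $V$ factors of~\eqref{eq:OriginalComplex} and nothing else, using the relation $UV=0$ to kill the configurations that would otherwise cross both basepoints. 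Apart from this combinatorial matching of asymptotics, the analytic input is entirely parallel to~\cite{InvPair}, since our hypotheses on partial diagrams exclude $\alpha$-boundary degenerations.
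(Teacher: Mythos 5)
Your outline correctly identifies the gluing/neck-stretching framework, the role of the orbit marking $f$, and the need to track the $\beta$-boundary degenerations against the curvature terms. But there is a genuine gap in the middle of the argument that the paper spends most of Section~\ref{sec:Pairing} filling, and your phrase ``the differentials agree for the stretched structure'' is where it hides.

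After stretching the neck along $Z$, what you actually obtain is the complex $(C,\partial^{(0)})$ of \emph{matched curve pairs}: a curve $u_1$ in $\Hdown$ and a curve $u_2$ in $\Hup$ whose Reeb orbit punctures are matched \emph{orbit-to-orbit}, with equal $(s,t)$-projections at an interior value $0<s<1$ (Definition~\ref{def:MatchedPair} and Theorem~\ref{thm:NeckStretch}). This is \emph{not} the tensor-product differential. In $\Amod(\Hdown)\DT\Dmod(\Hup)$, the $\Dmod$ side contributes an algebra element $\bOut(B_2)$ — a monomial in $U_i$'s and chords — and on the $\Amod$ side this is implemented by constraint packets in which each $\Hdown$-orbit is paired with a \emph{length-one Reeb chord} at $s=1$ (Definition~\ref{def:CompatiblePacket}, type $\{\orb_i,\longchord_j\}$), not with an $\Hup$-orbit at interior $s$. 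So the tensor-product differential corresponds to the \emph{self-matched} curves of Definition~\ref{def:SelfMatched}, not the matched pairs. Bridging $\partial^{(0)}$ to $\dChanged$ requires deforming the $s$-coordinate of each matched orbit from the interior to $s=1$, one equivalence class at a time, through the intermediate complexes $(C,\partial^{(k)})$ and the morphism-matched moduli spaces of Subsections~\ref{subsec:Intermediates}--\ref{subsec:Interpolate}; new ends (Types~\ref{SMCP:WX}, \ref{SMCP:XW}, the double-boundary-degeneration ends of Lemma~\ref{lem:WW}, and the $(m_-,m,m_+)$-bookkeeping of Lemma~\ref{lem:ChainMap}) appear and must cancel. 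None of this is ``standard continuation'' and it is not subsumed by the boundary-degeneration remarks you make. After that, a separate \emph{time-dilation} step (Subsection~\ref{subsec:TimeDilation}, $T\to\infty$) is still needed to reorganize the matched punctures into a sequence of levels corresponding to a sequence of algebra inputs, which is what the $\DT$ operation actually computes. Finally, your proposal does not address the $n=1$ case, where closed sphere components appear in the Gromov limit and the matched-pair picture must be replaced by the special matched pairs of Subsection~\ref{subsec:Nequals1}; this is not merely bookkeeping, since Lemma~\ref{lem:NoClosedCurves} genuinely fails there.
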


The proof will occupy the rest of this section.  Like the of the
pairing theorem from~\cite[Chapter~9]{InvPair}, we ``deformation 
the diagonal''. The key novelty here is to deform our orbits as well
as chords, as in~\cite{TorusMod}. However, the boundary degenerations
present in the current set-up cause this step to be more intricate.

As usual, the proof begins by inserting a sufficiently long neck, to
identify the Floer complex $\CFKsimp(\HD)$ with another complex which
is isomorphic to $C$ as a $\Ring$-module, and whose differential
$\partial^{(0)}$ counts points in a fibered product of moduli spaces
coming from the two sides. In our case, the fibered product is a
fibered product over $[0,1]\times \R$ (thought of as having
coordinates $(s,t)$) of moduli spaces of pseudo-holomorphic curves in
$\Hdown$ and $\Hup$.  This identification is spelled out in
Subsection~\ref{subsec:FiberedProduct}.  Note that this identification
works slightly differently when $n=1$. In this outline we will assume
$n>1$; the case where $n=1$ is handled in
Subsection~\ref{subsec:Nequals1}.

In Section~\ref{subsec:MatchedComplex}, we give an independent proof
that $\partial^{(0)}$ is indeed a differential. Although this is not
logically necessary (it follows from the identification with the
Heegaard Floer differential), elements of the proof will be used throughout.

We will compare the fibered product complex with another chain complex,
described in Subsection~\ref{subsec:SelfMatched}, that corresponds to
deforming the $s$-coordinate matchings to the boundary.  Again, the
underlying $\Ring$-module is $C$, but the differential $\dChanged$
now, instead of counting points in the fibered product, counts curves
where orbits for the curves in $\Hup$ match with long chords on the
$\Hdown$-side. These moduli spaces allow for some additional orbits on
the type $\Hdown$ side; details are given in
Subsection~\ref{subsec:SelfMatched}.

Interpolating between these two extremes is a sequence of complexes
(Subsection~\ref{subsec:Intermediates}), $\{(C,\partial^{(k)})\}_{k=0}^{2n}$,
where $\partial^{(2n)}=\dChanged$.

Isomorphisms $\Phi_k\colon (C,\partial^{(k)})\to (C,\partial^{(k+1)})$
are constructed in Section~\ref{subsec:Interpolate}, using moduli
spaces in which the $s$-matching on Reeb orbits are deformed by a
parameter $r\in (0,1)$.
Looking at ends of these moduli spaces give the desired relation
\[ \partial^{(k+1)}\circ \Phi_k + \Phi_k \circ \partial^{(k)}=0.\]
By an energy argument, these maps are isomorphisms.

Having fully deformed the $s$-parameter in the matching to obtain
$(C,\dChanged)$, we can now deform the $t$-parameter in the
matching as in~\cite[Chapter~9]{InvPair} (``time dilation''); see
Section~\ref{subsec:TimeDilation}.  We now proceed to establish these
steps.

\subsection{The Heegaard Floer differential and matched curves.}
\label{subsec:FiberedProduct}

The discussion here is closely modelled on~\cite[Section~9.1]{InvPair}.

Let $\Sigma_1$ denote the Heegaard surface for $\Hdown$ and $\Sigma_2$ denote the surface for $\Hup$.

\begin{defn}
  \label{def:MatchingPair}
  Let $\HD=\Hdown\#\Hup$.  States $\x_1\in\States(\Hdown)$,
  $\x_2\in\States(\Hup)$ are called {\em matching states} 
  if $\alpha(\x_1)$ is the complement of 
  $\alpha(\x_2)$ (i.e. $\Idown(\x_1)=\Iup(\x_2)$)
\end{defn}

There is a one-to-one correspondence between pairs of matching states
$\x_1$ and $\x_2$ and Heegaard states for $\Hdown\#\Hup$. Thus, the
generators of $\CFKsimp(\HD)$ correspond to the generators of
$\Amod(\Hdown)\otimes\Dmod(\Hup)$, where the latter tensor product is
over the idempotent subalgebra $\RestrictIdempRing(n)\subset \Clg(n)$.

In the Heegaard Floer differential for $\CFKsimp(\HD)$, the
holomorphic disk counting for the homology class $B$ is weighted by
$U^{n_\wpt(B)} V^{n_\zpt(B)}$.  Since in $\Ring$ we impose the relation
$UV=0$, it follows that the homology classes with non-zero contribution
have vanishing local multiplicity at $\wpt$ or $\zpt$.

For $x\in \alpha_i$, we write $\alpha(x)=i$. This definition will be used
both for $x\in\Sigma_1$ and $\Sigma_2$.

\begin{defn}
  \label{def:MatchingChords}
  Let $\rho_1$ be a Reeb chord in $\Hup$ and $\rho_2$ be a Reeb chord
  in $\Hdown$. We say that $\rho_1$ and $\rho_2$ are {\em matching
    chords} if the following two conditions hold:
  \begin{itemize}
  \item $\weight(\rho_1)=\weight(\rho_2)$ 
  \item $\alpha(\rho_1^-)\neq \alpha(\rho_2^-)$.
  \end{itemize}
\end{defn}

The first condition implies that if $\rho_1$ is a chord on $\Zin_i$,
then $\rho_2$ is a chord in $\Zout_i$. The second condition means that
the initial point of a chord is on $\alpha_{i-1}$ if and only if the
initial point of its matching chord is on $\alpha_i$.  It is
equivalent to replace the second condition by the condition
$\alpha(\rho_1^+)\neq \alpha(\rho_2^+)$.  Note that
chords in $\Zin_1$ or $\Zin_{2n}$ do not match with any chords in
$\Zout_1$ or $\Zout_{2n}$.

A more succinct description of the matching condition can be given
following our labeling conventions from Figure~\ref{fig:ChordNames}
(for the upper diagram) and~\ref{fig:ChordNamesA} (for the lower
diagram): with these conventions, $\rho_1$ and
$\rho_2$ are matching precisely if their labels are the
same.

\begin{defn}
  \label{def:MatchedPair}
  Suppose $n>1$.
  Fix two pairs $(\x_1,\x_2)$ and $(\y_1,\y_2)$ of matching states,
  i.e. where $\x_1,\y_1\in\States(\Hdown)$,
  $\x_2,\y_2\in\States(\Hup)$, so that $\x=\x_1\#\x_2$ and
  $\y=\y_1\#\y_2$ are Heegaard states for $\HD$.  
  A {\em matched pair} consists of the following data
  \begin{itemize}
  \item a holomorphic curve $u_1$ in $\Hdown$ with source $\Source_1$
    representing homology class $B_1\in\doms(\x_1,\y_1)$. 
  \item a holomorphic curve $u_2$ in $\Hup$ with source $\Source_2$ representing homology class $B_2\in\doms(\x_2,\y_2)$
  \item a bijection $\psi\colon \AllPunct(\Source_2)\to \AllPunct(\Source_1)$
  \end{itemize}
  with the following properties:
  \begin{itemize}
  \item For each $q\in \AllPunct(\Source_2)$ marked with a Reeb orbit
    or chord, the corresponding puncture $\psi(q)\in \AllPunct(\Source_1)$
    is marked with the matching Reeb orbit or chord.
  \item For each $q\in\AllPunct(\Source_2)$,
    \[ (s\circ u_1(\psi(q)),t\circ u_1(\psi(q)))=(s\circ u_2(q),t\circ u_2(q)).\]
  \end{itemize}
  If $B_1$ and $B_2$ induce $B\in \doms(\x,\y)$, let $\ModMatched^B(\x_1,\y_1;\x_2,\y_2;\Source_1,\Source_2;\psi)$
  denote the moduli space of matched pairs.
\end{defn}

\begin{lemma}
  \label{lem:SimpleLemma}
  Assume that $\rho_1,\sigma_1$ are Reeb chords on two different
  boundary components in $\Sigma_1$ and $\rho_2,\sigma_2$ are the
  matching Reeb chords in $\Sigma_2$.  The four numbers
  $\{\alpha(\rho_1^+), \alpha(\sigma_1^+),\alpha(\rho_2^+),
  \alpha(\sigma_2^+)\}$ are distinct if and only if the four numbers
  $\{\alpha(\rho_1^-),
  \alpha(\sigma_1^-),\alpha(\rho_2^-),\alpha(\sigma_2^-) \}$ are.
\end{lemma}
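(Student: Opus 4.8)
The statement is a small combinatorial fact about the endpoints of matching chords, and the plan is to reduce it to a ``parity'' bookkeeping on a single boundary circle. First I would set up the local picture on one boundary component $Z_i$: it meets two $\alpha$-curves $\alpha_{i-1}$ and $\alpha_i$ (the cases $i=1,2n$ being easier, since those circles meet only one arc and the relevant chords match nothing), and every Reeb chord $\rho$ on $Z_i$ is a word alternating in $L_i$ and $R_i$. By the labeling conventions of Figures~\ref{fig:ChordNames} and~\ref{fig:ChordNamesA}, and the succinct description of the matching condition recorded just before Definition~\ref{def:MatchedPair}, the chord $\rho_1$ in $\Sigma_1$ and its matching chord $\rho_2$ in $\Sigma_2$ carry the same label word, but the conventions on the two sides are ``switched''. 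Concretely, on the $D$ side $L_i$ runs $\alpha_{i-1}\to\alpha_i$ while on the $A$ side $L_i$ runs $\alpha_i\to\alpha_{i-1}$. Thus if the common label word starts with $L_i$ then $\rho_1^-\in\alpha_{i-1}$ while $\rho_2^-\in\alpha_i$; if it starts with $R_i$ the roles reverse; and the same analysis applies to the terminal points using the last letter of the word.

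The key reduction is then: for a single chord $\rho$ on $Z_i$ with matching chord $\rho'$, the unordered pair $\{\alpha(\rho^-),\alpha(\rho'^-)\}$ is always exactly $\{i-1,i\}$ (one endpoint on each), and likewise $\{\alpha(\rho^+),\alpha(\rho'^+)\}=\{i-1,i\}$. Indeed a word alternating in $L_i,R_i$ has its two ``$\Sigma_1$-endpoints'' on opposite $\alpha$-arcs precisely when the word has even length, and on the same arc when it has odd length; in the odd-length case the switched convention on the $\Sigma_2$ side places $\rho'^-$ on the opposite arc from $\rho^-$, so the pair $\{\alpha(\rho^-),\alpha(\rho'^-)\}$ is $\{i-1,i\}$ in all cases — and the identical statement holds for the $+$ endpoints. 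I would prove this by a direct inspection of the four subcases (word of even/odd length, starting with $L_i$ or $R_i$), which is routine once the conventions are pinned down.

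With that lemma in hand, the proposition is immediate. Since $\rho_1$ and $\sigma_1$ lie on \emph{different} boundary components, say $Z_i$ and $Z_j$ with $i\neq j$, the four numbers $\alpha(\rho_1^-),\alpha(\rho_2^-)$ lie in $\{i-1,i\}$ and $\alpha(\sigma_1^-),\alpha(\sigma_2^-)$ lie in $\{j-1,j\}$, and the pair from $Z_i$ is exactly $\{i-1,i\}$ while the pair from $Z_j$ is exactly $\{j-1,j\}$; similarly for the $+$ endpoints. Hence the four $-$ endpoints are distinct if and only if $\{i-1,i\}\cap\{j-1,j\}=\emptyset$, i.e. if and only if $|i-j|\geq 2$, and the four $+$ endpoints are distinct under exactly the same condition. (When $i$ or $j$ equals $1$ or $2n$ one of the matching chords does not exist, so there is nothing to check; this edge case should be noted.) I expect the only real obstacle to be a careful and unambiguous treatment of the orientation/labeling conventions on the two sides — getting the ``switch'' between Figures~\ref{fig:ChordNames} and~\ref{fig:ChordNamesA} exactly right so that ``same label word'' really does force opposite endpoints in the odd-length case; everything after that is a one-line parity argument.
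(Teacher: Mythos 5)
Your proof is correct and takes essentially the same route as the paper's: the key observation is that for a matching pair $\xi_1,\xi_2$ on a fixed boundary component $Z_i$, the unordered pairs $\{\alpha(\xi_1^-),\alpha(\xi_2^-)\}$ and $\{\alpha(\xi_1^+),\alpha(\xi_2^+)\}$ coincide (you pin them both down as $\{i-1,i\}$; the paper just records that they are equal), and the lemma is then an immediate consequence. The only cosmetic difference is that you re-derive the ``opposite arcs'' fact from the label-word convention and a parity case check, whereas it is already built into the matching condition $\alpha(\rho_1^-)\neq\alpha(\rho_2^-)$ of Definition~\ref{def:MatchingChords}.
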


\begin{proof}
  Let $\xi_1$ and $\xi_2$ be matching chords, then clearly
  \[\{\alpha(\xi_1^-),\alpha(\xi_2^-)\}=\{\alpha(\xi_1^+),\alpha(\xi_2^+)\}.\]
  The lemma follows readily.
\end{proof}

\begin{lemma}
  \label{lem:BoundaryMonotone}
  Let $(\x_1,\x_2)$ and $(\y_1,\y_2)$ be two pairs of matching states,
  and fix a matched pair $(u_1,u_2)$ connecting $\x_1\#\x_2$ to
  $\y_1\#\y_2$, with $u_1\in\ModFlow(\x_1,\y_1,\rhos_1,\dots,\rhos_m)$
  and $u_2\in\ModFlow(\x_2,\y_2,\rhos_1',\dots,\rhos_m')$ (where the
  chords in $\rhos_i$ all match, in the sense of
  Definition~\ref{def:MatchingChords} with chords in $\rhos_i'$).
  Then, both $u_1$ and
  $u_2$ are strongly boundary monotone; moreover,
  $\alpha(\x_1,\rhos_1,\dots,\rhos_\ell)$ is complementary to
  $\alpha(\x_2,\rhos_1',\dots,\rhos_\ell')$ for all $\ell=0,\dots,m$.
\end{lemma}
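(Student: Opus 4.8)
The statement is a matched-pair analogue of Lemma~\ref{lem:SBB}, and the plan is to reduce it to that lemma by a bookkeeping argument tracking the terminal $\alpha$-sets on the two sides. First I would observe that since $(u_1,u_2)$ is a matched pair, the two curves are synchronized: for each $\tau\in\R$ not equal to any puncture height, the punctures of $\Source_1$ at height $\tau$ match (in the sense of Definition~\ref{def:MatchingChords}) with the punctures of $\Source_2$ at height $\tau$, because the bijection $\psi$ preserves the $t\circ u$-values. Recall from the proof of Lemma~\ref{lem:SBB} that for a weakly boundary monotone flowline, $\alpha(u_i,\tau)$ (the set of indices $j$ with $u_i^{-1}(\alpha_j\times\{1\}\times\{\tau\})\neq\emptyset$) is locally constant in $\tau$ away from puncture heights, and that crossing a puncture labelled by a chord $\rho$ with $\rho^-$ on $\alpha_a$ and $\rho^+$ on $\alpha_b$ replaces $a$ by $b$ in this set.

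The key inductive claim is that for every $\tau$ away from puncture heights, $\alpha(u_1,\tau)$ and $\alpha(u_2,\tau)$ are complementary subsets of $\{1,\dots,2n-1\}$ (the former has $n$ elements, the latter $n-1$). For $\tau\to-\infty$ this is exactly the matching-states hypothesis $\Idown(\x_1)=\Iup(\x_2)$, i.e. $\alpha(\x_1)$ is the complement of $\alpha(\x_2)$. For the inductive step I would check that the effect of crossing a matched pair of punctures preserves complementarity: if a chord $\rho_1$ in $\rhos_i$ has endpoints on $\alpha_a$ (initial) and $\alpha_b$ (terminal), then its matching chord $\rho_2$ in $\rhos_i'$ has, by Definition~\ref{def:MatchingChords}, $\alpha(\rho_2^-)\neq\alpha(\rho_1^-)$, and since $\{\alpha(\rho_1^-),\alpha(\rho_2^-)\}=\{\alpha(\rho_1^+),\alpha(\rho_2^+)\}$ (both chords cover the same boundary circle $Z_i$, which meets exactly $\alpha_{i-1}$ and $\alpha_i$), the pair $\{a,b\}$ on the $\Hdown$-side and $\{a',b'\}$ on the $\Hup$-side are related by $a'=b$, $b'=a$. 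So on the $\Hdown$-side $a$ leaves and $b$ enters, while on the $\Hup$-side $b$ leaves and $a$ enters; complementarity is preserved. (When several chords in $\rhos_i$ act simultaneously one uses Lemma~\ref{lem:SimpleLemma} to see that the various indices moved are distinct, so the bookkeeping is consistent.) For the orbit punctures there is nothing to move, so complementarity is trivially preserved. This establishes that $\alpha(\x_1,\rhos_1,\dots,\rhos_\ell)$ is complementary to $\alpha(\x_2,\rhos_1',\dots,\rhos_\ell')$ for all $\ell$, which is the second assertion.

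Given complementarity at every level, strong boundary monotonicity of each $u_i$ follows from Lemma~\ref{lem:SBB} once I verify that $(\x_1,\vec{\rhos})$ and $(\x_2,\vec{\rhos}')$ are strongly boundary monotone sequences in the combinatorial sense of Definition~\ref{def:StronglyBoundaryMonotone}; but since $u_1,u_2$ are already assumed weakly boundary monotone (being flowlines) and their combined $\alpha$-sets always have the right cardinalities $n$ and $n-1$ and are disjoint, neither side can have two punctures of $\rhos_i$ (resp. $\rhos_i'$) with initial points on the same $\alpha$-arc without forcing the other side to lose complementarity or the cardinality count to fail. Concretely I would argue: if $u_2$ failed strong boundary monotonicity, then by Lemma~\ref{lem:SBB} there is some $\tau$ with $|u_2^{-1}(\alpha_j\times\{1\}\times\{\tau\})|\geq 2$ for some $j$; then at nearby regular $\tau'$ either just below or just above, $\alpha(u_2,\tau')$ has fewer than $n-1$ elements counted with multiplicity collapsing, contradicting that $\alpha(u_2,\tau')$ (as a set) is the complement of the $n$-element set $\alpha(u_1,\tau')$. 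The main obstacle is making this last cardinality/multiplicity argument airtight when several Reeb chords act at once on the same $\alpha$-arc: here I would lean on the already-established synchronization and on Lemma~\ref{lem:SimpleLemma} to reduce to the single-chord case, exactly as in the proof of Lemma~\ref{lem:SBB}, and then invoke that lemma directly on each side.
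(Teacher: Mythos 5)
Your approach is in the same spirit as the paper's: induction on the number of levels crossed, using Lemma~\ref{lem:SimpleLemma} to handle simultaneous chords, and tracking the occupied $\alpha$-sets on the two sides. But there is one concrete gap in your inductive step, and a related structural problem in how you separate the two conclusions.

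In the inductive step you argue that a matched chord pair effects a clean ``swap'': on the $\Hdown$-side $a=\alpha(\rho_1^-)$ leaves and $b=\alpha(\rho_1^+)$ enters, and vice versa on the $\Hup$-side, so the two sets stay complementary. But for this to be a swap and not a merge, you need to know that $b$ is \emph{not already} an element of $\alpha(u_1,\tau_0^-)\setminus\{a\}$; if it were, the $\Hdown$-side set would drop in cardinality after crossing $\tau_0$, and the complementarity claim would fail. You never verify this. The paper's proof handles exactly this point: it shows $A_+\cap\bigl(\alpha(\x_1,\rhos_1,\dots,\rhos_{i-1})\setminus A_-\bigr)=\emptyset$ by observing that for a chord $\rho_1$ with $\rho_1^+\neq\rho_1^-$, the matching condition gives $\alpha(\rho_1^+)=\alpha(\rho_2^-)$, so $b$ lies in $\alpha(\x_2,\rhos_1',\dots,\rhos_{i-1}')$ by continuity, and therefore cannot also lie in $\alpha(\x_1,\rhos_1,\dots,\rhos_{i-1})$ by the inductive disjointness hypothesis. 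This argument is not in your proposal and is not trivially recoverable from what you wrote.

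The deeper structural issue is that you try to prove complementarity first and then deduce strong boundary monotonicity by contradiction in a separate pass. But your statement of the complementarity claim (``the former has $n$ elements, the latter $n-1$'') already \emph{is} the statement that neither curve collapses two points onto the same $\alpha$-arc at the relevant heights, i.e.\ strong boundary monotonicity of the combinatorial sequence up to that level. These cannot be proved sequentially; they must be carried together as a joint induction, exactly as the paper does (it proves strong boundary monotonicity and disjointness simultaneously by induction on $\ell$). Your final appeal to Lemma~\ref{lem:SBB} does not break the circularity: that lemma only translates between the analytic and combinatorial formulations of strong boundary monotonicity, it does not supply either one.
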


\begin{proof}
  We prove the following by induction on $\ell$:
  \begin{enumerate}
  \item \label{item:SBM} $(\x_1,\rhos_1,\dots,\rhos_\ell)$ and
    $(\x_2,\rhos_1',\dots,\rhos_\ell')$ are strongly boundary monotone
  \item \label{item:Disjoint} $\alpha(\x_1,\rhos_1,\dots,\rhos_\ell)$ and 
    $\alpha(\x_2,\rhos_1',\dots,\rhos_\ell')$ are disjoint.
  \end{enumerate}
  The case where $\ell=0$ is simply the hypothesis that $\x_1$ and
  $\x_2$ are matching states. Given a subset ${\mathfrak s}$,
  let \[\alpha({\mathfrak s})=\{k\in 1,\dots,2n-1\big| \alpha_k\cap
  {\mathfrak s}\neq \emptyset\}.\] Recall that $\rhos^-_i$ denotes the
  set of initial points of all the chords in $\rhos_i$.  Let
  $A_-=\alpha(\rhos_i^-)$ and $A_+=\alpha(\rhos_i^+)$.  Continuity
  ensures that $A_-\subset \alpha(\x_1,\rhos_1,\dots,\rhos_{\ell-1})$
  and $B_-\subset \alpha(\x_2,\rhos'_1,\dots,\rhos'_{\ell-1})$.  
  Weak boundary monotonicity implies that $|A_-|=|\rhos_i^-|$ and $|B_-|=|{\rhos_i'}^-|$.
  By
  the induction hypothesis, $A_-$ and $B_-$ are disjoint.  From
  Lemma~\ref{lem:SimpleLemma}, it follows that
  \[ |A_-|=|\rhos_i^-|, |B_-|=|{\rhos_i'}^-|, |A_-\cap B_-|=\emptyset 
  \Rightarrow
  |A_+|=|\rhos_i^+|, |B_+|=|{\rhos_i'}^+|, |A_+\cap B_+|=\emptyset. \]
  In particular, we have verified that $A_+$ and $B_+$ are disjoint.
  It follows at once that 
  \[ \alpha(\x_1,\rhos_1,\dots,\rhos_i)=(\alpha(\x,\rhos_1,\dots,\rhos_{\ell-1})\setminus A_-)\cup A_+ \]
  is disjoint
  from 
  \[ \alpha(\x_2,\rhos_1',\dots,\rhos_i')=(\alpha(\x_2,\rhos_1',\dots,\rhos_{\ell-1}')\setminus B_-)\cup B_+, \]
  verifying Property~\eqref{item:Disjoint} for the inductive step.

  Since $|A_+|=|A_-|$, it follows that
  \[
  |\alpha(\x_1,\rhos_1,\dots,\rhos_i)|=|\alpha(\x_1,\rhos_1,\dots,\rhos_{i-1})|-|A_+\cap
  (\alpha(\x_1,\rhos_1,\dots,\rhos_{i-1})\setminus A_-)|. \] We show that 
  $A_+\cap (\alpha(\x_1,\rhos_1,\dots,\rhos_{i-1})\setminus A_-)$ is empty; for if it were 
  non-empty, there would be some chord $\rho_1\in \rhos_i$ with
  $\rho_1^+\neq \rho_1^-$, and $\alpha(\rho_1^+)\in
  \alpha(\x_1,\rhos_1,\dots,\rhos_{i-1})$. But for any chord with
  $\rho_1^+\neq \rho_1^-$, the matching chord $\rho_2$ satisfies
  $\alpha(\rho_1^+)=\alpha(\rho_2^-)$; so $\alpha(\rho_2^-)\in
  \alpha(\x_2,\rhos'_1,\dots,\rhos'_{i-1})$. But $\alpha(\rho_2^-)$ is
  contained in both $\alpha(\x_1,\rhos_1,\dots,\rhos_{i-1})$ and
  $\alpha(\x_2,\rhos_1',\dots,\rhos_{i-1}')$, violating an inductive
  hypothesis. We conclude that $|A_+\cap
  (\alpha(\x_1,\rhos_1,\dots,\rhos_{i-1})\setminus A_-|=0$, so
  \[ |\alpha(\x_1,\rhos_1,\dots,\rhos_i)|=|\alpha(\x_1,\rhos_1,\dots,\rhos_{i-1})|. \]
  A symmetric argument shows that
  \[ |\alpha(\x_2,\rhos'_1,\dots,\rhos'_i)|=|\alpha(\x_2,\rhos_1',\dots,\rhos_{i-1}')|, \]
  verifying Property~\eqref{item:SBM} for the inductive step.
\end{proof}

Recall  that
the expected dimensions of the moduli spaces $\ModFlow^{B_i}(\x_i,\y_i;\Source_i)$ for $i=1,2$ are given by the indices
\begin{align*}
  \ind(B_1,\Source_1)&=g_1+n + 2e(B_1)-\chi(\Source_1) +  2 o_1
+ c_1-2 \weight_1 \\
  \ind(B_2,\Source_2)&=g_2+n-1 + 2e(B_2)-\chi(\Source_2) +  2 o_2
+ c_2 -2 \weight_2,
\end{align*}
where $o_i$ denotes the number of interior punctures of $\Source_i$,
$c_i$ denotes the number of boundary punctures of $\Source_i$, and
$\weight_i$ denotes the total weight of $B_i$ at the boundary.
(In comparing this formula with, for example, \cite[Equation~9.8]{InvPair},
note that the convention on the Euler measures here are different from the ones used in~\cite{InvPair}; c.f. Remark~\ref{rem:EulerMeasures}.)

\begin{defn}
  The {\em index} of a matched pair
  is defined by the formula
  \begin{align}
    \label{eq:DefIndMatchedPair}
    \ind(B_1,\Source_1;B_2,\Source_2)&=
    \ind(B_1,\Source_1)+\ind(B_2,\Source_2)-c-2o\\
    &=g+2e(B_1)+2e(B_2)-\chi(\Source_1)-\chi(\Source_2)
    + 2o+c-4\weight,
    \nonumber
  \end{align}
  where
  $g=g_1+g_2+2n-1$ is the genus of $\Hup\#\Hdown$, 
  $c=c_1=c_2$, 
  $o=o_1=o_2$, and $\weight=\weight_1=\weight_2$.
\end{defn}

We can think of the moduli space
$\ModMatched^B(\x_1,\y_1;\x_2,\y_2;\Source_1,\Source_2;\psi)$ as a
fibered product
\[ \ModMatched^B(\x_1,\y_1;\x_2,\y_2;\Source_1,\Source_2;\psi)
  = \ModFlow^{B_1}(\x_1,\y_1,\Source_1)\times_\ev \ModFlow^{B_2}(\x_2,\y_2,\Source_2)\]
  where $\times_\ev$ denotes the fibered product over the evaluation maps
  at the punctures
  \[
  \ev_1\colon \ModFlow^{B_1}(\x_1,\y_1,\Source_1)\to ([0,1]\times
  \R)^k \qquad \ev_2\colon \ModFlow^{B_2}(\x_2,\y_2,\Source_2)\to
  ([0,1]\times \R)^k,\] where
  $k=|\AllPunct(\Source_1)|=|\AllPunct(\Source_2)|=c+o$.  Then, the index of the
  moduli space is the expected dimension of the moduli space of
  matched pairs inherited from its description as a fibered product
  over $([0,1]\times \R^k)$.

The moduli space $\ModMatched(\x_1,\y_1;\x_2,\y_2;\Source_1,\Source_2;\psi)$ comes with an $\R$ action
which is free except in the special case where both sides consist of trivial strips.
The quotient is denoted by
\[ \UnparModMatched(\x_1,\y_1;\x_2,\y_2;\Source_1,\Source_2;\psi)
=  \ModMatched(\x_1,\y_1;\x_2,\y_2;\Source_1,\Source_2;\psi)/\R. \]

\begin{lemma} 
  \label{lem:Transversality}
  Fix $B_1\in\doms(\x_1,\y_1)$ and $B_2\in\doms(\x_2,\y_2)$ so that
  $\weight_i(B_1)=\weight_i(B_2)$ for $i=1,\dots,2n$, and at least one of
  $n_{\wpt}(B_1)$ or $n_{\zpt}(B_1)$ vanishes.
  For generic admissible almost complex
  structures on $\Sigma_i\times [0,1]\times \R$, and
  $\ind(B_1,\Source_1;B_2,\Source_2)\leq 2$, the moduli space of
  matched pairs
  \[ \ModMatched^B(\x_1,\y_1;\x_2,\y_2;\Source_1,\Source_2;\psi)\] is
  transversely cut out by the $\dbar$-equation and the evaluation map;
  in particular, this moduli space is a manifold whose dimension is
  given by Equation~\eqref{eq:DefIndMatchedPair}.  
\end{lemma}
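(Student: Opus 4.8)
The plan is to reduce the transversality statement for matched pairs to the transversality statements already established for the two sides (Theorem~\ref{thm:GeneralPositionA} for $\Hdown$, Theorem~\ref{thm:GeneralPosition} for $\Hup$) together with standard transversality for fibered products of evaluation maps. First I would fix the combinatorial data: by Lemma~\ref{lem:BoundaryMonotone}, the hypotheses force $(\x_1,\vec\rhos)$ and $(\x_2,\vec\rhos')$ to be strongly boundary monotone, so the moduli spaces $\ModFlow^{B_1}(\x_1,\y_1;\Source_1)$ and $\ModFlow^{B_2}(\x_2,\y_2;\Source_2)$ each satisfy the hypotheses of the relevant general position theorem. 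Since $\ind(B_1,\Source_1;B_2,\Source_2)\le 2$ and the index of each side is at most this quantity plus $c+2o\ge 0$ is not automatic, I would first note that a nonempty matched moduli space forces $\ind(B_i,\Source_i)\le \ind(B_1,\Source_1;B_2,\Source_2)+c+2o$; one needs instead the observation (used already in the proof of Theorem~\ref{thm:GeneralPositionA}) that the $t$-evaluation maps are submersions, so one does not need a bound on the individual indices, only genericity. So the argument is: for generic $J_i$ on $\Sigma_i\times[0,1]\times\R$, the moduli space $\ModFlow^{B_i}(\x_i,\y_i;\Source_i)$ (with the order of punctures unspecified) is cut out transversely by $\dbar$, and the evaluation map $\ev_i\colon \ModFlow^{B_i}(\x_i,\y_i;\Source_i)\to([0,1]\times\R)^k$ at all $k=c+o$ punctures is a submersion.

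Next I would invoke a Sard--Smale argument to make the pair of evaluation maps mutually transverse. Concretely, $\ModMatched^B=\ModFlow^{B_1}\times_{\ev}\ModFlow^{B_2}$ is the preimage of the diagonal $\Delta\subset([0,1]\times\R)^k\times([0,1]\times\R)^k$ under $\ev_1\times\ev_2$. Because each $\ev_i$ is already a submersion for generic $J_i$, the product map $\ev_1\times\ev_2$ is a submersion, hence automatically transverse to $\Delta$; therefore the fibered product is a manifold, cut out transversely, with dimension $\dim\ModFlow^{B_1}+\dim\ModFlow^{B_2}-2k$. Substituting $\dim\ModFlow^{B_i}(\x_i,\y_i;\Source_i)=\ind(B_i,\Source_i)$ from Proposition~\ref{prop:ExpectedDimensionA} and Proposition~\ref{prop:ExpectedDimension} and using $2k=c+2o$ (since $k=c+o$) gives exactly the right-hand side of Equation~\eqref{eq:DefIndMatchedPair}. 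The hypothesis $\ind(B_1,\Source_1;B_2,\Source_2)\le 2$ is what guarantees we are in the range where the individual general position theorems apply after we account for the cutting down: the subspace of the moduli space respecting the partition $\vec{P}$ has the smaller dimension, and the constraint packet structure on one side matching the simple partition on the other is encoded in $\psi$, so the relevant diagonal is the one from Definition~\ref{def:MatchedPair}.

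One subtlety I would address carefully is that $\Sigma_1$ and $\Sigma_2$ share the boundary circles $Z_1,\dots,Z_{2n}$, so the almost-complex structures are not chosen entirely independently: near the punctures they are both required to be split (Definition~\ref{def:AdmissibleAlmostCx}), so the Reeb dynamics on $Z$ is common to both. This is fine, because the transversality perturbations needed for the $\dbar$-operator and the evaluation maps can be taken in the interior of $\Sigma_i$, away from the cylindrical ends, exactly as in~\cite[Section~9.1]{InvPair}. I would also note the hypothesis that $n_{\wpt}(B_1)$ or $n_{\zpt}(B_1)$ vanishes rules out $\alpha$-boundary degenerations in any Gromov limit (a point not needed for the transversality statement per se, but part of the standing hypotheses); for the transversality of smooth curves it plays no role. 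The main obstacle is really bookkeeping rather than analysis: one must check that the partition data $\psi$ and the matching condition on Reeb chords/orbits (Definition~\ref{def:MatchingChords}) correctly identify the relevant diagonal so that the dimension count works out to Equation~\eqref{eq:DefIndMatchedPair}, and that the evaluation-map submersivity from the one-sided theorems is genuinely available at all punctures simultaneously (including interior orbit punctures, where one evaluates both $s$ and $t$). This is the same mechanism as in~\cite[Proposition~5.6]{InvPair} and~\cite[Section~3.4]{McDuffSalamon}, so the proof is a citation-heavy adaptation rather than a new argument.

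\begin{proof}
  By Lemma~\ref{lem:BoundaryMonotone}, the hypotheses imply that
  $(\x_1,\vec{\rhos})$ and $(\x_2,\vec{\rhos}')$ are strongly
  boundary monotone (where $\vec{\rhos}$ and $\vec{\rhos}'$ are the
  sequences of matching Reeb chords and orbits carried by $\Source_1$
  and $\Source_2$). Thus, by the transversality statements
  underlying Theorem~\ref{thm:GeneralPosition} and
  Theorem~\ref{thm:GeneralPositionA}
  (compare~\cite[Proposition~5.6]{InvPair}), for generic admissible
  almost-complex structures $J_i$ on $\Sigma_i\times[0,1]\times\R$,
  the moduli spaces $\ModFlow^{B_i}(\x_i,\y_i;\Source_i)$ are cut out
  transversely by the $\dbar$-equation, and the evaluation maps
  \[ \ev_i\colon \ModFlow^{B_i}(\x_i,\y_i;\Source_i)\to ([0,1]\times
  \R)^{k},\qquad k=|\AllPunct(\Source_i)|=c+o, \]
  are submersions. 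The perturbations realizing this can be supported
  in the interior of $\Sigma_i$, away from the common cylindrical
  ends over $Z=Z_1\cup\dots\cup Z_{2n}$, so they may be chosen
  independently for $i=1,2$
  (cf.~\cite[Section~9.1]{InvPair}).

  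By Definition~\ref{def:MatchedPair}, the moduli space
  $\ModMatched^B(\x_1,\y_1;\x_2,\y_2;\Source_1,\Source_2;\psi)$ is the
  preimage of the diagonal $\Delta\subset ([0,1]\times\R)^k\times
  ([0,1]\times\R)^k$ under $\ev_1\times\ev_2$, where the factors are
  identified via $\psi$ and the matching condition on the Reeb
  chords and orbits (Definition~\ref{def:MatchingChords}). Since
  each $\ev_i$ is a submersion, so is $\ev_1\times\ev_2$, and hence
  it is transverse to $\Delta$. Therefore the fibered product
  \[ \ModMatched^B(\x_1,\y_1;\x_2,\y_2;\Source_1,\Source_2;\psi)=
  \ModFlow^{B_1}(\x_1,\y_1,\Source_1)\times_{\ev}
  \ModFlow^{B_2}(\x_2,\y_2,\Source_2) \]
  is a smooth manifold, transversely cut out by the $\dbar$-equation
  together with the evaluation maps, of dimension
  \[ \dim\ModFlow^{B_1}(\x_1,\y_1,\Source_1)+
  \dim\ModFlow^{B_2}(\x_2,\y_2,\Source_2)-2k. \]
  The condition $\ind(B_1,\Source_1;B_2,\Source_2)\leq 2$ ensures
  that the individual indices lie in the range covered by
  Proposition~\ref{prop:ExpectedDimension} and
  Proposition~\ref{prop:ExpectedDimensionA}, so that
  $\dim\ModFlow^{B_i}(\x_i,\y_i,\Source_i)=\ind(B_i,\Source_i)$.
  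Since $2k=c+2o$, substituting gives
  \[ \dim\ModMatched^B(\x_1,\y_1;\x_2,\y_2;\Source_1,\Source_2;\psi)
  = \ind(B_1,\Source_1)+\ind(B_2,\Source_2)-c-2o, \]
  which is Equation~\eqref{eq:DefIndMatchedPair}.
\end{proof}
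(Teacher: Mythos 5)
The paper proves this lemma simply by citing \cite[Lemma~9.4]{InvPair}, so your expanded argument is fair game; the overall strategy (fiber product over evaluation maps, transversality from submersivity) is the right one. However, there is a genuine error in the dimension count that you should not let stand.

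You claim the evaluation maps $\ev_i\colon \ModFlow^{B_i}(\x_i,\y_i;\Source_i)\to([0,1]\times\R)^k$ are submersions, and then write $\dim\ModMatched = \dim\ModFlow^{B_1}+\dim\ModFlow^{B_2}-2k$ with $k=c+o$, which you try to reconcile with Equation~\eqref{eq:DefIndMatchedPair} by asserting $2k=c+2o$. But $2k = 2c + 2o \neq c + 2o$ unless $c=0$, so either the assertion or the codimension count is wrong — and in fact both are. The map $\ev_i$ is \emph{not} a submersion onto $([0,1]\times\R)^k$: at each boundary ($\east$) puncture $q$, the image $u(q)$ lies on $\alphas\times\{1\}\times\R$, so $s\circ u(q)\equiv 1$ identically, and the $s$-component at those $c$ punctures has zero differential. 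The image of $\ev_i$ thus lies in the proper subspace $\{1\}^c\times\R^c\times((0,1)\times\R)^o$, and what is actually a submersion (for generic $J_i$, as in the proof of Theorem~\ref{thm:GeneralPositionA}) is the map recording the $t$-coordinate at boundary punctures and the $(s,t)$-coordinate at interior punctures. The matching condition from Definition~\ref{def:MatchedPair} is therefore $c + 2o$ real conditions, not $2k$. With that correction the fibered-product dimension is $\ind(B_1,\Source_1)+\ind(B_2,\Source_2)-(c+2o)$, agreeing with Equation~\eqref{eq:DefIndMatchedPair}. The rest of your argument — boundary monotonicity via Lemma~\ref{lem:BoundaryMonotone}, independence of the perturbations on the two sides since they can be supported away from the common cylindrical ends, and the citation to~\cite[Proposition~5.6]{InvPair} — is fine.
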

\begin{proof}
  This is is essentially~\cite[Lemma~9.4]{InvPair}.
\end{proof}

The data $(\Source_1,\Source_2,\psi\colon
\AllPunct(\Source_2)\to\AllPunct(\Source_1))$ of a matched pair can be
used to form a new source curve $\Source_1\natural_{\psi}\Source_2$,
obtained by gluing punctures to punctures (i.e. connected sum for
orbits and boundary connected sum for boundary punctures). If
$B_1\in\doms(\x_1,\y_1)$, $B_2\in\doms(\x_2,\y_2)$ are represented by
a matched pair, we can construct $B=B_1\natural B_2\in\doms(\x,\y)$,
where $\x=\x_1\#\x_2$ and $\y=\y_1\#\y_2$, represented by a (not
necessarily holomorphic) curve with source
$\Source=\Source_1\natural_\psi\Source_2$.

It is elementary to see that
\begin{align}
  e(B_1\natural B_2)&=e(B_1)+ e(B_2)-2 \weight \label{eq:EulerMeasures} \\
  \chi(\Source_1\natural_\psi \Source_2)&=\chi(\Source_1)+\chi(\Source_2) - 2o-c.
  \label{eq:EulerCharacteristics}
\end{align}
It is an easy consequence that
\[ \ind(B_1\natural
B_2,\Source_1\natural_\psi\Source_2)=\ind(B_1,\Source_1;B_2,\Source_2),\]
identifying expected dimension of the moduli space of curves in
$\Source_1\natural_\psi\Source_2$ with expected dimension of the
moduli spaces of matched pairs. Our goal is to refine this to an
identification of moduli spaces.

As in Definition~\ref{eq:EmbedMod}, let
\begin{align*}
  \chiEmb(B)&=g+e(B)-n_\x(B)-n_\y(B) \\
  \ind(B)&=e(B)+n_\x(B)+n_\y(B).
\end{align*}

\begin{prop}
  \label{prop:EmbeddedModuliSpaces}
  Fix $\x=\x_1\#\x_2$ and $\y=\y_1\#\y_2$, and decompose
  $B\in\doms(\x,\y)$ as $B=B_1\natural B_2$, with
  $B_i\in\doms(\x_i,\y_i)$.  Fix source curves $\Source_1$ and
  $\Source_2$ together with a one-to-one correspondence $\psi\colon
  \AllPunct(\Source_2)\to\AllPunct(\Source_1)$ which is consistent with the chord
  and orbit labels, so we can form $\Source=\Source_1\natural_\psi\Source_2$.
  Suppose that $\ModFlow^B(\x,\y;\Source)$ 
  (i.e. the moduli space for curves in $\HD$) and
  $\ModFlow^{B_i}(\x_i,\y_i;\Source_i)$
  (which are moduli spaces for curves in $\HD_i$) 
  are non-empty for $i=1,2$.
  Then, 
  $\chi(\Source)=\chiEmb(B)$ if and only if
  $\chi(\Source_i)=\chiEmb(B_i)$ for $i=1,2$; 
  and all the chords in $\AllPunct(\Source_1)$ have weight $1/2$ and
  all the orbits have length $1$.
\end{prop}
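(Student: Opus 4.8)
The plan is to reduce the statement to the single-diagram index/Euler-characteristic identity already established in Proposition~\ref{prop:ExpectedDimension} (and its refinement in the proof), applied simultaneously on all three diagrams $\HD$, $\HD_1=\Hdown$, $\HD_2=\Hup$, together with the combinatorial relations \eqref{eq:EulerMeasures} and \eqref{eq:EulerCharacteristics}. First I would record, using Equation~\eqref{eq:EulerMeasures} and \eqref{eq:EulerCharacteristics} and the matching conditions $\weight_i(B_1)=\weight_i(B_2)$ (so $\weight:=\weight_\partial(B_1)=\weight_\partial(B_2)$), the arithmetic identity
\[
\chi(\Source)-\chiEmb(B)=\bigl(\chi(\Source_1)-\chiEmb(B_1)\bigr)+\bigl(\chi(\Source_2)-\chiEmb(B_2)\bigr),
\]
where on the left $\chiEmb(B)=g+e(B)-n_\x(B)-n_\y(B)$ with $g=g_1+g_2+2n-1$, and on the right the $\chiEmb(B_i)$ use $d_1=g_1+n$, $d_2=g_2+n-1$ respectively; the constants match because $g=d_1+d_2$ and because $n_\x(B)=n_{\x_1}(B_1)+n_{\x_2}(B_2)$ (a point of $\x$ lies over a single puncture-free region, so multiplicities add under $\natural$), and similarly for $n_\y$, while $e(B)=e(B_1)+e(B_2)-2\weight$ contributes the $-2\weight$ that is absorbed into the Euler-characteristic bookkeeping.

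The second step is to invoke the geometric content of Proposition~\ref{prop:ExpectedDimension} (more precisely, the part of its proof showing $\chi(\Source)\ge\chiEmb(B)$ for any pseudo-holomorphic representative, with equality iff the curve is embedded; the analogous statement for lower diagrams is Proposition~\ref{prop:ExpectedDimensionA}). Since $\ModFlow^B(\x,\y;\Source)$ and the two $\ModFlow^{B_i}(\x_i,\y_i;\Source_i)$ are all non-empty, each of the three differences $\chi(\Source)-\chiEmb(B)$, $\chi(\Source_1)-\chiEmb(B_1)$, $\chi(\Source_2)-\chiEmb(B_2)$ is a non-negative even integer. The displayed identity then forces: $\chi(\Source)=\chiEmb(B)$ if and only if both summands on the right vanish, i.e. $\chi(\Source_i)=\chiEmb(B_i)$ for $i=1,2$. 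This is the equivalence asserted in the first sentence of the conclusion.

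For the ``moreover'' clause about weights, I would argue that when $\chi(\Source)=\chiEmb(B)$ holds, the embedded index of the glued curve in $\HD$ equals the expected dimension computed via the Reeb data, and compare the two index formulas. On the $\HD$-side, curves in $\CFKsimp(\HD)$ have no east or middle punctures after gluing, so the relevant index is simply $e(B)+n_\x(B)+n_\y(B)=\Mgr(B)$ with no chord/orbit corrections; on the matched-pair side, Equation~\eqref{eq:IndEmb} (resp. its type-$A$ analogue) gives the correction $2|\orb(\vec\rho)|+|\chords(\vec\rho)|-2\weight_\partial(B_i)$, and the difference $\ind(B_i,\x_i,\y_i;\vec\rho)-\Mgr(B_i)$ is $-\sum_{\orb}(2\weight(\orb)-2)-\sum_{\rho}(2\weight(\rho)-1)$, which by Theorem~\ref{thm:GeneralPosition} is $\le 0$ with equality exactly when every chord has length $1/2$ and every orbit has length $1$. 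Matching the glued index against $\ind(B_1,\Source_1;B_2,\Source_2)=\ind(B_1,\Source_1)+\ind(B_2,\Source_2)-c-2o$ via Equation~\eqref{eq:DefIndMatchedPair}, and using that the Euler-characteristic equalities already pin down $c$ and $o$ (the number of boundary resp. interior punctures), forces each length-correction to be zero, hence all chords have weight $1/2$ and all orbits length $1$. The main obstacle I anticipate is the bookkeeping of the additive constants in the $\chiEmb$ and $\ind$ formulas across the three diagrams — in particular checking that the genus shift $g=g_1+g_2+2n-1$, the two different values $d_1,d_2$, and the $-2\weight$ terms from \eqref{eq:EulerMeasures}--\eqref{eq:EulerCharacteristics} and from $e(B_1\natural B_2)$ all cancel correctly; this is routine but must be done carefully, and everything else follows formally from the non-negativity-and-equality dichotomy of Propositions~\ref{prop:ExpectedDimension} and~\ref{prop:ExpectedDimensionA}.
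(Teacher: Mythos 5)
Your overall strategy — use the gluing formulas \eqref{eq:EulerMeasures}--\eqref{eq:EulerCharacteristics} to relate $\chi(\Source)-\chiEmb(B)$ to the two pieces, then invoke the non-negativity $\chi(\Source_i)\geq\chiEmb(B_i)$ from Propositions~\ref{prop:ExpectedDimension} and~\ref{prop:ExpectedDimensionA} — is the paper's route, but your first displayed identity is wrong, and the error is not a bookkeeping nit: it is exactly where the ``weight $1/2$, orbit length $1$'' conclusion comes from. You claim that the $-2\weight$ from $e(B_1\natural B_2)=e(B_1)+e(B_2)-2\weight$ ``is absorbed into the Euler-characteristic bookkeeping,'' but the source-gluing formula \eqref{eq:EulerCharacteristics} contributes $-2o-c$, not $-2\weight$, and these do not cancel in general. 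Carrying out the arithmetic honestly (with $g=d_1+d_2$, which you used correctly, and the local-multiplicity additivity of $n_\x$, $n_\y$) gives
\[
\chi(\Source)-\chiEmb(B)=\bigl(\chi(\Source_1)-\chiEmb(B_1)\bigr)+\bigl(\chi(\Source_2)-\chiEmb(B_2)\bigr)+\bigl(2\weight-c-2o\bigr),
\]
where $\weight$ is the total boundary weight, $c$ the number of chord punctures, and $o$ the number of orbit punctures on $\Source_1$. The extra term $2\weight-c-2o$ is always $\geq 0$, with equality exactly when each chord has weight $1/2$ and each orbit has weight $1$; since all three summands on the right are non-negative (the first two by non-emptiness of the moduli spaces, the last one trivially), $\chi(\Source)=\chiEmb(B)$ forces all three to vanish simultaneously, which is precisely the proposition's conclusion including the clause about weights.

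The practical consequence of your error is that your entire ``moreover'' paragraph is a workaround for a conclusion that should have fallen out of the first displayed identity for free. That second paragraph is also shakier than you present it: you need $\ind(B_1,\Source_1;B_2,\Source_2)=\ind(B)$ to be an honest equality under the hypothesis $\chi(\Source)=\chiEmb(B)$, and you need to know that the matched moduli space is being cut out in the right dimension to conclude ``equality exactly when'' from the index drop — the index inequality alone does not force equality in the corrections without extra input. Once you correct the first identity, you should drop the second paragraph entirely: everything follows in one step.
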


\begin{proof}  
  Let $\weight$ denote the total weight of $B_1$ at the boundary,
  $o$ denote the number of orbits in $\Source_1$, and $c$ the number of chords.
  Since $g=g_1+g_2+2n-1$, it follows immediately 
  from Equation~\eqref{eq:EulerMeasures} that
  \[ \chiEmb(B_1\natural B_2)=\chiEmb(B_1)+\chiEmb(B_2)-2\weight;\] so using
  Equation~\eqref{eq:EulerCharacteristics}, it follows that
  \[ 
  \chi(\Source)-\chiEmb(B)
    = \left(\chi(\Source_1)-\chiEmb(B_1)\right) 
    + \left(\chi(\Source_2)-\chiEmb(B_2)\right)
  + 2\weight-c-2o.\]
  Clearly, $2\weight-c-2o\geq 0$, with equality iff each orbit has weight $1$ and each chord has weight $1/2$.
  Also, the hypotheses that
  $\ModFlow^B(\Source)\neq \emptyset$ and
  $\ModFlow^{B_i}(\Source_i)\neq \emptyset$ for $i=1,2$ ensure that 
  \[ \chi(\Source)\geq \chiEmb(B),\qquad \chi(\Source_i)\geq \chiEmb(B_i) \]
  with equality iff 
  the curves are embedded.
\end{proof}

The Gromov compactification
of the space of matched curves is provided by a space of matched combs,
which we define presently.

Let $(w_{\ell_1},\dots,w_1,u,v_1,\dots,v_k)$ be a story with total
source ${\overline\Source}$.  Let $\East({\overline\Source})$ be the
eastmost punctures in ${\overline\Source}$ (i.e. these are the East
punctures on curves that are not matched with west punctures on other
curves at East infinity), $\IntPunct({\overline\Source})$ be all the
orbit-marked punctures in all the components of ${\overline\Source}$,
and
\[ \AllPunct({\overline\Source})=\East({\overline\Source})\cup\IntPunct({\overline\Source}).\]

\begin{defn}
  \label{def:dMatched}
  Given $\x_1,\y_1\in\States(\Hdown)$ and $\x_2,\y_2\in\States(\Hup)$,
  a {\em matched story from $\x=\x_1\#\x_2$ to $\y=\y_1\#\y_2$} consists of the following data:
  \begin{itemize}
    \item a 
      pair of holomorphic stories
      \[ {\overline
        u}_1=(w_{\ell_1}^1,\dots,w_1^1,u^1,v_1^1,\dots,v_{k_1}^1)
      \qquad{\text{and}} \qquad {\overline
        u}_2=(w_{\ell_2}^2,\dots,w_1^2,u^2,v_1^2,\dots,v_{k_2}^2), \]
    \item 
      a one-to-one correspondence
      \[ \psi\colon \AllPunct({\overline\Source}_2)\to
      \AllPunct({\overline\Source}_1) \] so that for all $p\in
      \AllPunct({\overline\Source}_2)$, the Reeb chord or orbit marking
      $p$ and $\psi(p)$ have matching labels.
    \end{itemize}
    satisfying the following condition for each $q\in \AllPunct({\overline\Source}_2)$:
    \[ (s\circ {\overline u}_2(q),t\circ {\overline u}_2(q))=
    (s\circ {\overline u}_1(\psi(q)),t\circ {\overline u}_1(\psi(q))).\]
    for all $q\in\AllPunct({\overline \Source}_2)$.
    The story is called {\em stable} if there are no unstable east or 
    west infinity curves on either side, and either $u^1$ or $u^2$ is stable.
\end{defn}

\begin{defn}
A matched comb of height $N$ is a sequence of stable 
matched stories running from
$(\x_1^{j},\x_2^{j})$ to $(\x_1^{j+1},\x_2^{j+1})$
for sequences of lower states $\{\x_1^j\}_{j=1}^{N+1}$ (for $\Hdown$)
and upper states $\{\x_2^j\}_{j=1}^{N+1}$ (for $\Hup$).
\end{defn}

In principle, the Gromov compactification could contain more
complicated objects: closed components contained in some story, or
$\alpha$-boundary degenerations. We exclude these possibilities in the
next two lemmas.

\begin{lemma}
  \label{lem:NoBoundaryDegenerations}
  Fix $B_1\in\doms(\x_1,\y_1)$ and $B_2\in\doms(\x_2,\y_2)$ so that
  $\weight_i(B_1)=\weight_i(B_2)$ for $i=1,\dots,2n$, and at least one of
  $n_\wpt(B_1)$ or $n_\zpt(B_1)$ vanishes, and so that $\ind(B_1\natural
  B_2)\leq 2$.  Then, curves in the Gromov compactification of
  $\ModMatched^B(\x_1,\y_1;\x_2,\y_2)$ contain no
  boundary degenerations.
\end{lemma}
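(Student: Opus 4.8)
The plan is to rule out all the ways a boundary degeneration could bubble off in a matched comb of index $\leq 2$, treating separately the $\beta$-boundary degenerations that can occur on the $\Hdown$-side and those on the $\Hup$-side, and showing each costs more index than we have available, unless it can be absorbed into the matching structure in a way that the compatibility hypotheses preclude. First I would invoke $\alpha$-nonexistence: by the hypothesis that $n_\wpt(B_1)$ or $n_\zpt(B_1)$ vanishes, an $\alpha$-boundary degeneration on either side would have to cover both basepoints (since the $\alpha$-curves together with the filled-in arcs separate $\wpt$ from $\zpt$), which is impossible — this is exactly the argument used at the start of the proofs of Theorems~\ref{thm:AEnds} and~\ref{thm:DEnds}. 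So the only possible boundary degenerations are $\beta$-boundary degenerations, and by Propositions~\ref{prop:SmoothBoundaryDeg} (dimension $d$) and the index count these must be simple (a non-simple one has codimension $>1$ in its moduli space, so the remaining matched curve would have negative expected dimension).

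Next I would handle the simple $\beta$-boundary degeneration on the $\Hdown$-side. Such a component carries two matched Reeb orbits $\orb_j$, $\orb_k$ with $\{j,k\}\in\Mdown$ (or a single special orbit). On the matched side, the corresponding punctures of $\Source_2$ are marked by the matching orbits, so the $\Hup$-curve must also be sending those two punctures into a degeneration — but on the $\Hup$-side the orbits $\orb_j$ and $\orb_k$ are matched in $\Mup$, not $\Mdown$, so they need not coincide in the $t$-coordinate, and in fact for a generic curve $u_2$ the two punctures marked $\orb_j$ and $\orb_k$ are mapped to distinct $t$-values (this is the chamber/wall dichotomy of Lemma~\ref{lem:Walls} applied on the $\Hup$-side, or simply genericity). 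Since the matching condition forces $(s\circ u_1(\psi(q)),t\circ u_1(\psi(q)))=(s\circ u_2(q),t\circ u_2(q))$, and the $\Hdown$-degeneration forces the two $t$-values on the $\Hdown$-side to coincide (they lie on the same boundary degeneration level), this is a contradiction. A symmetric argument, with the roles of $\Mup$ and $\Mdown$ reversed, rules out a $\beta$-boundary degeneration on the $\Hup$-side. For the special (unmatched) orbits there is only one such orbit on each side, so there is nothing to collide, but the special degeneration on the $\Hdown$-side covers $\wpt$ or $\zpt$: the index/dimension count then forces the remaining matched curve to be constant, which cannot connect the distinct states we are considering unless the whole configuration is constant — and a constant configuration is not in the interior of a one-dimensional matched moduli space. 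I would also need to note here that the $\wpt$/$\zpt$-vanishing hypothesis on $B_1$ forces the analogous vanishing on $B_2$ since $\weight_i(B_1)=\weight_i(B_2)$ for all $i$ and the multiplicities at $\wpt,\zpt$ are determined by adjacent boundary multiplicities together with the matchings.

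The main obstacle I expect is the bookkeeping for the \emph{special} boundary degenerations: unlike the matched pair of orbits, a special orbit $\orb_j$ sits alone, so the "collision of $t$-values" contradiction does not apply directly, and one must instead run the dimension count carefully to show that a special $\beta$-boundary degeneration forces the complementary matched curve into a zero-dimensional (hence, by genericity, either empty or constant) moduli space. This requires combining Equation~\eqref{eq:DefIndMatchedPair} with the observation that removing a copy of $\Brs$ (or ${\mathcal B}_{\{j\}}$) from $B$ drops $e(B)$ by a definite amount while the source Euler characteristic changes correspondingly, exactly as in Proposition~\ref{prop:EmbeddedModuliSpaces}; I would phrase this as: if $\ind(B_1\natural B_2)\leq 2$ and a simple special degeneration bubbles off, the leftover has index $\leq 0$, and for the leftover to be non-empty and stable it must be index exactly $0$ with an embedded representative, which by the usual argument (no periodic domains, Property~\ref{UD:NoPerDom}) forces it to be the constant matched curve at $\x=\y$ — contradicting that we are describing an end of a genuinely one-dimensional moduli space connecting states of different Maslov grading. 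The remaining steps (excluding closed components by Property~\ref{UD:NoPerDom} / $H_2(Y;\Z)=0$, and checking that the dimension arithmetic leaves no slack) are routine adaptations of~\cite[Chapter~9]{InvPair}.
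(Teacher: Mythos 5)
Your proposal has two genuine gaps, both at points where you substitute a plausible-sounding transversality or basepoint argument for what the paper actually needs, which is a combinatorial propagation argument using the compatibility of the matchings.

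First, the $\alpha$-boundary degenerations. Your opening step claims the basepoint hypothesis rules out $\alpha$-boundary degenerations \emph{on either side}. That works on the $\Hdown$-side, where $\wpt,\zpt$ live, but the $\Hup$-side has no basepoints, so the argument does not touch it. The paper handles the $\Hup$-side $\alpha$-degeneration very differently: the matching condition forces all chords and orbits in that degeneration to project to $(1,\tau)$, and comparing weights shows the main component of $\overline{u}_1$ carries a chord packet $\rhos$ at $t=\tau$, $s=1$ with $\weight_i(\rhos)\geq 1$ for every $i$; since the main component has no orbits with $s$-projection $1$, the initial points $\rhos^-$ must land on all $2n$ boundary components, which is impossible by strong boundary monotonicity (at most $n$ $\alpha$-arcs can be occupied simultaneously). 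You cannot get this from the basepoints.

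Second, and more seriously, the $\beta$-boundary degenerations. You try to derive a contradiction from the fact that $\orb_j$ and $\orb_k$ are $\Mdown$-matched but not $\Mup$-matched, saying that ``for a generic curve $u_2$'' their $t$-values are distinct while the degeneration forces them to agree. But this is not a contradiction: we are examining a Gromov limit, not a generic point of a moduli space, and the whole point of the compactification is that the $t$-values (and $s$-values) of these punctures on the $\Hup$-side collide in the limit as well. What actually happens is that the degeneration \emph{propagates}: the matching condition transfers the coincidence of $(s,t)$-projections from $\Hdown$ to $\Hup$; then the $\Hup$-side degeneration containing $\orb_j$ must also contain $\orb_{j'}$ with $\{j,j'\}\in\Mup$, which transfers back to $\Hdown$; and the $\Hdown$-degeneration containing $\orb_{j'}$ must contain $\orb_{j''}$ with $\{j',j''\}\in\Mdown$; and so on. Because $\Mup$ and $\Mdown$ are compatible (Definition~\ref{def:CompatibleMatching}: together they generate a single equivalence class), this chain covers all $2n$ boundary components and therefore forces boundary degenerations on the $\Hdown$-side that cover both $\wpt$ and $\zpt$, contradicting the hypothesis. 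The wall/chamber machinery you invoke (Lemma~\ref{lem:Walls}) plays no role here. Also, your side remark that the $\wpt,\zpt$-vanishing on $B_1$ ``forces the analogous vanishing on $B_2$'' is ill-posed: $B_2$ is a domain in $\Hup$, which has no $\wpt,\zpt$ to speak of.

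Your index counting for ruling out non-simple degenerations and for the special degenerations is broadly sound, but those are the easy parts; the essential content of the lemma is the propagation via compatibility, which is what your proposal is missing.
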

  
\begin{proof}
  Fix a curve $({\overline u}_1,{\overline u}_2)$ denote the matched
  comb in the Gromov compactification of matched curve pairs, so that
  ${\overline u}_1$ denotes the portion in $\Hdown$ and ${\overline
    u}_2$ in $\Hup$.
  
  We first prove that neither ${\overline u}_1$ nor ${\overline u}_2$
  can contain a $\beta$-boundary degeneration. This is equivalent to
  showing that ${\overline u}_i$ does not contain a puncture $p_i$
  marked with a Reeb orbit, and with the property that
  $s({\overline u}_i(p_i))=0$.
  
  To see this, we prove first that if there is a puncture $p$ marked
  by a Reeb orbit in ${\overline u}_i$ with $\pi_\CDisk(p)=(0,\tau)$,
  then in fact for each $j=1,\dots,2n$, there are corresponding
  punctures $q_1$ and $q_2$ in ${\overline u}_1$ and ${\overline u}_2$
  with
  \[ \pi_{\CDisk} ({\overline u}_1(q_1))=\pi_{\CDisk}(u_2(q_2))=(0,\tau),\] so that $q_1$ is marked by some orbit that
  covers $\Zin_j$ and $q_2$ is marked by some orbit that covers
  $\Zout_j$.  This follows from the following observations:
  \begin{itemize}
    \item 
      The curve ${\overline u}_1$ contains a puncture $q_1$
      marked by Reeb orbit that covers $\Zin_j$
      with $\pi_{\CDisk}u(q)=(0,\tau)$ if and only ${\overline u}_2$ 
      contains a puncture $q_2$ marked by a Reeb orbits that covers $\Zout_j$
      with $\pi_{\CDisk}u(q)=(0,\tau)$. This is immediate from
      the matching condition.
    \item If ${\overline u}_1$ contains a puncture $q$ marked by a
      Reeb orbit that covers $\Zin_j$ with $\pi_{\CDisk}u(q)=(0,\tau)$,
      and $\{j,k\}\in\Mdown$, 
      then there is another puncture $q'$ marked by a Reeb orbit that
      covers $\Zin_k$.
      This follows at once from the fact that  $q$ is contained
      in a $\beta$-boundary
      degeneration component.
    \item If ${\overline u}_2$ contains a puncture $q$ marked
      by a Reeb orbit that covers $\Zout_j$ with $\pi_{\CDisk}u(q)=(0,\tau)$,
      then it also contains another puncture $q'$
      marked by a Reeb orbit that covers $\Zout_k$ with
      $\pi_{\CDisk}u(q')=(0,\tau)$, where $\{j,k\}\in\Mup$. 
      This follows as above.
    \end{itemize}
    Compatibility of $\Mup$ and $\Mdown$ now allows us to conclude
    that statement at the beginning of the paragraph.  Moreover, from
    that statement it follows that if ${\overline u}_1$ or ${\overline
      u}_2$ contains any boundary degeneration, then in fact it
    contains some boundary degeneration that contains $\wpt$ and
    another boundary degeneration that contains $\zpt$. But this
    violates the hypothesis that $n_\wpt(B_1)=0$ or
    $n_\zpt(B_1)=0$.

  We turn to the possibility of a $\Ta$ boundary degeneration.  Since
  $B_1$ does not cover both $\wpt$ and
  $\zpt$, our homological hypotheses on the type $A$ side ensures that
  the limiting curve contains no $\alpha$-boundary degenerations on
  the $\Hdown$ side. 

  It remains to consider the possibility that
  an $\alpha$-boundary degenerations that occurs on
  the $\Hdown$ side, with projecting to $(1,\tau)$ for some $\tau\in\R$.

  The chords and orbits in the boundary degeneration in
  ${\overline{u}_2}$ can match with chords and orbits on various East
  infinity curves in ${{\overline u}_1}$. Nonetheless, the
  $(s,t)$-projections of all of these chords and orbits are
  $(1,\tau)$, for some fixed $\tau\in\R$, by the matching
  condition. Moreover, the weight at each boundary component $Z_i$ of
  these chords in ${{\overline u}_1}$ must be at least $1$ (since the
  same is true for each boundary degeneration in $\Hdown$).  Consider
  now the main component $u_1$ of ${{\overline u}_1}$, obtained after
  removing East infinity curves.  Note that the total weights of the
  Reeb chords and orbits of an East infinity curve are the same at
  both their Eastern and their Western boundary; it follows that $u_1$
  has a chord packet $\rhos$ at $(s,t)=(1,\tau)$ with the property
  that $\weight_i(\rhos)\geq 1$ for all $i=1,\dots,2n$. Observe also the
  main component cannot have any orbits whose $s$-projection is
  $1$. It follows that $\rhos^-$ contains points on all of the
  boundary components of $\Sigma_1$ ; i.e. $\rhos^-$ contains at least
  $2n$ points. But this violates boundary monotonicity: $\rho^-$ can
  contain at most $n$ points.
\end{proof}

\begin{lemma}
  \label{lem:NoClosedCurves}
  Suppose that $n>1$.
  Fix $B_1\in\doms(\x_1,\y_1)$ and $B_2\in\doms(\x_2,\y_2)$ so that
  $\weight_i(B_1)=\weight_i(B_2)$ for $i=1,\dots,2n$, and at least one of
  $n_\wpt(B_1)$ or $n_\zpt(B_1)$ vanishes, and so that $\ind(B_1\natural
  B_2)\leq 2$.    Curves in the Gromov compactification of
  $\ModMatched^B(\x_1,\y_1;\x_2,\y_2)$ contain no
  closed components.
\end{lemma}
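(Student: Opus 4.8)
The plan is to rule out closed (sphere) components by a homological argument combined with the energy/index bookkeeping established earlier in this section. First I would recall that a closed component of a Gromov limit in $\Sigma\times[0,1]\times\R$ must be a sphere mapping into $\Sigma\times\{\text{pt}\}$, hence (projecting to $\Sigma$) represents a nonzero multiple of the fundamental class $[\cSigma]$ of the filled Heegaard surface; in particular its shadow covers every elementary domain with positive multiplicity, and its Euler measure contributes $e\geq 2$ to the total.

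The key steps, in order: (1) If a closed component $S$ appears in the limit of $\ModMatched^B(\x_1,\y_1;\x_2,\y_2)$, it occurs on one of the two sides, say in $\overline u_1$ (the $\Hdown$ side) or in $\overline u_2$ (the $\Hup$ side). (2) On the $\Hdown$ side: a closed component forces $n_\wpt(B_1)\geq 1$ and $n_\zpt(B_1)\geq 1$ simultaneously, since the sphere covers the whole surface $\Sigma_1$, including both basepoint regions. This directly contradicts the hypothesis that at least one of $n_\wpt(B_1)$ or $n_\zpt(B_1)$ vanishes. (3) On the $\Hup$ side: a closed component in $\overline u_2$ forces the shadow $B_2$ to have multiplicity $\geq 1$ at every puncture $p_1,\dots,p_{2n}$ of $\Sigma_2$. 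By the matching condition (Definition~\ref{def:MatchedPair}, extended to matched combs in Definition~\ref{def:dMatched}), the weights of $B_1$ and $B_2$ agree at every boundary component, so $\weight_i(B_1)\geq 1$ for all $i=1,\dots,2n$ as well; summing the matched pairs in $\Mdown$ then shows $B_1$ contains a full copy of each $\Bjk$, hence $n_\wpt(B_1)\geq 1$ and $n_\zpt(B_1)\geq 1$, again contradicting the hypothesis. (This is exactly the mechanism used in the proof of Lemma~\ref{lem:NoBoundaryDegenerations}, where a single boundary-degeneration component on one side propagates, via matching and compatibility of $\Mup$ and $\Mdown$, to cover both basepoints.)

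Alternatively — and perhaps more cleanly for bookkeeping purposes — I would argue by index: a closed component has nonnegative index contribution bounded below by $2$ (from $e\geq 2$ and positivity of the remaining local-multiplicity terms), so its removal from the limiting configuration leaves a matched comb of index $\leq \ind(B_1\natural B_2)-2\leq 0$; combined with the transversality statement of Lemma~\ref{lem:Transversality} (which rules out negative-index components for generic almost-complex structures) and Proposition~\ref{prop:EmbeddedModuliSpaces}, the only possibility would be a configuration consisting entirely of the closed component together with constant strips, which again forces the shadow to cover all of $\Sigma$ and hence both basepoint regions on the $\Hdown$ side.

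The main obstacle I anticipate is making the "propagation" argument on the $\Hup$ side fully rigorous: one must check that a sphere bubble in $\overline u_2$ genuinely forces the shadow $B_2$ to have positive multiplicity at \emph{every} puncture $p_i$, rather than merely at the punctures in the connected component of $\Sigma_2\setminus\betas$ where the bubble sits. This follows because $\Hup$ is connected and the sphere represents a positive multiple of $[\cSigma_2]$ (so its local multiplicities are positive everywhere, in fact constant and $\geq 1$); but one should verify that the Gromov limit cannot instead produce a closed component representing a multiple of some \emph{proper} subsurface class — this is excluded because the only closed $2$-cycles in a Heegaard surface supported away from the $\alpha$- and $\beta$-curves are multiples of the whole surface, a point already implicitly used in the finiteness arguments (e.g. in the proof of Proposition~\ref{prop:CurvedTypeD}). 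Once this is in hand, the contradiction with the $n_\wpt(B_1)\cdot n_\zpt(B_1)=0$ hypothesis is immediate, and the lemma follows.
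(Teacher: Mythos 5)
Your $\Hdown$-side argument agrees with the paper: a closed component there would cover all of $\Sigma_1$, hence both $\wpt$ and $\zpt$, contradicting the hypothesis.

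The $\Hup$-side argument, however, has a genuine gap. You claim that $\weight_i(B_1)\geq 1$ for all $i$ forces $B_1$ to contain a full copy of each $\Bjk$, hence to cover both basepoints. That implication does not hold: the weight at $Z_i$ is just the average of the local multiplicities of the two elementary domains adjacent to $Z_i$, and one can easily have a domain with large multiplicity in a collar of $\partial\Sigma_1$ while vanishing on the interior regions containing $\wpt$ and $\zpt$. The propagation argument you are adapting from Lemma~\ref{lem:NoBoundaryDegenerations} works there only because $\beta$-boundary degenerations are, by definition, supported on entire components of $\Sigma\setminus\betas$; once a degeneration covers $p_j$ it covers the whole $\Bjk$, including any basepoints in it. Positivity of boundary weights is a much weaker statement. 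A reliable sanity check: your argument never invokes $n>1$, yet the paper explicitly notes at the start of the $n=1$ subsection that closed components \emph{do} occur in zero-dimensional moduli spaces when $n=1$. Any argument that purports to exclude them without using $n>1$ must therefore be wrong.

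The paper instead runs an index computation. Removing the closed component (of multiplicity $k$) from the $\Hup$-side story and tracking all the matching constraints, one finds that the remaining matched pair $(u_1',u_2')$ lives in a moduli space of expected dimension at most $\ind(B_1\natural B_2)+2-2nk\leq 4-2nk$. The key term is $-2nk$: a sphere in $\Sigma_2$ has $e([\Sigma_2])+n_{\x_2}+n_{\y_2}=2n$, not $2$. For $n>1$ and $k\geq 1$ the bound is $\leq 0$, and since $u_1'$ must carry the orbit asymptotics coming from the sphere's contribution to the weights, it is non-constant, so the $\R$-action is free and the moduli space is empty. For $n=1$ the bound is $4-2k$, which is $2$ when $k=1$, so no contradiction. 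Your alternative index sketch ("$e\geq 2$ so the index drops by at least $2$") points in the right direction but misses exactly this: the drop is $\geq 2nk-2$, not $\geq 2$, and the lemma lives or dies on whether $2nk-2>0$, i.e.\ on $n>1$. Finally, the paper also rules out homologically trivial ("ghost") closed components by a separate dimension argument as in~\cite[Lemma~5.57]{InvPair}; your proof does not address that case.
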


\begin{proof}
  By our hypotheses on the homology class, homologically non-trivial 
  closed components cannot occur
  on the $\Hdown$ side. 
  
  Suppose there is a curve in the Gromov compactification of
  $\ModMatched^B(\x_1,\y_1;\x_2,\y_2)$ which has a single closed
  component in $\Hup$, and it has multiplicity $k>0$.  Let $(u_1',u_2')$
  be the main components of this limit, so that
  $u_1'\in\ModFlow^{B_1}(\x_1,\y_1;\Source_1')$ and
  $u_2'\in\ModFlow^{B_2-k[\Sigma_2]}(\x_2,\y_2;\Source_2')$.  We will
  show now that $(u_1',u_2')$ lies in a moduli space whose expected
  dimension $\ind(u_1',u_2')$ satisfies
  \begin{equation}
    \label{eq:ConstrainedModSpaceDim}
    \ind(u_1',u_2')\leq \ind(B_1\natural B_2)+2-2nk.
  \end{equation}

  Let $o_i'$ denote the number of orbits in $\Source_i'$,
  $c_i'$ denote the number of chords in $\Source_i$, 
  $\weight$ denote $\weight_{\partial}([B_1])=\weight_{\partial}([B_2])$,
  $\weight_i'$ denote $\weight_{\partial}([B_i'])$.
  Since there are no boundary degenerations, we conclude that $B_1=B_1'$,
  and in particular  $\weight_1'=\weight$.
  Also, the matching conditions ensure that $c_1'=c_2'$.
  If $\delta$ is the number of interior punctures in $\Source_1'$ that match with punctures in the (removed) closed component, then
  $o_1'=o_2'+\delta$.

  The pair $(u_1',u_2')$ lies in a moduli space whose expected dimension is given by
  \begin{align*}
    \ind(u_1',u_2')&=\ind(B_1,\Source_1')+\ind(B_2,\Source_2)-2(\delta-1)-2 o_2'-c_2'\\
    &=\ind(B_1,\Source_1')+\ind(B_2,\Source_2)-2o_1'-c_1'+2
  \end{align*}
  To see why, note that the $\delta$ orbits in 
  $\Source_1'$ are all required to have the same $(s,t)$ projection
  (hence the term $2(\delta-1)$); the additional $2o_2'+c_2'$ constraints come
  from the orbits and chords in $\Source_2'$, which project to the same
  $(s,t)$ values as corresponding orbits and chords in $\Source_1'$.

  It is elementary to see that
  \begin{align*}
    e([\Sigma_2])+n_{\x_2}([\Sigma_2])+n_{\y_2}([\Sigma_2])&=
    2-2g_2 + 2 (g_2 + n-1)=2n.
  \end{align*}
  Since $[B_1']=[B_1]$, the index formula gives
  \begin{align*}
    \ind(u_1',u_2')-\ind(B_1\natural B_2)
    &= e(B_1')+n_{\x_1}(B_1)+n_{\y_1}(B_1)-e(B_2)-n_{\x_2}(B_2)-n_{\y_1}(B_2) \\
    &\qquad
    -2\weight_1'-2\weight_2'+2\weight
    +2o_2'+c_2'+2 \\
    &=  -2nk -2\weight_2'+2o_2'+c_2'+2 \\
    &\leq -2nk+2,
  \end{align*}
  using the obvious inequality $-2\weight_2'+2o_2'+c_2'\leq 0$;
  i.e. Inequality~\ref{eq:ConstrainedModSpaceDim} holds.

  Our hypothesis that $\ind(B_1\natural B_2)=2$ ensures that
  $(u_1,u_2')$ lives in a moduli space ${\mathcal M}$ with a free $\R$
  action (since $u_1'$ contains orbits, so it cannot be constant) and
  $\dim({\mathcal M})=4-2nk$. Since $n>1$ and $k\geq 1$, we conclude that this
  moduli space is empty.

  We have ruled out homologically non-trivial closed components.
  Homologically trivial ``ghost'' components are also ruled out
  by the dimension formula, as in~\cite[Lemma~5.57]{InvPair}.
\end{proof}

\begin{remark}
  When $n=1$, spheres can occur in zero-dimensional moduli spaces, so
  that case must be handled separately; see
  Section~\ref{subsec:Nequals1}.
\end{remark}

\begin{defn}
  Given a shadow $B\in \doms(\x_1\#\y_1,\x_2\#\y_2)$,
  the {\em embedded matched moduli space} 
  $\ModMatched^B(\x_1,\y_1;\x_2,\y_2)$ is the union of all
  $\ModMatched^B(\x_1,\y_1;\x_2,\y_2;\Source_1,\Source_2;\psi)$ 
  taken over all compatible pairs $\Source_1$ and $\Source_2$
  with $\chi(\Source_1\natural\Source_2)=\chiEmb(B)$.
  Moreover,
  \[\UnparModMatched^B(\x_1,\y_1;\x_2,\y_2)=
  \ModMatched^B(\x_1,\y_1;\x_2,\y_2)/\R.\]
\end{defn}

\begin{prop}
  \label{prop:StretchNeck}
  We can find a generic almost-complex structures
  on $\Hup$, $\Hdown$, and $\HD=\Hdown\#_Z\Hup$
  with the following property. For each $\x=\x_1\#\x_2$
  and $\y=\y_1\#\y_2\in\States(\HD)$, $B\in\doms(\x,\y)$
  and $\ind(B)=1$, 
  with at least one of $n_{\wpt}(B)$ or $n_{\zpt}(B)$ vanishing,
  there is an identification of moduli spaces
  of curves in $\HD$ with matched curves:
  \[ \UnparModFlow^B(\x_1\# \x_2,\y_1\#\y_2)\cong
  \UnparModMatched^B(\x_1,\y_1; \x_2,\y_2).\]
\end{prop}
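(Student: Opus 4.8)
The plan is to follow the neck-stretching strategy of \cite[Proposition~9.14]{InvPair}, adapted to the present setting where the curves on the $\Hup$-side are type $D$ curves, the curves on the $\Hdown$-side are type $A$ curves, and (crucially) we must also track Reeb orbits in addition to chords. First I would fix a neck parameter: choose a family of almost-complex structures $J_T$ on $\HD$ which, near the separating union of circles $Z$, looks like the cylindrical model $\R\times Z\times [0,1]\times\R$ stretched to length $T$, and which restrict to the chosen generic admissible structures on $\Hup$ and $\Hdown$ away from the neck. For each $T$ and each $B\in\doms(\x,\y)$ with $\ind(B)=1$ and $n_{\wpt}(B)$ or $n_{\zpt}(B)$ vanishing, Gromov compactness (Proposition~\ref{prop:Compactness}, applied in the form appropriate to $\HD$) gives that a sequence of $J_T$-holomorphic curves with $T\to\infty$ converges to a matched comb in the sense of Definition~\ref{def:dMatched}. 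The main body of the argument is then: (a) every limiting matched comb is in fact a matched pair of the embedded type (no extra stories, no east/west-infinity curves, no boundary degenerations, no closed components), so the limit lies in $\UnparModMatched^B(\x_1,\y_1;\x_2,\y_2)$; (b) conversely, every matched pair can be glued back to a $J_T$-holomorphic curve for $T$ large; (c) these two constructions are mutually inverse and orientation-compatible, yielding the claimed identification of (finite, zero-dimensional after quotienting by $\R$) moduli spaces.

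For step (a), the key inputs are already in place. Lemma~\ref{lem:NoBoundaryDegenerations} rules out $\beta$-boundary degenerations (using compatibility of $\Mup$ and $\Mdown$ together with the hypothesis that $B_1$ misses $\wpt$ or $\zpt$) and $\alpha$-boundary degenerations (using boundary monotonicity: a putative $\alpha$-degeneration would force a chord packet with weight $\geq 1$ on every one of the $2n$ boundary components of $\Sigma_1$, contradicting that $\rhos^-$ has at most $n$ points). Lemma~\ref{lem:NoClosedCurves} rules out closed components and ghost components for $n>1$ by the dimension count $\dim=4-2nk<0$. A two-story (or taller) limiting comb would, by the additivity of the index under $\natural$ (Equations~\eqref{eq:EulerMeasures} and~\eqref{eq:EulerCharacteristics}, giving $\ind(B_1\natural B_2,\Source_1\natural_\psi\Source_2)=\ind(B_1,\Source_1;B_2,\Source_2)$), split $B$ into pieces whose indices sum to $1$ with each piece $\geq 1$, which is impossible for a nonconstant broken flow; and east/west-infinity curves on one side would, by the matching condition, have to be matched by east/west-infinity curves on the other side, and these can be absorbed since the matched comb condition forces the $(s,t)$-projections to agree — one argues as in \cite[Section~9]{InvPair} that the resulting object is honestly a matched pair with both source curves stable. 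Finally Proposition~\ref{prop:EmbeddedModuliSpaces} shows that $\chi(\Source)=\chiEmb(B)$ forces $\chi(\Source_i)=\chiEmb(B_i)$ and all chords of weight $1/2$, all orbits of length $1$, so the limit lands in the \emph{embedded} matched moduli space.

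For step (b), the gluing, the relevant transversality is Lemma~\ref{lem:Transversality}: for generic admissible almost-complex structures on the two sides and $\ind(B_1,\Source_1;B_2,\Source_2)\leq 2$, the matched moduli space is cut out transversally as a fibered product over $([0,1]\times\R)^{c+o}$ of the two $\dbar$-moduli spaces along the evaluation maps at punctures (chords \emph{and} orbits). Given such a transversally cut out matched pair $(u_1,u_2,\psi)$, a standard gluing construction — the same one used in \cite{InvPair}, now with the understanding that at an interior (orbit) puncture one performs the connected-sum/cylindrical gluing along the stretched neck rather than the boundary connected sum used at chord punctures — produces, for all sufficiently large $T$, a unique $J_T$-holomorphic curve in $\HD$ in class $B$ near $(u_1,u_2)$, and conversely these are the only $J_T$-holomorphic curves near the broken configuration. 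The orbit-gluing is exactly the neck-stretching counterpart of the ``orbit'' analysis in \cite{TorusMod} and of Propositions~\ref{prop:OrbitCurve} and~\ref{prop:WestInftyEnd} above; the fact that the matched pairs we glue have all orbits of length one (from Proposition~\ref{prop:EmbeddedModuliSpaces}) keeps this at the simplest level. Compatibility of orientations follows from the determinant-line description of the index as in \cite{InvPair}.

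The hard part will be step (a)'s exclusion of the ``extra orbit'' phenomena at the neck, combined with the bookkeeping of which boundary degenerations survive. Unlike the chord-only situation of \cite{InvPair}, here a neck-stretching limit can a priori produce a Reeb orbit cylinder straddling the neck, or a boundary degeneration on one side matched against a collection of east-infinity curves on the other; ruling these out is precisely the content of Lemmas~\ref{lem:NoBoundaryDegenerations} and~\ref{lem:NoClosedCurves}, and the proof of the present proposition mostly consists of assembling those lemmas with the index additivity and the embeddedness criterion of Proposition~\ref{prop:EmbeddedModuliSpaces}, then invoking the gluing theorem of \cite{InvPair} (augmented by orbit-gluing as in \cite{TorusMod}) for the inverse direction. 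I note that the hypothesis $n>1$ is used (via Lemma~\ref{lem:NoClosedCurves}); the $n=1$ case is deferred to Subsection~\ref{subsec:Nequals1}.
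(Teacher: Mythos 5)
Your proposal is correct and expands, in faithful detail, precisely the argument the paper gives in a single sentence: invoke Lemmas~\ref{lem:NoBoundaryDegenerations} and~\ref{lem:NoClosedCurves} to control the Gromov limit, use Proposition~\ref{prop:EmbeddedModuliSpaces} and index additivity to pin down the limiting matched pair, and glue back via the standard neck-stretching argument of \cite{InvPair} (with orbit gluing as in \cite{TorusMod}). The only superfluous remark is the mention of orientation compatibility, which is moot since the paper works over $\Field=\Zmod{2}$.
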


\begin{proof}
  With Lemmas~\ref{lem:NoBoundaryDegenerations}
  and~\ref{lem:NoClosedCurves} in place, this is a standard gluing
  argument; see~\cite[Proposition~9.6]{InvPair}.
\end{proof}

\begin{defn}
  \label{def:MatchComplex}
  Let $\HD=\Hup\#_Z\Hdown$ be a decomposition of a 
  doubly-pointed Heegaard diagram for a knot in $S^3$,
  and assume that $n>1$.
  The {\em chain complex of matched curves} is the pair $(C,\partial^{(0)})$,
  where, 
  as before, $C$ denotes the free $\Ring$-module generated by Heegaard states 
  for $\HD$ or, equivalently, matching pairs of states (as in Definition~\ref{def:MatchingPair}).
  The operator
  \[\partial^{(0)}\colon C\to C \]
  is the $\Ring$-module endomorphism specified by
  \begin{equation}
    \label{eq:DefD0}
    \partial^{(0)}(\x_1\#\x_2)=\sum_{(\y_1,\y_2)} \sum_{\{B\mid \ind(B)=1\}}
  \#\left(\UnparModMatched^B(\x_1,\y_1;\x_2,\y_2)\right)\cdot
  U^{n_\wpt(B)}V^{n_\zpt(B)} \cdot \y_1\#\y_2.
  \end{equation}
\end{defn}

\subsection{The chain complex of matched curves}
\label{subsec:MatchedComplex}

Although it is not technically necessary (it can be thought of as a
consequence of Proposition~\ref{prop:StretchNeck}, we include here a proof
that $\partial^{(0)}$ induces a differential. The methods appearing in
the proof will appear again in the proof of
Theorem~\ref{thm:PairAwithD}.

The following is a slight adaptation of~\cite[Proposition~9.16]{InvPair}:

\begin{lemma}
  \label{lem:dMatchedCompactify}
  Suppose that $\ind(B_1,\Source_1;B_2,\Source_2)=2$, then for generic
  $J$, the moduli space
  $\ModMatched^{B}(\x_1,\y_1;\x_2,\y_2;\Source_1,\Source_2)$ can be
  compactified by adding the following objects:
  \begin{enumerate}[label=(ME-\arabic*),ref=(ME-\arabic*)]
  \item two-story matched holomorphic curves
    (i.e. neither story contains curves at East or
    West infinity)
  \item
    \label{def:JJ}
    matched stories $(u^1,v^1)$ and $(u^2,v^2)$
    where the only
    non-trivial components of $v_1$ and $v_2$ are join curves, and
    where $\West(v^1)$ and $\West(v^2)$ are the two distinct length one Reeb chords
    that cover the same boundary component,
  \item 
    \label{def:OO} matched stories $(u^1,v^1)$ and $(u^2,v^2)$ where the only
    non-trivial components of $v^1$ and $v^2$ are orbit curves, and
    $\West(v^1)$ and $\West(v^2)$ are the two distinct length one Reeb chords that
    cover the same boundary component.
  \end{enumerate}
\end{lemma}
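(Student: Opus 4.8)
The plan is to follow the proof of~\cite[Proposition~9.16]{InvPair}, adapting it to account for the interior (orbit) punctures and the additional asymptotic geometry present in our setting. First I would invoke Gromov compactness for matched curves, which produces a limiting matched comb; the content of the statement is to enumerate the codimension-one strata of this compactification. By Lemma~\ref{lem:NoBoundaryDegenerations}, no boundary degenerations occur in a limit with $\ind(B_1\natural B_2)\leq 2$, and by Lemma~\ref{lem:NoClosedCurves} (using $n>1$) no closed components occur, so the limiting comb is built only from a two-story matched building together with, possibly, curves at East or West infinity. I would then argue that West infinity curves (boundary degeneration levels) cannot appear: a West infinity curve has at least one orbit puncture, and the matching condition forces a corresponding orbit puncture at the same $(s,t)$ on the other side; by the compatibility of $\Mup$ and $\Mdown$ and the argument in the proof of Lemma~\ref{lem:NoBoundaryDegenerations}, this propagates to boundary degenerations covering both $\wpt$ and $\zpt$, contradicting $n_\wpt(B_1)=0$ or $n_\zpt(B_1)=0$. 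Hence only two-story buildings and combs with East infinity curves survive.

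Next I would analyze which East infinity configurations arise in codimension one. As in~\cite{InvPair}, a curve at East infinity that is matched on both sides must be built from trivial components together with join, split, orbit, or orbit-shuffle components. The index/dimension count (combining Proposition~\ref{prop:ExpectedDimensionA} with the local computations in Examples~\ref{ex:JoinCurveEnd} and~\ref{ex:CollisionEnd}) shows that only a single nontrivial East infinity component can appear in codimension one, and that component must be a join curve or an orbit curve; split curves and orbit-shuffle curves occur in higher codimension because on the type $A$ side the packets appearing in matched curves are orbitless-or-simple and the relevant weights are minimal. Moreover, because the East infinity curve is glued simultaneously to $\Source_1$ and $\Source_2$, the Reeb asymptotics on the two sides must match; this forces the West asymptotics of $v^1$ and $v^2$ to be matching chords in the sense of Definition~\ref{def:MatchingChords}, i.e.\ (by the succinct labelling description) to carry the same label. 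For a join curve this means $\West(v^i)$ consists of the two distinct length-one chords covering a common boundary component $Z_j$ on the two sides (this is case~\ref{def:JJ}); for an orbit curve the single West chord on each side is a length-one chord covering the component containing the relevant orbit, and again matching forces these to be the two distinct such chords (case~\ref{def:OO}). The remaining possibility — a collision end on one side not accompanied by an East infinity curve — is excluded because a genuine collision changes the Reeb asymptotics and would destroy the matching with the other side unless compensated by an East infinity join curve, which returns us to case~\ref{def:JJ}.

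Finally I would assemble these observations: the boundary of $\overline{\ModMatched}^B(\x_1,\y_1;\x_2,\y_2;\Source_1,\Source_2)$ consists precisely of two-story matched holomorphic curves, matched join-curve combs as in~\ref{def:JJ}, and matched orbit-curve combs as in~\ref{def:OO}, which is the assertion. The main obstacle I anticipate is the bookkeeping in the East infinity analysis: one must check carefully that the matching condition, when combined with the dimension formula for the fibered-product moduli space (Equation~\eqref{eq:DefIndMatchedPair}) and the $\iota$-corrections from Section~\ref{sec:CurvesA}, rules out all East infinity configurations except single join or orbit curves, and in particular that the potentially troublesome higher-multiplicity join curves over $(L_jR_j)^m$ or $(R_jL_j)^m$ do not contribute in codimension one here — this is controlled exactly as in the proof of Theorem~\ref{thm:AEnds}, where only the splittings with one short piece appear. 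The transversality needed to make these strata genuine boundary facets is supplied by Lemma~\ref{lem:Transversality} and the genericity of $J$.
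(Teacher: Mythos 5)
Your overall skeleton follows the paper's: invoke Gromov compactness, use Lemma~\ref{lem:NoBoundaryDegenerations} and Lemma~\ref{lem:NoClosedCurves} to eliminate boundary degenerations and closed components, then enumerate the possible East-infinity configurations. The analysis of what survives there is where you have a genuine gap.

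The most important configuration to rule out is a join curve (equivalently, a collision) appearing on \emph{one} side while the corresponding East asymptotic on the other side is already a single length-one chord. You dismiss this by saying ``a genuine collision changes the Reeb asymptotics and would destroy the matching with the other side,'' but that reasoning is incorrect. By Definition~\ref{def:dMatched} the matching condition is imposed only on the \emph{eastmost} punctures of the two stories. If a join curve degenerates off on the $\Hdown$ side, its eastmost puncture is labelled by the joined chord $\rho_1\uplus\rho_2$, which still matches the single chord on the $\Hup$ side — so the matching condition is perfectly compatible with such a limit. In fact, in the pairing theorem of~\cite{InvPair} exactly this type of end (a one-sided join) does appear and carries algebraic meaning. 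What excludes it here is a different mechanism entirely: one must observe that $\rho_1^-$ and $\rho_2^-$ occupy both $\alpha$-arcs of a single boundary component $Z_j$ of $\Hdown$, while simultaneously $(\rho_1\uplus\rho_2)^-$ occupies one of the two $\alpha$-arcs of the corresponding $Z_j$ in $\Hup$; this contradicts the complementarity of $\alpha$-occupied sets established in Lemma~\ref{lem:BoundaryMonotone}. The paper reaches this point by a symmetric Euler-characteristic bookkeeping (combining the East curves of both sides into a single $\EastSource$, and showing that the dimension forces either $\{c_1,c_2\}=\{k,k+1\}$ — the one-sided join, ruled out by boundary monotonicity — or $c_1=c_2=k$ with $\EastSource$ a single annulus, which then decomposes as two disks giving exactly~\ref{def:JJ} or~\ref{def:OO}). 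Your proposal neither identifies this case nor supplies the boundary-monotonicity argument needed to exclude it.

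A secondary issue: you claim that split and orbit-shuffle curves are ruled out because ``the packets appearing in matched curves are orbitless-or-simple and the relevant weights are minimal.'' That is closer in spirit to Proposition~\ref{prop:EmbeddedModuliSpaces} than to anything in the proof of this lemma, and as stated it leaves the higher-multiplicity join configurations (which you flag yourself) unaddressed. The paper's Euler-characteristic inequality handles all of these uniformly: once one knows $\chi(\EastSource)\leq k$ and $k\leq c_i$, the only codimension-one contributions with $\ind=2$ are the two cases above, and the annulus case forces the two nontrivial disk components to have either two matched boundary punctures (JJ) or one matched interior puncture (OO). You would do well to adopt that counting rather than the informal weight argument.
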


 \begin{figure}[h]
 \centering
 \input{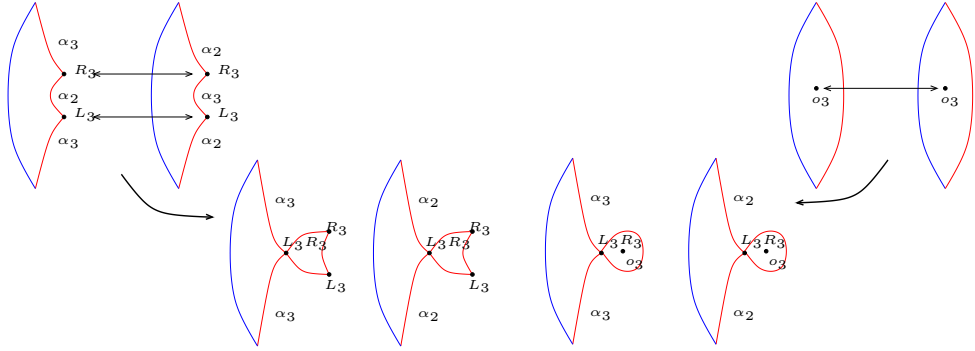}
 \caption{{\bf Cancellation of join ends with orbit ends.}}
 \label{fig:JoinOrbit}
 \end{figure}

\begin{proof}
  The main components are strongly boundary monotone by the argument
  from Lemma~\ref{lem:BoundaryMonotone}. This rules out boundary
  double points, as in~\cite[Lemma~5.56]{InvPair}.  Boundary
  degenerations are ruled out as in
  Lemma~\ref{lem:NoBoundaryDegenerations}.  Dimension considerations
  rule out closed components as in Lemma~\ref{lem:NoClosedCurves}.
  
  We follow proof of~\cite[Proposition~9.16]{InvPair}.  Suppose there
  is some matched story that appears in the boundary of the moduli
  space. We could write that matched story as a pair of stories with a
  matching condition on the eastmost punctures; 
  instead we combine the curves at East infinity
  into a single sequence, writing, more symmetrically,
  $(u_1,v_1,\dots,v_\ell,u_2)$. Let
  $(\Source_1',\EastSource_1,\dots,\EastSource_\ell,\Source_2')$
  denote the corresponding sequence of source curves.  Let
  $\EastSource=\EastSource_1\natural\dots\natural\EastSource_\ell$
  (where we glue along all boundary punctures and all interior
  punctures, as well).  There is an induced partition $P_1$ on the
  east punctures of $\Source_1$: two punctures are in the same
  partition if they are assigned to same components of
  $\EastSource$. There is a similar partition $P_2$ on the east
  punctures of $\Source_2$.  Combining Equations~\eqref{eq:DefIndMatchedPair}
  and~\eqref{eq:EulerMeasures}, we see that
  \[ \ind(B_1,\Source_1;B_2,\Source_2)=g-\chi(\Source_1)-\chi(\Source_2)+2e(B)+c+2o,\]
  where $c=|\East(\Source_1)|=|\East(\Source_2)|$
  and $o=|\IntPunct(\Source_1)|=|\IntPunct(\Source_2)|$.
  The limit curves
  $(u_1,u_2)$ live in a fibered product $\ModFlow$ whose dimension (after dividing out by the free $\R$ action) is
  given by
  \begin{align*}
    \dim(\ModFlow)&=\ind(B_1,\Source_1',P_1)+\ind(B_2,\Source_2',P_2)-k-1-2o', \\
    &= g-\chi(\Source_1')-\chi(\Source_2')+2e(B)+k-1+2o';
  \end{align*}
  where $k$ is the number of components of $\EastSource$, which agrees
  with $|P_1|=|P_2|$; and $o'=|\IntPunct(\Source_1')|=|\IntPunct(\Source_2')|$ 
  is the number of interior punctures in $\Source_1'$. By elementary topology,
  \[ \chi(\Source_1)+\chi(\Source_2)-c-2o=\chi(\Source_1')+\chi(\Source_2')+\chi(\EastSource)-c_1-c_2-2o',\]
  where $c_i=|\East(\Source_i')|$ for $i=1,2$.
  It follows that
  \[ \dim(\ModFlow)=\ind(B_1,\Source_1;B_2,\Source_2)+(\chi(\EastSource)-k)+(k-c_1)+(k-c_2)-1.\]
  Since $\chi(\EastSource)\leq k$ and $k\leq c_i$ for $i=1,2$, we conclude that
  if $\dim(\ModFlow)\geq 0$ and $\ind(B_1,\Source_1;B_2,\Source_2)=2$,
  then exactly one of the following three possibilities can occur:
  \begin{enumerate}[label=(d-\arabic*),ref=(d-\arabic*)]
  \item 
    \label{case:SplitJoin}
    all the components of $\EastSource$ are disks, and 
    $\{c_1,c_2\}=\{k,k+1\}$
  \item 
    \label{case:Annulus}
    $k=c_1=c_2$, exactly one component in $\EastSource$ is an annulus,
    and all other components are trivial strips.
  \end{enumerate}
  
  Case~\ref{case:SplitJoin} is excluded, as follows. Suppose that
  $c_1=k+1$ and $c_2=k$, so that there are two punctures on $\Source_1'$
  labelled by two chords $\rho_1$ and $\rho_2$ (for $\Hdown$)
  that are matched
  by $\EastSource$; and $\rho_1\uplus\rho_2$ is the corresponding
  chord in $\Source_2'$. Both chords $\rho_i$ have weight $1/2$, so
  clearly $\{\rho_1^-,\rho_2^-\}$ are the two $\alpha$-curves on some
  fixed boundary component, while $\rho_1\uplus\rho_2^-$ is also an
  $\alpha$-curve on the corresponding boundary component in
  $\Hup$. But this violates Lemma~\ref{lem:BoundaryMonotone},
  according to which the sets of $\alpha$-occupied curves are complementary.
  The case where $c_1=k$ and $c_2=k+1$ is excluded the same way.
  
  Consider next Case~\ref{case:Annulus}. Drop all the trivial strip components,
  letting $\EastSource_0$ be the annulus. Dropping all unstable components
  from east infinity, we have 
  $\EastSource_0=\EastSource_1\natural\dots\natural\EastSource_\ell$. 
  Obviously, we cannot have $\ell=1$: for an orbit at East infinity cannot match
  with an interior puncture. Consider next $\ell=2$.
  This can occur in two ways. Suppose
  $\EastSource=\EastSource_1\natural\EastSource_2$.  We have that
  \[ 0=\chi(\EastSource)=\chi(\EastSource_1)+\chi(\Source_2)-2o-c;\]
  where $o$ resp. $c$ denotes the number of interior resp.  boundary
  punctures in $\EastSource_1$ (which are matched with corresponding
  punctures in $\EastSource_2$).  Clearly, this forces $\EastSource_1$
  and $\EastSource_2$ to be disks, with either $o=0$ and $c=2$ or
  $o=1$ and $c=0$. The first case is Case~\ref{def:JJ}, and the second
  is Case~\ref{def:OO}.  The same Euler characteristic considerations
  show that decompositions with $\ell>2$ have unstable components.

  It remains to consider the case where there is more than one story.
  In that case, by the additivity of the index, it follows that there are 
  two stories, both have index $1$, and neither has curves at East infinity.
\end{proof}

\begin{remark}
  In~\cite[Proposition~9.16]{InvPair}, Case~\ref{case:Annulus} is
  ruled by the existence of a basepoint on the boundary, while
  Case~\ref{case:SplitJoin} does occur,
  unlike in the above proof, where it is ruled out by combinatorics
  of the Reeb chords. 
\end{remark}

\begin{lemma}
\label{lem:Join}
Let $(u^1,v^1)$ and $(u^2,v^2)$ be a matched story with
sources $(\Source_1,\EastSource_1)$ and $(\Source_2,\EastSource_2)$
with the property that for $i=1,2$, 
the only non-trivial component of $v^i$  is a
join curve forming a length one Reeb chord, then there are arbitrary
small open neighborhoods
$U$ of $(u^1,v^1)\times (u^2,v^2)$
in $\ModFlow^{B_1}(\x_1,\y_1,\Source_1\natural\EastSource_1)\times
\ModFlow^{B_2}(\x_2,\y_2,\Source_2\natural\EastSource_2)$
so that
$\partial{\overline U}$ 
meets 
$\ModFlow^B(\x_1,\y_1,\Source_1\natural\EastSource_1;
\x_2,\y_2;\Source_2;\Source_2\natural\EastSource_2)$ in an odd number of points.
The same conclusion holds if for each $i=1,2$, the only non-trivial component
of $v^i$ is an orbit curve.
\end{lemma}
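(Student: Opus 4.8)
The plan is to prove this as a gluing statement in the matched (fibered-product) setting, bootstrapping from the one-sided gluing results Proposition~\ref{prop:JoinCurve} and Proposition~\ref{prop:OrbitCurve}. Recall from Definition~\ref{def:MatchedPair} that $\ModMatched^B$ is the fibered product $\ModFlow^{B_1}(\x_1,\y_1;\Source_1\natural\EastSource_1)\times_{\ev}\ModFlow^{B_2}(\x_2,\y_2;\Source_2\natural\EastSource_2)$ over the product of the $[0,1]\times\R$-evaluations at all matched punctures, and from Lemma~\ref{lem:dMatchedCompactify} that in the situation at hand $\ind(B_1\natural B_2)=2$ and the matched story $(u^1,v^1)\times(u^2,v^2)$ is one of the boundary strata of type~\ref{def:JJ} (join) or~\ref{def:OO} (orbit).

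First I would record the local regularity. Since the only non-trivial component of $v^i$ is a join curve (resp.\ orbit curve) with planar source, $v^i$ is a transversely cut out point of its east-infinity moduli space (east-infinity moduli spaces of curves all of whose components are disks are regular). By Lemma~\ref{lem:Transversality} the fibered product is transversely cut out near $(u^1,v^1)\times(u^2,v^2)$, and Lemmas~\ref{lem:NoBoundaryDegenerations} and~\ref{lem:NoClosedCurves} guarantee that no boundary degenerations or closed components intervene in a small enough neighborhood, so that the local picture is governed purely by the gluing of the join (resp.\ orbit) curves to the main curves on the two sides.

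Next I would glue on each side separately. On the $\Hdown$-side the main curve $u^1$ has two east punctures labelled by the two halves $L_i,R_i$ of a length-one chord on some $Z_i$ (resp.\ a single east puncture labelled by the orbit $\orb_i$), and Proposition~\ref{prop:JoinCurve} (resp.\ Proposition~\ref{prop:OrbitCurve}) identifies a smeared neighborhood of $(u^1,v^1)$ in $\ModFlow^{B_1}(\x_1,\y_1;\Source_1\natural\EastSource_1)$ with a product of a neighborhood of $u^1$ in its moduli space and a gluing interval $[0,1)$, on which $\compactev_{p,q}$ (resp.\ $\overline{s\circ\ev_{\{p\}}}$) is proper near the end and of odd degree; the same holds on the $\Hup$-side, with the matching chords (resp.\ the matching orbit). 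The correspondence $\psi$ of Definition~\ref{def:MatchedPair} pairs the two half-chord punctures of $u^1$ with those of $u^2$ compatibly with the join orders on the two sides --- which is exactly the combinatorial constraint identified in Lemma~\ref{lem:dMatchedCompactify} forcing the west punctures of $v^1$ and $v^2$ to be the two halves of a common boundary component. Imposing on the glued curves the matching conditions coming from all the remaining punctures cuts the product of the two smeared neighborhoods down to a surface near the degenerate story, by Lemma~\ref{lem:Transversality} and the argument of Lemma~\ref{lem:BoundaryMonotone}, and leaves exactly one active matching equation near the gluing stratum: the equality of the two evaluation functions whose odd-degree behaviour is supplied by the one-sided propositions.

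It remains to count. For a sufficiently small neighborhood $U$ of $(u^1,v^1)\times(u^2,v^2)$ in the product, $\partial\overline U\cap\ModMatched^B$ is counted near the gluing stratum by the degree of the restriction of this last matching equation to the boundary of the joint gluing region, which mod $2$ is the product of the two one-sided odd degrees, hence odd. The orbit-curve case is word for word the same, with Proposition~\ref{prop:JoinCurve} replaced by Proposition~\ref{prop:OrbitCurve} and $\compactev_{p,q}$ by $\overline{s\circ\ev_{\{p\}}}$. The step I expect to be the main obstacle is the bookkeeping in the third paragraph: checking that after imposing all the inert matching conditions exactly one active equation survives, and that the fibered product meets the gluing stratum transversally in precisely the shape dictated by the one-sided propositions --- so that the boundary count really is the product of the two one-sided odd degrees rather than something more intricate coming from the interaction of the $s$- and $t$-coordinates in the matching.
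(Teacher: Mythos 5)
Your approach is essentially the paper's own: the published proof is a one-sentence citation of Propositions~\ref{prop:JoinCurve} and~\ref{prop:OrbitCurve}, and your write-up unpacks that citation in the fibered-product language. The ``main obstacle'' you flag in the closing paragraph is indeed the crux, and it comes down to a sign check that your argument does not quite supply. Writing $\delta_i\in[0,\epsilon)$ for the $t$-separation of the two matched half-chord punctures (in the join case) or for $1-s$ at the matched orbit puncture (in the orbit case) produced by the one-sided gluings, the matched locus near the comb is governed by $\{\delta_1=\delta_2\}\subset[0,\epsilon)^2$; this is a half-open arc emanating from the origin, with an odd boundary count, only if both $\delta_i$ grow away from $0$ in the same direction as the gluing parameters leave $0$ --- otherwise the locus is the origin alone and the comb is not an end at all. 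That the signs agree is forced by the conventions: the matching of Definition~\ref{def:MatchedPair} pairs same-labelled punctures across the two sides (Definition~\ref{def:MatchingChords}), and both length-one chords $\West(v^1)$ and $\West(v^2)$ factor as $L_i\uplus R_i$ in the labelling conventions of Figures~\ref{fig:ChordNames} and~\ref{fig:ChordNamesA}, so $\psi$ sends top to top and bottom to bottom and the two $\delta_i$ vary in the same sense. With this in hand, your product-of-odd-degrees count is correct: the number of branches of $\{\delta_1=\delta_2\}$ at the origin is the product of the two one-sided local degrees, hence odd, which is what $\partial\overline{U}$ records.
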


\begin{proof}
  If $(u_1,v_1)$ and $(v_2,v_2)$ is a matched story where the
  only non-trivial components of $v^1$ and $v^2$ are join curves,
  the result follows from Proposition~\ref{prop:JoinCurve}.
  When $v^1$ and $v^2$ are orbit curves, the result follows from
  Proposition~\ref{prop:OrbitCurve}.
  See Figure~\ref{fig:DSquaredZero}.
\end{proof}

 \begin{figure}[h]
 \centering
 \input{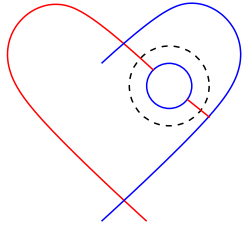}
 \caption{{\bf Cancellation of join ends with orbit ends; an example.}}
 \label{fig:DSquaredZero}
 \end{figure}

\begin{prop}
  \label{prop:dMatchedZqZero}
  The endomorphism $\partial^{(0)}$ is a differential.
\end{prop}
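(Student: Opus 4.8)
The plan is to show $\partial^{(0)}\circ \partial^{(0)}=0$ by the usual strategy of counting ends of one-dimensional moduli spaces of matched curves. Fix matching pairs of states $(\x_1,\x_2)$ and $(\z_1,\z_2)$, and a shadow $B\in\doms(\x_1\#\x_2,\z_1\#\z_2)$ with $\ind(B)=2$ and at least one of $n_\wpt(B)$, $n_\zpt(B)$ vanishing. By Proposition~\ref{prop:EmbeddedModuliSpaces} and Lemma~\ref{lem:Transversality}, for generic $J$ the embedded matched moduli space $\UnparModMatched^B(\x_1,\z_1;\x_2,\z_2)$ is a one-dimensional manifold. First I would invoke Lemma~\ref{lem:dMatchedCompactify}, which provides its compactification and enumerates the three kinds of ideal objects that can appear: two-story matched curves (\ref{def:JJ} is excluded here as it is one of the others — rather, type~(ME-1)), matched join-curve ends (ME-2), and matched orbit-curve ends (ME-3). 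Since $\ind(B_1\natural B_2)=2$, Lemmas~\ref{lem:NoBoundaryDegenerations} and~\ref{lem:NoClosedCurves} guarantee that no $\beta$- or $\alpha$-boundary degenerations, and no closed components, appear in the Gromov limit (using $n>1$; the $n=1$ case is deferred to Section~\ref{subsec:Nequals1}). Because $\UnparModMatched^B$ is a compact one-manifold, the total number of its ends is even.

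Next I would identify each type of end with the appropriate algebraic contribution. The two-story ends of type (ME-1) are precisely the terms in $\partial^{(0)}\circ\partial^{(0)}(\x_1\#\x_2)$: a two-story matched curve factors as a matched curve from $(\x_1,\x_2)$ to some intermediate matching pair $(\y_1,\y_2)$ followed by one from $(\y_1,\y_2)$ to $(\z_1,\z_2)$, with the $U$- and $V$-exponents adding (since $n_\wpt$ and $n_\zpt$ are additive under juxtaposition of homology classes), so their total count is exactly the $\z_1\#\z_2$-coefficient of $(\partial^{(0)})^2(\x_1\#\x_2)$, weighted by $U^{n_\wpt(B)}V^{n_\zpt(B)}$. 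It remains to show the join-curve ends and orbit-curve ends cancel each other in pairs. Here the key combinatorial point, already used in the proof of Lemma~\ref{lem:dMatchedCompactify}, is that in a matched join (or orbit) end the two West chords appearing are the \emph{two distinct length-one Reeb chords covering a common boundary component}, say $L_jR_j$ and $R_jL_j$ on $Z_j$. Using Lemma~\ref{lem:Join}: near a matched join-curve end the moduli space looks like a half-open interval, and near a matched orbit-curve end it likewise looks like a half-open interval; moreover Proposition~\ref{prop:JoinCurve} and Proposition~\ref{prop:OrbitCurve} each contribute an \emph{odd} number of boundary points. The cancellation is then the matched-curve analogue of the mechanism appearing in the proof of Proposition~\ref{prop:CurvedTypeA} and illustrated in Figure~\ref{fig:JoinOrbit} (and Figure~\ref{fig:DSquaredZero}): given a fixed shadow $B$ with a length-one Reeb-chord constraint on $Z_j$ at a fixed $t$-level, the moduli space $\UnparModMatched^B(\x_1,\z_1;\x_2,\z_2)$ built from the constraint $L_jR_j$ has one join end and one orbit end; the one built from $R_jL_j$ likewise; and these four ideal endpoints are matched up so that, modulo $2$, join ends cancel against orbit ends. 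Carrying this out amounts to checking that the $U$- and $V$-exponents agree across the cancelling pair — they do, because for a length-one Reeb-chord constraint the associated shadows all have the same multiplicities at $\wpt$ and $\zpt$, and no odd Reeb orbit is created or destroyed.

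Assembling: the even count of total ends, minus the (even, by pairing) count of join-plus-orbit ends, forces the count of two-story ends to be even; i.e. the $\z_1\#\z_2$-coefficient of $(\partial^{(0)})^2(\x_1\#\x_2)$ is zero in $\Field$, for each $\z_1\#\z_2$ and each $B$. Summing over $B$ and over target states gives $(\partial^{(0)})^2=0$ as an $\Ring$-module endomorphism. The main obstacle I anticipate is the bookkeeping in the pairing step — precisely matching a join-curve end of one matched moduli space with an orbit-curve end of the (possibly different) matched moduli space obtained by switching the length-one chord on $Z_j$, and verifying the $U,V$-exponents are preserved under this switch. This is exactly the matched-curve version of the argument already made for $\Ainfty$-modules in Proposition~\ref{prop:CurvedTypeA}, so it should go through with only notational overhead; the transversality and gluing inputs (Lemmas~\ref{lem:Transversality}, \ref{lem:dMatchedCompactify}, \ref{lem:Join}) are already in place.

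\begin{proof}
  Fix matching pairs $(\x_1,\x_2)$, $(\z_1,\z_2)$ and a shadow
  $B\in\doms(\x_1\#\x_2,\z_1\#\z_2)$ with $\ind(B)=2$, and suppose at
  least one of $n_\wpt(B)$, $n_\zpt(B)$ vanishes (homology classes with
  both multiplicities positive contribute nothing, since $UV=0$ in
  $\Ring$). By Proposition~\ref{prop:EmbeddedModuliSpaces} and
  Lemma~\ref{lem:Transversality}, for generic $J$ the moduli space
  $\UnparModMatched^B(\x_1,\z_1;\x_2,\z_2)$ is a smooth one-manifold; by
  Lemmas~\ref{lem:NoBoundaryDegenerations} and~\ref{lem:NoClosedCurves}
  (using $n>1$), no boundary degenerations or closed components appear
  in its Gromov limit. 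Thus, by Lemma~\ref{lem:dMatchedCompactify}, it
  has a compactification to a compact one-manifold whose boundary
  consists of two-story matched curves, matched join-curve ends of
  type~\ref{def:JJ}, and matched orbit-curve ends of
  type~\ref{def:OO}. Hence the total number of these ends is even.

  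Two-story ends are, by additivity of $\ind$, $n_\wpt$ and $n_\zpt$
  under juxtaposition of homology classes, in bijection with the terms
  appearing in the $\z_1\#\z_2$-coefficient of
  $(\partial^{(0)}\circ\partial^{(0)})(\x_1\#\x_2)$, each counted with
  the weight $U^{n_\wpt(B)}V^{n_\zpt(B)}$.

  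It remains to see that the join-curve ends and orbit-curve ends
  cancel each other modulo $2$. By Lemma~\ref{lem:dMatchedCompactify},
  in each such end the relevant West chords are the two distinct
  length-one Reeb chords $\longchord_j$ covering a common boundary
  component $Z_j$. Near a matched join-curve end,
  Proposition~\ref{prop:JoinCurve} shows that the moduli space is a
  half-open interval, and the corresponding end of
  $\UnparModMatched^B$ is reached an odd number of times; near a
  matched orbit-curve end the analogous statement holds by
  Proposition~\ref{prop:OrbitCurve}. As in the proof of
  Proposition~\ref{prop:CurvedTypeA}, for a fixed shadow $B$ carrying
  a length-one Reeb-chord constraint on $Z_j$ at a fixed $t$-level,
  the matched moduli space built from the constraint $L_j R_j$ has one
  join-curve end and one orbit-curve end, and the one built from $R_j
  L_j$ likewise; these four ideal endpoints are matched so that join
  ends cancel orbit ends in pairs. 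The $U$- and $V$-exponents agree
  across each such pair: the shadows all have the same multiplicity at
  $\wpt$ and at $\zpt$, and no odd Reeb orbit is created or removed.
  (See Figure~\ref{fig:JoinOrbit}.)

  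Therefore the count of two-story ends is even, i.e.\ the
  $\z_1\#\z_2$-coefficient of $(\partial^{(0)})^2(\x_1\#\x_2)$ in each
  homology class $B$ with $\ind(B)=2$ vanishes. Summing over all such
  $B$ and all target states shows $\partial^{(0)}\circ\partial^{(0)}=0$
  as an $\Ring$-module endomorphism.
\end{proof}
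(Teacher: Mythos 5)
Your proof is correct and follows essentially the same route as the paper's: compactify $\UnparModMatched^B$ via Lemma~\ref{lem:dMatchedCompactify}, identify two-story ends with $(\partial^{(0)})^2$, and cancel the join-curve ends (ME-2) against the orbit-curve ends (ME-3) via Lemma~\ref{lem:Join}. The paper's proof is considerably terser at the last step — it simply asserts the cancellation, citing Lemma~\ref{lem:Join} — while you supply the picture of how the $L_jR_j$ and $R_jL_j$ constraints pair the join and orbit ideal endpoints and why the $U,V$-exponents agree, which is a reasonable elaboration of the mechanism the paper leaves implicit (cf.\ Figure~\ref{fig:JoinOrbit}).
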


\begin{proof}
  Fix $\x_1\#\x_2, \y_1\#\y_2\in\States(\Hdown\#\Hup)$ and $B\in
  \pi_2(\x_1\#\x_2,\y_1\#\y_2)$ with $\ind(B)=2$.  Consider the ends
  of $\ModFlow^B(\x_1,\x_2;\y_1,\y_2)$, consisting of embedded,
  matched holomorphic curves representing $B$. By
  Lemma~\ref{lem:dMatchedCompactify}, ends of these moduli spaces are
  either two-story matched curves; of they correspond to
  matched combs, with height two, of two types (with join
  curves or orbit curves at East infinity).  These two types of ends
  cancel in pairs according to Lemma~\ref{lem:Join}; so the total
  number of two-story ends is even, and those count the $\y_1\#\y_2$
  coefficient of
  $\partial^{(0)}\circ \partial^{(0)}(\x_1\#\x_2)$.
\end{proof}

We identify $(C,\partial^{(0)})$ with the Heegaard-Floer chain
complex $\CFKsimp(\HD)$ associated to the doubly-pointed Heegaard
diagram $\HD$, via the following adaptation
of~\cite[Theorem~9.10]{InvPair}:

\begin{thm}
  \label{thm:NeckStretch}
  Let $\HD$ be a Heegaard diagram representing $K$, equipped with a
  decomposition $\HD=\Hdown\cup_Z \Hup$ as a union of an upper and a
  lower diagram along $2n$ circles, with $n>1$.
  For suitable choices of almost-complex structures $J$ used to define
  $\CFKsimp(K)$, there is an isomorphism (of chain complexes)
  $\CFKsimp(K)\cong (C,\partial^{(0)})$.
\end{thm}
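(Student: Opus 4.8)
The plan is to follow the standard neck-stretching argument of \cite[Theorem~9.10]{InvPair}, adapted to the present setting where the decomposing circles $Z$ bound the cylindrical ends used in Lipshitz's formulation. First I would choose the almost-complex structure $J$ on $\HD = \Hdown \cup_Z \Hup$ to be of the form used for stretching the neck along $Z$: fix admissible almost-complex structures $J_1$ on $\Hdown$ and $J_2$ on $\Hup$ (as in Definition~\ref{def:AdmissibleAlmostCx}), and build a family $J(T)$ that inserts a neck of length $T$ along $Z$, interpolating so that away from the neck $J(T)$ agrees with $J_1$ and $J_2$ respectively. For each such $T$, the Heegaard Floer complex $\CFKsimp(\HD, J(T))$ is, as an $\Ring$-module, freely generated by Heegaard states of $\HD$, which by Definition~\ref{def:MatchingPair} are in canonical bijection with matching pairs $(\x_1,\x_2)$ of states; this gives the identification of underlying modules. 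Moreover, the identification is compatible with the bigradings: the Alexander grading $\Agr$ and the Maslov/$\Delta$-grading on $\CFKsimp(\HD)$, under the connected-sum formula for $\Sigma = \Sigma_1 \#_Z \Sigma_2$, decompose additively into the contributions defined on the two sides (Proposition~\ref{prop:DefineBigradingD} and Lemma~\ref{lem:MgrDefined}), so that the grading on $(C,\partial^{(0)})$ matches the one on $\CFKsimp(\HD)$.

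The heart of the argument is to show that for $T$ sufficiently large, every index-one homology class $B\in\doms(\x,\y)$ with $n_{\wpt}(B)=0$ or $n_{\zpt}(B)=0$ (these are the only classes that contribute, since $UV=0$ in $\Ring$) has its moduli space $\UnparModFlow^B(\x_1\#\x_2,\y_1\#\y_2)$ identified with the embedded matched moduli space $\UnparModMatched^B(\x_1,\y_1;\x_2,\y_2)$. This is precisely the content of Proposition~\ref{prop:StretchNeck}: one direction is a Gromov-compactness argument showing that as $T\to\infty$, any sequence of $J(T)$-holomorphic curves converges (after passing to a subsequence) to a matched comb, using Lemmas~\ref{lem:NoBoundaryDegenerations} and~\ref{lem:NoClosedCurves} to rule out boundary degenerations and closed components in the limit (which is where the hypothesis $n>1$ and the compatibility of $\Mup$ with $\Mdown$ enter); the other direction is a gluing argument, assembling a matched pair of curves (with the matching condition on both the Reeb chords \emph{and} the Reeb orbits) into an honest $J(T)$-holomorphic curve in $\HD$ for $T$ large, using that both sides are transversely cut out (Lemma~\ref{lem:Transversality}). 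Since by Proposition~\ref{prop:StretchNeck} this is a bijection of compact zero-dimensional manifolds, the mod-$2$ counts agree, so $\partial^{(0)}$ as defined in Equation~\eqref{eq:DefD0} counts exactly the same curves (weighted by the same $U^{n_\wpt(B)}V^{n_\zpt(B)}$) as the Heegaard Floer differential of Equation~\eqref{eq:OriginalComplex} for the stretched $J(T)$. This establishes $\CFKsimp(K) \cong (C,\partial^{(0)})$ as chain complexes over $\Ring$; that $(C,\partial^{(0)})$ is indeed a complex is reconfirmed independently by Proposition~\ref{prop:dMatchedZqZero}.

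The main obstacle I expect is the gluing step of Proposition~\ref{prop:StretchNeck}, specifically the interplay between the neck-stretching parameter and the two kinds of matching (chords at east infinity and Reeb orbits at middle infinity). In \cite{InvPair} one glues only Reeb chords across the neck; here the punctures $p_i$ associated to the $\beta$-separated boundary circles give rise to Reeb orbits that must also be matched, and the corresponding gluing theory (connected sum of punctured surfaces along interior punctures, with the orbit asymptotics matched in $(s,t)$-position) is of the type handled in \cite{TorusMod}, but the presence of $\beta$-boundary degenerations — already accounted for in Lemmas~\ref{lem:NoBoundaryDegenerations} and~\ref{lem:NoClosedCurves} — makes the analysis of what can appear in the limit more delicate than in the bordered case. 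Once those lemmas are in hand, however, the gluing is a routine (if technical) application of the standard machinery, exactly as in \cite[Proposition~9.6]{InvPair}. A secondary point requiring care is the case $n=1$, where closed components (spheres) can appear in zero-dimensional moduli spaces and the identification of underlying modules works slightly differently; this is deferred to Section~\ref{subsec:Nequals1} and so lies outside the scope of this theorem as stated ($n>1$).
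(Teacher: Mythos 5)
Your proposal is correct and takes essentially the same route as the paper: the paper's proof is a one-line citation to Lipshitz's cylindrical reformulation together with Proposition~\ref{prop:StretchNeck}, and your argument is a careful unpacking of exactly those two ingredients (neck-stretching, identification of generators via matching pairs, and matched moduli spaces via Lemmas~\ref{lem:NoBoundaryDegenerations} and~\ref{lem:NoClosedCurves}). Your remarks about the Reeb-orbit matching being the new wrinkle beyond \cite{InvPair}, and about $n=1$ being excluded here, are accurate and well placed.
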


\begin{proof}
  This follows from Lipshitz's reformulation of Heegaard Floer
  homology (\cite[Theorem~2]{LipshitzCyl}) and
  Proposition~\ref{prop:StretchNeck}.
\end{proof}

\subsection{Self-matched curves}
\label{subsec:SelfMatched}

\begin{defn}
  \label{def:SelfMatched}
  Let $\Source_1$ be a decorated source.  Partition
  let $\IntPunctEv(\Source_1)$ resp. $\IntPunctOdd(\Source_1)$
  denote the set of interior punctures of $\Source_1$
  that are labelled by even resp. odd orbits.
  A {\em self-marked source} is a decorated
  source, together with an injection $\phi\colon \IntPunctEv(\Source_1)\to
  \East(\Source_1)$ with the property that if $p\in\IntPunctEv(\Source_1)$
  is marked by some orbit $\orb_j$, then $\phi(p)$ is marked by a length
  one chord that covers the boundary component $Z_k$ with
  $\{j,k\}\in \Mup$. A {\em self-matched curve} $u$ is an element
  $u\in\ModFlow^{B_1}(\x,\y;\Source_1,\phi)$, subject to the following additional
  constraints: for each puncture $p\in\IntPunctEv(\Source_1)$,
  \begin{equation}
    \label{eq:SelfMatching}
    t\circ u(p)=t\circ u(\phi(p)).
  \end{equation}
\end{defn}

Note although ``self-matched curves'' are supported in $\Hdown$, the
matching condition depends on $\Hup$, through its induced matching
$\Mup$.

\begin{defn}
  \label{def:SelfMatchedCurvePair}
  A {\em self-matched curve pair} consists of the following data:
  \begin{itemize}
  \item a self-matched source
    $(\Source_1,\phi\colon \IntPunctEv(\Source_1)\to \East(\Source_1))$
  \item a marked source $\Source_2$
  \item an injection $\psi\colon \AllPunct(\Source_2)\to
    \East(\Source_1)$, where
    $\AllPunct(\Source_2)=\IntPunct(\Source_2)\cup\East(\Source_2)$
  \item a pseudo-holomorphic self-matched curve $u_1$ with source 
    $\Source_1$
    in $\Hdown$ and 
  \item a pseudo-holomorphic curve $u_2$ with source $\Source_2$ in $\Hup$,
  \end{itemize}
  satisfying the following properties
  \begin{enumerate}[label=(smpc-\arabic*),ref=(smcp-\arabic*)]
  \item 
    \label{smcp:1to1}
    The map $\psi$ is a one-to-one correspondence between
    $\East(\Source_2)$ and $\East(\Source_1)\setminus
    \phi(\IntPunctEv(\Source_2))$.
  \item If $p\in \IntPunct(\Source_2)$, is labelled by an 
    orbit $\orb_j$, then $\phi(p)$ is labelled by 
    a length one Reeb chord that covers the boundary component of $\Zin_j$.
  \item If $p\in\East(\Source_2)$, then
    the name of the Reeb chord in $\Hup$ marking $p$
    is the same as the name of the Reeb chord in $\Hdown$ marking $\psi(p)$.
  \item For  each puncture $q\in \AllPunct(\Source_2)$:
    \begin{equation}
      \label{eq:SelfMatchedCurvePair}
      t\circ u_1(\psi(q))=t\circ u_2(q).
    \end{equation} 
  \end{enumerate}
  Let
  $\ModMatchedChanged^{B_1,B_2}(\x,\y;\Source_1,\Source_2,\phi,\psi)$
  denote the moduli space of self-matched curve pairs. 
\end{defn}

Note that the even orbits in $\Source_1$ are constrained by the
self-matching condition; and the odd orbits in $\Source_1$ are not constrained by any additional condition.

Let $(u_1,u_2)$ be a self-matched curve pair with sources $\Source_1$
and $\Source_2$. Let $o_1^+$ resp. $o_1^-$ denote the number of even
resp. odd orbits in $\Source_1$; let $c_i$ denote the number of
boundary punctures in $\Source_i$. For example, note that
$c_1=c_2+o_2+o_1^+$. 

Given homology classes $B_1\in\pi_2(\x_1,\y_1)$ and
$B_2\in\pi_2(\x_2,\y_2)$, we construct a corresponding class 
$B=B_1\# B_2\in\pi_2(\x_1\#\y_1,\x_2\#\y_2)$, as follows. First, we
obtain a homology class $B_2'$ from $B_2$ by summing, for each
puncture in $\Source_1$ labelled by some orbit $\orb_j$ with $f(j)=2k-1$,
all the components of $\Sigma_2\setminus \betas$ that contain boundary
components $\Zout_\ell$ with $f(\ell)\leq 2k$. Similarly, we obtain a homology
class $B_1'$ from $B_1$ by adding, for each puncture in $\Source_1$
labelled by $\orb_j$ with $f(j)=2k-1$, all the components of
$\Sigma_1\setminus\betas$ that contain boundary components $\Zin_\ell$
with
$f(\ell)\leq 2k$.
Let $B_1\# B_2=B_1'\natural B_2'$. 

We formalize now the expected dimension of the moduli space of pairs
of curves, with the time constraints coming from
Equations~\eqref{eq:SelfMatching} and~\eqref{eq:SelfMatchedCurvePair}.

\begin{defn}
  For a self-matched curve pair
  with
  $\chi(\Source_i)=\chiEmb(B_i)$, define the {\em index of the self-matched curve pair} by
  \[ \ind^{\sharp}(B_1,\Source_1;B_2,\Source_2)
  = \ind(B_1,\Source_1)+\ind(B_2,\Source_2)-c_1 \]
\end{defn}

\begin{prop}
  For a self-matched curve pair $(u_1,u_2)$, as above the domain
  $B_1\sharp B_2$ has the following properties:
  \begin{align*}
    B_1\sharp B_2&\in \doms(\x_1\#\x_2,\y_1\#\y_2) \\
    n_\wpt(B_1\sharp B_2)&=n_\wpt(B_1)+o^-_1 \\
    n_\zpt(B_1\sharp B_2)&=n_\zpt(B_1) \\
  \end{align*}
  Moreover, if $\chi(\Source_i)=\chiEmb(B_i)$ for $i=1,2$, then
  \[ \ind(B_1\sharp B_2)=\ind^{\sharp}(B_1,\Source_1;B_2,\Source_2).\]
\end{prop}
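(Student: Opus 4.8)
The statement is a bookkeeping lemma relating the combinatorics of a self-matched curve pair $(u_1,u_2)$ — living in $\Hdown$ and $\Hup$ respectively, with the even orbits of $\Source_1$ tied to long chords via $\phi$ and the punctures of $\Source_2$ tied to punctures of $\Source_1$ via $\psi$ — to the homology class $B_1\sharp B_2$ on the glued diagram $\HD=\Hdown\#\Hup$. The plan is to verify each of the four assertions in turn, the first three being direct consequences of the explicit definition of $B_1\sharp B_2 = B_1'\natural B_2'$, and the last being an Euler-characteristic/Euler-measure count identical in spirit to Proposition~\ref{prop:EmbeddedModuliSpaces}.

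First I would check that $B_1\sharp B_2\in\doms(\x_1\#\x_2,\y_1\#\y_2)$. Recall that $B_1'$ is obtained from $B_1$ by adjoining, for each odd-orbit puncture $\orb_j$ with $f(j)=2k-1$, all components of $\Sigma_1\setminus\betas$ containing incoming boundary circles $\Zin_\ell$ with $f(\ell)\leq 2k$; similarly for $B_2'$ on the outgoing side. One needs to see that $\partial_\alpha(B_1'\natural B_2')=(\y_1\#\y_2)-(\x_1\#\x_2)$ and that the $\beta$-boundary is minus that. Since adjoining whole components of $\Sigma_i\setminus\betas$ changes neither $\partial_\alpha$ nor $\x_i,\y_i$, and the $\alpha$-circles of $\HD$ are formed by gluing $\alpha$-arcs of $\Hdown$ to $\alpha$-arcs of $\Hup$ (Proposition~\ref{prop:ConstructDiagram}), the key point is that along each glued circle $Z_i$ the local multiplicities of $B_1'$ and $B_2'$ agree, so the two pieces assemble to a genuine two-chain on $\Sigma$; this agreement is exactly arranged by the construction of $B_i'$ — the weight of $B_1$ at $Z_i$ equals the weight of $B_2$ at $Z_i$ (a running hypothesis, via the matching condition on $u_1,u_2$) and the component-adjoining recipe is symmetric in $f$. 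The point-measure computations $n_\wpt(B_1\sharp B_2)=n_\wpt(B_1)+o_1^-$ and $n_\zpt(B_1\sharp B_2)=n_\zpt(B_1)$ then come from tracking which adjoined components of $\Sigma_1\setminus\betas$ resp. $\Sigma_2\setminus\betas$ contain $\wpt$ or $\zpt$: the component ${\mathcal B}_{\{j\}}$ containing $\wpt$ is adjoined to $B_1'$ once for every odd orbit $\orb_j$ with $f(j)$ odd and $\leq$ the relevant cutoff, i.e. once per odd orbit in $\Source_1$, giving the $+o_1^-$; and $\zpt$ lies in the component indexed by $f^{-1}(2n-1)$, which by the recipe (``$f(\ell)\leq 2k$'' never reaches $2n-1$ unless forced by the very last odd orbit, which crosses $\zpt$ only through $V$-terms that do not occur here because the $f(j)=1$ special orbit is odd) is never adjoined, leaving $n_\zpt$ unchanged. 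I would make this precise by the same induction on $k$ (indexing odd orbits by their $f$-values $1,3,\dots,2n-1$) used in Lemma~\ref{lem:FiniteSum}.

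Finally, for the index identity $\ind(B_1\sharp B_2)=\ind^{\sharp}(B_1,\Source_1;B_2,\Source_2)$, I would argue exactly as in Proposition~\ref{prop:EmbeddedModuliSpaces}: write out $\ind(B_1\sharp B_2)=e(B_1\sharp B_2)+n_\x(B_1\sharp B_2)+n_\y(B_1\sharp B_2)$ and compare with $\ind(B_1,\Source_1)+\ind(B_2,\Source_2)-c_1$, using the additivity of Euler measure under the adjoining of components (each adjoined component contributes $1$ to $e$ and $1$ to $P$, hence $2$ to the index, exactly as in Lemma~\ref{lem:GradingsWellDefined}), the boundary-connected-sum formula $e(B_1'\natural B_2')=e(B_1')+e(B_2')-2\weight$ from Equation~\eqref{eq:EulerMeasures}, the relation $c_1=c_2+o_2+o_1^+$ among the puncture counts, the Euler-characteristic formula $\chi(\Source_1\natural_\psi\Source_2)=\chi(\Source_1)+\chi(\Source_2)-2o-c$ from Equation~\eqref{eq:EulerCharacteristics} (adapted to the self-matched gluing, where the even orbits of $\Source_1$ are glued to long-chord punctures rather than to orbit punctures of $\Source_2$), and the hypothesis $\chi(\Source_i)=\chiEmb(B_i)$ together with the fact that, in the embedded case, every orbit appearing has length $1$ and every chord length $1/2$, so the weight/Euler-measure correction terms all collapse. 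The main obstacle will be the careful accounting in this last step: one must be scrupulous about which punctures of $\Source_1$ are consumed by $\phi$ (the even orbits), which by $\psi$ (the punctures of $\Source_2$), and which remain free (the odd orbits), and correspondingly which components get adjoined on each side and how the long chords introduced by $\phi$ contribute to $e(B_1)$ versus $e(B_1\sharp B_2)$ — this is where the $-c_1$ in $\ind^\sharp$ (rather than the $-c-2o$ of the ordinary matched index) enters, and matching it against the glued-diagram index requires keeping the $o_1^\pm$ bookkeeping straight. Once that is set up correctly the identity falls out of linear algebra, just as in Proposition~\ref{prop:EmbeddedModuliSpaces}.
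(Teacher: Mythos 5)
Your overall strategy matches the paper's: the paper's entire proof is the single observation that adding a component $\mathcal{D}$ of $\Sigma_i\setminus\betas$ to $B_i$ raises the index by $2$, and your plan invokes exactly this fact together with some Euler-measure bookkeeping. However, your justification of the first claim rests on a false premise. You write that the weight of $B_1$ at $Z_i$ equals the weight of $B_2$ at $Z_i$ as ``a running hypothesis, via the matching condition on $u_1,u_2$'' — but that is the running hypothesis for \emph{ordinary} matched pairs (Definition~\ref{def:MatchedPair}), not for \emph{self}-matched pairs. In Definition~\ref{def:SelfMatchedCurvePair} the odd orbits of $\Source_1$ have no counterpart in $\Source_2$, and each even orbit of $\Source_1$ is paired by $\phi$ with a long chord of $\Source_1$ itself, so the raw weights of $B_1$ and $B_2$ do \emph{not} agree along $Z$; the whole point of passing from $B_i$ to $B_i'$ is to repair exactly this mismatch. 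Asserting the agreement as a hypothesis and then saying ``the component-adjoining recipe is symmetric in $f$'' does not constitute a proof that $B_1'$ and $B_2'$ actually glue; one has to verify, boundary circle by boundary circle in $f$-order, that the deficits created by unmatched odd orbits and by the $\phi$-pairings are precisely filled by the adjoined components on both sides.

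Your treatment of $n_\zpt$ is also a gap. You correctly observe that the $\zpt$-containing component $\mathcal{B}_{\{f^{-1}(2n-1)\}}$ is swept in only when the odd orbit with $f$-value $2n-1$ appears, but then dismiss this possibility with the phrase ``crosses $\zpt$ only through $V$-terms that do not occur here because the $f(j)=1$ special orbit is odd,'' which is not an argument — there is no $f(j)=1$ special orbit in the conventions of Lemma~\ref{lem:AgrDom} (the two special circles are $f^{-1}(2)$, adjacent to $\wpt$, and $f^{-1}(2n-1)$, adjacent to $\zpt$), and the equality $n_\zpt(B_1\sharp B_2)=n_\zpt(B_1)$ stated in the proposition is an arithmetic identity about $B_1\sharp B_2$, not a statement about which terms survive the $UV=0$ relation. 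Finally, your paragraph on the index formula is a plan rather than a proof: you list the right ingredients, but you explicitly defer the bookkeeping that separates the punctures consumed by $\phi$ (the $o_1^+$ even orbits), those consumed by $\psi$ (the $c_2+o_2$ from $\Source_2$), and the free odd orbits, which is where the $-c_1=-(o_1^++c_2+o_2)$ in $\ind^\sharp$ has to be reconciled with the components added in forming $B_1'\natural B_2'$. That arithmetic is the actual content of the statement and cannot be left as an obstacle to overcome later.
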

\begin{proof}
  This is a straightforward consequence of the fact that if ${\mathcal
    D}$ is any of the components in $\Sigma_i\setminus\betas$ then
  $\ind(B_i+{\mathcal D})=\ind(B_i)+2$.
\end{proof}

Let $\x=\x_1\#\x_2$ and $\y=\y_1\#\y_2$.
Define
\begin{align*} \ModMatchedChanged^B&(\x,\y)\\
  &=
\bigcup_{\left\{\begin{tiny}\begin{array}{r}
B_1\in \pi_2(\x_1,\y_1) \\B_2\in\pi_2(\x_2,\y_2)
  \end{array}\end{tiny}\Big|~B=B_1\sharp B_2\right\}}
\bigcup_{\{(\Source_1,\Source_2,\phi,\psi)\big| \ind^{\sharp}(B_1,\Source_1;B_2,\Source_2)=\ind(B)\}}
\ModMatchedChanged^{B_1,B_2}(\x,\y,\phi,\psi).
\end{align*}
We use these moduli spaces to construct an endomorphism of $C$, specified by its values
$\x=\x_1\#\x_2$ by
\begin{align} \dChanged &(\x)
  &=\sum_{\y} 
  \sum_{\{B\in\pi_2(\x,\y)|\ind(B)=1\}}
  \#\left(\frac{\ModMatchedChanged^B(\x,\y)}{\R}\right)\cdot   U^{n_\wpt(B)}V^{n_\zpt(B)}
\cdot \y.
\label{eq:dChanged}
\end{align}

\begin{prop}
  \label{prop:dChangedSqZero}
  The endomorphism $\dChanged$ satisfies the identity  $\dChanged\circ \dChanged=0$.
\end{prop}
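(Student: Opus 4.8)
The plan is to prove $\dChanged \circ \dChanged = 0$ by the same strategy used for Proposition~\ref{prop:dMatchedZqZero}, namely by analyzing the ends of one-dimensional moduli spaces of self-matched curve pairs. Fix $\x = \x_1 \# \x_2$ and $\y = \y_1 \# \y_2$ together with a homology class $B \in \pi_2(\x,\y)$ with $\ind(B) = 2$, and with at least one of $n_\wpt(B)$ or $n_\zpt(B)$ vanishing (the other contributions being killed by the relation $UV = 0$ in $\Ring$). First I would establish the appropriate compactness statement for $\ModMatchedChanged^B(\x,\y)/\R$, paralleling Lemma~\ref{lem:dMatchedCompactify}: the main components on both sides are strongly boundary monotone (by the argument of Lemma~\ref{lem:BoundaryMonotone}, now applied to the self-matching correspondence), boundary degenerations are excluded as in Lemma~\ref{lem:NoBoundaryDegenerations} using the homological hypothesis on $\wpt,\zpt$, closed components are excluded as in Lemma~\ref{lem:NoClosedCurves} by the dimension count (using $n > 1$; the $n=1$ case is handled separately in Subsection~\ref{subsec:Nequals1}), and an Euler-characteristic bookkeeping identical to the one in the proof of Lemma~\ref{lem:dMatchedCompactify} limits the East-infinity degenerations to either (a) two-story self-matched curves, or (b) height-two self-matched combs in which the only non-trivial East-infinity components are a matched pair of join curves forming a length-one chord, or (c) the analogous configuration with orbit curves in place of join curves. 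The split/join configuration with mismatched occupied $\alpha$-curves (Case~\ref{case:SplitJoin} in Lemma~\ref{lem:dMatchedCompactify}) is again excluded by the boundary-monotonicity combinatorics of Lemma~\ref{lem:SimpleLemma}.

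Next I would invoke the gluing result of Lemma~\ref{lem:Join}: the join-curve ends (type (b)) and the orbit-curve ends (type (c)) occur with matching multiplicities and hence cancel in pairs, exactly as in the proof of Proposition~\ref{prop:dMatchedZqZero} and Figure~\ref{fig:JoinOrbit}. The one genuinely new feature here, compared with the matched-curve complex $(C,\partial^{(0)})$, is that the self-matched moduli spaces allow odd orbits on the $\Hdown$ side that are not constrained by any matching condition, and the even orbits on the $\Hdown$ side are constrained to sit at the same $t$-level as a length-one chord via $\phi$. I would check that the Gromov compactification still sees only the three end types above: an odd orbit sliding to $s=0$ produces a special boundary degeneration on the $\Hdown$ side, which is excluded by the homological hypothesis (such a degeneration covers $\wpt$, and since we also have a matching constraint it would force a companion degeneration covering $\zpt$, contradicting $n_\wpt(B) = 0$ or $n_\zpt(B) = 0$); while the $\phi$-constraint, being a closed condition of the same shape as the matching condition in Definition~\ref{def:MatchedPair}, contributes no new boundary phenomena beyond those already enumerated. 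The upshot is that the number of two-story self-matched ends is even, and these two-story ends compute the $\y$-coefficient of $\dChanged \circ \dChanged(\x)$, giving the desired identity.

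The main obstacle I anticipate is the compactness/index-counting step: one must carefully redo the Euler-characteristic arithmetic of Lemma~\ref{lem:dMatchedCompactify} with the augmented puncture structure of a self-matched source (Definition~\ref{def:SelfMatched}), tracking the $o_1^+$ even-orbit punctures that are $\phi$-matched to chords and the $o_1^-$ unconstrained odd-orbit punctures, and verifying that the constraint count built into $\ind^{\sharp}$ is precisely the one that makes $\dim \ModMatchedChanged^B(\x,\y)/\R = \ind(B) - 1$. Once the accounting confirms that $\chi(\EastSource) \le k$ and $k \le c_i$ force the same trichotomy as before, everything else follows by the already-established gluing propositions (Proposition~\ref{prop:JoinCurve} and Proposition~\ref{prop:OrbitCurve}) and the cancellation argument is routine. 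I would also note that the homological hypothesis ensuring one of $n_\wpt(B), n_\zpt(B)$ vanishes is exactly what is needed to rule out both $\alpha$- and $\beta$-boundary degenerations in the limit, so no essentially new analytic input beyond Section~\ref{sec:CurvesA} is required.
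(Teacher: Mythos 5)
Your proposal fails at a crucial step: you claim that boundary degenerations in the Gromov limit can be excluded ``as in Lemma~\ref{lem:NoBoundaryDegenerations},'' but that lemma's argument does not transfer to the self-matched setting, and in fact boundary-degeneration ends are unavoidable here and are central to the cancellation argument.

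The proof of Lemma~\ref{lem:NoBoundaryDegenerations} relies on the matching condition of Definition~\ref{def:MatchedPair}, under which each orbit puncture on the $\Hdown$ side is paired with an orbit puncture on the $\Hup$ side with the same $(s,t)$-projection. That is what lets one conclude that a $\beta$-boundary degeneration on one side forces a companion degeneration on the other, eventually propagating around a full cycle of $W$ to cover both $\wpt$ and $\zpt$. In the self-matched setting (Definition~\ref{def:SelfMatchedCurvePair}) the constraint is entirely different: orbits on $\Source_2$ are paired via $\psi$ with \emph{chords} on $\Source_1$, even orbits on $\Source_1$ are paired via $\phi$ with \emph{chords} on $\Source_1$, and odd orbits on $\Source_1$ carry no constraint at all. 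A simple boundary degeneration on the $\Hdown$ side around a pair $\{j,k\}\in\Mdown$ involves one even orbit (matched via $\phi$ to a length-one chord elsewhere in $\Source_1$) and one unconstrained odd orbit, neither of which touches $\wpt$ or $\zpt$; nothing in the homological hypothesis rules it out. Similarly, a simple boundary degeneration on the $\Hup$ side has its two orbits paired with two chords on the $\Hdown$ side (not with other orbits), so the propagation argument again breaks down. These give the genuine codimension-one phenomena labelled Type~\ref{SMCP:WX} and Type~\ref{SMCP:XW} in Proposition~\ref{prop:SMCP-Ends}. You also conflate all the orbit-curve ends into a single type, whereas the parity argument requires separating Type~\ref{SMCP:OX} ends where the orbit is odd (which cancel against the $\Hdown$-side boundary degenerations of Type~\ref{SMCP:WX}, via Lemmas~\ref{lem:OXodd} and~\ref{lem:WX}) from those where the orbit is even (which cancel against the $\Hup$-side degenerations of Type~\ref{SMCP:XW}, via Lemmas~\ref{lem:OXeven} and~\ref{lem:XW}). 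The join-curve/orbit-curve cancellation you do describe is only the Type~\ref{SMCP:JJ} versus Type~\ref{SMCP:XO} part of the argument (Lemmas~\ref{lem:JJ} and~\ref{lem:XO}); left at that, the count of two-story ends could well be odd, and the proof does not close.

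To repair the argument you would need the full enumeration of ends in Proposition~\ref{prop:SMCP-Ends}, including the two boundary-degeneration types, together with the new gluing statements (analogues of Proposition~\ref{prop:BoundaryDegenerationNbd} and Proposition~\ref{prop:WestInftyEnd}) that identify those degenerate ends with nearly-self-matched configurations. The $\phi$-constraint is emphatically not of the same shape as the $(s,t)$-matching of Definition~\ref{def:MatchedPair}: it is a $t$-only constraint on a pair of punctures both lying on $\Source_1$, and it is precisely this asymmetry that produces the new boundary phenomena.
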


As usual, the proof involves understanding the ends of one-dimensional
moduli spaces of self-matched curve pairs. These ends contain terms
counted in $\dChanged\circ\dChanged$; and all other terms cancel. We
give the proof after some preliminary results; see Figures~\ref{fig:JJandXO}, ~\ref{fig:OXoddWX},
and~\ref{fig:OXevenXW} for pictures.

 \begin{figure}[h]
 \centering
 \input{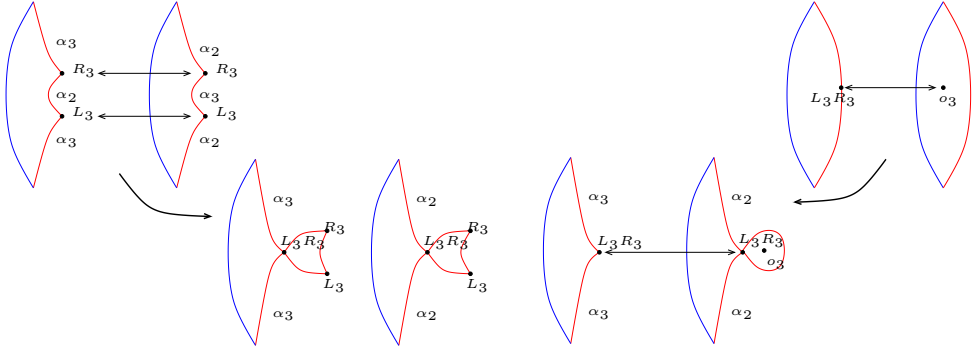}
 \caption{{\bf Ends of Type~\ref{SMCP:XO} cancel ends of type~\ref{SMCP:JJ}.}
 This is a combination of Lemmas~\ref{lem:JJ} and~\ref{lem:XO}.}
 \label{fig:JJandXO}
 \end{figure}

 \begin{figure}[h]
 \centering
 \input{OXoddWX.pstex_t}
 \caption{{\bf Ends of Type~\ref{SMCP:OX} with an odd orbit cancel against ends of Type~\ref{SMCP:WX}.}
 This cancellation is a combination of Lemmas~\ref{lem:OXodd} and~\ref{lem:WX}.}
 \label{fig:OXoddWX}
 \end{figure}

 \begin{figure}[h]
 \centering
 \input{OXevenXW.pstex_t}
 \caption{{\bf Ends of Type~\ref{SMCP:OX} with an even orbit cancel against ends of Type~\ref{SMCP:XW}.}
    This cancellation is a combination of Lemmas~\ref{lem:OXeven} and~\ref{lem:XW}.}
 \label{fig:OXevenXW}
 \end{figure}

\begin{defn}
  If ${\overline\Source}$ is the source of a holomorphic story
  $(w_\ell,\dots,w_1,u,v_1,\dots,v_m)$, let
  $\IntPunctOdd({\overline\Source})$ resp
  $\IntPunctEv({\overline\Source})\subset \IntPunct({\overline\Source})$
  denote the subset of punctures in ${\overline\Source}$ marked by
  Reeb orbits which are odd resp. even.  A {\em self-matched story}
  consists of the following data:
  \begin{itemize}
  \item a holomorphic story ${\overline u}=(w_\ell,\dots,w_1,u,v_1,\dots,v_m)$
  \item an injective map $\phi\colon \IntPunctEv({\overline\Source})\to
    \East({\overline\Source})$ with the property that if
    $p\in {\overline\Source}$ is a puncture marked by some orbit
    $\orb_j$, then $\phi(p)$ is marked by some chord $\longchord_k$
    that covers $Z_k$ with multiplicity one, so that $\{j,k\}\in
    \Mup$.
  \end{itemize}
\end{defn}
  
\begin{defn}
  A {\em self-matched story pair} 
  consists of the following data:
  \begin{itemize}
  \item a self-matched holomorphic story ${\overline u}_1$ in $\Hdown$,
    with source ${\overline\Source}_1$
  \item a holomorphic story ${\overline u}_2$  in $\Hup$,
    with source ${\overline\Source}_2$
  \item a one-to-one correspondence
    \[\psi\colon \AllPunct({\overline\Source}_2)\to
    \East({\overline\Source}_1)\setminus
    \phi(\IntPunct({\overline \Source}_1),\]
  \end{itemize}
  satisfying the following properties:
  \begin{itemize}
  \item if $q\in \IntPunct({\overline\Source}_2)$ is marked by an orbit
    $\orb_j$ (in $\Hup$), then $\psi(q)$ is marked by one of the two
    length $1$ chords that covers $\Zin_j$
  \item if $q\in\East({\overline\Source}_2)$,
    the Reeb chord in $\Hup$ label on $q$ has the same
    name as the Reeb chord in $\Hdown$ that marks
    $\psi(q)$
  \item for each $q\in \AllPunct(\Source_2)$, 
\[      t\circ {\overline u}_1(\psi(q))=
t\circ {\overline u}_2(q).\]
  \end{itemize}
\end{defn}

\begin{prop}
  \label{prop:SMCP-Ends}
  Suppose that $\ind(B_1,\Source_1;B_2,\Source_2)=2$.
  Every comb pair appearing in the boundary of the moduli space
  $\ModMatchedChanged^B(\x,\y;\Source_1,\Source_2)$ 
  is of one of the following types:
  \begin{enumerate}[label=(sME-\arabic*),ref=(sME-\arabic*)]
    \item 
      \label{SMCP:2Story}
      a two-story self-matched curve pair
    \item
      \label{SMCP:JoinCurve}
      a self-matched story pair  of the form $(u_1,v_1),u_2$,
      where $v_1$ is a join curve
    \item 
      \label{SMCP:XO}
      a self-matched story pair of the form $u_1,(u_2,v_2)$
      where $v_2$ is an orbit curve
    \item\label{SMCP:OX} a self-matched story pair $(u_1,v_1),u_2$ where
      $v_1$ is an orbit curve
    \item 
      \label{SMCP:JJ}
      a self-matched story pair of the form $(u_1,v_1),(u_2,v_2)$
      where $v_1$ and $v_2$ are join curves, and the corresponding
      punctures in $u_1$ and $u_2$ are the two distinct length one Reeb 
      chords that cover the same boundary component.
    \item 
      \label{SMCP:WX} 
       a self-matched story pair of the form $(w_1,u_1),u_2$,
      where $w_1$ is a simple boundary degeneration
    \item
      \label{SMCP:XW}
      a self-matched story pair of the form $u_1,(w_2,u_2)$,
      where $w_2$ is a simple boundary degeneration.
  \end{enumerate}
\end{prop}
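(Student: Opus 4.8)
\textbf{Proof plan for Proposition~\ref{prop:SMCP-Ends}.}

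The plan is to run the standard Gromov compactness + dimension-counting argument, parallel to the proofs of Theorem~\ref{thm:AEnds} and Lemma~\ref{lem:dMatchedCompactify}, but carrying the extra self-matching constraints from Definition~\ref{def:SelfMatched} along throughout. First I would invoke compactness (Proposition~\ref{prop:Compactness}) to produce, for a sequence escaping the one-dimensional moduli space, a limiting self-matched comb pair; a self-matched comb pair consists of a self-matched holomorphic story in $\Hdown$ together with a holomorphic story in $\Hup$ and the correspondence $\psi$ on punctures respecting the matching and the $t$-constraints of Equations~\eqref{eq:SelfMatching} and~\eqref{eq:SelfMatchedCurvePair}. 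The bulk of the work is then a case-by-case exclusion of all degenerations except those listed, via the index formula $\ind^\sharp(B_1,\Source_1;B_2,\Source_2)=\ind(B_1,\Source_1)+\ind(B_2,\Source_2)-c_1$, which (for the embedded moduli spaces, using $\chi(\Source_i)=\chiEmb(B_i)$) agrees with $\ind(B_1\sharp B_2)$.

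The skeleton of the case analysis is as follows. I would first rule out, exactly as in Lemma~\ref{lem:NoBoundaryDegenerations} (but restricted to the $\Hdown$ side, since the matching is now ``internal'' via $\phi$), all $\alpha$-boundary degenerations: these cover both $\wpt$ and $\zpt$, contradicting the hypothesis that $B_1$ vanishes at one basepoint; one must check that the self-matching does not force such a coverage. Closed components are excluded by the dimension count as in Lemma~\ref{lem:NoClosedCurves}. Next, the Euler-characteristic bookkeeping from the proof of Lemma~\ref{lem:dMatchedCompactify} shows that the only stories contributing at codimension one are: (i) genuine two-story self-matched pairs (Type~\ref{SMCP:2Story}); or (ii) a single story on one side with a single nontrivial East-infinity component, which is either a join curve or an orbit curve (on the $\Hup$ side: giving~\ref{SMCP:XO} or, more precisely, the analysis feeds into~\ref{SMCP:JJ}; on the $\Hdown$ side: the join-curve possibility~\ref{SMCP:JoinCurve} and the orbit-curve possibility~\ref{SMCP:OX}); or (iii) a $\beta$-boundary degeneration breaking off on one side, which must be simple by the dimension formula, giving Types~\ref{SMCP:WX} and~\ref{SMCP:XW}. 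The ``annulus'' case from Lemma~\ref{lem:dMatchedCompactify} combined with the self-matching is what produces the paired join curves~\ref{SMCP:JJ}. Throughout, I would use the combinatorics of Reeb chord labels (Lemma~\ref{lem:SimpleLemma} and the boundary-monotonicity arguments of Lemma~\ref{lem:BoundaryMonotone}) to eliminate the split-curve/mismatched-weight possibilities, just as Case~\ref{case:SplitJoin} was excluded there.

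The main obstacle I anticipate is the correct accounting of which orbit and boundary-degeneration ends survive and in which chamber (in the sense of Theorem~\ref{thm:GeneralPositionA}) they occur, because the self-matching condition couples the $t$-coordinate of each \emph{even} orbit in $\Source_1$ to the $t$-coordinate of a length-one chord $\phi(p)$, while odd orbits in $\Source_1$ remain free. This asymmetry (even vs.\ odd orbits) is exactly what separates ends of Type~\ref{SMCP:OX} into the two sub-flavors needed for the later cancellation lemmas (cf. Figures~\ref{fig:OXoddWX} and~\ref{fig:OXevenXW}), so I must make sure the dimension formula correctly predicts that an \emph{even} orbit degenerating off on the $\Hdown$ side (Type~\ref{SMCP:OX}) is codimension one and matched in the chamber structure to a boundary degeneration on the $\Hup$ side (Type~\ref{SMCP:XW}), while an \emph{odd} orbit degenerating gives Type~\ref{SMCP:OX} paired with a $\Hdown$-side boundary degeneration (Type~\ref{SMCP:WX}). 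This requires carefully tracking $o_1^+$, $o_1^-$, $c_1$, $c_2$, $o_2$ through the relation $c_1 = c_2 + o_2 + o_1^+$ and through the homology-class sum $B_1\sharp B_2 = B_1'\natural B_2'$, so that the index drop of exactly $1$ at each claimed end is verified. Once each of the seven cases is confirmed to be the only codimension-one boundary phenomena and all other configurations are shown to be codimension $\geq 2$, the proposition follows.
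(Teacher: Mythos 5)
Your plan is essentially the right approach and matches the paper's: Gromov compactness produces a limiting self-matched comb pair, and a dimension count with the self-matched index $\ind^\sharp(B_1,\Source_1;B_2,\Source_2)=\ind(B_1,\Source_1)+\ind(B_2,\Source_2)-c_1$ rules out everything of codimension $\geq 2$. Two clarifications are worth making, however.

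First, the chamber/parity accounting you flag as your main anticipated obstacle does not belong to this proposition. Proposition~\ref{prop:SMCP-Ends} is purely a classification of which degeneration types can appear in the boundary; the question of which chamber $\Chamber^{\orb_j>\orb_k}$ or $\Chamber^{\orb_j<\orb_k}$ a boundary-degeneration end lands in, and how the even-orbit ends (Type~\ref{SMCP:OX} with $\phi$-constrained orbit) are paired against ends of Type~\ref{SMCP:XW} while the odd-orbit ends pair against Type~\ref{SMCP:WX}, is handled only afterward, in Lemmas~\ref{lem:OXodd}, \ref{lem:OXeven}, \ref{lem:WX}, \ref{lem:XW} and the proof of Proposition~\ref{prop:dChangedSqZero}. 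Keeping that separation clean avoids over-committing here.

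Second, the paper's actual proof does not parallel Theorem~\ref{thm:AEnds} (which is built around allowed constraint packets and boundary-degeneration collisions in a single diagram). Instead it runs a direct bookkeeping in the spirit of Lemma~\ref{lem:dMatchedCompactify}: for the main-component limit $(u_1',u_2')$ it introduces quantities $\lambda_+$ (East punctures on $\Source_1'$ arising from formerly-even orbits), $\lambda_-$ (East punctures not matched to anything, arising from formerly-odd orbits), and new counts $c_1'$, $o_+'$, $o_-'$, $o_2'$, and establishes
\[
\ind(u_1',u_2')-\ind^\sharp(\Source_1,\Source_2)\ \leq\ (c_1'-\lambda_--\lambda_+-c_1)+(o_+'-o_+)+(2o_-'+\lambda_--2o_-)+(o_2'-o_2),
\]
each summand being $\leq 0$. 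The case analysis is then exactly ``at most one summand can be $-1$,'' which produces Types~\ref{SMCP:JoinCurve}, \ref{SMCP:JJ}, \ref{SMCP:XO}, \ref{SMCP:OX}; the boundary-degeneration cases~\ref{SMCP:WX}, \ref{SMCP:XW} come from a similar weight-drop estimate showing the degeneration must be simple ($k=1$). This explicit inequality, and the observation (which your plan doesn't quite get to) that a non-simple boundary degeneration drops the index by at least $2k-1\geq 3$, is the concrete content you would still need to supply to turn your plan into a proof.
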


\begin{proof}
  Suppose that we have a sequence of self-matched curve pairs
  with fixed sources $\Source_1$ and $\Source_2$, representing
  fixed homology classes $B_1$ and $B_2$.
  The index of these is computed nby
  \begin{align*}
    \ind^{\sharp}(\Source_1,\Source_2)&=
    \ind(B_1;\Source_1)+\ind(B_2;\Source_2)-o_+-c_2-o_2\\
    &= d_1+n_{\x_1}(B_1)+n_{\y_1}(B_1) + e(B_1)-2\weight^1_\partial \\
    &\qquad
    + d_2+n_{\x_2}(B_2)+n_{\y_2}(B_2)
    +e(B_2)-2\weight^2_\partial+2o_-+o_++o_2+c_1,
  \end{align*}
  where $o_+$ resp. $o_-$ denotes the number of punctures
  in $\Source_1$ marked by even resp. odd orbits; $c_2$ denotes the number of East
  punctures in $\Source_2$ and $o_2$ the number of interior punctures.

  Take a Gromov limit, and assume that it does not contain any curves at West infinity,
  and that it consists of a singly story.
  Consider  the main component $(u_1',u_2')$,
  with sources $\Source_1'$ and $\Source_2'$.  This limit inherits
  certain matching conditions, whose expected dimension we will now
  express.
  Let $\lambda_+$ denote the number of East punctures
  that arise as limits of East marked orbits in $\Source_1$.
  Let $\lambda_-$ denote the number of East punctures
  that are not matched with any other punctures in either $\Source_i'$.
  (These punctures arise as limits of interior punctures in $\Source_1$
  marked by odd orbits.)
  Let $c_2'$ denote the number of East punctures in
  $\Source_2'$ and $o_2'$ denote the number of its interior punctures.

  Each
  interior puncture of $\Source_1'$ marked by an even orbit is
  constrained to lie at the same $t$-level as some corresponding East
  puncture $\Source_1'$; and each puncture of $\Source_2'$ is
  constrained to lie at the same $t$-level as a corresponding East
  puncture in $\Source_1'$.  Thus, the expected dimension of the moduli
  space in which $(u_1',u_2')$ lives is computed by
  \begin{align*}
    \ind(u_1',u_2')&\leq
    \ind(B_1;\Source_1')+\ind(B_2;\Source_2')-o_+'-\lambda_+-c_2'-o_2' \\
    &= d_1+n_{\x_1}(B_1)+n_{\x_2}(B_1)+e(B_1)-2\weight^1_\partial   \\
    &\qquad +d_2+ n_{\x_1}(B_1)+n_{\x_2}(B_1)+e(B_2)-2\weight^2_\partial+c_1'+o_+'+2o_-'+o_2',
  \end{align*}

  Thus,
  \[ \ind(u_1',u_2')-\ind^\sharp(\Source_1,\Source_2)
  \leq  (c_1'-\lambda_--\lambda_+-c_1)+(o_+'-o_+)+(2o'_-+\lambda_--2o_-)+(o_2'-o_2).\]

  Each quantity in parentheses is clearly non-positive. Since
  we assumed that $\ind^\sharp(\Source_1,\Source_2)=2$, at most one of the quantities
  can be $-1$ (and the others zero) for $(u_1',u_2')$ to exist.

  If $2o_+'+\lambda_--2o_-=-1$, then $\lambda_-=1$. In this case,
  an (odd) orbit curve forms in $\Source_1$ (Case~\ref{SMCP:OX}). On the other hand,
  if $2o_+'+\lambda_--2o_-=0$, then $\lambda_-=0$.

  Also, $o_+'+\lambda_+\leq o_+$, so if $o'_+-o_+=-1$, then
  $\lambda_+=0$ or $1$.  Assume first that $\lambda_+=0$. In this
  case, $\Source_1'$ must contain a multiple even orbit. In that case,
  it is also the case that $c_1'<c_1$, which is a
  contradiction. Assume next that $\lambda_+=1$. In that case,
  $c_1'=c_1+1$. This is the case of an (even) orbit curve appearing in
  $\Source_1$ (Case~\ref{SMCP:OX}, again).

  If $c_1'-c_1=-1$, there are two possibilities. A join curve can form
  on $\Source_1$ without one on $\Source_2$
  (Case~\ref{SMCP:JoinCurve}), or it can form on both sides
  (Case~\ref{SMCP:JJ}).

  If $o_2'-o_2=-1$, we have an orbit curve forming on $\Source_2$ (Case~\ref{SMCP:XO}).

  This finishes the cases where curves at West infinity do not form.

  Suppose that the limiting curve $(u',v')$ contains weight $2k$
  boundary degeneration with $2m$ distinct orbits in it. 
  Then, $(u',v')$ lies in a moduli space whose index is computed by
  \[ \ind(B_1,\Source_1)+\ind(B_1',\Source_1')-(c_1-2k)-(2m-1):\]
  there are $c_1-2k$ height constraints coming from chords in
  $\Source_1'=\Source_1$ that are not matched with orbits in the boundary
  degeneration, and the remaining group of $2m$ chords are required to occur at
  the same $t$-value.

  \begin{align*}
    \ind(B_1,\Source_1')&\leq \ind(B_1,\Source_1) -c_1+c_1'=\ind(B_1,\Source_1)-2(k-m) \\
    \ind(B_2',\Source_2')&\leq \ind(B_2,\Source_2) -2k.
  \end{align*}
  Thus,
  \[ \ind(u_1',u_2')\leq \ind^{\sharp}-2k+1.\]
  Thus, for the limiting object to be non-empty, we require $k=1$,
  i.e. the boundary degeneration is simple; this is
  Case~\ref{SMCP:XW}. More generally, each boundary degeneration
  level carries codimension at least $1$, so it follows that no more
  than one boundary degeneration can occur, and if it does, then there
  are no other curves at East infinity.

  Suppose next that there is a weight $2k$ boundary degeneration West
  infinity on the $\Source_1$-side; let $a$ denote the total weight of
  the orbits, and let $m$ denote the number
  of chords in $\Source_1'$ that are matched with the boundary degeration. 
  Suppose that $m>0$. 
  Then, $(u_1',u_2')$ lives in a moduli space with expected dimension
  \[ \ind(u_1',u_2')=\ind(B_1',\Source_1')+\ind(B_2,\Source_2')-c_1'+1,\]
  since there are $c_1'-m$ matching comditions coming from the chords
  that do not go into the boundary degeneration, and the remaining $m$
  chords are required to lie at the same height, imposing $m-1$
  further constraints. It is straightforward to see that
  \begin{align*}
    n_{\x_1}(B_1')+n_{\y_1}(B_1')+e(B_1')&=n_{\x_1}(B_1)+n_{\y_1}(B_1)+e(B_1)-4k \\
    \weight^1_\partial(B_1')&=\weight^1_{\partial}(B_1) -2k\\
    c_1'&\leq c_1-a+m \\
    o_1'&\leq o_1-a \\ 
    \weight^1_\partial(B_1')&=\weight^1_{\partial}(B)-a
    \end{align*}
    so $\ind(u_1',u_2')\leq \ind^{\sharp}+ 1-2k$. 
    Thus, $k>1$ forces $(u_1',u_2')$ to be in an empty moduli space;
    the case $k=1$ is allowed, and it is 
    Case~\ref{SMCP:WX}. 

    We turn our attention to $m=0$.
    In this case, 
    \[ \ind(u_1',u_2')=\ind(B_1',\Source_1')+\ind(B_2,\Source_2')-c_1',\]
    and the boundary degeneration is forced to contain exactly one odd
    orbit: it is a special boundary degeneration with weight $2k=2a$.
    In this case, computing as above, we find that  $\ind(u_1',u_2')\leq \ind^{\sharp}-2k$. The
    moduli space is once again empty if $k>1$. When $k=1$, there is a
    special case where it is non-empty: when $(u_1',u_2')$ is
    constant. This case also falls under Case~\ref{SMCP:WX}.

\end{proof}

Let $(u_1,v_1),(u_2,v_2)$ be a self-matched story pair where $v_1$ and
$v_2$ are join curves, as in Case~\ref{SMCP:JJ}.  Then, $(u_1,u_2)$
are self-matched pairs. Similarly, if $u_1,(u_2,v_2)$ is a
self-matched story pair where $v_2$ is an orbit curve as in
Case~\ref{SMCP:OX}, then $(u_1,u_2)$ are also self-matched pairs.  In
these cases, we call $(u_1,u_2)$ the {\em trimming} of the
corresponding limit curve.

\begin{lemma}
  \label{lem:JJ}
  The number of ends of Type~\ref{SMCP:JJ} coincides with the number
  of self-matched pairs $(u_1,\Source_1,u_2,\Source_2,\phi,\psi)$ 
  for which $\East(\Source_2)$
  consists of one length $1$ chord and all other chords have length
  $1/2$.
\end{lemma}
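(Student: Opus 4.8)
The plan is to invoke Proposition~\ref{prop:JoinCurve} twice (once on each side of the matching), just as the analogous statement for ordinary matched curves (Lemma~\ref{lem:Join}) was deduced. Suppose $(u_1,v_1),(u_2,v_2)$ is a self-matched story pair of the form in Case~\ref{SMCP:JJ}, so that $v_1$ and $v_2$ are join curves whose corresponding east punctures in $u_1$ and $u_2$ are labelled by the two distinct length-one chords that cover a fixed boundary component $Z_k$; their joins are the length-two chords $L_k R_k$ or $R_k L_k$, and these are the east punctures of the trimming $(u_1,u_2)$. First I would record that the trimming $(u_1,u_2)$ is itself a self-matched curve pair (in the sense of Definition~\ref{def:SelfMatchedCurvePair}): the self-matching condition is a condition on punctures marked by even orbits, and it is unaffected by the join/split operation at $Z_k$, while the matching injection $\psi$ for the trimming is obtained from the one for $(u_1,v_1),(u_2,v_2)$ by replacing the two matched pairs of length-$1/2$ punctures on each side by the single pair of length-one punctures.

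Next I would identify the neighborhood structure. By Proposition~\ref{prop:JoinCurve}, applied on the $\Hdown$ side with $M$ taken to be the diagonal in $\R^{P_2}$ cutting out the corresponding constraints on $\Source_2$-punctures via $\psi$ (which is exactly the locus where $u_2$'s matched punctures coincide), a smeared neighborhood of $(u_1,v_1)$ in $\ClosedModFlow^{B_1}(\x_1,\y_1;\Source_1\natural\EastSource_1)$, fibered over the relevant diagonal, is homeomorphic to $[0,1)$. The matching conditions linking the two sides are transverse by the genericity of $J$ (Lemma~\ref{lem:Transversality} and Theorem~\ref{thm:GeneralPositionA}), so the fibered product of the two smeared neighborhoods is again a half-open interval $[0,1)$ whose open end limits onto the self-matched curve pair. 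This shows that every end of Type~\ref{SMCP:JJ} contributes exactly one boundary point of a one-manifold, and conversely: a self-matched pair $(u_1,u_2)$ whose east punctures consist of one length-one chord (over $Z_k$) together with length-$1/2$ chords elsewhere is, after splitting the length-one chord, the trimming of a unique (up to the gluing parameter) Type~\ref{SMCP:JJ} limiting object — there are exactly two ways to split $L_k R_k$ (as $L_k \uplus R_k$ or ... — in fact, for a chord with equal endpoints the relevant split producing a boundary-monotone packet is unique, as in the proof of Theorem~\ref{thm:AEnds}), and this is what makes the correspondence one-to-one.

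The counting then goes as follows: on each side the join component is planar hence a smooth point of $\ModEast$ (as in Proposition~\ref{prop:JoinCurve}), so no multiplicities intervene, and the gluing identification of the neighborhood with $[0,1)$ is a genuine bijection near $(u_1,v_1),(u_2,v_2)$. Thus the map sending a Type~\ref{SMCP:JJ} end to its trimming is a bijection onto the set of self-matched pairs with exactly one length-one east chord. The main obstacle I anticipate is keeping the bookkeeping of the two injections $\phi$ (the even-orbit self-matching on $\Source_1$) and $\psi$ (the cross-matching of $\Source_2$ onto the remaining east punctures of $\Source_1$) consistent under the join operation, and in particular verifying that the trimmed data $(\Source_1',\Source_2',\phi',\psi')$ still satisfies all of Conditions~\ref{smcp:1to1} and the subsequent ones in Definition~\ref{def:SelfMatchedCurvePair}; this is routine but must be checked carefully since the east-puncture sets change. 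A secondary point is to confirm that the index bookkeeping is consistent — that $\ind^\sharp$ of the trimmed pair equals $\ind^\sharp(B_1,\Source_1;B_2,\Source_2)-1$ so the trimmed pairs are indeed rigid — which follows from the Euler characteristic identities used in the proof of Proposition~\ref{prop:SMCP-Ends}.
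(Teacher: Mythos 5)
Your strategy is the right one: recognize that a Type~\ref{SMCP:JJ} end is glued from a rigid self-matched pair with one length-one east chord on $\Source_2$ together with a join curve on each side, so that the count of such ends equals the count of such self-matched pairs. Your observations that the trimming is again a self-matched pair in the sense of Definition~\ref{def:SelfMatchedCurvePair}, and that $\ind^{\sharp}$ drops by one under trimming, are also correct. But the step on which everything rests is left as an assertion, and your citation for it is wrong: Proposition~\ref{prop:JoinCurve} does not take an auxiliary manifold $M$ (that is the framework of Proposition~\ref{prop:OrbitCurve}), so you cannot feed the cross-matching constraints into it as you propose. More substantively, the gluing involves two coupled scale parameters --- one for the join curve $v_1$ on the $\Hdown$ side and one for $v_2$ on the $\Hup$ side --- and ``the fibered product of the two smeared neighborhoods'' does not parse as written: the two sides share a common base, a one-dimensional neighborhood $\widetilde U$ of the \emph{pair} $(u_1,u_2)$ with one matching constraint relaxed, not separate neighborhoods of $u_1$ and $u_2$, and the two gluing scales are forced to agree to leading order by the coupled matching and collision constraints.

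The paper's proof carries out precisely this coupled computation: it enlarges $\ModMatchedChanged(\x,\y,\Source_1',\Source_2')$ to $\BigModMatched$ by dropping the matching constraints at the two distinguished punctures $q_1,q_2$, defines $F=(F_1,F_2,F_3)\colon\BigModMatched\to\R^3$ recording the matching discrepancy at $q_1$, the $t$-separation of $q_1,q_2$ on $\Source_2'$, and the $t$-separation of $\psi(q_1),\psi(q_2)$ on $\Source_1'$, identifies the moduli space near the end as $F^{-1}(\{0\}\times\Delta)$, and shows via the gluing map $\gamma\colon\widetilde U\times[0,\epsilon)^2\to\BigModMatched$ that $(F\circ\gamma)^{-1}(\{0\}\times\Delta)$ is a half-open interval whose only boundary point is over the origin. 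Here $F_2$ is controlled by the $\Hup$-side gluing scale and $F_3$ by the $\Hdown$-side scale, so the diagonal constraint is what couples them. This two-parameter degree argument is the actual content of the lemma; your bookkeeping concern about $\phi$ and $\psi$, by contrast, is benign. Until the coupled gluing analysis is supplied, the claimed bijection between ends and trimmings is not established.
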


\begin{proof}
  Let $\ModMatchedX(\x,\y,\Source_1;\Source_2)$ be the moduli space of
  self-matched curve pairs, where exactly one of the boundary punctures $q$ of
  $\Source_2$ is labelled by a length $1$ chord, which we denote $\longchord$ (and all others
  boundary punctures are marked by length $1/2$ chords). This moduli space
  embeds in a larger moduli space
  ${\widetilde\ModMatchedX}$, where we drop the condition 
  from Equation~\eqref{eq:SelfMatchedCurvePair} for the puncture $q\in \Source_2$ marked
  by the length $1$ chord. This moduli space in turn admits an evaluation map
  \[\ev_q-\ev_{\psi(q)}\colon \widetilde\ModMatchedX \to \R \]
  with $0$ as a regular value, whose preimage is $\ModMatchedX(\x,\y,\Source_1;\Source_2)$.

  Given  $u=(u_1,u_2)\in \ModMatchedX(\x,\y;\Source_1,\Source_2)$, let  ${\widetilde U}\subset \widetilde\ModMatchedX$
  be a neighborhood so that $\ev_q-\ev_{\psi(q)}\colon {\widetilde U}\to (-\epsilon,\epsilon)$ is a diffeomorphism
  (i.e. so that $u$ is the preimage of $0$).
  
  Let $\Source_1'=\Source_1\natural_{\psi(q)} \EastSource_1$ and $\Source_2'=\Source_2\natural_q \EastSource_2$.
  Thus, $\Source_2$ comes with an extra pair of punctures $\{q_1,q_2\}$ (labelled by 
  chords which can be joined to form $\longchord$).
  Let $\BigModMatched$ be the moduli space like $\ModMatchedChanged(\x,\y,\Source_1',\Source_2')$,
  except now that at the two distinguished punctures $\{q_1,q_2\}$ in $\Source_2'$ coming from the east infinity
  curve, we do not require the time constraint from Equation~\eqref{eq:SelfMatchedCurvePair}.
  There is a map $F=(F_1,F_2,F_3)\colon \BigModMatched \to \R^3$, with components
  \[
    F_1=t\circ \ev_{\psi(q_1)}-t\circ \ev_{q_1}, \qquad
    F_2=t\circ \ev_{q_1}-t\circ \ev_{q_2} \qquad
    F_3=t\circ \ev_{\psi(q_1)}-t\circ \ev_{\psi(q_2)},
    \]
  with the property that if $\Delta\subset \R^2$ denotes the diagonal, th
  Type~\ref{SMCP:JJ}
  end of $\ModMatchedChanged(\x,\y,\Source_1',\Source_2')$
  at $(u_1,\Source_1,u_2,\Source_2,\phi,\psi)$.

  Gluing the east infinity curves to $u_1$ and $u_2$, we get a gluing map
  \[ \gamma \colon {\widetilde U}\times (0,\epsilon)\times (0,\epsilon) \to \BigModMatched.\]
  The gluing map continuously extends to a  map 
  from ${\widetilde U}\times [0,\epsilon)\times [0,\epsilon)$ 
  to the Gromov compactification of $\BigModMatched$
  so that for all $r_1,r_2>0$,
  \begin{align*}
    \gamma(u_1\times u_2\times \{0\}\times \{0\})&=((u_1,v_1),(u_2,v_2)) \\
    F_1\circ \gamma|_{{\widetilde U}\times \{0\}\times \{0\}}&=
    {\ev_{\psi(q)}-\ev_{q}}|_{\widetilde U} \\
    F_2\circ \gamma({\widetilde U}\times \{r_1\}\times \{0\})&=0 \\
    F_2\circ \gamma({\widetilde U}\times \{r_1\}\times \{r_2\})&>0 \\
    F_3\circ \gamma({\widetilde U}\times \{0\}\times \{r_2\})&=0 \\
    F_3\circ \gamma({\widetilde U}\times \{r_1\}\times \{r_2\})&>0.
  \end{align*}
  It follows that $(F\circ \gamma)^{-1}(\{0\}\times \Delta)$ has a
  single endpoint over the origin, and that is the point
  $((u_1,v_1),(u_2,v_2))$, giving the stated correspondence
  betwen ends and self-matched curves stated in the lemma.
\end{proof}

\begin{lemma}
  \label{lem:XO}
  The number of  ends of Type~\ref{SMCP:XO} coincides with the number 
  of self-matched pairs for which $\East(\Source_2)$ consists of one length $1$ chord
  and all other chords have length $1/2$.
\end{lemma}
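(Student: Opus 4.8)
The plan is to mirror the proof of Lemma~\ref{lem:JJ}, exploiting the close structural parallel between join-curve ends of Type~\ref{SMCP:JJ} and orbit-curve ends of Type~\ref{SMCP:XO}. Both arise as codimension-one degenerations of a one-dimensional moduli space of self-matched curve pairs in which a curve at East infinity (a join curve, respectively an orbit curve) bubbles off the $\Hup$-side component; and in both cases the trimmed curve $(u_1,u_2)$ is a self-matched pair in which $\East(\Source_2)$ acquires a single length-one chord, with all remaining chords of length $1/2$. The goal is to set up the same kind of gluing-and-evaluation argument that identifies such ends bijectively with these special self-matched pairs.

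First I would fix a self-matched pair $(u_1,\Source_1,u_2,\Source_2,\phi,\psi)$ in which exactly one puncture $q\in\East(\Source_2)$ carries a length-one chord $\longchord$ covering some $\Zout_j$, with all other chords of length $1/2$. Since on the $\Hup$-side the relevant East-infinity curve is an orbit curve rather than a join curve, the source $\Source_2'$ is obtained from $\Source_2$ by replacing the boundary puncture $q$ with an interior puncture $p$ labelled by $\orb_j$; on the $\Hdown$-side nothing changes, because the matching puncture $\psi(q)$ remains labelled by the length-one chord. I would then introduce a moduli space $\widetilde{\ModMatchedX}$ obtained by relaxing the time-matching constraint of Equation~\eqref{eq:SelfMatchedCurvePair} at the distinguished puncture, equipped with an evaluation map whose $0$-fiber recovers the self-matched pairs in question, exactly as in Lemma~\ref{lem:JJ}.

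The heart of the argument is the gluing analysis: near $(u_1,(u_2,v_2))$ with $v_2$ an orbit curve, Proposition~\ref{prop:OrbitCurve} provides a smeared neighborhood $U$ so that $U\times_{\ev} M$ is homeomorphic to $u_0\times m_0\times (0,1]$, and the function $\overline{s\circ \ev_{\{p\}}}$ is proper and of odd degree near $1$. Here I would take $M$ to be the diagonal encoding the remaining matching constraints (those coming from the length-$1/2$ chords and the even-orbit self-matchings), and $\phi\colon M\to\R^{P}$ the inclusion. Properness of $\overline{s\circ\ev_{\{p\}}}$ near $1$ says precisely that the orbit-curve end of the one-dimensional moduli space $\ModMatchedChanged(\x,\y,\Source_1',\Source_2')$ at a fixed self-matched pair is a single boundary point; the odd-degree statement (combined with working mod $2$) promotes this to the claimed bijection between ends of Type~\ref{SMCP:XO} and the special self-matched pairs. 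Summing over all choices of $(\Source_1,\Source_2,\phi,\psi)$ with the required length constraints, and invoking the transversality of Theorem~\ref{thm:GeneralPositionA}, gives the count.

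The main obstacle I anticipate is bookkeeping rather than analysis: one must check that the self-matched curve pairs $(u_1,u_2)$ produced by trimming genuinely satisfy all the conditions of Definition~\ref{def:SelfMatchedCurvePair} (in particular that the homology classes match up under the $\sharp$ operation, that the auxiliary puncture $q_0$ in Proposition~\ref{prop:OrbitCurve} can be chosen consistently, and that the index drops by exactly one so that the trimmed moduli space is genuinely zero-dimensional), and conversely that every such special self-matched pair arises as a trimming, with multiplicity one mod $2$. Verifying the index arithmetic is the place where an error would hide; but it is dictated by the formula $\ind(B_i+\mathcal D)=\ind(B_i)+2$ together with the dimension count already carried out in the proof of Proposition~\ref{prop:SMCP-Ends}, so no new ideas are needed beyond carefully repeating that computation. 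I would then simply record that the argument is ``the same as the proof of Lemma~\ref{lem:XO}'' wherever it genuinely is, to keep the exposition short.
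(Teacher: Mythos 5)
Your proposal follows essentially the same route as the paper: reuse the $\ModMatchedX$ / $\widetilde{\ModMatchedX}$ setup from Lemma~\ref{lem:JJ}, relax the time constraint at the distinguished puncture, and use the orbit-curve gluing (which the paper carries out directly, and you attribute, correctly, to the mechanism of Proposition~\ref{prop:OrbitCurve}) to identify the Type~\ref{SMCP:XO} ends with the zero-dimensional moduli of self-matched pairs carrying one length-one chord. The only slip is the final sentence, where you presumably meant ``the same as the proof of Lemma~\ref{lem:JJ}''.
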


\begin{proof}
  Let $\ModMatchedX(\x,\y,\Source_1,\Source_2)$, ${\widetilde\ModMatchedX}$, and $U$ be as in the proof of Lemma~\ref{lem:JJ}.
  Gluing the orbit curve to $\Source_2$ now gives a gluing map
  \[ \gamma\colon U \times (0,\epsilon)\to \ModMatchedChanged(\x,\y,\Source_1,\Source_2'),\]
  where now $\Source_2'=\Source_2\natural\EastSource$ is obtained by gluing on an orbit curve, which we denote here by $v_2$.
  (Note that $\Source_2'$ and $v_2$ mean different objects than they did in the proof of Lemma~\ref{lem:JJ}.)
  Gluing extends continuously to the Gromov compactification, giving 
  \[ \gamma\colon U \times [0,\epsilon)\to \overline\ModMatchedChanged(\x,\y,\Source_1,\Source_2'),\]
  so that for all $r>0$,
  \begin{align*}
    \gamma(u_1,\times u_2\times \{0\})&=(u_1,(u_2,v_2)) \\
    \ev_{\longchord_i}-\ev_{\orb_i'}|_{\gamma(U\times \{0\})}&=\ev_{\longchord_i}-\ev_{\longchord_i'} \\
    s\circ \ev_{\orb_i'}(\gamma(u,0)&=1 \\
    s\circ \ev_{\orb_i'}(\gamma(u,r)&<1.
  \end{align*}
  There is a neighborhood $U'$ of $(u_1,(u_2,v_2))$ in ${\overline\ModMatchedChanged}(\x,\y,\Source_1,\Source_2')$ so that
  \[ U'\cap\ModMatchedChanged(\x,\y;\Source_1,\Source_2')=(\ev_{\orb_i}-\ev_{\orb'_i})^{-1}(0),\]
  so that $s\circ \ev_{\orb_i'}\colon U'\to (1-\epsilon,1]$ is a proper map near $1$.
  It follows that the ends 
  $U'\cap\ModMatchedChanged(\x,\y;\Source_1,\Source_2')=(\ev_{\orb_i}-\ev_{\orb'_i})^{-1}(0)$
  is modelled on the preimage at $s=1$.
\end{proof}

\begin{defn}
  \label{def:PartialSMCP}
  Let $X$ be a set consisting of one or two Reeb chords.  A
  {\em{$X$-partially self-matched curve pair }} (or $X$-partial-smcp) is the data $(\Source_1,\phi,\Source_2,\psi)$ as
  in Definition~\ref{def:SelfMatchedCurvePair}, except
Condition~\ref{smcp:1to1} is replaced by the following:
  \begin{itemize}
    \item 
     $\East(\Source_1)$ is partitioned into three disjoint sets:
     \[ \phi(\IntPunctEv(\Source_1))\qquad \psi(\East(\Source_2))\qquad X',\]
     where $X'$ is a set with $|X'|=|X|$, and the labels on the punctures of $X'$
     are specified in $X$.
   \end{itemize}
     Moreover, if $|X|=2$, let $X'=\{p_1,p_2\}$. In this case, we also require 
    \begin{equation}
      \label{eq:NearMatchedConstraint}
      t\circ u_1(p_1)=t\circ u_1(p_2).
    \end{equation}
    Let $\NearModMatched{X}{B}(\x,\y)$ denote the moduli space of $X$-partial smcps,
  with homology class $B$.
\end{defn}

Let $(u_1,v_1),u_2$ be a self-matched story pair where $v_1$ is an
orbit curve; and let $\longchord$ be the chord labelling the boundary
puncture of $v_1$. If the orbit in $v_1$ is odd, then $(u_1, u_2)$ is
a $\{\longchord\}$-partial smcp.  If the orbit is even, then
$(u_1,u_2)$ is a $X$-partial smcp, where $X$ consists of
two length one chords
that cover $\Mup$-matched boundary components.  Similarly, if
$(w_1,v_1),u_2$ is a self-matched story pair where $w_1$ is a boundary
degeneration, the curves $u_1$ and $u_2$ is a partial smcp with one
remaining chord, and if $u_1,(w_2,u_2)$ is a self-matched story pair
where $w_2$ is a boundary degeneration, then $(u_1,u_2)$ is a
$\{\longchord\}$-partial smcp, where $\longchord$ is a length one chord  
that covers some boundary component.  In all the above cases, we
call $(u_1,u_2)$ the {\em{trimming}} of the self-matched story pair.
The next few lemmas show that the number of ends are determined by the
trimmings of the limiting configurations.

\begin{lemma}
  \label{lem:OXodd}
  Suppose that $\orb_j$ is an odd orbit, and let $\longchord_j$  be the 
  chord of length one that covers the corresponding boundary component
  $\Zin_j$.  The number of curves in
  $\NearModMatched{\{\longchord_j\}}{B}(\x,\y;\Source_1,\Source_2)$
  has the same parity as the number of ends of
  $\ModMatchedChanged^B(\x,\y;\Source_1',\Source_2')$ of
  Type~\ref{SMCP:OX}, where $v_1$ is an orbit curve that has a
  boundary puncture marked by $\orb_j$.
\end{lemma}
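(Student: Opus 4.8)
The plan is to realize the Type~\ref{SMCP:OX} ends (with $v_1$ an orbit curve carrying the odd orbit $\orb_j$) via a gluing argument, exactly parallel to the proofs of Lemmas~\ref{lem:JJ} and~\ref{lem:XO}, but now using the analogue of Proposition~\ref{prop:OrbitCurve} applied to an orbit curve attached to the $\Hdown$-side rather than the $\Hup$-side. First I would set up the ``relaxed'' moduli space: let $\ModMatchedTSIC(\x,\y;\Source_1',\Source_2')$ be the space of self-matched curve pairs in which $\Source_1'=\Source_1\natural_{p}\EastSource$ is obtained from $\Source_1$ by gluing on an orbit curve $\EastSource$ at the puncture $p$ marked by $\orb_j$ (so $p$ becomes an interior orbit puncture of $\Source_1'$), while $\longchord_j$ becomes the boundary puncture of $v_1$. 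Because the orbit $\orb_j$ is odd, this puncture is \emph{not} subject to any self-matching constraint (Equation~\eqref{eq:SelfMatching} applies only to even orbits), and the boundary puncture of $v_1$ labelled $\longchord_j$ is also unmatched in the partial smcp structure; this is precisely why the partial smcp $\NearModMatched{\{\longchord_j\}}{B}(\x,\y;\Source_1,\Source_2)$ of Definition~\ref{def:PartialSMCP} is the relevant object, since it records exactly one leftover chord $\longchord_j$ on $\East(\Source_1)$ and imposes no extra time constraint on it (because $|X|=1$).

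Next I would invoke Proposition~\ref{prop:OrbitCurve} with the ambient manifold $M$ taken to be the diagonal in $\R^{P_0}$ encoding all the remaining matching conditions between $\Source_1$ and $\Source_2$ (the self-matching conditions for even orbits of $\Source_1$ via $\phi$, and the east-puncture matchings via $\psi$), and with $u_0$ the trimming of the limiting configuration. The hypotheses of that proposition --- transversality of $\phi$ to the evaluation maps on $\ModFlow^B(\x,\y;\Source_{1,0})$ and $\ModFlow^B(\x,\y;\Source_1)$, and that the fibered product is two-dimensional with $(u_0,m_0)$ isolated --- are guaranteed by the index count $\ind^\sharp(B_1,\Source_1;B_2,\Source_2)=2$ together with the transversality results Theorem~\ref{thm:GeneralPositionA} and Lemma~\ref{lem:Transversality} (adapted to self-matched pairs). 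The proposition then produces a smeared neighborhood $U$ of $(u_0,v_1)$ in $\ClosedModFlow^B$ such that $U\times_{\compactev_{P_0}}M$ is homeomorphic to $(0,1]$, with $\overline{s\circ\ev}_{\{p\}}$ proper and of odd degree near $1$; i.e.\ each partial smcp in $\NearModMatched{\{\longchord_j\}}{B}$ contributes an odd number of Type~\ref{SMCP:OX} ends, and conversely each such end trims to one. Matching the homology class bookkeeping ($n_\wpt$, $n_\zpt$, the $U$ and $V$ exponents) is routine: removing an orbit curve does not change the shadow $B$, and adding or removing the odd orbit $\orb_j$ adjusts only the count of odd orbits, hence the $U$-exponent, in a way compatible with the conventions in Equation~\eqref{eq:dChanged}.

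The main obstacle, I expect, is not the gluing itself --- which is essentially Proposition~\ref{prop:OrbitCurve} applied verbatim --- but rather the verification that the Gromov limit really has the product structure being glued: one must know that when an orbit curve bubbles off the $\Hdown$-side, the remaining main component $u_1$ together with $u_2$ forms precisely a $\{\longchord_j\}$-partial smcp in the sense of Definition~\ref{def:PartialSMCP}, with no additional degeneration occurring simultaneously. This is exactly the content of the codimension count already carried out in the proof of Proposition~\ref{prop:SMCP-Ends} (the case $\lambda_-=1$, $o'_+ - o_+ = 0$, which forced an odd orbit curve end), so I would cite that computation. One subtlety worth spelling out is that the puncture $p$ being interior for the glued curve $u_1'$ but a boundary ($\east$) puncture after degeneration is what makes $s\circ\ev_{\{p\}}$ tend to $1$ at the end --- this is the standard mechanism, and I would only remark on it rather than reprove it. The conclusion on parities then follows, completing the lemma; see Figure~\ref{fig:OXoddWX} for the picture, and note that this lemma combines with Lemma~\ref{lem:WX} to show these ends cancel in the proof of Proposition~\ref{prop:dChangedSqZero}.
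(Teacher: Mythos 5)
Your proof is correct and takes essentially the same approach as the paper: both realize the Type~\ref{SMCP:OX} ends by gluing an orbit curve onto $\Source_1$ at the puncture labelled $\longchord_j$, invoking Proposition~\ref{prop:OrbitCurve} with the manifold $M$ taken to be the diagonal encoding the remaining partial-smcp matching constraints. The paper's proof is a single-sentence appeal to Proposition~\ref{prop:OrbitCurve}; your elaboration (citing the codimension count from Proposition~\ref{prop:SMCP-Ends} and tracking the $U$/$V$-exponent bookkeeping) fills in detail the paper leaves implicit, and your stated asymptotics $s\circ\ev_{\{p\}}\to 1$ match Proposition~\ref{prop:OrbitCurve}.
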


\begin{proof}
  This follows from the gluing
  \[ \gamma \colon
  \NearModMatched{\longchord_j}{B}(\x,\y;\Source_1',\Source_2) \times (0,\epsilon) \to
  \ModMatched^B(\x,\y,\Source_1,\Source_2), \] 
  gluing the orbit curve to $\Source_1'$ at $\longchord_j$,
  which parameterizes the end with $s(u(q))\goesto 0$,
  where here $q$ denotes the $\orb_j$-marked puncture;
  cf. Proposition~\ref{prop:OrbitCurve}.
\end{proof}

The following is a variant of Proposition~\ref{prop:BoundaryDegenerationNbd}:

\begin{lemma}
  \label{lem:WX}
  Suppose that $\{j,k\}\in\Mup$, where $f(j)$ is odd. Let $\longchord_j$
  be a chord of length one that covers an boundary component $\Zin_j$.
  The number of curves in
  $\NearModMatched{\{\longchord_j\}}{B}(\x,\y;\Source_1,\Source_2)$ has
  the same parity as the number of ends of
  $\ModMatchedChanged^B(\x,\y;\Source_1',\Source_2')$ of
  Type~\ref{SMCP:WX}, where $w_1$ is a (smooth) simple boundary degeneration
  that contains $\orb_j$. 
\end{lemma}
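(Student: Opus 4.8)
The plan is to prove Lemma~\ref{lem:WX} by a gluing argument, a variant of Proposition~\ref{prop:BoundaryDegenerationNbd} and a direct companion to Lemma~\ref{lem:OXodd}: together those two lemmas are what make the odd‑orbit ends of Type~\ref{SMCP:OX} cancel against the simple‑boundary‑degeneration ends of Type~\ref{SMCP:WX} in the proof of $\dChanged\circ\dChanged=0$ (Proposition~\ref{prop:dChangedSqZero}). First I would fix a Type~\ref{SMCP:WX} limiting comb $\bigl((w_1,u_1),u_2\bigr)$ occurring in the Gromov compactification of $\ModMatchedChanged^B(\x,\y;\Source_1',\Source_2')$. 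By case~\ref{SMCP:WX} of Proposition~\ref{prop:SMCP-Ends}, $w_1$ is a smooth simple boundary degeneration in $\Hdown$; its two interior orbit punctures are $\Mdown$‑matched, so one of them is $\orb_j$ (with $f(j)$ odd) and the other, being even, is $\phi$‑matched to a length‑one chord carried on the main component $u_1$. Trimming off $w_1$ (and its associated west‑infinity data) leaves a $\{\longchord_j\}$‑partial self‑matched curve pair $(u_1,u_2)\in\NearModMatched{\{\longchord_j\}}{B}(\x,\y;\Source_1,\Source_2)$; the identification of the single surviving free chord with the length‑one chord $\longchord_j$ covering $\Zin_j$, and the way $k$ with $\{j,k\}\in\Mup$ enters, is read off from the matching conditions and the homology‑class bookkeeping of the boundary degeneration, exactly as in the analysis of boundary‑degeneration collisions in the proof of Proposition~\ref{prop:CurvedTypeA}.

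Next I would apply Proposition~\ref{prop:BoundaryDegenerationNbd} to the main component $u_1$, taking for the auxiliary datum $(M,\phi)$ the fibered product over $\R^{P_0}$ that records two kinds of constraints: the matching of the remaining east and interior punctures of $u_1$ with the corresponding punctures of the $\Hup$‑side curve $u_2$ (this is the standard device used to pass from the gluing propositions of Section~\ref{sec:CurvesA} to the matched‑pair statements; compare Lemmas~\ref{lem:JJ}, \ref{lem:XO}, and~\ref{lem:OXodd}), together with the $\phi$‑time‑matching of the even orbit of $w_1$ to its partner chord, which must be folded in because that orbit is dragged into $w_1$ along with $\orb_j$. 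Transversality of all of these evaluation maps --- by Lemma~\ref{lem:Transversality}, Theorem~\ref{thm:GeneralPositionA}, and genericity of $J$ --- would verify the hypotheses of Proposition~\ref{prop:BoundaryDegenerationNbd}, whose conclusion is a smeared neighborhood $U$ of $\bigl((w_1,u_1),u_2\bigr)$ on which ${\overline{s\circ \ev_{p}}}\colon U\times_{\ev_{P_0}} M\to[0,1]$ is proper near $0$ and of odd degree near the comb, where $p$ is the $\orb_j$‑marked puncture. The fibered product $U\times_{\ev_{P_0}}M$ is, near its $s=0$ boundary, modelled on the finite set of curves constituting $\NearModMatched{\{\longchord_j\}}{B}(\x,\y;\Source_1,\Source_2)$ (finiteness being the one behind Lemma~\ref{lem:FiniteSum}); summing the odd local degrees over its points, and over the finitely many choices of sources, then yields the asserted equality of parities.

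The hard part will be step two: unlike in Proposition~\ref{prop:BoundaryDegenerationNbd}, the boundary degeneration here is ``simple'' in the $\Mdown$‑sense but interacts with the $\Mup$‑governed $\phi$‑matching, so one must check that as $w_1$ bubbles off the chord matched to its even orbit limits correctly onto the $t$‑level of $w_1$ --- i.e. that the codimension‑one stratum being glued to is exactly the one cut out by the partial‑smcp conditions of Definition~\ref{def:PartialSMCP} with $|X|=1$, so that (in contrast to the $|X|=2$ situation relevant to Lemmas~\ref{lem:OXeven} and~\ref{lem:XW}) no residual time‑matching of the form~\eqref{eq:NearMatchedConstraint} is imposed. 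A secondary point, as in Proposition~\ref{prop:WestInftyEnd}, is that the $\betas$‑evaluation of $u_1$ must be transverse to the wall $\Wall^{\orb_j=\orb_{j'}}$ of Lemma~\ref{lem:Walls} for the local degree count to be well defined; this holds for generic $J$ by Theorem~\ref{thm:GeneralPositionA}. Finally one records that the $U$‑ and $V$‑exponents attached to the two cancelling families of ends agree --- which is immediate, since the trimmed configuration and the limit have the same local multiplicities at $\wpt$ and $\zpt$ and the same count of odd orbits --- so that the cancellation survives the specialization $UV=0$, exactly as in the parallel bookkeeping in the proof of Proposition~\ref{prop:CurvedTypeA}.
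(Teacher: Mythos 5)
Your overall plan — trim the Type~\ref{SMCP:WX} comb to a $\{\longchord_j\}$-partial smcp, then run a gluing argument in the style of Proposition~\ref{prop:BoundaryDegenerationNbd}/Lemma~\ref{lem:OXodd} to get an odd-degree parameterization of the end — is the right idea, and you correctly spot the key subtlety: the even orbit of $w_1$ is $\phi$-constrained to its partner chord, and this constraint must somehow be accounted for in the gluing. But your proposed handling of that constraint does not work as written, and this is a genuine gap.

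You want to fold the time-matching $t\circ u(q_2)=t\circ u(\phi(q_2))$ into the auxiliary data $(M,\phi\colon M\to\R^{P_0})$. That is impossible by definition: $P_0$ is exactly the set of punctures \emph{not} on the boundary degeneration component, and $q_2$ (the even orbit of $w_1$) lies on that component. Constraints involving a degenerating puncture cannot be recorded by a map to $\R^{P_0}$. Proposition~\ref{prop:BoundaryDegenerationNbd} anticipates precisely this and supplies the separate mechanism $\ev_{p_1}-\ev_{q_0}$ for it; one must take $p_1 = q_2$ (the \emph{even} orbit) and $q_0 = q_3=\phi(q_2)$ so that the cut $(\ev_{p_1}-\ev_{q_0})^{-1}(0)$ recovers $\ModMatchedChanged$. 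Instead you declare $p$ to be ``the $\orb_j$-marked puncture,'' i.e. the odd orbit $q_1$, which is not $\phi$-constrained to anything; that choice yields the wrong fibered product. The paper's own proof handles exactly this by naming all three punctures $q_1$ (odd orbit), $q_2$ (even orbit), $q_3=\phi(q_2)$, and studying the two-component map with $F_1 = t\circ\ev_{q_1} - t\circ\ev_{q_2}$ (detecting the boundary degeneration, as in Proposition~\ref{prop:WestInftyEnd}) and $F_2 = t\circ\ev_{q_1}-t\circ\ev_{q_3}$ (recording the constraint), showing that the composite with the two-parameter gluing map has odd degree near the origin.

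Your remark about the chamber wall $\Wall^{\orb_j=\orb_{j'}}$ is aimed in the right direction but misfires: transversality to the wall is not what is at stake. The chamber structure enters because detection via a $t$-difference of the two orbits (the route the paper actually takes, following Proposition~\ref{prop:WestInftyEnd}, not Proposition~\ref{prop:BoundaryDegenerationNbd}) is two-sided, and the sign of $t\circ w_1(q_1)-t\circ w_1(q_2)$ determines from which side the end is approached. The paper deals with this by an explicit case split on that sign; in each case the degree is odd, so the parity conclusion holds. Had you genuinely run the $s\circ\ev_p$ argument of Proposition~\ref{prop:BoundaryDegenerationNbd} (which is one-sided, since the end sits at $s=0$), no chamber issue would arise at all — so the comment, as written, attaches the chamber phenomenon to the wrong proposition. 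Finally, the observation that the $U$- and $V$-exponents match is correct but is needed for the downstream cancellation in Proposition~\ref{prop:dChangedSqZero}, not for the parity statement of Lemma~\ref{lem:WX} itself.
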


\begin{proof}
  Let $\Source_1'=\WestSource\natural\Source_1$. 
  This curve contains three distinguished punctures, $q_1$ and $q_2$ marked by orbits
  (coming from $\WestSource$), labelled so that $q_2$ is the even orbit; and a third puncture $q_3$ that 
  marked by a length $1$ chord, with $q_3=\phi(q_2)$.
  (In the notation of the lemma statement, $q_3$ is labelled by $\longchord_j$.)
  Consider a moduli space ${\BigModMatched}$ containing $\ModMatchedChanged(\x,\y,\Source_1',\Source_2)$,
  where we drop the height constraint from Equation~\eqref{eq:SelfMatching} for the orbit $q_2$ and its corresponding
  chord $q_3=\phi(q_2)$. Thus, there is an evaluation map 
  \[ t\circ \ev_{q_1}-t\circ \ev_{q_2}\colon \BigModMatched\to \R \]
  so that $0$ is a regular value, and whose preimage is identified with 
  $\ModMatchedChanged(\x,\y,\Source_1',\Source_2)$.
  We will consider a map $F=(F_1,F_2)\colon \BigModMatched \to \R^2$ whose components are given by 
  \[ F_1= t\circ \ev_{q_1}-t\circ \ev_{q_2}\qquad F_2=t\circ \ev_{q_1}-t\circ\ev_{q_3}.\]
  This map extends continuously to the Gromov compactification of $\BigModMatched$.

  Fix $((u_1,w_1),u_2)$ in the Gromov compactification.
  Gluing  gives a map
  \[ \gamma\colon  \ModFlow\times_{\Tb}
  \ModWest(\WestSource)\times (-\epsilon,\epsilon)\times (0,\epsilon)\to \BigModMatched.\]
  Here, the $(-\epsilon,\epsilon)$ specifies the $t$-coordinate where the gluing
  is performed, and $[0,\epsilon)$ represents the gluing scale. 
  We restrict this to $(u_1,u_2)\times w_1\in \ModFlow\times_{\Tb}\ModDeg$,
  and denote the resulting map
  \[ \gamma\colon (-\epsilon,\epsilon)\times(0,\epsilon)\to \BigModMatched.\]

  Consider the two punctures $q_1$ and $q_2$ in $\WestSource$
  Since $w_1$ is generic,  $t\circ w_1(q_1)\neq t\circ w_1(q_2)$.
  Assume $t \circ w_1(q_1)>t\circ w_1(q_2)$.
  The map
  \[ (t\circ \ev_{q_1}-t\circ \ev_{q_3}) \circ\gamma(\cdot,0) \colon(-\epsilon,\epsilon)\to (-\epsilon,\epsilon) \]
  has odd degree. By our assumption, for all $r>0$,
  \[    F_1\circ \gamma(r,t)>0.\]
  Clearly, also
  \[    F_1(0,t)=0.\]
  Thus, 
  \[ F\circ \gamma\colon (-\epsilon,\epsilon)\times [0,\epsilon) \to \R^{\geq 0}\times \R \]
  is proper map of degree $1$ to points in $\R^{\geq 0}\times \R$ near the origin.
  The moduli space consists of points in the preimage of $\R^{>0}\times \{0\}$, a smooth one-manifold whose end consists of the preimage
  of the origin. 

  The same argument applies when $t\circ w_1(q_1)<t\circ w_1(q_2)$, with minor modifications.
\end{proof}

\begin{lemma}
  \label{lem:OXeven}
  Suppose that $\orb_j$ is an even orbit, and let 
  $\longchord_j$ be a chord of length one that covers a the corresponding boundary component $\Zin_j$,
  and $\longchord_k$ be a chord of length one that covers $\Zin_k$, with $\{j,k\}\in \Mup$.
  The number of curves in
  $\NearModMatched{\{\longchord_j,\longchord_k\}}{B}(\x,\y;\Source_1,\Source_2)$
  has the same parity as the number of ends of 
  $\ModMatchedChanged^B(\x,\y;\Source_1',\Source_2')$ 
  of Type~\ref{SMCP:OX}, where $v_1$ is an orbit curve with boundary puncture marked by $\longchord_j$.
\end{lemma}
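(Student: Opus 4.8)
The plan is to follow the same gluing strategy used in Lemma~\ref{lem:JJ}, Lemma~\ref{lem:XO}, and especially Lemma~\ref{lem:OXodd}, adapting it to the case of an even orbit. The key structural difference is that an even orbit $\orb_j$ on the $\Hdown$-side is always matched (via $\phi$) with a length one chord $\longchord_j'$ in $\Source_1$, and moreover its $\Mup$-partner $\orb_k$ appears on the $\Hup$-side. When the orbit $\orb_j$ slides off to $s=1$ in $\Source_1'$, the orbit curve $v_1$ carries a boundary puncture marked by one of the two length one chords covering $\Zin_j$; after trimming, we are left with a configuration $(u_1,u_2)$ in which $\East(\Source_1)$ acquires \emph{two} extra distinguished length one chords, namely $\longchord_j$ (the former orbit puncture's chord) and $\longchord_k$ (the chord on the $\Hup$-side that matched the orbit, pulled back via $\psi$; here we are using that $\{j,k\}\in\Mup$ and that on the $\Hup$-side the even orbit is constrained to lie at the same level as a chord). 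These two punctures are constrained to lie at the same $t$-level, which is precisely Equation~\eqref{eq:NearMatchedConstraint} in Definition~\ref{def:PartialSMCP} with $X=\{\longchord_j,\longchord_k\}$. Thus the trimming of an end of Type~\ref{SMCP:OX} with an even orbit curve is exactly a curve in $\NearModMatched{\{\longchord_j,\longchord_k\}}{B}(\x,\y;\Source_1,\Source_2)$.

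The heart of the argument is the gluing correspondence. First I would set $\Source_1'=\Source_1\natural\EastSource$, gluing an orbit curve $v_1$ onto $\Source_1$ at the puncture $\longchord_j$, so that the interior puncture created is labelled by $\orb_j$ and matched via $\phi$ with $\longchord_k$ pulled back by $\psi$ to $\Source_1'$. (Note: the even orbit on the $\Hup$-side forces, through the allowed-packet structure, the appearance of $\longchord_k$.) Then I would invoke Proposition~\ref{prop:OrbitCurve}, in the same manner as Lemma~\ref{lem:OXodd} and Lemma~\ref{lem:XO}, with the auxiliary manifold $M$ taken to be the diagonal in $\R^{P}$ that encodes the remaining matching conditions of the partial smcp — in particular the self-matching condition $t\circ u_1(\phi(\IntPunctEv))=t\circ u_1(\East)$ and the condition~\eqref{eq:NearMatchedConstraint} relating $\longchord_j$ and $\longchord_k$. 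Proposition~\ref{prop:OrbitCurve} then produces a smeared neighborhood $U$ of $(u_1,v_1)$ in $\ClosedModFlow^B(\x,\y;\Source_1')$ (times the $\Hup$-side curve, untouched) so that $U\times_{\compactev_{P_0}}M$ is a one-manifold whose end over $s=1$ is proper and of odd degree, and whose interior parameterizes curves in $\ModMatchedChanged^B(\x,\y;\Source_1',\Source_2')$. Taking the union over all orbit curves with the correct data gives the parity statement.

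The one subtle point — and the step I expect to be the main obstacle — is correctly identifying the partition $P$ and the diagonal $M$ so that the fibered product $\ModFlow^{B_1}(\x,\y;\Source_{1,0})\times_{\ev_{P_0}} M$ is zero-dimensional (so that the isolated-point hypothesis of Proposition~\ref{prop:OrbitCurve} applies) while simultaneously the moduli space $\ModMatchedChanged^B(\x,\y;\Source_1',\Source_2')$ is one-dimensional, consistent with $\ind(B_1,\Source_1;B_2,\Source_2)=2$. Here one must account for the extra constraint~\eqref{eq:NearMatchedConstraint}: passing from the honest self-matched curve pair (where the even orbit $\orb_j$ contributes its own self-matching condition $t\circ u(\orb_j)=t\circ u(\longchord_k)$) to the $\{\longchord_j,\longchord_k\}$-partial smcp, the orbit disappears and is replaced by two chords at a common $t$-level, so the count of constraints is preserved; the index bookkeeping is identical to the proof of Proposition~\ref{prop:SMCP-Ends}, where the quantity $o_+'-o_+=-1$ corresponds to exactly this degeneration with $\lambda_+=1$ and $c_1'=c_1+1$. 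Once the index matching is verified, the transversality hypotheses of Proposition~\ref{prop:OrbitCurve} follow from Theorem~\ref{thm:GeneralPositionA} for generic $J$, and the odd-degree conclusion is automatic. I would also remark, as in Lemma~\ref{lem:OXodd} and Lemma~\ref{lem:WX}, that the $U$ and $V$ powers are unaffected by this trimming (the orbit curve has $n_\wpt=n_\zpt=0$ and $\orb_j$ is even, so it was not being counted with a power of $U$), so the cancellation it participates in — against ends of Type~\ref{SMCP:XW}, via Lemma~\ref{lem:XW} — occurs with matching coefficients; see Figure~\ref{fig:OXevenXW}.
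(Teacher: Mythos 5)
Your proposal is correct and follows essentially the approach of the paper's proof. The paper spells out the degree argument explicitly—introducing an auxiliary moduli space $\widetilde\ModMatchedX$ in which the constraint $t\circ u_1(q_1)=t\circ u_1(q_2)$ from Equation~\eqref{eq:NearMatchedConstraint} is dropped, together with evaluation maps $F_1=t\circ\ev_{q_1}-t\circ\ev_{q_2}$ and $F_2=s\circ\ev_{q_1}$, and a gluing map into the space $\BigModMatched$ where $q_1$ is replaced by the orbit $\orb_j$, then shows $F=(F_1,F_2)$ has odd degree near $(0,1)$—whereas you package the same content as a direct appeal to Proposition~\ref{prop:OrbitCurve} with $M$ encoding all the matching constraints; these are equivalent formulations, and your identification of the trimmed limit as a $\{\longchord_j,\longchord_k\}$-partial smcp with the same-$t$-level constraint is precisely what the paper records just before Lemma~\ref{lem:OXodd}.
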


\begin{proof}
  Let ${\widetilde\ModMatchedX}$ denote the  moduli space containing
  $\NearModMatched{\{\longchord_j,\longchord_k\}}{B}(\x,\y,\Source_1,\Source_2)$,
  of data $(u_1,u_2,\Source_1,\phi,\Source_2,\psi)$ satisfying the
  conditions from Definition~\ref{def:PartialSMCP}, except that for
  the two distinguished punctures $q_1$ and $q_2$ on $\Source_1$
  labelled by $\longchord_j$ and $\longchord_k$ (not contained
  contained in $\phi(\IntPunctEv(\Source_1))$ or
  $\psi(\East(\Source_2))$), we no longer require that $t\circ
  u_1(q_1)=t\circ u_1(q_2)$; i.e. we drop the constraint from
  Equation~\eqref{eq:NearMatchedConstraint}.

  We have a map
  \[t\circ \ev_{q_1}-t\circ \ev_{q_2}\colon {\widetilde\ModMatchedX}\to \R \]
  so that $0$ is a regular value, and
  \[ (t\circ \ev_{q_1}-t\circ \ev_{q_2})^{-1}(0)= \NearModMatched{\{\longchord_j,\longchord_k\}}{B}(\x,\y,\Source_1,\Source_2).\]

  Each $(u_1,u_2)\in  \NearModMatched{\{\longchord_j,\longchord_k\}}{B}(\x,\y,\Source_1,\Source_2)$
  neighborhood ${\widetilde U}\in{\widetilde M}$, so that
  $F_1\colon {\widetilde U} \to (-\epsilon,\epsilon)$
  is a homeomorphism. 

  Let $\BigModMatched$ be the moduli space like ${\widetilde\ModMatchedX}$,
  except now $q_1$ is marked by the orbit $\orb_j$ rather than $\longchord_j$.
  In particular, 
  we have the map 
  \[ F_1=t\circ\ev_{q_1}-t\circ \ev_{q_2}\colon \BigModMatched\to \R \]
  with regular value $0$, so that
  \[ F_1^{-1}(0)=\ModMatchedChanged^B(\x,\y,\Source_1',\Source_2').\]
  Let $F_2=s\circ \ev_{q_1}$, we have a map
  \[ F=(F_1,F_2)\colon \BigModMatched\to \R\times \R^{< 1}. \]

  Gluing on the orbit curve
  at $q_1$ gives a map $\gamma\colon {\widetilde U}\times [0,r) \to
  \ModMatchedChanged^B(\x,\y,\Source_1',\Source_2')$.
  We have that
  \[ F_1\circ \gamma \colon {\widetilde U}\times (0)\to (-\epsilon,\epsilon) \]
  has degree $1$ near $0$; and for all ${\widetilde u}\in {\widetilde U}$, $r\in (0,\epsilon)$.
  \begin{align*}
    F_2\circ \gamma({\widetilde u},0)&=1 \\
    F_2 \circ \gamma({\widetilde u},r)&<1 
  \end{align*}
  Thus, 
  \[ F\circ \gamma({\widetilde U}\times [0,\epsilon)) \to \R\times \R^{\leq 1} \]
  is proper of degree $1$ for points near $(0,1)$ in $\R\times \R^{\leq 1}$.
  It follows at once that the smooth manifold
  \[ 
  \ModMatchedChanged^B(\x,\y,\Source_1',\Source_2')=F_1^{-1}(\{0\}) \]
  has one end over the point $(0,1)\in \R\times \R^{\leq 1}$.
\end{proof}

\begin{lemma}
  \label{lem:XW}
  Suppose that $\{j,k\}\in\Mup$, and let $\longchord_j$ and
  $\longchord_k$ be chords of length one that cover the boundary
  components $\Zin_j$ and $\Zin_k$ respectively.  The number of curves in
  $\NearModMatched{\{\longchord_j,\longchord_k\}}{B}(\x,\y;\Source_1,\Source_2)$ has
  the same parity as the number of ends of
  $\ModMatchedChanged^B(\x,\y;\Source_1,\Source_2')$ of
  Type~\ref{SMCP:XW}, where $w_2$ is a simple boundary degeneration
  that contains $\orb_j$ and $\orb_k$.
\end{lemma}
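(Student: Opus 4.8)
The plan is to prove this by a gluing argument running parallel to the proof of Lemma~\ref{lem:WX}, but now attaching the boundary degeneration to the $\Hup$-side curve $u_2$ in place of the $\Hdown$-side curve. Fix a curve $(u_1,u_2)$ in $\NearModMatched{\{\longchord_j,\longchord_k\}}{B}(\x,\y;\Source_1,\Source_2)$; by Definition~\ref{def:PartialSMCP} the two distinguished East punctures $p_1,p_2\in\Source_1$ carry the length-one chords $\longchord_j$ and $\longchord_k$ and satisfy the matching constraint $t\circ u_1(p_1)=t\circ u_1(p_2)$ of Equation~\eqref{eq:NearMatchedConstraint}. Form $\Source_2'$ by attaching to $\Source_2$ the source of a simple $\Mup$-boundary degeneration $w_2$ whose two interior orbit punctures $q_1,q_2$ are marked by $\orb_j$ and $\orb_k$; the comb $(u_1,(w_2,u_2))$ is then a self-matched story pair of Type~\ref{SMCP:XW}, with $\psi(q_i)=p_i$, and conversely every such end has this shape by Proposition~\ref{prop:SMCP-Ends}.

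First I would build an enlarged moduli space $\BigModMatched$ containing $\ModMatchedChanged^B(\x,\y;\Source_1,\Source_2')$, obtained by dropping the time-matching constraint at the pair of punctures $q_1,q_2\in\Source_2'$ and their images $p_1,p_2\in\Source_1$. It carries a map $F=(F_1,F_2)\colon \BigModMatched\to\R^2$ with $F_i=t\circ\ev_{q_i}-t\circ\ev_{p_i}$, whose common zero set is exactly $\ModMatchedChanged^B(\x,\y;\Source_1,\Source_2')$. Next, the standard gluing of a simple boundary degeneration onto $u_2$ (the $\Hup$-side analogue of Propositions~\ref{prop:BoundaryDegenerationNbd} and~\ref{prop:WestInftyEnd}, whose odd-degree conclusion invokes the odd-degree statement for $\evB$ from Lemmas~\ref{lem:BoundaryDegenerationsDegree1} and~\ref{lem:BoundaryDegenerationsDegree1A}) produces a smeared neighborhood of $(u_1,(w_2,u_2))$ identified with an odd number of copies of $[0,\epsilon)\times(-\epsilon,\epsilon)$, where the $[0,\epsilon)$ factor is the gluing parameter and $(-\epsilon,\epsilon)$ records the common $t$-position of the $d$-tuple of feet of the bubble on $u_2$; this step uses genericity of $w_2$ together with transversality of $\evB$ at $u_2$ to the wall $\Wall^{\orb_j=\orb_k}$. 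Denote the resulting gluing map $\gamma$.

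Transporting $F$ through $\gamma$, I would then verify — just as in Lemma~\ref{lem:WX} — that $F$ extends continuously to the Gromov compactification, vanishes identically on the ideal comb locus, and that $F\circ\gamma$ is a proper map of odd degree onto a neighborhood of the origin in $\R^{\ge 0}\times\R$; the preimage of $\R^{>0}\times\{0\}$ is $\ModMatchedChanged^B(\x,\y;\Source_1,\Source_2')$ near its Type~\ref{SMCP:XW} end, with a single boundary point $(u_1,(w_2,u_2))$ over the origin. This yields a parity-preserving correspondence between Type~\ref{SMCP:XW} ends and points of $\NearModMatched{\{\longchord_j,\longchord_k\}}{B}(\x,\y;\Source_1,\Source_2)$. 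I would also record, following the homology-class bookkeeping in Proposition~\ref{prop:SMCP-Ends}, that removing the shadow $\Bjk$ of $w_2$ from the $\Hup$-side class changes neither $n_\wpt$ nor $n_\zpt$ of $B$ — since on the $\Hup$ side the component $\Bjk$ of $\Sigma_2\setminus\betas$ containing $\Zout_j,\Zout_k$ contains neither basepoint — so the $U^{n_\wpt(B)}V^{n_\zpt(B)}$ weights agree on the two sides of the correspondence.

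The only real work is bookkeeping: faithfully transcribing the gluing and transversality inputs from the $\Hdown$-side arguments (Lemmas~\ref{lem:OXeven} and~\ref{lem:WX}) to the present $\Hup$-side situation, tracking which punctures are constrained and which are free once the bubble detaches, and checking the basepoint-multiplicity invariance above; no new analytic ingredient beyond Propositions~\ref{prop:BoundaryDegenerationNbd} and~\ref{prop:WestInftyEnd} is required.
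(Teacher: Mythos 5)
Your proposal takes essentially the same route as the paper: glue a simple $\beta$-boundary degeneration onto the $\Hup$-side curve $u_2$, form an enlarged moduli space $\BigModMatched$ by dropping the two matching constraints at $q_1,q_2$, and then run a degree argument built on the odd-degree property of $\evB$. The one place where you should tighten the bookkeeping is the cut-down map. Your $F=(F_1,F_2)$ with $F_i=t\circ\ev_{q_i}-t\circ\ev_{p_i}$ correctly characterizes $\ModMatchedChanged^B$ as $F^{-1}(0)$, but neither coordinate is sign-definite, so the claim that $F\circ\gamma$ is a proper odd-degree map onto a neighborhood of the origin in $\R^{\geq 0}\times\R$ (with the ideal-comb locus over the boundary) does not follow as stated. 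The paper resolves this by working with a third coordinate $F_3=t\circ\ev_{q_1}-t\circ\ev_{q_2}$, which is what actually detects the approach to the degeneration: $F_3\to 0$ as the gluing parameter $r\to 0$ and is sign-definite for $r>0$ and generic $w_2$, giving a degree-one map $\overline{U}\to\R\times\R\times\R^{\geq 0}$ whose boundary maps to $\{F_3=0\}$. (This also accounts for why the gluing domain needs the extra $\widetilde U$ direction, matching dimensions.) Your remaining observations — that the wall $\Wall^{\orb_j=\orb_k}$ must be avoided generically, and that removing $\Bjk\subset\Sigma_2$ changes neither $n_\wpt$ nor $n_\zpt$ — are correct, though the latter is bookkeeping that lives in the proof of Proposition~\ref{prop:dChangedSqZero} rather than in this lemma.
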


\begin{proof}
  Let ${\widetilde\ModMatchedX}$ be as in Lemma~\ref{lem:OXeven}.

  Let $\BigModMatched$ be the moduli space containing the moduli space
  of self-matched curve pairs
  $\ModMatchedChanged^{B}(\x,\y,\Source_1,\Source_2')$ as in
  Definition~\ref{def:SelfMatchedCurvePair}, except that now there are
  two distinguished (interior) punctures $q_1,q_2$ in $\Source_2$
  marked by orbits $\orb_j$ and $\orb_k$ respectively, where we do not
  impose the corresponding height constraints (from
  Equation~\eqref{eq:SelfMatchedCurvePair}).  Note that $u_1$
  represents the homology class $B_1$ and $u_2$ represents the
  homology class $B_2+{\mathcal D}$, where ${\mathcal D}$ is the
  elementary domain $\Sigma_2$ containing $\Zout_j$ and $\Zout_k$.
  Consider the map
  $F=(F_1,F_2,F_3) \colon \BigModMatched\to \R^3$ whose three components are 
  the are evaluation maps
  \begin{align*}
    F_1&=t\circ \ev_{\psi(q_1)}-t\circ \ev_{q_1} \\
    F_2&=t\circ \ev_{\psi(q_1)}-t\circ \ev_{\psi(q_2)}\\
    F_3&=t\circ \ev_{q_1}-t\circ \ev_{q_2}. 
  \end{align*}
  Clearly, 
  \[ \ModMatchedChanged^{B_1,B_2+{\mathcal D}}(\x,\y,\Source_1,\Source_2')=F^{-1}(\Delta\times \{0\}), \]
  where $\Delta\subset \R^2$ is the diagonal.

  For sufficiently small open subsets ${\widetilde U}\subset
  {\widetilde \ModMatchedChanged}$, gluing gives a map
  \[ \gamma\colon   (-\epsilon,\epsilon)\times {\widetilde \ModMatched}\times_{\Tb} \ModDeg({\mathcal D})\times
   [0,\epsilon) \to \BigModMatched, \] 
  where the $(-\epsilon,\epsilon)$ factor 
  specifies the $t$-coordinate where the
  gluing is performed, and the $[0,\epsilon)$ represents the gluing scale.
  The fibered product over $\Tb$ is taken with respect to an evaluation map
  $\ev_t \colon {\ModFlow}\to \Tb$, defined by
  \[ \ev_t(u_1)={u_1}^{-1}(0,t)\in \Tb; \] and the
  (degree one) evaluation map $\ev^\beta\colon \ModWest\to \Tb$.
  Assume that $\ev^{\beta}(w_1)$ is a regular value, so that there
  is an open neighborhood $W$ of $w_2$ so that $\ev\colon W \to \Tb$
  is a local diffeomorphism.  Restricting to some sufficiently small
  neighborhood ${\widetilde U}\subset {\widetilde \ModFlow}$, we
  can guarantee that for $t\in (-\epsilon,\epsilon)$,
  $\ev_t({\widetilde U}) \subset \ev(W)$, so
  ${\widetilde U}\times_{\Tb} W \cong {\widetilde U}$. 
  Further shrinking ${\widetilde U}$ if needed, we can assume that
  \[ t\circ \ev_{q_1}-t\circ \ev_{q_2} \colon {\widetilde U} \to
  (-\epsilon,\epsilon) \] is a homeomorphism, so that the preimage of
  $0$ is the nearly self-matched curve pair $(u_1,u_2)$.  We
  abbreviate the gluing map (suppressing the choice of $W$), writing
  instead
  \[ \gamma\colon {\widetilde U}\times (-\epsilon,\epsilon)\times
  (0,\epsilon)\to
  {\BigModMatched}.\]
  This map has a natural extension to 
  ${\widetilde U}\times (-\epsilon,\epsilon)\times
  [0,\epsilon)$ to the Gromov compactification of ${\BigModMatched}$.

  For any fixed ${\widetilde u}\in {\widetilde U}$, $r\in (0,1)$, 
  \[ F_1\colon \gamma(\cdot,{\widetilde u},r)
  \colon (-\epsilon,\epsilon)\to (-\epsilon,\epsilon) \]
  has degree one, since the same statement holds when $r=0$.
  Also, for any fixed $t\in (-\epsilon,\epsilon)$, 
  \[ F_2\circ \gamma(t,\cdot,r)
  \colon {\widetilde U}\to (-\epsilon,\epsilon)\]
  has degree one, since the same statement holds setting $r=0$
  Finally, 
  \[ 
    F_3(t,{\widetilde u},0)= 0\qquad{\text{and}}\qquad
    F_3(t,{\widetilde u},r)> 0. 
    \]
  It follows that 
  \[ F\circ \gamma \colon (-\epsilon,\epsilon)\times
  {\widetilde\ModMatched}\times [0,\epsilon) \to 
  \R\times \R \times \R^{\geq 0}\] has degree $1$ near the origin; and the smooth
manifold $F^{-1}(\Delta\times \{0\})$ has one end over the origin.
\end{proof}

\begin{proof}[Proof of Proposition~\ref{prop:dChangedSqZero}]
  As usual, we consider index two moduli spaces. Consider their ends,
  as in Proposition~\ref{prop:SMCP-Ends}.
  Combining Lemma~\ref{lem:JJ} and \ref{lem:XO}, it follows that
  the count of ends of Type~\ref{SMCP:JJ} cancels with
  the ends of Type~\ref{SMCP:XO}.  Combining Lemma~\ref{lem:WX} and
  \ref{lem:OXodd}, it follows that ends of of Type~\ref{SMCP:WX}
  cancel with ends of Type~\ref{SMCP:OX}, where the orbit curve is
  odd, as illustrated in Figure~\ref{fig:OXoddWX}.  Combining
  Lemma~\ref{lem:OXeven} and~\ref{lem:XW}, it follows that ends of
  Type~\ref{SMCP:XW} cancel with ends of Type~\ref{SMCP:OX} where the
  orbit curve is even, as illustrated in Figure~\ref{fig:OXevenXW}.
  
  The ends that are not accounted for are of Type~\ref{SMCP:2Story};
  and these ends count the $\y$ coefficient of
  $\dChanged\circ\dChanged$.
\end{proof}

\subsection{Intermediate complexes}
\label{subsec:Intermediates}

Note that the self-matched compatible pairs from
Definition~\ref{def:SelfMatchedCurvePair} use the orbits curves
quite differently from Definition~\ref{def:MatchedPair}; and so the
chain complex defined using the two objects
$(C,\partial^{(0)})$ and $(C,\partial_{\natural})$ seem quite different.
To construct a homopy equivalence between these complexes, we will use a
sequence of intermediate complexes, defined here.

Fix an integer $\ell$ and a marked source $\Source$. Let
$\IntPunct^{\leq \ell}(\Source)\subset \IntPunct(\Source)$ denote the
subset of those punctures $q\in \Source$ that are marked by orbits
$\orb_j$ with $f(j)\leq \ell$.  Define $\IntPunct^{\ell}(\Source)$ and
$\IntPunct^{>\ell}(\Source)$ analogously

\begin{defn}
  \label{def:IntermediateComplexes}
  Let $\ell$ be an integer between $0,\dots,2n$.
  An $\ell$-self-matched curve pair  is the following data.
  \begin{itemize}
    \item a holomorphic curve $u_1$ in $\Hdown$, with source $\Source_1$
    \item a holomorphic curve $u_2$ in $\Hup$, with source $\Source_2$
    \item an injection 
      $\phi\colon \IntPunctEv^{\leq \ell}(\Source_1) \to \East(\Source_1)$
    \item an injection $\psi\colon \East(\Source_2)\to\East(\Source_1)$
    \end{itemize}
    with the following properties:
    \begin{itemize}
    \item $\East(\Source_1)$ is a union of three disjoint sets,
      $\phi(\IntPunctEv^{\leq \ell}(\Source_1))$, $\psi(\East(\Source_2))$, and
      $\IntPunct^{\leq \ell}(\Source_1)$
    \item If $p\in \IntPunctEv^{\leq \ell}(\Source_1)$ is labelled by some orbit $\orb_j$,
      then $\phi(p)$ is marked by a length one Reeb chord that covers
      the boundary component of $\Zin_k$, where $\{j,k\}\in\Mup$, and
      \[ t \circ u_1(\phi(p))=t\circ u_2(p). \]
    \item If $q\in \East^{\leq \ell}(\Source_2)$ , then $\psi(q)$ is labelled by a
      length one Reeb chord that covers the boundary component of
      $\Zin_j$, and
      \[ t\circ u_1(\psi(q))=t\circ u_2(q). \]
    \item If $q\in \East^{> \ell}(\Source_2)\cup\East(\Source_2)$,
      then the marking on the Reeb chord or orbit
      of $q$ is the same as the marking on the Reeb chord or orbit of
      $\psi(q)$, and 
      \[ (s\circ u_1(\psi(q)),t \circ u_1(\psi(q)))=
      (s \circ u_2(q),t \circ u_2(q)).\]
      \end{itemize}
    Let $\ModInt{\ell}^{B_1,B_2}(\x,\y,\Source_1,\Source_2,\phi,\psi)$ denote the
    moduli space of $\ell$-morphism matched curves.
\begin{align*} \ModMor{\ell}^B&(\x,\y)\\
  &=
\bigcup_{\left\{\begin{tiny}\begin{array}{r}
B_1\in \pi_2(\x_1,\y_1) \\B_2\in\pi_2(\x_2,\y_2)
  \end{array}\end{tiny}\Big|~B=B_1\natural B_2\right\}}
\bigcup_{\{(\Source_1,\Source_2,\phi,\psi)\big| \ind^{\natural}(B_1,\Source_1;B_2,\Source_2)=\ind(B)\}}
\ModMor{\ell}^{B_1,B_2}(\x,\y,\phi,\psi).
\end{align*}
\end{defn}

\begin{defn}
  Fix $\ell\in \{0,\dots,2n\}$. 
  Consider $C$ equipped with the endomorphism
  determined by
  \[ \dInt{\ell}(\x)=\sum_{\y} \sum_{\{B\in\doms(\x,\y)\big| \ind(B)=1\}}
  \# \left(\frac{\ModInt{\ell}^B(\x,\y)}{\R}\right)\cdot \y \]
\end{defn}

\begin{remark}
  Note that $\dInt{0}$ is the operator from
  Equation~\eqref{eq:DefD0}; and  $\dInt{2n}$ is the operator
  $\dChanged$ is the operator
  from Equation~\eqref{eq:dChanged}
\end{remark}

Unlike the earlier cases, a sequence of $\ell$-self-matched curve
pairs can have a Gromov limit to a pair of curves
$((w_1,u_1),(w_2,u_2))$ where both $w_1$ and $w_2$ are simple boundary
degenerations, so that each puncture in $w_2$ has a corresponding
puncture in $w_1$, which is marked by the same orbit.  This can happen
in the special case where the odd orbit $\orb_j$ in $w_2$ has $f(j)=\ell$.

We will formulate the end counts in terms of the following types of curves
(which naturally arise in Gromov limits of $\ell$-self-matched curve pairs):

\begin{defn}
  \label{def:IntermediateComplexesx}
  Let $\rho$ be a Reeb chord in $\Sigma_1$, and 
  let $\ell$ be an integer between $0,\dots,2n$.
  An {\em $\ell$-self-matched curve pair with remaining $\{\rho\}$}  is the following data.
  \begin{itemize}
    \item a holomorphic curve $u_1$ in $\Hdown$, with source $\Source_1$
    \item a holomorphic curve $u_2$ in $\Hup$, with source $\Source_2$
    \item an injection 
      $\phi\colon \IntPunctEv^{\leq \ell}(\Source_1)\to \East(\Source_1)$
    \item an injection $\psi\colon \East(\Source_2)\to\East(\Source_1)$
    \end{itemize}
    with the following properties:
    \begin{itemize}
    \item $\East(\Source_1)$ is a union of four disjoint sets,
      \[ \phi(\East^{\leq \ell}_+(\Source_1)), \qquad
      \psi(\East(\Source_2)\cup\East(\Source_2)), \qquad
      \East^{<\ell}_-(\Source_1), \qquad \{q_0\}, \]
      where $q_0$ is a puncture labelled by the Reeb chord $\rho$.
    \item If $p\in \IntPunctEv^{\leq \ell}(\Source_1)$ is labelled by some orbit $\orb_j$,
      then $\phi(p)$ is marked by a length one Reeb chord that covers
      the boundary component of $\Zin_k$, where $\{j,k\}\in\Mup$, and
      \[ t \circ u_1(\phi(p))=t\circ u_2(p). \]
    \item If $q\in \East^{\leq \ell}(\Source_2)\cup \East(\Source_2)$, 
      then $\psi(q)$ is labelled by a
      length one Reeb chord that covers the boundary component of
      $\Zin_j$, and
      \[ t\circ u_1(\psi(q))=t\circ u_2(q). \]
    \item If $q\in \East^{> \ell}(\Source_2)$,
      then the marking on the Reeb chord or orbit
      of $q$ is the same as the marking on the Reeb chord or orbit of
      $\psi(q)$, and 
      \[ (s\circ u_1(\psi(q)),t \circ u_1(\psi(q)))=
      (s \circ u_2(q),t \circ u_2(q)).\]
      \end{itemize}
  We denote the moduli space of these data by 
  $\NearModMatched{\ell,\{\rho\}}{B}(\x,\y,\Source_1,\Source_2)$.
\end{defn}

 \begin{figure}[h]
 \centering
 \input{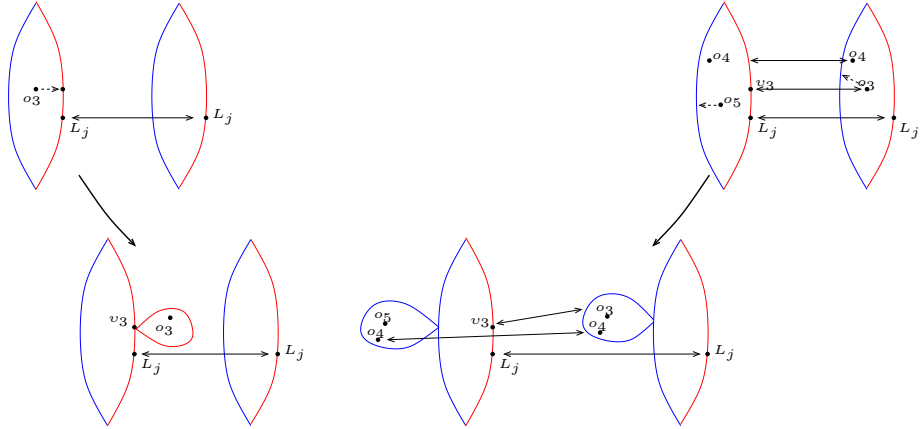}
 \caption{Ends of Type~\ref{SMCP:OX} 
   where the orbit is labelled by $\orb_j$ with 
   $f(j)=\ell$ is odd,
   cancel against curves with boundary degenerations on both sides.}
 \label{fig:OXWW}
 \end{figure}

\begin{lemma}
  \label{lem:WW}
  Let $j$ be so that $f(j)=\ell$ is odd.
  The number of
  curves in
  $\NearModMatched{\ell;\{\longchord_j\}}{B}(\x,\y;\Source_1,\Source_2)$
  has the same parity as the number of ends of
  $\ModInt{\ell}^{B'}(\x,\y;\Source_1',\Source_2')$ of
  the form $((u_1,w_1),(u_2,w_2))$, where
  $\Source_1'=\WestSource_1\natural \Source_1$,
  $\Source_2'=\WestSource_2\natural\Source_2$, $w_1$ and $w_2$ are
  simple boundary degenerations with sources $\WestSource_1$ and
  $\WestSource_2$ both of which contain a puncture marked with
  $\orb_j$, and
  $B'_i=B_i+{\mathcal D}_i$, where ${\mathcal D}_i$ is the shadow of
  $w_i$ for $i=1,2$.
\end{lemma}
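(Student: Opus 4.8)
The plan is to follow the template established by Lemmas~\ref{lem:JJ}, \ref{lem:WX}, \ref{lem:OXeven}, and \ref{lem:XW}: we show that a suitable one-parameter family of gluings realizes the desired end, and we count ends by a degree argument. First I would set up the relevant ambient moduli space. Starting from an $\ell$-self-matched curve pair with remaining $\{\longchord_j\}$, which lives in $\NearModMatched{\ell;\{\longchord_j\}}{B}(\x,\y;\Source_1,\Source_2)$, I glue on a simple boundary degeneration $w_1$ (with source $\WestSource_1$, containing punctures marked $\orb_j$ and $\orb_k$ where $\{j,k\}\in\Mup$) to $\Source_1$ along the chord $\longchord_j$ --- here we use that $f(j)=\ell$ is odd, so in the $\ell$-self-matched picture the orbit $\orb_j$ on the $\Hup$-side has been ``exposed'' as a long chord on the $\Hdown$-side --- and simultaneously a simple boundary degeneration $w_2$ (with source $\WestSource_2$, containing punctures marked $\orb_j$ and $\orb_k$) to $\Source_2$. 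The shadows shift by ${\mathcal D}_1$ resp. ${\mathcal D}_2$, giving $B'_i=B_i+{\mathcal D}_i$, consistent with the fact that the self-matching conventions (the ``$\sharp$'' construction) incorporate the components of $\Sigma_i\setminus\betas$ whose orbits have small $f$-value.

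Next I would construct the gluing map. As in the proof of Lemma~\ref{lem:XW}, I would form an enlarged moduli space $\BigModMatched$ containing $\ModInt{\ell}^{B'}(\x,\y;\Source_1',\Source_2')$, where the height constraints at the orbit punctures $q_1$ (marked $\orb_j$) and $q_2$ (marked $\orb_k$) in $\WestSource_2$, and the corresponding punctures $p_1,p_2$ in $\WestSource_1$, are relaxed. On $\BigModMatched$ one has a map $F=(F_1,F_2,F_3)\colon \BigModMatched\to\R^3$ built from differences of $t$-evaluations, whose zero set (more precisely, the preimage of $\Delta\times\{0\}$ for the appropriate diagonal $\Delta$) is the moduli space we want. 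Gluing gives a map
\[ \gamma\colon {\widetilde U}\times (-\epsilon,\epsilon)\times (-\epsilon,\epsilon)\times [0,\epsilon)\times [0,\epsilon)\to \BigModMatched, \]
where the two $(-\epsilon,\epsilon)$ factors record the $t$-coordinates of the two boundary-degeneration gluings and the two $[0,\epsilon)$ factors are the gluing scales. Using that $\evB\colon \ModWest^{\Bjk}\to\Tb$ has odd degree for each side (Lemmas~\ref{lem:BoundaryDegenerationsDegree1} and~\ref{lem:BoundaryDegenerationsDegree1A}), and that $\evB_{\mathbf q}$ is transverse to the relevant walls (Theorem~\ref{thm:GeneralPositionA}), the fibered products are identified with an odd number of copies of the model neighborhood. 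A degree computation then shows that $F\circ\gamma$ is proper of odd degree near the origin, so the smooth manifold $\ModInt{\ell}^{B'}(\x,\y;\Source_1',\Source_2')$ has, near each such limiting configuration, a single end; hence the two parities agree.

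The main obstacle will be bookkeeping the combinatorics of the doubled boundary degeneration and verifying the transversality/degree statement cleanly. Because \emph{both} $w_1$ and $w_2$ are boundary degenerations matched orbit-to-orbit (rather than orbit-to-chord, as in Lemma~\ref{lem:XW}, or chord-to-chord), there are genuinely two independent gluing parameters and two independent $t$-translation parameters, and one must check that they can be decoupled so that the model neighborhood is a product $[0,\epsilon)^2\times(-\epsilon,\epsilon)^2$ and that the relevant evaluation maps restrict to maps of odd degree on the appropriate faces --- this is where one invokes genericity of $J$ (so that $t\circ w_i(p_1)\neq t\circ w_i(p_2)$ and the $\Tb$-evaluations are transverse to $\Wall^{\orb_j=\orb_k}$). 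The dimension count also needs care: one must confirm that requiring both simple boundary degenerations, together with the induced height constraints, drops the index by exactly the right amount so that the limiting configuration sits in a moduli space of the expected (zero-dimensional after quotienting) size; this is a direct computation using Equation~\eqref{eq:EmbIndA} and the identity $\ind(B_i+{\mathcal D}_i)=\ind(B_i)+2$, paralleling the index estimates in the proof of Proposition~\ref{prop:SMCP-Ends}. Once these points are in place, the argument is a routine adaptation of the gluing lemmas already proved.
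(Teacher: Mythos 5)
Your proposal takes essentially the same route as the paper's proof: glue $w_1$ and $w_2$ onto the trimmed pair with two independent $t$-translation parameters and two gluing scales, and read off the ends by a degree argument on an enlarged moduli space in which the two height constraints are relaxed. Two details to tighten if you write this out. First, $w_1$ lives in $\Hdown$, so its orbit partner to $\orb_j$ is dictated by $\Mdown$, not $\Mup$; only $w_2$'s partner uses $\Mup$ --- the two boundary degenerations generally carry different second orbits, which is exactly why the relaxed height constraints on each side have to be tracked separately. Second, the comparison map in the paper has four components rather than three: two $t$-differences (one for each relaxed matching constraint) and two $s$-evaluations, namely $s\circ u_1(\psi(q_2))$ and $s\circ u_2(q_2)$, each of which tends to $1$ as the boundary degeneration forms on its respective side; modeling $F$ as an $\R^3$-valued map of $t$-differences alone would not register the two boundary degenerations independently, so the preimage you want is of the form $\{0\}\times\{0\}\times\Delta\subset\R^2\times(0,1)^2$ rather than a diagonal inside a space of $t$-differences.
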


\begin{proof}
  Let $q_1$ and $q_2$ be the two punctures on $\Source_2'$ coming from
  $w_2$, labelled so that $q_1$ is labelled by the odd Reeb orbit and
  $q_2$ by the even one. In particular, $\psi(q_1)$ is a puncture on
  $\Source_1'$ labelled by $\longchord_j$.
  There is a
  moduli space ${\BigModMatched}$ which is like
  $\NearModMatched{\ell;\{\longchord_j\}}{B}(\x,\y,\Source_1',\Source_2')$, except we now drop the
  conditions that 
  \[ t\circ u_1(\psi(q_1))=t\circ u_2(q_1)\qquad{\text{and}}\qquad
  t\circ  u_1(\psi(q_2))=t\circ u_2(q_2). \] 
  Thus, we have a map
  \[ F=(F_1,F_2,F_3,F_4)\colon \BigModMatched\to \R\times \R\times (0,1)\times (0,1) \]
  with components
  \begin{align*}
    F_1&=t\circ u_1(\psi(q_1))-t\circ u_2(q_1),\\
    F_2&=t\circ  u_1(\psi(q_2))-t\circ u_2(q_2),\\
    F_3&=s\circ u_1(\psi(q_2)), \\
    F_4&=s\circ u_2(q_2) 
  \end{align*}
  so that $F^{-1}(0\times \Delta)=\NearModMatched{\ell;\{\longchord_j\}}{B}(\x,\y,\Source_1',\Source_2')$.

  Fix $(u_1,u_2)\in\NearModMatched{\ell;\{\longchord_j\}}{B}(\x,\y;\Source_1,\Source_2)$. 
  Gluing $w_1$ and $w_2$ to $u_1$ and $u_2$ gives a map
  \[ \gamma\colon \times (-\epsilon,\epsilon)\times
  (0,\epsilon)\times (-\epsilon,\epsilon)\times (0,\epsilon)\to
  \BigModMatched.\] (Note we are now gluing to both sides, giving two
  time parameters and two scale parameters.)  

  The gluing map extends
  to a map from $(-\epsilon,\epsilon)\times
  [0,\epsilon)\times (-\epsilon,\epsilon)\times [0,\epsilon)$
  to the Gromov compactification of $\BigModMatched$.
  This extension satisfies the following properties,
  for all $t_1,t_2\in(-\epsilon,\epsilon)$ and $r_1,r_2\in(0,\epsilon)$:
  \begin{itemize}
    \item 
      $F_1\circ \gamma(\cdot, r_1, t_1 
      r_2)\colon (-\epsilon,\epsilon)\to (-\epsilon,\epsilon)$
      has degree one, since the same holds for $r_1=0$.
    \item 
      $F_2\circ \gamma(t_1, r_1, \cdot,
      r_2)\colon (-\epsilon,\epsilon)\to (-\epsilon,\epsilon)$
      has degree one, since the same holds for $r_2=0$.
    \item 
      $F_3(t_1,\cdot,t_2,r_2)\colon [0,\epsilon)\to (0,1]$
      has degree one near $1$.
    \item 
      $F_4(t_1,r_1,t_2,\cdot)\colon [0,\epsilon)\to (0,1]$
      has degree one near $1$.
  \end{itemize}
  It follows at once that 
  $(F\circ \gamma)^{-1}(\{0\}\times \{0\}\times \Delta)$ is a one-manifold with a single
  end over the origin.
\end{proof}

\begin{prop}
  The endomorphism $\dInt{\ell}$ satisfies
  $\dInt{\ell}\circ\dInt{\ell}=0$.
\end{prop}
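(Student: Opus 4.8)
The plan is to argue exactly as in the proof of Proposition~\ref{prop:dChangedSqZero}, using the interpolating nature of the operator $\dInt{\ell}$. One considers the index two moduli spaces $\ModInt{\ell}^B(\x,\y)$ for $\ind(B)=2$, with $\x=\x_1\#\x_2$, $\y=\y_1\#\y_2$, and $n_\wpt(B)$ or $n_\zpt(B)$ vanishing. The one-dimensional ends of these moduli spaces (once one rules out $\alpha$-boundary degenerations and closed components, exactly as in Lemmas~\ref{lem:NoBoundaryDegenerations} and~\ref{lem:NoClosedCurves}) split into several types, which I would classify by an index count generalizing the one in Proposition~\ref{prop:SMCP-Ends}. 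Bookkeeping the interior punctures of $\Source_1$ marked by orbits $\orb_j$ according to whether $f(j)\leq \ell$ or $f(j)>\ell$, and whether they are even or odd, gives the following end types: (i) two-story $\ell$-self-matched curve pairs; (ii) join curves on $\Source_2$ alone; (iii) orbit curves on $\Source_2$ (Type~\ref{SMCP:XO}); (iv) orbit curves on $\Source_1$ (Type~\ref{SMCP:OX}), where the orbit $\orb_j$ has $f(j)\leq \ell$ and may be even or odd; (v) simultaneous join curves on $\Source_1$ and $\Source_2$ (Type~\ref{SMCP:JJ}); (vi) simple boundary degenerations on the $\Hdown$ side containing $\orb_j$ with $f(j)<\ell$ (Type~\ref{SMCP:WX}); (vii) simple boundary degenerations on the $\Hup$ side containing $\orb_j$ with $f(j)>\ell$ (Type~\ref{SMCP:XW}); and (viii) the genuinely new type from Lemma~\ref{lem:WW}, namely configurations $((u_1,w_1),(u_2,w_2))$ with simple boundary degenerations $w_1,w_2$ on both sides containing the same odd orbit $\orb_j$ with $f(j)=\ell$. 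The main novelty relative to Proposition~\ref{prop:dChangedSqZero} is the orbits with $f(j)=\ell$ (or, in the case $\ell=2n$, the unmatched orbit of $f$-value $2n-1$): for $\ell<2n$ the matching partner $\orb_k$ with $\{j,k\}\in\Mup$ has $f(k)=\ell+1>\ell$, so $\orb_k$ is treated on the $\Hup$ side by the ``upward'' matching, making a two-sided boundary degeneration possible.

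Next I would establish the pairwise cancellations of all ends of types (ii)--(viii). The cancellation of (ii) with (v) and of (iii) with (v) is handled by Lemmas~\ref{lem:JJ} and~\ref{lem:XO} verbatim, which show that all three match the count of partially self-matched pairs with a single length-one chord at East infinity. For orbits $\orb_j$ with $f(j)<\ell$ odd, ends of type (iv) with odd orbit cancel ends of type (vi) by the combination of Lemmas~\ref{lem:OXodd} and~\ref{lem:WX}, both counting curves in $\NearModMatched{\{\longchord_j\}}{B}(\x,\y;\Source_1,\Source_2)$. For even orbits $\orb_j$ with $f(j)\leq \ell$ (whose partner $\orb_k$ has $f(k)\geq \ell+1>\ell$), ends of type (iv) with even orbit cancel ends of type (vii) by Lemmas~\ref{lem:OXeven} and~\ref{lem:XW}, both counting curves in $\NearModMatched{\{\longchord_j,\longchord_k\}}{B}(\x,\y;\Source_1,\Source_2)$. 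The remaining ends are: type (iv) with $\orb_j$ having $f(j)=\ell$ odd, and type (viii). These two are matched by Lemma~\ref{lem:OXodd} together with Lemma~\ref{lem:WW}: the former shows ends of type (iv) (with odd $\orb_j$, $f(j)=\ell$) have the same parity as the count of curves in $\NearModMatched{\ell;\{\longchord_j\}}{B}(\x,\y;\Source_1,\Source_2)$, and the latter shows the ends of type (viii) have the same parity as that same count; see Figure~\ref{fig:OXWW}. Hence the only uncancelled ends of the one-dimensional moduli spaces are the two-story ends of type (i), which count the $\y$-coefficient of $\dInt{\ell}\circ\dInt{\ell}(\x)$, so this count is even; thus $\dInt{\ell}\circ\dInt{\ell}=0$.

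One subtlety to check carefully is that the $U$- and $V$-exponents of the two moduli spaces in each cancelling pair agree, so that the cancellation occurs over $\Ring$ and not merely $\Field$; this is the analogue of the bookkeeping already done in the proofs of Lemmas~\ref{lem:WX},~\ref{lem:OXeven},~\ref{lem:WW}, and in Proposition~\ref{prop:dChangedSqZero}, and it follows because removing a boundary degeneration with $n_\wpt=n_\zpt=0$ does not change the local multiplicities at $\wpt$ or $\zpt$, while the total count of odd orbits (equivalently the relevant $U$-weight) is preserved under the relabeling. I would also need to verify the compactness and index estimates that exclude $\alpha$-boundary degenerations and closed components in the present ($\ell$-dependent) setting; these go through as in Lemmas~\ref{lem:NoBoundaryDegenerations} and~\ref{lem:NoClosedCurves} since the hypothesis $n>1$ and the $UV=0$ relation are still in force, and the constraint packets involved remain allowed. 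The main obstacle, then, is not any single analytic input—each gluing statement is already available as one of Lemmas~\ref{lem:JJ}--\ref{lem:WW}—but rather the careful index computation that produces the correct list of end types (i)--(viii) for each value of $\ell$ and verifies that the cancellations pair up exactly, with the orbits of $f$-value $\ell$ absorbed by the two-sided boundary degenerations of Lemma~\ref{lem:WW}.
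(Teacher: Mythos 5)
Your proposal follows essentially the same route as the paper's proof: synthesize the arguments for $\partial^{(0)}$ (Proposition~\ref{prop:dMatchedZqZero}) and $\dChanged$ (Proposition~\ref{prop:dChangedSqZero}), classify the ends of index-two moduli spaces of $\ell$-self-matched curve pairs as in Proposition~\ref{prop:SMCP-Ends} together with the new two-sided boundary degeneration end of Lemma~\ref{lem:WW}, and cancel in pairs via Lemmas~\ref{lem:JJ}, \ref{lem:XO}, \ref{lem:OXodd}, \ref{lem:WX}, \ref{lem:OXeven}, \ref{lem:XW}, and~\ref{lem:WW}. The added remark about matching $U$- and $V$-exponents is welcome and correct. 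However, there are concrete errors in your $f$-value bookkeeping that should be fixed.

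First, in your item~(vii), a boundary degeneration on the $\Hup$ side (Type~\ref{SMCP:XW}) contains two orbits $\orb_j, \orb_k$ with $\{j,k\}\in\Mup$, and for this to appear as an \emph{isolated} degeneration on $\Source_2$ (rather than a two-sided one) \emph{both} orbits must already be deformed, i.e.\ $f(j),f(k)\leq\ell$ — not $f(j)>\ell$ as you write. If either orbit had $f>\ell$, it would still be matched to an orbit on $\Source_1$ by the full $(s,t)$-constraint, so pushing it to $s=0$ would force a degeneration on $\Source_1$ as well, producing an end of the Lemma~\ref{lem:WW} type, not an SMCP:XW end. Second, and relatedly, your parenthetical claim that the $\Mup$-partner $\orb_k$ of an even orbit $\orb_j$ with $f(j)\leq\ell$ satisfies $f(k)\geq\ell+1$ is false: the $f$-values of an $\Mup$-pair are consecutive with the even member having the \emph{larger} value, so $f(k)=f(j)-1<\ell$. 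Neither error breaks the pairing (iv) $\leftrightarrow$ (vii), which is correct and matches the paper, but the stated reason for it is wrong. Finally, your type~(ii) (``join curves on $\Source_2$ alone'') is mislabeled — join curves can only form on the type-$A$ side $\Source_1$, or on both sides simultaneously — and the phrase ``(ii) with (v) and (iii) with (v)'' is ambiguous; the intended cancellation is that the two-sided ends of Types~\ref{def:OO} and~\ref{def:JJ} cancel for orbits with $f>\ell$ while Types~\ref{SMCP:XO} and~\ref{SMCP:JJ} cancel for $f\leq\ell$, and single-sided join ends on $\Source_1$ cancel among themselves.
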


\begin{proof}
  This is a straightforward synthesis of Propositions~\ref{prop:dMatchedZqZero}
  and~\ref{prop:dChangedSqZero}. Again, we look at two-dimensional
  moduli spaces. We find that their ends are of the following types:
  \begin{itemize}
  \item Two-story $\ell$-self-matched curve pairs.
  \item Type~\ref{def:JJ}.
  \item Type~\ref{def:OO}, if the orbit curves  are marked by $\orb_j$ with $j\not\in\Omega_k$
  \item Type~\ref{SMCP:XO}, if the odd orbit $\orb_j$ has $f(j)\leq \ell$.
  \item Type~\ref{SMCP:OX}, where the orbit curve on the left is marked by an odd orbit $\orb_j$,
    with $f(j)\leq \ell$.
  \item Type~\ref{SMCP:WX}.
  \item Type~\ref{SMCP:XW}.
  \item $\Omega$-matched story pair ends, of the form $(w_1,u_1),(w_2,u_2)$, 
    which occurs when $\ell$ is odd and 
    both $w_1$ and $w_2$ contain the puncture with $f$-value equal to $\ell+1$.
  \end{itemize}
  Ends of Type~\ref{def:OO} and~\ref{SMCP:XO} cancel with ends of
  Type~\ref{def:JJ} by Lemma~\ref{lem:Join} 
  when the orbit is has $f$-value greater than $\ell$; and a combination
  of Lemmas~\ref{lem:JJ} combined with Lemma~\ref{lem:XO} otherwise.
  Similarly, ends of type~\ref{SMCP:OX} where the orbit is an odd orbit in $\Omega_k$
  cancel with those of Type~\ref{SMCP:WX}, as in Lemmas~\ref{lem:OXodd} and~\ref{lem:WX}.
  Ends of Type~\ref{SMCP:OX} where the orbit is even and and has $f$-value less than or equal to $\ell$
  cancel with those of Type~\ref{SMCP:XW} where the orbit has $f$-value less than ore equal to $\ell$.
  By Lemma~\ref{lem:WW} and~\ref{lem:OXodd}, 
  the remaining ends of the form $(w_1,u_1),(w_2,u_2)$ 
  cancel with 
  ends of Type~\ref{SMCP:OX} where the orbit is odd, has and has 
  $f$-value equal to $\ell$; see Figure~\ref{fig:OXWW}.
\end{proof}

Consider $C$ equipped with the endomorphism
\begin{align*}
 \partial^{(k)} &(\x)
  &=\sum_{\y} 
  \sum_{\{B\in\pi_2(\x,\y)|\ind(B)=1\}}
  \#\left(\frac{\ModInt{\ell}^B(\x,\y)}{\R}\right)\cdot   U^{n_\wpt(B)}V^{n_\zpt(B)}
\cdot \y.
\end{align*}

\subsection{Interpolating between the intermediate complexes}
\label{subsec:Interpolate}

In this section we prove the following:

\begin{prop}
  \label{prop:Intermediates}
  When $n>1$, 
  there is an isomorphism of chain complexes over 
  $\Ring$
  \[ \Phi_{\ell}\colon (C,\partial^{(\ell)})\to (C,\partial^{(\ell+1)})\]
\end{prop}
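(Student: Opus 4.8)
The plan is to construct $\Phi_\ell$ by counting points in moduli spaces of curve pairs in which the matching condition for the orbit with $f$-value $\ell+1$ is deformed by an interpolation parameter $r \in (0,1)$, exactly in the spirit of the ``deforming the diagonal'' argument of~\cite[Chapter~9]{InvPair} but applied to a single orbit/chord matching. More precisely, fix the integer $j$ with $f(j)=\ell+1$. An $\ell$-self-matched curve pair treats $\orb_j$ as in the $(\partial^{(\ell)})$-world, while an $(\ell+1)$-self-matched pair treats it as in the $(\partial^{(\ell+1)})$-world; the difference is whether the puncture marked by $\orb_j$ on the $\Hup$-side is constrained only in the $t$-coordinate (matched against a long chord on the $\Hdown$-side) or is left to interact in both the $s$- and $t$-coordinates. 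I would introduce a one-parameter family of matching conditions indexed by $r$, where at $r\to 0$ one recovers one flavor and at $r\to 1$ the other, and define $\Phi_\ell$ by counting rigid (index $0$, i.e.\ $\ind(B)=0$, so that after the $\R$-quotient these are isolated) points in the total moduli space swept out as $r$ ranges over $(0,1)$, weighted by $U^{n_\wpt(B)}V^{n_\zpt(B)}$ together with the appropriate power of $U$ recording odd orbits, as in Equation~\eqref{eq:DefAction} and~\eqref{eq:dChanged}.

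The key steps, in order, are: (1) set up the deformed moduli spaces precisely, including the index/Euler-characteristic bookkeeping analogous to Definition~\ref{def:IndexTypeA} and the computations in Proposition~\ref{prop:SMCP-Ends}, so that the parametrized moduli spaces for the $\Phi_\ell$-count are $0$-dimensional and the ones governing the chain-map relation are $1$-dimensional; (2) prove transversality and compactness for these deformed moduli spaces, adapting Lemma~\ref{lem:NoBoundaryDegenerations}, Lemma~\ref{lem:NoClosedCurves} (here using $n>1$, which is exactly why the hypothesis appears), and Proposition~\ref{prop:Compactness} — the new phenomenon being boundary degenerations on both sides that can appear as $r\to 0$ or $r\to 1$, handled by the gluing results Proposition~\ref{prop:BoundaryDegenerationNbd}, Proposition~\ref{prop:WestInftyEnd}, and Lemma~\ref{lem:WW}; (3) analyze the ends of the $1$-dimensional parametrized moduli spaces: interior two-story ends give $\partial^{(\ell+1)}\circ\Phi_\ell + \Phi_\ell\circ\partial^{(\ell)}$, the $r\to 0$ and $r\to 1$ ends give exactly the zero-dimensional moduli spaces defining $\partial^{(\ell)}$ and $\partial^{(\ell+1)}$ (up to the sign/parity matching already established in Proposition~\ref{prop:SMCP-Ends} and Lemmas~\ref{lem:JJ}--\ref{lem:XW}), and all join-curve, orbit-curve, and non-visible collision ends cancel in pairs as in the proofs of Proposition~\ref{prop:dMatchedZqZero} and Proposition~\ref{prop:dChangedSqZero}; hence $\partial^{(\ell+1)}\circ\Phi_\ell = \Phi_\ell\circ\partial^{(\ell)}$; (4) show $\Phi_\ell$ is an isomorphism by an energy/area filtration argument: $\Phi_\ell$ preserves the generating set $C$ and, after ordering generators by the local multiplicities of the connecting domains (equivalently by the Alexander and Maslov filtrations of Lemmas~\ref{lem:AgrDom} and~\ref{lem:MgrDefined}), its ``diagonal'' term is the identity (contributed by the constant flowline, whose area is zero and which always glues), so $\Phi_\ell = \Id + (\text{strictly filtration-decreasing})$ and is therefore invertible over $\Ring$. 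The finiteness of the sums defining $\Phi_\ell$ follows just as in Lemma~\ref{lem:FiniteSum}.

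The main obstacle I expect is step (2)–(3): controlling the degenerations at the two endpoints $r\to 0,1$ of the interpolation parameter, where one must ensure that as the deformed orbit-matching collapses, the only codimension-one degenerations are the simple boundary degenerations (on one or both sides) already catalogued, and that their counts are exactly accounted for by Lemmas~\ref{lem:OXodd}, \ref{lem:WX}, \ref{lem:OXeven}, \ref{lem:XW}, and \ref{lem:WW}. In particular, the subtle point is that the deformation must be chosen so that no genuinely new curve configuration (e.g.\ a double boundary degeneration with the wrong orbit, or an $\alpha$-boundary degeneration) can appear; the argument that excludes these is the same combinatorial/dimension-counting reasoning used throughout Section~\ref{subsec:SelfMatched}, together with the crucial use of the relation $UV=0$ to kill ends whose homology class crosses both basepoints, but assembling it coherently for the parametrized family is where the real work lies. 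The invertibility in step (4) is, by contrast, essentially formal once the chain-map property is in hand.
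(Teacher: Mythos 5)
Your proposal takes a genuinely different route from the paper, and it also has a concrete gap that you have already flagged but not resolved.

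On the difference of approach: the paper does not deform the matching condition by an abstract interpolation parameter $r$. Instead, Definition~\ref{def:MorMatch} introduces a distinguished \emph{time} $t_0\in\R$ into the data of an ``$\ell$-morphism matched curve'': punctures whose $f$-value differs from $\ell$ retain the fixed (undeformed) matching constraint dictated by $\ell$ or $\ell+1$, while a puncture with $f$-value $\ell$ is treated one way if its $t$-coordinate is $<t_0$, another way if $>t_0$, and with an interleaving constraint on $s$-projections if it sits exactly at $t_0$. The interpolation is thus a time cutoff, not a family of deformed constraints, and the identity contribution enters through $\Phi_\ell=\Id+h_\ell$ (made explicit in the $n=1$ version, Proposition~\ref{prop:Intermediatesx}). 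By contrast, you leave the precise form of the $r$-deformation unspecified, so it is not clear what equation replaces Equations~\eqref{eq:SelfMatching} and~\eqref{eq:SelfMatchedCurvePair} for intermediate $r$; this is not merely cosmetic, because the entire end analysis depends on the exact degenerations the deformation permits.

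On the gap: you acknowledge that step (2)--(3), the classification and cancellation of ends, is ``where the real work lies,'' but you then claim the conclusion by citing earlier lemmas. That is not sufficient. In the paper, this is Lemma~\ref{lem:ChainMap}, whose proof requires a new and fairly intricate classification of ends at the distinguished level $t_0$ into types~\ref{end:OffRight}--\ref{end:OrbFromAboveA} (and an even/odd dichotomy in $\ell$), followed by a combinatorial matching of ends labelled by ``profiles'' $(m_-,m,m_+)$ and an index $j$, with the residual unmatched ends of profiles $(0,1,m_+,1)$ and $(m_-,1,0)$ giving exactly the terms $\partial^{(\ell)}$ and $\partial^{(\ell+1)}$. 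None of Lemmas~\ref{lem:OXodd}, \ref{lem:WX}, \ref{lem:OXeven}, \ref{lem:XW}, \ref{lem:WW} directly produces this pairing, because those concern ends of the self-matched moduli spaces, not the morphism moduli space with a distinguished level; they enter only as ingredients. In addition, the double-boundary-degeneration ends $((w_1,u_1),(w_2,u_2))$ with both $w_i$ containing an $f$-value-$\ell$ orbit (Lemma~\ref{lem:WW}) arise only in this interpolation step and have no analogue in the $d^2=0$ arguments you reference, so they must be treated fresh. Finally, there is a small bookkeeping inconsistency in your step (3): you state the $r\to 0,1$ ends yield $\partial^{(\ell)}$ and $\partial^{(\ell+1)}$ outright and then conclude $\partial^{(\ell+1)}\circ\Phi_\ell=\Phi_\ell\circ\partial^{(\ell)}$, but what the end count actually gives (in the paper's form) is $\partial^{(\ell+1)}\circ h_\ell + h_\ell\circ\partial^{(\ell)}=\partial^{(\ell)}+\partial^{(\ell+1)}$, which makes $\Id+h_\ell$ the chain map; your later step (4) recovers this interpretation, but the intermediate claim is muddled. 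Your invertibility argument via the area filtration and the constant flowline is correct and is the part of the argument the paper leaves mostly implicit.
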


The map $\Phi_\ell$ will be constructed by counting curves which generalize
the $\Omega$-matched curves, as follows.

\begin{defn}
  \label{def:MorMatch}
  Let $\ell$ be an integer between $1,\dots,2n$.
  An $\ell$-morphism matched curve, is the following data.
  \begin{itemize}
    \item a holomorphic curve $u_1$ in $\Hdown$, with source $\Source_1$
    \item a holomorphic curve $u_2$ in $\Hup$, with source $\Source_2$
    \item a subset $X\subset \East^{\ell}_+(\Source_1)$, 
      (i.e. which is empty if $\ell$ is odd)
    \item a subset $Y\subset \East^{\ell}_-(\Source_1)$
    \item an injection 
      $\phi\colon \IntPunctEv^{<\ell}(\Source_1)\cup X \to \East(\Source_1)$
    \item an injection $\psi\colon \East(\Source_2)\to\East(\Source_1)$
    \item a real number $t_0\in \R$
    \end{itemize}
    with the following properties:
    \begin{itemize}
    \item $\East(\Source_1)$ is a union of four disjoint sets,
      $\phi(\East^{<\ell}_+(\Source_1)\cup X)$, $\psi(\East(\Source_2))$,
      $\East^{<\ell}_-(\Source_1)$, and $Y$.
    \item $X$
      contains all punctures $q\in \East^{\ell}_+(\Source_1)$
      with $t\circ u_1(q)>t_0$ 
    \item $Y$ contains all punctures $q\in \East^{\ell}_-(\Source_1)$
      with $t\circ u_1(q)<t_0$ 
    \item If $p\in \IntPunctEv^{<\ell}(\Source_1)\cup X$ is labelled by some orbit $\orb_j$,
      then $\phi(p)$ is marked by a length one Reeb chord that covers
      the boundary component of $\Zin_k$, where $\{j,k\}\in\Mup$, and
      \[ t \circ u_1(\phi(p))=t\circ u_2(p). \]
    \item If $q\in \East^{<\ell}(\Source_2)$ or $q\in\East^{\ell}(\Source_2)$
      and $t \circ u_2(q)<t_0$, then $\psi(q)$ is labelled by a
      length one Reeb chord that covers the boundary component of
      $\Zin_j$, and
      \[ t\circ u_1(\psi(q))=t\circ u_2(q) \]
    \item If $q\in \East^{> \ell}(\Source_2)$ or
      $q\in \East^{\ell}(\Source_2)$ and $t \circ u_2(q)>t_0$, 
      then the marking on the Reeb chord or orbit
      of $q$ is the same as the marking on the Reeb chord or orbit of
      $\psi(q)$, and 
      \[ (s\circ u_1(\psi(q)),t \circ u_1(\psi(q)))=
      (s \circ u_2(q),t \circ u_2(q)).\]
    \item 
      If $q\in \East^{\ell}(\Source_2)$ and $t\circ u_2(q)=t_0$,
      then $\psi(q)$ is also labelled by the same Reeb orbit, and
      \[ t\circ u_1(\psi(q))=t\circ u_2(q)=t_0.\]
      Moreover, the following inequalities hold on the $s$ projection.
      Order the punctures $q\in \East^{\ell}(\Source_2)$ with $t\circ u_1(q)=t_0$ 
      $\{q_i\}_{i=1}^m$ so that the sequence $\{s\circ u_2(q_i)\}_{i=1}^m$ is 
      increasing; then 
      \[   s\circ u_2(q_i)< s\circ u_1(\psi(q_i)) \]
      and, if $i<m$,
      \[ s\circ u_1(\psi(q_i))<s\circ u_2(q_{i+1}).\]
      \end{itemize}
    Each $\ell$-morphism matched curve has three associated integers,
    $m_-$, $m$, and $m_+$, where $m_-$ resp. $m_+$ denotes the number of punctures $q$
    in $\East^{\ell}(\Source_2)$ with $t(q)<t_0$ resp. $t(q)>t_0$;
    and $m$ (as above) is the number of punctures 
    in $\East^{\ell}(\Source_2)$ with $t(q)=t_0$.
    The triple $(m_-,m,m_+)$ is called the {\em profile} of the $\ell$-morphism
    matched curve.

    Let $\ModMor{\ell}(\x,\Source_1,\Source_2,\phi,\psi)$ denote the
    moduli space of $\ell$-morphism matched curves.
\begin{align*} \ModMor{\ell}^B&(\x,\y)\\
  &=
\bigcup_{\left\{\begin{tiny}\begin{array}{r}
B_1\in \pi_2(\x_1,\y_1) \\B_2\in\pi_2(\x_2,\y_2)
  \end{array}\end{tiny}\Big|~B=B_1\natural B_2\right\}}
\bigcup_{\{(\Source_1,\Source_2,\phi,\psi)\big| \ind^{\natural}(B_1,\Source_1;B_2,\Source_2)=\ind(B)\}}
\ModMor{\ell}^{B_1,B_2}(\x,\y,\phi,\psi).
\end{align*}
\end{defn}

\begin{defn}
  Define a map $h_\ell\colon (C,\partial^{(\ell)})\to (C,\partial^{(\ell+1)})$
  by the formula
\begin{align*}
 h_\ell &(\x)
  &=\sum_{\y} 
  \sum_{\{B\in\pi_2(\x,\y)|\ind(B)=1\}}
  \#\ModMor{\ell}(\x,\y)\cdot   U^{n_\wpt(B)}V^{n_\zpt(B)}
\cdot \y.
\end{align*}
\end{defn}

\begin{lemma}
  \label{lem:ChainMap}
  \[ \partial^{(\ell+1)}\circ h_\ell + h_\ell\circ \partial^{(\ell)}
  = \partial^{(\ell+1)}+\partial^{(\ell)}. \]
\end{lemma}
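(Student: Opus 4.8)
The plan is to prove Lemma~\ref{lem:ChainMap} by the standard procedure of analyzing the ends of one-dimensional moduli spaces, in this case the spaces $\ModMor{\ell}^B(\x,\y)$ of $\ell$-morphism matched curves with $\ind(B)=2$. As in the proofs of Proposition~\ref{prop:dMatchedZqZero}, Proposition~\ref{prop:dChangedSqZero}, and the proposition establishing $\dInt{\ell}\circ\dInt{\ell}=0$, I would first carry out the transversality and compactness preliminaries: for generic almost-complex structures these moduli spaces are cut out transversely, so they are smooth one-manifolds, and their Gromov compactifications add only the degenerations allowed by the dimension count. The new feature here is the auxiliary parameter $t_0\in\R$ and the profile $(m_-,m,m_+)$ recording how the $\east^\ell$-punctures of $\Source_2$ distribute relative to $t_0$; the degeneration analysis must keep track of how the profile changes at the ends.

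Next I would enumerate the ends. First there are the ``ordinary'' two-story ends, which split into two families: those where the $t_0$-level sits in the top story versus the bottom story. One family contributes the terms of $\partial^{(\ell+1)}\circ h_\ell$ and the other contributes $h_\ell\circ \partial^{(\ell)}$; this is the analogue of the two-story ends in Lemma~\ref{lem:dMatchedCompactify} but now bookkept by which story carries the $t_0$-matching. Second, there are the ends coming from join curves and orbit curves at East infinity (as in Cases~\ref{def:JJ} and~\ref{def:OO} of Lemma~\ref{lem:dMatchedCompactify}, and Cases~\ref{SMCP:JJ}, \ref{SMCP:XO}, \ref{SMCP:OX}, \ref{SMCP:WX}, \ref{SMCP:XW} of Proposition~\ref{prop:SMCP-Ends}); these cancel in pairs using the gluing results Proposition~\ref{prop:JoinCurve}, Proposition~\ref{prop:OrbitCurve}, Proposition~\ref{prop:BoundaryDegenerationNbd}, Proposition~\ref{prop:WestInftyEnd}, together with Lemmas~\ref{lem:JJ}, \ref{lem:XO}, \ref{lem:OXodd}, \ref{lem:WX}, \ref{lem:OXeven}, \ref{lem:XW}, \ref{lem:WW}, exactly as in the earlier $d^2=0$ proofs. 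Third — and this is the crucial family — there are the ends where the parameter $t_0$ itself slides: as $t_0\to+\infty$ past all the $\east^\ell_-$-punctures of $\Source_1$ and all $\east^\ell$-punctures of $\Source_2$, one obtains $\partial^{(\ell)}$ (every level-$\ell$ orbit is still treated $(s,t)$-matched), and as $t_0\to-\infty$ one obtains $\partial^{(\ell+1)}$ (every level-$\ell$ orbit is now self-matched via $\phi$). These two ``boundary-in-$t_0$'' ends contribute precisely $\partial^{(\ell)}+\partial^{(\ell+1)}$, the right-hand side of the asserted identity. I would also need to check that the collision ends where a $\east^\ell$-puncture of $\Source_2$ crosses the level $t_0$ (changing the profile from $(m_-,m,m_+)$ through $(m_-,m+1,m_+-1)$ etc.) are interior to the one-manifold, not genuine ends — this follows from the ordering inequalities on the $s$-projections in Definition~\ref{def:MorMatch}, which are open conditions that glue across such a crossing.

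Finally, summing over all index-two classes $B$ with $\ind(B)=1$ contributions and reading off the $\y$-coefficient: the total number of ends of the one-manifold is even, the join/orbit/boundary-degeneration ends cancel in pairs, the two-story ends give $\partial^{(\ell+1)}\circ h_\ell + h_\ell\circ\partial^{(\ell)}$, and the two $t_0\to\pm\infty$ ends give $\partial^{(\ell)}+\partial^{(\ell+1)}$; rearranging yields the stated formula. Throughout, the $U$ and $V$ exponents on matching ends agree for the same reasons given in the proof of Proposition~\ref{prop:CurvedTypeA} and Proposition~\ref{prop:dChangedSqZero} (uncontained collisions and special boundary degenerations have $n_\wpt=n_\zpt=0$, odd-orbit counts are preserved, and $UV=0$ kills the problematic configurations), so no new algebraic subtlety arises.

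I expect the main obstacle to be the careful bookkeeping of the $t_0$-sliding ends together with the profile changes: one must verify that precisely the two extreme profiles $(0,0,m_\ell)$ and $(m_\ell,0,0)$ (where $m_\ell$ is the total number of level-$\ell$ orbit punctures of $\Source_2$) occur as genuine boundary, that they reproduce $\partial^{(\ell+1)}$ and $\partial^{(\ell)}$ respectively, and that every intermediate profile crossing is a codimension-zero phenomenon inside the compactified one-manifold rather than an end. This requires combining the index formula with the $s$-ordering constraints of Definition~\ref{def:MorMatch} and an appropriate gluing statement for a $\east^\ell$-puncture passing through the $t_0$-level, analogous to (but slightly different from) Proposition~\ref{prop:JoinCurve}.
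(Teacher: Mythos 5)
Your overall outline (ends of one‑dimensional moduli spaces of $\ell$-morphism matched curves, two-story ends giving the compositions, a cancellation among the remaining configurations) matches the paper's broad strategy, but the engine driving your argument — the two ``boundary-in-$t_0$'' ends at $t_0\to\pm\infty$ — is not what the paper uses, and as stated it does not work. The point is that the moduli spaces contributing to $h_\ell$ always have at least one level-$\ell$ puncture pinned at the threshold $t_0$; the extremal configurations that survive all the cancellations in the paper's proof have profile $(0,1,m_+)$ and $(m_-,1,0)$ (that is, $m=1$, not $m=0$). The $\partial^{(\ell)}$ terms come from ends of Type~\ref{end:HorizCollision} with profile $(0,1,m_+,1)$ — an $s$-collision between the one $t_0$-level puncture of $\Source_1$ and its partner on $\Source_2$ — and the $\partial^{(\ell+1)}$ terms come from Types~\ref{end:OffRight} and~\ref{end:OffLeft} with profile $(m_-,1,0)$ — an orbit curve sliding to $s=1$ or a boundary degeneration at $s=0$ at the $t_0$-level. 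Neither of these is ``$t_0$ slides off to $\pm\infty$''. A literal $t_0\to\pm\infty$ limit would require $m=0$, in which case $t_0$ is unconstrained and the moduli space has the wrong dimension to count rigid objects; the parameter $t_0$ does not provide an extra free real direction on top of an already rigid curve. (You have the orientations reversed as well: $t_0\to+\infty$ makes all level-$\ell$ punctures have $t<t_0$, hence chord-matched; $t_0\to-\infty$ makes them $(s,t)$-matched. Your parenthetical glosses say the opposite.)

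A second gap is your claim that the profile-crossing configurations — where a level-$\ell$ puncture of $\Source_1$ or $\Source_2$ hits the level $t=t_0$ — are ``interior to the one-manifold, not genuine ends'', on account of the $s$-ordering inequalities being open. This is the opposite of what happens: those crossings are exactly the ends of Types~\ref{end:OrbFromBelow}, \ref{end:OrbFromAbove} and \ref{end:OrbFromAboveA}, and they do contribute to the boundary count. The $s$-ordering inequalities do not glue across such a crossing, because the incoming stratum ($(s,t)$-matched, so $s$-equality) is precisely the closure of the failure of the strict inequality; the profile change is accompanied by a change in the constraint count, so it is a boundary, not an interior wall. What is true is that these ends cancel, but the cancellation is against the orbit-curve/boundary-degeneration/$s$-collision ends with shifted profiles, through the bookkeeping $(m_-,m,m_+,j)\leftrightarrow(m_-,m-1,m_++1,j)$ etc., and the ``leftovers'' after that cancellation are precisely the $\partial^{(\ell)}+\partial^{(\ell+1)}$ terms. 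Saying ``the join/orbit/boundary-degeneration ends cancel in pairs'' therefore over-claims: a sub-family of them survives, and it is that sub-family that produces the right-hand side of the identity. Finally, the paper's proof splits into two cases according to the parity of $\ell$ (for even $\ell$, the level-$\ell$ orbits are even, so they carry an extra paired chord, and the end types are slightly different — in particular Type~\ref{end:OffLeft} becomes a single boundary degeneration rather than a pair); your proposal does not address this dichotomy, and the bookkeeping would not close without it.
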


\begin{proof}
  Consider ends of one-dimensional moduli spaces
  $\ModMor{\ell}(\x,\y)$.
  There are two cases, according to the parity of $\ell$. Suppose that
  $\ell$ is odd.

  There are ends as involving punctures other than the ones whose
  $t$-projection is $t_0$. These ends are as in the intermediate
  complexes in Proposition~\ref{prop:Intermediates}. Many of these ends cancel in pairs
  in the proof of that proposition, leaving the two-story buildings, 
  and the ends that involve the special $t_0$-level.

  Those ends in turn can be classified, as follows.  Let
  $\{q_1,\dots,q_m\}\in \East^{\ell}(\Source_2)$ be the punctures with
  $t\circ u_1(q_i)=t_0$, labelled as in Definition~\ref{def:MorMatch}.
  \begin{enumerate}[label=($\flat$-\arabic*),ref=($\flat$-\arabic*)]
    \item \label{end:OffRight} The end corresponds to $s(u_1(q_m))\goesto 1$;
      in this case, there is a Gromov limit
      to $((u_1,v_1),u_2)$,
      where $v_1$ is an orbit curve (for an orbit whose $f$-value is $\ell$),
      attached at the level $t_0$.
      These ends are labelled by integers $(m_-,m-1,m_+)$,
      where $m_-$ and $m_+$ are defined as in Definition~\ref{def:MorMatch}.
    \item \label{end:OffLeft}
      The end corresponds to $s(u_2(\psi(q_1)))\goesto 0$;
      in this case, there is a Gromov limit
      to $((w_1,u_1),(w_2,v_2))$, where 
      $w_1$ and $w_2$ are simple boundary degenerations,
      both of which contain an orbit $\orb_j$
      with $f(j)=\ell$.
      These ends are labelled by integers $(m_-,m-1,m_+)$.
    \item
      \label{end:HorizCollision}
      Pairs $(u_1,u_2)$ with $s(u_1(q_i))=s(u_2(\psi(q_i)))$
      or $s(u_2(\psi(q_i)))= s(u_1(q_{i+1}))$.
      These ends are labelled $(m_-,m,m_+,j)$ where
      with the convention $j=2i-1$, if $s(u_1(q_i))=s(u_2(\psi(q_i)))$;
      and $j=2i$ if $s(u_2(\psi(q_i)))= s(u_1(q_{i+1}))$.
    \item 
      \label{end:OrbFromBelow}
      there is some $q\in \East^{\ell}(\Source_2)$ 
      with $t(u_2(q))=t_0$, but $q$ arises as a limit point 
      of punctures with  $t(u_2(q))<t_0$.
      Let $s_0=s(u_1(\psi(q)))=s(u_2(q))$.
      These ends are labelled $(m_-,m,m_+,j)$ 
      where
      \begin{equation}
        j =\left\{\begin{array}{ll}
        1 & {\text{if $s_0< s\circ u_1(q_1)$}} \\
        2i-1 & {\text{if $s\circ u_2(\psi(q_{i-1}))<s_0 <s\circ u_2(q_i)$}} \\
        2i &{\text{if $s\circ u_2(q_i)< s_0< s\circ u_1(\psi(q_{i+1}))$}} \\
        2m &{\text{if $s\circ u_1(q_{m})< s_0$}}
        \end{array}\right.
        \label{eq:DefOfj}
      \end{equation}
      Let  $m_+$ here be one greater than the number of
      $q\in\East^{\ell}(\Source_2)$ with $t(u_2(q))>t_0$;
      this is $m_+$ for the curves before taking the Gromov limit.
    \item \label{end:OrbFromAbove} there is some $q\in \East^{\ell}(\Source_2)$ 
      with $t(u_2(q))=t_0$, but $q$ arises as a limit point
      of punctures with $t(u_2(q))> t_0$.
      These ends are labelled $(m_-,m,m_+,j)$,
      where $j$ is defined as in Equation~\eqref{eq:DefOfj};
      where now $m_-$ is computed before taking the Gromov limit.
    \item \label{end:OrbFromAboveA} there is some $q\in \East^{\ell}(\Source_1)$ 
      with $t(u_1(q))=t_0$, arising as a limit point
      of punctures with $t(u_1(q))> t_0$.
      These ends are labelled $(m_-,m,m_+,j)$,
      where $j$ is defined as in Equation~\eqref{eq:DefOfj};
      and $m_-$ is computed before taking the Gromov limit.
  \end{enumerate}

 \begin{figure}[h]
 \centering
 \input{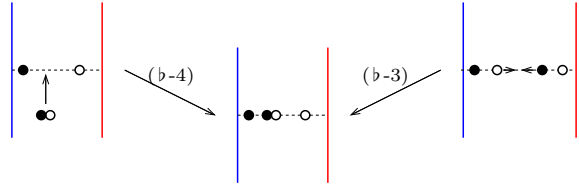}
 \caption{{\bf Ends of Types~\ref{end:OrbFromBelow} and~\ref{end:HorizCollision}
     cancel}
   We have drawn here the strip: the light dots represent the images under the 
   projection to $[0,1]\times \R$ of the punctures on $\Source_1$; 
   the dark ones represent the punctures on $\Source_2$.   
 \label{fig:MorMod1}}
 \end{figure}

  End of Type~\ref{end:HorizCollision} $(m_-,m,m_+,j)$
  cancel with end of Type~\ref{end:OrbFromBelow} $(m_-,m-1,m_++1,j)$
  except when $m=1$; see Figure~\ref{fig:MorMod1}.

  Ends of Type~\ref{end:OrbFromAbove} $(m_-,m,m_+,j)$ cancel in pairs
  except in the special case where $j=m$.

 \begin{figure}[h]
 \centering
 \input{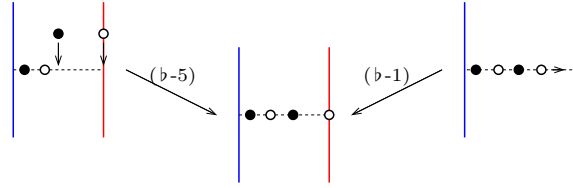}
 \caption{{\bf Ends of Type~~\ref{end:OffRight}
     cancel certain ends of Type\ref{end:OrbFromAbove},}
   when the ennd of Type~\ref{end:OrbFromAbove} has the form
   $(m_-,m,m_+,m)$.
 \label{fig:MorMod2}}
 \end{figure}

 \begin{figure}[h]
 \centering
 \input{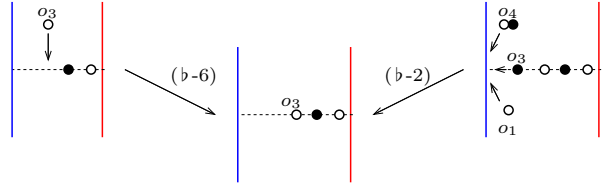}
 \caption{{\bf Ends of Types~\ref{end:OrbFromAboveA} and~\ref{end:OffLeft}
     cancel}
 \label{fig:MorMod3}}
 \end{figure}

  Ends of Type~\ref{end:OffRight} $(m_-,m,m_+)$
  cancel with ends of Type~\ref{end:OrbFromAbove} 
  $(m_-+1,m-1,m_+,m-1)$ except when $m=1$.
  
  Ends of Type~\ref{end:OffLeft}  $(m_-,m,m_+)$ cancel with ends of
  Type~\ref{end:OrbFromAboveA} $(m_-+1,m-1,m_+)$ 
  except when $m=1$.

  The remaining ends are: Type~\ref{end:HorizCollision} $(m_-,1,m_+,1)$,
  Type~\ref{end:OffRight} $(m_-,1,m_+)$, and Type~\ref{end:OffLeft}
  $(m_-,1,m_+)$. Now, provided that $m_->0$,
  ends of Type~\ref{end:HorizCollision} $(m_-,1,m_+,1)$
  correspond to ends of Type~\ref{end:OffRight} and~\ref{end:OffLeft}
  $(m_--1,1,m_++1)$.

  After these further cancellations, the remaining ends are
  of Type~\ref{end:HorizCollision} $(0,1,m_+,1)$ -- which
  correspond  to the terms in $\partial^{(\ell)}$ -- and ends of type
  Type~\ref{end:OffRight} and~\ref{end:OffLeft}
  $(m_-,1,0)$ -- which correspond to the terms in $\partial^{(\ell+1)}$.

  The remaining two-story buildings count terms
  in $\partial^{(\ell+1)}\circ h_{\ell}+h_{\ell}\circ \partial^{(\ell)}$,
  verifying that 
  $\partial^{(\ell+1)}\circ h_{\ell}+h_{\ell}\circ \partial^{(\ell)}=0$ when
  $\ell$ is odd.

  This discussion requires slight modifications in case $\ell$ is even.
  Let
  $\{q_1,\dots,q_m\}\in \East^{\ell}(\Source_2)$ be the punctures with
  $t\circ u_1(q_i)=t_0$, labelled as in Definition~\ref{def:MorMatch}.
  \begin{enumerate}[label=($\flat'$-\arabic*),ref=($\flat'$-\arabic*)]
    \item \label{end:eOffRight} The end corresponds to $s(u_1(q_m))\goesto 1$;
      in this case, there is a Gromov limit
      to $((u_1,v_1),u_2)$,
      where $v_1$ is an orbit curve (for an orbit whose $f$-value is $\ell$),
      attached at the level $t_0$.
    \item \label{end:eOffLeft}
      The end corresponds to $s(u_2(\psi(q_1)))\goesto 0$;
      in this case, there is a Gromov limit
      to $((w_1,u_1),u_2)$, where 
      $w_1$ is a simple boundary degeneration containing 
      an orbit $\orb_j$
      with $f(j)=\ell$,
      and $u_2$ contains an extra unmatched chord $\longchord_k$,
      which covers the boundary component $\Zdown_k$ so that $f(k)=\ell-2$.
    \item
      \label{end:eHorizCollision}
      Pairs $(u_1,u_2)$ with $s(u_1(q_i))=s(u_2(\psi(q_i)))$
      or $s(u_2(\psi(q_i)))= s(u_1(q_{i+1}))$.
    \item 
      \label{end:eOrbFromBelow}
      There is some $q\in \East^{\ell}(\Source_2)$ 
      with $t(u_2(q))=t_0$, but $q$ arises as a limit point 
      of punctures with  $t(u_2(q))<t_0$.
    \item \label{end:eOrbFromAboveA} There is some $q\in \East^{\ell}(\Source_1)$ 
      with $t(u_1(q))=t_0$, arising as a limit point
      of punctures with $t(u_1(q))> t_0$.
      Note that in this case there as an extra puncture $q'$ (the limit of $\phi(q)$)
      with $t(q')=t_0$ on $u_1$ labelled by $\longchord_k$, where
      $f(k)=\ell-1$.
  \end{enumerate}
  With the above remarks in place, verification of the stated relation 
  when $\ell$ is even proceeds much as before. The most significant
  difference is that the cancellation of ends of Type~\ref{end:eOrbFromAboveA}
  with those of Type~\ref{end:eOffLeft} is slightly simpler than the corresponding
  cancellation in the odd case; see Figure~\ref{fig:eMorMod}.
 \begin{figure}[h]
 \centering
 \input{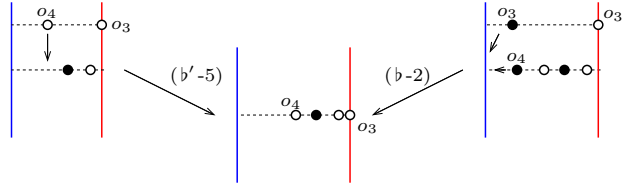}
 \caption{{\bf Ends of Types~\ref{end:eOrbFromAboveA} and~\ref{end:eOffLeft}
     cancel}
 \label{fig:eMorMod}}
 \end{figure}

\end{proof}

\subsection{Time dilation}
\label{subsec:TimeDilation}

Consider $(C,\dChanged)$. The differential $\dChanged$ counts self-matched curve pairs.
As in~\cite{InvPair}, we deform the matching appearing. In our case,
we deform the constraints appearing in the definition
of a self-matched curve pair (Equation~\eqref{eq:SelfMatchedCurvePair})
by conditions indexed by a real parameter $T$, as follows:
\[       T\cdot t\circ u_1(\psi(q))=t\circ u_2(q). \]
The corresponding moduli spaces are denoted 
$\ModMatchedChanged(T;\x,\y)$, which we call  the {\em moduli space of $T$-modified 
paritally self-matched curve pairs}.

Taking the limit as $T\goesto\infty$, the curves converge to combs,
whose algebraic information is contained in their main components.
These limiting objects are natural analogues of the ``trimmed simple ideal matched curves'' from~\cite[Definition~9.31]{InvPair}:

\begin{defn}
  \label{def:tsic}
  A {\em trimmed simple ideal partially self-matched curve} is a pair of
  holomorphic combs $(u_1,u_2)$ connecting two Heegaard states
  generators $\x=\x_1\#\x_2$ and $\y=\y_1\#\y_2$, where $u_1$ is a
  self-matched curve (in the sense of
  Definition~\ref{def:SelfMatched}), equipped with a a one-to-one
  correspondence $\varphi\colon \East(\Source_1)\setminus
  \phi(\IntPunctEv(\Source_1))\to\East(\Source_2)$ such that
  either 
  one of $u_1$ or $u_2$ is trivial, and the other has index $1$,
  and 
  $\East(\Source_1)\setminus\phi(\IntPunctEv(\Source_1))$ and $\East(\Source_2)$ are empty; or all of 
  the following conditions hold:
  \begin{enumerate}[label=(TSIC-\arabic*),ref=(TSIC-\arabic*)]
    \item 
      \label{TSIC:LeftIsCurve} The comb $u_1$ is a holomorphic curve for $\Hdown$ asymptotic to a sequence of non-empty sets of 
      Reeb chords $\vec{\rhos}=(\rhos_1,\dots,\rhos_m)$
    \item \label{TSIC:Ind1} $u_1$ has index $1$ with respect to $\vec{\rhos}$.
    \item \label{TSIC:Ind2} $u_2$ is a height $m$ holomorphic building for $\Hup$ with no components at east infinity
    \item \label{TSIC:Ind3} each story of $u_2$ has index one.
    \item \label{TSIC:SBM}
      $u_1$ and $u_2$ are strongly boundary monotone
    \item
      \label{TSIC:Composable}
        for each $i=1,\dots,m$, the east punctures of the $i^{th}$ story of $u_2$ are labelled, in order,
      by a non-empty sequence of Reeb chords $(-\rho^i_1,\dots,-\rho^i_{\ell_i})$ with the property that
      the sequence of singleton sets of chords $\vec{\rho}^i=(\{\rho^i_1\},\dots,\{\rho^i_{\ell_i}\})$ are composable.
    \item 
      \label{TSIC:Matching}
      The composition of the sequence of singleton sets of Reeb chords $\rho^i$ on the $i^{th}$ story of $u_2$
      coincides with the $i^{th}$ set of reeb chords $\rhos_i$ in the partition for $w$. 
  \end{enumerate}
  Let $\ModMatchedTSIC^{B_1\natural B_2}(\x,\y)$ denote the moduli space of trimmed simple ideal partially
  self-matched curves.
\end{defn}

\begin{prop}
  \label{prop:TGoestoInf}
  Fix $\x=\x_1\#\x_2,\y=\y_1\#\y_2\in\States(\HD=\Hup\#\Hdown)$,
  $B_1\in\doms(\x_1,\y_1)$,
  and 
  $B_2\in\doms(\x_2,\y_2)$.
  For each generic $T$ sufficiently large,
  \[ \#\ModMatchedTSIC^{B_1\natural B_2}(\x,\y)=
  \#\ModMatchedChanged^{B_1\natural B_2}(T;\x,\y).\]
\end{prop}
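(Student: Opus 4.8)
The statement is a ``$T$-large equals $T=\infty$'' comparison of the kind carried out in~\cite[Chapter~9]{InvPair}; the standard strategy is a degeneration-and-gluing argument in the parameter $T$. First I would set up the parametrized moduli space $\bigcup_{T\in(T_0,\infty)} \ModMatchedChanged^{B_1\natural B_2}(T;\x,\y)\times\{T\}$, for $T_0$ sufficiently large and $J$ generic, and argue (via the index bookkeeping already established: $\ind^{\sharp}(B_1,\Source_1;B_2,\Source_2)=\ind(B_1\natural B_2)=1$, together with Lemma~\ref{lem:Transversality} adapted to the $T$-modified matching constraint) that this parametrized space is a compact one-manifold with boundary whose boundary over finite $T$ consists precisely of the two slices $\{T=T_0\}$ and the slices at the (finitely many) $T$ at which a curve degenerates into a two-story object. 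Since the two-story ends pair up among themselves (this is exactly the content of the $\dChanged\circ\dChanged=0$ analysis in Proposition~\ref{prop:dChangedSqZero}, run in the parametrized setting), the signed count $\#\ModMatchedChanged^{B_1\natural B_2}(T;\x,\y)$ is independent of generic $T\in(T_0,\infty)$. Thus it suffices to identify the count for $T$ near $\infty$ with $\#\ModMatchedTSIC^{B_1\natural B_2}(\x,\y)$.

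\textbf{The limit $T\to\infty$.} For the second half I would argue that as $T\goesto\infty$ along a sequence in $\ModMatchedChanged^{B_1\natural B_2}(T;\x,\y)$, the $\Hdown$-side curve $u_1$ stays in a fixed homology class and converges (Gromov/EGH compactness, Proposition~\ref{prop:Compactness}) to a holomorphic comb, while the matching condition $T\cdot t\circ u_1(\psi(q))=t\circ u_2(q)$ forces the $\Hup$-side curve $u_2$ to spread out in the $t$-direction: the images under $t\circ u_2$ of punctures that were at distinct $t\circ u_1$-levels on the $\Hdown$-side are driven to infinite separation, so $u_2$ breaks into a height-$m$ building, one story for each level of $u_1$, with no curves at East infinity surviving (their relative $t$-positions being rescaled away). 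This is exactly the passage to a trimmed simple ideal partially self-matched curve (Definition~\ref{def:tsic}): Conditions~\ref{TSIC:LeftIsCurve}--\ref{TSIC:Matching} record that $u_1$ is a single index-one self-matched curve with Reeb asymptotics $\vec{\rhos}$, that $u_2$ is a height-$m$ building with each story of index one and with East-puncture labels (in order) composing to the matching sets $\rhos_i$, and that both sides are strongly boundary monotone (this last by the argument of Lemma~\ref{lem:BoundaryMonotone}, using compatibility of $\Mup$ and $\Mdown$). Conversely, one glues: given a trimmed simple ideal partially self-matched curve, the standard family-gluing theorem (as in~\cite[Section~9.5]{InvPair}, together with the gluing lemmas Propositions~\ref{prop:JoinCurve}--\ref{prop:WestInftyEnd} used to control the orbit-curve and boundary-degeneration ends already enumerated in Proposition~\ref{prop:SMCP-Ends}) produces, for each generic $T$ large, a unique nearby $T$-modified partially self-matched curve, and these exhaust the moduli space for $T$ large. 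This yields the claimed equality of counts.

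\textbf{Main obstacle.} The delicate point, just as in~\cite{InvPair}, is ruling out unwanted degenerations in the $T\to\infty$ limit and in the gluing: besides the expected breaking into stories one must exclude boundary degenerations (on both the $\alpha$- and $\beta$-sides) and closed components, and one must verify that in the limit the $\Hdown$-side does not itself break (so that it remains a single index-one curve rather than a multi-story building whose stories individually fail to satisfy the matching). Here I would reuse the dimension-counting arguments of Lemmas~\ref{lem:NoBoundaryDegenerations} and~\ref{lem:NoClosedCurves} and the detailed end-classification of Proposition~\ref{prop:SMCP-Ends}, adapted to the $T$-modified constraint: the homological hypothesis that $n_\wpt(B_1)=0$ or $n_\zpt(B_1)=0$ (together with $UV=0$) is what forbids the competing $\alpha$-side boundary degenerations, and the index formula $\ind^{\sharp}=1$ leaves no room for any extra bubbling once the inter-story $t$-rescaling is accounted for. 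The bookkeeping that the $U$- and $V$-exponents $n_\wpt(B)$ and $n_\zpt(B)$ are preserved under the limit (so that the equality of counts is an equality of $\Ring$-module coefficients, not merely of integers) is routine given the homology-class identifications $B=B_1\sharp B_2$ already in place.
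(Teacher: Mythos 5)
Your second and third paragraphs capture essentially the paper's proof, which follows \cite[Proposition~9.40]{InvPair}: Gromov/EGH compactness as $T\to\infty$ produces a matched pair of combs, the index formula pins down that $u_1$ remains a single index-one story and that each story of $u_2$ has index one (Conditions~\ref{TSIC:LeftIsCurve}--\ref{TSIC:Ind3}), boundary monotonicity of the limit follows from Lemma~\ref{lem:StrongMonotoneClosed}, and the $\alpha$-boundary degenerations of $u_2$ are excluded because by Lemma~\ref{lem:NonZeroAlgElts} each constraint packet induces a non-vanishing algebra element; the converse direction is the gluing argument. One small imprecision: in the Gromov limit, curves at East infinity on the $\Hdown$-side do in general survive --- they are removed by hand (the ``trimming'' in Definition~\ref{def:tsic}), not rescaled away by the matching constraint.

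Your first paragraph, however, contains a step that does not work and should be deleted. The signed count $\#\ModMatchedChanged^{B_1\natural B_2}(T;\x,\y)$ is \emph{not} independent of generic $T$ on an interval $(T_0,\infty)$, and the claim that the two-story ends of the $T$-parametrized moduli space ``pair up among themselves'' does not follow from $\dChanged\circ\dChanged=0$. In a one-parameter family, index-one curves break at isolated parameter values into an (index-one)$\times$(index-one) two-story building modulo $\R$; counting those breakings produces a chain homotopy between $\dChangedT[T_0]$ and $\dChangedT[T_1]$, not an equality. The paper is explicit about this: in the proof of Theorem~\ref{thm:PairAwithD} it asserts only that varying $T$ yields chain homotopy equivalences of the complexes $(C,\dChangedT)$, not an invariance of the coefficient counts. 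Fortunately none of this is needed: the compactness and gluing argument in your later paragraphs already establishes the equality of counts for every generic $T$ large enough, which is precisely what the proposition claims, and constancy-in-$T$ for large $T$ is then a corollary rather than a prerequisite.
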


\begin{proof}
  The proof is as in~\cite[Proposition~9.40]{InvPair}; the key
  difference being that in~\cite{InvPair}, there is no self-matching
  (in particular $u_1$ a holomorphic curve rather than a self-matched
  curve), but this does not affect the argument.

  In a little more detail, Gromov compactness shows that as
  $T\goesto\infty$, the $T$-selfmatched curves converge to a pair of
  combs $U_1$ and $U_2$ (in $\Hdown$ and $\Hup$ respectively)
  satisfying a matching condition. Throwing out the East infinity
  curves, we arrive at a pair of combs $(u_1,u_2)$.  The matching
  conditions (Condition~\ref{TSIC:Matching}) is clear; the fact that
  $u_1$ is a (one-story) holomorphic curve (Condition~\ref{TSIC:LeftIsCurve})
  of index one (Property~\ref{TSIC:Ind1}) follows
  from the index formula. 

  Boundary monotonicity of $u_1$ follows from
  Lemma~\ref{lem:StrongMonotoneClosed}. It follows then from
  Lemma~\ref{lem:NonZeroAlgElts} that the algebra elements from each
  packet are non-zero. It now follows that $u_2$ has no
  $\alpha$-boundary degenerations; for such a boundary degeneration
  would give rise to a vanishing algebra element. Having eliminated
  $\alpha$-boundary degenerations from $u_2$,
  Condition~\ref{TSIC:Ind2} and~\ref{TSIC:Ind3} follows from the index
  formula. 
  Boundary monotonicity of $u_2$ follows now from Lemma~\ref{lem:NonZeroAlgElts}
  combined with Proposition~\ref{prop:SBD}.
  The matching conditions~\ref{TSIC:Matching} is straightforward.

  Conversely, the existence of $T$-self-matched curves for sufficiently
  large $T$ follows from a gluing argument as in the proof 
  of~\cite[Proposition~9.40]{InvPair}.
\end{proof}

\subsection{Putting together the pieces}

We can now assemble the steps to provide the main theorem:

\begin{proof}[Proof of Theorem~\ref{thm:PairAwithD}]
  Start from the complex $C_\Ring(\HD)=(C,\partial)$ for the
  doubly-pointed Heegaard diagram, with differential as in
  Equation~\eqref{eq:OriginalComplex}. When $n>1$, Theorem~\ref{thm:NeckStretch}
  (neck stretching) identifies $C_\Ring(\HD)\simeq
  (C,\partial^{(0)})$, where the latter differential counts matched
  holomorphic curves.  Proposition~\ref{prop:Intermediates}
  gives the sequence of isomorphisms
  \[ (C,\partial^{(0)})\cong\cdots \cong (C,\partial^{(2n)}). \]
  Note that $(C,\partial^{(2n)})=(C,\dChanged)$.
  When $n=1$, 

  Next, we replace the
  differential $\dChanged$ by a new differential $\dChangedT$ which
  counts $T$-modified partially self-matched pairs; i.e. points in
  $\ModMatchedChanged(T;\x,\y)$. When $T=1$, clearly
  $\dChanged=\dChangedT$.  The chain homotopy type of
  $(C,\dChangedT)$ is independent of the choice of $T$: i.e. varying
  $T$ gives chain homotopy equivalences between the various choices of
  complex.  (This is the anlogoue of~\cite[Proposition~9.22]{InvPair},
  with the understanding that now, in one-dimensional families, we
  have orbit curve end cancellation ends in addition to the
  cancellation of join curve ends as in~\cite{InvPair};
  cf. Lemma~\ref{lem:Join} above.)  Taking $T$ sufficiently large
  as in Proposition~\ref{prop:TGoestoInf}, and composing homotopy
  equivalences, we find that $C_{\Ring}(\HD)$ is chain homotopic to
  $(C,\partial')$, where now $\partial'$ counts trimmed simple ideal
  partially self-matched curves (Definition~\ref{def:tsic}). 
  Since $u_1$ is strongly boundary monotone (which can be phrased in terms
  of chord packets, thanks to Lemma~\ref{lem:SBA}),
  Lemma~\ref{lem:NonZeroAlgElts} guarantees that the 
  objects counted in $\partial'$ correspond to the algebraic
  counts appearing in the differential on $\Amod(\Hup)\DT\Dmod(\Hdown)$.
\end{proof}

\subsection{The case where $n=1$}
\label{subsec:Nequals1}

The case where $n=1$ works technically a little differently from the
case where $n>1$.  The key distinguishing feature is that in the case
where $n=1$, closed components do exist in the Gromov
compactification. (It is also, of course a bit simpler, since we have to
deal with deforming only one pair of matched orbits.)

As we shall see in our proof of Theorem~\ref{thm:MainTheorem}
(Section~\ref{sec:Comparison}), we will need the case $n=1$ only in a
very specific special case: gluing on the standard lower diagram,
which has the property that any homotopy class that covers both $Z_1$
and $Z_2$ also covers the two basepoints $\wpt$ and $\zpt$. This property 
would allow us to simplify the arguments considerably; but in the interest
of giving a clean statement of Theorem~\ref{thm:PairAwithD}, we give a
proof when $n=1$ without these restrictions hypotheses.

\subsubsection{Matched curves}

Consider the notion of matched curves (as in
Definition~\ref{def:MatchedPair}), except where the objects
${\overline u}_1$ and ${\overline u}_2$ are stories, rather than
simply curves. When $n>1$, Lemma~\ref{lem:NoClosedCurves} shows that
in sufficiently small index (and in homology classes not covering both
$\wpt$ and $\zpt$), the combs contain no closed components;
Lemma~\ref{lem:NoBoundaryDegenerations} shows that they can contain no
boundary degenerations. Thus, with these hypotheses, the matched
stories are automatically matched curves.

This is no longer the case where $n=1$. Specifically,
Lemma~\ref{lem:NoClosedCurves} fails in this case: moduli spaces of
self-matched stories are expected to contain closed components (on the
$\Hup$ side); and indeed, after removing those components, we obtain a (suitably) generalized matched curve in a moduli space of the same expected dimension.

We formalize these curves as follows:

\begin{defn}
  A  {\em special matched pair}
  consists of 
  \begin{itemize}
  \item a holomorphic curve $u_1$ in $\Hdown$ with source $\Source_1$ representing homology class $B_1\in\doms(\x_1,\y_1)$
  \item a holomorphic curve $u_2$ in $\Hup$ with source $\Source_2$ representing homology class $B_2\in\doms(\x_2,\y_2)$
  \item a subset $X\subset \East(\Source_1)$ of punctures marked by the orbit $\orb_1$, 
  \item a subset $Y\subset \East(\Source_1)$ of punctures marked by the orbit $\orb_2$, 
  \item a one-to-one correspondence $\phi\colon X\to Y$
  \item an injection $\psi\colon \East(\Source_2)\to \East(\Source_1)$
  \end{itemize}
  with the following properties:
  \begin{itemize}
  \item $\East(\Source_1)$ is a disjoint union of
    $\psi(\East(\Source_2))$, $X$, and $Y$.
  \item For each $q\in \East(\Source_2)$ is marked with a Reeb orbit
    or chord in $\Hup$, the corresponding puncture $\psi(q)\in
    \East(\Source_1)$ is marked with the Reeb orbit or chord in
    $\Hdown$ with the same name.
  \item For each $q\in\East(\Source_2)$,
    \[ (s\circ u_1(\psi(q)),t\circ u_1(\psi(q)))=(s\circ u_2(q),t\circ u_2(q)).\]
  \item For each $p\in X$
    \[ (s\circ u_1(\phi(p)),t\circ u_1(\phi(p)))=(s\circ u_2(p),t\circ u_2(p)).\]
  \end{itemize}
  If $B_1$ and $B_2$ induce $B\in \doms(\x,\y)$, let $\SModMatched^B(\x_1,\y_1;\x_2,\y_2;\Source_1,\Source_2;\psi,\phi)$
  denote the modul space of matched pairs.
\end{defn}

We have the following analogue of Lemmas~\ref{lem:NoClosedCurves} for
special matched curves.

\begin{lemma}
  \label{lem:NoBoundaryDegenerationsMatchedNone}
  Suppose $n=1$.
  Fix $B_1\in\doms(\x_1,\y_1)$ and $B_2\in\doms(\x_2,\y_2)$ so that
  $\weight_i(B_1)=\weight_i(B_2)$ for $i=1,\dots,2n$, and at least one of
  $n_\wpt(B_1)$ or $n_\zpt(B_1)$ vanishes, and so that $\ind(B_1\natural
  B_2)\leq 2$.  Then, curves in the Gromov compactification of
  $\SModMatched^B(\x_1,\y_1;\x_2,\y_2)$ contain no
  closed components or boundary degenerations.
\end{lemma}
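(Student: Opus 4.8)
The statement is the $n=1$ analogue of Lemmas~\ref{lem:NoBoundaryDegenerations} and~\ref{lem:NoClosedCurves}, now for \emph{special} matched pairs. The strategy is to run the same two arguments, but with the extra flexibility afforded by the $\phi$-matching (which matches some $\orb_1$-punctures directly with $\orb_2$-punctures on the $\Hdown$ side) and to dispose of closed components by a dimension count that uses $n=1$ in reverse from how it was used in Lemma~\ref{lem:NoClosedCurves}: there, $n>1$ forced $4-2nk<0$; here, for $n=1$ a closed component of multiplicity $k$ leaves a moduli space of dimension $4-2k$, which is nonnegative only for $k=1$, and we must exclude even that case. First I would treat boundary degenerations: as in the proof of Lemma~\ref{lem:NoBoundaryDegenerations}, a $\beta$-boundary degeneration on either side at time $\tau$ would (by the matching condition, compatibility of $\Mup$ and $\Mdown$, and the fact that when $n=1$ there is a single matched pair of orbits) force a puncture marked $\orb_1$ and a puncture marked $\orb_2$ at $(s,t)=(0,\tau)$; but the component of $\Sigma\setminus\betas$ containing $Z_1$ contains $\wpt$ and the one containing $Z_2$ contains $\zpt$, so the shadow would have positive local multiplicity at both $\wpt$ and $\zpt$, contradicting the hypothesis. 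An $\alpha$-boundary degeneration on the $\Hdown$ side covers both $\wpt$ and $\zpt$, hence is again excluded by the homological hypothesis; an $\alpha$-boundary degeneration on the $\Hup$ side is ruled out by boundary monotonicity exactly as in the last paragraph of the proof of Lemma~\ref{lem:NoBoundaryDegenerations} (a chord packet of total weight $\geq 1$ at every boundary component has $\rhos^-$ of size $\geq 2n$, impossible).

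Next I would handle closed components, adapting Lemma~\ref{lem:NoClosedCurves}. Suppose a curve in the Gromov compactification has a closed component of multiplicity $k>0$; since the $\Hdown$-side homology class is constrained, this must occur on the $\Hup$ side, so the main components $(u_1',u_2')$ represent $B_1$ and $B_2-k[\Sigma_2]$. Using $e([\Sigma_2])+n_{\x_2}([\Sigma_2])+n_{\y_2}([\Sigma_2])=2-2g_2+2(g_2+n-1)=2n=2$, the same index bookkeeping as in Lemma~\ref{lem:NoClosedCurves} (accounting now for the $\phi$-matching constraints, each of which is a pair of conditions in $[0,1]\times\R$ just like the $\psi$-matching constraints) shows that $(u_1',u_2')$ lies in a moduli space of dimension $\leq \ind(B_1\natural B_2)+2-2nk = \ind(B_1\natural B_2)+2-2k$. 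With $\ind(B_1\natural B_2)\leq 2$ this is $\leq 4-2k$, so only $k=1$ survives, giving a moduli space of dimension $\leq 2$ carrying a free $\R$-action (the main component still contains orbits, so it is not constant) and hence an unparametrized quotient of dimension $\leq 1$. To finish I would observe that this remaining case is still impossible: when $\ind(B_1\natural B_2)=2$ and $k=1$, equality must hold in all the intermediate inequalities — in particular the $\Hup$-side main component has no boundary punctures, no interior punctures beyond those matched to the sphere, and the sphere absorbs exactly the orbit punctures of $u_1'$ that were matched (via $\phi$ or $\psi$) to it; but then the orbit constraints on $u_1'$ coming from those punctures would match points on the sphere, which cannot carry $\beta$-boundary data, and one sees the resulting configuration forces $u_2'$ to be constant and $u_1'$ to represent a class covering a component of $\Sigma_1\setminus\betas$ that contains both basepoints, again contradicting $n_\wpt(B_1)=0$ or $n_\zpt(B_1)=0$. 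Homologically trivial ghost components are excluded by the dimension formula as in~\cite[Lemma~5.57]{InvPair}.

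\textbf{Main obstacle.} The hard part is the last step: ruling out the $k=1$ closed-sphere configuration cleanly. Unlike the $n>1$ case, where the dimension count kills all $k\geq 1$ outright, here one genuinely has to examine what a multiplicity-one sphere bubble can look like and argue — via a combination of the exactness of the intermediate inequalities and the basepoint hypothesis — that no such matched configuration exists. I expect this to require care about how the $\phi$-matched and $\psi$-matched punctures are distributed between the surviving components and the bubble, and the cleanest route is probably to show directly that any such configuration would have $n_\wpt(B_1)>0$ and $n_\zpt(B_1)>0$ simultaneously, using that the sphere, to be matched to orbit punctures of $u_1'$ on \emph{both} $Z_1$ and $Z_2$, forces shadow contributions near both basepoints on the $\Hdown$ side. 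Everything else is a routine transcription of the $n>1$ arguments with the bookkeeping adjusted for the extra $\phi$-matching.
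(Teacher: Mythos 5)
Your treatment of boundary degenerations is essentially the paper's: a $\beta$-boundary degeneration at time $\tau$ on either side forces, via the matching and the fact that when $n=1$ there is a single $\Mmid$-pair $\{1,2\}$, orbit punctures at both $Z_1$ and $Z_2$ at that time; on the $\Hdown$ side the corresponding components of $\Sigma_1\setminus\betas$ (either two boundary degenerations, or a closed component at $(0,\tau)$) cover both $\wpt$ and $\zpt$, contradicting the hypothesis. The paper states this in the form ``a $\beta$-degeneration on one side forces a closed component or two $\beta$-degenerations on the other side'' but the content is the same. Your remark that $\alpha$-degenerations on the $\Hdown$ side cover both basepoints, and on the $\Hup$ side violate boundary monotonicity as in Lemma~\ref{lem:NoBoundaryDegenerations}, also matches the intended argument.

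Where you diverge, and where the gap sits, is the closed-component case. You reproduce the index bookkeeping of Lemma~\ref{lem:NoClosedCurves} to get $\dim\le 4-2nk = 4-2k$, correctly observe that this kills only $k\ge 2$, and then try to argue away $k=1$ by asserting that tightness of the intermediate inequalities forces $u_2'$ constant and $B_1$ to cover both basepoints. That final step is not substantiated: nothing in the chain of inequalities forces $u_2'$ to be constant (it can still carry chords matched with $\Hdown$), and there is no reason the removal of a multiplicity-one sphere from the $\Hup$ side should of itself produce a $B_1$ that covers both $\wpt$ and $\zpt$. The paper takes a different and much shorter route that you should note: it does not run the $\ind$ estimate at all, but observes directly that for a closed component to appear in the Gromov limit of a \emph{special} matched pair, at least two pairs of matched orbits must be forced to the same $(s,t)$-value --- either a pair $\{q_1,q_2\}\subset\AllPunct(\Source_2)$ together with their $\psi$-images $\{p_1,p_2\}\subset\East(\Source_1)$, or two $X$-$Y$ pairs $\{p_1,\phi(p_1)\}$ and $\{p_2,\phi(p_2)\}$ on $\Source_1$. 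This is a codimension-two condition, and with $\ind(B_1\natural B_2)\le 2$ (so the unparametrized moduli space is at most one-dimensional) it is excluded. Two further things to flag: your $\phi$-matching bookkeeping treats the $X$-$Y$ constraints only as index corrections and never considers the $X$-$Y$ sphere bubble, which is a genuine degeneration of the special matched moduli space (it is precisely how the glued spheres of Theorem~\ref{thm:NeckStretchNOne} can collide), and must be addressed; and once you take the codimension-two viewpoint the $k\ge 2$ cases are subsumed as well, so the $\ind$-estimate you transcribed from Lemma~\ref{lem:NoClosedCurves} becomes unnecessary. In short: swap your index-count approach for the codimension-two collision argument and the proof closes; as written the $k=1$ closed-component case is a gap.
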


\begin{proof}
  For a closed component to form, at least two pairs of matched orbits
  must come together (i.e. it could either be that two punctures in $X$ along with 
  their two corresponding punctures in $Y$; or punctures $q_1$ and $q_2$ in $\Source_2$
  along with their matching punctures $p_1$ and $p_2$ in $\Source_1$).
  In any case, this occurs in codimension $2$, and is therefore
  excluded from the index computation. 

  Let $({\overline u}_1,{\overline u}_2)$ denote a Gromov limit.
  Suppose that ${\overline u}_2$ contains a boundary degeneration. It
  follows that there are two punctures in ${\overline \Source}_2$
  marked by Reeb orbits that cover both boundary components, and which
  project to $(1,\tau)$.  There must be two matching punctures in
  ${\overline \Source}_1$,
  which project to the same point $(1,\tau)$. 
  It follows that $\Source_1$ contains either a closed component projecting to $(1,\tau)$,
  or two boundary degenerations (covering $\Zin_1$ and $\Zin_2$). In either case,
  it follows that $n_\wpt(B_1)>0$ and $n_\zpt(B_1)>0$, violating our hypothesis.

  Similarly, if ${\overline u}_1$ contains a boundary degeneration,
  then it follows that ${\overline u}_2$ must also contain a closed component
  or a boundary degeneration, contradicting the above.
\end{proof}

We can form an endomorphism of $C$, obtained by counting index one
special matched curves. We denote this by $(C,\partial^{(0)})$.
In view of the above compactness result, $\partial^{(0)}$ is a differential.
(Compare Proposition~\ref{prop:dMatchedZqZero}.)

\begin{thm}
  \label{thm:NeckStretchNOne}
  When $n=1$, 
  suitable choices of almost-complex structures $J$ used to define
  $\CFKsimp(K)$, there is an isomorphism (of chain complexes)
  $\CFKsimp(K)\cong (C,\partial^{(0)})$.
\end{thm}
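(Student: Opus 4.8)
The proof runs parallel to that of Theorem~\ref{thm:NeckStretch}, with matched pairs (Definition~\ref{def:MatchedPair}) replaced throughout by \emph{special matched pairs}, and with Proposition~\ref{prop:StretchNeck} replaced by its analogue allowing a single closed bubble on the $\Hup$-side. First I would invoke Lipshitz's cylindrical reformulation of knot Floer homology~\cite{LipshitzCyl} to present $\CFKsimp(K)$ via counts of embedded pseudo-holomorphic curves in $\Sigma\times[0,1]\times\R$, and then insert a long neck along $Z=Z_1\cup Z_2$. Fix $B\in\doms(\x,\y)$ with $\ind(B)=1$ and at least one of $n_\wpt(B),n_\zpt(B)$ equal to zero, write $\x=\x_1\#\x_2$, $\y=\y_1\#\y_2$, and consider a sequence of such curves as the neck length tends to $\infty$. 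By the compactness of Proposition~\ref{prop:Compactness} (following~\cite{EGH}), this sequence converges to a matched comb: a pair of holomorphic buildings $({\overline u}_1,{\overline u}_2)$ for $\Hdown$ and $\Hup$, together with curves in the neck which are generically trivial strips. Strong boundary monotonicity of the main components, established as in Lemma~\ref{lem:BoundaryMonotone}, rules out boundary double points.

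The one feature absent when $n>1$ is that the dimension estimate behind Lemma~\ref{lem:NoClosedCurves} no longer excludes closed components on the $\Hup$-side: a cover of $[\Sigma_2]$ with multiplicity $k$ leaves the main components in a moduli space of dimension $\le \ind(B)+2-2nk$, which for $n=1$ is $\le\ind(B)$ rather than $\le\ind(B)-2$. When $\ind(B)=1$ this forces $k=1$ and the main components to be rigid. I would then observe that excising this copy of $[\Sigma_2]$ from ${\overline u}_2$ produces exactly a special matched pair: the two boundary circles $\orb_1,\orb_2$ of the excised component correspond, through the matching condition, to an $\orb_1$-marked and an $\orb_2$-marked east puncture on the $\Hdown$-side main component, and these become the sets $X$ and $Y$ together with the bijection $\phi\colon X\to Y$ of the definition; the $\Hup$-side homology class drops by $[\Sigma_2]$; and a direct check shows the result lies in $\SModMatched^B(\x_1,\y_1;\x_2,\y_2)$ in a moduli space of the same expected dimension. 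A standard gluing argument (as in~\cite[Proposition~9.6]{InvPair}) provides the converse, producing curves in $\HD$ from special matched pairs; surjectivity of this correspondence onto the zero-dimensional moduli space of index-one special matched curves, together with compactness of the latter, is exactly the content of Lemma~\ref{lem:NoBoundaryDegenerationsMatchedNone} --- once the sphere is absorbed into the special-matched data, no further closed components or $\beta$-boundary degenerations, and by the homological hypothesis no $\alpha$-boundary degenerations, can form in index $\le 2$. Assembling these identifications of moduli spaces over all $B$ with $\ind(B)=1$ yields a $\Ring$-module isomorphism intertwining the Heegaard Floer differential with $\partial^{(0)}$, i.e. $\CFKsimp(K)\cong(C,\partial^{(0)})$; recall $\partial^{(0)}$ is already known to square to zero by the compactness argument preceding the theorem (compare Proposition~\ref{prop:dMatchedZqZero}).

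The main obstacle is the bookkeeping around the closed component: verifying that excision of $[\Sigma_2]$, with the attendant re-labelling of two matched punctures as $X$ and $Y$ and the homology-class shift $B_2\mapsto B_2-[\Sigma_2]$, lands in the stated moduli space with unchanged expected dimension, and that the gluing map realizing the converse is a local homeomorphism even in the presence of the sphere bubble --- sphere components being automatically regular and, for generic $J$, transverse to the relevant evaluation constraints. This is also where the hypothesis that $B$ covers at most one of $\wpt,\zpt$ is essential: it is precisely what forbids multiplicity $k\ge 2$ and the simultaneous appearance of a closed component and a boundary degeneration, either of which would otherwise drop the dimension and spoil the identification.
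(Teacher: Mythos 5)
Your overall strategy matches the paper's: stretch the neck, use Gromov compactness to get matched combs, observe that for $n=1$ the dimension bound permits exactly one multiplicity-one closed component on the $\Hup$-side, excise it to produce a special matched pair, and glue back for the converse. Where you have a real gap is in the sentence ``sphere components being automatically regular and, for generic $J$, transverse to the relevant evaluation constraints.'' This does not by itself give the needed identification of counts. The glued object is a fibered product over evaluation at \emph{two} punctures of the closed surface (one marked $\orb_1$, one marked $\orb_2$), and the paper explicitly flags this as the new feature relative to the one-puncture stabilization argument of~\cite[Section~10]{HolDisk}: ``here we are gluing spheres with two punctures, rather than the one puncture considered there.'' The paper therefore does not appeal to a regularity generality; it reduces to a model computation (Figure~\ref{fig:StabN1}) in which one of the two $\Hdown$-side bigons is rigid with an interior $\orb_1$-orbit, the other is two-dimensional (cut parameter plus $\R$-action) with an interior $\orb_2$-orbit, and one checks directly that the $(s,t)$-coordinates of $\orb_1$ and $\orb_2$ can be equalized in exactly one way, after which gluing the twice-punctured $\cSigma_2$ gives the expected differential with the right count. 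Without this model computation, you have not established that excision/re-gluing is a bijection on rigid moduli spaces.

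A secondary imprecision: the hypothesis $n_\wpt(B)=0$ or $n_\zpt(B)=0$ is not what ``forbids multiplicity $k\ge 2$.'' That exclusion comes purely from the dimension estimate $\ind(u_1',u_2')\le\ind(B)+2-2nk$, which for $n=1$, $\ind(B)=1$, $k\ge 2$ is already negative. The basepoint hypothesis is used for a different purpose: via Lemma~\ref{lem:NoBoundaryDegenerationsMatchedNone}, it rules out the formation of $\alpha$- and $\beta$-boundary degenerations, since any such degeneration on either side would force a matching closed component or boundary degeneration on the $\Hdown$-side covering both $\wpt$ and $\zpt$. You should keep these two mechanisms separate rather than attributing both to the basepoint condition.
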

\begin{proof}
  Using the neck stretching from Theorem~\ref{thm:NeckStretch}, we
  finding limiting objects which are now matched stories. Those
  stories contain closed components on the $\Hup$-side. Removing each
  such closed component, we arrive at a corresponding special matched
  curve.  Conversely, given a special matched curve, at each puncture
  $p\in X$, and matching puncture $\phi(p)\in Y$, we attach a sphere
  that covers $\Hup$. Gluing that sphere gives
  the stated identification.

  To justify this, we need to observe that the count of curves
  after gluing spheres at each
  $\{p,\phi(p)\}$ puncture (with $p\in X$) agrees with the original count of
  special marked curves. It suffices to verify this in a special case;
  see Figure~\ref{fig:StabN1}. 

    \begin{figure}[h]
      \centering
      \input{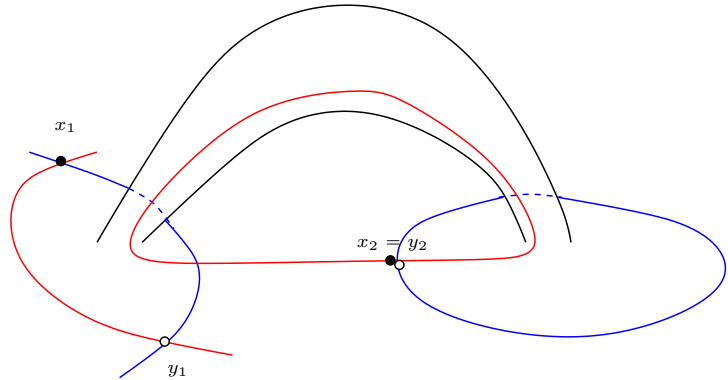}
      \caption{{\bf Local contributions of spheres}.
            \label{fig:StabN1}}
    \end{figure}

    We have exhibited a region in a Heegaard diagram. It is known that
    for any almost-complex structure, the displayed shadow from
    $\{x_1,x_2\}$ to $\{y_1,y_2\}$ has an odd number of
    representatives. View the annular region as an upper diagram. The
    moduli space from the lower diagram now consists of a bigon from
    $x_1$ to $y_1$ superimposed on a bigon from $x_2$ to itself. The
    first moduli space is one-dimensional, with a free $\R$ action,
    and an orbit $\orb_1$ in the interior. The second moduli space is
    two-dimensional, parameterized by a cut and an $\R$ action, with
    an orbit $\orb_2$ in its interior. We can scale the $\R$ action
    and the cut perameter so that $\orb_1$ and $\orb_2$ occur at the
    same $(s,t)$ coordinate; gluing in the sphere gives rise to the
    needed differential.
\end{proof}

\begin{rem}
  The gluing of closed components is very similar to the stabilization
  invariance proof in Heegaard Floer homology~\cite[Section~10]{HolDisk}.
  The key difference is that here we are gluing spheres with two punctures,
  rather than the one puncture considered there.
\end{rem}

\subsubsection{Comparison with self-matched curves}

The intermediate complexes considered when $n=1$ have a slightly
different form.  For $(C,\partial^{(1)})$, we count points in moduli
spaces satisfying the matching conditions for special matched curves,
except now punctures on the $\Hup$-side marked by $\orb_1$ are matched
with punctures on the $\Hdown$-side, marked by the corresponding length one
Reeb chord.  Similarly, in $(C,\partial^{(2)}$, all orbits on the $\Hup$-side
are matched with the corresponding length one Reeb chord; while orbits on the $\Hup$ side are matched with long chords on the $\Hup$ side.

We have the following analogue of Proposition~\ref{prop:Intermediates}:

\begin{prop}
  \label{prop:Intermediatesx}
  There is an isomorphism of chain complexes over $\Ring$
  \[ \Phi\colon (C,\partial^{(0)})\to (C,\partial^{(2)})\]
\end{prop}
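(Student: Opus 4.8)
The plan is to mimic, in the much simpler $n=1$ setting, the entire chain of deformations carried out in Sections~\ref{subsec:SelfMatched}--\ref{subsec:Interpolate} for $n>1$, but now keeping track of the single matched pair of orbits $\{\orb_1,\orb_2\}$ together with the closed components that are permitted in the Gromov compactification (these were the objects that made Lemma~\ref{lem:NoBoundaryDegenerationsMatchedNone} necessary). Concretely, I would construct $\Phi$ as a composite $\Phi_1\circ\Phi_0$, where $\Phi_0\colon (C,\partial^{(0)})\to(C,\partial^{(1)})$ deforms the $s$-coordinate matching on the pair of orbits labelled by $\orb_1$ (the odd orbit, since $f(1)=1$ or $2n-1=1$ when $n=1$), and $\Phi_1\colon (C,\partial^{(1)})\to(C,\partial^{(2)})$ deforms the $s$-coordinate matching on $\orb_2$. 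Each $\Phi_k$ is defined by counting index-one ``$k$-morphism matched curves'' exactly as in Definition~\ref{def:MorMatch}, adapted to allow the special matched-curve structure (the maps $\phi\colon X\to Y$ and $\psi$) of Subsection~\ref{subsec:Nequals1}.

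First I would set up the relevant moduli spaces: the $\ell$-self-matched special curve pairs interpolating between $\partial^{(0)}$, $\partial^{(1)}$, $\partial^{(2)}$, and then the morphism moduli spaces $\ModMor{\ell}$ carrying the auxiliary cut parameter $t_0$. The key compactness input is the $n=1$ analogue of Proposition~\ref{prop:SMCP-Ends} together with Lemma~\ref{lem:NoBoundaryDegenerationsMatchedNone}: since $B_1$ does not cover both $\wpt$ and $\zpt$, no boundary degeneration can appear in the main component, and closed components (which are now genuinely present) appear only when two matched pairs of orbits collide — a codimension-two phenomenon, hence absent from the ends of one-dimensional moduli spaces. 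This is the place where the $n=1$ argument genuinely differs from the $n>1$ argument, and I expect it to be the main obstacle: one must check carefully that every way a closed component could bubble off in a one-parameter family of $\ell$-morphism matched curves forces either (a) a homology class covering both basepoints, which is killed by $UV=0$, or (b) codimension $\geq 2$. The bookkeeping of profiles $(m_-,m,m_+)$ and the cancellation of ends of the various types (the $n=1$ analogues of Types~\ref{end:OffRight}--\ref{end:OrbFromAboveA}, now with the extra subtlety that the ``boundary degeneration on the $\Hup$ side'' can instead be a closed component glued along two orbit punctures) proceeds exactly as in the proof of Lemma~\ref{lem:ChainMap}, using the gluing results Propositions~\ref{prop:JoinCurve}, \ref{prop:OrbitCurve}, \ref{prop:BoundaryDegenerationNbd}, \ref{prop:WestInftyEnd} and the closed-component gluing of Theorem~\ref{thm:NeckStretchNOne}.

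Having produced the chain maps $\Phi_0$ and $\Phi_1$, I would verify they are isomorphisms by the same energy/filtration argument used throughout: each $\Phi_\ell$ is filtered (it strictly increases, or preserves, the relevant area filtration coming from the local multiplicities at the deformed orbit region), the diagonal term $(m_-,m,m_+)=(0,1,0)$ contributes the identity on the associated graded, and all other terms strictly raise the filtration; hence $\Phi_\ell$ is upper-triangular with identity diagonal and therefore invertible over $\Ring$. Concatenating, $\Phi=\Phi_1\circ\Phi_0\colon(C,\partial^{(0)})\to(C,\partial^{(2)})$ is the desired isomorphism of chain complexes over $\Ring$. Finally I would remark that this, combined with Theorem~\ref{thm:NeckStretchNOne} and the $T$-dilation argument of Subsection~\ref{subsec:TimeDilation} (which goes through verbatim when $n=1$, there being only one pair of orbits to dilate), completes the proof of Theorem~\ref{thm:PairAwithD} in the outstanding case $n=1$.
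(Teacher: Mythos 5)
Your proposal is correct and follows essentially the same route as the paper's proof: both construct $\Phi$ as the composite $\Phi_1\circ\Phi_0$ of two interpolating maps, each of the form $\Id+h_\ell$ where $h_\ell$ counts rigid $n=1$ analogues of the morphism matched curves of Definition~\ref{def:MorMatch}, and both rely on Lemma~\ref{lem:NoBoundaryDegenerationsMatchedNone} to exclude the boundary-degeneration ends (Type~\ref{end:OffLeft}) and closed-component bubbling from the relevant one-dimensional moduli spaces, with the remaining ends cancelling as in Lemma~\ref{lem:ChainMap} (noting, as the paper does, that join curve ends are automatically absent when $n=1$). One small imprecision: the identity term in $\Phi_\ell$ comes from the trivial morphism matched curve (profile $(0,0,0)$), not $(0,1,0)$, and $h_\ell$ strictly raises the area filtration; and the end analogous to Type~\ref{end:OrbFromAboveA} is replaced here not by a closed-component phenomenon but by ends where a $\phi$-matched pair $q,\phi(q)$ on $\Source_1$ (labelled $\orb_1,\orb_2$) collide at the same $(s,t)$-coordinate, and these cancel in pairs according to whether they come from $t>t_0$ or $t<t_0$ — but these are details of bookkeeping, not of strategy.
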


\begin{proof}
  We construct first an isomorphism
  $\Phi_0\colon (C,\partial^{(0)})\to (C,\partial^{(1)})$
  as in the proof of Proposition~\ref{prop:Intermediates}.
  Specifically, we modify the definition of $0$-morphism matched curves
  as in Definition~\ref{def:MorMatch}, which comes equipped with a special
  time $t_0$ with the following properties:
  \begin{itemize}
  \item Each puncture in $\Source_2$ 
    marked by $\orb_2$ is matched with a length one chord in $\Source_1$
    that covers the corresponding boundary component, with the same $t$-projection.
  \item Each puncture in $\Source_2$ marked with the orbit $\orb_1$
    and which projects to $t>t_0$ is matched with an orbit on $\Source_1$
    with the same $(s,t)$ projection.
  \item Each puncture in $\Source_2$ marked with the orbit $\orb_1$
    and which projects to $t<t_0$ is matched with a chord on $\Source_1$
    marked with the corresponding length one chord, with the same $t$-projection.
  \item The punctures $\{q_i\}_{i=1}^m$ in $\Source_2$
    that project to  $t=t_0$ are all marked by $\orb_1$
    have corresponding punctures $\{\psi(q_i)\}_{i=1}^m$
    in $\Source_1$ that project to $t=t_0$.
    For all $i=1,\dots,m$
    $s\circ u_2(q_i)<s\circ u_1(\psi(q_i))$; and moreover
    for $i=1,\dots,m-1$,
    and $s\circ u_1(\psi(q_i))<s\circ u_2(q_{i+1})$.
  \item The remaining punctures on $\Source_1$ marked with $\orb_1$ are paired
    off with punctures on $\Source_1$ marked with $\orb_2$, with the same $(s,t)$ projection.
    \end{itemize}
  Define a map $h_0\colon C\to C$ counting rigid such objects, as
  in the proof of Lemma~\ref{lem:ChainMap}.

  We adapt the proof of Lemma~\ref{lem:ChainMap}, to show that
  \[ \partial^{(1)}\circ h_0 + h_0\circ \partial^{(0)}=\partial^{(0)}+\partial^{(1)}.\]
  In this adaptation, we consider once again ends of one-dimensional
  moduli spaces. Some of the ends considered in that proof cannot
  occur: ends of Type~\ref{end:OffRight} can occur.
  Ends of
  Type~\ref{end:OffLeft} are excluded: the argument of Lemma~\ref{lem:NoBoundaryDegenerationsMatchedNone} would show that if a boundary degeneration occurs in $\Hup$, then in fact the homology class on $\Hdown$ has $n_\wpt>0$ and $n_\zpt>0$.
  Ends of
  Types~\ref{end:HorizCollision},~\ref{end:OrbFromBelow}, and~\ref{end:OrbFromAbove} can occur; but ends of
  Type~\ref{end:OrbFromAboveA} are replaced by ends where there are
  two matching punctures on $\Source_1$, $q$ and $\phi(q)$, labelled
  by $\orb_1$ and $\orb_2$, which project to the same $(s,t)$
  coordinate.  These latter ends cancel in pairs (where the double
  puncture comes from $t>t_0$ or $t<t_0$. The remaining ends cancel in
  pairs as in the proof of Lemma~\ref{lem:ChainMap}.  (Note
  also that there are no join curve ends to the moduli spaces, since
  the boundary Reeb chords appearing in join curves do not exist for
  $n=1$ diagrams.)

  The isomorphism $\Phi_0$ is now given by $\Id+h_0$.

  A similar isomorphism is constructed
  $\Phi_1\colon (C,\partial^{(1)})\to (C,\partial^{(2)})$ is constructed analogously.
\end{proof}

We wish to compare $(C,\partial^{(2)})$ with the chain complex
$(C,\dChanged)$ defined by counting self-matched curves
(Definition~\ref{def:SelfMatched}). (Note that for $\partial^{(2)}$,
we allow punctures in $\Source_1$ marked by $\orb_2$ to project to the
same $(s,t)$-coordinate as some puncture marked by $\orb_1$; while
punctures in $\Source_2$ marked by $\orb_i$ project to the same
$t$-coordinate as some boundary puncture on $\Source_1$ marked by a
corresponding length $1$ Reeb chord.) An isomorphism between the two
complexes is constructed as follows:

\begin{prop}
  \label{prop:nEqualsOne}
  When $n=1$, there is an isomorphism of complexes over $\Ring$
  \[ (C,\partial^{(0)})\cong (C,\partial^{(2)}). \]
\end{prop}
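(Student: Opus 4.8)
The statement to prove is Proposition~\ref{prop:nEqualsOne}: when $n=1$ there is an isomorphism of $\Ring$-complexes $(C,\partial^{(0)})\cong (C,\partial^{(2)})$. Here $\partial^{(0)}$ counts \emph{special} matched curves (with matched pairs of orbits $\orb_1,\orb_2$ on the $\Hdown$-side forced to the same $(s,t)$-coordinate), while $\partial^{(2)}$ counts curves in which every $\orb_i$-puncture on the $\Hup$-side is matched with the corresponding length-one Reeb chord on the $\Hdown$-side, and $\orb_1$- and $\orb_2$-punctures remaining on the $\Hdown$-side are matched to each other at a common $(s,t)$-coordinate. The approach is the ``self-matched curve'' deformation already used in the $n>1$ case: I would run the $T$-deformation (time dilation) of Subsection~\ref{subsec:TimeDilation}, together with a preliminary $s$-deformation interpolating between $\partial^{(0)}$ and $\partial^{(2)}$ exactly as $\Phi_\ell$ did in Subsection~\ref{subsec:Interpolate}, but now with only the single matched pair $\{1,2\}$ to deform. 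Concretely, I would construct a map $h\colon (C,\partial^{(0)})\to (C,\partial^{(2)})$ by counting rigid ``morphism matched curves'' equipped with a distinguished time $t_0$: $\orb_i$-punctures on $\Source_2$ with $t>t_0$ are matched to $\orb_i$-punctures on $\Source_1$ with the same $(s,t)$, those with $t<t_0$ are matched to the corresponding length-one chord on $\Source_1$ with the same $t$-coordinate, and the punctures at level $t_0$ satisfy the interleaving $s$-inequalities of Definition~\ref{def:MorMatch}; meanwhile the leftover $\orb_1,\orb_2$ punctures on $\Source_1$ are paired off at common $(s,t)$-coordinates.

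\textbf{Key steps in order.} First I would set up the relevant moduli spaces of morphism matched curves for $n=1$ and prove the needed transversality (a direct specialization of Lemma~\ref{lem:Transversality} and Theorem~\ref{thm:GeneralPositionA}). Second, I would establish the compactness statement: any Gromov limit of a one-parameter family of these curves, in a homology class not covering both $\wpt$ and $\zpt$, contains no $\alpha$-boundary degenerations and—crucially—no closed components \emph{except} the ones that get reabsorbed by the matching (this is the $n=1$ analogue of Lemma~\ref{lem:NoBoundaryDegenerationsMatchedNone}). Third, I would classify the ends of the one-dimensional moduli spaces into the types $\ref{end:OffRight}$--$\ref{end:OrbFromAboveA}$ of Lemma~\ref{lem:ChainMap}, noting (as in the proof of Proposition~\ref{prop:Intermediatesx}) that ends of Type~$\ref{end:OffLeft}$ cannot occur for $n=1$ because a boundary degeneration on the $\Hup$-side would force $n_\wpt(B_1)>0$ and $n_\zpt(B_1)>0$, and that Type~$\ref{end:OrbFromAboveA}$ ends are replaced by ``double-puncture'' ends where a matched $\orb_1/\orb_2$ pair on $\Source_1$ collides; those double-puncture ends cancel in pairs. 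Fourth, I would run the cancellation bookkeeping: ends of Type~$\ref{end:HorizCollision}$ cancel with Type~$\ref{end:OrbFromBelow}$, Type~$\ref{end:OrbFromAbove}$ ends cancel in pairs, Type~$\ref{end:OffRight}$ ends cancel the remaining Type~$\ref{end:OrbFromAbove}$ ends, leaving exactly the two-story ends plus the ends corresponding to the terms of $\partial^{(0)}$ and of $\partial^{(2)}$. This yields $\partial^{(2)}\circ h + h\circ\partial^{(0)}=\partial^{(0)}+\partial^{(2)}$, so $\Phi=\Id+h$ is a chain map. Fifth, an energy/area filtration argument (as used to conclude the $\Phi_\ell$ are isomorphisms in Proposition~\ref{prop:Intermediates}) shows $h$ is a sum of $\Ring$-module endomorphisms strictly increasing a suitable filtration, hence $\Id+h$ is invertible over $\Ring$.

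\textbf{The main obstacle.} The delicate point is the compactness/degeneration analysis specific to $n=1$: closed spheres on the $\Hup$-side genuinely appear in the Gromov compactification (this is precisely why $n=1$ is handled separately), so the dimension-count argument of Lemma~\ref{lem:NoClosedCurves}, which for $n>1$ rules them out because $\dim=4-2nk<0$, fails. I would instead argue, as in Lemma~\ref{lem:NoBoundaryDegenerationsMatchedNone}, that a sphere forming requires two matched orbit pairs to collide simultaneously—a codimension-two phenomenon—so it does not occur in generic one-parameter families, \emph{and} that when a single matched pair of orbits degenerates it is precisely the situation already encoded in the ``special matched curve'' definition (a sphere glued at the $\{p,\phi(p)\}$ pair), so no new end type arises beyond those listed. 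Verifying that the local sphere-gluing count behaves correctly here reduces, exactly as in the proof of Theorem~\ref{thm:NeckStretchNOne} (Figure~\ref{fig:StabN1}), to a model computation with a two-punctured sphere, which I would cite rather than redo. Once this degeneration analysis is pinned down, the rest is a routine transcription of the $n>1$ bookkeeping with all the $\Mup$-matched pairs other than $\{1,2\}$ deleted.
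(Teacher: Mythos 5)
Your proposal proves the statement \emph{as literally written}, which is $(C,\partial^{(0)})\cong(C,\partial^{(2)})$. That identification is the content of the paper's \emph{preceding} Proposition~\ref{prop:Intermediatesx}, and your argument is essentially a faithful paraphrase of its proof: a single $\Phi_\ell$-type interpolation (one pass for $\orb_1$, one for $\orb_2$, or equivalently a composite $\Phi_1\circ\Phi_0$), with the observation that Type~\ref{end:OffLeft} ends are killed because an $\Hup$-side boundary degeneration forces positive multiplicity at both $\wpt$ and $\zpt$, that Type~\ref{end:OrbFromAboveA} ends are replaced by double-puncture collision ends on $\Source_1$ which cancel in pairs, and that there are no join-curve ends when $n=1$.

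However, the paper's own proof of Proposition~\ref{prop:nEqualsOne} is \emph{not} that argument. Reading the paragraph immediately preceding the proposition and the proof itself, the paper is here comparing $(C,\partial^{(2)})$ with the self-matched complex $(C,\dChanged)$ — the statement as printed is evidently a slip (it repeats the conclusion of Proposition~\ref{prop:Intermediatesx} when the body constructs a map $\Phi\colon(C,\partial^{(2)})\to(C,\dChanged)$). That last step is the one needed to finish the $n=1$ case of Theorem~\ref{thm:PairAwithD}: it passes from the ``pair $\orb_1,\orb_2$ on $\Source_1$ at the same $(s,t)$-point'' constraint of $\partial^{(2)}$ to the ``match the even orbit $\orb_2$ on $\Source_1$ with a length-one chord at the same $t$; leave $\orb_1$ unconstrained'' constraint of $\dChanged$.

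Your proposal does not reach that step, and it is genuinely different from what you describe. In the $\partial^{(2)}\to\dChanged$ interpolation, the distinguished time $t_0$ sorts the extra $\orb_2$-punctures on $\Source_1$: those with $t>t_0$ are paired with an $\orb_1$-puncture at the same $(s,t)$-point (the $\partial^{(2)}$ constraint), while those with $t<t_0$ are matched to a length-one chord on $\Source_1$ at the same $t$ (the $\dChanged$ constraint); the punctures at $t=t_0$ are interleaved as in Definition~\ref{def:MorMatch}. The end analysis is now asymmetric in $\orb_1$ versus $\orb_2$. In particular, the paper must handle boundary-degeneration ends on the $\Hdown$-side (Types~\ref{end:OffLeftxGen} and~\ref{end:OffRightxGen}): when the degenerating disk contains $\orb_1$ it also contains $\zpt$, and the corresponding terms vanish because we have specialized to $UV=0$; when it contains $\orb_2$ (and $\wpt$), the two end types cancel against each other. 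None of the ``$UV=0$ kills this end'' bookkeeping appears in your proposal, because it is not needed for $\partial^{(0)}\to\partial^{(2)}$; it is the crux of the step you have not done. To complete the argument as the paper intends, you should first cite Proposition~\ref{prop:Intermediatesx} for the $\partial^{(0)}\to\partial^{(2)}$ step (rather than re-proving it) and then carry out the $\partial^{(2)}\to\dChanged$ interpolation with the ends listed as \ref{end:OffRightx}--\ref{end:OffRightxGen} and the $UV=0$ cancellation.
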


\begin{proof}
  As in the proof of Proposition~\ref{prop:Intermediates}, we
  construct an isomorphism 
  \[ \Phi\colon
  (C,\partial^{(2)})\to(C,\dChanged),\] by counting certain curves.

  The curves we count in this morphism are equipped with a
  distinguished $t$-value $t_0$, with the following properties:
  \begin{itemize}
  \item All punctures on $\Source_2$ labelled with some orbit
    are paired off with punctures in $\Source_1$ labelled with
    the matching length one Reeb chord.
  \item The punctures $\{q_i\}_{i=0}^{2m-1}$ on $\Source_1$
    with $t(q_i)=t_0$ have the following properties:
    \begin{itemize}
    \item 
      $q_i$ is marked by the orbit $\orb_j$ for $j=1,2$
      where $i\equiv j\pmod{2}$
    \item
      $s(u_1(q_i))$ is a monotone increasing function of $i=0,\dots,2m-1$.
      \end{itemize}
  \item The remaining punctures $q$ on $\Source_1$ labelled with the orbit
    $\orb_2$ are paired off with punctures $\phi(q)$ in $\Source_1$:
    \item
    if $t(q)>t_0$, then $\phi(q)$ is labelled with $\orb_1$,
    and $q$ and $\phi(q)$ have the same $(s,t)$ projection.
  \item
    if $t(q)<t_0$, then $\phi(q)$ is labelled with the length one chord
    covering $\Zin_1$; and $t(u(\phi(q)))=t(u(q))$.
  \end{itemize}

    Counting such curves induces a map
    $h\colon (C,\partial^{(2)})\to (C,\dChanged)$.
    We claim that
    \[ \dChanged \circ h + h \circ \partial^{(2)}=\partial^{(2)}+\dChanged.\]
    This is obtained by looking at ends of one-dimensional moduli spaces of the above kind.
    The following kinds of ends can occur
    \begin{enumerate}[label=($\flat$'-\arabic*),ref=($\flat$'-\arabic*)]
    \item \label{end:OffRightx} $s(u_1(q_{2m-1}))\goesto 1$,
      so that in the Gromov limit, we get $((u_1,v_1),u_2)$,
      where $v_1$ is an orbit curve (with orbit $\orb_1$)
      attached at the level $t_0$.
    \item \label{end:OffLeftx}
      Ends where $s(u_2(\psi(q_0)))\goesto 0$;
      in this case, there is a Gromov limit
      to $((w_1,u_1),u_2)$, where 
      $w_1$ is a simple boundary degeneration containing $\orb_2$ and
      $\wpt$. 
    \item
      \label{end:HorizCollisionx}
      Pairs $(u_1,u_2)$ with $s(u_1(q_i))=s(u_2(\psi(q_i)))$
      or $s(u_2(\psi(q_i)))= s(u_1(q_{i+1}))$.
    \item 
      \label{end:OrbFromBelowx}
      there is some $q\in \East(\Source_2)$
      labelled by an orbit
      with $t(u_2(q))=t_0$, but $q$ arises as a limit point 
      of punctures with  $t(u_2(q))<t_0$.
    \item \label{end:OrbFromAbovex} there is some $q\in \East(\Source_2)$
      labelled by an orbit
      with $t(u_2(q))=t_0$, but $q$ arises as a limit point
      of punctures with $t(u_2(q))> t_0$.
    \item \label{end:OffLeftxGen}
      Ends where $s(u_2(\psi(q)))\goesto 0$
      and $t(u_1(q))>t_0$;
      in this case, there is a Gromov limit
      to $((u_1,v_1),u_2)$, where 
      $v_1$ is an orbit curve.
    \item \label{end:OffRightxGen}
      Ends where $s(u_2(\psi(q)))\goesto 1$
      and $t(u_1(q))>t_0$;
      in this case, there is a Gromov limit
      to $((w_1,u_1),v_2)$, where 
      $w_1$ is a simple boundary degeneration.
    \end{enumerate}
    Consider ends of Type~\ref{end:OffLeftxGen}. When $w_1$ contains
    $\orb_1$, it also contains $\zpt$. These cases do not count
    algebraically, since we have specialized to $UV=0$. Thus, ends of
    Type~\ref{end:OffLeftxGen} count when $w_1$ contains $\orb_2$ and
    $\wpt$; and these cancel against ends of 
    Type~\ref{end:OffRightxGen}.

    Ends of type~\ref{end:OffRightx} drop out in pairs, or cancel with ends
    of Type~\ref{end:OrbFromAbovex}, 
    when the latter orbit is labelled $\orb_2$.
    Ends of Type~\ref{end:OffLeftx} drop out with orbits of
    Type~\ref{end:OrbFromAbovex},
    when the latter orbit is labelled $\orb_1$.
    Ends of Type~\ref{end:HorizCollisionx}
    cancel with ends of Type~\ref{end:OrbFromBelowx} (noting that the latter
    punctures come in pairs). See Figure~\ref{fig:MorNOne} for an illustration.
    \begin{figure}[h]
      \centering
      \input{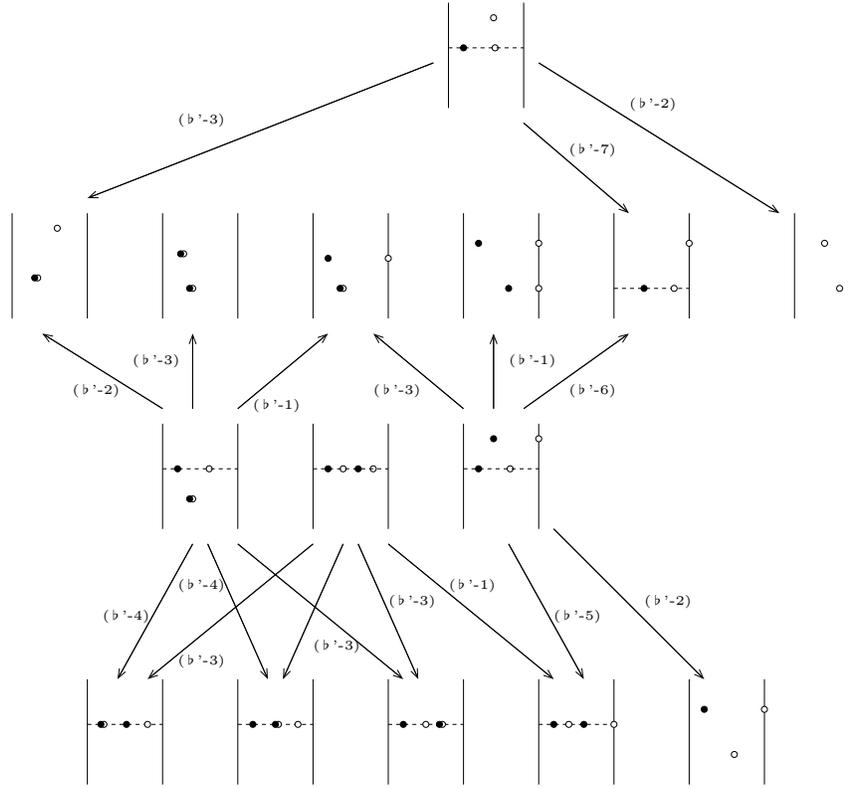}
      \caption{{\bf Cancellation of ends in the moduli spaces in the
          construction of $h\colon (C,\partial^{(2)})\to
          (C,\dChanged)$.} The drawings are shorthand: we have
        illustrated the projections of punctures in $\Source_1$ to the
        strip, coloring the ones labelled by $\orb_1$ white and those
        labelled by $\orb_2$ black. There are four moduli spaces,
        with arrows coming out of them; and their ends are at the ends
        of the arrows. Ends with two incoming arrows cancel, and ends
        with only one incoming arrow
        are the terms in $\partial_\sharp$ and $\partial^{(2)}$.}
      \label{fig:MorNOne}
    \end{figure}
\end{proof}

\newcommand\OmegaInEv{\widecheck{\mathbf\Omega}^+}
\newcommand\OmegaInOdd{\widecheck{\mathbf\Omega}^-}
\newcommand\orW{\vec{W}}
\newcommand\pin{\widehat{p}}
\newcommand\pout{\widecheck{p}}
\newcommand\DAmodBig{\lsup{\star}\DAmod}
\newcommand\ClginBig{\lsup{\star}\Clgin}
\newcommand\ClgoutBig{\lsup{\star}\Clgout}
\newcommand\rhosin{\rhos^{\wedge}}
\newcommand\rhosout{\rhos^{\vee}}
\newcommand\SourceMid{\Source}
\newcommand\IdempIn{\widecheck{I}}
\newcommand\IdempOut{\widehat{I}}
\section{Bimodules}
\label{sec:Bimodules}

In this section, we describe how to associate a type DA bimodule to a
middle Heegaard diagram (cf. Definition~\ref{def:MiddleDiagram}),
together with a matching on the incoming boundary components.  Loosely
speaking, the incoming boundary is treated as type $A$, and the
outgoing as type $D$.  (See~\cite{Bimodules} for the corresponding
construction in bordered Floer homology.)

In more detail, fix a middle diagram
\begin{align*} \Hmid=(\Sigma_0,(\Zin_1,&\dots,\Zin_{2m}),(\Zout_1,\dots,\Zout_{2n}),
\{\alphain_1,\dots,\alphain_{2m-1}\},
\{\alphaout_1,\dots,\alphaout_{2n-1}\},\\
&\{\alpha^c_1,\dots,\alpha^c_{g}\},
\{\beta_1,\dots,\beta_{g+m+n-1}\}), 
\end{align*}
and let $\MatchIn$ be a matching on $\{1,\dots,2m\}$, thought of as indexing the
components of $\Zin$. The Heegaard diagram induces a matching $\Mmid$
on all the boundary components of $\Sigma_0$.<

Together, $\MatchIn$ and $\Mmid$ given an equivalence
relation on the components of $\partial\Sigma$.
\begin{defn}
  \label{def:CompatibleDA}
  We say that $\MatchIn$ is
  {\em compatible} with $\Hmid$ if every equivalence class has some
  component of $\Zout$ in it. 
\end{defn}
Form $\Wmid=W(\Hmid)$ as in Definition~\ref{def:AssociatedW}, and
$\Win=W(\MatchIn)$.  The compatibility condition is equivalent to the
condition that the one-manifold $W=\Wmid\cup\Win$ has no closed
components.

\begin{defn}
  Let $\Hmid$ be a middle diagram, equipped with a matching $\MatchIn$ on the
  incoming boundary components.
  The {\em full incoming} algebra 
  $\ClginBig(\Hmid)$ and {\em full outgoing algebra} 
  $\ClgoutBig(\Hmid)$ are defined by
  \[\Clgin(\Hmid)=\bigoplus_{k=0}^{2m-1}\Clg(2m,k);
  \qquad \Clgout(\Hmid)=\bigoplus_{k=0}^{2n-1}\Clg(2n,k).\]
  We will be primarily interested in the $k=n$, summands,
  which we call the {\em incoming algebra} and the {\em outgoing algebra}
  respectively:
  \[\Clgin(\Hmid)=\Clg(2m,m);
  \qquad \Clgout(\Hmid)=\Clg(2n,n).\]
\end{defn}

Each middle Heegaard state $\x$ determines two subsets
\[ \alphain(\x)\subset \{1,\dots,2m\}\qquad
{\text{resp.}}\qquad
\alphaout(\x)\subset \{1,\dots,2n\}\]
consisting of those $i\in\{1,\dots,2m\}$ resp.  $\{1,\dots,2n\}$
with $\x\cap \alphain_i\neq \emptyset$ resp 
$\x\cap\alphaout_i\neq\emptyset$.
Each middle Heegaard state $\x$ has an {\em idempotent type}
$k=|\alphain(\x)|$. 

Let $\DAmodBig(\Hmid)$ be the $\Field$-vector space
spanned by
the middle Heegaard states of $\Hmid$.
Let 
\[ \IdempIn(\x)=\Idemp{\alphain(\x)}
\qquad{\text{and}}\qquad
\IdempOut(\x)=\Idemp{\{1,\dots,2n-1\}\setminus \alphaout(\x)}.\]
An
$\IdempRing(2n,k-m+n)-\IdempRing(2m,k)$-bimodule structure is specified by
\[  \IdempOut(\x)\cdot \x
\cdot \IdempIn(\x)=\x.\]
There is a splitting
\[ \DAmodBig(\Hmid)=\bigoplus_{k\in\Z}
~~~{\lsup{\IdempRing(2n,k-m+n)}\DAmod(\Hmid)}_{\IdempRing(2m,k)},\]
where $k$ is the idempotent type of $\x$. We will be primarily
interested in the summand where $k=m$,
\[ \lsup{\IdempRing(2n,n)}\DAmod(\Hmid)_{\IdempRing(2m,m)}.\]

Our goal here is to endow $\DAmod(\Hmid)$ with the structure of a type $DA$ bimodule structure
$\DAmod(\Hmid,\MatchIn)=\lsup{\Clgout}\DAmod(\Hmid)_{\Clgin}$, where
$\Clgin=\Clgin(\Hmid)$ and $\Clgout=\Clgout(\Hmid)$. 

To equip $\DAmod(\Hmid)$ with the structure of a $DA$ bimodule, we
choose further an orientation $\orW$ on $W=\Wmid\cup\Win$.  Each boundary
component $\Zin_i$ or $\Zout_j$ of $\Hmid$ corresponds to some point
on $W$.

This data specifies an orbit marking, in the following sense:

\begin{defn}
  \label{def:OrbitMarkingDA}
  A {\em special orbit} in $\Hmid$ covers an orbit in $\Zin_i$ that is matched
  with $\Zout_j$. 
  An {\em orbit marking} in a middle diagram $\Hmid$ is a partition of the
  simple orbits of $\Zin$ so that:
  \begin{itemize}
  \item $\MatchIn$ matches even with odd orbits,
  \item if components of $\Zin$ are matched by $\Mmid$, then one one
    is even and the other is odd.
  \end{itemize}
  Let $\OmegaInEv\subset \{1,\dots,2m\}$ denote the even boundary components of $\Zin$;
  and $\OmegaInOdd\subset \{1,\dots,2m\}$ denote the odd ones.
\end{defn}

An orientation on $W$ is equivalent to an orbit marking: each segment
of $W$ in $\Wmid$ is oriented from even to odd, and each segment in $\Win$
is oriented from odd to even.

The orientation on $W$ also induces a pair of functions
\[ \sigma\colon \{1,\dots,2m\}\to \{1,\dots,2n\}, \qquad \tau\colon
\{1,\dots,2m\}\to \{1,\dots,2n\},\] the {\em starting} and {\em
  terminal} points respectively.  Namely, $\sigma(p)=i$ and
$\tau(p)=j$ if $\Zin_p$ is contained on the oriented interval in $W$
starting at $\Zout_i$, and $\tau(p)=j$ if $\Zin_p$ is contained on the
oriented interval terminating in $\Zout_j$.

See Figure~\ref{fig:OrbitMarkingDA} for an example.

 \begin{figure}[h]
 \centering
 \input{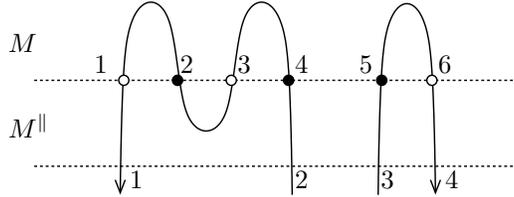}
 \caption{{\bf Orbit markings in a middle diagram.}
   The even orbits (1,3,6) are colored white and the odd ones (2,4,5) are colored black.
   Moreover, 
   $\sigma(1)=\sigma(2)=\sigma(3)=\sigma(4)=2$;
   $\tau(1)=\tau(2)=\tau(3)=\tau(4)=1$;
   $\sigma(5)=\sigma(6)=3$;
   $\tau(5)=\tau(6)=4$.}
 \label{fig:OrbitMarkingDA}
 \end{figure}

The $DA$ bimodule $\DAmod(\Hmid)$ depends on the incoming matching,
but we typically suppress data from the notation; in fact, even the
curvature element in the incoming algebra $\Clgin$ depends on this
choice. The bimodule also depends on an orientation of $W$, which we
also suppress.

\subsection{Type $DA$ bimodules}
\label{subsec:DAmodConstruction}

We adapt the definitions from Section~\ref{subsec:AlgebraicConstraints}
in the following straightforwarde manner.
Suppose that $\rhos$ is a set of Reeb chords for $\Hmid$
that is algebraic in the sense of Definition~\ref{def:AlgebraicPacket},
and that is supported entirely on $\Zin$. For each $\rho\in \rhos$, 
$\alpha(\rho^+)$ resp.
$\alpha(\rho^-)$ be the curve $\alphain_i$ with $\rho^+\in\alphain_i$
resp. $\rho^-\in\alphain_i$. 
Let \[ I^-(\rhos)=\sum_{\{{\mathbf{s}}\big| \{\alphain(\rho_1^-),\dots,\alphain(\rho_j^-)\}\subset
\{\alphain_i\}_{i\in{\mathbf{s}}}\}} I_{\mathbf{s}}
\qquad\text{and}\qquad
I^+(\rhos)=\sum_{\{{\mathbf{s}}\big| \{\alphain(\rho_1^+),\dots,\alphain(\rho_j^+)\}\subset
\{\alphain_i\}_{i\in{\mathbf{s}}}\}} I_{\mathbf{s}}
\]
Then, $\bIn_0(\rhos)$ be the algebra element $a_0\in\BlgZ(2m,m)$ with
$a=I^- \cdot a\cdot I^+$ and whose weight $w_i(a)$ is the average local
multiplicity at $\Zin_i$ for $i=1,\dots,2m$. 
Let $\bIn(\rhos)$ be the image of $\bIn_0(\rhos)$
in $\Clgin$.

The definition of constraint packets has the following immediate
generalization to middle diagrams:
\begin{defn}
  \label{def:CompatiblePacketDA}
  Fix a Heegaard state $\x$ and a sequence $\vec{a}=(a_1,\dots,a_\ell)$ of pure
  algebra elements of $\Clgin(\Hmid)$. 
  A sequence of constraint packets $\rhos_1,\dots,\rhos_k$ is called
  \em{$(\x,\vec{a})$-compatible} if  there is a sequence 
  $1\leq k_1<\dots<k_\ell\leq k$ so that the following conditions hold:
  \begin{itemize}
  \item the constraint packets $\rhos_{k_i}$ consist of chords
    in $\Zin$, and they are algebraic, in the sense
    of Definition~\ref{def:AlgebraicPacket},
  \item  $\IdempIn(\x)\cdot \bIn(\rhos_{k_1})\otimes\dots\otimes \bIn(\rhos_{k_\ell})=
    {\mathbf I}(\x)\cdot a_1\otimes\dots\otimes a_{\ell}$,
    as elements of $\DAmod(\Hdown)\otimes \Clgin^{\otimes \ell}$
  \item
    for each $t\not\in \{k_1,\dots,k_\ell\}$,
    the constraint packet $\rhos_t$ is one 
    of the following  types:
    \begin{enumerate}[label=($DA\rhos$-\arabic*),ref=($DA\rhos$-\arabic*)]
    \item 
      \label{eq:OddOrbit}
      it consists of the orbit $\{\orb_i\}$,
      where $\orb_i$ is {\em odd} for the $\orW$-induced orbit marking.
    \item 
      \label{eq:EvenOrbit}
      it is of the form
      $\{\orb_i,\longchord_j\}$, where $\orb_i$ is even
      $\{i,j\}\in\Matching$, and
      $\longchord_j$ is one of the two Reeb chords
        that covers $\Zin_j$ with multiplicity one. 
    \item 
      \label{eq:OutOrbit}
      it is of the form $\{\orb_k\}$, where $\orb_k$ is the simple Reeb orbit
      around some component in $\Zout$
    \item 
      \label{eq:OutChord} it is of the form $\{\rho_j\}$, where $\rho_j$ is a Reeb chord
      of length $1/2$ supported in some $\Zout$.
  \end{enumerate}
\end{itemize}
Let $\llbracket \x,a_1,\dots,a_\ell\rrbracket$ 
be the set of all sequences of constraint packets $\rhos_1,\dots,\rhos_k$
that are $(\x,\vec{a})$-compatible.
\end{defn}

Given $(\rhos_1,\dots,\rhos_k)\in\llbracket
\x,a_1,\dots,a_{\ell}\rrbracket$, we can consider
$\pi_2(\x,\rhos_1,\dots,\rhos_k,\y)$, the space of homology
classes of maps with asymptotics at $\Zin$ 
specified by the given Reeb chords.

We define the index $\ind(B,\x,\y;\vec{\rhos})$ exactly as in the case
of type A modules (Definition~\ref{def:IndexTypeA}):
\begin{align*}
  \ind(B,\x,\y;\vec{\rhos}) &= e(B)+n_\x(B)+n_\y(B)+\ell   \label{eq:EmbIndA} \\
  & \qquad -\weight(\vec{\rhos})+\iota(\chords(\vec{\rhos}))+\sum_{o\in\orb(\vec\rhos)} (1-\weight(o)), \nonumber
\end{align*}

\begin{lemma}
  \label{lem:MgrAndAgr}
  Given $B\in\doms(\x,\y)$,
  the quantity
  \[ \Mgr(B)=e(B)+n_\x(B)+n_\y(B)-\weight_\partial(B) \] is indendent
  of $B$ (depending only on $\x$ and $\y$).  
  Also, if $\{j,k\}\in\Mmid$, oriented in $\Wmid$ from $k$ to $j$, then the integer
  \[ \Agr_{\{j,k\}}=\weight_{j}(B)-\weight_{k}(B),\] 
  is independent of the choice of
  $B$ (depending only on $\x$ and $\y$).
\end{lemma}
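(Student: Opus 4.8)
The plan is to reduce Lemma~\ref{lem:MgrAndAgr} to the already-established independence results for upper and lower diagrams, namely Lemma~\ref{lem:GradingsWellDefined}, Lemma~\ref{lem:MgrDefined}, and Lemma~\ref{lem:AgrDom}, since all three assertions are of the same flavor: a linear functional on domains that descends to a function on states because it vanishes on the relevant ``periodic'' domains. First I would note that, just as in the proof of Lemma~\ref{lem:GradingsWellDefined}, if $B, B' \in \doms(\x,\y)$ then the difference $B - B'$ is a formal sum of components of $\cSigma \setminus \betas$; this follows from the fact that the $\alpha^c$-circles are homologically independent from the $\beta$-circles, which for a middle diagram holds by the same conditions (UD-4) and (UD-5) inherited from the underlying upper diagram structure (recall a middle diagram is an upper diagram with an arc deleted). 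So it suffices to check that each of the two quantities $\Mgr$ and $\Agr_{\{j,k\}}$ evaluates to zero on every component $\cald$ of $\cSigma \setminus \betas$.

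The computations are then component-by-component. For a component $\cald$, one has $e(\cald) + n_\x(\cald) + n_\y(\cald) = 2$ (this is the same local calculation used in Lemma~\ref{lem:GradingsWellDefined}, where each such component has $e(\cald) = 1$ and $P(\cald) = 1$, contributing $2$ to $\Mgr(\phi)$; the point measure $P(\cald) = n_\x(\cald) + n_\y(\cald)$ since a component of $\cSigma\setminus\betas$ meets the $\alpha$-curves only in its interior corners). On the other hand, $\weight_\partial(\cald) = \sum_i \weight_i(\cald)$, and since a component of $\Sigma \setminus \betas$ contains exactly two boundary circles (by Condition~\ref{UD:TwoBoundariesApiece}, applied to the $\beta$-decomposition of the middle diagram), the weight of $\cald$ at each of its two boundary circles is $1$ and zero at all others, so $\weight_\partial(\cald) = 2$. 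Hence $\Mgr(\cald) = 2 - 2 = 0$. For $\Agr_{\{j,k\}}$: if $\{j,k\} \in \Mmid$, then $Z_j$ and $Z_k$ lie in the same component $\Bjk$ of $\Sigma \setminus \betas$, so $\weight_j(\cald) = \weight_k(\cald)$ for every component $\cald$ (both equal $1$ if $\cald = \Bjk$ and both equal $0$ otherwise); therefore $\Agr_{\{j,k\}}(\cald) = \weight_j(\cald) - \weight_k(\cald) = 0$. Additivity of both quantities under juxtaposition ($B_1 * B_2$) is immediate from additivity of $e$, $n_\x$, $n_\y$, and $\weight$ — for the Maslov-type term one invokes the additivity of $\Mgr$ under juxtaposition as in~\cite[Theorem~3.3]{SarkarWhitney}, exactly as in the proof of Proposition~\ref{prop:DefineBigradingD}.

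I do not anticipate a genuine obstacle here; this is a routine adaptation. The one point requiring a little care is confirming that the structural hypotheses on middle diagrams (the existence of at least one domain $B \in \doms(\x,\y)$ for any pair of states, and the ``$B - B'$ is supported away from the $\beta$-complement's homology'' statement) carry over verbatim. These follow because the conditions (UD-2)–(UD-5) are stated for upper diagrams and a middle diagram is obtained merely by deleting one $\alpha$-arc, which only weakens the span of the $\alpha$-arcs and does not affect Conditions~\ref{UD:NoPerDom} and~\ref{UD:NoHone} concerning the $\alpha^c$-circles; in particular Condition~\ref{UD:NoHone} guarantees $\doms(\x,\y) \neq \emptyset$. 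Once the vanishing-on-components and additivity statements are in hand, the existence of the functions $\Mgr\colon \States(\Hmid) \to \Z$ and $\Agr_{\{j,k\}}\colon \States(\Hmid) \to \Z$, unique up to an overall additive constant, follows formally, precisely as in Proposition~\ref{prop:DefineBigradingD}.
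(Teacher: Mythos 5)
Your proof is correct and takes essentially the same route as the paper's: both reduce the claim to the observation that $\doms(\x,\y)$ is a torsor over the span of the components $\Bjk$ of $\Sigma_0\setminus\betas$, and then verify $e(\Bjk)+n_\x(\Bjk)+n_\y(\Bjk)=2$ and $\weight_j(\Bjk)=\weight_k(\Bjk)=1$ so that both $\Mgr$ and $\Agr_{\{j,k\}}$ vanish on each $\Bjk$ by linearity. The invocation of Sarkar--Whitney juxtaposition-additivity in your last paragraph is not actually needed for this lemma (only for promoting $\Mgr$ to a function on $\States$, which is done in the surrounding text rather than in the lemma itself), but this is harmless.
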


\begin{proof}
  Clearly, $\{j,k\}\in\Mmid$ if and only if $i$ and $j$ are contained
  in the same component $\Bjk$ of
  \[ \Sigma_0\setminus(\beta_1\cup\dots\cup\beta_{g+m+n-1}).\]
  Now, 
  \[ \pi_2(\x,\y)\cong \bigoplus_{\{j,k\}} \Z\cdot \Bjk.\]

  The lemma follows from the following:
  \begin{align*}
    e(\Bjk)+n_{\x}(\Bjk)+n_{\y}(\Bjk)&=2 \\
    \weight_j(\Bjk)&=\weight_k(\Bjk)=1.
  \end{align*}
\end{proof}

We can think of $\Agr(B)$ as defining an element ${\mathbb
  A}(B)\in H^1(\Wmid,\partial \Wmid)\cong \Z^{m+n}$, the {\em Alexander grading}.  
Now, the Alexander grading, with
values in $H^1(\Wmid,\partial \Wmid)$; and the $\delta$-grading $\gr$, with
values in $\Q$, are determined up to overall constants by
\begin{align*}
  \Agr(\x)-\Agr(\y)&=\Agr(B) \\
  \gr(\x)-\gr(\y)&=e(B)+n_\x(B)+n_\y(B)-\weight_\partial(B)
\end{align*}

Given $B\in\pi_2(\x,\rhos_1,\dots,\rhos_h,\y)$, 
let $\bOut_0(B)=a\in\BlgZ(2n,n)$ be the algebra element 
with $\Iup(\x)\cdot a=a$ and whose weight
at $i\in\{1,\dots,2n\}$ agrees with the local multiplicity of
$B$ at $\Zout_i$.
Let $\bOut(B)$ denote the image of $\bOut_0(B)$ in $\Clgout$.
Given, an $(\x,\vec{a})$-compatible sequence of constraint packets
$\rhos_1,\dots,\rhos_h$, we define a corresponding monomial in the
variables $U_1,\dots,U_{2n}$, denoted $\gamma(\rhos_1,\dots,\rhos_h)$, to
be product over all packets of Type~\ref{eq:OddOrbit} of the element
$U_{\tau(i)}$.
Let
\begin{equation}
  \label{eq:bOut}
  \bOut(B,\rhos_1,\dots,\rhos_k)=\gamma(\rhos_1,\dots,\rhos_h)\cdot
  \bOut(B).
\end{equation}

Let $\ModFlow(\x,\y,\rhos_1,\dots\rhos_h)$ denote the moduli space of
flowlines as in Definition~\ref{def:GenFlow}, with the understanding
that now the asymptotics at $\pm \infty$ go to middle Heegaard states
$\x$ and $\y$.  Note also that for middle diagrams, the number of
$\beta$-curves is given by $d=g+m+n-1$.

Define
\begin{align}
\label{eq:DefDA-Action}
\delta^1_{\ell+1}&(\x,a_1,\dots,a_\ell)\\
&=
\sum_{
\left\{\begin{tiny}
\begin{array}{r}
\y\in\States \\
(\rhos_1,\dots,\rhos_k)\in \llbracket \x,a_1,\dots,a_\ell\rrbracket \\
B\in\pi_2(\x,\rhos_1,\dots,\rhos_h,\y)
\end{array}
\end{tiny}\Big| \ind(B,\rhos_1,\dots,\rhos_h)=1\right\}}
\!\!\!\!\!\!\!\!\!\!\!\!\!\!\!\!\!\!\!\!\!\!\!\!\#\UnparModFlow(\x,\y,\rhos_1,\dots\rhos_h)\cdot \bOut(B,\rhos_1,\dots,\rhos_h)\otimes \y.
\nonumber
\end{align}

Lemma~\ref{lem:CompatWithMgr} has the following straightforward adaptation:

\begin{lemma}
  \label{lem:CompatWithMgrDA}
  Fix $\x,\y\in\States$, a sequence of pure algebra elements
  $\vec{a}=(a_1,\dots,a_\ell)$, an $(\x,\vec{a})$-compatible sequence
  of constraint packets $\rhos_1,\dots,\rhos_h$, and
  $B\in\doms(\x,\y)$.  If there is a pre-flowline $u$ whose shadow is
  $B$ and whose  packet sequence is $(\rhos_1,\dots,\rhos_\ell)$,
  then
  \[ \gr(\x)+\ell-\sum_{i=1}^{\ell}\weight(a_i)=\gr(\y)-\weight_{\Zout}(\bOut(B))+
  \ind(B,\x,\y,\rhos_1,\dots,\rhos_h).\]
\end{lemma}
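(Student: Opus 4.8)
The statement is the grading-tracking identity for the type $DA$ bimodule of a middle diagram, and it should follow exactly as Lemma~\ref{lem:CompatWithMgr} did for the type $A$ module, with only bookkeeping changes to account for the outgoing boundary $\Zout$. The plan is to start from the defining relations for $\gr$ and $\Mgr$ on middle Heegaard states (the last displayed equations of Section~\ref{sec:Bimodules}, coming from Lemma~\ref{lem:MgrAndAgr}), which give
\[ \gr(\x)-\gr(\y)=e(B)+n_\x(B)+n_\y(B)-\weight_\partial(B) \]
for any $B\in\doms(\x,\y)$. Here $\weight_\partial(B)$ is the total weight of $B$ at \emph{all} of $\partial\Sigma_0$, i.e.\ at $\Zin$ together with $\Zout$; so it decomposes as $\weight_\partial(B)=\weight_{\Zin}(B)+\weight_{\Zout}(B)$, and $\weight_{\Zout}(B)=\weight_{\Zout}(\bOut(B))$ by the definition of $\bOut(B)$ (its weight at $i\in\{1,\dots,2n\}$ is the local multiplicity of $B$ at $\Zout_i$).

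Next I would bring in the index formula for a pre-flowline with packet sequence $(\rhos_1,\dots,\rhos_h)$, namely $\ind(B,\x,\y;\vec{\rhos})=e(B)+n_\x(B)+n_\y(B)+h-\weight(\vec{\rhos})+\iota(\chords(\vec{\rhos}))+\sum_{o\in\orb(\vec\rhos)}(1-\weight(o))$ from Definition~\ref{def:IndexTypeA} (which applies verbatim to middle diagrams, as noted just before Lemma~\ref{lem:MgrAndAgr}). Since every orbit appearing in any $\rhos_i$ is simple, it has weight $1$, so the orbit correction term vanishes; and since for each packet $\iota(\chords(\rhos_i))=-\weight(\chords(\rhos_i))$ by Example~\ref{eq:IotaOfChord}, the index simplifies to $\ind(B,\vec{\rhos})=e(B)+n_\x(B)+n_\y(B)+h-\weight(\chords(\vec{\rhos}))$, where $\weight(\chords(\vec{\rhos}))$ is the total weight of all chords at $\Zin$ — here one must note that $\weight_\partial(B)$ on the $\Zin$ side is accounted for by exactly the chord weights, since the constraint packet weights at $\Zin$ match the local multiplicities of $B$ around the incoming boundary. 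Subtracting the $\gr$-difference equation from the index equation then yields
\[ \gr(\x)-\gr(\y)-\ind(B,\vec{\rhos})=-h-\weight_{\Zout}(\bOut(B))+\sum_{i=1}^{h}\weight(\chords(\rhos_i))-\weight_{\Zin}(B)+\weight_{\Zin}(B), \]
which, after the same cancellation $\sum_i\weight(\chords(\rhos_i))=\weight_{\Zin}(B)$, reduces to $-h-\weight_{\Zout}(\bOut(B))+\big(\sum_i\weight(\chords(\rhos_i))-\weight_{\Zin}(B)\big)$.

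The final step is the same conversion from packet count $h$ to algebra-element count $\ell$ as in Lemma~\ref{lem:CompatWithMgr}: among the $h$ packets, $\ell$ of them are the orbitless algebraic packets $\rhos_{k_1},\dots,\rhos_{k_\ell}$ whose total chord weight equals $\sum_{i=1}^\ell\weight(a_i)$, and the remaining $h-\ell$ packets are of Types~\ref{eq:OddOrbit}--\ref{eq:OutChord}. Here is the one genuine difference from the type $A$ case and the place where I expect to have to be careful: a packet of Type~\ref{eq:EvenOrbit} carries a weight-one chord at $\Zin$, a packet of Type~\ref{eq:OutChord} carries a weight-$1/2$ chord at $\Zout$, and a packet of Type~\ref{eq:OutOrbit} carries a weight-one orbit at $\Zout$, all of which contribute to $\weight_\partial(B)$ in a way that must be matched against the $+h$ in the index and against $\weight_{\Zout}(\bOut(B))$. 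I would sort the $h-\ell$ extra packets by type and verify that, packet by packet, the contributions to $h$, to $\weight(\chords(\vec\rhos))$ restricted to $\Zin$, to $\weight_{\Zin}(B)$, and to $\weight_{\Zout}(\bOut(B))$ all cancel except for a net $-\ell$ and $+\sum_i\weight(a_i)$, exactly as in the type $A$ computation where the last two lines of the proof of Lemma~\ref{lem:CompatWithMgr} ran; the key input is that $\weight_{\Zout}(\bOut(B))$ counts precisely the boundary weight of $B$ at $\Zout$ while the $+h$ term counts the total number of packets, so each $\Zout$-supported packet contributes $+1$ to $h$ that is not present in the $\ell$-count but is present as part of $\weight_{\Zout}$. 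Carrying this bookkeeping through gives
\[ \gr(\x)+\ell-\sum_{i=1}^{\ell}\weight(a_i)=\gr(\y)-\weight_{\Zout}(\bOut(B))+\ind(B,\x,\y,\rhos_1,\dots,\rhos_h), \]
which is the claim. The main obstacle is purely the careful accounting of how the four packet types contribute to the $\Zin$ versus $\Zout$ boundary weights and to the packet count; there is no new geometry beyond what Lemma~\ref{lem:CompatWithMgr} already established.
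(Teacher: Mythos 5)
There is a genuine gap, and it is not just sloppy bookkeeping: you have misread the meaning of $\weight_{\Zout}(\bOut(B))$ in the statement, and that misreading propagates an error through every intermediate formula. In Equation~\eqref{eq:bOut} the outgoing algebra element is $\bOut(B,\rhos_1,\dots,\rhos_h)=\gamma(\rhos_1,\dots,\rhos_h)\cdot\bOut(B)$, where $\gamma$ is the monomial $\prod U_{\tau(i)}$ taken over all packets of Type~\ref{eq:OddOrbit}; so its total weight is $\weight(\bOut(B,\vec{\rhos}))=n_1+\weight_{\Zout}(B)$, where $n_1$ is the number of odd-incoming-orbit packets. The lemma's $\weight_{\Zout}(\bOut(B))$ must be read as this full quantity (as the paper's own proof makes explicit, ending at ``$-\weight(\bOut(B,\vec{\rhos_i}))$'' with the $\#(\text{odd orbits coming in})$ term absorbed). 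You instead assert ``the key input is that $\weight_{\Zout}(\bOut(B))$ counts precisely the boundary weight of $B$ at $\Zout$,'' and you never mention $\gamma$ or Type~\ref{eq:OddOrbit} packets at all. With your interpretation the two sides of the identity differ by exactly $n_1$, so the proof cannot close.

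There are also concrete algebra errors along the way that you would hit before the final accounting. Your ``simplified'' index $\ind=e(B)+n_\x(B)+n_\y(B)+h-\weight(\chords(\vec{\rhos}))$ is not a simplification of Definition~\ref{def:IndexTypeA}: substituting $\iota(\chords)=-\weight(\chords)$ into that definition (with the orbit correction vanishing) gives $\ind=e+n_\x+n_\y+h-\weight_\partial(B)-\weight(\chords(\vec{\rhos}))$, not your formula — you have dropped the $-\weight_\partial(B)$ term. The correct subtraction is simply $\gr(\x)-\gr(\y)-\ind=-h+\weight(\chords(\vec{\rhos}))$ (now meaning the total chord weight at both $\Zin$ and $\Zout$). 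Your subsequent cancellation $\sum_i\weight(\chords(\rhos_i))=\weight_{\Zin}(B)$ is also false: $\weight_{\Zin}(B)$ receives weight-one contributions from every incoming orbit (in Type~\ref{eq:OddOrbit} and~\ref{eq:EvenOrbit} packets), which are not chords. The paper's proof avoids all of this by working directly with $-h-\sum_i\iota(\chords(\rhos_i))$ and tabulating $-\iota$ for each of the five packet types ($-\weight(a_j)$, $0$, $-1$, $0$, $-1/2$); the excess $n_1+n_3+\tfrac{n_4}{2}$ that remains after extracting $-\ell+\sum\weight(a_i)$ is then recognized as $\weight(\bOut(B,\vec{\rhos}))$, with $n_1$ coming from the $\gamma$ factor you omitted. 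You would need to add that identification to make the proof correct.
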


\begin{proof}
  We have that
  \[ \gr(\x)-\gr(\y)=e(B)+n_\x(B)+n_\y(B)-\weight_\partial(B), \]
  and
  \[
  \ind(B)=e(B)+n_\x(B)+n_\y(B)+h-\weight_\partial(B)+\sum\iota(\chords(\rhos_i)).
  \]
  Taking the difference, we find that
  \[ \gr(\x)-\gr(\y)-\ind(B,\x,\y,\vec{\rhos})
  =-h-\sum\iota(\chords(\rhos_i)).\]
  Now, if $\rhos_i$ is an algebraic packet, then
  $\iota(\chords(\rhos_i))=-\weight(\rhos_i)$;
  if it is of Type~\ref{eq:OddOrbit},
  $\iota(\chords(\rhos_i))=0$;
  if it is of Type~\ref{eq:EvenOrbit},
  $\iota(\chords(\rhos_i))=-1$;
  if it is of Type~\ref{eq:OutOrbit},
  the contribution is $0$;
  if it is of Type~\ref{eq:OutChord},
  we get $-1/2$. 
  It follows that
  \begin{align*}
    \gr(\x)-\gr(\y)-\ind(B,\x,\y,\vec{\rhos})
    &=-\ell+\left(\sum_{i=1}^{\ell}\weight_{\Zin}(a_i)\right) \\
    & \qquad
  -\#(\text{odd orbits coming in})-\weight_{\Zout}(B) \\
  &=-\ell+\left(\sum_{i=1}^{\ell}\weight_{\Zin}(a_i)\right)
  -\weight(\bOut(B,\vec{\rhos_i})).
  \end{align*}
\end{proof}

\begin{lemma}
  Given $\x$ and a sequence of algebra elements $(a_1,\dots,a_\ell)$,
  there are only a finite number of non-negative homology classes
  $B\in\pi_2(\x,\y,\vec{\rhos})$ where $\rhos_1,\dots,\rhos_h$ are
  $(\x,\vec{a})$-compatible and with
  $\ind(B,\x,\y,\vec{\rhos})=1$.
\end{lemma}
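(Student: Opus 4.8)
The plan is to mimic the proof of Lemma~\ref{lem:FiniteSum}, using Lemma~\ref{lem:MgrAndAgr} in place of Lemma~\ref{lem:GradingsWellDefined}. Since $\States$ is finite it is enough to fix $\y$ and to bound, in terms of $\x,\vec{a},\y$: the number of non-algebraic packets of each kind appearing in an $(\x,\vec{a})$-compatible sequence $\vec{\rhos}$; the algebraic packets themselves; and, once $\vec{\rhos}$ is fixed, the non-negative classes $B\in\pi_2(\x,\vec{\rhos},\y)$ with $\ind(B,\x,\y;\vec{\rhos})=1$. For an $(\x,\vec{a})$-compatible sequence write $p_{\mathrm{odd}},p_{\mathrm{ev}},q_{\mathrm{orb}},q_{\mathrm{ch}}$ for the numbers of non-algebraic packets of Types~\ref{eq:OddOrbit}, \ref{eq:EvenOrbit}, \ref{eq:OutOrbit}, \ref{eq:OutChord} respectively.

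The first step reduces the index formula. Substituting $e(B)+n_\x(B)+n_\y(B)=\gr(\x)-\gr(\y)+\weight_\partial(B)$ from Lemma~\ref{lem:MgrAndAgr}, using $\weight(\vec{\rhos})=\weight_\partial(B)$, that each orbit is simple, and evaluating $\iota$ on each packet type ($\iota(\chords(\rhos))=-\weight(\rhos)$ on an algebraic packet, $-\tfrac12$ on a Type~\ref{eq:OutChord} packet, $-1$ on a Type~\ref{eq:EvenOrbit} packet, $0$ on Type~\ref{eq:OddOrbit} and Type~\ref{eq:OutOrbit} packets), as in the proof of Lemma~\ref{lem:CompatWithMgrDA}, one obtains
\[
\ind(B,\x,\y;\vec{\rhos})=\gr(\x)-\gr(\y)+\ell-\textstyle\sum_{i=1}^{\ell}\weight_{\Zin}(a_i)+p_{\mathrm{odd}}+q_{\mathrm{orb}}+\tfrac12 q_{\mathrm{ch}}.
\]
Setting the left side equal to $1$ and using $\gr(\y)\le\max_{\y'\in\States}\gr(\y')$, the three non-negative quantities $p_{\mathrm{odd}},q_{\mathrm{orb}},q_{\mathrm{ch}}$ are each bounded; and so is the outgoing boundary weight $\weight_{\Zout}(B)=q_{\mathrm{orb}}+\tfrac12 q_{\mathrm{ch}}$, since these are the only packets carrying asymptotics at $\Zout$. (Equivalently one can invoke Lemma~\ref{lem:CompatWithMgrDA} directly: $\weight(\bOut(B,\vec{\rhos}))=\weight_{\Zout}(B)+p_{\mathrm{odd}}$ is determined once the index is fixed.)

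The remaining and most delicate step is to bound $p_{\mathrm{ev}}$. It suffices to bound the incoming boundary multiplicities $\weight_p(B)$ at each $\Zin_p$, because for a $\MatchIn$-pair $\{i,j\}$ with $i$ even the multiplicity $\weight_i(B)$ equals the ($\vec{a}$-determined) algebraic contribution at $\Zin_i$ plus the number of Type~\ref{eq:EvenOrbit} packets carrying that pair, so $p_{\mathrm{ev}}$ is then bounded. By the Alexander-grading part of Lemma~\ref{lem:MgrAndAgr}, $\weight_j(B)-\weight_k(B)$ is a constant depending only on $\x,\y$ whenever $\{j,k\}\in\Mmid$, so when $k\in\Zout$ this bounds $\weight_j(B)$ by the bound on $\weight_{\Zout}(B)$. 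For an incoming component lying in an $\Mmid$-pair of two incoming components I would walk along the equivalence relation generated by $\Mmid$ and $\MatchIn$, which by compatibility (Definition~\ref{def:CompatibleDA}) reaches $\Zout$ from every incoming component: an $\Mmid$-step shifts $\weight$ by the constant above, and a $\MatchIn$-step shifts it by the $\vec{a}$-determined contributions together with at most the already-bounded quantity $p_{\mathrm{odd}}$ (the Type~\ref{eq:OddOrbit} packets on the odd member of the pair). Chaining the finitely many steps bounds every $\weight_p(B)$, hence $p_{\mathrm{ev}}$.

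Finally, every packet count is bounded and each packet is drawn from a finite list --- finitely many simple orbits, $\MatchIn$-pairs, and length-$\tfrac12$ chords, and, for each pure algebra element $a_i$, finitely many algebraic packets $\rhos$ with $\bIn(\rhos)=a_i$ (the idempotents and the weight vector of $a_i$ pin $\rhos$ down) --- so there are only finitely many $(\x,\vec{a})$-compatible sequences $\vec{\rhos}$. For fixed $\vec{\rhos}$ and $\y$, any two classes in $\pi_2(\x,\vec{\rhos},\y)$ differ by a periodic domain of vanishing incoming multiplicity, i.e.\ by an integral combination of the components $\Bjk$ with $j,k\in\Zout$ (using $\pi_2(\x,\y)\cong\bigoplus_{\{j,k\}\in\Mmid}\Z\cdot\Bjk$ from the proof of Lemma~\ref{lem:MgrAndAgr}); non-negativity of $B$ bounds the corresponding coefficients below, and the bound on $\weight_{\Zout}(B)$ bounds them above. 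Hence only finitely many $B$ remain. The main obstacle is the bound on $p_{\mathrm{ev}}$: this is precisely where the compatibility of $\MatchIn$ with the diagram's matching must be used, since otherwise the incoming multiplicities need not be controlled by the outgoing ones.
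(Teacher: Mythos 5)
Your proposal is correct and follows essentially the same strategy as the paper's proof: use the grading relation of Lemma~\ref{lem:CompatWithMgrDA} (itself resting on Lemma~\ref{lem:MgrAndAgr}) to pin down $\weight(\bOut(B,\vec{\rhos}))$ once $\x,\vec{a},\y$ and the index are fixed, then propagate bounds on the boundary multiplicities of $B$ through the equivalence relation generated by $\Mmid$ and $\MatchIn$ (using that every class reaches an outgoing component, where the weight is controlled), and finish via the injectivity of $B\mapsto\bigoplus_i\weight_i(B)$. The only cosmetic difference is that you phrase the bounds in terms of the packet counts $p_{\mathrm{odd}},p_{\mathrm{ev}},q_{\mathrm{orb}},q_{\mathrm{ch}}$ while the paper works directly with the weight vector of $b$; these carry the same information.
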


\begin{proof}
  According to Lemma~\ref{lem:MgrAndAgr}, $(\x,\vec{a})$, $\y$, and
  $\ind(B,\x,\y,\vec{\rhos})=1$ determines the total weight of $b$.
  The lemma also shows that 
  $\{i,j\}\in\Mmid$, then $\weight_i(B)-\weight_j(B)$ is independent
  of $B$ (depending only on $\x$ and $\y$). 

  Suppose next  that
  $\{i,j\}\in\MatchIn$, and $i$ is an odd orbit, then if
  $c_{i,j}=\weight_i(a_1\otimes\dots \otimes a_\ell)-
  \weight_j(a_1\otimes\dots\otimes a_\ell)$, then clearly
  \[ c_{i,j}\leq  \weight_i(B)-\weight_j(B)\leq c_{i,j}+\weight_{\tau(i)}(b).\]
  Finally,
  if $i\in\Zout$ is $W$-initial, then
  $\weight_i(B)=\weight_i(b)$.

  Since every in-coming boundary component is equivalent (using the
  equivalence relation generated by $\Mmid$ and $\MatchIn$) to an
  $W$-initial out boundary component, we have universal upper bounds
  on the weights of $B$ at all of its boundary points (which we also
  assumed to be non-negative).  Since the map $B\mapsto
  \bigoplus_{i}\weight_i(B)$ gives an injection of $\pi_2(\x,\y)$
  into $\Z^{m+n}$, the lemma follows.
\end{proof}

Theorem~\ref{thm:AEnds} has the following straightforward adaptation
to middle diagrams.

\begin{remark}
  We will need the following theorem in the case where the in-coming
  sequence of packets are compatible with some sequence of algebra
  elements $(a_1,\dots,a_\ell)$. To underscore the similarity with
  Theorem~\ref{thm:AEnds}, we have stated it under slightly weaker
  hypotheses. (Compare Definition~\ref{def:Allowed}.)
\end{remark}

\begin{thm}
  \label{thm:DAEnds} Choose a middle diagram $\Hmid$, and fix a
  compatible matching $\Mup$. Choose also an orbit marking
  (Definition~\ref{def:OrbitMarkingDA}).  Fix a lower Heegaard state $\x$
  and a sequence of constraint packets $\vec{\rhos}$ with the
  following properties:
  \begin{itemize}
  \item $(\x,\vec{\rhos})$ is strongly
    boundary monotone.
  \item The chords appearing in each packet  $\rhos_i$ are disjoint from one another
  \item Each packet contains at most one orbit, and that orbit is simple.
  \item If a packet contains an even (in-coming) orbit, then it 
    contains exactly one other Reeb chord, as well; and that chord
    is disjoint from the orbit.
  \item If the packet contains an orbit which is not even,
    then it contains no other chord.
  \end{itemize}
  Let $\y$ be a
  lower Heegaard state, and $B\in\pi_2(\x,\y)$, whose local multiplicity
  vanishes somewhere. Choose $\Source$ and
  $\vec{P}$ so that $[\vec{P}]=(\rhos_1,\dots,\rhos_\ell)$ and
  so that the $\chi(\Source)=\chiEmb(B)$;
  and suppose that $\ind(B,\x,\y;\vec{\rhos})=2$, and abbreviate
  $\UnparModFlow=\UnparModFlow^B(\x,\y;\Source;{\vec{P}})$. The total
  number of ends of $\UnparModFlow$ of the following types are even in
  number:
  \begin{enumerate}[label=(DAE-\arabic*),ref=(DAE-\arabic*)]
  \item \label{endDA:2Story}
    Two-story ends, which are of the form
    \[ \UnparModFlow(\x,\w;\Source_1;\rhos_1,\dots,\rhos_i)\times 
    \UnparModFlow(\w,\y;\Source_2;\rhos_{i+1},\dots,\rhos_{\ell}),\]
    taken over all lower Heegaard states $\w$ and choices of $\Source_1$ and $\Source_2$ so that
    $\Source_1\natural \Source_2=\Source$, and $B_1\natural B_2=B$.
  \item 
    \label{endDA:Orbit}
    Orbit curve ends, of the form
    $\UnparModFlow^B(\x,\y,\Source';\rhos_1,\dots,\rhos_{i-1},\sigmas,\rhos_{i+1},\dots,\rhos_{\ell})$,
    where $\orbits(\sigmas)=\orbits(\rhos_i)\setminus\{\orb_r\}$,
    $\chords(\sigmas)=\chords(\rhos_i)\cup\{\longchord_r\}$ where
    $\longchord_r$ is a Reeb chord that covers the boundary component
    $Z_r$ with multiplicity $1$.
  \item 
    \label{endDA:ContainedCollisions}
    Contained collision ends for two consecutive packets $\rhos_i$ and
    $\rhos_{i+1}$, which correspond to points in
    $\UnparModFlow^{B'}(\x,\y,\Source;\rhos_1,\dots,\rhos_{i-1},\sigmas,\rhos_{i+2},\dots,\dots,\rhos_\ell)$
    with the following properties:
    \begin{itemize}
    \item The collision is visible.
    \item The packets $\rhos_i$ and $\rhos_{i+1}$ are strongly composable.
    \item The packet $\sigmas$ is a contained collision of $\rhos_i$
      and $\rhos_{i+1}$
    \item 
      The chords in $\sigmas$ are disjoint from one
      another.
    \end{itemize}
  \item
    \label{endDA:Join}
    Join ends, of the form
    $\UnparModFlow^B(\x,\y,\Source';\rhos_1,\dots,\rhos_{i-1},\sigmas,\rhos_{i+1},\dots,\rhos_{\ell})$,
    $\orbits(\sigmas)=\orbits(\rhos_i)$, and the following conditions hold:
    \begin{itemize}
      \item $(\x,\rhos_1,\dots,\rhos_{i-1},\sigmas,\rhos_{i+1},\dots,\rhos_{\ell})$
        is strongly boundary monotone.
      \item There is some $\rho\in \chords(\rhos_i)$ with the property
        that $\rho=\rho_1\uplus \rho_2$, and
        $\chords(\sigmas)=(\chords(\rhos_i)\setminus \{\rho\})\cup
        \{\rho_1,\rho_2\}$.
      \item  In the above decomposition, at least one of $\rho_1$ and 
        $\rho_2$ covers only half of a boundary component.
      \end{itemize}
  \item \label{endDA:BoundaryDegeneration}
    Boundary degeneration collisions $\sigmas$
    between two consecutive packets $\rhos_i$ and $\rhos_{i+1}$;
    when
    $\orb_j\in\orbits(\rhos_i)$, $\orb_k\in\orbits(\rhos_{i+1})$
    and $\{j,k\}\in\Mmid$.
    When $\sigmas=\rhos_i\setminus \{\orb_j,\orb_k\}$
    is non-empty, 
    these correspond to points in
    \[ \ModFlow^{B'}(\x,\y,\Source';\rhos_1,\dots,
    \rhos_{i-1},\sigmas,\rhos_{i+1},\dots,\rhos_\ell)\]
    in the chamber $\Chamber^{\orb_j<\orb_k}$, where
    the homology class $B'$ 
    is obtained from $B$ by removing 
    a copy of $\Bjk$. When $\sigmas=\emptyset$, then $\ell=1$,
    $\x=\y$, $B'=0$, and the end is unique.
    \end{enumerate}
\end{thm}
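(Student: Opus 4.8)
The plan is to prove Theorem~\ref{thm:DAEnds} by reduction to Theorem~\ref{thm:AEnds}, since the moduli spaces entering the $DA$ bimodule construction are holomorphic flowlines for the middle diagram $\Hmid$ that differ from those in the type $A$ setting only in that the boundary components are split into ``incoming'' and ``outgoing'' groups, with the orbit marking on $\Zin$ and $\Zout$ prescribed by the orientation $\orW$ on $W=\Wmid\cup\Win$. First I would recall that a middle diagram is an upper diagram with one $\alpha$-arc deleted, and that the asymptotics, the index formula $\ind(B,\x,\y;\vec{\rhos})$, the strong boundary monotonicity condition, the expected-dimension result Proposition~\ref{prop:ExpectedDimensionA}, and the compactness and gluing results of Section~\ref{sec:CurvesA} all apply verbatim (with $d=g+m+n-1$). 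In particular, the classification of ends of index-two moduli spaces by Gromov compactness and the local gluing analyses of Propositions~\ref{prop:JoinCurve}, \ref{prop:OrbitCurve}, \ref{prop:BoundaryDegenerationNbd}, and \ref{prop:WestInftyEnd} do not care whether a boundary component is labelled incoming or outgoing.

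The key steps, in order, are as follows. \textbf{(1)} Observe that the hypotheses on $\vec{\rhos}$ in the statement are precisely the conditions that each packet is ``allowed'' in the sense of Definition~\ref{def:Allowed} with respect to the orbit marking on $\Zin$ induced by $\orW$, together with the extra packet types~\ref{eq:OutOrbit} and~\ref{eq:OutChord} involving outgoing orbits and outgoing length-$1/2$ chords, which are handled exactly as ordinary (non-special) orbits and chords on the $\beta$-boundary in Theorem~\ref{thm:AEnds}; there is no basepoint constraint on the $\Zout$ side, so the only boundary-degeneration ends come from matched pairs $\{j,k\}\in\Mmid$ (which by compatibility of $\MatchIn$ always lies within a single $\Mmid$-class of incoming components or connects an incoming component to an outgoing one). \textbf{(2)} Apply Gromov compactness (Proposition~\ref{prop:Compactness}) to a generic one-dimensional moduli space $\UnparModFlow^B(\x,\y;\Source;\vec{P})$ with $\ind=2$, ruling out $\alpha$-boundary degenerations via the hypothesis that $B$ has vanishing local multiplicity somewhere (as in the proof of Theorem~\ref{thm:AEnds}), and ruling out higher-weight $\beta$-boundary degenerations and multi-level configurations by the dimension count. \textbf{(3)} Go through the resulting list of ends exactly as in the proof of Theorem~\ref{thm:AEnds}: two-story ends give~\ref{endDA:2Story}; orbit-curve ends where an even incoming orbit slides off give (together with the $\mu_0$-term bookkeeping) contributions accounted for in~\ref{endDA:Orbit}; collision ends split into the visible contained collisions~\ref{endDA:ContainedCollisions} (which cancel against permuted orders), non-weakly-composable and strongly composable ones, and the boundary-degeneration collisions~\ref{endDA:BoundaryDegeneration}; join-curve ends satisfying the length condition give~\ref{endDA:Join}; and the special/uncontained boundary degenerations for $\{j,k\}\in\Mmid$ give~\ref{endDA:BoundaryDegeneration}, including the unique end when the remaining packet $\sigmas$ is empty (using Lemma~\ref{lem:BoundaryDegenerationsDegree1A}). \textbf{(4)} Conclude that the total count of ends of the listed types is even.

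The main obstacle I anticipate is bookkeeping the orbit-marking conventions correctly: in the type $A$ setting of Theorem~\ref{thm:AEnds} there are exactly two special (unmatched) orbits corresponding to the basepoints $\wpt$ and $\zpt$, whereas in the middle-diagram setting the ``special'' orbits of Definition~\ref{def:OrbitMarkingDA} are those on $\Zin_i$ matched with some $\Zout_j$, and the basepoints have been removed. I need to check carefully that the argument in Theorem~\ref{thm:AEnds} that odd-orbit ends cancel against uncontained boundary-degeneration collisions still goes through when the matched partner of an incoming even orbit lies on the outgoing boundary, and that outgoing orbits (packet type~\ref{eq:OutOrbit}) and outgoing half-chords (type~\ref{eq:OutChord}) never themselves generate boundary degenerations (since no two outgoing boundary components are $\Mmid$-matched — each $\Mmid$-class contains exactly one $\Zout$ component, by the structure of middle diagrams). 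Granting this, the cancellations of Lemmas~\ref{lem:JJ}--\ref{lem:WW}-style arguments (or more directly, the ``permute the two colliding packets'' argument in the proof of Theorem~\ref{thm:AEnds}) apply, and the enumeration of surviving ends matches~\ref{endDA:2Story}--\ref{endDA:BoundaryDegeneration}. The rest is a routine transcription, and I would simply write ``This follows exactly as in the proof of Theorem~\ref{thm:AEnds}, with the modifications indicated in the proof of Theorem~\ref{thm:DEnds}'' after spelling out the orbit-marking dictionary above.
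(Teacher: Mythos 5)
Your overall plan matches the paper's: the proof really is just a rerun of Theorem~\ref{thm:AEnds}, and the paper's own proof consists of essentially that one sentence, together with a single pointer to the difference in how boundary degenerations are handled.  That difference is exactly the one you put your finger on, but not quite for the reason you give.  The paper's observation is that in a middle diagram there are \emph{no special boundary degenerations} in the sense of Definition~\ref{def:OrbitMarking}/\ref{endA:SpecialBoundaryDegeneration}: since there are no basepoints $\wpt,\zpt$ marking a region, every component of $\Sigma_0\setminus\betas$ contains exactly two boundary circles, so every simple boundary degeneration carries two genuine Reeb orbits, and the $\wpt$-special case of Theorem~\ref{thm:AEnds} simply does not arise.  (Equivalently, as the paper phrases it, one recovers Theorem~\ref{thm:AEnds} from Theorem~\ref{thm:DAEnds} by treating $\wpt$ and $\zpt$ as orbits around punctures.)  That makes the list~\ref{endDA:2Story}--\ref{endDA:BoundaryDegeneration} a strict sublist of the one in Theorem~\ref{thm:AEnds}, and no separate dictionary for special orbits is needed.

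One concrete misstatement to fix: your parenthetical claim that ``each $\Mmid$-class contains exactly one $\Zout$ component, by the structure of middle diagrams'' is false.  For the local-maximum middle diagram $\Hmax{c,n}$ (and more generally any middle diagram containing a cap), there is a component of $\Sigma_0\setminus\betas$ that meets two outgoing boundary circles $\Zout_c$ and $\Zout_{c+1}$.  Compatibility of $\MatchIn$ (Definition~\ref{def:CompatibleDA}) only guarantees that each $(\Mmid\cup\MatchIn)$-equivalence class contains \emph{some} outgoing component, not that $\Mmid$-classes are balanced in this way.  Fortunately nothing you need hangs on this: a boundary degeneration involving two outgoing orbits is simply a collision of two packets of type~\ref{eq:OutOrbit}, and it is already covered by case~\ref{endDA:BoundaryDegeneration} of the statement (these are precisely the ends that feed the $\mu_0^{\cBlg_2}$ term in Proposition~\ref{prop:DAmid}); there is no cancellation that would be threatened.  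Drop the parenthetical and the argument is clean.
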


\begin{proof}
  The proof is exactly as in the proof of Theorem~\ref{thm:AEnds}.
  The key difference is that in a middle diagram, we do not treat
  special boundary degenerations separately. (Theorem~\ref{thm:AEnds}
  would have a had a similar statement if we had treated the marked
  points $\wpt$ and $\zpt$ as orbits around the punctures.)
\end{proof}

\begin{prop}
  \label{prop:DAmid}
  Let $\Hmid$ be a middle diagram that is compatible with a given
  matching $M$ on its incoming boundary. Choose an orientation on
  $W=W(\Hmid)\cup W(M)$. The  $\IdempRing(2n)-\IdempRing(2m)$-bimodule
  $\DAmod(\Hmid)$, equipped the operations 
  \[\delta^1_{\ell+1}\colon \DAmod(\Hmid)\otimes\Clgin^{\otimes \ell}\to \Clgout\otimes\DAmod(\Hmid)\]
  defined above endows $\DAmod(\Hmid)$ with the structure of 
  a curved $\Clg(n,\Mout)-\Clg(m,M)$ $DA$ bimodule (cf. Equations~\eqref{eq:CurvedDAbimoduleRelation1} and~\eqref{eq:CurvedDAbimoduleRelation2}),
  where $\Mout$ is the matching on $\{1,\dots,2n\}$ induced by 
  $\Mmid$ and $M$.
\end{prop}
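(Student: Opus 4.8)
The plan is to obtain Proposition~\ref{prop:DAmid} by combining the proofs of Proposition~\ref{prop:CurvedTypeA} (the type $A$ module) and Proposition~\ref{prop:CurvedTypeD} (the type $D$ structure), now using Theorem~\ref{thm:DAEnds} as the engine for classifying ends of one-dimensional moduli spaces. The preliminary points are quick: well-definedness of the operations $\delta^1_{\ell+1}$ in Equation~\eqref{eq:DefDA-Action} is exactly the content of the finiteness lemma preceding the proposition; compatibility with the $\delta$-grading and the Alexander grading follows immediately from Lemma~\ref{lem:CompatWithMgrDA}, Lemma~\ref{lem:MgrAndAgr}, and the definitions of the gradings on $\Clgin$ and $\Clgout$ (Equations~\eqref{eq:DefDeltaAlg} and~\eqref{eq:DefAgrAlg}). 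As in the type $D$ case, I would first observe that a shadow $B$ covering all of $\Sigma$ contributes nothing to $(\mu_2^{B_2}\otimes\Id)\circ(\Id\otimes\delta^1)\circ\delta^1$, since there is no nonzero element of $\Clgout$ of everywhere-positive weight; this lets us work throughout with shadows whose local multiplicity vanishes somewhere, which is the hypothesis of Theorem~\ref{thm:DAEnds}.

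For the curved $DA$ relations, fix $\x,\z\in\States(\Hmid)$, a (possibly empty) sequence $b_1,\dots,b_k$ of pure algebra elements in $\Clgin$, and a shadow $B$ with $\ind(B)=2$. Consider the union, over all $(\x,b_1,\dots,b_k)$-compatible sequences of constraint packets $\vec\rhos$ and all sources $\Source$, of the moduli spaces $\UnparModFlow^B(\x,\z;\Source;\vec\rhos)$; by Theorem~\ref{thm:GeneralPositionA} these are smooth $1$-manifolds, and by Theorem~\ref{thm:DAEnds} their ends are of types DAE-1 through DAE-5. I would then match contributions: two-story ends (DAE-1) give the terms $\mu_2^{B_2}\circ(\Id\otimes\delta^1_{k-j+1})\circ(\delta^1_{j+1}(\x,b_1,\dots,b_j),b_{j+1},\dots,b_k)$ of Equation~\eqref{eq:CurvedDAbimoduleRelation2} (and the $(\mu_2^{B_2}\otimes\Id)\circ(\Id\otimes\delta^1_1)\circ\delta^1_1$ term when $k=0$); orbit curve ends (DAE-2) in which the packet has the form $\{\orb_i,\longchord_j\}$ with $\orb_i$ an even incoming orbit give the incoming-curvature terms $\delta^1_{k+2}(\x,b_1,\dots,\mu_0^{B_1},\dots,b_k)$, exactly as in Proposition~\ref{prop:CurvedTypeA}; and contained visible collision ends (DAE-3) between two orbitless incoming packets give the algebra-multiplication terms $\delta^1_k(\x,b_1,\dots,\mu_2^{B_1}(b_i,b_{i+1}),\dots,b_k)$.

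The remaining ends must cancel in pairs. Join ends (DAE-4), contained collisions involving orbits, and visible collisions of weakly-but-not-strongly composable chords cancel after permuting the two colliding packets, precisely as in Proposition~\ref{prop:CurvedTypeA}; orbit curve ends where the orbit is an outgoing orbit (Type~\ref{eq:OutOrbit}) cancel against strongly composable outgoing chord collisions, exactly as in Proposition~\ref{prop:CurvedTypeD}; and the odd incoming orbit curve ends cancel against the uncontained boundary degeneration collisions of DAE-5 in which a $\Mmid$-matched pair (one of them a special orbit) appears --- this is the middle-diagram version of the cancellation phrased in Proposition~\ref{prop:CurvedTypeA} using the basepoints $\wpt,\zpt$ as unmatched orbits. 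Finally, the outgoing $\beta$-boundary degeneration ends of DAE-5 (with $\sigmas$ trivial) produce the outgoing curvature term $\mu_0^{B_2}\otimes\x$ of Equation~\eqref{eq:CurvedDAbimoduleRelation1}; since such ends force the remaining Reeb data to be trivial, $\x=\z$ and $k=0$, which is why this term does not appear in Equation~\eqref{eq:CurvedDAbimoduleRelation2}.

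The main obstacle, as in the $A$ and $D$ arguments, is the bookkeeping of the outgoing $U$-weights. Each of the cancellations above must be checked to be weight-preserving: the algebra output $\bOut(B,\vec\rhos)=\gamma(\vec\rhos)\cdot\bOut(B)$, which records both the local multiplicities of $B$ along $\Zout$ and the factor $\gamma=\prod U_{\tau(i)}$ over odd incoming orbits, must agree for the two ends being cancelled (the domains and the sets of odd incoming orbits may differ, as they do already in Proposition~\ref{prop:CurvedTypeA}). The delicate point is verifying that the outgoing boundary degeneration ends assemble to exactly the monomial $U_iU_j$ for each $\{i,j\}\in\Mout$, which requires tracking how odd incoming orbits --- through the starting/terminal maps $\sigma,\tau$ determined by the orientation $\orW$ --- link together outgoing boundary components that are $\Mout$-matched but not directly $\Mmid$-matched. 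This is the step where the arguments of Propositions~\ref{prop:CurvedTypeA} and~\ref{prop:CurvedTypeD} have to be genuinely fused; the rest of the proof is a transcription of those two proofs.
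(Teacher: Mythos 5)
Your proposal follows the same route as the paper's proof (combining Propositions~\ref{prop:CurvedTypeD} and~\ref{prop:CurvedTypeA} through the end-classification of Theorem~\ref{thm:DAEnds}), and it correctly pinpoints where the genuine new content lies: the verification that the $\sigmas=\emptyset$ boundary degeneration ends assemble to $\mu_0^{\Clgout}\otimes\x=\sum_{\{j,k\}\in\Mout}U_jU_k\otimes\x$, and that the odd incoming orbits, through the $\gamma$-factor and the maps $\sigma,\tau$, account for $\Mout$-matched outgoing components that are not directly $\Mmid$-matched. However, you explicitly leave this step as a flagged ``delicate point'' rather than carrying it out, and this step is precisely what the paper's proof spends its time on --- everything else really is a transcription of the two earlier proofs.

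The missing argument is a chamber count. Fix a generator $\x$ and an $\Mmid$-pair $\{p,q\}$. The boundary degeneration ends of $\ModFlow(\x,\x,\{\orb_p\},\{\orb_q\})\cup\ModFlow(\x,\x,\{\orb_q\},\{\orb_p\})$ contribute to $(\mu_2\otimes\Id)\circ(\Id\otimes\delta^1_1)\circ\delta^1_1(\x)$ only when both singletons $\{\orb_p\}$, $\{\orb_q\}$ are allowed constraint packets: each must be an outgoing orbit (Type~\ref{eq:OutOrbit}) or an odd incoming orbit (Type~\ref{eq:OddOrbit}). Since the orbit marking assigns opposite parities to $\Mmid$-matched incoming components and $\{\orb_{\mathrm{even}}\}$ alone is not allowed, at most one $\Mmid$-pair in each $W$-equivalence class qualifies: the pair of outgoing endpoints when the class has no incoming components, and otherwise the pair consisting of an outgoing endpoint together with the last odd incoming orbit along the oriented chain. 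In the first case $\delta^1_1$ outputs $U_jU_k$ for the two endpoints directly; in the second case, the outgoing orbit contributes one endpoint $U_j$ through $\bOut(B)$, and the odd incoming orbit contributes the other endpoint $U_{\tau(\cdot)}$ through the $\gamma$-factor in Equation~\eqref{eq:bOut}. Summing over the $W$-equivalence classes (and over the two chamber configurations $\Chamber^{\orb>\orb'}$ and $\Chamber^{\orb'>\orb}$) recovers $\mu_0^{\Clgout}$. Without this computation, your statement that ``the outgoing $\beta$-boundary degeneration ends produce the outgoing curvature term'' is an assertion rather than a proof.
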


\begin{proof}
  This is a combination of Propositions~\ref{prop:CurvedTypeD} and
  Proposition~\ref{prop:CurvedTypeA}.

  In more detail, look at ends of one-dimensional moduli spaces.  Note
  that the moduli spaces that contribute to the outgoing algebra
  element cannot cover all of the outgoing boundary with positive
  weight, so we can restrict attention to homology classes $B$ that do
  not cover all of $\Sigma$; i.e. Theorem~\ref{thm:DAEnds} applies.

  Consider first the $\Ainfty$ relation with no incoming algebra
  elements.

  When $\x\in\Chamber^{\orb>\orb'}$, there is a corresponding end of
  $\ModFlow(\x,\x,\{\orb\},\{\orb'\})$.  In turn, that moduli space
  contributes to $\mu_2\circ(\Id\otimes \delta^1)\circ \delta^1$ only
  in two cases: 
  \begin{itemize}
  \item when both $\orb$ and $\orb'$ are Reeb orbits on
    the out-going boundary, or 
  \item one of the two is a Reeb orbit on the
    out-going boundary and the other is an odd orbit on the in-coming
    boundary.
  \end{itemize}
  Each equivalence class of orbits contains exactly one pair of orbits
  which can be paired in a simple boundary degeneration as above: the
  boundary degeneration is the unique simple boundary degeneration that
  contains the orbit corresponding to the endpoint of the given
  $W$-equivalence class. Explicitly, when the $W$-equivalence class
  contains no in-coming boundary components, then $\orb$ and
  $\orb'$ are the two (out-going) orbits in the equivalence
  class; otherwise, if $\orb$ is outgoing and it is paired with 
  an in-coming $\orb'$, then that $\orb'$ must be the 
  last odd orbit in the $W$-equivalence class (under the ordering induced
  by its orientation).

  By switching the order of $\orb$ and $\orb'$ if needed (since
  $\x\in\Chamber^{\orb>\orb'}$ or $\Chamber^{\orb'>\orb}$), we can conclude
  each equivalence class of orbits contributes $U_j U_k$,
  where $\Zout_j$ and $\Zout_k$ are the two out-going boundary components
  in the equivalnce class.
  Thus, these boundary degenerations to
  $\mu_2\circ(\Id\otimes\delta^1)\circ\delta^1(\x)$ gives a term of
  the form $\mu_0^{\Clgout}\otimes \x$. 
  
  As in the proof of Proposition~\ref{prop:CurvedTypeA}, there are orbit 
  curve ends which can be identified with
  $\delta^1(\x,\mu^{\Clgin}_0)$.

  All other contributions cancel in pairs as in the proof of
  Proposition~\ref{prop:CurvedTypeA}, verifying the weighted type $DA$
  structure equation with no algebra inputs,
  Equation~\eqref{eq:CurvedDAbimoduleRelation1}.
  Note that we do have collision ends when packets on $\Zin$ collide
  with packets on $\Zout$; but once again, these cancel with ends 
  where the two packets are permuted.
\end{proof}

\subsection{Another pairing theorem}

Let $\Hup$ be an upper diagram and $\Hmid$ be a middle diagram, so
that $\partial\Hup$ is identified with $\partial^{\vee}\Hmid$.  Then,
we can form a new upper diagram $\Hmid\#\Hup$.  Evidently, there is a
one-to-one correspondence between pairs of states $\x$ and $\y$, where
$\x$ is an partial Heegaard state for $\Hmid$ and $\y$ is an upper
Heegaard state for $\Hup$, and
$\alpha(\x)=\{1,\dots,2n\}\setminus\alphaout(\y)$.

We have the following analogue of the pairing theorem
Theorem~\ref{thm:PairAwithD}; compare also~\cite[Theorem~11]{Bimodules}:

\begin{thm} 
  \label{thm:PairDAwithD}
  Let $\Hmid$ and $\Hup$ be as above.
  Let $\Clg_1=\Clg(\Hup)=\Clgin(\Hmid)$;
  $\Clg_2=\Clgout(\Hmid)$.
  Under the above hypotheses, there is a quasi-isomorphism
  of curved type $D$ structures
  \[\lsup{\Clg_2}\Dmod(\Hmid\#\Hup) 
  \simeq
  \lsup{\Clg_2}\DAmod(\Hmid)_{\Clg_1}\DT \lsup{\Clg_1}\Dmod(\Hup).\]
\end{thm}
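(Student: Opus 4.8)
The plan is to follow the template of Theorem~\ref{thm:PairAwithD}, replacing the type $A$ side by the type $DA$ side. Since $\DAmod(\Hmid)$ is a bimodule defined with the incoming boundary treated exactly as the type $A$ structure is for lower diagrams, and $\Dmod(\Hup)$ is exactly the type $D$ structure of the preceding sections, the entire neck-stretching and deformation-of-the-diagonal machinery should transplant essentially verbatim, with the key bookkeeping difference that there is no pair of basepoints $\wpt,\zpt$ on the middle side: the roles previously played by the two special orbits are now played by honest outgoing orbits, and the outgoing algebra element $\bOut(B,\vec\rhos)$ records the data that used to be recorded by powers of $U$ and $V$.

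First I would set up the analogue of Subsection~\ref{subsec:FiberedProduct}: stretching the neck along $Z=\partial\Hup=\partial^\vee\Hmid$ identifies $\lsup{\Clg_2}\Dmod(\Hmid\#\Hup)$ (with a suitably stretched almost-complex structure) with a ``matched'' type $D$ structure $(C,\delta^1_{(0)})$ whose underlying module is the same, but whose operation counts pairs $(u_1,u_2)$ of holomorphic curves in $\Hmid$ and $\Hup$ matched along the neck circles, weighted by the outgoing algebra element $\bOut$ of the part of $u_1$ living over $\Zout$. The compactness statements needed here are Lemmas~\ref{lem:NoBoundaryDegenerations} and~\ref{lem:NoClosedCurves}, whose proofs go through: the compatibility of $\Mmid$ (restricted to $\Zout\cup Z$) with $\Matching(\Hup)$ is exactly the condition that forces $\beta$-boundary degenerations on the two sides to pair up, and then the hypothesis that $B$ has vanishing local multiplicity somewhere (automatic here, since $B$ contributes to a type $D$ operation over $\Clg_2$, hence cannot cover all of $\Sigma$) rules them out. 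The condition in Proposition~\ref{prop:ConstructDiagram}-style gluing that $W_1\cup W_2$ have no closed components is precisely Definition~\ref{def:CompatibleDA}.

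Next, exactly as in Subsections~\ref{subsec:SelfMatched}--\ref{subsec:Interpolate}, I would introduce self-matched curve pairs (curves in $\Hmid$ with even incoming orbits tied to long chords via $\Mmid$, odd incoming orbits free, and outgoing Reeb data matched to $\Hup$), form the intermediate complexes $(C,\delta^1_{(k)})$ for $k=0,\dots,$ (the number of orbit-equivalence classes), establish $\delta^1_{(k)}\circ\delta^1_{(k)}=0$ via Theorem~\ref{thm:DAEnds}, and build isomorphisms $\Phi_k\colon (C,\delta^1_{(k)})\to(C,\delta^1_{(k+1)})$ by the morphism-matched-curve count of Subsection~\ref{subsec:Interpolate}. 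The end-cancellation analysis of Lemma~\ref{lem:ChainMap} (the six types $(\flat$-$1)$ through $(\flat$-$6)$ for $\ell$ odd, and $(\flat'$-$1)$ through $(\flat'$-$6)$ for $\ell$ even) should apply unchanged; the only adaptation is that the special boundary degeneration cancellations of Figure~\ref{fig:SpecialCancellation} become ordinary boundary-degeneration-collision cancellations, since in a middle diagram there is no distinguished basepoint and all orbit classes are treated uniformly (this is the remark in the proof of Theorem~\ref{thm:DAEnds}). Finally I would run the time-dilation argument of Subsection~\ref{subsec:TimeDilation}: letting $T\to\infty$ in the $s$-matching between outgoing orbits of $u_1$ and chords/orbits of $u_2$, the curves converge to trimmed simple ideal partially self-matched curves (Definition~\ref{def:tsic}), whose main components $(u_1,u_2)$ have $u_1$ a boundary-monotone self-matched curve in $\Hmid$ and $u_2$ a height-$m$ building in $\Hup$; by Lemma~\ref{lem:NonZeroAlgElts} the chord packets carry nonzero algebra elements, so the count of these objects is exactly the count defining $\DAmod(\Hmid)\DT\Dmod(\Hup)$. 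Composing the chain of (quasi-)isomorphisms gives the result.

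The main obstacle I anticipate is not any single step but the correct accounting of the outgoing orbits in the deformation argument. In Theorem~\ref{thm:PairAwithD} the orbits on the lower side carried $U$ and $V$ powers subject to $UV=0$, which is what made the finiteness (Lemma~\ref{lem:FiniteSum}) and the cancellation of ``$V$-type'' ends work; here the outgoing orbits instead produce genuine algebra elements $\bOut$ in $\Clg_2$, so one must verify that (i) the sums defining $\delta^1_{(k)}$ and the morphisms are still finite — this should follow from the weight bounds in the proof of the finiteness lemma for $\DAmod(\Hmid)$, together with the grading constraint of Lemma~\ref{lem:CompatWithMgrDA} — and (ii) in the Gromov limits, boundary degenerations involving a pair of matched \emph{outgoing} orbits contribute exactly the curvature term $\mu_0^{\Clg_2}\otimes-$ rather than dropping out, and they pair correctly with the outgoing orbit-curve ends, as in the last paragraph of the proof of Proposition~\ref{prop:DAmid}. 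Once the orbit-marking bookkeeping (the functions $\sigma,\tau$ and the even/odd partition of Definition~\ref{def:OrbitMarkingDA}) is threaded consistently through the ``$T\to\infty$'' analysis, the grading statements follow formally from Lemmas~\ref{lem:MgrAndAgr} and~\ref{lem:CompatWithMgrDA} and the Mayer--Vietoris description of the grading on a tensor product.
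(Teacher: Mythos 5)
Your proposal follows the paper's own route essentially step for step: neck-stretch to a matched-curve type $D$ structure, deform the $s$-matching through a chain of intermediate complexes, then time-dilate. The cancellation analysis you cite (Theorem~\ref{thm:DAEnds} in place of Theorem~\ref{thm:AEnds}, ordinary boundary-degeneration collisions absorbing the roles of the special degenerations) is exactly what the paper does.

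Two small corrections are worth flagging. First, your initial compactness paragraph overstates the situation: in the $DA$ setting, $\beta$-boundary degenerations in the matched-curve compactification are \emph{not} completely ruled out by the hypothesis that $B$ has vanishing multiplicity somewhere. The index argument of Lemma~\ref{lem:NoBoundaryDegenerations2DA} only shows they are confined to the exceptional homology classes $B=\Brs$; in those classes they genuinely persist, occur with odd algebraic count, and produce precisely the curvature term $\mu_0^{\Clg_2}\otimes-$ in $\delta^1_{\natural}$. You recover this in your obstacle paragraph under point (ii), but the two statements as written are in tension; the first paragraph should be softened from ``rules them out'' to ``confines them to the degeneration classes, where they produce the curvature.'' Second, the argument via matched curves and intermediate complexes needs $m>1$; the case $m=1$ requires a separate treatment using the ``special matched pairs'' device from the $n=1$ discussion of Subsection~\ref{subsec:Nequals1} (one must allow closed sphere components bubbling off on the $\Hmid$-side, paired at the two matched orbits). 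Your proposal omits this, but it is a small technical supplement rather than a gap in the main strategy.
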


\subsection{Proof of the DA bimodule pairing theorem}

The proof of Theorem~\ref{thm:PairDAwithD} is very similar to the
proof of Theorem~\ref{thm:PairAwithD}. The key algebraic difference is
that in the present context, there is a curvature in the result; and
analytically, the curvature is produced by boundary
degenerations. We give the details presently.

Let $\Hup_2$ be an upper diagram with $2m$ outgoing boundary
components and $\Hmid$ be a middle diagram with $2m$ incoming boundary
components and $2n$ outgoing ones. Fix an identification between
$\Zout(\Hup_1)$ and $\Zin(\Hmid)$, and use this to form the upper
$\Hup=\Hmid\#\Hup_2$.

Definition~\ref{def:MatchingPair} has the following straightforward
generalization:

\begin{defn}
  Let $\Hup=\Hmid\#\Hup_2$.  States $\x_1\in\States(\Hmid)$,
  $\x_2\in\States(\Hup)$ are called {\em matching states} 
  if $\alphain(\x_1)$ is the complement of 
  $\alpha(\x_2)$ (i.e. $\Idown(\x_1)=\Iup(\x_2)$).
\end{defn}

There is a one-to-one correspondence between pairs of matching states
$\x_1$ and $\x_2$, and upper states for $\Hup$; and hence there is a
one-to-one correpsondence between generators of
$\DAmod(\Hmid)\DT\Dmod(\Hup_2)$ and $\Dmod(\Hup)$.

\begin{defn}
Suppose that $(\x_1,\y_1)$ and $(\x_2,\y_2)$ are matching states,
and $B_1\in \doms(\x_1,\y_1)$ and $B_2\in\doms(\x_2,\y_2)$.
We say that $B_1$ and $B_2$ are {\em matching domains}
if the local multiplicities of $B_2$
around each component of $\Zout_i(\Hup_2)$ coincide with
the local multiplicities around each component of 
$\Zin_i(\Hmid)$. 
\end{defn}

For $\x=\x_1\#\x_2$ and $\y=\y_1\#\y_2$, there is a one-to-one
correspondence between $B\in\doms(\x,\y)$ and matching domains
$B_1\in\doms(\x_1,\y_1)$ and $B_2\in\doms(\x_2,\x_2)$. In that case,
we write  $B=B_1\# B_2$.

Definition~\ref{def:MatchedPair} has the following generalization:

\begin{defn}
  \label{def:MatchedPairDA}
  Suppose $n>1$.
  Fix two pairs $(\x_1,\x_2)$ and $(\y_1,\y_2)$ of matching states,
  i.e. where $\x_1,\y_1\in\States(\Hmid)$,
  $\x_2,\y_2\in\States(\Hup_2)$, so that $\x=\x_1\#\x_2$ and
  $\y=\y_1\#\y_2$ are Heegaard states for $\HD$.  
  A {\em matched pair} consists of the following data
  \begin{itemize}
  \item a holomorphic curve $u_1$ in $\Hmid$ with source $\Source_1$ representing homology class $B_1\in\doms(\x_1,\y_1)$
  \item a holomorphic curve $u_2$ in $\Hup_2$ with source $\Source_2$ representing homology class $B_2\in\doms(\x_2,\y_2)$
  \item a bijection $\psi\colon \AllPunct(\Source_2)\to \AllPunctIn(\Source_1)$,
    where $\AllPunctIn(\Source_1)\subset\AllPunct(\Source_1)$ denotes the set
    of punctures on $\Source_1$ that are labelled by chords and orbits
    supported on $\Zin$,
  \end{itemize}
  with the following properties:
  \begin{itemize}
  \item For each $q\in \AllPunct(\Source_2)$ is marked with a Reeb orbit
    or chord, the corresponding puncture $\psi(q)\in \AllPunct(\Source_1)$
    is marked with the matching Reeb orbit or chord
    (in the sense of Definition~\ref{def:MatchingChords}).
  \item For each $q\in\AllPunct(\Source_2)$,
    \[ (s\circ u_1(\psi(q)),t\circ u_1(\psi(q)))=(s\circ u_2(q),t\circ u_2(q)).\]
  \end{itemize}
  If $(u_1,u_2)$ is a matched pair with homology class
  $B_1$ and $B_2$, then $B_1$ and $B_2$ are matching domains.
  Let $\ModMatched^B(\x_1,\y_1;\x_2,\y_2;\Source_1,\Source_2;\psi)$
  denote the modul space of matched pairs with shadow $B=B_1\# B_2$.
\end{defn}

We have the following analogue of Lemma~\ref{lem:BoundaryMonotone}:

\begin{lemma}
  \label{lem:BoundaryMonotoneDA}
  Let $(\x_1,\x_2)$ and $(\y_1,\y_2)$ be two pairs of matching states,
  and fix a matched pair $(u_1,u_2)$ connecting $\x_1\#\x_2$ to
  $\y_1\#\y_2$, with $u_1\in\ModFlow(\x_1,\y_1,\rhos_1,\dots,\rhos_m)$
  and $u_2\in\ModFlow(\x_2,\y_2,\rhos_1',\dots,\rhos_m')$ (where the
  chords in $\rhos_i$ all match, in the sense of
  Definition~\ref{def:MatchingChords} with chords in $\rhos_i'$).
  Then, both $u_1$ and
  $u_2$ are strongly boundary monotone; moreover,
  $\alpha(\x_1,\rhos_1,\dots,\rhos_\ell)$ is complementary to
  $\alpha(\x_2,\rhos_1',\dots,\rhos_\ell')$ for all $\ell=0,\dots,m$.
\end{lemma}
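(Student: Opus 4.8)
This is a bimodule version of Lemma~\ref{lem:BoundaryMonotone}, and the proof will be the same inductive argument, with the role of $\Hdown$ played by the middle diagram $\Hmid$ (restricted to its incoming boundary) and the role of $\Hup$ played by $\Hup_2$. The key point that makes the argument go through is that the matching condition in Definition~\ref{def:MatchedPairDA} pairs punctures on the incoming boundary of $\Hmid$ with all punctures of $\Hup_2$; so the only boundary components that enter are the $2m$ components of $\Zin(\Hmid)$, which are precisely the components of $\partial\Hup_2$, and the combinatorics of matching Reeb chords is identical to the one analyzed in Section~\ref{subsec:FiberedProduct}. In particular, Lemma~\ref{lem:SimpleLemma} applies verbatim to matching chords on these boundary components.

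First I would set up the induction exactly as in Lemma~\ref{lem:BoundaryMonotone}: prove by induction on $\ell$ that both $(\x_1,\rhos_1,\dots,\rhos_\ell)$ and $(\x_2,\rhos_1',\dots,\rhos_\ell')$ are strongly boundary monotone, and that $\alphain(\x_1,\rhos_1,\dots,\rhos_\ell)$ and $\alpha(\x_2,\rhos_1',\dots,\rhos_\ell')$ are disjoint (equivalently complementary, since the first has $m$ elements and the second $m-1$ among $\{1,\dots,2m-1\}$, once we know both are honest $\alpha$-sets). The base case $\ell=0$ is the hypothesis that $\x_1$ and $\x_2$ are matching states, i.e. $\IdempIn(\x_1)=\Iup(\x_2)$. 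For the inductive step, set $A_-=\alphain(\rhos_\ell^-)$, $A_+=\alphain(\rhos_\ell^+)$, $B_-=\alpha({\rhos_\ell'}^-)$, $B_+=\alpha({\rhos_\ell'}^+)$; weak boundary monotonicity gives $|A_-|=|\rhos_\ell^-|$, $|B_-|=|{\rhos_\ell'}^-|$; the inductive hypothesis gives $A_-$ and $B_-$ disjoint and contained in the respective terminal $\alpha$-sets. Lemma~\ref{lem:SimpleLemma} (with $\Sigma_1$ now the middle-diagram side restricted to $\Zin$, $\Sigma_2$ the $\Hup_2$ side) then upgrades disjointness of $A_-,B_-$ to disjointness of $A_+,B_+$, and the set-update formula $\alphain(\x_1,\rhos_1,\dots,\rhos_\ell)=(\alphain(\x_1,\rhos_1,\dots,\rhos_{\ell-1})\setminus A_-)\cup A_+$ shows the two new $\alpha$-sets remain disjoint. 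The cardinality count — showing $A_+\cap(\alphain(\x_1,\dots,\rhos_{\ell-1})\setminus A_-)=\emptyset$ using that the matching chord of any chord with $\rho_1^+\neq\rho_1^-$ satisfies $\alpha(\rho_1^+)=\alpha(\rho_2^-)$, which would then force a common element of the two $\alpha$-sets contradicting the inductive hypothesis — runs word for word as in Lemma~\ref{lem:BoundaryMonotone}, establishing that the cardinalities of both $\alpha$-sets are preserved and hence strong boundary monotonicity persists.

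I do not expect a genuine obstacle here; the statement is essentially a corollary of the already-proved Lemma~\ref{lem:BoundaryMonotone} together with the observation that the matching condition for $DA$ bimodules involves only the $\Zin$-punctures of $\Hmid$, which behave exactly like the boundary punctures of a lower diagram. The one bookkeeping point to be careful about is the asymmetry in cardinalities: a middle state restricted to its incoming $\alpha$-arcs carries $|\alphain(\x_1)|=m$ occupied arcs while an upper state for $\Hup_2$ carries $n$-type data but here, since $\Hup_2$ has $2m$ outgoing boundary components, its states have $m-1$ occupied arcs among $\{1,\dots,2m-1\}$ — so "complementary" means the $m$-element set $\alphain(\x_1)\subset\{1,\dots,2m-1\}$ and the $(m-1)$-element set $\alpha(\x_2)\subset\{1,\dots,2m-1\}$ are disjoint, which is automatic from $|\{1,\dots,2m-1\}|=2m-1$. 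I would simply remark that this is the only difference from the proof of Lemma~\ref{lem:BoundaryMonotone} and otherwise refer to that argument, rather than repeating the full induction.

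\begin{proof}
  This is proved exactly as Lemma~\ref{lem:BoundaryMonotone}. One proves by
  induction on $\ell$ that $(\x_1,\rhos_1,\dots,\rhos_\ell)$ and
  $(\x_2,\rhos_1',\dots,\rhos_\ell')$ are strongly boundary monotone and that
  $\alphain(\x_1,\rhos_1,\dots,\rhos_\ell)$ and
  $\alpha(\x_2,\rhos_1',\dots,\rhos_\ell')$ are disjoint. The base case
  $\ell=0$ is the hypothesis that $\x_1$ and $\x_2$ are matching states. For the
  inductive step, write $A_{\pm}=\alphain(\rhos_\ell^{\pm})$ and
  $B_{\pm}=\alpha({\rhos_\ell'}^{\pm})$; weak boundary monotonicity gives
  $|A_-|=|\rhos_\ell^-|$ and $|B_-|=|{\rhos_\ell'}^-|$, and the inductive
  hypothesis gives that $A_-$ and $B_-$ are disjoint and contained in the
  respective terminal $\alpha$-sets. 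Since the matching condition in
  Definition~\ref{def:MatchedPairDA} pairs the punctures of $\Source_2$ with the
  incoming punctures of $\Source_1$, all the chords involved lie on the
  $2m$ components of $\Zin(\Hmid)=\partial\Hup_2$, and
  Lemma~\ref{lem:SimpleLemma} applies, giving that $A_+$ and $B_+$ are
  disjoint. The argument of Lemma~\ref{lem:BoundaryMonotone} now shows that
  $\alphain(\x_1,\rhos_1,\dots,\rhos_\ell)$ and
  $\alpha(\x_2,\rhos_1',\dots,\rhos_\ell')$ are disjoint, and that
  $|\alphain(\x_1,\rhos_1,\dots,\rhos_\ell)|=|\alphain(\x_1,\rhos_1,\dots,\rhos_{\ell-1})|$
  and
  $|\alpha(\x_2,\rhos_1',\dots,\rhos_\ell')|=|\alpha(\x_2,\rhos_1',\dots,\rhos_{\ell-1}')|$,
  which is strong boundary monotonicity. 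The only difference from
  Lemma~\ref{lem:BoundaryMonotone} is the asymmetry of the two $\alpha$-sets
  ($m$ elements on the $\Hmid$ side and $m-1$ on the $\Hup_2$ side), under which
  disjoint is equivalent to complementary inside $\{1,\dots,2m-1\}$.
\end{proof}
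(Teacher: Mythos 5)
The paper states Lemma~\ref{lem:BoundaryMonotoneDA} without proof, so there is no author-supplied argument to compare against; your proposal fills in the gap, and does so in the natural way: it runs the same induction as Lemma~\ref{lem:BoundaryMonotone} and invokes Lemma~\ref{lem:SimpleLemma}, which applies verbatim because matched chords on $\Zin(\Hmid)$ vs.\ $\partial\Hup_2$ have exactly the combinatorics of matched chords on $\partial\Hdown$ vs.\ $\partial\Hup$. Your cardinality bookkeeping ($|\alphain(\x_1)|=m$, $|\alpha(\x_2)|=m-1$, so disjoint $\Leftrightarrow$ complementary in $\{1,\dots,2m-1\}$) is correct, and the base case of the induction is indeed the definition of matching states.

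One point worth making explicit rather than leaving implicit: strong boundary monotonicity for $u_1$, per Definition~\ref{def:StronglyBoundaryMonotone} and Condition~\ref{property:StronglyBoundaryMonotone}, concerns \emph{all} $\alpha$-arcs of the middle diagram, including the $\alphaout$-arcs, whereas your induction only controls the $\alphain$-portion of the terminal $\alpha$-set via the matching with $\Hup_2$. This is not a gap, because the lemma's hypothesis that the chords in $\rhos_i$ \emph{all} match with chords in $\rhos_i'$ forces every chord of $u_1$ to lie on $\Zin(\Hmid)$; hence the occupancy of the $\alphaout$-arcs is constant in time, and monotonicity there is automatic from $\x_1$ being a Heegaard state. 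A sentence recording this observation would make the proof self-contained, since a reader of the lemma's conclusion ``$u_1$ is strongly boundary monotone'' will expect the $\alphaout$-arcs to be addressed. With that addition your argument is complete and is surely what the authors intended.
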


Given a holomorphic curve $u_1$ for a middle diagram, let $\cin_1$
denote the number of chords in $u_1$ on $\Zin(\Hmid)$ $\oin_1$ denote
the number of orbits in $u_1$ on $\Zin(\Hmid)$, and $\win_1$ denote
the total weight of $u_1$ at $\Zin(\Hmid)$.  

If $(u_1,u_2)$ is a
matched pair of curves, if $c_2$, $o_2$, and $w_2$ denote the number
of chords, orbits, and total weight at $\Zout(\Hup_2)$ respectively,
then $\cin_1=c_2$, $\oin_1=o_2$, $\win_1=\weight_2$.
The space of matched pairs lies in a moduli space whose expected dimension is
given by
\begin{equation}
  \label{eq:DefIndMatchedPairDA}
  \ind(B_1,\Source_1;B_2,\Source_2)=
\ind(B_1,\Source_1)+\ind(B_2,\Source_2)-c_2-2o_2.
\end{equation}

\begin{defn}
The {\em embedded index} of a matched pair is defined by the formula
\begin{align}
    \label{eq:DefEmbInd}
    \ind(B_1;B_2)&=e(B_1)+n_{\x_1}(B_1)+n_{\y_1}(B_1)
    + e(B_2)+n_{\x_2}(B_2)+n_{\y_2}(B_2)-2\weight_{\Zin}(B_1) \nonumber \\
    &= e(B_1\natural B_2)+n_{\x}(B_1\natural B_2)+n_{\y}(B_1\natural B_2).
  \end{align}
\end{defn}

Lemma~\ref{lem:TransversalityDA} has the following analogue:

\begin{lemma} 
  \label{lem:TransversalityDA}
  Fix $B_1\in\doms(\x_1,\y_1)$ and $B_2\in\doms(\x_2,\y_2)$ so that
  $\weight_i(B_1)=\weight_i(B_2)$ for $i=1,\dots,2m$, 
  and the local multiplicity of $B_1$ vanishes somewhere.
  For generic admissible almost complex
  structures on $\Sigma_i\times [0,1]\times \R$, and
  $\ind(B_1,\Source_1;B_2,\Source_2)\leq 2$, the moduli space of
  matched pairs
  \[ \ModMatched^B(\x_1,\y_1;\x_2,\y_2;\Source_1,\Source_2;\psi)\] is
  transversely cut out by the $\dbar$-equation and the evaluation map;
  in particular, this moduli space is a manifold whose dimension is
  given by Equation~\eqref{eq:DefIndMatchedPairDA}.  
\end{lemma}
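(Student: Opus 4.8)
The plan is to mirror, essentially verbatim, the proof of Lemma~\ref{lem:Transversality}, which was ``essentially'' the content of \cite[Lemma~9.4]{InvPair}, so the only real work is checking that the two features distinguishing the middle-diagram setting from the gluing-of-two-upper-lower-diagrams setting do not affect that argument. First I would recall that, by the definition of a matched pair (Definition~\ref{def:MatchedPairDA}), the moduli space $\ModMatched^B(\x_1,\y_1;\x_2,\y_2;\Source_1,\Source_2;\psi)$ is the fibered product
\[
\ModFlow^{B_1}(\x_1,\y_1;\Source_1)\times_{\ev}\ModFlow^{B_2}(\x_2,\y_2;\Source_2)
\]
over the evaluation maps at the matched punctures, where $\ev_1$ records the $(s,t)$-values at the punctures in $\AllPunctIn(\Source_1)$ and $\ev_2$ at the punctures of $\AllPunct(\Source_2)$, landing in $([0,1]\times\R)^k$ with $k=c_2+o_2$. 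The hypothesis that the local multiplicity of $B_1$ vanishes somewhere plays the same role here that the hypothesis ``$n_{\wpt}(B_1)=0$ or $n_{\zpt}(B_1)=0$'' played in Lemma~\ref{lem:Transversality}: it rules out the possibility that one of the two curves is forced to cover all of its Heegaard surface, so that both sides genuinely vary and the standard genericity perturbations are available.

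Next I would carry out the transversality argument in three parts. First, for generic admissible $J$ on each $\Sigma_i\times[0,1]\times\R$, the individual moduli spaces $\ModFlow^{B_i}(\x_i,\y_i;\Source_i)$ are transversely cut out by $\dbar$ (as in Theorem~\ref{thm:GeneralPositionA}, relying on \cite[Proposition~5.6]{InvPair}); the key point, already used in the proof of Proposition~\ref{prop:DAmid}, is that the homological hypothesis on $B_1$ prevents $\alpha$-boundary degenerations and keeps the somewhere-injectivity arguments valid. Second, the product evaluation maps $\ev_i\colon \ModFlow^{B_i}(\x_i,\y_i;\Source_i)\to ([0,1]\times\R)^k$ are submersions, since one can always translate a curve in the $\R$-direction and, as in \cite[Lemma~9.4]{InvPair}, perturb the complex structures near the punctures to make the evaluations independent (here I must take care that the matched punctures lie only on $\Zin(\Hmid)$, but this merely restricts which punctures of $\Source_1$ participate and does not obstruct the submersivity). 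Third, the fibered product over the diagonal $\Delta\subset([0,1]\times\R)^k\times([0,1]\times\R)^k$ is then transverse, so $\ModMatched^B$ is a smooth manifold whose dimension is $\ind(B_1,\Source_1)+\ind(B_2,\Source_2)-2k = \ind(B_1,\Source_1)+\ind(B_2,\Source_2)-c_2-2o_2$, which is exactly Equation~\eqref{eq:DefIndMatchedPairDA}.

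The two points to watch, which I expect to be the main (if mild) obstacle, are the asymmetry of the matching and the role of boundary degenerations. Unlike the upper/lower case, here the punctures of $\Source_1$ split into those on $\Zin$ (which get matched with $\Source_2$) and those on $\Zout$ (which are free, type-$D$-style asymptotics); I would note that the free $\Zout$-punctures simply contribute their own independent evaluations, which is harmless for transversality and is already accounted for in the index formula. Regarding boundary degenerations: in the present setting there is no basepoint on $\Zin(\Hmid)$, so $\beta$-boundary degenerations there are not automatically excluded the way they are in Lemma~\ref{lem:NoBoundaryDegenerations}; however, for the transversality statement at index $\le 2$ this is not needed, since (as in Proposition~\ref{prop:SmoothBoundaryDeg} and the index count in Proposition~\ref{prop:DAmid}) boundary degenerations carry codimension at least one and, when they occur, are simple and somewhere-injective near the boundary, so they too are transversely cut out for generic $J$ and do not destabilize the $\ind\le 2$ strata. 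I would finish by invoking the hypothesis $\weight_i(B_1)=\weight_i(B_2)$ for $i=1,\dots,2m$ to guarantee that the matching data $\psi$ is compatible with the local multiplicities (so the fibered product is nonempty in a consistent homology class $B=B_1\#B_2$), completing the proof exactly as in \cite[Lemma~9.4]{InvPair}.
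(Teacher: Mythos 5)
Your proposal is correct and takes essentially the same approach as the paper, which gives no separate proof here but simply states the lemma as the middle-diagram analogue of Lemma~\ref{lem:Transversality}, itself proved by citing \cite[Lemma~9.4]{InvPair}; your spelled-out version of that fibered-product transversality argument is the intended one. Your closing remark about boundary degenerations is harmless but not needed: they concern compactness of the moduli space rather than its transversality, which is what the lemma asserts.
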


Proposition~\ref{prop:EmbeddedModuliSpaces} has the following analogue:
\begin{prop}
  \label{prop:EmbeddedModuliSpacesDA}
  Fix $\x=\x_1\#\x_2$ and $\y=\y_1\#\y_2$, and decompose
  $B\in\doms(\x,\y)$ as $B=B_1\natural B_2$, with
  $B_i\in\doms(\x_i,\y_i)$.  Fix source curves $\Source_1$ and
  $\Source_2$ together with a one-to-one correspondence $\psi\colon
  \AllPunct(\Source_2)\to\AllPunct(\Source_1)$ which is consistent with the chord
  and orbit labels, so we can form $\Source=\Source_1\natural_\psi\Source_2$.
  Suppose that $\ModFlow^B(\x,\y;\Source)$ 
  (i.e. the moduli space for curves in $\HD$) and
  $\ModFlow^{B_i}(\x_i,\y_i;\Source_i)$
  (which are moduli spaces for curves in $\HD_i$) 
  are non-empty for $i=1,2$.
  Then,
  $\ind(B_1,\Source_1;B_2,\Source_2)\leq \ind(B)$
  if and only if
  $\chi(\Source_i)=\chiEmb(B_i)$ for $i=1,2$; 
  and all the chords in $\Source_1$ have weight $1/2$ and all 
  the orbits in $\Source_1$ have weight $1$.
\end{prop}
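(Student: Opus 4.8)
\textbf{Proof plan for Proposition~\ref{prop:EmbeddedModuliSpacesDA}.}
The plan is to follow verbatim the structure of the proof of Proposition~\ref{prop:EmbeddedModuliSpaces}, with only the bookkeeping adapted to the middle-diagram set-up. First I would recall the two basic additivity identities. Writing $g$ for the genus of $\Hup=\Hmid\#\Hup_2$, I have $g=g_1+g_2+2m-1$, and hence, arguing exactly as for Equation~\eqref{eq:EulerMeasures}, the Euler measures combine as $e(B_1\natural B_2)=e(B_1)+e(B_2)-2\weight_{\Zin}(B_1)$, while the Euler characteristics combine as in Equation~\eqref{eq:EulerCharacteristics}, namely $\chi(\Source_1\natural_\psi\Source_2)=\chi(\Source_1)+\chi(\Source_2)-2\oin_1-\cin_1$, where $\cin_1=c_2$ and $\oin_1=o_2$ count the chords and orbits of $\Source_1$ supported on $\Zin$ (these are precisely the punctures matched by $\psi$ with punctures of $\Source_2$). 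Combining these with the definition $\chiEmb(B)=g+e(B)-n_\x(B)-n_\y(B)$ gives the key identity
\[
\chi(\Source)-\chiEmb(B)=\bigl(\chi(\Source_1)-\chiEmb(B_1)\bigr)+\bigl(\chi(\Source_2)-\chiEmb(B_2)\bigr)+2\win_1-\cin_1-2\oin_1,
\]
exactly as in the pairing theorem for type $A$ and type $D$ structures.

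Next I would observe that $2\win_1-\cin_1-2\oin_1\geq 0$, with equality precisely when every chord of $\Source_1$ on $\Zin$ has weight $1/2$ and every orbit of $\Source_1$ on $\Zin$ has weight $1$; this is the same elementary inequality used in Proposition~\ref{prop:EmbeddedModuliSpaces}, noting that in a weakly boundary monotone configuration the weight of a chord is a half-integer, so each chord contributes at least $1/2$ and each orbit at least $1$. The hypothesis that $\ModFlow^{B}(\x,\y;\Source)\neq\emptyset$ together with Proposition~\ref{prop:ExpectedDimension} forces $\chi(\Source)\geq\chiEmb(B)$ with equality iff the curve is embedded; likewise Proposition~\ref{prop:ExpectedDimension} (applied to $\Hup_2$) and Proposition~\ref{prop:ExpectedDimensionA} (applied to $\Hmid$, using that a middle diagram is treated like a lower diagram for these index computations) give $\chi(\Source_i)\geq\chiEmb(B_i)$ for $i=1,2$, with equality iff the respective curves are embedded. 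Since all three quantities $\chi(\Source)-\chiEmb(B)$, $\chi(\Source_1)-\chiEmb(B_1)$, $\chi(\Source_2)-\chiEmb(B_2)$ are non-negative, and $2\win_1-\cin_1-2\oin_1\geq 0$, the displayed identity shows that the left side vanishes iff all four summands on the right vanish. This yields simultaneously that $\chi(\Source)=\chiEmb(B)$ iff $\chi(\Source_i)=\chiEmb(B_i)$ for $i=1,2$, and that in this case all chords on $\Zin$ of $\Source_1$ have weight $1/2$ and all orbits on $\Zin$ of $\Source_1$ have weight $1$.

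Finally, I would translate the conclusion into the form stated: since $\chi(\Source)=\chiEmb(B)$ exactly when $u$ is embedded, and since, using Equation~\eqref{eq:DefIndMatchedPairDA} together with the identity $\ind(B_1\natural B_2,\Source_1\natural_\psi\Source_2)=\ind(B_1,\Source_1;B_2,\Source_2)$ (which follows from the two additivity formulas above just as in the discussion preceding Proposition~\ref{prop:EmbeddedModuliSpaces}), the condition $\ind(B_1,\Source_1;B_2,\Source_2)\leq\ind(B)$ is equivalent to $\chi(\Source_1\natural_\psi\Source_2)=\chiEmb(B)$, hence to $\chi(\Source_i)=\chiEmb(B_i)$ for $i=1,2$. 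I do not expect any serious obstacle here; the only point requiring a little care is the correct genus count $g=g_1+g_2+2m-1$ for gluing a middle diagram onto an upper diagram (rather than $2n-1$), and the corresponding fact that it is exactly the $\Zin$-weight $\win_1$, not the full boundary weight, that enters the Euler-measure additivity — this is because only the $\Zin$-punctures of $\Source_1$ are glued, the $\Zout$-punctures of $\Source_1$ remaining free. Once that bookkeeping is pinned down, the argument is a line-by-line transcription of the proof of Proposition~\ref{prop:EmbeddedModuliSpaces}.
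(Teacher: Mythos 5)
Your Euler-characteristic bookkeeping is correct (the genus count $g=g_1+g_2+2m-1$, the additivity of $e$ with $2\win_1$ subtracted, the additivity of $\chi$ with $2\oin_1+\cin_1$ subtracted), but the final translation back to the stated condition does not work, and a piece of the conclusion is simply never produced. The proposition asserts that the index inequality $\ind(B_1,\Source_1;B_2,\Source_2)\leq\ind(B)$ is equivalent to: $\chi(\Source_i)=\chiEmb(B_i)$ for $i=1,2$, \emph{and} all chords of $\Source_1$ have weight $1/2$, \emph{and} all orbits of $\Source_1$ have weight $1$ --- where ``all'' means all of them, including those on $\Zout(\Hmid)$, which are not glued. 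You assert that the index inequality ``is equivalent to $\chi(\Source_1\natural_\psi\Source_2)=\chiEmb(B)$''; that equivalence is false, and your conclusion only addresses the $\Zin$-punctures of $\Source_1$.

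The missing ingredient is the second half of the index comparison. One has the index additivity $\ind(B_1,\Source_1;B_2,\Source_2)=\ind(B,\Source)$, and then, writing $o_1^{\mathrm{out}}$, $c_1^{\mathrm{out}}$, $\weight_1^{\mathrm{out}}$ for the number of orbits, chords, and the total weight of $\Source_1$ at $\Zout(\Hmid)$ (the free punctures of the glued source),
\[
\ind(B_1,\Source_1;B_2,\Source_2)-\ind(B)=\bigl(\chiEmb(B)-\chi(\Source)\bigr)+\bigl(2o_1^{\mathrm{out}}+c_1^{\mathrm{out}}-2\weight_1^{\mathrm{out}}\bigr).
\]
Both brackets are $\leq 0$: the first by Proposition~\ref{prop:ExpectedDimension} applied to the non-empty $\ModFlow^B(\x,\y;\Source)$, the second because each chord has weight $\geq 1/2$ and each orbit weight $\geq 1$. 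Plugging your additivity identity for $\chi(\Source)-\chiEmb(B)$ into the first bracket expresses $\ind(B_1,\Source_1;B_2,\Source_2)-\ind(B)$ as a sum of non-positive terms, one for each of: $\chi(\Source_1)-\chiEmb(B_1)$, $\chi(\Source_2)-\chiEmb(B_2)$, the $\Zin$-weight deficit, and the $\Zout$-weight deficit. Vanishing of \emph{all} of these is the right-hand side of the proposition, including the $\Zout$-condition you omit. (You should also notice that, once this bookkeeping is done, the inequality $\ind(B_1,\Source_1;B_2,\Source_2)\leq\ind(B)$ holds automatically under the stated non-emptiness hypotheses; so the proposition is only non-vacuous if the displayed ``$\leq$'' is read as ``$=$''. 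Your argument, which treats the index inequality as equivalent to $\chi(\Source)=\chiEmb(B)$, papers over both this reading issue and the $\Zout$-term.)
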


In the present case, the Gromov compactification can contain
$\beta$-boundary degenerations. Nonetheless, excluding the
$\alpha$-boundary degenerations and closed curve components work as
before, as in the following. Compare Lemmas~\ref{lem:NoBoundaryDegenerations}
and~\ref{lem:NoClosedCurves} respectively: 

\begin{lemma}
  \label{lem:NoBoundaryDegenerationsDA}
  Fix $B_1\in\doms(\x_1,\y_1)$ and $B_2\in\doms(\x_2,\y_2)$ with so
  that 
  \begin{itemize}
    \item the local multiplicities of $B_1$ along $\Zin(\HD_1)$ agree
      with the local multiplicities of $\Zout(\HD_2)$ 
      \item $B_1$ has
      vanishing local multiplicity somewhere.
      \end{itemize}
      Then, curves in the Gromov
  compactification of
  $\ModMatched^B(\x_1,\y_1;\x_2,\y_2;\Source_1,\Source_2)$ contain no
  $\alpha$-boundary degenerations.
\end{lemma}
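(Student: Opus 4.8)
The plan is to mimic the proof of Lemma~\ref{lem:NoBoundaryDegenerations}, noting that the role of the two basepoints $\wpt,\zpt$ there is now played, on the middle-diagram side, by the out-going boundary components of $\Hmid$. Concretely, suppose $({\overline u}_1,{\overline u}_2)$ is a matched comb in the Gromov compactification, with ${\overline u}_1$ in $\Hmid$ and ${\overline u}_2$ in $\Hup_2$, and suppose ${\overline u}_1$ contains an $\alpha$-boundary degeneration component. Such a component projects to $(1,\tau)$ for some $\tau\in\R$, and it covers some boundary component of $\Sigma_1$; in fact (following the chord/weight bookkeeping in the proof of Lemma~\ref{lem:NoBoundaryDegenerations}) its main component $u_1$ then has a chord packet $\rhos$ at $(s,t)=(1,\tau)$ with $\weight_i(\rhos)\geq 1$ for every $i$ ranging over all the boundary components of $\Sigma_1$ — both $\Zin$ and $\Zout$. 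The total number of such boundary components is $2m+2n$.

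First I would observe that the matching condition (Definition~\ref{def:MatchedPairDA}), together with the fact that curves at east infinity have equal total weight at their east and west ends, forces the weights of $\rhos$ along $\Zin(\Hmid)$ to be reflected in a corresponding chord packet on $u_2$ at the same $t$-level; but this is harmless. The real point is the $\Zout$ side: the east punctures of $u_1$ on $\Zout(\Hmid)$ are not constrained by any matching, but boundary monotonicity for $u_1$ (which applies because $u_1$ is the main component of a limit of strongly boundary monotone curves, by Lemma~\ref{lem:StrongMonotoneClosed} and Lemma~\ref{lem:BoundaryMonotoneDA}) says that $\rhos^-$ contains at most $d=g+m+n-1$ points, hence in particular at most $m+n$ points — whereas having $\weight_i(\rhos)\geq 1$ at all $2m+2n$ boundary components of $\Sigma_1$ would force $\rhos^-$ to contain at least $m+n$ points lying on distinct $\alpha$-arcs on the $\Zin$ side together with further points on the $\Zout$ side, contradicting boundary monotonicity. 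This is the same mechanism as the last paragraph of the proof of Lemma~\ref{lem:NoBoundaryDegenerations}.

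Next I would handle closed components exactly as in Lemma~\ref{lem:NoClosedCurves} (using $n>1$ implicitly, or more precisely using the dimension count there): a homologically nontrivial closed component on the $\Hup_2$ side forces the matched pair into a moduli space of expected dimension $\le \ind(B_1\natural B_2)+2-2(m+n)k$ for some $k\geq 1$ (the class $[\Sigma_2]$ contributes $e+n_{\x_2}+n_{\y_2}=2(g_2)\,$-type bookkeeping giving $2(m+n)$ here, since $\Sigma_2$ has $2m$ punctures), which is negative under our hypothesis $\ind(B_1\natural B_2)\leq 2$; ghost components are ruled out by the dimension formula as in~\cite[Lemma~5.57]{InvPair}. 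Homologically nontrivial closed components on the $\Hmid$ side are excluded directly by the hypothesis that $B_1$ has vanishing local multiplicity somewhere. I should be careful here that the correct multiplicity of the relevant elementary region of $\Sigma_2$ (the one containing $\Zout_j,\Zout_k$ of $\Hup_2$) enters — but this is exactly the computation $e([\Sigma_2])+n_{\x_2}([\Sigma_2])+n_{\y_2}([\Sigma_2])=2(m+n)$ adapted from the displayed equation in the proof of Lemma~\ref{lem:NoClosedCurves}.

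The main obstacle I anticipate is purely bookkeeping: getting the weight/puncture counts right on the $\Zout(\Hmid)$ side, since unlike in the pairing theorem of Section~\ref{sec:Pairing} there is genuinely no matching constraint or basepoint there to "absorb" a boundary degeneration, and one must instead derive the contradiction purely from the strong boundary monotonicity bound $|\rhos^-|\leq d$. Once that inequality is set up correctly the argument is a line-by-line transcription of Lemmas~\ref{lem:NoBoundaryDegenerations} and~\ref{lem:NoClosedCurves}, with "$\wpt$ or $\zpt$" replaced by "some component of $\Zout(\Hmid)$" and "$n>1$" replaced by "$m+n>1$" (which holds since $n\geq 1$ and $m\geq 1$), so I would state the proof concisely by reference to those two lemmas and spell out only the $\Zout$-side weight count in detail.
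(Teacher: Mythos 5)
There are several genuine problems with this proposal.

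\textbf{The numerology is wrong and the deduction is invalid.} You assert that strong boundary monotonicity bounds $|\rhos^-|$ by $d=g+m+n-1$ ``hence in particular at most $m+n$.'' This inference fails whenever $g\geq 1$, since then $g+m+n-1\geq m+n$. More fundamentally, $d$ is the wrong quantity: the bound from boundary monotonicity is $|\rhos^-|=|A_-|\leq|\alpha(\x,\rhos_1,\dots,\rhos_{\ell-1})|$, and for a middle Heegaard state with $|\alphain(\x)|=m$, Equation~\eqref{eq:alpaInOut} gives $|\alphain(\x)|+|\alphaout(\x)|=m+n-1$ (the $g$ $\alpha$-circles contribute nothing, since chord endpoints lie only on arcs). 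So the correct upper bound is $m+n-1$, not $d$. Your lower bound is likewise off: $\weight_i(\rhos)\geq 1$ on all $2m+2n$ boundary circles forces at least one chord per circle, and since within a packet the initial points lie on distinct arcs, $|\rhos^-|\geq 2m+2n$, not merely $\geq m+n$. The contradiction should be $2m+2n\leq m+n-1$, which is the clean analogue of $2n\leq n$ in Lemma~\ref{lem:NoBoundaryDegenerations}; as written your stated bounds $m+n\leq |\rhos^-|\leq m+n$ do not even conflict.

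\textbf{The chord packet lands on the wrong curve.} When the $\alpha$-boundary degeneration is a component of $\overline u_1$, it is attached to the main component $u_1$ along the $\alpha$-boundary at $t=\tau$, not through East punctures; so $u_1$ does \emph{not} inherit a chord packet at $(1,\tau)$ with $\weight_i\geq 1$ everywhere. What does happen (exactly as in the last paragraph of the proof of Lemma~\ref{lem:NoBoundaryDegenerations}) is that the degeneration's East punctures on $\Zin$ are matched, via $\psi$, with East punctures of $\overline u_2$, so $\overline u_2$ acquires punctures at $(1,\tau)$ covering all $2m$ circles; transferring weight through East infinity curves, it is $u_2$, an upper-diagram flowline with $|\alpha(\x_2)|=m-1$ occupied arcs, that gets a chord packet with $\geq 2m$ initial points on distinct arcs. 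The contradiction $2m>m-1$ is then clean. You have the constraint on the wrong side of the matching.

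\textbf{You omit the $\overline u_2$ case.} The Gromov limit of a matched comb can carry the $\alpha$-boundary degeneration on the $\Hup_2$ side, and your proposal only discusses $\overline u_1$. For a degeneration in $\overline u_2$, the matching gives $u_1$ a chord packet at $(1,\tau)$ with $\weight_{\Zin_i}\geq 1$ for $i=1,\dots,2m$; since those chords all have initial points on $\alphain$-arcs and there are only $2m-1$ such arcs, $|\rhos^-|\geq 2m>2m-1$, a contradiction.

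Finally, the last two paragraphs about closed components and ghost bubbles belong to the \emph{next} lemma (Lemma~\ref{lem:NoClosedCurvesDA}), not this one; the statement you are asked to prove concerns only $\alpha$-boundary degenerations. The overall strategy (imitate Lemma~\ref{lem:NoBoundaryDegenerations}) is certainly what the paper intends, but the bookkeeping you flag as ``the main obstacle'' is exactly where the argument as you have written it breaks down.
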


\begin{lemma}
  \label{lem:NoClosedCurvesDA}
  Suppose that $m>1$.  Fix matching domains $B_1\in\doms(\x_1,\y_1)$
  and $B_2\in\doms(\x_2,\y_2)$, so that $B_1$ has a vanishing local multiplicity
  somewhere, and so that
  $\ind(B_1\natural B_2)\leq 2$.  Curves in the Gromov
  compactification of
  $\ModMatched^B(\x_1,\y_1;\x_2,\y_2;\Source_1,\Source_2)$ contain no
  closed components.
\end{lemma}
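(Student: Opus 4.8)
The statement to prove is Lemma~\ref{lem:NoClosedCurvesDA}: closed components cannot appear in the Gromov compactification of $\ModMatched^B(\x_1,\y_1;\x_2,\y_2;\Source_1,\Source_2)$ when $m>1$, under the stated hypotheses.

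The plan is to follow the proof of Lemma~\ref{lem:NoClosedCurves} essentially verbatim, with the bookkeeping adjusted to the $DA$ setting. First I would dispose of homologically nontrivial closed components on the $\Hmid$-side: since $B_1$ has a vanishing local multiplicity somewhere, a closed component there would force a positive multiplicity everywhere (a closed surface covers all of $\Sigma_1$), a contradiction. Next, suppose a nontrivial closed component of multiplicity $k>0$ occurs on the $\Hup_2$-side. Write $(u_1',u_2')$ for the main components of the limit, with $u_1'\in\ModFlow^{B_1}(\x_1,\y_1;\Source_1')$ and $u_2'\in\ModFlow^{B_2-k[\Sigma_2]}(\x_2,\y_2;\Source_2')$. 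Because there are no $\alpha$-boundary degenerations (Lemma~\ref{lem:NoBoundaryDegenerationsDA}) and no $\beta$-boundary degenerations can interact with the closed component removal, we have $B_1'=B_1$ and in particular $\weight_1'=\weight_{\Zin}(B_1)$; the matching condition gives $c_1'=c_2'$ (where I count chords on $\Zin$), and if $\delta$ is the number of interior punctures in $\Source_1'$ matched with the removed closed component, then $o_1'=o_2'+\delta$ (again counting $\Zin$-punctures). Following the index computation in Lemma~\ref{lem:NoClosedCurves}, I would show $(u_1',u_2')$ lies in a moduli space whose expected dimension satisfies
\[ \ind(u_1',u_2')\leq \ind(B_1\natural B_2)+2-2mk. \]
The key input is the topological identity
\[ e([\Sigma_2])+n_{\x_2}([\Sigma_2])+n_{\y_2}([\Sigma_2])=2-2g_2+2(g_2+m+n-1)-? \]
— here I must be careful: for a middle-to-upper gluing the relevant count of $\beta$-circles for $\Hup_2$ is $g_2+m-1$ (it is an upper diagram with $2m$ boundary components), so $e([\Sigma_2])+n_{\x_2}([\Sigma_2])+n_{\y_2}([\Sigma_2]) = 2-2g_2+2(g_2+m-1)=2m$. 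Plugging this in, together with $[B_1']=[B_1]$ and the obvious inequality $-2\weight_2'+2o_2'+c_2'\leq 0$, yields $\ind(u_1',u_2')-\ind(B_1\natural B_2)\leq -2mk+2$, as claimed.

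Now the hypothesis $\ind(B_1\natural B_2)\leq 2$ forces $(u_1',u_2')$ to live in a moduli space $\mathcal M$ with $\dim(\mathcal M)\leq 4-2mk$. This moduli space carries a free $\R$-action, since $u_1'$ contains orbits on $\Zin$ (the removed closed component was matched with interior punctures of $\Source_1'$) and hence cannot be constant; so after dividing by $\R$ the dimension is $\leq 3-2mk$. Since $m>1$ and $k\geq 1$, this is negative, so the moduli space is empty. Finally, homologically trivial ghost components are ruled out by the dimension formula exactly as in~\cite[Lemma~5.57]{InvPair} and Lemma~\ref{lem:NoClosedCurves}. This completes the proof.

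The main obstacle I anticipate is purely bookkeeping: getting the Euler-characteristic/multiplicity identity for $[\Sigma_2]$ right with the correct count of $\beta$-circles ($g_2+m-1$ rather than $g_2+m+n-1$, since $\Hup_2$ has only $2m$ boundary components and no outgoing ones), and correctly accounting for which punctures are on $\Zin(\Hmid)$ versus on the glued-up internal boundary so that the matching-induced equalities $c_1'=c_2'$, $o_1'=o_2'+\delta$, $\weight_1'=\weight_{\Zin}$ are stated for the right sets of punctures. Once those conventions are pinned down, the dimension-counting argument is identical to that of Lemma~\ref{lem:NoClosedCurves}, and the factor $2mk$ (with $m$ in place of the earlier $n$) is exactly what makes the argument go through for $m>1$.
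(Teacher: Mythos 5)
Your proof is correct and is exactly the adaptation the paper intends: the paper states Lemma~\ref{lem:NoClosedCurvesDA} without proof, remarking only that it ``works as before'' by analogy with Lemma~\ref{lem:NoClosedCurves}. You correctly identified the one nontrivial bookkeeping point — that $\Hup_2$ is an upper diagram with $2m$ boundary components and hence has $g_2+m-1$ $\beta$-circles, giving $e([\Sigma_2])+n_{\x_2}([\Sigma_2])+n_{\y_2}([\Sigma_2])=2m$ and the bound $\ind(u_1',u_2')\leq\ind(B_1\natural B_2)+2-2mk$ — which is precisely why the hypothesis $m>1$ (rather than $n>1$) is what the $DA$ version needs. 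The remaining steps (ruling out closed components on the $\Hmid$ side via the vanishing local multiplicity, freeness of the $\R$-action because the matched orbits force $u_1'$ to be nonconstant, and dismissing ghost components via the dimension formula) transfer verbatim, as you say.

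One small point worth tightening if you wrote this up: the parenthetical ``no $\beta$-boundary degenerations can interact with the closed component removal'' deserves a word of justification, since unlike the $A$-$D$ pairing the $DA$ setting does permit $\beta$-boundary degenerations (Lemma~\ref{lem:NoBoundaryDegenerations2DA}). The cleanest way to say it is by codimension: a simple $\beta$-boundary degeneration is codimension $1$, a closed bubble of multiplicity $k\geq 1$ is codimension $\geq 2mk-2\geq 2$, so in a moduli space with $\ind\leq 2$ they cannot both occur, and the argument proceeds with $B_1'=B_1$ as you state.
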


\begin{lemma}
  \label{lem:NoBoundaryDegenerations2DA}
  Fix matching domains
  $B_1\in\doms(\x_1,\y_1)$ and $B_2\in\doms(\x_2,\y_2)$ so that
  $w_i(B_1)=w_i(B_2)$ so that
  $\ind(B_1\natural B_2)\leq 2$.  Then, curves in the Gromov
  compactification of
  $\ModMatched^B(\x_1,\y_1;\x_2,\y_2;\Source_1,\Source_2)$ contain no
  $\beta$-boundary degenerations, except in the special case where
  $B=\Bjk$ for some matched pair $j$ and $k$.
\end{lemma}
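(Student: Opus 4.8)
The plan is to follow the proof of Lemma~\ref{lem:NoBoundaryDegenerations}, the essential difference being that in the present bimodule setting the middle diagram $\Hmid$ carries no basepoints $\wpt,\zpt$, so a $\beta$-boundary degeneration cannot be excluded outright; instead the combinatorics of the matching force it to live in the single homology class $B=\Brs$, which is exactly the configuration that produces the curvature $\mu_0^{\Clg_2}$ in Theorem~\ref{thm:PairDAwithD}. First I would dispose of the other degenerate pieces: by Lemma~\ref{lem:NoBoundaryDegenerationsDA} the limit contains no $\alpha$-boundary degenerations, and by Lemma~\ref{lem:NoClosedCurvesDA} it contains no closed components, so the only remaining possibility is a $\beta$-boundary degeneration, which (as in those lemmas) must occur either on the $\Hmid$-side, in $\overline u_1$, or on the $\Hup_2$-side, in $\overline u_2$.

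The heart of the argument is a cascade along the matching, exactly as in Lemma~\ref{lem:NoBoundaryDegenerations}. Suppose $(\overline u_1,\overline u_2)$ is a comb in the Gromov limit and some component of $\overline u_1$ or $\overline u_2$ has a puncture marked by a Reeb orbit covering a boundary component with $s$-coordinate $0$, at height $t=\tau$. I claim that, for every boundary component $Z$ of $\Hmid$ in the $(\Mmid\cup\MatchIn)$-equivalence class of that component, there are matched orbit punctures over $(0,\tau)$: on both sides when $Z$ is an incoming component of $\Hmid$, and on $\overline u_1$ alone when $Z$ is outgoing. This is proved by induction along the equivalence class using the three observations from Lemma~\ref{lem:NoBoundaryDegenerations}: (i) the matching of Definition~\ref{def:MatchedPairDA} pairs each $\Zin$-puncture of $\Source_1$ with a puncture of $\Source_2$ with the same $(s,t)$-value and matching label; (ii) a $\beta$-boundary degeneration of the middle diagram $\Hmid$ has shadow supported on the $\Brs$ with $\{r,s\}\in\Mmid$, so an orbit puncture over $(0,\tau)$ forces one on its $\Mmid$-partner; and (iii) a $\beta$-boundary degeneration of the upper diagram $\Hup_2$ has shadow supported on the components indexed by $\MatchIn=\Matching(\Hup_2)$, so it forces orbit punctures in $\MatchIn$-pairs. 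The induction terminates because $\Mmid\cup\MatchIn$ is compatible, i.e. every equivalence class contains an outgoing component of $\Hmid$ (the compatibility hypothesis of Theorem~\ref{thm:PairDAwithD}), so the chain of forced orbit punctures cannot grow past the outgoing components. It follows that the combined shadow of the boundary-degeneration pieces over $(0,\tau)$ contains a full copy of the component $\Brs$ of the filled surface $\overline\Sigma(\Hup)$ minus the $\betas$-circles indexed by that equivalence class.

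To finish I would run the dimension count of the proofs of Lemma~\ref{lem:NoClosedCurvesDA} and Proposition~\ref{prop:EmbeddedModuliSpacesDA}. The embedded index $\ind(B)=e(B)+n_\x(B)+n_\y(B)$ is linear in the shadow, and for each component $\mathcal D$ of $\overline\Sigma(\Hup)\setminus\betas$ one has $e(\mathcal D)+n_\x(\mathcal D)+n_\y(\mathcal D)=2$; hence if $N$ denotes the total number of such component-copies appearing in the shadows of the boundary-degeneration pieces (over all heights $\tau$ and all equivalence classes), then $\ind(B_1\natural B_2)=\ind(B_1'\natural B_2')+2N$, where $B_i'$ is the shadow of the main component $u_i'$. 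Since $\ind(B_1\natural B_2)\le 2$, we get $N=1$ and $\ind(B_1'\natural B_2')\le 0$. A matched pair that is not a pair of trivial strips carries a free $\R$-action, so such a moduli space of index $\le 0$ is empty; thus $(u_1',u_2')$ is the constant matched pair $\x=\y$, its shadow vanishes, and $B=\Brs$ — the exceptional case $B=\Bjk$ of the statement. Conversely in this class the configuration does occur: one glues the constant matched pair to the simple boundary degeneration(s) assembling $\Brs$ along the $\Tb$-evaluation, which has degree one (the analogue of Lemma~\ref{lem:BoundaryDegenerationsDegree1}), and these are precisely the ends producing the curvature in Theorem~\ref{thm:PairDAwithD}.

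The main obstacle is the cascade of the second paragraph, specifically handling the special orbits of $\Hmid$ — those lying on an incoming component matched by $\Mmid$ to an outgoing component — since such an orbit couples the $\Hmid$-side matching $\Mmid$ and the $\Hup_2$-side matching $\MatchIn$ within a single inductive step; one must check that the chain of forced orbit punctures still closes up by compatibility of $\Mmid$ with $\MatchIn$, rather than running off to infinitely many boundary components. Once this combinatorial claim is in place, the rest is the standard neck-stretching and Gromov-compactness bookkeeping used throughout Section~\ref{sec:Pairing}.
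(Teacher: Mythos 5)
Your proof is correct and follows essentially the same route as the paper: propagate the $\beta$-boundary degeneration along the component of $W=\Wmid\cup\Win$ containing its orbit (the same cascade used in Lemma~\ref{lem:NoBoundaryDegenerations}, with $\Mmid$ and $\MatchIn$ in place of $\Mup$ and $\Mdown$), obtaining a full copy of the component $\Bjk$ of $\Sigma\setminus\betas$ in the glued surface, then observe that this copy contributes $2$ to the embedded index and conclude that the main component must be constant. Your final worry about "special" incoming components matched by $\Mmid$ to outgoing ones is not actually an obstruction — the cascade simply terminates there, which is exactly what compatibility (Definition~\ref{def:CompatibleDA}) guarantees happens in every equivalence class — so the argument closes up as you want.
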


\begin{proof}
  The proof of Lemma~\ref{lem:NoBoundaryDegenerations} actuall shows that
  if there is $\beta$-boundary degeneration (on either side), then 
  in fact, it forms along with a sequence of boundary degenerations
  that contain all the orbits in a single component of $W$.
  A straightforward computation shows that the remaining components
  have index $0$, and therefore, if it has a pseudo-holomorphic representative,
  the remainder must correspond to a constant map. It follows that
  $B=\Bjk$.
\end{proof}

\subsubsection{Type $D$ structures of matched curves}

\begin{defn}
  \label{def:MatchTypeD}
  Let $\Hup=\Hmid\cup \Hup_2$ be a decomposition of an upper diagram.
  Let $X$ denote the vector spacegenerated by 
  upper states for $\Hup$, equipped with an operator
  \[ \delta^1_{\natural}\colon X \to \Clg\otimes X \]
  characterized by
  \[ \delta^1_{\natural}(\x)=\sum_{\{B=B_1\# B_2\big| ind(B)=1\}}
 \#\ModMatched^B(\x,\y)\cdot  \bOut(B)\otimes \y.\]
\end{defn}

The analogue of Theorem~\ref{thm:NeckStretch} is the following:

\begin{thm}
  \label{thm:NeckStretchDA}
  Provided $m>1$.
  $(X,\delta^1_{\natural})$ is a curved type $D$ structure, which is
  homotopy equivalent to $\Dmod(\Hup)$.
\end{thm}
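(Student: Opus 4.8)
The statement to be proved, Theorem~\ref{thm:NeckStretchDA}, asserts that the neck-stretched model $(X,\delta^1_\natural)$ of the type $D$ structure of $\Hup=\Hmid\cup\Hup_2$ (with $m>1$) is a curved type $D$ structure over $\Clg$, and is quasi-isomorphic to $\Dmod(\Hup)$. The plan is to mirror, essentially step for step, the argument surrounding Theorem~\ref{thm:NeckStretch}, but with the complications of boundary degenerations and curvature accounted for as in the $DA$-bimodule constructions of Section~\ref{sec:Bimodules}. First I would establish the identification of $\Dmod(\Hup)$ with $(X,\delta^1_\natural)$: by stretching the neck along the circles $Z$ separating $\Hmid$ from $\Hup_2$, the moduli spaces $\UnparModFlow^B(\x,\y)$ computing $\delta^1$ for $\Hup$ degenerate into matched pairs of curves in $\Hmid$ and $\Hup_2$ in the sense of Definition~\ref{def:MatchedPairDA}. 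This is a standard gluing/compactness argument. The input we need here is that in index $1$ (and in homology classes with vanishing local multiplicity somewhere, which is automatic since the outgoing algebra element has positive weight only if $B$ covers all of $\Sigma$), the Gromov limits contain no $\alpha$-boundary degenerations (Lemma~\ref{lem:NoBoundaryDegenerationsDA}), no closed components (Lemma~\ref{lem:NoClosedCurvesDA}, using $m>1$), and no nontrivial $\beta$-boundary degenerations (Lemma~\ref{lem:NoBoundaryDegenerations2DA}, since $\Bjk$ has index $2$, not $1$). With these in place, the usual gluing theorem (cf. Proposition~\ref{prop:StretchNeck}) gives $\UnparModFlow^B(\x,\y)\cong\UnparModMatched^B(\x_1,\y_1;\x_2,\y_2)$ for index one $B$, hence $\Dmod(\Hup)\cong(X,\delta^1_\natural)$ as $\Ring$-modules-with-operator; invariance of $\Dmod(\Hup)$ under the almost-complex structure then upgrades this identification to a quasi-isomorphism of type $D$ structures once we know the right side is actually a type $D$ structure.

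The second, substantive step is to verify the curved type $D$ structure relation for $\delta^1_\natural$ directly — this gives an independent proof that it is a type $D$ structure (paralleling Proposition~\ref{prop:dMatchedZqZero}, but now with curvature). Here I would look at ends of one-dimensional moduli spaces $\UnparModMatched^B(\x,\y)$ with $\ind(B)=2$. By the analogue of Lemma~\ref{lem:dMatchedCompactify} — whose proof carries over, using Lemma~\ref{lem:BoundaryMonotoneDA} for boundary monotonicity of the main components, Lemma~\ref{lem:NoClosedCurvesDA} for closed components, and Lemma~\ref{lem:NoBoundaryDegenerations2DA} for the boundary-degeneration cases — the ends fall into the following types: two-story matched curves; matched stories where the only nontrivial East-infinity components are join curves on both sides (analogue of Type~\ref{def:JJ}); matched stories where the only nontrivial East-infinity components are orbit curves on both sides (analogue of Type~\ref{def:OO}); and, crucially, ends where a simple $\beta$-boundary degeneration $\Bjk$ splits off from $B$ — this last type exists precisely because $\ind(\Bjk)=2$, unlike in the setting of Theorem~\ref{thm:NeckStretch}. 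The join–orbit pairs cancel by the analogue of Lemma~\ref{lem:Join} (Propositions~\ref{prop:JoinCurve} and~\ref{prop:OrbitCurve}). The boundary-degeneration ends, by the argument in the proof of Proposition~\ref{prop:DAmid}, contribute, for each $W$-equivalence class of orbits, the term $U_jU_k$ where $\Zout_j,\Zout_k$ are the two outgoing boundary components in that class — summing over all classes this produces exactly the curvature $\mu_0^{\Clg}=\sum_{\{j,k\}\in\Mout}U_jU_k$. The remaining two-story ends give the composite $(\mu_2\otimes\Id)\circ(\Id\otimes\delta^1_\natural)\circ\delta^1_\natural$, so the total count being even yields the curved type $D$ relation $0=\mu_0\otimes\Id_X+(\mu_2\otimes\Id_X)\circ(\Id_\Clg\otimes\delta^1_\natural)\circ\delta^1_\natural$.

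The remaining step is to confirm the grading statement: the gradings on $X$ are induced by $\Mgr$ and $\Agr$ on upper states of $\Hup$, and compatibility with $\delta^1_\natural$ follows from the additivity of Euler and point measures under the gluing $B=B_1\# B_2$ (Equations~\eqref{eq:EulerMeasures} and \eqref{eq:EulerCharacteristics} and the middle-diagram analogues from Lemma~\ref{lem:MgrAndAgr}), together with the definitions of the algebra gradings. I expect the main obstacle to be the careful bookkeeping in the boundary-degeneration ends — specifically, checking that each $W$-equivalence class contributes exactly one pair of matched outgoing orbits with the correct chamber count (using Lemma~\ref{lem:BoundaryDegenerationsDegree1} for the degree-one property of the evaluation map $\evB$, and the chamber structure from Section~\ref{subsec:SelfMatched}), so that the boundary-degeneration ends assemble precisely into $\mu_0^{\Clg}\otimes\x$ rather than some other central element. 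Everything else is a transcription of the arguments already given for Theorems~\ref{thm:PairAwithD} and~\ref{thm:PairDAwithD} and Proposition~\ref{prop:DAmid}; in fact this theorem is the "neck-stretching half" of the proof of Theorem~\ref{thm:PairDAwithD}, and once it is in place, combining it with the deformation-of-the-diagonal argument (which proceeds exactly as in Sections~\ref{subsec:SelfMatched}–\ref{subsec:TimeDilation}, now producing a curved type $D$ structure) completes the proof of Theorem~\ref{thm:PairDAwithD}.
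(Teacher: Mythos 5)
Your first paragraph is exactly the paper's proof: stretch the neck along the separating circles, use Lemmas~\ref{lem:NoBoundaryDegenerationsDA}, \ref{lem:NoClosedCurvesDA}, and \ref{lem:NoBoundaryDegenerations2DA} to control the Gromov limits in index one, identify $\delta^1_{\natural}$ with $\delta^1$ for a particular almost-complex structure on $\Hup$ via gluing, and then conclude from Proposition~\ref{prop:CurvedTypeD} and the invariance of $\Dmod(\Hup)$ under change of complex structure that $(X,\delta^1_{\natural})$ is a curved type $D$ structure homotopy equivalent to $\Dmod(\Hup)$. The paper's proof is only this first step (two sentences, with the reference to ``Theorem~\ref{thm:NeckStretchDA}'' evidently a typo for Theorem~\ref{thm:NeckStretch}), and the remark after the theorem in the paper then extracts the boundary-degeneration consequence that you work out explicitly.

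Your second paragraph — a direct verification of the curved type $D$ relation by counting ends of index-$2$ matched moduli spaces — is not in the paper's proof and is logically redundant once the identification $\Dmod(\Hup)\cong (X,\delta^1_\natural)$ is established, but it is valid and parallels what the paper does for Proposition~\ref{prop:dMatchedZqZero} and the curvature analysis in Proposition~\ref{prop:DAmid}. One small imprecision: you say $\Bjk$ ``splits off from $B$,'' but Lemma~\ref{lem:NoBoundaryDegenerations2DA} is stronger — boundary degenerations only occur when $B=\Bjk$ in its entirety, with the remaining curve forced by the index computation to be constant. That is what makes the algebraic accounting so clean: each matched pair $\{j,k\}\in\Mout$ contributes exactly the index-$2$ class $\Bjk$ with $\x=\y$, whose boundary-degeneration ends are counted by Lemma~\ref{lem:BoundaryDegenerationsDegree1} to give the coefficient of $U_jU_k$ in $\mu_0$. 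The grading check in your third paragraph is likewise automatic given the identification with $\Dmod(\Hup)$, so it too is superfluous for the paper's route, though harmless.
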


\begin{proof}
  The stretching argument from Theorem~\ref{thm:NeckStretchDA}
  proves that $\delta^1_{\natural}$ agrees with $\delta^1$
  for a suitable complex structure on $\Hup$.
  It follows that $\delta^1_{\natural}$ is a curved type $D$ structure.
\end{proof}

\begin{rem}
  The above argument shows that boundary degenerations which are
  allowed in Lemma~\ref{lem:NoBoundaryDegenerations2DA} indeed do
  occur; and their algebraic count is $1$.
\end{rem}

\subsubsection{Self-matched curves}

We adapt Definition~\ref{def:SelfMatched} to middle diagrams as follows:

\begin{defn}
  \label{def:SelfMatchedDA}
  Let $\Source_1$ be a decorated source for $\Hmid$.  
  according to whether the punctures are marked by chords or orbits on
  $\Zout$ or $\Zin$ respectively.  Further partition
  the punctures of $\Source_1$ labelled by chords and orbits in $\Zin$,
  as
  \[ \EastIn(\Source_1)\cup \OmegaInEv(\Source_1)\cup
  \OmegaInOdd(\Source_1),\] where $\EastIn(\Source_1)$ consists of
  boundary punctures, $\OmegaInEv(\Source_1)$ consists of interior
  punctures marked by even Reeb orbits, and $\OmegaInOdd(\Source_1)$
  consists of interior punctures marked by odd Reeb orbits. A {\em
    self-marked source} is a decorated source, together with an
  injection $\phi\colon \OmegaInEv(\Source_1)\to
  \EastIn(\Source_1)$ with the property that if
  $p\in\OmegaInEv(\Source_1)$ is marked by some orbit $\orb_j$, then
  $\phi(p)$ is marked by a length one chord that covers the boundary
  component $Z_k$ so that $\{j,k\}\in \Mup$. A {\em self-matched
    curve} $u$ is an element
  $u\in\ModFlow^{B_1}(\x,\y;\Source_1,\phi)$, subject to the following
  additional constraints: for each puncture
  $p\in\OmegaInEv(\Source_1)$,
  \begin{equation}
    \label{eq:SelfMatchingDA}
    t\circ u(p)=t\circ u(\phi(p)).
  \end{equation}
  Let
  $\ModMatchedChanged^{B_1,B_2}(\x,\y;\Source_1,\Source_2,\phi,\psi)$
  denote the moduli space self-matched curve pairs. 
\end{defn}

Let $X$ be the $\Field$-vector space generated by matching states
$\x_1$ and $\x_2$. 
We use the self-matched moduli spaces to construct
a map
\[ \DChanged\colon X \to \Blg\otimes X \]
defined as in Definition~\ref{def:MatchTypeD},
only using $\ModMatchedChanged^B(\x,\y)$ instead of
$\ModMatched^B(\x,\y)$.

\begin{proof}[Proof of Theorem~\ref{thm:PairDAwithD}]
  The proof is very similar to the proof of
  Theorem~\ref{thm:PairAwithD}.  Theorem~\ref{thm:NeckStretchDA}
  provides the $D$-module isomorphism
  $\Dmod(\Hup)\cong(X,\delta^1_{\natural})$.  Suppose $m>1$.  

  We wish to define a sequence of intermediate complexes Specifically,
  in the definitions of the intermediate complexes, we chose a
  particular ordering on $\{1,\dots,m\}$ in which to deform the
  matching condition; specifically, choose a one-to-one correspondence
  $f\colon \{1,\dots,2m\}\to\{1,\dots,2m\}$ so that if
  $n_1,\dots,n_{2k}$ are the orbits in $\Zin$ on a given component of
  $W$, arranged in the opposite to the order the appear in $W$ (with
  its orientation), then
  $\{f(n_2),f(n_1),f(n_3),\dots,f(n_{2k}),f(n_{2k-1})\}$ is a sequence
  of consecutive integers. See Figure~\ref{fig:LabelOrbitDA} for an
  example.
  
  We can now define the sequence of intermediate type $D$ structures
  $(X,\delta^1_\ell\colon X\to \Clg\otimes X)$ using operators
  $\delta^1_\ell$ that count $\ell$-self-matched curve pairs, adapting
  Definition~\ref{def:IntermediateComplexes}.

 \begin{figure}[h]
 \centering
 \input{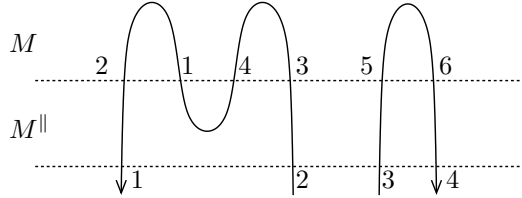}
 \caption{{\bf Labeling the orbits for a middle diagram.} 
   For this picture, we have listed a valid numbering $f$.}
 \label{fig:LabelOrbitDA}
 \end{figure}

 Isomorphisms are constructed as in
 Proposition~\ref{prop:Intermediates}.  Finally, the identification of
 $(X,\DChanged)$ with the tensor product is achieved by time dilation.

  The case where $m=1$ is handled separately, using special matched
  pairs,  as in Subsection~\ref{subsec:Nequals1}.
\end{proof}

\section{Extending the $DA$ bimodules}
\label{sec:ExtendDA}

As defined so far, our type DA bimodules are curved bimodules over
$\Clg$. Our aim here is to extend these modules to modules over
$\Blg$. Specifically, if $\Hmid$ is a middle diagram, let
$\Blgin=\Blgin(\Hmid)=\bigoplus_{k=0}^{2m-1}\Blg(2m,k)$,
$\Blgout=\Blgout(\Hmid)=\bigoplus_{k=0}^{2n-1}\Blg(2n,k)$, so 
that
\begin{align*}
 \Clgin&= 
\left(\sum_{\{\x\mid \x\cap \{0,
    2m\}=\emptyset\}} \Idemp{\x}\right)\cdot \Blgin\cdot 
\left(\sum_{\{\x\mid \x\cap \{0, 2m\}=\emptyset\}}
  \Idemp{\x}\right) \\ \\
 \Clgout&= 
\left(\sum_{\{\x\mid \x\cap \{0,
    2m\}=\emptyset\}} \Idemp{\x}\right)\cdot \Blgout\cdot 
\left(\sum_{\{\x\mid \x\cap \{0, 2m\}=\emptyset\}}
  \Idemp{\x}\right).
\end{align*}
In particular, there are inclusion maps
${\iota}\colon \Clgin\to \Blgin$ and ${\iota}\colon \Clgout\to \Blgout$.
In the next statement we will use the corresponding bimodules
$\lsup{\Blgin}[\iota]_{\Clgin}$ and
$\lsup{\Blgout}[\iota]_{\Clgout}$ 

\begin{prop}
  \label{prop:ExtendDA}
  Let $\Hmid$ be a middle diagram, and $\lsup{\Clgout}\DAmod(\Hmid)_{\Clgin}$
  be the corresponding type $DA$ bimodule.
  Then, there is a type $DA$ bimodule 
  $\lsup{\Blgout}\DAmod_{\Blgin}$, with the property that
  \[ \lsup{\Blgout}[\iota]_{\Clgout}~
  \DT \lsup{\Clgout}\DAmod(\Hmid)_{\Clgin}=
  \lsup{\Blgout}\DAmodExt_{\Blgin}\DT~\lsup{\Blgin}[\iota]_{\Clgin}.\]
\end{prop}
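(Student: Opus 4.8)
The plan is to construct $\lsup{\Blgout}\DAmodExt_{\Blgin}$ from $\DAmod(\Hmid)$ purely algebraically, by the standard ``idempotent completion'' trick that already appears in this circle of ideas (compare the extension procedures in Section~\ref{sec:ExtendDA} of \cite{Bordered2}). Recall that $\Clgin=\RestrictIdempRing(2m)\cdot \Blgin \cdot \RestrictIdempRing(2m)$, where $\RestrictIdempRing(2m)$ is the subring spanned by idempotents $\Idemp{\x}$ with $\x\cap\{0,2m\}=\emptyset$; similarly for the outgoing side. First I would take as underlying bimodule the tensor product
\[
\DAmodExt = \lsup{\Blgout}[\iota]_{\Clgout}\DT \lsup{\Clgout}\DAmod(\Hmid)_{\Clgin}\DT {}^{\Clgin}[\iota^{\vee}]_{\Blgin},
\]
where $[\iota^{\vee}]$ denotes the ``other side'' transformer bimodule associated to the inclusion $\Clgin\hookrightarrow\Blgin$ (i.e. the type $A$--type $D$ bimodule which on the idempotent ring is $\Clgin$ viewed inside $\Blgin$, with $m_2$ given by the $\Clgin$-action and $\delta^1$ trivial, as in the Example following Definition~\ref{def:Transformer}). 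The point is that $\lsup{\Blgin}[\iota]_{\Clgin}$ and $\lsup{\Clgin}[\iota^{\vee}]_{\Blgin}$ are quasi-inverse bimodules: tensoring them in one order gives $\lsup{\Blgin}[\Id]_{\Blgin}$ up to homotopy when restricted to the relevant idempotents, and in the other order gives the identity $\lsup{\Clgin}[\Id]_{\Clgin}$ on the nose (the inclusion $\iota$ is split injective on idempotents). Once this is in place, the displayed identity of the proposition becomes a formal consequence of associativity of $\DT$ and the cancellation $\lsup{\Clgin}[\iota^{\vee}]_{\Blgin}\DT\lsup{\Blgin}[\iota]_{\Clgin}\simeq \lsup{\Clgin}[\Id]_{\Clgin}$.

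Next I would verify that this $\DAmodExt$ actually satisfies the curved type $DA$ structure relations (Equations~\eqref{eq:CurvedDAbimoduleRelation1} and~\eqref{eq:CurvedDAbimoduleRelation2}) over the curved algebras $\cBlgout$ and $\cBlgin$. The curvatures extend: $\mu_0^{\Matching}=\sum_{\{i,j\}\in\Matching}U_iU_j$ as defined in Equation~\eqref{eq:CurvatureOfB} makes sense over all of $\Blg(2m,m)$, not just $\Clg$, and the inclusion $\iota$ carries the $\Clg$-curvature to the $\Blg$-curvature since $U_i$ for $i\neq 0,2m$ lie in $\Clg$, while for the special strands $\{0,2m\}$ the relevant $U$'s are $U_1$ and $U_{2n}$, which are in $\Clg$ as noted after the definition of $\Clg(n)$. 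So the transformer bimodules $[\iota]$ and $[\iota^{\vee}]$ are compatible with curvature in the sense required by Proposition~\ref{BK1:prop:AdaptedTensorProducts} (adapted to the curved setting, which is exactly the Proposition stated just after Equation~\eqref{eq:MayerVietoris}); hence the iterated tensor product is again a curved type $DA$ bimodule and the structure relations come for free from that proposition. I would also check that the gradings (the $\Q^n$-valued Alexander grading and the $\Delta$-grading from Section~\ref{subsec:Gradings}) extend, which is routine since the weight and Alexander functions on $\Clg$ are restrictions of the corresponding functions on $\Blg$.

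The one genuine subtlety — and I expect this to be the main obstacle — is that $\Blg(2m,m)$ has idempotents $\Idemp{\x}$ with $\x$ meeting $\{0,2m\}$, and the middle-diagram moduli spaces of Section~\ref{sec:Bimodules} simply do not see such idempotent states: a middle Heegaard state $\x$ always has $\alphain(\x)\subset\{1,\dots,2m-1\}$, so $\IdempIn(\x)=\Idemp{\alphain(\x)}$ never involves $0$ or $2m$. Thus one must make sense of what the extended bimodule does on the ``missing'' idempotents, and here the $[\iota^{\vee}]$ transformer does the work automatically: tensoring on the right by $\lsup{\Clgin}[\iota^{\vee}]_{\Blgin}$ has the effect of declaring $\DAmodExt\cdot\Idemp{\x}=0$ whenever $\x\cap\{0,2m\}\neq\emptyset$ (since $[\iota^{\vee}]$ kills those idempotents), which is consistent and is precisely what is forced by the tensor-product identity we want. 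The remaining care is to check that the $\delta^1_\ell$ operations of $\DAmodExt$, which now take inputs in all of $\Blgin$ including $L_1,R_1,L_{2m},R_{2m}$, are well-defined: on inputs involving those boundary algebra elements the operation must vanish, and again this is built into $[\iota^{\vee}]$. So the real content is organizational — setting up the transformer bimodules $[\iota]$, $[\iota^{\vee}]$ carefully in the curved context, checking their quasi-invertibility on the nose in one direction, and invoking the curved version of Proposition~\ref{BK1:prop:AdaptedTensorProducts} to get both the $DA$-relations and the stated tensor-product identity at once. I would then close by remarking that, as in \cite[Section~\ref{BK2:sec:DuAlg}]{Bordered2}, one may give a more explicit description of the $\delta^1_\ell$ on the extended bimodule directly in terms of the original moduli-space counts together with the combinatorics of how idempotent states with $0$ or $2m$ occupied interact with the algebra elements $L_1,R_1,L_{2m},R_{2m}$, but this is not needed for the statement as given.
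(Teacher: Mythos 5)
The paper's proof is geometric, not formal: it constructs $\DAmodExt$ by modifying the Heegaard diagram itself. An ``extended middle diagram'' $\HmidExt$ is produced from $\Hmid$ by adding two dummy boundary components $\Zmid_0,\Zmid_1$, two new $\alpha$-arcs, and two new $\beta$-circles (Example~\ref{ex:HmidEx}); the bimodule of $\HmidExt$ is defined by holomorphic-curve counts over all of $\Blgin$ and $\Blgout$, and the displayed tensor-product identity is then a destabilization theorem proved in two steps: a direct moduli-space comparison after removing $\alphain_0$ and $\alphain_{2m}$ (Lemma~\ref{lem:RemoveAlphaIns}), followed by a neck-stretching/gluing argument whose key input is an odd-degree computation for the evaluation map $\ModFlow^{[k]}(x,x)\to \Sym^k([0,1]\times\R)$ on a model punctured disk (Lemma~\ref{lem:EvaluationDegreeOne}, used in Proposition~\ref{prop:ExtendMainStep}). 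Your proposal tries to bypass this entirely and manufacture $\DAmodExt$ by a formal tensor-product trick.

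That route has a genuine gap: the ``other-side transformer'' $\lsup{\Clgin}[\iota^\vee]_{\Blgin}$ that your argument relies on does not exist in the form you describe. With underlying bimodule $\RestrictIdempRing$ (the restricted idempotent ring of $\Clgin$) and ``$\delta^1$ trivial'', the only candidate for the bimodule action is $\delta^1_2(\x,b)=(\Cidemp\,b\,\Cidemp)\otimes\x$, with $\Cidemp$ as in Equation~\eqref{eq:DefCidemp}; but the corner projection $b\mapsto\Cidemp\,b\,\Cidemp$ is not an algebra homomorphism, so with $\delta^1_1=0$ and no higher operations the $k=2$ curved $DA$ structure relation, Equation~\eqref{eq:CurvedDAbimoduleRelation2}, fails. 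Concretely, in $\Blgin=\Blg(2m,m)$ take $b_1=L_1$ and $b_2=R_1$: one has $L_1\Cidemp=0$ and $\Cidemp R_1=0$ (both chords involve idempotent states containing $0$), so $\Cidemp L_1\Cidemp=\Cidemp R_1\Cidemp=0$; yet $\Cidemp\, L_1R_1\,\Cidemp=U_1\cdot\Cidemp\neq 0$. Hence $\delta^1_2(\x,\mu_2(b_1,b_2))$ is nonzero while the composite term $\mu_2\circ(\Id\otimes\delta^1_2)\circ(\delta^1_2(\cdot,b_1),b_2)$ vanishes, and there is nothing left in the relation to cancel it. Any attempt to repair $[\iota^\vee]$ forces a nontrivial infinite tower of higher operations $\delta^1_\ell$ (driven both by this failure of multiplicativity and by the curvature $\mu_0$), and once those are present your assertion that $\lsup{\Clgin}[\iota^\vee]_{\Blgin}\DT\lsup{\Blgin}[\iota]_{\Clgin}$ is the identity ``on the nose'' is no longer automatic and would itself need proof. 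Since that claim is exactly what makes the displayed identity come out formally, the argument as written does not go through, and I see no way to make this purely algebraic approach match the analytic content of the paper's Lemma~\ref{lem:EvaluationDegreeOne}.
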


The bimodule over $\Blg$ in the statement of
Proposition~\ref{prop:ExtendDA} is constructed by extending the notion of
middle diagrams, and defining their associated type $DA$ bimodules, as
we do presently.  The proof of the above theorem is given in
Section~\ref{sec:DestabilizationTheorem}.

\subsection{Extended middle diagrams}
We define now relevant generalization of middle diagrams used
in the construction of the module from Proposition~\ref{prop:ExtendDA}.

\begin{defn}
An {\em extended middle diagram} is a Heegaard diagram obtained 
from an upper diagram with $2m+2n+2$ boundary components 
$\{Z_0,\dots,Z_{2m+2n+1}\}$, and adding a new arc $\alpha_{2m+2n+1}$ connecting $Z_0$ and
$Z_{2m+2n+1}$ , 
and adding two new beta circles, so that the $\beta$ circles now are labelled
$\{\beta_i\}_{i=1}^{g+m+n+1}$.

We think of $2m$ of these components as
``incoming'' boundary, writing $\Zin_i=Z_i$ for
$i=1,\dots,2m$; $2n$ ``outgoing'' boundary components, writing
$\Zout_i=Z_{2m+2n+2-i}$ for $i=1,\dots,2n$; and two ``middle'' boundary
components $\Zmid_0=Z_0$ and $\Zmid_1=Z_{2m+1}$.  We also let
$\alphain_i=\alpha_i$ for $i=0,\dots,2m$; 
$\alphaout_i=\alpha_{2m+2n+1-i}$ for $i=0,\dots,2n$.
We write this data
\begin{align*} \HmidExt=(\Sigma_0,\{\Zin_i\}_{i=1}^{2m},
\{\Zout_i\}_{i=1}^{2n},
\{\Zmid_0,\Zmid_1\},
\{\alphain_i\}_{i=0}^{2m},
\{\alphaout_i\}_{i=0}^{2n},
\{\alpha^c_i\}_{i=1}^g,
\{\beta_i\}_{i=1}^{g+m+n+1}\}).
\end{align*}
\end{defn}

Note that  $\alphain_0$ connects $\Zin_1$ to
$\Zmid_0$, $\alphaout_0$ connects $\Zmid_0$ to $\Zout_1$,
$\alphain_{2m}$ connects $\Zin_{2m}$ to $\Zmid_1$, and
$\alphaout_{2n}$ connects $\Zmid_1$ to $\Zout_{2n}$.

\begin{example}
  \label{ex:HmidEx}
  Let 
  \[\Hmid=(\Sigma_0,\{\Zin_i\}_{i=1}^{2m}, \{\Zout_i\}_{i=1}^{2n},
  \{\alphain_i\}_{i=1}^{2m-1}, \{\alphaout_i\}_{i=1}^{2n-1},
  \{\alpha^c_i\}_{i=1}^g, \{\beta_i\}_{i=1}^{g+m+n-1})\] be a middle
  diagram.  We can form an extended middle diagram as follows.
  Connect $\Zin_1$ and $\Zout_1$ by a path that crosses only
  $\beta$-circles.  Remove a disk centered at a point on the path from
  $\Sigma_0$ to obtain a new boundary component $\Zmid_0$, and
  introduce new arcs $\alphain_0$ connecting $\Zin_1$ to $\Zmid_0$ and
  $\alphaout_0$ connecting $\Zmid_0$ to $\alphaout_1$. Add also a new
  small $\beta$-circle $\beta_{0}$ in a collar neighborhood of
  $\Zmid_0$.  Connect $\Zin_{2m}$ and $\Zout_{2n}$ by another path,
  remove another disk to obtain the boundary component $\Zmid_1$, and
  introduce arcs $\alphain_{2m}$ connecting this to $\Zin_{2m}$ and
  $\alphaout_{2n}$ connecting it to $\Zout_{2n}$, and introduce a
  circle $\beta_{g+m+n}$ which encricles $\Zmid_1$, intersecting
  $\alphain_{2m}$ and $\alphaout_{2n}$ in one point apiece. In this
  manner, we obtain a new extended middle diagram, $\HmidExt$, called
  a {\em stabilization} of $\Hmid$.
\end{example}

See Figure~\ref{fig:ExtMidDiag} for an example.

\begin{figure}[h]
 \centering
 \input{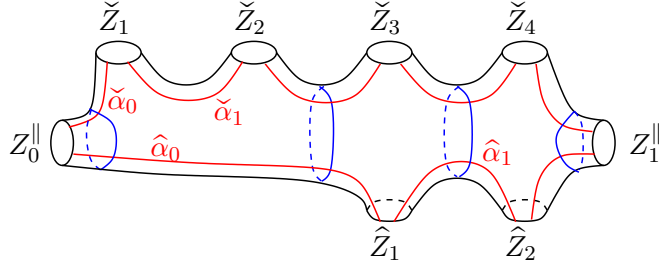}
 \caption{{\bf Extended middle diagram of local minimum.} 
 \label{fig:ExtMidDiag}}
 \end{figure}

\begin{defn}
  An {\em extended partial Heegaard state} is a $g+m+n+1$-tuple of
  points, where one lies on each $\beta$-circle, one lies on each
  $\alpha$-circle, and no more than one lies on each $\alpha$-arc.
  Let $\alphain(\x)\subset \{0,\dots,2m\}$ be the set of
  $i\in\{0,\dots,2n\}$ so that $\x\cap \alphain_i\neq \emptyset$; and
  $\alphaout(\x)\subset\{0,\dots,2n\}$ be the set of
  $i\in\{0,\dots,2n\}$ so that $\x\cap \alphain_i\neq \emptyset$.  An
  exended middle Heegaard state has an idempotent type
  $k=|\alphain(\x)|$.
\end{defn}

\subsection{$\DA$ bimodules for extended diagrams}

An exteded middle diagram induces a matching on the components of
$\partial\Sigma$; in fact, if $\HmidExt$ is an extension of $\Hmid$,
then the matchings are the same.

\begin{defn}
  Let $\HmidExt$ be an extended middle diagram compatible with a matching $M$ on
  the incoming boundary components. The {\em incoming algebra}
  $\Blgin(\HmidExt)$ and the {\em outgoing algebra}
    $\Blgout(\HmidExt)$ are defined by
  \[\Blgin(\Hmid)=\bigoplus_{k=0}^{2m-1}\Blg(2m,k);
  \qquad \Blgout(\Hmid)=\bigoplus_{k=0}^{2n-1}\Blg(2n,k).\]
\end{defn}

We will define a type $DA$ bimodule $\DAmodExt(\HmidExt)=
\lsup{\Blgout}\DAmodExt(\HmidExt)_{\Blgin}$. As a vector space,
it is spanned by the extended middle Heegaard states of $\HmidExt$.
\[ \Idemp{\{0,\dots,2n\}\setminus \alphaout(\x)} \cdot \x
\cdot \Idemp{\alphain(\x)}=\x.\]

Again, there is a splitting
\[ \DAmodExt(\HmidExt)=\bigoplus_{k\in\Z}
~~~{\lsup{\IdempRing(2n,k-m+n)}\DAmodExt(\Hmid)}_{\IdempRing(2m,k)},\] where $k$
  is the idempotent type of $\x$. We will be primarily interested in
the summand where $k=m$,
\[ \lsup{\IdempRing(2n,n)}\DAmodExt(\Hmid)_{\IdempRing(2m,m)}.\]

The material from Section~\ref{subsec:DAmodConstruction}
has a straightforward adaptation to extended diagrams,
with the understanding that the pseudo-holomorphic curves in consideration
now have zero multiplicity at the middle boundary $\Zmid_0$ and $\Zmid_1$.

In particular, the definition of $(\x,\vec{a})$-compatible sequences
given in Definition~\ref{def:CompatiblePacketDA}
can be readily defined when $\x$ is an extended partial Heegaard state,
and the sequence $\vec{a}=(a_1,\dots,a_{\ell})$ lies in $\Blgin(\HmidExt)$
(rather than $\Clgin(\Hmid)$, as it was there).

Observe given $(B,\rhos_1,\dots,\rhos_h)$, the associated element
$\bOut(B,\rhos_1,\dots,\rhos_h)$ defined as in Equation~\eqref{eq:bOut}
naturally lies in $\Blgout(\HmidExt)$ for extended diagrams, rather than merely 
in $\Clgout$.

With these observations in place, Equation~\eqref{eq:DefAction} 
induces now a map
\[ \delta^1_{\ell+1}\colon \DAmodExt(\HmidExt)\otimes\Blgin^{\otimes \ell}\to
\Blgout\otimes \DAmodExt(\HmidExt).\]
Proposition~\ref{prop:DAmid} has the following analogue for extended middle diagrams:

\begin{prop}
  \label{prop:DAmidEx}
  Let $\HmidExt$ be an extended middle diagram that is compatible with a given
  matching $M$ on its incoming boundary. Choose an orientation on
  $W=W(\Hmid)\cup W(M)$. The  $\IdempRing(2n)-\IdempRing(2m)$-bimodule
  $\DAmodExt(\HmidExt)$, equipped the operations 
  $\delta^1_{\ell+1}\colon \DAmodExt(\HmidExt)\otimes\Blgin^{\otimes m}\to \Blgout\otimes\DAmodExt(\HmidExt)$
  defined above endows $\DAmodExt(\HmidExt)$ with the structure of 
  a curved $\Blgin-\Blgout$ type $DA$ bimodule.
\end{prop}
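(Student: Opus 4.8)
\textbf{Proof proposal for Proposition~\ref{prop:DAmidEx}.}
The plan is to verify the curved type $DA$ structure equations (Equations~\eqref{eq:CurvedDAbimoduleRelation1} and~\eqref{eq:CurvedDAbimoduleRelation2}) by analyzing ends of one-dimensional moduli spaces of pseudo-holomorphic flowlines in $\HmidExt$ that have vanishing local multiplicity at the two middle boundary components $\Zmid_0$ and $\Zmid_1$. This is a direct adaptation of the argument for Proposition~\ref{prop:DAmid}, which itself synthesizes the type $D$ relation (Proposition~\ref{prop:CurvedTypeD}) and the type $A$ relation (Proposition~\ref{prop:CurvedTypeA}). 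First I would observe that the combinatorial and analytic scaffolding carries over verbatim: the definitions of $(\x,\vec{a})$-compatible sequences, the index formula, the finiteness of the sum in the analogue of Equation~\eqref{eq:DefDA-Action}, and the grading statements (via the analogues of Lemmas~\ref{lem:MgrAndAgr} and~\ref{lem:CompatWithMgrDA}) all hold in the extended setting once one imposes that curves avoid $\Zmid_0$ and $\Zmid_1$. The key point to record is that a homology class in $\doms(\x,\y)$ for an extended middle diagram restricts to a sum of components of $\Sigma_0\setminus\betas$, each of which has vanishing multiplicity at $\Zmid_0$ and $\Zmid_1$ by construction, so the curvature element $\mu_0^{\Blgout}$ produced by boundary degenerations never involves $U$-variables associated to the middle boundary.

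Next I would appeal to the analogue of Theorem~\ref{thm:DAEnds} (the ends classification for middle diagrams, itself proved by the method of Theorem~\ref{thm:AEnds}) applied to $\HmidExt$. In a one-dimensional moduli space $\UnparModFlow^B(\x,\y;\Source;\vec{\rhos})$ with $\ind(B,\x,\y;\vec{\rhos})=2$, and with $B$ having vanishing local multiplicity somewhere (which we may assume, since otherwise the corresponding term in $(\mu_2^{\Blgout}\otimes\Id)\circ(\Id\otimes\delta^1)\circ\delta^1$ vanishes — there are no nonzero elements of $\Blgout$ with positive weight everywhere), the ends are: two-story ends (contributing the composition terms $\mu_2^{\Blgout}\circ(\Id\otimes\delta^1_{k-j+1})\circ(\delta^1_{j+1}(\x,\dots),\dots)$ in Equation~\eqref{eq:CurvedDAbimoduleRelation2}); orbit curve ends attaching a long chord to an even incoming orbit (contributing $\delta^1_{k+2}(\x,\dots,\mu_0^{\Blgin},\dots)$); contained collision ends and join ends, which cancel in pairs exactly as in Proposition~\ref{prop:CurvedTypeA}; and boundary degeneration collision ends. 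The boundary degeneration ends, together with the odd-orbit curve ends, are the mechanism producing the curvature $\mu_0^{\Blgout}$ on the left: as in the proof of Proposition~\ref{prop:DAmid}, each $W$-equivalence class of orbits contributes exactly one pair of orbits realized by a simple boundary degeneration, and switching their order as needed (since $\x$ lies in one of the two chambers $\Chamber^{\orb_j>\orb_k}$ or $\Chamber^{\orb_k>\orb_j}$) yields a term $U_j U_k\otimes\x$ for the two outgoing boundary components $\Zout_j,\Zout_k$ in that class — so summing over classes gives precisely $\mu_0^{\Blgout}\otimes\x$ when $k=0$ (Equation~\eqref{eq:CurvedDAbimoduleRelation1}) and does not appear for $k\geq 1$ (Equation~\eqref{eq:CurvedDAbimoduleRelation2}).

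The one genuinely new feature — and the step I expect to be the main obstacle — is bookkeeping the larger algebras. Over $\Blgin$ and $\Blgout$ (rather than the subalgebras $\Clgin$, $\Clgout$), the idempotent states $\x$ with $\x\cap\{0,2m\}\neq\emptyset$ are now permitted, and the extra arcs $\alphain_0$, $\alphain_{2m}$, $\alphaout_0$, $\alphaout_{2n}$ together with the two extra $\beta$-circles are precisely what make room for the generators $L_1,R_1,L_{2m},R_{2m}$ (on the incoming side) and their outgoing counterparts. I would need to check that the analogue of Lemma~\ref{lem:NonZeroAlgElts} holds in this extended setting — i.e. that strongly boundary monotone compatible sequences map to nonzero elements of $\Blgin$ — using Proposition~\ref{prop:Ideal} (the description of the ideal $\mathcal{J}$) for the full algebra $\BlgZ(2m,k)$; the subtlety is that the "too far" and weight conditions of Proposition~\ref{prop:Ideal} must be excluded using boundary monotonicity exactly as before, but now the argument must also account for chords supported in the extreme boundary components. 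Once this nonvanishing is in place, the new $\beta$-circles around $\Zmid_0$ and $\Zmid_1$ simply act as spectators (they each contain a single point of every state, forced by the multiplicity-zero condition at the middle boundary), so the transversality and compactness results of Sections~\ref{sec:CurvesD}–\ref{sec:CurvesA} apply without change, and the end-counting argument closes.
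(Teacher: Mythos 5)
Your proposal is correct and follows essentially the same route as the paper's (very terse) proof, which simply notes that the ends classification of Theorem~\ref{thm:DAEnds} adapts to extended middle diagrams and that the proposition then follows as in Proposition~\ref{prop:DAmid}. Your elaboration of the boundary-degeneration-to-curvature mechanism and the caveat about the analogue of Lemma~\ref{lem:NonZeroAlgElts} for the full algebra $\Blgin$ (rather than $\Clgin$) is consistent with what "straightforward adaptation" has to mean; the caveat does not cause trouble because curves are constrained to have zero multiplicity at $\Zmid_0$ and $\Zmid_1$, so the argument of Lemma~\ref{lem:NonZeroAlgElts} via Proposition~\ref{prop:Ideal} goes through with $\Blg(2m,m)$ in place of $\Clg(m)$.
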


\begin{proof}
  Theorem~\ref{thm:DAEnds} has a straightforward adaptation to
  extended middle diagrams.  The proposition then is an immediate
  consequence.
\end{proof}

\subsection{Destabilizing extended diagrams}
\label{sec:DestabilizationTheorem}

Our aim now is to prove the following version of
Proposition~\ref{prop:ExtendDA}:

\begin{prop}
  \label{prop:ExtendDAPrecise}
  Let $\HmidExt$ be an extension of a middle diagram $\Hmid$, as in
  Example~\ref{ex:HmidEx}. The type $DA$  bimodules
  associated to $\Hmid$ and $\HmidExt$ are related by the formula
  \[ \lsup{\Blgout}[\iota]_{\Clgout}~
  \DT \lsup{\Clgout}\DAmod(\Hmid)_{\Clgin}=
  \lsup{\Blgout}\DAmodExt_{\Blgin}(\HmidExt)\DT~\lsup{\Blgin}[\iota]_{\Clgin}.\]
\end{prop}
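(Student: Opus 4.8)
The plan is to prove Proposition~\ref{prop:ExtendDAPrecise} by a neck-stretching/gluing argument in the spirit of Theorem~\ref{thm:PairDAwithD}, comparing holomorphic curves in $\HmidExt$ with matched pairs of curves in $\Hmid$ and in two small ``plumbing'' pieces. Concretely, $\HmidExt$ is obtained from $\Hmid$ by the stabilization of Example~\ref{ex:HmidEx}: we remove two small disks along paths connecting $\Zin_1$ to $\Zout_1$ and $\Zin_{2m}$ to $\Zout_{2n}$, introducing the middle boundary components $\Zmid_0$, $\Zmid_1$, the arcs $\alphain_0,\alphaout_0,\alphain_{2m},\alphaout_{2n}$, and the two new $\beta$-circles $\beta_0$ and $\beta_{g+m+n}$, each meeting its adjacent pair of $\alpha$-arcs once. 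First I would insert long necks along two curves in $\HmidExt$ separating each small ``linking'' region (a twice-punctured annular neighborhood carrying one new $\beta$-circle and the two new $\alpha$-arcs) from the rest of $\Sigma_0$. As in Theorem~\ref{thm:NeckStretchDA}, for a sufficiently stretched almost-complex structure, index-$1$ moduli spaces of curves in $\HmidExt$ are identified with fibered products of index-$1$ curves in $\Hmid$ with rigid curves in each of the two linking pieces, where the matching is along the Reeb chords/orbits emerging at $\Zmid_0$ resp. $\Zmid_1$ from the $\Hmid$-side paired with the corresponding asymptotics in the linking pieces.

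The key computation is then to identify the $DA$ bimodule contribution of each linking piece. Each such piece is exactly a genus-$0$ diagram with one $\beta$-circle meeting two $\alpha$-arcs once, so the holomorphic curve count there is completely explicit (as in the computation of $\lsup{\Ring}[\Psi]_{\cClg(1)}$ in Lemma~\ref{lem:GlobalMinimum}, or the standard identity/inclusion bimodule computations). I claim the linking piece at $\Zmid_0$ realizes precisely the inclusion bimodule $\lsup{\Blg}[\iota]_{\Clg}$ on one strand, and likewise at $\Zmid_1$: the new $\beta$-circle together with the pair of arcs $\alphain_0,\alphaout_0$ provides exactly the extra idempotent states $\x$ with $\x\cap\{0\}\neq\emptyset$ (and similarly $\x\cap\{2m\}$ or $\{2m+2n\}$), and the only rigid holomorphic curves are the bigons that translate these extra generators to the ``$\Clg$-type'' generators while emitting the boundary chords $L_1,R_1$ (resp. $L_{2m},R_{2m}$) that are precisely the generators of $\Blg$ not lying in $\Clg$. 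Since $\iota\colon \Clgin\to\Blgin$ and $\iota\colon\Clgout\to\Blgout$ are the maps adding back exactly these idempotents and boundary chords, the fibered product over the two linking pieces computes $\lsup{\Blgout}[\iota]_{\Clgout}\DT\lsup{\Clgout}\DAmod(\Hmid)_{\Clgin}\DT\lsup{\Blgin}[\iota^{-1}]$—equivalently, after moving the right-hand inclusion across, the asserted identity $\lsup{\Blgout}[\iota]_{\Clgout}\DT\lsup{\Clgout}\DAmod(\Hmid)_{\Clgin}=\lsup{\Blgout}\DAmodExt(\HmidExt)_{\Blgin}\DT\lsup{\Blgin}[\iota]_{\Clgin}$.

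The steps, in order: (1) set up the neck-stretching for $\HmidExt$ along the two separating curves, invoking Lemmas~\ref{lem:NoBoundaryDegenerationsDA}, \ref{lem:NoClosedCurvesDA}, and \ref{lem:NoBoundaryDegenerations2DA} (adapted to the extended setting, noting curves have zero multiplicity at $\Zmid_0,\Zmid_1$, which rules out boundary degenerations and closed components in the linking pieces) to reduce index-$1$ curves in $\HmidExt$ to matched pairs; (2) compute, by the explicit genus-$0$ analysis, that each linking piece contributes the one-strand inclusion bimodule $\lsup{\Blg}[\iota]$—here one checks the idempotents and the $\delta^1_2$ actions $\delta^1_2(x,b)=\iota(b)\otimes x$, with all higher $\delta^1_\ell$ vanishing, exactly matching Definition~\ref{def:Transformer}-style computations; (3) assemble the fibered product, using the associativity of $\DT$ (Proposition~\ref{BK1:prop:AdaptedTensorProducts}-style statements and the curved-tensor-product formalism of Section~\ref{subsec:Framework}), and check that curvature and gradings match—both $\DAmodExt(\HmidExt)$ and the right-hand side carry curvature $\mu_0^{\Mout}$ since the matchings of $\Hmid$ and $\HmidExt$ agree; (4) handle the self-matching/intermediate-complex interpolation and time-dilation exactly as in the proof of Theorem~\ref{thm:PairDAwithD}, treating the $m=1$ case via special matched pairs as in Subsection~\ref{subsec:Nequals1} if needed.

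The main obstacle I anticipate is step (1): verifying that in the extended setting the Gromov compactification of the relevant moduli spaces in $\HmidExt$ contains no stray closed components or $\beta$-boundary degenerations supported in (or overlapping) the linking regions, and that the matched-pair count genuinely equals the original count after gluing spheres—this is the analogue of the subtle point in Theorem~\ref{thm:NeckStretchDA} and Lemma~\ref{lem:NoBoundaryDegenerations2DA}, and it is where the hypothesis that extended curves have vanishing multiplicity at $\Zmid_0,\Zmid_1$ does the essential work. The explicit identification in step (2) is routine but must be done carefully to get the boundary-chord names $L_1,R_1,L_{2m},R_{2m}$ and the idempotent bookkeeping exactly right, since those are precisely the data distinguishing $\Blg$ from $\Clg$.
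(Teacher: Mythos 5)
Your approach is genuinely different from the paper's and, as written, has gaps. The paper proves Proposition~\ref{prop:ExtendDAPrecise} in two steps of rather different character. Step~1 (Lemma~\ref{lem:RemoveAlphaIns}) is purely algebraic: tensoring with $\lsup{\Blgin}[\iota]_{\Clgin}$ constrains the generators of $\DAmodExt(\HmidExt)$ so that $\x$ must contain $x_0=\beta_0\cap\alphaout_0$ and $x_{2m+1}=\beta_{g+m+n}\cap\alphaout_{2n}$, and then one observes that a holomorphic curve for such a generator can never involve $\alphain_0$ or $\alphain_{2m}$ (its $\alpha$-intervals there would need endpoints on Reeb chords containing $L_1,R_1,L_{2m},R_{2m}$, which are excluded from $\Clgin$, or at $\pm\infty$ at a point other than $x_0$). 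No neck-stretching is needed at this stage. Step~2 (Proposition~\ref{prop:ExtendMainStep}) is the geometric part: stretching along a curve $\gamma_0$ parallel to $\beta_0$ (and similarly for $\beta_{g+m+n}$), cutting off the punctured disk $\Hmid_0$ which contains $\Zmid_0$, $\beta_0$, and a single $\alpha$-arc, and appealing to the model computation Lemma~\ref{lem:EvaluationDegreeOne} that the evaluation map $\ev\colon\ModFlow^{[k]}(x,x)\to\Sym^{k}([0,1]\times\R)$ has odd degree.

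Your proposal tries to do all of this in one geometric neck-stretch, cutting off regions that contain both $\alphain_0$ and $\alphaout_0$ at once, and identifying each such region with an ``inclusion bimodule.'' There are two concrete problems. First, the ``linking region'' you describe is not what you say it is: $\alphain_0$ and $\alphaout_0$ run to $\Zin_1$ and $\Zout_1$ respectively, so a separating curve cutting off a region containing both arcs must pass through $\Zin_1$ and $\Zout_1$, giving a surface-with-corners and a boundary-arc gluing rather than a closed-curve stretch. The pairing theorems in this paper handle gluings along unions of boundary circles, not along arcs, so the decomposition you describe does not fall under their hypotheses. Second, and more structurally, the two inclusion bimodules $[\iota]_{\Clgin}$ and $[\iota]_{\Clgout}$ in the statement live over different algebras ($2m$-strand vs.\ $2n$-strand) and appear on opposite sides of $\DAmod(\Hmid)$, whereas your linking piece at $\Zmid_0$ geometrically couples the incoming arc $\alphain_0$ and the outgoing arc $\alphaout_0$ at the same $\beta$-circle; it cannot factor as a one-strand inclusion on each side. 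Finally, your claim that the curve count in the small piece is ``routine (in the style of Lemma~\ref{lem:GlobalMinimum})'' understates the analytic input: the paper needs Lemma~\ref{lem:EvaluationDegreeOne}, whose proof is a nontrivial degree computation via Blaschke products, to control the evaluation map into $\Sym^k([0,1]\times\R)$ in the fibered-product description. You would need a replacement for this in your more complicated piece, and it is not supplied.
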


\begin{figure}[h]
 \centering
 \input{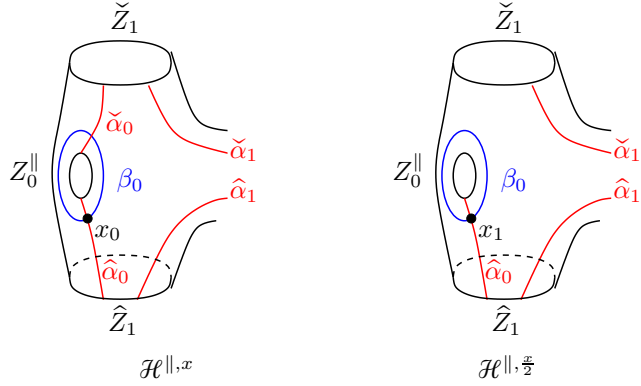}
 \caption{{\bf Removing $\alphain_0$.}
 At the left, we have  a portion of an extended middle diagram;
 at the right, we have the corresponding half-extended middle diagram. \label{fig:ExtRemove1}}
 \end{figure}

This is proved in two steps.

{\bf{Step 1: Remove $\alphain_0$ and $\alphain_m$.}}
  From $\Hmid$ we constructed $\HmidExt$. There is another diagram,
  $\HmidHalfExt$ which is obtained from $\HmidExt$ by removing
  $\alphain_0$ and $\alphain_{2m+1}$. (See Figure~\ref{fig:ExtRemove1}
  for an illustration of removing $\alphain_0$.)
  We call such diagrams {\em half-extended middle diagrams}.
  We can define the associated DA bimodule as before, except that now
  the input algebra for a half extended middle diagram is obviously
  $\Clgin$, whereas the output algebra is {\em a priori} $\Blgout$.

  \begin{lemma}
    \label{lem:RemoveAlphaIns}
    The DA bimodules for an extended middle diagram and a half-extended 
    middle diagram are related by
  \[ \lsup{\Blgout}\DAmodHalfExt(\HmidHalfExt)_{\Clgin}=~
  \lsup{\Blgout}\DAmodExt(\HmidExt)_{\Blgin}\DT~\lsup{\Blgin}[\iota]_{\Clgin}.\]
  \end{lemma}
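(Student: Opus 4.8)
\textbf{Proof strategy for Lemma~\ref{lem:RemoveAlphaIns}.}
The plan is to show that the tensor product on the right-hand side is computed by exactly the same moduli spaces that define $\DAmodHalfExt(\HmidHalfExt)$, once one understands how the type $D$ structure $\lsup{\Blgin}[\iota]_{\Clgin}$ acts as a ``filter'' on algebra inputs. First I would recall that $\lsup{\Blgin}[\iota]_{\Clgin}$ is, by definition, the one-generator bimodule induced by the inclusion $\iota\colon \Clgin\to\Blgin$, which on the level of idempotents is the inclusion of the subring $\RestrictIdempRing(2m,m)$ spanned by those $\Idemp{\x}$ with $\x\cap\{0,2m\}=\emptyset$. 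Thus taking $\DAmodExt(\HmidExt)\DT\lsup{\Blgin}[\iota]_{\Clgin}$ amounts to restricting the operations $\delta^1_{\ell+1}$ of $\DAmodExt(\HmidExt)$ to input sequences $(a_1,\dots,a_\ell)$ of pure algebra elements of $\Clgin\subset\Blgin$, and to those extended middle Heegaard states $\x$ whose incoming idempotent $\IdempIn(\x)=\Idemp{\alphain(\x)}$ satisfies $\alphain(\x)\cap\{0,2m\}=\emptyset$; i.e., $\x$ misses the arcs $\alphain_0$ and $\alphain_{2m}$. The underlying vector spaces then match: an extended middle state missing $\alphain_0$ and $\alphain_{2m}$ is exactly a half-extended middle state, since in $\HmidHalfExt$ those two arcs have been deleted.

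The key step is a bijection between the relevant moduli spaces. Given an extended middle diagram $\HmidExt$, the curves counted in $\delta^1_{\ell+1}$ have vanishing multiplicity at $\Zmid_0$ and $\Zmid_1$, and the boundary-monotonicity condition at $\Zin_1$ (and $\Zin_{2m}$) forces their asymptotics there to be compatible with an idempotent avoiding $\{0,2m\}$ once the input algebra elements are taken in $\Clgin$; this is precisely the content of Proposition~\ref{prop:Ideal} (the ideal criterion) applied to the boundary components adjacent to the middle punctures. Concretely, I would argue: (i) any holomorphic flowline for $\HmidExt$ whose incoming constraint packets are $(\x,\vec a)$-compatible with $\vec a$ in $\Clgin$ cannot have a Reeb chord on $\Zmid_0$ or $\Zmid_1$ (vanishing multiplicity there), so $\alphain_0$ and $\alphain_{2m}$ play no role in the asymptotics; (ii) deleting $\alphain_0$ and $\alphain_{2m}$ from $\Sigma_0$ changes neither the set of shadows $\doms(\x,\y)$ nor the index formula (the Euler measure and point-measure computations of Lemma~\ref{lem:GradingsWellDefined} and Proposition~\ref{prop:ExpectedDimension} are insensitive to arcs disjoint from the curves of the relevant $\phi$), so $\ModFlow(\x,\y,\rhos_1,\dots,\rhos_h)$ is literally the same moduli space in $\HmidHalfExt$ as in $\HmidExt$; (iii) the output algebra element $\bOut(B,\rhos_1,\dots,\rhos_h)\in\Blgout$ is computed identically. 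Combining these, the restricted operation on $\DAmodExt(\HmidExt)\DT\lsup{\Blgin}[\iota]_{\Clgin}$ equals the defining operation on $\DAmodHalfExt(\HmidHalfExt)$, which is the claimed identity.

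I expect the main obstacle to be a careful verification of point (i): ruling out Reeb chords on the middle boundary components when the incoming inputs are constrained to $\Clgin$. The subtlety is that, a priori, a curve could have positive multiplicity near $\Zmid_0$ coming from an interior region of $\Sigma_0$ rather than from a prescribed Reeb chord; one must use that such a curve's shadow would then have positive local multiplicity at $\Zmid_0$, contradicting the defining requirement (for extended diagrams) that curves have zero multiplicity there, or — on the $\HmidHalfExt$ side — that the idempotent $\IdempIn(\x)$ and all output weights are unaffected because $\alphain_0$ bounds no elementary domain once deleted. I would also need to check the finiteness/grading bookkeeping (the analogues of Lemma~\ref{lem:MgrAndAgr} and the finiteness lemma preceding Proposition~\ref{prop:DAmid}) transfer verbatim, but these are routine since the two diagrams differ by arcs that meet no $\beta$-circle in the relevant configurations. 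A symmetric argument handles $\alphain_{2m}$ and $\Zmid_1$ simultaneously.
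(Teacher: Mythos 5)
Your proposal is essentially correct and follows the same two-step structure as the paper's proof: first identify the underlying vector spaces (restricted idempotents force the states to contain $x_0$ and $x_{2m+1}$, hence to miss $\alphain_0$ and $\alphain_{2m}$, so they are exactly the half-extended states), and then show the relevant moduli spaces coincide because no curve can have $\alpha$-boundary on the deleted arcs. The one place where your reasoning is less precise than it should be is in point (i). You center the argument on ruling out Reeb chords on $\Zmid_0,\Zmid_1$ and then on local multiplicity near $\Zmid_0$; but zero multiplicity at $\Zmid_0,\Zmid_1$ is already built into the definition of the moduli spaces for an extended diagram and is not the crux. The issue that actually needs addressing is whether a curve can have a boundary arc lying on $\alphain_0$ (or $\alphain_{2m}$) away from the middle puncture, with Reeb chords on $\Zin_1$ (or $\Zin_{2m}$). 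The clean dichotomy to state is: any such boundary arc would have its two endpoints be either Reeb chord punctures or $\pm\infty$ punctures; the former is forbidden because the restricted incoming algebra $\Clgin$ contains no chords with an endpoint on $\alphain_0$ or $\alphain_{2m}$ (the idempotent ring omits $0$ and $2m$), and the latter is forbidden because the restricted generators contain $x_0\in\alphaout_0\cap\beta_0$ and $x_{2m+1}\in\alphaout_{2n}\cap\beta_{g+m+n}$ rather than the competing intersection points $y_0\in\alphain_0\cap\beta_0$, $y_{2m+1}\in\alphain_{2m}\cap\beta_{g+m+n}$. You do have both ingredients present in your write-up (the idempotent restriction in your opening paragraph, and boundary monotonicity at $\Zin_1$), so the argument can be closed; but as currently organized it does not quite say why the $\alpha$-arc endpoints must be of one of those two types, which is the clean way to finish. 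Also, the appeal to Proposition~\ref{prop:Ideal} is misplaced: that proposition characterizes when pure algebra elements fall into the ideal $\mathcal J\subset\BlgZ$, which is a separate issue from the idempotent restriction you actually need here; the restriction that $0,2m\notin\alphain(\x)$ follows simply from the definition of $\lsup{\Blgin}[\iota]_{\Clgin}$ and needs no ideal-theoretic input. Points (ii) and (iii) are fine and match the paper's conclusion.
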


  \begin{proof}
    Recall that $\beta_0$ and $\beta_{g+m+n}$ denote the two
    $\beta$-circles that encircle $\Zmid_0$ and $\Zmid_1$
    respectively.  Let $x_0=\beta_0\cap\alphaout_0$ and
    $x_{2m+1}=\beta_{g+m+n}\cap\alphaout_{2n}$; and
    $y_0=\beta_0\cap\alphain_0$ and
    $y_{2n+1}=\beta_{g+m+n}\cap\alphaout_{2n+1}$; The generators of
    $\lsup{\Blgout}\DAmodExt(\HmidExt)_{\Blgin}\DT~\lsup{\Blgin}[\iota]_{\Clgin}$
    are those middle states with $\{x_0,x_{2m+1}\}\subset \x$, which
    in turn are the states generating $\DAmodHalfExt(\HmidHalfExt)$.
    Similarly, the holomorphic curves counted in the module actions
    for
    $\lsup{\Blgout}\DAmodExt(\HmidExt)_{\Blgin}\DT~\lsup{\Blgin}[\iota]_{\Clgin}$
    cannot have an $\alpha$-interval which maps to $\alphain_0$ or
    $\alphain_{2m+1}$: for the endpoints would have to either be at
    Reeb chords, but the incoming algebra does not allow Reeb chords
    with boundary on $\alphain_0$ or $\alphain_{2m+1}$; or they would
    have to be $\pm \infty$ punctures, but the generators we are
    considering contain $x_0$ and $x_{2m+1}$, not the corresponding
    $y_0$ or $y_{2n+1}$.
    Thus, the holomorphic curves used to define the $\Ainfty$ operations on
    $\lsup{\Blgout}\DAmodExt(\HmidExt)_{\Blgin}\DT~\lsup{\Blgin}[\iota]_{\Clgin}$
    coincide with the ones used to define 
    $\lsup{\Blgout}\DAmodExt(\HmidHalfExt)_{\Clgin}$.
  \end{proof}
    
{\bf{Step 2: Remove $\alphaout_0$ and $\alphaout_{2n}$.}}
This is a neck stretching argument, in the spirit of Step 1 in the
pairing theorem.

We start by analyzing moduli spaces that are relevant after the neck
stretching.  To this end, let $\Hmid_0$ consist of a punctured disk,
whose puncture we think of as the (filled) middle boundary $\Zmid$,
and whose boundary we label ${\mathcal Z}$, equipped with a single
embedded $\beta$-circle that separates the puncture from the boundary,
and a single $\alpha$-arc, denoted $\alpha$, that runs from the
boundary to the puncture, meeting $\beta$ in a single point, which we
denote $x$. See Figure~\ref{fig:LocalStab}.

\begin{figure}[h]
 \centering
 \input{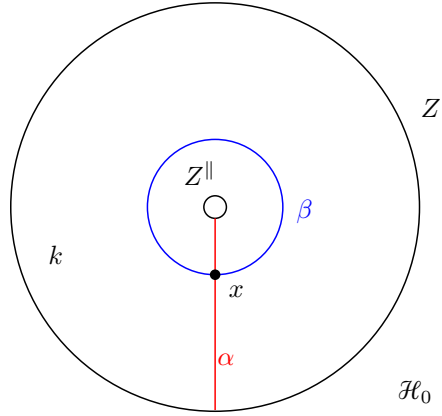}
 \caption{{\bf The diagram ${\mathcal H}_0$.}  We will consider
   homology classes with local multiplicity $k$ near the boundary $Z$,
   as indicated.}
 \label{fig:LocalStab}
 \end{figure}

A homology class of flows from $x$ to itself is determined by its
local multiplicity $k$ at the puncture; denote the space
$\ModFlow^{[k]}(x,x)$.

\begin{lemma}
  The space $\ModFlow^{[k]}(x,x)$ is $2k$-dimensional.
  There is a dense, open subset of $\ModFlow^{k}(x,x)$ consisting of
  those curves whose Reeb orbits are simple.
\end{lemma}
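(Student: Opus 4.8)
The claim is that $\ModFlow^{[k]}(x,x)$, the space of pseudo-holomorphic flowlines in $\mathcal H_0$ from the generator $x$ to itself with local multiplicity $k$ at the puncture $\Zmid$, is a smooth manifold of dimension $2k$, with a dense open subset consisting of curves whose Reeb orbits are all simple. The plan is to compute the expected dimension from the index formula of Proposition~\ref{prop:ExpectedDimensionA} (or its type $D$ analogue, Equation~\eqref{eq:IndEmb}), verify transversality by appealing to the general-position results already established (Theorem~\ref{thm:GeneralPosition} and Theorem~\ref{thm:GeneralPositionA}), and then handle the genericity of simple orbits via an explicit description of the moduli space in this very simple local model.

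First I would set up the homology class: since $\mathcal H_0$ is a punctured disk with one $\beta$-circle separating the puncture from the boundary $Z$ and one $\alpha$-arc meeting $\beta$ once at $x$, a two-chain from $x$ to itself is determined by its multiplicity $k$ at $\Zmid$; call the underlying domain $B_k$. Here $d=g+m+n-1$ specializes to $d=1$ for this local diagram (one $\beta$-circle). I would compute $e(B_k)$, $n_x(B_k)$, and the boundary weight, and plug into the index formula. The point is that a rigid curve crossing $\Zmid$ with multiplicity one (a single ``index $1$'' building block, as in the model computations of Figure~\ref{fig:IndexModel} and the discussion around Example~\ref{eq:IotaOfChord}) contributes, and after quotienting by $\R$ one expects $\dim \ModFlow^{[k]}(x,x)=2k$ --- morally because such a curve decomposes as $k$ ``teardrops,'' each contributing $2$ (one translation parameter and one cut/branch parameter), matching the pattern $e=\frac{w+1}{2}$, $n_x=\frac{w-1}{2}$ for weight-$w$ orbits noted in the proof of Proposition~\ref{prop:ExpectedDimension}. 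I would verify the arithmetic directly; this is routine once the domain $B_k$ is identified.

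Next I would address smoothness: for generic (in fact, here, \emph{any} admissible split) almost-complex structure, the relevant moduli spaces in $\mathcal H_0$ are transversely cut out. Since all source components in this model are topological disks (the diagram has genus $0$ and $d=1$), one can invoke the transversality statement used for curves at East infinity (the proposition preceding Definition~\ref{def:HolomorphicStory}, i.e.~\cite[Proposition~5.14]{InvPair}) together with Theorem~\ref{thm:GeneralPositionA}; boundary monotonicity is automatic since $d=1$. So $\ModFlow^{[k]}(x,x)$ is a smooth manifold of the expected dimension $2k$.

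Finally, for the statement about simple Reeb orbits: the locus where some Reeb orbit asymptotic has multiplicity $\geq 2$ is a finite union of strata, each of strictly positive codimension --- indeed, replacing two simple-orbit punctures at different heights by a single multiplicity-$2$ orbit drops the index by $2$ by the orbit correction term $2-2w$ in the index formula (as in Example~\ref{ex:CollisionEnd} and the index discussion in the proof of Proposition~\ref{prop:ExpectedDimension}). Hence the multiple-orbit locus is a closed subset of codimension $\geq 1$, so its complement is dense and open. I expect the main obstacle to be bookkeeping the index computation cleanly: correctly identifying $e(B_k)$, $P(B_k)$, and the boundary weight $\weight_\partial(B_k)$ in the filled surface $\overline{\mathcal H}_0$ (bearing in mind Remark~\ref{rem:EulerMeasures} about Euler measures being taken in $\overline\Sigma$), and making sure the ``$2k$'' drops out after the $\R$-quotient; transversality and the density statement are then essentially formal consequences of the general-position machinery already in place.
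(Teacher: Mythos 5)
Your overall approach is the same as the paper's: plug the domain $B_k$ (with $e(B_k)=k$ and point measure $P(B_k)=k$) into the index formula to get $2k$, invoke planarity of the source to get transversality, and argue that non-simple-orbit configurations are positive codimension. One genuine issue to correct: you twice refer to obtaining $2k$ ``after quotienting by $\R$,'' but $\ModFlow^{[k]}(x,x)$ is the \emph{unreduced} moduli space --- no $\R$-quotient is taken. This is forced by the very next lemma in the paper, which asserts that $\ev\colon \ModFlow^{[k]}(x,x)\to \Sym^k([0,1]\times\R)$ is proper of odd degree; the target is real $2k$-dimensional, so the domain must be as well. If you did mod out by $\R$ you would land on $2k-1$, contradicting the statement. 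Your ``$k$ teardrops each contributing $2$'' heuristic (translation plus cut parameter per teardrop) already gives $2k$ without any quotient, so the remedy is simply to drop the $\R$-quotient language; the index computation $e(B_k)+P(B_k)=k+k=2k$ is exactly what the paper records. Your extra paragraph on density of simple orbits, using the $2-2w$ correction term to see that multiple orbits drop the index, is a reasonable fleshing-out of a point the paper leaves implicit.
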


\begin{proof}
  The dimension is a straightforward application of the dimension formula:
  the Euler measure of the region is $k$, and the point measure is $k$.
  Smoothness is a consequence of the fact that the domain curve is also planar.
\end{proof}

Consider the evaluation map $\ev\colon \ModFlow^k(x,x)\to
\Sym^k([0,1]\times \R)$, which projects the punctures of the source
curves in $\ModFlow^{k}(x,x)$ to $[0,1]\times \R$. 

\begin{lemma}
  \label{lem:EvaluationDegreeOne}
  The map
  $\ev\colon \ModFlow^{[k]}(x,x)\to \Sym^k([0,1]\times \R)$
  is a proper map of odd degree.
\end{lemma}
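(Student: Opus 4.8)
The plan is to compute the degree of $\ev$ by choosing a convenient point in $\Sym^k([0,1]\times\R)$ at which the fiber is easy to enumerate, and to show that the count there is odd (in fact, I expect it to equal $1$ generically, after dividing out suitable symmetries), with the degree being well-defined because $\ev$ is proper. Properness follows from Gromov compactness (Proposition~\ref{prop:Compactness}) together with the observation that the diagram $\Hmid_0$ admits no boundary degenerations (the single $\beta$-circle bounds a disk containing only the puncture $\Zmid$, and an $\alpha$-boundary degeneration is impossible since there are no closed $\alpha$-curves and the $\alpha$-arc runs to the puncture); there is also no target $\beta$-boundary breaking contributing to non-compactness of the relevant fiber. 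So the only possible degenerations in a sequence of curves with fixed evaluation are breakings into two-story buildings, and one checks, as in the proof of Proposition~\ref{prop:ExpectedDimension} and Lemma~\ref{lem:BoundaryDegenerationsDegree1}, that when the $k$ evaluation points are held at $k$ \emph{distinct} heights $t_1 < \dots < t_k$ in $[0,1]\times\R$ (with prescribed $s$-coordinates), no such breaking occurs within the fiber.

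First I would reduce to the case of distinct, widely separated heights: choose a point $p = \{(s_1,t_1),\dots,(s_k,t_k)\}\in\Sym^k([0,1]\times\R)$ with $t_1 \ll t_2 \ll \dots \ll t_k$ and each $s_i$ generic in $(0,1)$. A curve $u\in\ev^{-1}(p)$ then has each of its $k$ orbit punctures at a distinct $t$-level, and by the dimension count each such level is ``rigid'' in the appropriate sense. Here I expect to argue that near each puncture the curve looks like the unique local model (an orbit wrapping $\Zmid$ once, in the spirit of the model computations of Figure~\ref{fig:IndexModel}), and that the global curve is then obtained by stacking these $k$ model pieces; the branched cover $\pi_{\CDisk}\circ u$ over $[0,1]\times\R$ is forced, and the count of such stacked configurations with the prescribed $s$-coordinates is exactly $1$. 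This is the heart of the matter: one must verify that there is precisely one such curve, up to nothing (since the $\R$-action is already used up by fixing the heights, or rather we work with parametrized curves and the evaluation pins everything down). The key structural input is that the source is planar, so the moduli space is transversally cut out (cf.\ the transversality in Theorem~\ref{thm:GeneralPositionA}), and the evaluation map is then a submersion near such points.

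The main obstacle I anticipate is controlling what happens when several of the $t_i$ are allowed to coincide or collide --- i.e.\ verifying that the degree is genuinely independent of the point in $\Sym^k([0,1]\times\R)$, equivalently that $\ev$ is proper so that the degree is well-defined at all. The subtlety is that as two orbit punctures come to the same height, one could in principle produce a curve with a multiply-covered orbit (a non-simple orbit wrapping $\Zmid$ with multiplicity $2$), which is exactly the kind of codimension-one phenomenon that could change the count; but the previous lemma guarantees that such curves form a positive-codimension subset, and the collision ends (in the terminology of Theorem~\ref{thm:AEnds}, adapted to this toy diagram) contribute in codimension $\geq 1$, so they do not affect the degree. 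I would also need to check that no curve in the fiber can ``run off to $s=1$'' producing an orbit curve or chord at east infinity in a way that escapes the compactification transversally to $\ev$; but since the diagram has no outgoing boundary components other than the puncture $\Zmid$ (whose multiplicity $k$ is fixed), and the single $\alpha$-arc's endpoint structure forbids the relevant chord degenerations, this does not occur. Putting these together: $\ev$ is proper, hence has a well-defined $\bmod 2$ degree, and evaluating at a generic point with distinct spread-out heights gives an odd count (indeed $1$), so the degree is odd.

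Finally, I would remark that the reason we only need the $\bmod 2$ degree is that all the gluing and counting arguments downstream (in the proof of Proposition~\ref{prop:ExtendDAPrecise}, Step 2) are carried out over $\Field = \Zmod 2$, so it suffices to know $\#\ev^{-1}(p) \equiv 1 \pmod 2$ for generic $p$; this is exactly what the stacking-of-models argument delivers, and it is the analogue in this setting of Lemma~\ref{lem:BoundaryDegenerationsDegree1} and Lemma~\ref{lem:BoundaryDegenerationsDegree1A}, proved by the same strategy of reducing to $g=0$-type model computations and using that planar sources give transversally cut out moduli spaces.
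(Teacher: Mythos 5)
Your proposal takes a genuinely different route from the paper. You argue by degeneration: push the $k$ evaluation heights far apart, identify the limit as a stack of $k$ rigid local orbit-curve models, and invoke gluing (plus properness via Gromov compactness and exclusion of boundary breaking) to conclude the degree is $1$ at such a point, hence odd everywhere. The paper instead inverts the surface so that $\Hmid_0$ becomes the unit disk with $\beta$ the unit circle and the puncture at the origin, applies the Schwartz reflection principle to extend each half-disk holomorphic map to a self-map $f\colon D\to D$ with $f(\partial D)\subset\partial D$ and $f$ real on the real axis, and then uses the classical classification of such maps as Blaschke products $\prod_{i=1}^k\frac{(z-\alpha_i)(z-\overline{\alpha_i})}{(1-\overline{\alpha_i}z)(1-\alpha_i z)}$ with $\{\alpha_i\}\in\Sym^k(D^+)$ --- which is precisely the claim that $\ev$ is a \emph{homeomorphism} for the standard complex structure, whence odd degree in general. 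The paper's argument buys you an exact identification with zero transversality or gluing overhead; yours is the more standard Floer-theoretic strategy and would generalize to situations without such an explicit model, but as written it has a real gap: you say you ``expect'' the count at well-separated heights to be $1$ and that the curve is ``obtained by stacking'' $k$ model pieces, but stacking at finite separation is not a broken building --- you would need an actual gluing theorem for orbit curves (in the spirit of Proposition~\ref{prop:OrbitCurve}) to deduce that the fiber over a point with large-but-finite separation has an odd number of elements, and you haven't stated or invoked such a result. Until that step is supplied, your argument is a plausible sketch rather than a proof.
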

 
\begin{proof}
  Properness can be thought of as a consequence of Gromov's compactness
  theorem, since any non-constant holomorphic curves from $x$ to $x$
  projects to some symmetric product $\Sym^k([0,1]\times\R)$. It
  remains to compute the degree, which we do with a model computation.

  Invert the Heegaard surface, so that it is a disk $D$ in the complex
  plane, $Z$ corresponds to the origin, $\beta$ corresponds to the
  unit circle.  We can think of the domain of the holomorphic curve
  $D^+$ as the upper half disk (i.e. $z\in D$ whose imaginary part is
  non-negative).  The holomorphic disks we are considering are
  holomorphic maps from $D^+$ to $D$ which carry the real interval
  $[-1,1]$ in $\partial D^+$ to the real interval in $D$, the upper half
  circle in $\partial D^+$ to the boundary of $D$, and the points $\{\pm
  1\}$ to $1$. By the Schwartz reflection principle, these correspond
  to holomorphic maps $f\colon D\to D$ so that
  \begin{enumerate}[label=(B-\arabic*),ref=(B-\arabic*)]
  \item $f(\partial D)\subset \partial D$
  \item 
    $f(D\cap \R)\subset \R$; i.e. ${\overline{f(z)}}=f({\overline z})$.
  \end{enumerate}
  Consider those $f$ for which $f(D\cap \R)\neq 0$. (These correspond
  to those holomorphic curves that have no Reeb chord on their boundary.)
  By classical complex analysis holomorphic maps $f$ as above can be uniquely
  written in the form
  \[ f(z)=\prod_{i=1}^{k}\frac{(z-\alpha_i)(z-{\overline\alpha_i})}
  {(1-{\overline\alpha}_iz)\cdot (1-{\alpha}_i z)},\]
  where $\{\alpha_1,\dots,\alpha_k\}\in\Sym^k(D^+)$.
  This shows that $\ev$ induces a homeomorphism, for suitable choices
  of complex structure $\beta$. It follows that the degree is odd in general.
\end{proof}

\begin{prop}
  \label{prop:ExtendMainStep}
  There is an identification
  \[ \lsup{\Blgout}\DAmodHalfExt(\HmidHalfExt)_{\Clgin}
  \simeq 
  \lsup{\Blgout}[\iota]_{\Clgout}\DT~ \lsup{\Clgout}\DAmod(\Hmid)_{\Clgin}\]
\end{prop}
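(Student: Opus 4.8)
The proof of Proposition~\ref{prop:ExtendMainStep} will follow the neck-stretching blueprint used for Theorem~\ref{thm:PairAwithD} and Theorem~\ref{thm:PairDAwithD}, but in a much simpler setting: we are gluing the tiny standard piece $\Hmid_0$ (a punctured disk with one $\beta$-circle and one $\alpha$-arc, as in Figure~\ref{fig:LocalStab}) to the diagram $\Hmid$ along its $\Zout_1$ boundary (and symmetrically the analogous piece along $\Zout_{2n}$). First I would observe that $\HmidHalfExt$ is obtained from $\Hmid$ by gluing two copies of $\Hmid_0$ in this way (one at $\Zin_1$–$\Zout_1$ side, creating $\Zmid_0$ and $\beta_0$, and one at the $\Zin_{2m}$–$\Zout_{2n}$ side), and that the generators of $\DAmodHalfExt(\HmidHalfExt)$ are in bijection with those of $\DAmod(\Hmid)$ via the unique generator $x$ of $\Hmid_0$ (the point $\beta_0\cap\alphaout_0$). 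Under this bijection the incoming idempotents agree and the outgoing idempotents, now recorded over the larger ring $\Blgout$, are exactly the images of the old $\Clgout$-idempotents under $\iota$: the extra $\alphaout_0$ and $\alphaout_{2n}$ arcs force $0$ and $2n$ to lie outside $\alphaout(\x)$, so the extended module lands in the $\iota$-image, matching $\lsup{\Blgout}[\iota]_{\Clgout}\DT \lsup{\Clgout}\DAmod(\Hmid)_{\Clgin}$ on the nose at the level of underlying bimodules.

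Next I would compare the module operations. After stretching the neck along the glued circle, Gromov compactness identifies the moduli spaces defining $\delta^1_{\ell+1}$ for $\HmidHalfExt$ with fibered products of moduli spaces for $\Hmid$ with moduli spaces in $\Hmid_0$, the latter being the spaces $\ModFlow^{[k]}(x,x)$ from the excerpt, fibered over the evaluation maps to $\Sym^k([0,1]\times\R)$. The key input is Lemma~\ref{lem:EvaluationDegreeOne}: the evaluation map $\ev\colon \ModFlow^{[k]}(x,x)\to \Sym^k([0,1]\times\R)$ is proper of odd degree. This means that for each matched configuration coming from $\Hmid$ (a rigid curve in $\Hmid$ with some collection of Reeb chords/orbits on $\Zout_1$, of total weight $k$), the number of ways to complete it to a matched curve for $\HmidHalfExt$ is odd, i.e. $1$ mod $2$. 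Since we work over $\Field=\Zmod 2$, this gives a bijection-up-to-parity between the curves counted in $\delta^1_{\ell+1}$ for $\HmidHalfExt$ and those counted for $\Hmid$, and the algebra element $\bOut(B)\in\Blgout$ read off from the glued curve is precisely $\iota$ of the $\Clgout$-element read off from the $\Hmid$-curve (the multiplicities at $\Zmid_0$, $\Zmid_1$ vanish, and at $\Zout_1$, $\Zout_{2n}$ the $\Hmid_0$-piece contributes no new idempotent content). I would also need to check there are no new ends or degenerations: $\alpha$-boundary degenerations in $\Hmid_0$ are excluded because the boundary of $\Hmid_0$ is where we match, and any such degeneration would violate the hypotheses as in Lemma~\ref{lem:NoBoundaryDegenerationsDA}; $\beta$-boundary degenerations in $\Hmid_0$ would cover the puncture $\Zmid$ with positive multiplicity, which is excluded by our multiplicity constraints; and closed components are excluded by dimension as in Lemma~\ref{lem:NoClosedCurvesDA}. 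Combining with Lemma~\ref{lem:RemoveAlphaIns} then yields Proposition~\ref{prop:ExtendDAPrecise}, hence Proposition~\ref{prop:ExtendDA}.

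\textbf{Main obstacle.} The heart of the argument is Lemma~\ref{lem:EvaluationDegreeOne} together with the gluing statement that realizes the stretched moduli spaces as honest fibered products with $\ModFlow^{[k]}(x,x)$; this is where one must be careful that the Reeb chords/orbits appearing on the $\Zout_1$-boundary of the $\Hmid$-curves can always be matched against asymptotics of curves in $\Hmid_0$, and that the matching evaluation is transverse. The degree-one computation itself is the model computation via the Schwarz reflection principle and the factorization of finite Blaschke products already sketched in the excerpt, so the obstacle is really bookkeeping: verifying that every constraint packet that can appear in $\delta^1_{\ell+1}$ for $\Hmid$ is compatible with a completion in $\Hmid_0$, and that the grading/idempotent data transports correctly under $\iota$. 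I expect the transversality and the exclusion of spurious degenerations (in the spirit of Lemmas~\ref{lem:NoBoundaryDegenerationsDA}, \ref{lem:NoClosedCurvesDA}, \ref{lem:NoBoundaryDegenerations2DA}, adapted to this very small local diagram) to be the most delicate routine step, but not genuinely difficult given the planarity of $\Hmid_0$ and the machinery already developed in Sections~\ref{sec:CurvesD}--\ref{sec:Bimodules}.
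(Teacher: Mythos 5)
Your proposal follows the same strategy as the paper's proof: stretch the neck along the small circle $\gamma_0$ parallel to $\beta_0$ (and then repeat for the $\Zmid_1$ side), identify the moduli spaces of the half-extended diagram as fibered products of those for $\Hmid$ with $\ModFlow^{[d]}(\Hmid_0)$, and invoke Lemma~\ref{lem:EvaluationDegreeOne} to conclude that the counts agree mod~$2$. One minor imprecision worth noting: the fibered-product evaluation is taken at the interior point $z_0$ (the filling of $\Zmid_0$), not at the boundary component $\Zout_1$ as you wrote, but this does not affect the correctness of the overall argument.
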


\begin{proof}
  Start from $\Dmod(\Hmid)$. We will add first $\Zmid_0$, $\beta_0$,
  $\alphaout_0$ to obtain a diagram ${\mathcal H}'$; and then add
  $\Zmid_1$, $\beta_{g+m+n}$, and $\alphaout_{2n}$ to obtain $\HmidHalfExt$.
  
  Fix a curve $\gamma_0$ in ${\mathcal H}'$ which is parallel to
  $\beta_0$, dividing $\HmidHalfExt$ into two components, one of which
  is homeomorphic to $\Hmid_0$, and another looks like $\HmidHalfExt$
  with $\beta_0$ removed.  Stretch the neck normal to $\gamma_0$, so
  that $\HmidHalfExt={\mathcal H}'\cup \Hmid_0$.
  The diagram ${\mathcal H}'$ can also be used to define a $DA$ bimodule
  of the form $\lsup{\Blgout}\DAmodP_{\Clgin}$.

  Let $\ModFlow^B(\x,\y;\rhos_1,\dots,\rhos_k)$ be some moduli space of
  holomorphic curves which appears in the definition of the module
  actions for $\Hmid$, equipped with a puncture point $z_0$ (thought
  of as a filling of $\Zmid_0$). We have an analogous 
  moduli spaces $\ModFlow^B_{{\mathcal H}'}(\x,\y;\rhos_1,\dots,\rhos_k)$ which
  define the actions on $\lsup{\Blgout}\DAmodP_{\Clgin}$.

  Stretching the neck gives a fibered product description:
  \[ \ModFlow^B_{{\mathcal H}'}(\x,\y;\rhos_1,\dots,\rhos_k)
  =\ModFlow^B(\x,\y;\rhos_1,\dots,\rhos_k)\times_{\Sym^d([0,1]\times\R)}
  \ModFlow^{[d]}(\Hmid_0),\]
  where the fibered product is taken over the evaluation at the puncture $z_0$
  \[ \ModFlow^B(\x,\y;\rhos_1,\dots,\rhos_k)\to
  \Sym^d([0,1]\times\R) \] (where here $d$ depends on the homotopy
  class of $B$), and the evaluation map from
  Lemma~\ref{lem:EvaluationDegreeOne}. Indeed, by
  Lemma~\ref{lem:EvaluationDegreeOne}, it follows that
  $\DAmod(\Hmid)\cong \DAmodP({\mathcal H}')$.

  Adding $\beta_{g+m+n}$, $\Zmid_1$, and $\alphaout_{2n}$ to
  ${\mathcal H}'$ in the same manner, we obtain an 
  isomorphism $\DAmodP({\mathcal H}')\cong \DAmodExt(\HmidExt)$.
\end{proof}

We now have all the ingredients to prove the following:

\begin{proof}
  [Proof of Proposition~\ref{prop:ExtendDAPrecise}] Combine
  Lemma~\ref{lem:RemoveAlphaIns} and
  Proposition~\ref{prop:ExtendMainStep}.
\end{proof}

\section{Some algebraically defined bimodules}
\label{sec:AlgDA}

Having defined the holomorphic objects of study, we now turn to their
computation.  In~\cite{Bordered2}, we associated bimodules to
crossings, maxima, and minima.  Their description did not involve
pseudo-holomorphic curves: rather, they were defined via explicit,
algebraic descriptions. These bimodules were defined over the algebra
$\Alg$, and some versions were defined over a related algebra
$\DuAlg$. In this section, we adapt these constructions to the curved
framework. These modified bimodules will play a central role in an
explicit computation of the holomorphic objects
(e.g. Theorem~\ref{thm:MainTheorem} and Theorem~\ref{thm:ComputeD}
below).

\subsection{ Algebraically defined, curved bimodules}
\label{subsec:FormalModules}

Consider the $DA$ bimodules from~\cite{Bordered2} associated to a
positive crossings, a negative crossing, a maximum, and a minimum,
$\lsup{\Alg_2}\Pos^i_{\Alg_1}$, $\lsup{\Alg_2}\Neg^i_{\Alg_1}$,
$\lsup{\Alg_2}\Max^c_{\Alg_1}$, and $\lsup{\Alg_2}\Min^c_{\Alg_1}$.
(We suppress here the parameters (matching and strand numbers) of the algebras
appearing in these bimodules.) By slightly modifying the construction, we obtain corresponding
curved bimodules over $\cBlg$, which are related to the original bimodule as follows:

\begin{prop}
  \label{prop:CurvedDABimodules}
  There are curved bimodules associated to crossings, maxima, and
  minima over $\Blg$, $\lsup{\cBlg_2}\Pos^i_{\cBlg_1}$,
  $\lsup{\cBlg_2}\Neg^i_{\cBlg_1}$, $\lsup{\cBlg_2}\Max^c_{\cBlg_1}$, and
  $\lsup{\cBlg_2}\Min^c_{\cBlg_1}$, which are related to the corresponding
  bimodules over $\Alg$ defined in~\cite{Bordered2}
  by the following relations:
  \begin{align*} 
    \lsup{\Alg_2}T_{\cBlg_2} \DT \lsup{\cBlg_2}\Pos^i_{\cBlg_1} &\simeq
    \lsup{\Alg_2}\Pos^i_{\Alg_1}\DT \lsup{\Alg_1}T_{\cBlg_1} \\
    \lsup{\Alg_2}T_{\cBlg_2} \DT \lsup{\cBlg_2}\Neg^i_{\cBlg_1} &\simeq
    \lsup{\Alg_2}\Neg^i_{\Alg_1}\DT \lsup{\Alg_1}T_{\cBlg_1} \\
    \lsup{\Alg_2}T_{\cBlg_2} \DT \lsup{\cBlg_2}\Max^c_{\cBlg_1} &\simeq
    \lsup{\Alg_2}\Max^c_{\Alg_1}\DT \lsup{\Alg_1}T_{\cBlg_1} \\
    \lsup{\Alg_2}T_{\cBlg_2} \DT \lsup{\cBlg_2}\Min^c_{\cBlg_1} &\simeq
    \lsup{\Alg_2}\Min^c_{\Alg_1}\DT \lsup{\Alg_1}T_{\cBlg_1},
  \end{align*}
  where $\lsup{\Alg_i}T_{\cBlg_i}$ is the $\Blg$-to-$\Alg$ transformer
  from Definition~\ref{def:Transformer}.
\end{prop}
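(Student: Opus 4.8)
The plan is to construct the curved bimodules $\lsup{\cBlg_2}\Pos^i_{\cBlg_1}$, $\lsup{\cBlg_2}\Neg^i_{\cBlg_1}$, $\lsup{\cBlg_2}\Max^c_{\cBlg_1}$, $\lsup{\cBlg_2}\Min^c_{\cBlg_1}$ by taking the same underlying $\IdempRing$-bimodules and the same defining structure maps as in~\cite{Bordered2}, but reinterpreting them over $\Blg$ (rather than over $\Alg$), and then to verify the stated tensor-product relations directly. Recall from Section~\ref{subsec:BorderedAlgebras} that $\Alg(n,\Matching)$ is obtained from $\Blg(n)$ by adjoining exterior variables $C_{i,j}$ for $\{i,j\}\in\Matching$ with $dC_{i,j}=U_iU_j$ and $C_{i,j}^2=0$, and that the transformer $\lsup{\Alg}T_{\cBlg}$ (Definition~\ref{def:Transformer}) has $\delta^1_1(\One)=\sum_{(i,j)\in\Matching}C_{i,j}\otimes\One$, $\delta^1_2(\One,b)=b\otimes\One$, $\delta^1_\ell\equiv 0$ for $\ell>2$. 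The key structural observation is that tensoring a type $D$ (or $DA$) structure over $\cBlg$ with $\lsup{\Alg}T_{\cBlg}$ has the effect of ``resolving'' the curvature term $\mu_0^{\Matching}=\sum U_iU_j$ into the differential $dC_{i,j}=U_iU_j$: the curvature relation~\eqref{eq:CurvedDAbimoduleRelation1} for a curved $DA$ bimodule over $\cBlg$ becomes, after tensoring, precisely the uncurved $DA$ relation over $\Alg$, because the extra $\mu_0$-terms are cancelled by differentials of the adjoined $C_{i,j}$'s coming from $\delta^1_1$ of the transformer.

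\textbf{Main steps.} First I would make precise the passage from the $\Alg$-bimodules of~\cite{Bordered2} to $\Blg$-bimodules: the bimodules $\Pos^i,\Neg^i,\Max^c,\Min^c$ over $\Alg$ have structure maps $\delta^1_{\ell+1}$ whose outputs are sums of terms $a\otimes\y$ with $a\in\Alg$; each such $a$ is a monomial in the $U$'s and the $C_{i,j}$'s. I would define the curved $\Blg$-bimodule by replacing each occurrence of a factor $C_{i,j}$ in an output algebra element by nothing (deleting it) while simultaneously adding, to $\delta^1_1$, a term $(\sum_{(i,j)}U_i U_j\cdot(\text{appropriate cofactor}))\otimes\x$ — in other words, reading off the $\Blg$-bimodule as the ``homology'' of the $\Alg$-bimodule with respect to the internal differential $d C_{i,j}=U_iU_j$, much as the remark after Equation~\eqref{eq:DefcClg} says $\Blg$ is the homology of $\Alg$. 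Concretely, since the $C_{i,j}$ are exterior, every output algebra element of the $\Alg$-bimodule is uniquely of the form $U^{\vec n}\cdot C_{i_1,j_1}\cdots C_{i_r,j_r}$; the curved $\Blg$-bimodule keeps the $r=0$ terms in $\delta^1_{\ell+1}$ as-is, and the $r=1$ terms (i.e.\ those with a single $C_{i,j}$) contribute to a modified $\delta^1_1$; terms with $r\geq 2$ should not occur in these particular model bimodules (or can be absorbed), which I would check case by case from the explicit formulas in~\cite{Bordered2}. Second, I would verify that the resulting $(\delta^1_{\ell+1})$ satisfy the curved $DA$ relations~\eqref{eq:CurvedDAbimoduleRelation1} and~\eqref{eq:CurvedDAbimoduleRelation2} with curvature $\mu_0^{\Matching}$ on both $\cBlg_1$ and $\cBlg_2$; this follows formally from the uncurved $DA$ relations for the $\Alg$-bimodules by separating terms according to their $C$-degree, with the $\mu_0$-terms in~\eqref{eq:CurvedDAbimoduleRelation1} matching the $dC_{i,j}$-contributions. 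Third, I would prove the four displayed quasi-isomorphisms: by associativity of $\DT$ and the defining property of the transformer, $\lsup{\Alg_2}T_{\cBlg_2}\DT\lsup{\cBlg_2}X_{\cBlg_1}\DT\lsup{\cBlg_1}T'$ — where $\lsup{\cBlg_1}T'$ is a type $D$ structure, or where we keep the right leg as a bimodule leg — can be computed either by first forming $\lsup{\Alg_2}T_{\cBlg_2}\DT\lsup{\cBlg_2}X_{\cBlg_1}$, which by construction is quasi-isomorphic to $\lsup{\Alg_2}X'_{\cBlg_1}$ for the corresponding $\Alg_2$-$\cBlg_1$ bimodule, and then noting $\lsup{\Alg_2}X'_{\cBlg_1}\DT\lsup{\cBlg_1}T_{\cBlg_1'}\simeq \lsup{\Alg_2}X''_{\Alg_1}\DT\lsup{\Alg_1}T_{\cBlg_1}$ by the analogous statement on the $\Alg_1$-side; here one uses that tensoring the $\Alg$-bimodule $X$ with $T$ on the appropriate side undoes the ``delete the $C$'s'' operation. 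The cleanest formulation may be to observe that $\lsup{\Alg}T_{\cBlg}$ is a quasi-inverse (in an appropriate sense) to the natural functor induced by $\iota\colon\Blg\hookrightarrow$ (``homology of $\Alg$''), so that the whole statement is a bookkeeping identity once one sets up the right adjunction.

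\textbf{The main obstacle.} The hard part will be step one done honestly: extracting, from the explicit combinatorial definitions in~\cite{Bordered2} of $\Pos^i,\Neg^i,\Max^c,\Min^c$ over $\Alg$, the precise form of the structure maps and confirming that no output algebra element carries two or more $C_{i,j}$ factors (so that the ``resolve the curvature'' recipe is unambiguous and produces genuine $\delta^1_{\ell+1}$ landing in $\Blg$ plus a single curvature-resolving $\delta^1_1$ correction). If some bimodule does have $C^2$-free but multi-$C$ outputs, one must instead argue that passing to a homotopy-equivalent model kills them, or handle the general homological-perturbation-lemma reduction from $\Alg$ to its homology $\Blg$, which is more work but standard. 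A secondary subtlety is keeping track of the curvature appearing in $\cBlg_2$ in~\eqref{eq:CurvedDAbimoduleRelation1} but not in~\eqref{eq:CurvedDAbimoduleRelation2}, and making sure the transformer's $\delta^1_1$ correctly accounts for it on the outgoing side; this is exactly the point emphasized in the remark after~\eqref{eq:CurvedDAbimoduleRelation2}, and I would check it on the simplest model (say $\lsup{\cBlg_2}\Max^c_{\cBlg_1}$ with $n=1$) before asserting it in general. Everything else — the three tensor-product quasi-isomorphisms — is then formal manipulation with $\DT$ and Proposition~\ref{BK2:prop:AdaptedTensorProd}-style associativity, together with the already-established fact that $\DT$-ing with a transformer is a well-defined operation on the curved side.
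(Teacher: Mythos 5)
Your overall intuition — that the transformer ``resolves'' the curvature $\mu_0 = \sum U_i U_j$ into the differential $dC_{i,j}$, and that the curved $\Blg$-bimodules should arise from the $\Alg$-bimodules of~\cite{Bordered2} by stripping out the $C$-dependence — is exactly right and is also the idea behind the paper's proof. But the paper implements this with a sharper formal device that sidesteps the two obstacles you yourself flagged.

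Rather than doing term-by-term surgery on the structure maps of $\lsup{\Alg_2}X_{\Alg_1}$ (deleting $C$'s from outputs and hoping no multi-$C$ outputs occur), the paper isolates a notion of \emph{standard} bimodule: one where (a) any standard input sequence $a_1,\dots,a_{\ell-1}$ containing at least one element of $\Blg_1$ has output in $\Blg_2 \otimes X$, and (b) the sum $C^2\otimes\x + \sum_{\ell\geq 0} \delta^1_{1+\ell}(\x, C^1,\dots,C^1)$ lands in $\Blg_2\otimes X$. The curved bimodule is then \emph{defined} as the result of forming $\lsup{\Alg_2}X_{\Alg_1}\DT \lsup{\Alg_1}T_{\cBlg_1}$ and simply discarding the single term $C^2\otimes X$ that property (b) allows. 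This is cleaner than your proposed recipe in two respects. First, it does not require each individual $\delta^1_{\ell+1}$-output to be a monomial with at most one $C$; cancellations among multi-$C$ outputs inside the all-$C^1$ sum in (b) are allowed, and only the net $C^2\otimes X$ contribution matters. Second, because the $C^1$'s enter as \emph{inputs} fed to $X$ via the transformer's $\delta^1_1$ rather than by surgery on $X$'s \emph{outputs}, the first displayed equality $\lsup{\Alg_2}T_{\cBlg_2}\DT\lsup{\cBlg_2}X_{\cBlg_1} = \lsup{\Alg_2}X_{\Alg_1}\DT\lsup{\Alg_1}T_{\cBlg_1}$ is literally true by construction, with no quasi-isomorphism bookkeeping needed. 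The curved $\Ainfty$ relations then drop out: the dropped $C^2\otimes\x$ contributes exactly $(\partial C^2)\otimes\x = \mu_0^{\Blg_2}\otimes\x$ to the uncurved $\Ainfty$ relation, which is the curvature term on the outgoing side, while the incoming $\mu_0^{\Blg_1}$-terms account for $\partial C^1$. Finally, the paper invokes the already-established fact that the four model bimodules of~\cite{Bordered2} are standard, which is where the ``case by case'' verification you anticipated actually lives — but as a citation, not a fresh computation. In short, you would do well to replace your ad hoc $C$-stripping recipe with the ``standard bimodule'' abstraction; it makes the reduction to cited facts about~\cite{Bordered2}'s bimodules clean and removes the gap around potential multi-$C$ outputs.
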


\begin{proof}
  This follows from the fact that the bimodules defined over $\Alg$
  are ``standard'' in the sense
  of~\cite[Section~\ref{BK2:sec:Algebras}]{Bordered2}, which we recall
  presently.  First, recall that the algebras $\Alg_i$ are equipped
  with preferred elements $C_i$, and each is equipped with a
  subalgebra $\Blg_i\subset \Alg_i$, with the property that $d C_i\in
  \Blg_i$; indeed, $d C_i$ is the curvature $\mu_0^{\Blg_i}$
  associated to the matching.  A {\em standard sequence} is a sequence
  $a_1,\dots,a_{\ell-1}$ of elements of $\Alg_1$ with the property
  that each $a_i$ is either equal to $C_1$ or it is an element of
  $\Blg_1\subset \Alg_1$.
  \begin{defn}
    \label{def:Standard}
    A type $DA$ bimodule $\lsup{\Alg_2}X_{\Alg_1}$ is called {\em standard} if
    the following conditions hold:
    \begin{enumerate}[label=(DA-\arabic*),ref=(DA-\arabic*)]
    \item
      \label{prop:Adapted}
      The bimodule $X$ is finite-dimensional (over $\Field$),
      and it has a  a $\Q$-grading $\Delta$ and a further grading $\Agr$, as follows. 
      Think of $\Alg_i$ as associated to a union of points $Y_i$,
      and fix a cobordism $W_1$ from $Y_1$ to $Y_2$.
      The weights of a
      a homogenous algebra $a$ element can be viewed as an element
      of $H^0(Y_i)$; taking the coboundary then induces an element
      $\Agr(a)$ in $H^1(W,\partial W)$.
      These various gradings are related by the formulas:
      \begin{align*}
        \Delta(\delta^1_{\ell}(\x,a_1,\dots,a_{\ell-1}))&=
        \Delta(\x)+\Delta(a_1)+\dots+\Delta(a_{\ell-1})-\ell+2 \\
        \Agr(\delta^1_{\ell}(\x,a_1,\dots,a_{\ell-1}))&=
        \Agr(\x)+\Agr(a_1)+\dots+\Agr(a_{\ell}));
      \end{align*}
      (This is the condition that $X$ is {\em adapted to $W_1$}
      in the sense
      of~\cite[Definition~\ref{BK2:def:AdaptedDA}]{Bordered2}.)
    \item
      The one-manifold $W$ is compatible with the
      matching $\Matching_1$ used in the algebra $\Alg_1$, in the
      sense that
      $W\cup W(\Matching_1)$ has no closed components.
    \item\label{prop:LandsInB} For any standard sequence of elements $a_1,\dots,a_{\ell-1}$
      with at least some $a_i\in\Blg_1$,
      $\delta^1_{\ell}(\x,a_1,\dots,a_{\ell-1})\in\Blg_2\otimes X$.
    \item
      \label{prop:CStandard}
      For any $\x\in X$,
      \[
      C^2\otimes \x +
      \sum_{\ell=0}^{\infty}\delta^1_{1+\ell}(\x,\overbrace{C^1,\dots,C^1}^{\ell})\in
      \Blg_2\otimes X.\]
      \end{enumerate}
  \end{defn}
  If $\lsup{\Alg_2}X_{\Alg_1}$ is standard in the above sense, then
  the actions on 
  $\lsup{\Alg_2}X_{\Alg_1}\DT \lsup{\Alg_1}T_{\cBlg_1}$ are sums of
  actions on $X$ where the input is a standard sequence in the
  above sense. It follows from Property~\ref{prop:LandsInB} that
  the output lies in $\Blg_2$, except in the special case of the
  $\delta^1_1$ action on the tensor product, in which case
  Proposition~\ref{prop:CStandard} expresses the output in
  $\Blg_2\otimes X$ plus $C^2\otimes X$.  Dropping the term in
  $C^2\otimes X$, we obtain actions defining
  $\lsup{\cBlg_2}X_{\cBlg_1}$. As the notation suggests, the operations so defined give a curved
  DA bimodule. Indeed,  the curved bimodule relations can be seen as direct
  consequence of the ordinary bimodule relations for
  $\lsup{\Alg_2}X_{\Alg_1}$: in our construction of
  $\lsup{\Blg_2}X_{\Blg_1}$, we have dropped the term $C^2\otimes \x$,
  but its contribution to the $\Ainfty$ relation in $\Blg_2\otimes X$
  is precisely $(\partial C^2) \otimes \x=\mu_0^{\Blg_2}\otimes \x$;
  moreover, the terms involving for $\mu_0^{\Blg_1}$ account for the
  terms in the $\Ainfty$ relation containing $\partial C^1$.

  By construction,
  \[ \lsup{\Alg_2}T_{\cBlg_2}\DT \lsup{\cBlg_2}X_{\cBlg_1}
  =\lsup{\Alg_2}X_{\Alg_1}\DT \lsup{\Alg_1}T_{\cBlg_1}:\]
  
  The proposition now follows since all the bimodules listed above
  are standard. (See~\cite[Proposition~\ref{BK2:prop:PosExt} and
    Theorems~\ref{BK2:thm:MaxDA} and~\ref{BK2:thm:MinDual}]{Bordered2}.)
\end{proof}

(Note that the actions on $\lsup{\cBlg_2} X_{\cBlg_1}$ are precisely
the actions $\gamma_k$ defined
in~\cite[Section~\ref{BK2:sec:Fast}]{Bordered2}.)

\subsection{Bimodules over $\DuAlg$}

Recall that in~\cite{Bordered2}, we considered an algebra $\DuAlg$ that
was dual to $\Alg$. Sometimes, it is convenient to work with bimodules over this algebra
(and its quotient, described in Subsection~\ref{subsec:nDuAlg}).

It is natural to consider idempotents in $\DuAlg$
that are complementary to those in $\Alg$. In the present paper, when
we write $\cBlg$, we understand $\Blg(2n,n)$, with curvature specified by some matching 
$\Matching$. Correspondingly, when we
write $\DuAlg$ without decoration, we understand $\DuAlg(2n,n+1,\Matching)$
from~\cite{Bordered2}.

In~\cite[Section~\ref{BK2:sec:DuAlg}]{Bordered2}, we constructed
bimodules associated to crossings over the algebras $\DuAlg$, denoted
$\lsup{\DuAlg_1}\Pos^i_{\DuAlg_2}$ and $\lsup{\DuAlg_1}\Neg^i_{\DuAlg_2}$, which
become homotopy equivalent to the type $DA$ bimodule over $\Alg$,
after tensoring with the canonical $DD$ bimodule. (See
Proposition~\ref{prop:DualityRelationship} below).  Before giving the
precise statement, we describe a similar bimodule for a local minimum,
as well. (This latter bimodule was not needed in~\cite{Bordered2}; rather, it
was sufficient to have only its corresponding type $DD$ module.)

\subsubsection{The local minimum $\lsup{\DuAlg_1}\Min^c_{\DuAlg_2}$}
\label{subsec:DDmin}

Let $\lsup{\Blg_2}\Min^c_{\Blg_1}$ denote the DA bimodule for a
minimum from~\cite{Bordered2}.  We define now the corresponding $DA$
bimodule $\lsup{\DuAlg_1}\Min^c_{\DuAlg_2}$ (i.e. where the input
algebra $\DuAlg_2$ has $2$ fewer strands than the output algebra
$\DuAlg_1$).  This discussion is is very similar to the construction
of the corresponding bimodule for a local maximum (over $\Alg$)
from~\cite[Section~\ref{BK2:sec:Max}]{Bordered2}; see
also~\cite[Section~\ref{BK1:sec:Crit}]{BorderedKnots}.

Let $\phi_c\colon \{1,\dots,2n\}\to \{1,\dots,2n+2\}$ be the map
\begin{equation}
\label{eq:DefInsert}
\phi_c(j)=\left\{\begin{array}{ll}
j &{\text{if $j< c$}} \\
j+2 &{\text{if $j\geq c$.}}
\end{array}\right.
\end{equation}
Let 
\begin{equation}
  \label{eq:MaxAlgebras}
  \DuAlg_1=\DuAlg(n+1,\Matching_1)
  \qquad{\text{and}}\qquad
  \DuAlg_2=\Alg(n,\Matching_2),
\end{equation}
where $\Matching_2$ is obtained from $\Matching_1$ by the property
that:
\begin{align}
\{\phi_c(i),\phi_c(j)\}\in\Matching_1&\Rightarrow
\{i,j\}\in\Matching_2; \nonumber \\
\{c,\phi_c(s)\},\{c+1,\phi_c(t)\}\in\Matching_1&\Rightarrow\{s,t\}\in\Matching_2. \label{eq:M1andM2}
\end{align}

We call an idempotent state $\y$ for $\DuAlg_1$ an {\em allowed idempotent state for $\DuAlg_1$} if
\begin{equation}
  \label{eq:AllowedIdempotents}
  |\y\cap\{c-1,c,c+1\}|\leq 2\qquad\text{and}\qquad c\in\y.
\end{equation}
There is a map $\psi'$ from
allowed idempotent states $\y$ for $\DuAlg_1$ to idempotent states  for $\Blg_2(n)$,
where $\x=\psi'(\y)\subset \{0,\dots,2n\}$ is characterized by
\begin{equation}
  \label{eq:SpecifyPsi}
  |\y\cap \{c-1,c,c+1\}| + 
  |\x\cap \{c-1\}| =2~\qquad{\text{and}}~\qquad \phi_c(\x)\cap \y=\emptyset.
\end{equation}
Similarly, we can define $\psi$ from idempotent states for $\DuAlg_1$ to idempotent states for $\DuAlg_2$ by 
\begin{equation}
\label{eq:SpecifyPsiPrimed}
\psi(\y)=\{0,\dots,2n\}\setminus\psi'(\y). 
\end{equation}

We have the following:

\begin{lemma}
  \label{lem:ConstructDeltaTwo}
  If $\x$ is an allowed idempotent state (for $\DuAlg_1$) and $\y$ is
  an idempotent state for $\DuAlg_2$ so that $\psi(\x)$ and $\y$ are
  close enough (i.e. $\Idemp{\psi(\x)}\cdot \Blg_1\cdot \y\neq 0$),
  then there is an allowed idempotent state $\z$ (for $\DuAlg_1$) with
  $\psi(\z)=\y$ so that there is a map
  \[ \Phi_{\x}\colon 
  \Idemp{\psi(\x)}\cdot \DuAlg_2\cdot \Idemp{\y}\to
  \Idemp{\x}\cdot \DuAlg_1\cdot \Idemp{\z}\]
  with the following properties:
  \begin{itemize}
    \item $\Phi_\x$ maps the portion of $\Idemp{\psi(\x)}\cdot \Blg_2\cdot \Idemp{\y}$ with weights
      $(v_1,\dots,v_{2n})$ surjectively onto the portion of 
      $\Idemp{\x}\cdot \Blg_1\cdot\Idemp{\z}$ with 
      $w_{\phi_c(i)}=v_i$ and $w_{c}=w_{c+1}=0$, 
    \item $\Phi_{\x}$ further satisfies the relations
      \begin{align*}
      \Phi_{\x}(U_i\cdot a)&=U_{\phi_c(i)} \cdot \Phi_{\x}(a)  \\
      \Phi_{\x}(E_i\cdot a)&=
      \left\{\begin{array}{ll}
      E_{\phi_c(i)}\cdot \Phi_{\x}(a) & {\text{if $i\neq t$}} \\
      (E_{\phi_c(t)}+E_{c}\llbracket E_{\phi_c(t)},E_{c+1}\rrbracket)\cdot \Phi_\x(a) &{\text{if $i=t$}}
      \end{array}\right.
          \end{align*}
      for any $i\in 1,\dots,2n$ and $a\in \Idemp{\psi(\x)}\cdot \DuAlg_2\cdot \Idemp{\y}$.
    \end{itemize}
    Moreover, the state $\z$ is uniquely characterized by the existence of 
    such a map $\Phi_\x$.
\end{lemma}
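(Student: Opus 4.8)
\textbf{Plan of proof for Lemma~\ref{lem:ConstructDeltaTwo}.}
The plan is to mimic closely the construction of the $\delta^1_2$-map for a local maximum in~\cite[Lemma~\ref{BK2:lem:ConstructDeltaTwo}]{Bordered2}, transposing the roles of ``incoming'' and ``outgoing'' strands to account for the fact that a minimum has two more outgoing strands than incoming ones (so now the algebra $\DuAlg_1$ has more strands than $\DuAlg_2$). First I would set up notation: fix the allowed idempotent state $\x$ for $\DuAlg_1$ and the idempotent state $\y$ for $\DuAlg_2$ with $\Idemp{\psi(\x)}\cdot\Blg_1\cdot\Idemp{\y}\neq 0$, and observe that by Proposition~\ref{prop:Ideal} (the concrete description of the ideal $\mathcal J$) the non-vanishing of this product means $\psi(\x)$ and $\y$ are not too far, i.e.\ their ordered entries differ by at most one in each coordinate. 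This is what will let me identify, coordinate by coordinate, how the weight vector of a pure algebra element in $\Idemp{\psi(\x)}\cdot\DuAlg_2\cdot\Idemp{\y}$ gets inflated under $\phi_c$ to a weight vector in $\Idemp{\x}\cdot\DuAlg_1\cdot\Idemp{\z}$ with $w_c=w_{c+1}=0$.

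Next I would \emph{construct the target state $\z$}. The point is that $\z$ is forced: since we require $\psi(\z)=\y$ and $\psi'$ is determined by Equation~\eqref{eq:SpecifyPsi}, $\psi'(\z)=\{0,\dots,2n\}\setminus\y$ is determined, and then $\z$ itself is recovered from $\psi'(\z)$ together with the constraint $c\in\z$ and $|\z\cap\{c-1,c,c+1\}|\leq 2$ from Equation~\eqref{eq:AllowedIdempotents}, using the normalization $w_c=w_{c+1}=0$ (which pins down whether $c-1$ or $c+1$ belongs to $\z$, exactly as in~\eqref{eq:SpecifyPsi}). I would check that this $\z$ is indeed an allowed idempotent state for $\DuAlg_1$ and that $\Idemp{\x}\cdot\DuAlg_1\cdot\Idemp{\z}\neq 0$ — this last point uses the ``not too far'' conclusion above together with Equation~\eqref{eq:M1andM2} relating $\Matching_1$ and $\Matching_2$, so that the matching conditions defining the ideal $\mathcal J$ in $\DuAlg_1$ are not violated. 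Having fixed $\z$, I would define $\Phi_\x$ on the preferred generator $\gamma_{\psi(\x),\y}$ of the polynomial module $\Idemp{\psi(\x)}\cdot\DuAlg_2\cdot\Idemp{\y}\cong\Field[U_1,\dots,U_{2n}]$ by sending it to the pure algebra element of $\Idemp{\x}\cdot\DuAlg_1\cdot\Idemp{\z}$ with weights $w_{\phi_c(i)}=\weight_i(\gamma_{\psi(\x),\y})$ and $w_c=w_{c+1}=0$, and then extend $\Field$-linearly and $U$-equivariantly via $\Phi_\x(U_i\cdot a)=U_{\phi_c(i)}\cdot\Phi_\x(a)$. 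Surjectivity onto the prescribed weight range is then immediate from the definition.

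The remaining work — and I expect it to be the \textbf{main obstacle} — is verifying the $E_i$-equivariance relations, in particular the exceptional formula at $i=t$ involving the correction term $E_c\llbracket E_{\phi_c(t)},E_{c+1}\rrbracket$. The subtlety is that the elements $L_i,R_i$ (here written $E_i$) in $\DuAlg$ are not central and do not commute with the $U$'s in a naive way; moreover the multiplication rules $E_{i+1}E_i$, $E_iE_{i+1}$ in $\DuAlg$ involve the relative-weight bookkeeping and the ideal relations $L_{i+1}L_i$, $R_iR_{i+1}$ from Proposition~\ref{prop:Ideal}. I would handle this by checking the relation on the generator $\gamma_{\psi(\x),\y}$ and all its $U$-multiples separately in two cases: when the relevant idempotent coordinate moving under $E_i$ is away from the strands $c,c+1$ (where $\phi_c$ is an order-preserving relabeling and the relation is formal), and when it is the strand $t$ adjacent (under $\phi_c$) to the newly inserted pair $\{c,c+1\}$ — there the chord $E_{\phi_c(t)}$ in $\DuAlg_1$ can be ``detoured'' around the inserted strands, producing exactly the two-term expression $E_{\phi_c(t)}+E_c\llbracket E_{\phi_c(t)},E_{c+1}\rrbracket$ once one tracks which idempotent states are connected. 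Finally, the uniqueness of $\z$ follows since any $\z'$ admitting such a $\Phi_\x$ must satisfy $\psi(\z')=\y$ (forced by the weight condition $w_{\phi_c(i)}=v_i$ together with the idempotent-state matching built into $\psi$) and must have $w_c=w_{c+1}=0$, which together with allowedness pins $\z'=\z$. I would remark that this whole argument is the mirror image, under the strand-number-reversing symmetry of~\cite{Bordered2}, of the local-maximum construction there, so most of the combinatorial lemmas can be cited rather than reproven.
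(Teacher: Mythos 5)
Your overall plan matches the paper's: the paper's proof consists entirely of the sentence that this is a ``straightforward adaptation of'' the corresponding Lemma~\ref{BK2:lem:ConstructDeltaTwo} of~\cite{Bordered2}, with the single novelty being the replacement of the dual variables $C_p$ (extending $\Blg$ to $\Alg$) by the dual variables $E_i$ (extending $\Blg$ to $\DuAlg$). Your construction of $\z$ from the requirements $\psi(\z)=\y$, $w_c=w_{c+1}=0$, and allowedness, and your definition of $\Phi_\x$ on the preferred generator $\gamma_{\psi(\x),\y}$ by the weight shift $w_{\phi_c(i)}=v_i$, are both correct and are exactly the content being imported from~\cite{Bordered2}.

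There is, however, a genuine misreading of the algebra in the step you yourself flag as the main obstacle. You write that ``the elements $L_i,R_i$ (here written $E_i$) in $\DuAlg$ are not central,'' and later refer to $E_{\phi_c(t)}$ as a ``chord'' that can be ``detoured around the inserted strands.'' The $E_i$ are not the Reeb-chord generators $L_i,R_i$ at all: they are the additional dual variables adjoined when passing from $\Blg$ to $\DuAlg$ (analogous to the $C_{i,j}$ adjoined to form $\Alg$, which satisfy $dC_{i,j}=U_iU_j$), appearing for instance in the canonical $DD$-bimodule differential as $U_i\otimes E_i$ and carrying $\Delta$-degree $+1$, and $\nDuAlg$ is precisely the quotient by $\llbracket E_i,E_j\rrbracket=1$ for matched pairs. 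The exceptional formula $\Phi_\x(E_t\cdot a)=(E_{\phi_c(t)}+E_c\llbracket E_{\phi_c(t)},E_{c+1}\rrbracket)\cdot\Phi_\x(a)$ is therefore not a Reeb-chord ``detour'' phenomenon; it is the $E$-variable analogue of the $C_p$-correction in~\cite[Lemma~\ref{BK2:lem:ConstructDeltaTwo}]{Bordered2}, reflecting that in $\DuAlg_1$ the anticommutator $\llbracket E_{\phi_c(t)},E_{c+1}\rrbracket$ (for the matched pair $\{c+1,\phi_c(t)\}\in\Matching_1$) has not yet been set to $1$. Since the paper singles out precisely this $C_p\leftrightarrow E_i$ replacement as the only new content, an argument that mistakes the $E_i$ for Reeb chords would not actually verify the required relation; you should instead track how the $E_i$-multiplication interacts with $\phi_c$ and with the matching $\Matching_1\supset\{c+1,\phi_c(t)\}$, mirroring the $C_p$-bookkeeping of the cited lemma.
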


\begin{proof}
  This is a straightforward adaptation
  of~\cite[Lemma~\ref{BK2:lem:ConstructDeltaTwo}]{Bordered2}; the only
  novelty is that here we are using the algebra elements $E_i$
  extending $\Blg$ to $\DuAlg$ rather than the algebra elements $C_p$
  extending $\Blg$ to $\Alg$ as in that lemma. 
\end{proof}

\begin{defn}
  Let $\lsup{\DuAlg_1}\Min^c_{\DuAlg_2}$ be the vector space generated
  by elements ${\mathbf Q}_\y$ generated by
  allowed idempotent states $\y$ for $\DuAlg_1$. Endow this with 
  the structure of a 
  $\IdempRing(\DuAlg_1)-\IdempRing(\DuAlg_2)$ bimodule
  by
  \[ \Idemp{\psi(\y)}\cdot {\mathbf Q}_\y\cdot \Idemp{\y}={\mathbf Q}_\y.
  \]
 Let 
  \[ \delta^1_1\colon \lsup{\DuAlg_1}\Max^c_{\DuAlg_2}
  \to \DuAlg_1\otimes \lsup{\DuAlg_1}\Max^c_{\DuAlg_2} \]
  be the map specified by
  \[ \delta^1_1({\mathbf Q}_{\y})= \Idemp{\y}\cdot \left(R_{c+1} R_{c} +
    L_{c} L_{c+1} + U_c E_{c+1}\right)\otimes \sum_{\z} {\mathbf
    Q}_{\z}.\] where the sum is taken over all allowed idempotents
  $\z$ for $\DuAlg_2$. 
  Let 
  \[\delta^1_2\colon \lsup{\DuAlg_1}\Max^c_{\DuAlg_2}\otimes
  \DuAlg_2\to \DuAlg_1\otimes \lsup{\DuAlg_1}\Max^c_{\DuAlg_2} \]
  be the map characterized by the property that if
  $a=\Idemp{\psi(\x)}\cdot a\cdot \Idemp{\y}\in\DuAlg_1$ is a non-zero
  algebra element,
  then 
  $\delta^1_2({\mathbf Q}_\x\cdot a)=\Phi_{\x}\cdot {\mathbf Q}_{\z}$,
  where $\z$ is as in Lemma~\ref{lem:ConstructDeltaTwo}.
\end{defn}

\begin{prop}
  The above specified actions $\delta^1_1$ and $\delta^1_2$ (and
  $\delta^1_\ell=0$ for all $\ell>2$) give
  $\lsup{\DuAlg_2}\Min^c_{\DuAlg_1}$ the structure of a $DA$
  bimodule, equipped with a $\Delta$-grading and a grading by $H^1(W,\partial)$,
  where $W$ is the one-manifold specified in the diagram.
\end{prop}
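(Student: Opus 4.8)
The statement asserts that the explicitly defined operations $\delta^1_1$, $\delta^1_2$ (with $\delta^1_\ell = 0$ for $\ell > 2$) make $\lsup{\DuAlg_1}\Min^c_{\DuAlg_2}$ into a type $DA$ bimodule with the stated gradings. The plan is to verify the type $DA$ bimodule structure equations \eqref{eq:CurvedDAbimoduleRelation1} and \eqref{eq:CurvedDAbimoduleRelation2} directly, by a careful bookkeeping argument, following the model of the analogous result for the local maximum over $\Alg$, namely \cite[Theorem~\ref{BK2:thm:MaxDA}]{Bordered2}, together with its dual counterpart for crossings over $\DuAlg$ in \cite[Section~\ref{BK2:sec:DuAlg}]{Bordered2}. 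Since $\delta^1_\ell$ vanishes for $\ell \geq 3$, the relations to check involve only $\delta^1_1$ and $\delta^1_2$ applied to sequences of at most two algebra inputs; this substantially limits the combinatorial complexity.

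First I would record the curvature of $\DuAlg_2$ and $\DuAlg_1$ (coming from their matchings $\Matching_2$ and $\Matching_1$ related by \eqref{eq:M1andM2}) and the element $E_{c+1}$ with $dE_{c+1} = U_c U_{c+1}$ or the appropriate analogue, so that $\delta^1_1({\mathbf Q}_\y)$ involving the term $U_c E_{c+1}$ can be understood as the "correction" term absorbing the $\mu_0$-type contributions. Then I would check the $\ell = 0$ relation \eqref{eq:CurvedDAbimoduleRelation1}: one computes $(\mu_2^{\DuAlg_1}\otimes \Id)\circ(\Id \otimes \delta^1_1)\circ\delta^1_1({\mathbf Q}_\y)$, which expands into products of the chords $R_{c+1}R_c$, $L_cL_{c+1}$, $U_c E_{c+1}$ with themselves, and shows that the terms which are not in the ideal $\mathcal J$ combine with $\delta^1_2({\mathbf Q}_\y, \mu_0^{\DuAlg_2})$ and $\mu_0^{\DuAlg_1}\otimes {\mathbf Q}_\y$. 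The key computational identities here are the relations $L_{c+1}L_c = 0$, $R_c R_{c+1} = 0$ in $\DuAlg_1$ from Proposition~\ref{prop:Ideal}, together with the fact that $\phi_c$ matches the variables $U_i$ to $U_{\phi_c(i)}$ and collapses $U_c, U_{c+1}$ pairs to the matched variable on the $\DuAlg_2$ side. This parallels exactly the corresponding verification in \cite[Theorem~\ref{BK2:thm:MaxDA}]{Bordered2}, just with the algebra roles dualized.

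Next I would verify the relations \eqref{eq:CurvedDAbimoduleRelation2} for $k = 1$ and $k = 2$ inputs. For $k = 1$: the relevant terms are $(\mu_2^{\DuAlg_1}\otimes\Id)\circ(\Id\otimes\delta^1_1)\circ\delta^1_2({\mathbf Q}_\x, a)$, $(\mu_2^{\DuAlg_1}\otimes\Id)\circ(\Id\otimes\delta^1_2)\circ\delta^1_1({\mathbf Q}_\x)$ applied with input $a$, and $\delta^1_1$ composed with the module action. Here one uses the intertwining properties of $\Phi_\x$ from Lemma~\ref{lem:ConstructDeltaTwo}: $\Phi_\x(U_i a) = U_{\phi_c(i)}\Phi_\x(a)$ and $\Phi_\x(E_i a) = E_{\phi_c(i)}\Phi_\x(a)$ (with the exceptional case $i = t$ producing the extra term $E_c\llbracket E_{\phi_c(t)}, E_{c+1}\rrbracket$), which is precisely engineered so that left-multiplication on the $\DuAlg_1$ side matches left-multiplication by $a$ on the $\DuAlg_2$ side, modulo the chord $U_c E_{c+1}$ appearing in $\delta^1_1$. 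For $k = 2$: only the multiplication-input term $\delta^1_2({\mathbf Q}_\x, \mu_2^{\DuAlg_2}(a_1, a_2))$ and the composite $(\mu_2^{\DuAlg_1}\otimes\Id)\circ(\Id\otimes\delta^1_2)\circ\delta^1_2({\mathbf Q}_\x, a_1, a_2)$ survive (since there are no $\delta^1_3$ terms and $\mu_0^{\DuAlg_1}$-contributions enter only through $\delta^1_1$), so this reduces to checking that $\Phi_{\z}\circ \Phi_\x$ corresponds to $\Phi_\x$ applied to $\mu_2^{\DuAlg_2}(a_1, a_2)$, i.e. functoriality of the family $\{\Phi_\x\}$ under composition — which is essentially built into the definition via the "next idempotent state" $\z$ and the surjectivity/weight statement in Lemma~\ref{lem:ConstructDeltaTwo}.

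Finally, I would verify the gradings. The $\Delta$-grading and the $H^1(W, \partial W)$-valued Alexander grading are defined up to overall shift so that $\delta^1_1$ and $\delta^1_2$ are homogeneous: one checks that the chord $R_{c+1}R_c + L_cL_{c+1} + U_cE_{c+1}$ has the same $\Delta$-degree and Alexander degree (all three summands must agree, which follows from $dE_{c+1}$ being a product of two matched $U$'s, so $\Delta(E_{c+1}) = \Delta(U_c U_{c+1}) + 1$ in the appropriate normalization), matching the required shift $\Delta(\delta^1_1(\x)) = \Delta(\x) - (-1)$; and that $\Phi_\x$ shifts $\Delta$ by the right amount dictated by the weights of the output chord. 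The main obstacle I anticipate is not any single step but the bookkeeping in the $k=1$ relation \eqref{eq:CurvedDAbimoduleRelation2}: one must carefully track how the exceptional term $E_c\llbracket E_{\phi_c(t)}, E_{c+1}\rrbracket$ in the $\Phi_\x$ formula interacts with the $U_c E_{c+1}$ summand of $\delta^1_1$, and confirm that these non-obvious cross-terms cancel (using relations like $E_{c}E_{c+1} = 0$ and the commutator identity defining $\llbracket\cdot,\cdot\rrbracket$) rather than producing spurious contributions. This is exactly the place where the dual construction over $\DuAlg$ differs from the $\Alg$ version in \cite{Bordered2}, so it cannot simply be quoted; but since the algebraic structure is entirely parallel, I expect the cancellation to go through by the same mechanism, just with $E$'s in place of $C$'s.
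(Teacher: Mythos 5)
Your plan takes essentially the same approach as the paper: direct verification of the curved $DA$ relations, with the crux correctly pinpointed as the interaction between the exceptional term $E_c\llbracket E_{\phi_c(t)},E_{c+1}\rrbracket$ in $\Phi_\x$ and the $U_cE_{c+1}$ summand of $\delta^1_1$. The paper's entire proof is essentially that one observation made precise — the differential $U_c\llbracket E_{\phi_c(t)},E_{c+1}\rrbracket$ of the exceptional term cancels against the anti-commutator of $E_{\phi_c(t)}$ with $\delta^1_1=U_cE_{c+1}$ — whereas you only gesture at the expected mechanism (and the specific relation $E_cE_{c+1}=0$ you float is not what is actually used; the cancellation is via the Leibniz rule).
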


\begin{proof}
  The proof is
  straightforward. (Compare~\cite[Theorem~\ref{BK2:thm:MaxDA}]{Bordered2}.)
  The differential of the term $E_{c}\llbracket E_{c+1},E_t\rrbracket$
  in $\Phi_\x(E_s)$, which is $U_{c} \cdot \llbracket
  E_{c+1},E_t\rrbracket$, cancels against the anti-commutator of
  the other term in
  $\Phi_{\x}(E_t)$, $E_{\phi_c(t)}$,
  with the term $\delta^1_1(\x)=U_c\cdot E_{c+1}$.
\end{proof}

\subsubsection{$DA$ bimodules over $\DuAlg$}

The bimodule over $\DuAlg$ associated to a local minimum defined above
and the bimodules over $\DuAlg$ associated to crossings
in~\cite[Section~\ref{BK2:sec:DualCross}]{Bordered2} are related to
the corresponding bimodules over $\Alg$.

In a little more detail, given an integer $n$, a matching $\Matching$
on $\{1,\dots,2n\}$, and an integer $i\in 1,\dots,2n-1$, there are
associated algebras which we abbreviate $\Alg_1$, $\DuAlg_1$,
$\Alg_2$, and $\DuAlg_2$, and imodules $\lsup{\Alg_2}\Pos^i_{\Alg_1}$
and
$\lsup{\Alg_2}\Neg^i_{\Alg_1}$. In~\cite[Section~\ref{BK2:sec:DualCross}]{Bordered2},
we also associated bimodules $\lsup{\DuAlg_1}\Pos_{\DuAlg_2}$
and $\lsup{\DuAlg_1}\Neg_{\DuAlg_2}$.

\begin{prop}
  \label{prop:DualityRelationship}
  For algebras $\Alg_j$ and $\DuAlg_j$ as above, we have relations
  \begin{align*} 
    \lsup{\Alg_2}\Pos^i_{\Alg_1}\DT \lsup{\Alg_1,\DuAlg_1}\CanonDD &\simeq
    \lsup{\DuAlg_1}\Pos^i_{\DuAlg_2}\DT \lsup{\DuAlg_2,\Alg_1}\CanonDD \\
    \lsup{\Alg_2}\Neg^i_{\Alg_1}\DT \lsup{\Alg_1,\DuAlg_2}\CanonDD &\simeq
    \lsup{\DuAlg_1}\Neg^i_{\DuAlg_2}\DT \lsup{\DuAlg_2,\Alg_2}\CanonDD
  \end{align*}
  Similarly, now using algebras $\DuAlg_j$ as in Equation~\ref{eq:MaxAlgebras}
  (and corresponding algebras $\Alg_j$), we have that
  \begin{align*}
  \lsup{\Alg_2}\Min_{\Alg_1}\DT \lsup{\Alg_1,\DuAlg_1}\CanonDD &\simeq
    \lsup{\DuAlg_1}\Min_{\DuAlg_2}\DT \lsup{\DuAlg_2,\Alg_2}\CanonDD
  \end{align*}
\end{prop}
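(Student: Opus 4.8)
The plan is to handle the three relations separately, since the two crossing relations are already essentially available from~\cite{Bordered2} while the minimum relation requires genuinely new work. For the positive and negative crossings, the bimodules $\lsup{\DuAlg_1}\Pos^i_{\DuAlg_2}$ and $\lsup{\DuAlg_1}\Neg^i_{\DuAlg_2}$ over the dual algebras were constructed in~\cite[Section~\ref{BK2:sec:DualCross}]{Bordered2} precisely so as to be related to the crossing bimodules over $\Alg$ after pairing with the canonical type $DD$ bimodule $\CanonDD$; the content of the first two lines of the Proposition is thus a restatement of~\cite[Proposition~\ref{BK2:prop:DualCross}, Lemma~\ref{BK2:lem:DuAlgDualCross}, and Proposition~\ref{BK2:prop:DAcrosses}]{Bordered2}. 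I would spell out only the (trivial) bookkeeping that the two box tensor products in each line are type $DD$ bimodules over the same (reordered) pair of algebras, so that the comparison is meaningful, and then cite those results. The new case is the local minimum, whose $DA$ bimodule $\lsup{\DuAlg_1}\Min^c_{\DuAlg_2}$ over $\DuAlg$ was introduced only in Section~\ref{subsec:DDmin} of the present paper.

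For the minimum I would argue by a direct computation of both type $DD$ bimodules. The key structural point is that $\CanonDD$ is rank one over the (complementary) idempotent rings, so box tensoring with it preserves the underlying $\Field$-vector space; in each case that space is spanned by the allowed idempotent states of $\DuAlg_1$, with idempotents translated by the maps $\psi,\psi'$ of Equations~\eqref{eq:SpecifyPsi}--\eqref{eq:SpecifyPsiPrimed}. Tensoring with $\CanonDD$ converts the right-module operations $\delta^1_1,\delta^1_2$ of $\Min^c$ into a type $DD$ differential by feeding in the ``dual'' algebra generators carried by the differential on $\CanonDD$ and reading off outputs on the other factor. The comparison then rests on the observation that the construction of $\lsup{\DuAlg_1}\Min^c_{\DuAlg_2}$ in Section~\ref{subsec:DDmin}, via the maps $\Phi_\x$ of Lemma~\ref{lem:ConstructDeltaTwo}, is a verbatim transcription of the construction of $\lsup{\Alg_2}\Max^c_{\Alg_1}$ in~\cite{Bordered2} with the extension variables $E_i$ of $\DuAlg$ playing exactly the role of the extension variables $C_p$ of $\Alg$. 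Since the differential on $\CanonDD$ dualizes precisely the pairing between $\{U_i,L_i,R_i\}$ and their transposes, and between the $C_p$ and the $E_i$, the two $DD$ differentials can be matched term by term. I would organize this so that it parallels the proof of Proposition~\ref{prop:CurvedDABimodules} and the arguments of~\cite[Section~\ref{BK2:sec:DuAlg}]{Bordered2}.

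Concretely, the steps in order are: (i) identify the underlying $DD$ bimodules on both sides with their spanning sets of allowed idempotent states and check that $\psi,\psi'$ make the two idempotent labellings correspond under the idempotent duality implemented by $\CanonDD$; (ii) write out the $DD$ differential obtained from $\Min^c$ over $\Alg$ box-tensored with $\CanonDD$, using the known formula for $\delta^1$ of $\Max^c$ over $\Alg$ together with the differential on $\CanonDD$; (iii) write out the $DD$ differential obtained from $\Min^c$ over $\DuAlg$ analogously, from the $\delta^1_1$ and $\delta^1_2$ of Section~\ref{subsec:DDmin}; (iv) exhibit the bijection of generators identifying the two, checking in particular that the term $U_c E_{c+1}$ in $\delta^1_1$ and the anticommutator correction $E_{c}\llbracket E_{\phi_c(t)},E_{c+1}\rrbracket$ in $\Phi_\x$ are the $\CanonDD$-duals of the corresponding terms on the $\Alg$-side; and (v) note that the $\Delta$- and $H^1(W,\partial W)$-gradings are carried along, so the identification is an equivalence of graded $DD$ bimodules. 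The main obstacle I anticipate is steps (i) and (iv) taken together: keeping straight the several idempotent translations — allowed states for $\DuAlg_1$, the constraints $c\in\y$ and $|\y\cap\{c-1,c,c+1\}|\le 2$, and their images under $\psi$ and $\psi'$ — while simultaneously tracking how $\CanonDD$ dualizes them, so that the term-by-term matching of the two differentials, including the correction terms, is unambiguous. Everything else should be routine once that dictionary is pinned down.
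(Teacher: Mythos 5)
Your proposal is correct and follows essentially the paper's route: dispatch the crossing identities by citation to~\cite{Bordered2} (the paper cites Proposition~\ref{BK2:prop:PosExt} for $\Pos$ and the ``opposite module'' construction of Definition~\ref{BK2:def:NegCrossing} and Section~\ref{BK2:subsec:DuAlgNegCross} for $\Neg$), and handle the minimum by comparing the two type $DD$ bimodules directly. The one economy the paper makes that you re-derive by hand: rather than computing both $\Min^c\DT\CanonDD$ tensor products from scratch and matching them term by term, it factors through the explicit type $DD$ bimodule $\lsup{\Alg_2,\DuAlg_1}\Min$ already defined in~\cite[Section~\ref{BK2:subsec:DDmin}]{Bordered2}; the identification of $\lsup{\Alg_2}\Min^c_{\Alg_1}\DT\CanonDD$ with that object is~\cite[Theorem~\ref{BK2:thm:MinDual}]{Bordered2}, so only the second identification $\lsup{\DuAlg_1}\Min_{\DuAlg_2}\DT\CanonDD\simeq\lsup{\Alg_2,\DuAlg_1}\Min$ needs to be checked afresh, and this is the ``straightforward exercise'' part. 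Your longer direct computation would prove the same thing, but half of it is redundant with what~\cite{Bordered2} already supplies.
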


\begin{proof}
  Note that the $DA$ bimodules over $\DuAlg$ are all bounded, so the
  above tensor products all make sense.  The relation involving the
  positive crossing
  is~\cite[Proposition~\ref{BK2:prop:PosExt}]{Bordered2}.  The
  relationship with the negative crossing follow formally, since both
  negative crossing bimodules are constructed as ``opposite modules''
  to the positive crossing bimodules;
  see~\cite[Definition~\ref{BK2:def:NegCrossing} and
  Subsection~\ref{BK2:subsec:DuAlgNegCross}]{Bordered2}.
 
  For the relation involving the minimum, note that
  in~\cite[Section~\ref{BK2:subsec:DDmin}]{Bordered2},
  we defined the type $DD$ bimodule associated to a minimum,
  denoted $\lsup{\Alg,\DuAlg}\Min$.
  In Theorem~\ref{BK2:thm:MinDual}, it is verified that
  \[\lsup{\Alg_2}\Min^c_{\Alg_1}\DT \lsup{\Alg_1,\DuAlg_1}\CanonDD
  \simeq \lsup{\Alg_2,\DuAlg_1}\Min.\]
  The similar verification that 
  \[\lsup{\Alg_2,\DuAlg_1}\Min\simeq
  \lsup{\DuAlg_1}\Min_{\DuAlg_2}\DT \lsup{\DuAlg_2,\Alg_2}\CanonDD\]
  is a straightforward exercise in the definitions.
\end{proof}

\begin{rem}
  Observe that for all $X\in \Min^c$,
  \begin{equation}
    \label{eq:IdealToIdeal}
    \delta^1_2(X,\llbracket E_i,E_j\rrbracket)=
    \left\{\begin{array}{ll}
        \llbracket E_{\phi_c(i),\phi_c(j)}\rrbracket\otimes X &{\text{if $\{i,j\}\in M_2$,
            $\{i,j\}\neq \{s,t\}$}} \\
        \llbracket E_{\phi_c(s)},E_{c}\rrbracket\cdot
        \llbracket E_{\phi_c(t)},E_{c+1}\rrbracket\otimes X
        &{\text{if $\{i,j\}=\{s,t\}$.}}
      \end{array}\right.
  \end{equation}
\end{rem}

\subsection{A new algebra $\nDuAlg$}
\label{subsec:nDuAlg}

We will find it convenient to work in a quotient of $\DuAlg$.
Specifically,
let $\nDuAlg$ be the quotient of $\DuAlg$ by the relations $\llbracket E_i,
E_j\rrbracket = 1$ for all $\{i,j\}\in\Matching$.  There is a quotient
map $q\colon \DuAlg\to\nDuAlg$.

Recall that $\DuAlg$ is equipped with a $\Delta$-grading defined by
\[ \Delta(a)=\#(\text{$E_j$~that divide $a$})-\sum_{i} \weight_i(a).\]
Since $\llbracket E_i,E_j\rrbracket$ is homogeneous
with $\Delta(\llbracket E_i,E_j\rrbracket)=0$, it follows that the
$\Delta$ grading descends to $\nDuAlg$.

The weight vector, which was a grading on $\DuAlg$, no longer descends
to $\nDuAlg$. However, there is an Alexander grading on $\DuAlg$,
induced by the matching and the orientation on its associated
one-manifold, defined by
\[ \AlexGr(a)=\bigoplus_{\{i,j\}\in\Matching} \weight_i(a)-\weight_j(a), \]
where $W$ represents an arc oriented from $i$ to $j$.
This grading descends to $\nDuAlg$.

Evidently, the set of algebra
elements in $\nDuAlg$ with a fixed $\Delta$-grading is
finite-dimensional (unlike for $\DuAlg$). It follows that bimodules
over $\nDuAlg$ automatically satisfy the boundedness properties
required to form their tensor products, as
in~\cite[Proposition~\ref{BK2:prop:AdaptedTensorProd}]{Bordered2}.
Specifically, a bimodule is called {\em adapted $W$} if it has a grading
by $\Q$ and $H^1(W)$, as in Property~\ref{prop:Adapted} from
Definition~\ref{def:Standard} (using the algebras
$\nDuAlg$ in place of $\Alg$, equipped with the above Alexander grading).

\begin{prop}
  \label{prop:AdaptedTensorProdnDuAlg}
  Choose $W_1\colon Y_1\to Y_2$, $W_2\colon Y_2\to Y_3$, $\nDuAlg_1$,
  $\nDuAlg_2$, and $\nDuAlg_3$ as above.  Suppose moreover that $W_1\cup
  W_2$ has no closed components, i.e. it is a disjoint union of
  finitely many intervals joining $Y_1$ to $Y_3$.  Given any two
  bimodules $\lsup{\nDuAlg_2}X^1_{\nDuAlg_1}$ and
  $\lsup{\nDuAlg_3}X^2_{\nDuAlg_2}$ adapted to $W_1$ and $W_2$ respectively,
  we can form their tensor product
  $\lsup{\nDuAlg_3}X^2_{\nDuAlg_2}\DT~\lsup{\nDuAlg_2}X^1_{\nDuAlg_1}$ (i.e. only
  finitely many terms in the infinite sums in its definition
  are non-zero); and moreover, it is a bimodule that is adapted to
  $W=W_1\cup W_2$.
\end{prop}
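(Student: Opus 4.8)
\textbf{Proof plan for Proposition~\ref{prop:AdaptedTensorProdnDuAlg}.}

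The plan is to reduce this to the tensor-product result already established over $\DuAlg$, namely~\cite[Proposition~\ref{BK2:prop:AdaptedTensorProd}]{Bordered2}, by leveraging the quotient map $q\colon\DuAlg\to\nDuAlg$. First I would recall the grading bookkeeping: the key structural feature of $\nDuAlg$ that distinguishes it from $\DuAlg$ is that the set of homogeneous algebra elements with a fixed $\Delta$-grading is finite-dimensional (this is because the relations $\llbracket E_i,E_j\rrbracket=1$ collapse the $\Z^{2n}$-worth of weight data into the $\Q^n$-valued Alexander grading, and there are only finitely many monomials in $\nDuAlg$ of bounded $\Delta$-degree). This finiteness is precisely what is needed to guarantee that the infinite sums $\delta^1_{k}=\sum_{\ell}\delta^1_{\ell+1}(\x,\dots)$ defining the tensor product $X^2\DT X^1$ have only finitely many nonzero terms: since each $\delta^1$-operation on a bimodule adapted to $W_i$ strictly shifts the $\Delta$-grading by $\ell-1$ relative to the inputs (and the module $X^i$ is finite-dimensional over $\Field$, so has bounded $\Delta$-support), only finitely many lengths $\ell$ can contribute to a fixed output.

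The main steps, in order, would be: (1) verify the stated finiteness of $\nDuAlg$ in each $\Delta$-degree — this is the substitute for the boundedness hypothesis that in~\cite{Bordered2} had to be imposed by hand; (2) observe that, because $W_1\cup W_2$ has no closed components, the Mayer--Vietoris map $H^1(W_1,\partial W_1)\oplus H^1(W_2,\partial W_2)\to H^1(W,\partial W)$ of Equation~\eqref{eq:MayerVietoris} is well-behaved, so that the grading data on $X^1$ and $X^2$ assemble into a grading on the tensor product; (3) run the standard argument that the tensor product $\delta^1$-operations are well-defined (finitely supported) and satisfy the curved type $DA$ bimodule relations — this is formally identical to the verification in~\cite[Proposition~\ref{BK1:prop:AdaptedTensorProducts}]{BorderedKnots} and the curved analogue appearing earlier in this paper (the unnamed Proposition following Definition~\ref{def:Standard}'s surrounding material, i.e. the curved adapted tensor product proposition in Section~\ref{subsec:Framework}); (4) check that the resulting bimodule inherits the $\Q$-grading and $H^1(W,\partial W)$-grading, i.e. is adapted to $W$, by adding up the grading shifts along the two factors.

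The main obstacle I expect is step (3), specifically confirming that the infinite sum defining each tensor-product operation terminates. The subtlety is that unlike in the uncurved $\DuAlg$ case, where the Alexander (weight) grading alone forced termination, here one must argue using the $\Delta$-grading together with the finite-dimensionality of the underlying $\Field$-vector spaces $X^1$ and $X^2$. Concretely: each application of $\delta^1_{\ell+1}$ in the tensor-product recipe lowers the $\Delta$-grading of the module factor by $\ell-1\geq 0$ (with equality only when $\ell=1$), and an unbounded tower of length-$\ell$ operations with $\ell\geq 2$ would force an unbounded drop in $\Delta$, contradicting finite-dimensionality; the $\ell=1$ operations require a separate (standard) acyclicity-of-$\delta^1_1$-type argument, or else one uses that a bimodule adapted to $W_1$ is in particular finite-dimensional and operationally bounded in the relevant sense. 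I would model this argument directly on the proof of~\cite[Proposition~\ref{BK2:prop:AdaptedTensorProd}]{Bordered2}, substituting the $\nDuAlg$ $\Delta$-grading finiteness for the boundedness hypothesis there, and noting that the curvature terms (the $\mu_0$-insertions) also shift $\Delta$ by $-2$ and so do not disrupt the counting. Everything else is routine bookkeeping, so I would not grind through it.
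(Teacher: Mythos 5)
Your proposal is essentially the approach the paper takes: the paper's proof is a one-liner that the statement follows exactly as in \cite[Proposition~\ref{BK2:prop:AdaptedTensorProd}]{Bordered2}, in view of the fact that the set of algebra elements in $\nDuAlg$ with a fixed $\Delta$-grading is finite-dimensional (the same property that makes the cited argument for $\Alg$ go through). You have correctly identified both the reduction to the cited proposition and the key finiteness input; the rest of your discussion is just spelling out details that the paper chooses not to reproduce.
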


\begin{proof}
  This follows exactly as
  in~\cite[Proposition~\ref{BK2:prop:AdaptedTensorProd}]{Bordered2},
  in view of the fact that the set of algebra elements in $\nDuAlg$
  with a fixed $\Delta$-grading is finite-dimensional. (The same
  property holds for the algebras $\Alg$ considered in that
  proposition.)
\end{proof}

We construct bimodules over $\nDuAlg$ that are related to bimodules over 
$\DuAlg$ in the following:

\begin{lemma}
  \label{lem:BimodulesOvernDuAlg}
  There are $DA$ bimodules
  $\lsup{\nDuAlg_1}\Pos^i_{\nDuAlg_2}$, 
  $\lsup{\nDuAlg_1}\Neg^i_{\nDuAlg_2}$, 
  $\lsup{\nDuAlg_1}\Min^c_{\nDuAlg_2}$
  related to the above bimodules by the relations
  \begin{align} 
  \lsup{\nDuAlg_1}
  q_{\DuAlg_1} \DT~ \lsup{\DuAlg_1}\Min^c_{\DuAlg_2} &\simeq
    \lsup{\nDuAlg_1}\Min^c_{\nDuAlg_2}\DT~ \lsup{\nDuAlg_2}q_{\DuAlg_2} \label{eq:MinnDuAlg}\\
  \lsup{\nDuAlg_1}q_{\DuAlg_1} \DT~ \lsup{\DuAlg_1}\Pos^i_{\DuAlg_2} &\simeq
    \lsup{\nDuAlg_1}\Pos_{\nDuAlg_2}\DT~ \lsup{\nDuAlg_2}q_{\DuAlg_2} \label{eq:PosnDuaAlg}\\
  \lsup{\nDuAlg_1}q_{\DuAlg_1} \DT~ \lsup{\DuAlg_1}\Neg^i_{\DuAlg_2} &\simeq
    \lsup{\nDuAlg_1}\Neg_{\nDuAlg_2}\DT~ \lsup{\nDuAlg_2}q_{\DuAlg_2} 
\end{align}
\end{lemma}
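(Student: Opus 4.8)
\textbf{Plan of proof of Lemma~\ref{lem:BimodulesOvernDuAlg}.}
The strategy is to mimic the construction of $\lsup{\cBlg_2}X_{\cBlg_1}$ from $\lsup{\Alg_2}X_{\Alg_1}$ in Proposition~\ref{prop:CurvedDABimodules}: there one passed from $\Alg$ to its subalgebra $\Blg$ by discarding the $C^2\otimes X$ term in $\delta^1_1$; here one passes from $\DuAlg$ to its quotient $\nDuAlg$ by observing that the bimodules over $\DuAlg$ in question have a very special form with respect to the ideal $\langle \llbracket E_i,E_j\rrbracket - 1 : \{i,j\}\in\Matching\rangle$ generated by the quotient relations. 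First I would isolate the structural property of $\lsup{\DuAlg_1}\Pos^i_{\DuAlg_2}$, $\lsup{\DuAlg_1}\Neg^i_{\DuAlg_2}$, and $\lsup{\DuAlg_1}\Min^c_{\DuAlg_2}$ that makes the quotient well-behaved. For the minimum this is exactly Equation~\eqref{eq:IdealToIdeal}: $\delta^1_2(X,\llbracket E_i,E_j\rrbracket)$ is either $\llbracket E_{\phi_c(i)},E_{\phi_c(j)}\rrbracket\otimes X$ when $\{i,j\}\in\Matching_2$ and $\{i,j\}\neq\{s,t\}$, or $\llbracket E_{\phi_c(s)},E_{c}\rrbracket\cdot\llbracket E_{\phi_c(t)},E_{c+1}\rrbracket\otimes X$ when $\{i,j\}=\{s,t\}$; in the first case the output is in the two-sided ideal defining $\nDuAlg_1$ exactly when the input is, and in the second case it lies in that ideal because $\{s,t\}\in\Matching_2$ forces both $\{\phi_c(s),c\}$ and $\{\phi_c(t),c+1\}$ to be in $\Matching_1$ (by Equation~\eqref{eq:M1andM2}), so both bracketed factors are among the generators. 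Thus $\delta^1_2$ descends to the quotient. For the crossing bimodules one needs the analogous statement, which is part of the ``standard'' structure from~\cite{Bordered2} (specifically the compatibility of $\delta^1_\ell$ with the bracket elements, as used in~\cite[Section~\ref{BK2:sec:DualCross}]{Bordered2}); I would cite that rather than re-derive it.

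The second step is to set up the formal mechanism by which this descent is expressed as a tensor-product identity. The bimodule $\lsup{\nDuAlg_j}q_{\DuAlg_j}$ is the type $DA$ bimodule induced by the algebra homomorphism $q\colon\DuAlg_j\to\nDuAlg_j$, as in the Example following Definition~\ref{def:Transformer}, with $\delta^1_2(x,b)=q(b)\otimes x$ and $\delta^1_k\equiv 0$ for $k\neq 2$. Note $q$ sends the curvature $\mu_0$ of $\DuAlg_j$ (coming from the matching) to the curvature of $\nDuAlg_j$, since the curvature is $\sum U_iU_j$ and the variables $U_i$ survive in the quotient, so this induced bimodule is legitimate in the curved setting. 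I would then \emph{define} $\lsup{\nDuAlg_1}\Pos^i_{\nDuAlg_2}$, $\lsup{\nDuAlg_1}\Neg^i_{\nDuAlg_2}$, $\lsup{\nDuAlg_1}\Min^c_{\nDuAlg_2}$ to have the same underlying bimodules (over the complementary idempotent rings, which are unchanged by the quotient) and with operations $\bar\delta^1_\ell$ obtained by post-composing the $\DuAlg_1$-operations with $q$ and pre-composing with any chosen lift of $\nDuAlg_2$-inputs back to $\DuAlg_2$ — the point being that the descent property of the previous paragraph guarantees this is independent of the lift and satisfies the $DA$ relations (Equations~\eqref{eq:CurvedDAbimoduleRelation1},~\eqref{eq:CurvedDAbimoduleRelation2}). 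With this definition the claimed homotopy equivalences~\eqref{eq:MinnDuAlg}--\eqref{eq:PosnDuaAlg} become, after unwinding $\DT$ with the $q$-bimodules, essentially tautologies: tensoring on the left by $\lsup{\nDuAlg_1}q_{\DuAlg_1}$ composes the $\DuAlg_1$-valued outputs with $q$, while tensoring on the right by $\lsup{\nDuAlg_2}q_{\DuAlg_2}$ feeds in lifts of $\nDuAlg_2$-inputs, and the two sides agree on the nose (or after an obvious homotopy handling idempotents). Boundedness of all the tensor products, needed for these expressions to make sense, follows from Proposition~\ref{prop:AdaptedTensorProdnDuAlg} together with the finiteness of each graded piece of $\nDuAlg$.

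The main obstacle, and the one place genuine checking is required rather than bookkeeping, is verifying that the descended operations $\bar\delta^1_\ell$ really are well-defined and satisfy the curved $DA$ relations — equivalently, that the two-sided ideal $\langle\llbracket E_i,E_j\rrbracket-1\rangle$ is ``closed'' under all the operations $\delta^1_\ell$ of the three $\DuAlg$-bimodules, not merely under $\delta^1_2$ applied to a single bracket. For the crossing bimodules, whose operations $\delta^1_\ell$ for $\ell>2$ are nonzero, this requires going into the explicit formulas from~\cite[Section~\ref{BK2:sec:DualCross}]{Bordered2} and checking that whenever a bracketed input appears, the output is congruent mod the ideal to what one gets by replacing that bracket by $1$; the relevant computations are the ones built into the ``standard'' bimodule formalism there, so I expect to discharge this by quoting the structural lemmas of~\cite{Bordered2} rather than recomputing, but one must be careful that those lemmas were stated over $\Alg$ and $\DuAlg$ and check that the relevant combinatorics (the behavior of $\llbracket\cdot,\cdot\rrbracket$ under $\phi_c$ and under composition) is genuinely what is used. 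Once that closure is established, the homotopy equivalences are formal, and in fact can be upgraded to strict equalities with the bimodules defined as above.
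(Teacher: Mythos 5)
Your argument is correct and takes essentially the same route as the paper's proof: for the minimum, Equation~\eqref{eq:IdealToIdeal} shows the operations carry the kernel of $q$ into the kernel of $q$, and for the crossings the key fact is that the only nonvanishing operations $\delta^1_2,\delta^1_3$ of the $\DuAlg$-bimodules commute with multiplication by $\llbracket E_i,E_j\rrbracket$ (a structural feature of their construction in~\cite{Bordered2}, which the paper records explicitly), so that $\Pos$, $\Neg$, $\Min$ all descend to $DA$ bimodules over $\nDuAlg$ and the displayed identities hold essentially on the nose. One small imprecision to tidy: outputs such as $\llbracket E_{\phi_c(s)},E_c\rrbracket\cdot\llbracket E_{\phi_c(t)},E_{c+1}\rrbracket\otimes X$ are not themselves in the ideal $\ker q$ --- they are congruent to $1\otimes X$ modulo it --- so what your computation really establishes (and what is needed for descent) is that $\delta^1_2\bigl(X,\llbracket E_i,E_j\rrbracket-1\bigr)\in\ker q\otimes X$; with that rephrasing the argument is exactly the paper's.
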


\begin{proof}
  Equation~\eqref{eq:IdealToIdeal} ensures that
  all the operations where some element of the
  input algebra lies in the ideal in $\DuAlg$ whose quotient is
  $\nDuAlg$ have the property that the output is contained in the corresponding 
  ideal in the output algebra. 
  It follows at once that there is an induced module
  $\lsup{\nDuAlg_1}\Min^c_{\nDuAlg_2}$ satsisfying
  Equation~\eqref{eq:MinnDuAlg}.

  The bimodule associated to a positive crossing is constructed in
  Section~\cite[Section~\ref{BK2:sec:DuAlg}]{Bordered2}. It is constructed
  so that all operations commute with multiplication by 
  $\llbracket E_i,E_j\rrbracket$. Specifically, there are relations
  \begin{align*}
    \delta^1_2(X,\llbracket E_i,E_j\rrbracket\cdot a)&=\llbracket
     E_{\tau(i)},E_{\tau(j)}\rrbracket\cdot \delta^1_2(X,a)\nonumber\\
    \delta^1_3(X,\llbracket E_i,E_j\rrbracket\cdot a_1,a_2)&=
    \delta^1_3(X,a_1,\llbracket E_i,E_j\rrbracket\cdot a_2)
    =
    \llbracket E_\tau(i),E_\tau(j)\rrbracket\cdot 
    \delta^1_3(X,a_1,a_2),
  \end{align*}
  where $\tau\colon \{1,\dots,2n\}\to \{1,\dots,2n\}$ is the
  transposition that switches the two adjacent elements, corresponding
  to the strands that enter the crossing.  (Note that $\delta^1_k=0$
  for $k>3$.)  It follows at once that there is a bimodule
  $\lsup{\nDuAlg_1}\Pos_{\nDuAlg_2}$ satisfying
  Equation~\eqref{eq:PosnDuaAlg}.

  The corresponding construction for the negative crossing follow
  similarly. (Compare~\cite[Section~\ref{BK2:subsec:DuAlgNegCross}]{Bordered2}.)
\end{proof}

We wish to state an analogue of
Proposition~\ref{prop:DualityRelationship}, using the algebras $\nDuAlg$. To this end, we will
define the canonical type $DD$ bimodule
$\lsup{\cBlg,\nDuAlg}\CanonDD$.  
Generators of $\lsup{\cBlg,\nDuAlg}\CanonDD$, as a vector space,
correspond to $n$-element subsets $\x\subset\{0,\dots,2n\}$, i.e.
$I$-states for $\Blg(2n,n)$. Let $\gen_\x$ be the generator corresponding to $\x$. The  left $\IdempRing(2n,n)\otimes \IdempRing(2n,n+1)$-module 
structure is specified by
\[ (\Idemp{\x}\otimes \Idemp{\{0,\dots,2n\}\setminus \x}) \cdot \gen_\x = \gen_x.\]
The differential is specified by the element
\[
A = \sum_{i=1}^{2n} \left(L_i\otimes R_i + R_i\otimes L_i\right) + \sum_{i=1}^{2n}
U_i\otimes E_i \in
  \Blg\otimes \nDuAlg,\]
  \[ \delta^1 \colon \CanonDD \to \Alg\otimes\nDuAlg \otimes
  \CanonDD.\] by $\delta^1(v)=A\otimes v$.
That is, $\lsup{\cBlg,\nDuAlg}\CanonDD$ is obtained from the description
of $\lsup{\Alg,\DuAlg}\CanonDD$ by dropping the terms involving $C_{\{i,j\}}$,
and then taking the quotient to go from $\DuAlg$ to $\nDuAlg$, or, more formally,
\[ \lsup{\Alg}T_{\cBlg}\DT \lsup{\cBlg,\nDuAlg}\CanonDD=
\lsup{\nDuAlg_1}q_{\DuAlg}\DT \lsup{\DuAlg,\Alg}\CanonDD.\]

For future reference, we describe analogous type $DD$ bimodules for 
a local minimum and for a crossing.

\subsubsection{Algebraically defined curved $DD$-bimodule of a local minimum}

Fix some positive integers $n$ and $c$,
with $1\leq c\leq 2n+1$, and a matching $\Matching_1$ on $\{1,\dots,2n+2\}$.
Let 
\[
  \nDuAlg_1=\nDuAlg(n+1,\Matching_1)
  \qquad{\text{and}}\qquad
  \cBlg_2=\cBlg(n,\Matching_2),
\]
where $\Matching_2$ is induced from $\Matching_1$ as in Equation~\ref{eq:M1andM2}.
 Define $\lsup{\cBlg_2,\nDuAlg_1}\Min_c=\Min$, as follows.
 
 We call an idempotent state $\y$ for $\Blg_1$ an {\em allowed idempotent state for $\Blg_1$} if
 \[ |\y\cap\{c-1,c+1\}|\neq\emptyset\qquad\text{and}\qquad c\not\in\y.\] 
 Note that the allowed idempotent states $\y$ for $\Blg_1$ 
 are those that are of the form $\{0,\dots,2n+2\}\setminus \y'$, where
 $\y'$ is an allowed idempotent state for $\DuAlg_1$, in the sense of Equation~\eqref{eq:AllowedIdempotents}.

 As a vector space, $\Min$ is spanned by vectors that are in
 one-to-one correspondence with allowed idempotent states for $\nDuAlg_1$;
 if $\y$ is an allowed idempotent state for $\nDuAlg_1$, let ${\mathbf P}_\y$ be its corresponding generator.
 The bimodule structure, over the rings of idempotents $\IdempRing(\Blg_2)\otimes \IdempRing(\nDuAlg_1)$
 is specified by the relation
 \[ (\Idemp{\psi(\y)}\otimes \Idemp{\{0,\dots,2n+2\}\setminus \y})\cdot {\mathbf P}_{\y}= {\mathbf P}_\y,\]
 where $\psi$ is as in Equation~\eqref{eq:SpecifyPsi}.

 The differential is specified by the the following element in $\Blg_2\otimes\nDuAlg_1$:
 \begin{align}                                                                                                                                        
   A''&=(1\otimes L_{c} L_{c+1}) +                                                                                                                        
   (1\otimes R_{c+1} R_{c})                                                                                                                             
   + \sum_{j=1}^{2n} R_{j} \otimes L_{\phi(j)} + L_{j} \otimes R_{\phi_c(j)} + U_{j}\otimes E_{\phi_c(j)}
   \label{eq:defDDmin}\\
   &                                                                                                                                                    
   + 1\otimes E_{c} U_{c+1}                                                                                                                           
   + U_{s}\otimes E_{c+1}.
   \nonumber                                                                                                                                            
 \end{align}

 (Note that in~\cite[Section~\ref{BK2:subsec:DDmin}]{Bordered2}, we described a $DD$ bimodule
 $\lsup{\nDuAlg_1,\Alg_2}\Min_c$ associated to a local minimum. The above description
 is obtained by elminiating all the terms involving $C_{i,j}$, and specializing from $\DuAlg$
 to $\nDuAlg$.)

 The bimodule can be described in a little more detail, in terms of the classification of
 idempotents for $\nDuAlg_1$ into three types,  labelled $\XX$, $\YY$, and $\ZZ$:
 \begin{itemize}
 \item $\y$ is of type $\XX$ if $\y\cap \{c-1,c,c+1\}=\{c,c+1\}$,
 \item $\y$ is of type $\YY$ if $\y\cap \{c-1,c,c+1\}=\{c-1,c\}$,
 \item $\y$ is of type $\ZZ$ if $\y\cap \{c-1,c,c+1\}=\{c-1,c+1\}$. 
 \end{itemize}
 There is a corresponding classification of the generators ${\mathbf
   P}_{\y}$ into $\XX$, $\YY$, and $\ZZ$.

 With respect to this decomposition, 
 terms in  the differential are of the following four types:
 \begin{enumerate}[label=(P-\arabic*),ref=(P-\arabic*)]
 \item 
   \label{type:OutsideLR}
   $R_{j}\otimes L_{\phi_c(j)}$
   and $L_{c}\otimes R_{\phi_c(j)}$ for all $j\in \{1,\dots,2n\}\setminus \{c-1,c\}$; these connect
   generators of the same type. 
 \item
   \label{type:UC}
   $U_{j}\otimes E_{\phi_c(j)}$
 \item 
   \label{type:UC2}
   $1\otimes E_{c} U_{c+1}$
 \item
   \label{type:Curved}
   $U_s \otimes E_{c+1}$
 \item 
   \label{type:InsideCup}
   Terms in the diagram below that connect  generators
   of different types.
   \begin{equation}
     \label{eq:CritDiag}
     \begin{tikzpicture}[scale=1.5]%[y=48pt,x=1in]
       \node at (-1.5,0) (X) {$\XX$} ;
       \node at (1.5,0) (Y) {$\YY$} ;
       \node at (0,-2) (Z) {$\ZZ$} ;
       \draw[<-] (X) [bend right=7] to node[below,sloped] {\tiny{$1\otimes R_{c+1} R_{c}$}}  (Y)  ;
       \draw[<-] (Y) [bend right=7] to node[above,sloped] {\tiny{$1\otimes L_{c} L_{c+1}$}}  (X)  ;
       \draw[<-] (X) [bend right=7] to node[below,sloped] {\tiny{$R_{c-1}\otimes L_{c-1}$}}  (Z)  ;
       \draw[<-] (Z) [bend right=7] to node[above,sloped] {\tiny{$L_{c-1}\otimes R_{c-1}$}}  (X)  ;
       \draw[<-] (Z) [bend right=7] to node[below,sloped] {\tiny{$R_{c} \otimes L_{c+2}$}}  (Y)  ;
       \draw[<-] (Y) [bend right=7] to node[above,sloped] {\tiny{$L_{c}\otimes R_{c+2}$}}  (Z)  ;
     \end{tikzpicture}
   \end{equation}
 \end{enumerate}

 With the understanding that
 if $c=1$, then the terms containing $L_{c-1}$ or $R_{c-1}$ are missing; 
 similarly, if $c=2n$, the terms containing $R_{c+2}$ and $L_{c+2}$ are missing.

\subsubsection{Algebraically defined curved bimodule of a positive crossing}
\label{subsec:AlgPosCross}

We construct the bimodle $\lsup{\cBlg_2,\nDuAlg_1}\Pos_i$.
Fix $i$ with $1\leq i\leq 2n-1$,
fix a matching $\Matching$ on $\{1,\dots,2n\}$.
Let $\tau=\tau_i\colon \{1,\dots,2n\}\to  \{1,\dots,2n\}$ 
be the transposition that 
switches $i$ and $i+1$, and let
$\tau(\Matching)$ be the induced matching; i.e.
$\{j,k\}\in\Matching$ iff $\{\tau(j),\tau(k)\}\in\tau(\Matching)$
Let 
\begin{equation}
  \label{eq:DefA1A2}
  \nDuAlg_1=\nDuAlg(n,\tau_i(\Matching))
  \qquad{\text{and}}\qquad
  \cBlg_2=
  \cBlg(n,\Matching).
\end{equation}

\begin{figure}[ht]
\input{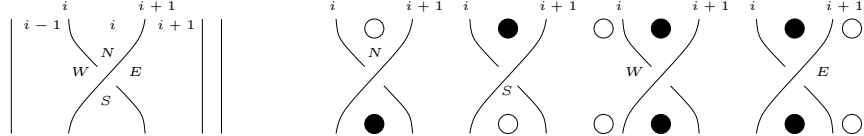}
\caption{\label{fig:PosCrossDD} {\bf{Positive crossing $DD$ bimodule generators.}}  
The four generator types are pictured to the right.}
\end{figure}

As an $\IdempRing(\Blg_1)-\IdempRing(\nDuAlg_2)$-bimodule, $\Pos_i$ is the submodule
of $\IdempRing(\Blg_1)\otimes_{\Field}\IdempRing(\DuAlg_2)$ generated by elements
$\Idemp{\x}\otimes \Idemp{\y}$ where $\x\cap\y=\emptyset$ or 
\begin{equation}
  \label{eq:PosGens}
  \x\cap\y=\{i\}\qquad{\text{and}}\qquad \{0,\dots,2n\}\setminus (\x\cup \y) =\{i-1\}~\text{or}~\{i+1\}.
\end{equation}
Generators can be classified into four types,
$\North$, $\South$, $\West$, and $\East$:  for generators of type $\North$ 
the subsets $\x$ and $\y$ are complementary subsets of $\{0,\dots,2n\}$ 
and $i\in \x$; 
for generators of type $\South$, 
$\x$ and $\y$ are complementary subsets of $\{0,\dots,2n\}$ with $i\in \y$;
for generators of type $\West$, $i-1\not\in \x$ and $i-1\not\in \y$,
and $\x\cap\y=\{i\}$;
for generators of type $\East$, $i+1\not\in \x$ and $i+1\not\in \y$,
and $\x\cap\y=\{i\}$.

The differential has the following types of terms:
\begin{enumerate}[label=(P-\arabic*),ref=(P-\arabic*)]
\item 
  \label{type:OutsideLRP}
  $R_j\otimes L_j$
  and $L_j\otimes R_j$ for all $j\in \{1,\dots,2n\}\setminus \{i,i+1\}$; these connect
  generators of the same type. 
\item
  \label{type:UCP}
  $U_{j}\otimes E_{\tau(j)}$
  for all $j=1,\dots,2n$
\item 
  \label{type:InsideP}
  Terms in the diagram below that connect  generators
  of different types:
  \begin{equation}
    \label{eq:PositiveCrossing}
    \begin{tikzpicture}[scale=1.8]
    \node at (0,3) (N) {$\North$} ;
    \node at (-2,2) (W) {$\West$} ;
    \node at (2,2) (E) {$\East$} ;
    \node at (0,1) (S) {$\South$} ;
    \draw[->] (S) [bend left=7] to node[below,sloped] {\tiny{$R_i\otimes U_{i+1}+L_{i+1}\otimes R_{i+1}R_i$}}  (W)  ;
    \draw[->] (W) [bend left=7] to node[above,sloped] {\tiny{$L_{i}\otimes 1$}}  (S)  ;
    \draw[->] (E)[bend right=7] to node[above,sloped] {\tiny{$R_{i+1}\otimes 1$}}  (S)  ;
    \draw[->] (S)[bend right=7] to node[below,sloped] {\tiny{$L_{i+1}\otimes U_i + R_i \otimes L_{i} L_{i+1}$}} (E) ;
    \draw[->] (W)[bend right=7] to node[below,sloped] {\tiny{$1\otimes L_i$}} (N) ;
    \draw[->] (N)[bend right=7] to node[above,sloped] {\tiny{$U_{i+1}\otimes R_i + R_{i+1} R_i \otimes L_{i+1}$}} (W) ;
    \draw[->] (E)[bend left=7] to node[below,sloped]{\tiny{$1\otimes R_{i+1}$}} (N) ;
    \draw[->] (N)[bend left=7] to node[above,sloped]{\tiny{$U_{i}\otimes L_{i+1} + L_{i} L_{i+1}\otimes R_i$}} (E) ;
  \end{tikzpicture}
\end{equation}
\end{enumerate}

Note that for a generator of type $\East$, the terms of
Type~\ref{type:OutsideLRP} with $j=i+2$ vanish; while for one of type
$\West$, the terms of Type~\ref{type:OutsideLRP} with $j=i-1$ vanish.

There is a $\Q$-grading on $\Pos_i$ defined by
\begin{equation}
  \label{eq:DeltaGradingPos}
  \Delta(\North)=\Delta(\South)=\Delta(\West)-\OneHalf=\Delta(\East)-\OneHalf,
\end{equation}
which is homological in the sense that if $(a\otimes b)\otimes Y$
appears with non-zero coefficient in $\delta^1(X)$, then 
\[ \Delta(X)-1=\Delta(Y)+\Delta(a)+\Delta(b)
=\Delta(Y)-\weight(a)-\weight(b)+\# \text{($E$ in $b$)}.\]

There is an additional $\MGradingSet=\Q^{2n}$-valued grading on
$\Pos_i$. 
The grading set can be thought of as the vector space
spanned by the strands in the picture.)  

Fix a standard basis
$e_1,\dots,e_{2n}$ for $\Q^{2n}$ (where we can think of $e_k$ as
labelling the strand whose output is the $k^{th}$ spot from the left,
in the algebra corresponding to $\Blg$), and set
\begin{equation}
  \label{eq:AlgGradeCrossing}
  \begin{array}{llll}
  \gr(\North)=\frac{e_i +e_{i+1}}{4} & 
  \gr(\West)=\frac{e_{i}-e_{i+1}}{4} & 
  \gr(\East)=\frac{-e_{i}+e_{i+1}}{4} &
  \gr(\South)=\frac{-e_{i}-e_{i+1}}{4}.
  \end{array}
\end{equation}
This induces an Alexander grading on the bimodule, in the following
sense.  Let $\Agr^{\Matching}$ denote the Alexander grading on the
algebra from Subsection~\ref{subsec:Gradings}.
Now, if
$(a\otimes b)\otimes Y$
appears with non-zero multiplicity in $\delta^1(X)$,
then
\[
\Pi(\gr(X))=\Agr^{\Matching}(a)-\Agr^{\tau_i(\Matching)}(b)+\Pi(\gr(Y)),\]
where $\Pi\colon \Q^{2n}\to \Q^{n}$ is the quotient by the
matching. 

(More abstractly, if $W_1$ is the $1$-manifold with $n$ components
associated to the matching, and $W_2$ denotes $1$-manifold with $2n$
components appearing in the picture of the crossing, the map $\Pi$ is
induced by the inclusion map $H^1(W_2,\partial W_2)\to H^1(W_2\cup
W_1,\partial W_2\cup W_1)\cong H^1(W_1,\partial W_1)$.)

The definition is almost the same as the $DD$ bimodule
$\lsup{\Alg_2,\DuAlg_1}\Pos_i$ associated to a crossing
from~\cite[Section~\ref{BK2:subsec:DDcross}]{Bordered2}, except that the
terms involving $C_{\{i,j\}}$ are dropped; i.e. by its construction, it
  is clear that
\[ \lsup{\nDuAlg_1}q_{\DuAlg_1} \DT\lsup{\Alg_2,\DuAlg_1}\Pos_i
= \lsup{\Alg_2}T_{\cBlg_2}\DT \lsup{\cBlg_2,\nDuAlg_1}\Pos_i. \]

\subsubsection{Algebraically defined curved bimodule of a negative crossing}
Taking opposite modules, we can form
\[{\overline{\lsup{\Alg_1,\DuAlg_2}
    \Pos_i}}=\overline{\Pos}_i^{\Alg_1,\DuAlg_2}
=\lsup{\Alg_1^{\opp},(\DuAlg_2)^{\opp}}{\overline{\Pos}}_i.\] Combining
this with the identification $\Opposite$ of $\Alg_1$ and $\DuAlg_2$
with their opposites (an identification that switches the roles of $R_i$ and $L_i$), we arrive at a type $DD$ bimodule, denoted
$\lsup{\Alg_1,\DuAlg_2}\Neg_i$.
Concretely, we reverse all the arrows in Figure~\ref{eq:PositiveCrossing} and replace algebra elements $L_j$ and $R_j$
with $R_j$ and $L_j$ respectively (leaving $U_j$ unmodified).

  Note that this is related to the $DD$ bimodule associated to a crossing
  $\lsup{\DuAlg,\Alg}\Pos$ from~\cite[Section~\ref{BK2:subsec:DDcross}]{Bordered2}
  by the relation
  \[ \lsup{\Alg}T_{\cBlg}\DT \lsup{\cBlg,\nDuAlg}\Pos
  = \lsup{\nDuAlg_1}q_{\DuAlg} \DT \lsup{\DuAlg,\Alg}\Pos.\]

\subsection{Curved $DA$ bimodules over $\Blg$ and those over $\nDuAlg$}

We have the following analogue of Proposition~\ref{prop:DualityRelationship}:

\begin{prop}
  \label{prop:BimodulesOvernDuAlg}
  There is the following relationship between the curved bimodules
  associated to a minimum $\lsup{\cBlg_2}\Min^c_{\cBlg_1}$,
  $\lsup{\cBlg_2}\Pos^i_{\cBlg_1}$, and $\lsup{\cBlg_2}\Neg^i_{\cBlg_1}$
  (from Proposition~\ref{prop:CurvedDABimodules})
  and the corresponding $DA$ bimodules over $\nDuAlg$ (from Lemma~\ref{lem:BimodulesOvernDuAlg}):
  \begin{align*} 
    \lsup{\cBlg_2}\Min^c_{\cBlg_1}\DT \lsup{\cBlg_1,\nDuAlg_1}\CanonDD &\simeq
    \lsup{\nDuAlg_1}\Min^c_{\nDuAlg_2}\DT \lsup{\nDuAlg_2,\cBlg_2}\CanonDD \\
    \lsup{\cBlg_2}\Pos^i_{\cBlg_1}\DT \lsup{\cBlg_1,\nDuAlg_1}\CanonDD &\simeq
    \lsup{\nDuAlg_1}\Pos^i_{\nDuAlg_2}\DT \lsup{\nDuAlg_2,\cBlg_2}\CanonDD \\
    \lsup{\cBlg_2}\Neg^i_{\cBlg_1}\DT \lsup{\cBlg_1,\nDuAlg_2}\CanonDD &\simeq
    \lsup{\nDuAlg_1}\Neg^i_{\nDuAlg_2}\DT \lsup{\nDuAlg_2,\cBlg_2}\CanonDD 
  \end{align*}
\end{prop}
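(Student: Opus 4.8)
The plan is to prove all three equivalences by a single ``sandwich'' argument that reduces each to Proposition~\ref{prop:DualityRelationship} (which in turn rests on~\cite{Bordered2}). The ingredients are: the relations expressing the curved $\cBlg$-bimodules in terms of the $\Alg$-bimodules of~\cite{Bordered2} (Proposition~\ref{prop:CurvedDABimodules}, which in its proof is in fact an equality $\lsup{\Alg_2}T_{\cBlg_2}\DT \lsup{\cBlg_2}X_{\cBlg_1}=\lsup{\Alg_2}X_{\Alg_1}\DT \lsup{\Alg_1}T_{\cBlg_1}$); the relations expressing the $\nDuAlg$-bimodules in terms of the $\DuAlg$-bimodules (Lemma~\ref{lem:BimodulesOvernDuAlg}); the compatibility $\lsup{\Alg}T_{\cBlg}\DT \lsup{\cBlg,\nDuAlg}\CanonDD = \lsup{\nDuAlg_1}q_{\DuAlg}\DT \lsup{\DuAlg,\Alg}\CanonDD$ recorded in Subsection~\ref{subsec:nDuAlg}; and Proposition~\ref{prop:DualityRelationship} itself.

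First I would treat the minimum. Tensoring the left-hand side $\lsup{\cBlg_2}\Min^c_{\cBlg_1}\DT \lsup{\cBlg_1,\nDuAlg_1}\CanonDD$ on the left with the transformer $\lsup{\Alg_2}T_{\cBlg_2}$ and using associativity of $\DT$ together with Proposition~\ref{prop:CurvedDABimodules}, this becomes $\lsup{\Alg_2}\Min^c_{\Alg_1}\DT \lsup{\Alg_1}T_{\cBlg_1}\DT \lsup{\cBlg_1,\nDuAlg_1}\CanonDD$; by the compatibility relation the last two factors equal $\lsup{\nDuAlg_1}q_{\DuAlg_1}\DT \lsup{\DuAlg_1,\Alg_1}\CanonDD$, so we reach $\lsup{\Alg_2}\Min^c_{\Alg_1}\DT \lsup{\Alg_1,\DuAlg_1}\CanonDD\DT \lsup{\nDuAlg_1}q_{\DuAlg_1}$. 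Now Proposition~\ref{prop:DualityRelationship} rewrites the first two factors as $\lsup{\DuAlg_1}\Min^c_{\DuAlg_2}\DT \lsup{\DuAlg_2,\Alg_2}\CanonDD$, then Lemma~\ref{lem:BimodulesOvernDuAlg} rewrites $\lsup{\nDuAlg_1}q_{\DuAlg_1}\DT \lsup{\DuAlg_1}\Min^c_{\DuAlg_2}$ as $\lsup{\nDuAlg_1}\Min^c_{\nDuAlg_2}\DT \lsup{\nDuAlg_2}q_{\DuAlg_2}$, and finally the compatibility relation (now for $\cBlg_2,\Alg_2,\nDuAlg_2$) turns $\lsup{\nDuAlg_2}q_{\DuAlg_2}\DT \lsup{\DuAlg_2,\Alg_2}\CanonDD$ back into $\lsup{\Alg_2}T_{\cBlg_2}\DT \lsup{\cBlg_2,\nDuAlg_2}\CanonDD$. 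The upshot is that $\lsup{\Alg_2}T_{\cBlg_2}\DT\bigl(\lsup{\cBlg_2}\Min^c_{\cBlg_1}\DT \lsup{\cBlg_1,\nDuAlg_1}\CanonDD\bigr)\simeq \lsup{\Alg_2}T_{\cBlg_2}\DT\bigl(\lsup{\nDuAlg_1}\Min^c_{\nDuAlg_2}\DT \lsup{\nDuAlg_2,\cBlg_2}\CanonDD\bigr)$.

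The step I expect to be the main obstacle is cancelling the transformer $\lsup{\Alg_2}T_{\cBlg_2}\DT(-)$ from this last equivalence. This is legitimate because the functor $\lsup{\cBlg_2}Y\mapsto \lsup{\Alg_2}T_{\cBlg_2}\DT \lsup{\cBlg_2}Y$ --- here applied, via the identity on the $\nDuAlg_1$-side, to curved type $D$ structures over $\cBlg_2\otimes\nDuAlg_1$ --- is conservative: the transformer has underlying $(\IdempRing,\IdempRing)$-bimodule equal to $\IdempRing$ with $\delta^1_2(\One,b)=b\otimes\One$ and $\delta^1_1(\One)=\sum C_{i,j}\otimes\One$, so $\lsup{\Alg_2}T_{\cBlg_2}\DT Y$ has $Y$ as underlying space and differential obtained from that of $Y$ by adding the term $\sum C_{i,j}\otimes\Id$; filtering $\Alg_2$ by $C$-degree, a morphism $\lsup{\Alg_2}T_{\cBlg_2}\DT f$ is a homotopy equivalence iff $f$ is. (Alternatively one simply checks the claimed identity on the explicit small models of $\Min^c$, $\Pos^i$, $\Neg^i$, and the $\CanonDD$'s described in Subsections~\ref{subsec:DDmin}--\ref{subsec:AlgPosCross}.) Granting this, the minimum case follows. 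The positive crossing is word-for-word the same chain with $\Min^c$ replaced by $\Pos^i$, using the crossing statements of Proposition~\ref{prop:DualityRelationship} and Lemma~\ref{lem:BimodulesOvernDuAlg}. The negative crossing follows from the positive one by passing to opposite modules (with the indices on the $\CanonDD$ factors reindexed exactly as in Proposition~\ref{prop:DualityRelationship}), precisely as $\lsup{\DuAlg_1}\Neg^i_{\DuAlg_2}$ and $\lsup{\nDuAlg_1}\Neg^i_{\nDuAlg_2}$ are themselves defined from their positive counterparts.
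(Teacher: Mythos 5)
The paper's proof is a direct computation: it introduces explicit finite $DD$-bimodule models $\lsup{\cBlg_2,\nDuAlg_1}\Min_c$ and $\lsup{\cBlg_2,\nDuAlg_1}\Pos_i$ (Subsections~\ref{subsec:DDmin}--\ref{subsec:AlgPosCross}), verifies that both sides of each claimed equivalence compute to that common model (citing~\cite[Lemma~\ref{BK2:lem:DuAlgDualCross}]{Bordered2} for one of the crossing computations), and handles the negative crossing via the opposite-algebra symmetry of $\CanonDD$. Your approach is genuinely different: you ``sandwich'' the statement by tensoring with the transformer $\lsup{\Alg_2}T_{\cBlg_2}$, reduce to the known uncurved dualities (Proposition~\ref{prop:DualityRelationship}), and then try to cancel the transformer. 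The chain of identifications you produce before the cancellation step is correct, and it does buy you a cleaner logical dependency on~\cite{Bordered2}.

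The gap is in the cancellation. To conclude $A\simeq B$ from $\lsup{\Alg_2}T_{\cBlg_2}\DT A\simeq \lsup{\Alg_2}T_{\cBlg_2}\DT B$, what you need is that the functor $\lsup{\Alg_2}T_{\cBlg_2}\DT(-)$ is fully faithful on the relevant homotopy category of curved type $DD$ bimodules, not merely ``conservative.'' Your filtration sketch only shows that a morphism of the form $T\DT f$ is an equivalence if and only if $f$ is; it does not show that every homotopy equivalence $T\DT A\to T\DT B$ arises as $T\DT f$ for some curved morphism $f\colon A\to B$. Indeed, a morphism $g\colon T\DT A\to T\DT B$ is a map $A\to\Alg_2\otimes B$, which decomposes by $C$-degree as $g=g_0+g_1+\cdots$; the degree-zero component of the type-$D$ morphism equation for $g$ is the curved morphism equation for $g_0$ \emph{plus} a correction term coming from $\mu_1(C_{i,j})=U_iU_j$ applied to $g_1$, so $g_0$ need not itself be a morphism over $\cBlg_2$. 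Contrast this with the paper's cancellation in Proposition~\ref{prop:InductiveStep}, where the inclusion bimodule $i$ is cancelled using the identity $\Cidemp\cdot\Blg\cdot\Cidemp=\Clg$, which is precisely the fully-faithfulness statement; no such identity is available for $T$, because $\Alg$ is strictly larger than $\Blg$ as a $\Blg$-bimodule. Your parenthetical alternative --- checking the equivalences directly on the explicit small models --- is the correct route and is what the paper actually does; as written, the conservativity step needs either a real fully-faithfulness argument for $T\DT(-)$ or replacement by that direct computation.
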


\begin{proof}
  It is straightforward to see that
  $\lsup{\cBlg_1}\Min^c_{\cBlg_2}\DT \lsup{\cBlg_2,\nDuAlg_2}\CanonDD
  =\lsup{\cBlg_1,\nDuAlg_2}\Min_c$, where the latter is the type $DD$ bimodule 
  from Subsection~\ref{subsec:DDmin}.
  The verification:
  $\lsup{\nDuAlg_1}\Min^c_{\nDuAlg_2}\DT \lsup{\nDuAlg_2,\cBlg_2}\CanonDD
  =\lsup{\cBlg_1,\nDuAlg_2}\Min_c$
  follows similarly.

  The identification
  $\lsup{\cBlg_2,\nDuAlg}\Pos_i=~\lsup{\cBlg_2}\Pos^i_{\cBlg_1}\DT
  \lsup{\cBlg_1,\nDuAlg_1}\CanonDD$
  is similarly straightforward.
  A homotopy equivalence
  $\lsup{\nDuAlg_1}\Pos^i_{\nDuAlg_2}\DT \lsup{\nDuAlg_2,\cBlg_2}\CanonDD\simeq 
    \lsup{\nDuAlg_1,\cBlg_2}\Pos_i$
  is given exactly as in the proof
  of~\cite[Lemma~\ref{BK2:lem:DuAlgDualCross}]{Bordered2}
  (where it is shown that
  $\lsup{\DuAlg_1}\Pos^i_{\DuAlg_2}\DT \lsup{\DuAlg_2,\Alg_2}\CanonDD\simeq 
  \lsup{\DuAlg,\Alg}\Pos_i$).

  The relation with a negative crossing follows formally, since $\lsup{\cBlg,\nDuAlg}\CanonDD$
  is preserved by the symmetry induced by taking the algebra to its opposite.
\end{proof}

\subsection{Restricting idempotents}
Consider $\Blg(n)$, and let
\begin{equation}
  \label{eq:DefCidemp}
  \Cidemp(n)=\sum_{\{\x\big|\x\cap\{0,2n\}=\emptyset\}}\Idemp{\x}.
\end{equation}
Recall that $\Clg(n)$ is the subalgebra of $\Blg(n)$ determined by
$\Clg(n)=\Cidemp(n)\cdot\Blg(n)\cdot\Cidemp(n)$.  We can think of the
inclusion map $i\colon \Clg(n)\to \Blg(n)$ as a curved type $DA$ bimodule
$\lsup{\cBlg(n)}i_{\cClg(n)}$. As usual, we drop $n$ (in the notation
for $\cClg$, $\cBlg$, and $\Cidemp$) when it
clear from the context.

\begin{lemma}
  \label{lem:RestrictIdempotents}
  There are $DA$ bimodules
  $\lsup{\cClg_2}\Pos^i_{\cClg_1}$, 
  $\lsup{\cClg_2}\Neg_{\cClg_1}$, 
  $\lsup{\cClg_2}\Min_{\cClg_1}$
  related to the above bimodules by the relations
  \begin{align*} 
  \lsup{\cBlg_2}i_{\cClg_2} \DT~ \lsup{\cClg_2}\Pos_{\cClg_1} &\simeq~
    \lsup{\cBlg_2}\Pos_{\cBlg_1}\DT~ \lsup{\cBlg_1}i_{\cClg_1} \\
  \lsup{\cBlg_2}i_{\cClg_2} \DT~ \lsup{\cClg_2}\Neg_{\cClg_1} &\simeq~
    \lsup{\cBlg_2}\Neg_{\cBlg_1}\DT~ \lsup{\cBlg_1}i_{\cClg_1} \\
  \lsup{\cBlg_2}i_{\cClg_2} \DT~ \lsup{\cClg_2}\Min_{\cClg_1}  &\simeq~ 
    \lsup{\cBlg_2}\Min_{\cBlg_1}\DT~ \lsup{\cBlg_1}i_{\cClg_1} 
\end{align*}
\end{lemma}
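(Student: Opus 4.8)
The plan is to produce the bimodules $\lsup{\cClg_2}\Pos^i_{\cClg_1}$, $\lsup{\cClg_2}\Neg^i_{\cClg_1}$, and $\lsup{\cClg_2}\Min^c_{\cClg_1}$ directly, by restricting the idempotents in the already-constructed curved bimodules over $\cBlg$ (Proposition~\ref{prop:CurvedDABimodules}), and then to check that the stated tensor-product relations hold essentially formally. First I would recall that, since $\cClg(n)=\Cidemp(n)\cdot\cBlg(n)\cdot\Cidemp(n)$ with $\Cidemp(n)$ as in Equation~\eqref{eq:DefCidemp}, the bimodule $\lsup{\cBlg}i_{\cClg}$ has underlying space $\Cidemp(n)\cdot\cBlg(n)\cdot\Cidemp(n)$ as a bimodule over $\IdempRing(\cBlg)$ and $\RestrictIdempRing(n)$, with $\delta^1_2$ given by the inclusion and all higher $\delta^1_\ell$ vanishing. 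Tensoring a curved $DA$ bimodule $\lsup{\cBlg_2}X_{\cBlg_1}$ with $\lsup{\cBlg_1}i_{\cClg_1}$ on the right simply restricts the right action to sequences of algebra elements in $\cClg_1$, and tensoring with $\lsup{\cBlg_2}i_{\cClg_2}$ on the left would project outputs; but since in our situation the crossing/min bimodules over $\cBlg$ have the property that an input sequence of pure algebra elements supported away from the boundary strands produces outputs supported away from the boundary strands (this is visible from the explicit descriptions of $\Pos_i$, $\Neg_i$, $\Min^c$ recalled in Section~\ref{subsec:FormalModules}, whose operations never create an occupied idempotent in $\{0,2n\}$ unless one is present in the input), the left projection is automatic.

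The concrete construction I would give is: let $\lsup{\cClg_2}\Pos^i_{\cClg_1}$ be the sub-bimodule of $\lsup{\cBlg_2}\Pos^i_{\cBlg_1}$ spanned by those generators $\x$ with $\IdempIn(\x)$ and $\IdempOut(\x)$ both in the restricted idempotent ring $\RestrictIdempRing$, with operations obtained by restricting $\delta^1_{\ell+1}$ to inputs in $\cClg_1$ and then using the observation above that the outputs automatically land in $\cClg_2\otimes(\text{restricted generators})$. One checks this is closed under the operations and satisfies the curved $DA$ structure relations, because these follow termwise from the relations on $\lsup{\cBlg_2}\Pos^i_{\cBlg_1}$ (the curvature terms $\mu_0^{\cClg_1}$, $\mu_0^{\cClg_2}$ are the images of $\mu_0^{\cBlg_1}$, $\mu_0^{\cBlg_2}$ under the idempotent restriction, and $\Idemp{\mathbf s}\cdot\mu_0^{\cBlg}=\mu_0^{\cClg}$ for restricted idempotents since the matching pairs $\{i,j\}$ appearing in $\mu_0$ with $i$ or $j$ in $\{1,2n\}$ are killed on restricting, matching the curvature of $\cClg$). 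The same construction works verbatim for $\Neg^i$ (which is the opposite module of $\Pos^i$, and opposite commutes with idempotent restriction) and for $\Min^c$. Then the three displayed relations hold by the following string: $\lsup{\cBlg_2}i_{\cClg_2}\DT\lsup{\cClg_2}\Pos^i_{\cClg_1}$ has generators the restricted generators of $\Pos^i$ (since $i$ has underlying module the restricted idempotent ring) with operations the restricted operations; while $\lsup{\cBlg_2}\Pos^i_{\cBlg_1}\DT\lsup{\cBlg_1}i_{\cClg_1}$ has generators those pairs $(\x,b)$ with $b$ a generator of $i$, i.e. again the restricted generators, with operations obtained by feeding restricted algebra sequences into $\Pos^i$ — the same thing.

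The main obstacle, and the one step requiring genuine (if routine) verification rather than formal manipulation, is the claim that the operations on the $\cBlg$-bimodules $\Pos^i$, $\Neg^i$, $\Min^c$ preserve the restricted-idempotent condition — i.e. that a $\delta^1_{\ell+1}$ output never produces a generator whose idempotent meets $\{0,2n\}$ when all inputs are restricted. For the crossing bimodules this is immediate from Equation~\eqref{eq:PositiveCrossing} and its analogue, since the strands labelled $i,i+1$ are interior (one uses $1\le i\le 2n-1$, so $i-1\ge 0$ and $i+2\le 2n$ can be boundary only at the extreme cases, which must be checked as a sub-case exactly as in the remarks after Equation~\eqref{eq:PositiveCrossing}). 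For the minimum bimodule one must check, using the explicit description of $\delta^1_1$ and $\delta^1_2$ via the map $\Phi_\x$ and the diagram~\eqref{eq:CritDiag}, that the new strands $c,c+1$ introduced by the minimum and the insertion map $\phi_c$ interact with the boundary idempotents correctly; here one would invoke the grading and the structure of $\Phi_\x$ from Lemma~\ref{lem:ConstructDeltaTwo} and its analogue to rule out boundary occupation. Once this bookkeeping lemma is in place, the three equivalences drop out formally from associativity of $\DT$ and the fact that $\lsup{\cBlg}i_{\cClg}$ is, on both sides, just the inclusion of the restricted idempotent ring.
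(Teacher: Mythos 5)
Your construction — restrict to the generators with both idempotents in the restricted ring and check that the operations preserve this condition — is the paper's approach. The paper packages the "main obstacle" you identify as the single identity $X\cdot \Cidemp = \Cidemp\cdot X$ (so the right-restricted and left-restricted generators coincide, from which closure of the operations and the factorization through $\cClg_2$ follow formally by the usual idempotent bookkeeping of type $DA$ operations), and then disposes of it by citing the corresponding statement about the underlying $\IdempRing$-bimodule structures from the companion paper rather than re-deriving it here. One small inaccuracy in your write-up: your appeal to Equation~\eqref{eq:PositiveCrossing} and the diagram~\eqref{eq:CritDiag} points to the $DD$-bimodule descriptions (the $\lsup{\cBlg,\nDuAlg}$ objects of Section~\ref{sec:AlgDA}), not to the curved $DA$ bimodules over $\cBlg$ whose operations you actually need to inspect; the latter are produced by Proposition~\ref{prop:CurvedDABimodules} from the $\Alg$-bimodules of~\cite{Bordered2} and are not given explicit arrow diagrams in this paper, which is exactly why the paper falls back on the citation to~\cite{Bordered2}.
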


\begin{proof}
  It suffices to notice that all three bimodules
  $\lsup{\cBlg_2}X_{\cBlg_1}=\lsup{\cBlg_2}\Min_{\cBlg_1}$, 
  $\lsup{\cBlg_2}\Pos^i_{\cBlg_1}$, and $\lsup{\cBlg_2}\Neg^i_{\cBlg_1}$,
  and have the property that if
  $X\cdot \Cidemp=\Cidemp\cdot X$, with $\Cidemp$ as in Equation~\eqref{eq:DefIota}.
  (Note that $\Cidemp$
  depends on the underlying algebra; in particular for $\Min$, the left and
  right ``$\Cidemp$'' differ.) This in turn it follows from properties of
  these three objects as (ordinary) bimodules over the idempotent
  algebras;
  c.f.~\cite[Proposition~\ref{BK2:prop:RestrictIdempotent}]{Bordered2}.
\end{proof}

\section{Computing the $DD$ bimodules of standard middle diagrams}
\label{sec:ComputeDDmods}

Given an extended middle diagram $\HmidExt$, we can think of the bimodule
\[\lsup{\cBlgout}\DAmodExt(\HmidExt)_{\cBlgin}\DT
~\lsup{\cBlgin,\nDuAlgin}\CanonDD\]
as the type $DD$ bimodule 
associated to $\HmidExt$.  
Our aim here is to compute these bimodules
for some standard middle diagrams, expressed in terms of the
algebraically defined $DD$ bimodules from Section~\ref{sec:AlgDA}.

\begin{thm}
  \label{thm:ComputeDDmods}
  For each number $n$ of strands equipped with some matching
  $\MatchIn$, there are extended middle Heegaard diagrams for local
  minima (provided that $\{c,c+1\}\not\in\MatchIn$), positive, and
  negative crossings, denoted $\Hmin{c}$, $\Hpos{i}$ $\Hneg{i}$; and
  whose associated $DA$ bimodules
  $\lsup{\cBlg_2}\DAmodExt(\Hmin{c})_{\cBlg_1}$,
  $\lsup{\cBlg_2}\DAmodExt(\Hpos{i})_{\cBlg_1}$,
  $\lsup{\cBlg_2}\DAmodExt(\Hneg{i})_{\cBlg_1}$
  are related to the
  bimodules from Lemma~\ref{lem:BimodulesOvernDuAlg} by:
  \begin{align}
    \lsup{\nDuAlg_1}\Min^c_{\nDuAlg_2} \DT~ \lsup{\nDuAlg_2,\cBlg_2}\CanonDD &\simeq
    \lsup{\cBlg_2}\DAmodExt(\Hmin{c})_{\cBlg_1} \DT~ \lsup{\cBlg_1,\nDuAlg_1}\CanonDD \label{eq:ComputeMinDD} \\
    \lsup{\nDuAlg_1}\Pos^i_{\nDuAlg_2} \DT~ \lsup{\nDuAlg_2,\cBlg_2}\CanonDD &\simeq
    \lsup{\cBlg_2}\DAmodExt(\Hpos{i})_{\cBlg_1} \DT~ \lsup{\cBlg_1,\nDuAlg_1}\CanonDD \label{eq:ComputePosDD}\\
    \lsup{\nDuAlg_2}\Neg_{\nDuAlg_1} \DT~ \lsup{\nDuAlg_1,\cBlg_1}\CanonDD &\simeq
    \lsup{\cBlg_2}\DAmodExt(\Hneg{i})_{\cBlg_1} \DT~ \lsup{\cBlg_1,\nDuAlg_1}\CanonDD. \label{eq:ComputeNegDD}
  \end{align}
\end{thm}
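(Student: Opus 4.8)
The plan is to verify each of the three equivalences \eqref{eq:ComputeMinDD}, \eqref{eq:ComputePosDD}, \eqref{eq:ComputeNegDD} by a direct identification of generators and differentials, following the same strategy used for the analogous computations in~\cite{Bordered2}. First I would recall that the right-hand sides of the three equations are, by Proposition~\ref{prop:BimodulesOvernDuAlg} applied in reverse, homotopy equivalent to the type $DD$ bimodules $\lsup{\cBlg_2,\nDuAlg_1}\Min_c$, $\lsup{\cBlg_2,\nDuAlg_1}\Pos_i$, and $\lsup{\cBlg_2,\nDuAlg_1}\Neg_i$ described explicitly in Section~\ref{subsec:FormalModules} (the algebraically defined curved $DD$ bimodules). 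So it suffices to show that the type $D$ structure $\lsup{\cBlg_2}\DAmodExt(\Hmid)_{\cBlg_1}\DT \lsup{\cBlg_1,\nDuAlg_1}\CanonDD$ obtained by pairing the holomorphic bimodule of the relevant standard extended middle diagram with the canonical $DD$ bimodule agrees with these combinatorial models. In fact it is cleaner to observe that $\lsup{\cBlg_1,\nDuAlg_1}\CanonDD$ is a curved type $D$ structure over $\cBlg_1\otimes\nDuAlg_1$, that $\lsup{\cBlg_2}\DAmodExt(\Hmid)_{\cBlg_1}$ pairs with it to produce a type $DD$ bimodule over $\cBlg_2$ and $\nDuAlg_1$, and that Theorem~\ref{thm:PairDAwithD} (the $DA$-$D$ pairing theorem) identifies this pairing with the type $D$ structure of the extended middle diagram glued to the upper diagram underlying $\CanonDD$.

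The key steps, then, are: (1) Exhibit the standard extended middle diagrams $\Hmin{c}$, $\Hpos{i}$, $\Hneg{i}$ explicitly (these are the stabilizations, in the sense of Example~\ref{ex:HmidEx}, of the model middle diagrams $\Hmin{c,n}$, $\Hpos{i,n}$, $\Hneg{i,n}$ from Section~\ref{sec:Heegs}), and enumerate their partial Heegaard states. (2) Compute the generators of $\lsup{\cBlg_2}\DAmodExt(\Hmid)_{\cBlg_1}\DT\lsup{\cBlg_1,\nDuAlg_1}\CanonDD$: these are pairs consisting of a middle state $\x$ and a generator $\gen_\x$ of $\CanonDD$ whose idempotents are complementary, and I would match them against the $\North/\South/\West/\East$ generators of $\Pos_i$ (respectively the $\XX/\YY/\ZZ$ generators of $\Min_c$) listed in Section~\ref{subsec:AlgPosCross} and~\ref{subsec:DDmin}. (3) Compute the differential by counting the relevant low-index holomorphic curves in the extended diagrams --- here one uses Theorem~\ref{thm:DAEnds} and the index/Euler-measure formulas of Section~\ref{sec:CurvesD}, together with the fact that in these model diagrams the genus is low and all elementary domains are topological disks or annuli, so the curve counts are forced --- and check term-by-term that the resulting coefficients reproduce the coefficients appearing in the matrices $A''$ of Equation~\eqref{eq:defDDmin} and the crossing diagrams~\eqref{eq:PositiveCrossing}, including the curved (Type~\ref{type:Curved}) term $U_s\otimes E_{c+1}$ coming from a $\beta$-boundary degeneration. (4) Check that the $\Q$-grading $\Delta$ and the Alexander grading match on both sides, which follows from Lemma~\ref{lem:MgrAndAgr} and the grading conventions for the algebraic bimodules in Section~\ref{sec:AlgDA}. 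The negative-crossing case \eqref{eq:ComputeNegDD} follows from the positive-crossing case \eqref{eq:ComputePosDD} by the same opposite-module symmetry used to define $\lsup{\cBlg_2}\Neg^i_{\cBlg_1}$, since the Heegaard diagram $\Hneg{i}$ is obtained from $\Hpos{i}$ by the orientation-reversing symmetry that reflects the crossing.

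I expect the main obstacle to be Step (3) for the local minimum: verifying that the holomorphic curve count in $\Hmin{c}$ produces exactly the terms in~\eqref{eq:defDDmin}, in particular the asymmetric terms $1\otimes E_c U_{c+1}$ and $U_s\otimes E_{c+1}$, and that nothing extra appears. The subtlety is that the extended minimum diagram has two extra $\beta$-circles encircling the middle punctures $\Zmid_0,\Zmid_1$, so one must carefully trace which domains connecting the generators $\XX$, $\YY$, $\ZZ$ support rigid curves, and account for the $\beta$-boundary degenerations permitted by Lemma~\ref{lem:NoBoundaryDegenerations2DA} (which contribute the curvature and the $U_s\otimes E_{c+1}$ term, cf.\ the remark after Theorem~\ref{thm:NeckStretchDA}). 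A clean way to organize this is to reduce, via Proposition~\ref{prop:ExtendDAPrecise}, to computing $\lsup{\cClg_2}\DAmod(\Hmin{c,n})_{\cClg_1}$ on the non-extended diagram where the relevant curves are fewer and more transparently rigid, then extend. Once the minimum is handled, the crossing computations are genuinely parallel to~\cite[Sections~\ref{BK2:subsec:DDcross}]{Bordered2}, with the only new feature being that we are counting holomorphic curves directly rather than invoking a previously established algebraic formula, so the bookkeeping is the same but the input is geometric.
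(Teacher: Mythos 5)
Your overall strategy is the right one and matches the paper's: compute the $DA$ bimodules $\DAmodExt(\Hmid)$ of the standard extended middle diagrams directly by holomorphic curve counting, tensor with the combinatorial $\CanonDD$, and compare term-by-term against the algebraic $DD$ bimodules $\lsup{\cBlg_2,\nDuAlg_1}\Min_c$ and $\lsup{\cBlg_2,\nDuAlg_1}\Pos_i$; and the negative crossing is indeed disposed of by the reflection symmetry of the Heegaard diagram, exactly as you say. Two issues, one minor and one substantial.

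Minor: you suggest invoking Theorem~\ref{thm:PairDAwithD} to identify $\DAmodExt(\Hmid)\DT\CanonDD$ with a holomorphically defined type $D$ structure of a glued diagram. That theorem pairs a middle Heegaard diagram with an \emph{upper Heegaard diagram}; $\CanonDD$ is defined purely algebraically and is not exhibited as the invariant of any Heegaard diagram in this paper, so the pairing theorem does not apply. The paper simply tensors algebraically and compares.

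Substantial: your claim that ``all elementary domains are topological disks or annuli, so the curve counts are forced'' is false for the annular domains, and this is precisely where the difficulty lies. For the crossing diagram (and for some domains in the minimum diagram), the relevant shadows are annuli with cuts, and the count of pseudo-holomorphic representatives depends on the almost-complex structure: as the conformal structure varies, the moduli spaces enter and leave chambers, and the $\delta^1$-terms they contribute change. Your proposal has no mechanism for resolving this ambiguity. The paper handles it in two complementary ways you do not anticipate. First, it proves a \emph{uniqueness lemma} (Lemma~\ref{lem:PosDDchar}) showing that a type $DD$ bimodule is determined up to isomorphism by a short list of coefficients that can be read off from polygon (hence unambiguous) counts plus the $\Delta$- and Alexander gradings plus the $DD$ structure relations; most of the differential is then forced algebraically rather than counted. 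Second, for the few remaining ambiguous terms it \emph{chooses} a specific almost-complex structure by stretching a neck in the diagram (Figure~\ref{fig:StretchPosDoms} onwards) and shows that for that choice the ambiguous moduli spaces are empty or singleton as required. A similar algebraic-forcing argument pins down the coefficient of $U_s\otimes E_{c+1}$ in the minimum case (the ``$q$'' in the proof of Proposition~\ref{prop:MinimumComputation}), where again a direct curve count is carried out only under extra hypotheses on $s$ and the general case is forced by $\delta\circ\delta=0$. Without these two ingredients your Step~(3) would stall exactly where you expect difficulty.
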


We split the verification of Theorem~\ref{thm:ComputeDDmods} into
parts (Subsections~\ref{subsec:LocalMin}, \ref{subsec:Pos}, and
\ref{subsec:Neg}), starting first with a warm-up
(Subsection~\ref{subsec:CanonDD}).

\subsection{The canonical $DD$ bimodule}
\label{subsec:CanonDD}

Although this is not logically needed for the subsequent computations,
we compute the bimodule associated to the middle diagram for the identity cobordism, as follows:

\begin{prop}
  \label{prop:ComputeIdDD}
  For the (extended middle) Heegaard diagram $\Hid$ for the identity cobordism
  from Figure~\ref{fig:IdDiag},
  stabilized as in Section~\ref{sec:ExtendDA}, we have that
  \[ \lsup{\cBlg}\DAmod(\Hid)_{\cBlg}\DT \lsup{\cBlg,\nDuAlg}\CanonDD
  \simeq \lsup{\cBlg,\nDuAlg}\CanonDD, \]
\end{prop}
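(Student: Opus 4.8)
The plan is to reduce the statement to identifying the bimodule $\lsup{\cBlg}{\DAmod(\Hid)}_{\cBlg}$ --- more precisely its extension $\DAmodExt(\HmidExt)$ over $\cBlg$ for the stabilized diagram $\HmidExt$ --- with the \emph{identity} type $DA$ bimodule $\lsup{\cBlg}{[\mathrm{id}]}_{\cBlg}$ of the Example following Definition~\ref{def:Transformer}, taken with $\phi$ the identity map of $\cBlg$. This is a legitimate curved type $DA$ bimodule precisely because the incoming and outgoing matchings induced by the identity diagram coincide, so that $\phi(\mu_0)=\mu_0$. Granting this identification, the proposition is immediate: unwinding the definition of $\DT$ shows $\lsup{\cBlg}{[\mathrm{id}]}_{\cBlg}\DT \lsup{\cBlg,\nDuAlg}{\CanonDD}=\lsup{\cBlg,\nDuAlg}{\CanonDD}$ on the nose, since the only nontrivial operation $\delta^1_2([\mathrm{id}],b)=b\otimes [\mathrm{id}]$ simply returns the incoming algebra element of $\CanonDD$ unchanged.

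So everything rests on computing $\DAmodExt(\HmidExt)$ directly from the diagram of Figure~\ref{fig:IdDiag}, stabilized as in Section~\ref{sec:ExtendDA}. First I would enumerate the extended middle Heegaard states of idempotent type $n$: in the standard identity diagram each interior $\beta$-circle $\beta_k$ is isotopic to a curve separating $\{\alphain_k,\alphaout_k\}$ from $\{\alphain_{k+1},\alphaout_{k+1}\}$, and the two stabilization circles encircling $\Zmid_0$ and $\Zmid_1$ each meet only their adjacent $\alpha$-arcs; a short combinatorial check then shows the type-$n$ states are in canonical bijection with the idempotent states of $\Blg(2n,n)$, and that under this bijection the underlying $\IdempRing(2n,n)$-bimodule of $\DAmodExt(\HmidExt)$ is exactly that of $\lsup{\cBlg}{[\mathrm{id}]}_{\cBlg}$ (here the outgoing idempotent $\IdempOut(\x)$ is complementary to $\IdempIn(\x)$, as in Section~\ref{sec:Bimodules}).

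Next I would compute the operations. Using the index formula of Definition~\ref{def:IndexTypeA} together with the very restricted domain combinatorics of the identity diagram --- it has no positive periodic domain disjoint from the outgoing boundary, and its only positive low-index domains are the thin bigons and rectangles running between $\alphain_i$ and $\alphaout_i$ across a single $\beta$-circle --- I would show that every nonempty index-one moduli space $\UnparModFlow(\x,\y,\vec\rhos)$ has $\vec\rhos$ consisting of a single incoming constraint packet: either a chord $L_i$ or $R_i$, or an $\{\orb_i,\longchord_j\}$ packet or odd-orbit packet producing $U_i$. In each case the shadow $B$ has a unique holomorphic representative, $\y$ is the matching state of $\x$, and $\bOut(B)$ is precisely the corresponding pure generator of $\cBlg$. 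Hence $\delta^1_2(\x,b)=b\otimes \x'$ and $\delta^1_\ell\equiv 0$ for $\ell\neq 2$; the curved $DA$ structure relation~\eqref{eq:CurvedDAbimoduleRelation1} is then automatic over $\Field=\Zmod{2}$ since the two matchings agree. This pins down $\DAmodExt(\HmidExt)\simeq\lsup{\cBlg}{[\mathrm{id}]}_{\cBlg}$, as required.

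The main obstacle is the bookkeeping in this last step: checking that no higher operations survive and that the assignment $B\mapsto \bOut(B)$ reproduces exactly the generators of $\cBlg$ with their idempotent and $U_i$ constraints. An alternative that avoids part of this is to combine the destabilization equivalence recorded in Section~\ref{sec:Heegs} (the diagram $\Hup_1\#\Hid$ is equivalent to $\Hup_1$) with the pairing theorem, Theorem~\ref{thm:PairDAwithD}: these give $\lsup{\cBlg}{\DAmod(\Hid)}_{\cBlg}\DT \lsup{\cBlg}{\Dmod(\Hup)}\simeq \lsup{\cBlg}{\Dmod(\Hup)}$ for every upper diagram $\Hup$, so $\DAmod(\Hid)$ acts as the identity on all type $D$ structures arising this way; identifying it with the identity bimodule then follows once one produces enough such test objects, and Proposition~\ref{prop:ExtendDA} passes the conclusion to the extended diagram over $\cBlg$.
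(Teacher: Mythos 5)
Your strategy differs from the paper's in a way that both introduces gaps and makes the problem harder. You aim to show that $\DAmodExt(\HmidExt)$ coincides with the identity bimodule $\lsup{\cBlg}{[\mathrm{id}]}_{\cBlg}$ --- a strictly stronger statement than the proposition, which only asserts an identification \emph{after} tensoring with $\CanonDD$. The paper never proves (and never needs) that $\DAmod(\Hid)$ is the identity bimodule on the nose: it verifies the three families of $\delta^1_2$ actions
$\delta^1_2(\One,L_i)=L_i\otimes\One$, $\delta^1_2(\One,R_i)=R_i\otimes\One$, $\delta^1_2(\One,U_i)=U_i\otimes\One$,
deduces that $L_i\otimes R_i$, $R_i\otimes L_i$, $U_i\otimes E_i$ appear in the $DD$ differential, and then uses the $\Delta$-grading of Equation~\eqref{eq:DeltaGradenAlg} together with the constraint $\weight(a)=\weight(b)$ from the Alexander grading to show that no other algebra pair $(a\otimes b)$ can appear (all of them have $\Delta\leq -2$). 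That grading argument is the crux of the proof, and it is entirely absent from your proposal; without it, you are left with the bookkeeping problem you flag yourself and never resolve.

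Your description of the domain combinatorics is also incorrect at a key point. You assert that the only positive low-index domains are ``thin bigons and rectangles running between $\alphain_i$ and $\alphaout_i$ across a single $\beta$-circle,'' and that each action comes from a single incoming constraint packet of that shape. But the paper's verification of $\delta^1_2(\One,U_i)=U_i\otimes\One$ counts a twice-punctured \emph{annulus} bounded by $\beta_{i-1}$ and $\beta_i$, with a cut parameter whose behavior depends on the idempotent type of the input (and in one case involves a $\beta$-boundary degeneration to determine the parity of the count). The $\{\orb_i,\longchord_j\}$ and odd-orbit packets you invoke are not what produce the input $U_i$: they produce the $\gamma$-factors $U_{\tau(i)}$ in the \emph{output} and are not relevant here. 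Finally, your fallback route through Theorem~\ref{thm:PairDAwithD} and the observation that $\Hup\#\Hid$ destabilizes to $\Hup$ would require knowing that $\Dmod(\cdot)$ is invariant under handleslides and destabilizations of upper diagrams --- exactly the invariance statement Remark~\ref{rem:NoInvariance} declares the paper will \emph{not} establish --- and even then a Yoneda-style argument from ``acts as identity on all $\Dmod(\Hup)$'' to ``is the identity bimodule'' would need separate justification.
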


\begin{proof}
  The stabilized identity diagram Figure~\ref{fig:IdDiag} is redrawn in
  Figure~\ref{fig:IdDiag2}.

\begin{figure}[h]
 \centering
 \input{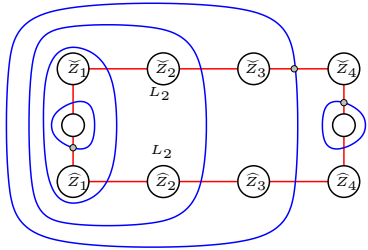}
 \caption{{\bf Extended middle diagram of the identity, again.}}
 \label{fig:IdDiag2}
 \end{figure}

 We claim that for the diagram, 
 \[\delta^1_2(\One,L_i)=L_i\otimes\One \qquad
 \delta^1_2(\One,R_i)=R_i\otimes \One,\qquad
 \delta^1_2(\One,U_i)=U_i\otimes \One.\]
The holomorphic disks counting the first two kinds of actions
are polygons (in fact, in cases where $1<i<2n$, they are rectangles).
For example, the rectangles counting the operations
\[
  \delta^1_2(\Idemp{\{2,3,4\}},L_2)=L_2\otimes \Idemp{\{1,3,4\}} \qquad
  \delta^1_2(\Idemp{\{1,3,4\}},R_2)=R_2\otimes \Idemp{\{2,3,4\}} 
  \]
are shown in the top line of Figure~\ref{fig:IdDomains}.

The holomorphic disk representing the action
$\delta^1_2(\One,U_i)=U_i\otimes \One$ is a (twice punctured) annulus,
which is the region bounded by $\beta_{i-1}$ and $\beta_{i}$ (labelled
so that $\beta_0$ and $\beta_{2n}$ encircle the two ``middle''
boundary components $\Zmid_0$ and $\Zmid_1$); but the precise
combinatorics are dictated by the idempotent type of the input. When
only one of $i-1$ or $i$ is present in the in-coming generator, there
is a cut from the generator to $\Zin_i$, and another cut along
$\alphaout_{i-1}\cup\alphaout_{i}$, which might or might not cut through
$\Zout_i$. In both cases, this annulus always has an odd number of
pseudo-holomorphic representatives that can be realized as branched
double-covers of the disk, and the output algebra element is always
$U_i$.

When both $i-1$ and $i$ are present in the in-coming generator, there
are two cuts going in towards $\Zin_i$; one goes exactly until
$\Zin_i$ and the other not quite. The total number of such ends is odd.
To see this, we consider the one-dimensional moduli space 
with the given homotopy class, and which contains the Reeb orbit.
This has one end, which is a boundary degeneration. The other ends of this moduli space occur when a cut goes exactly out till $\Zin_i$; thus there is an odd number of such ends.

It follows that the generators of $\lsup{\cBlg}\DA(\Hid)_{\cBlg}\DT
\lsup{\cBlg,\nDuAlg}\CanonDD$ correspond to complementary idempotents,
and the terms $L_i\otimes R_i$, $R_i\otimes L_i$, and $U_i\otimes E_i$
appear with non-zero coefficient in the differential. 

The $DD$ bimodule inherits a $\Delta$-grading, defined by  by
\begin{equation}
  \label{eq:DeltaGradenAlg}
  \Delta ((a\otimes b)\otimes \x)=\# \text{($E$ in $b$)}-\weight(a)-\weight(b). 
\end{equation}
By the Alexander grading, 
the coefficients $(a\otimes b)\x$ in $\delta^1(\y)$
satisfy the relation that $\weight(a)=\weight(b)$. 
Thus, $L_i\otimes R_i$, $R_i\otimes L_i$, and $U_i\otimes E_i$ are all the elements
with grading $-1$, and all other algebra elements have
$\leq -2$, so they cannot appear in the differential.
It follows that 
\[ \lsup{\cBlg}\DA(\Hid)_{\cBlg}\DT
\lsup{\cBlg,\nDuAlg}\CanonDD=\lsup{\cBlg,\nDuAlg}\CanonDD.\]
\end{proof}

\begin{figure}[h]
 \centering
 \input{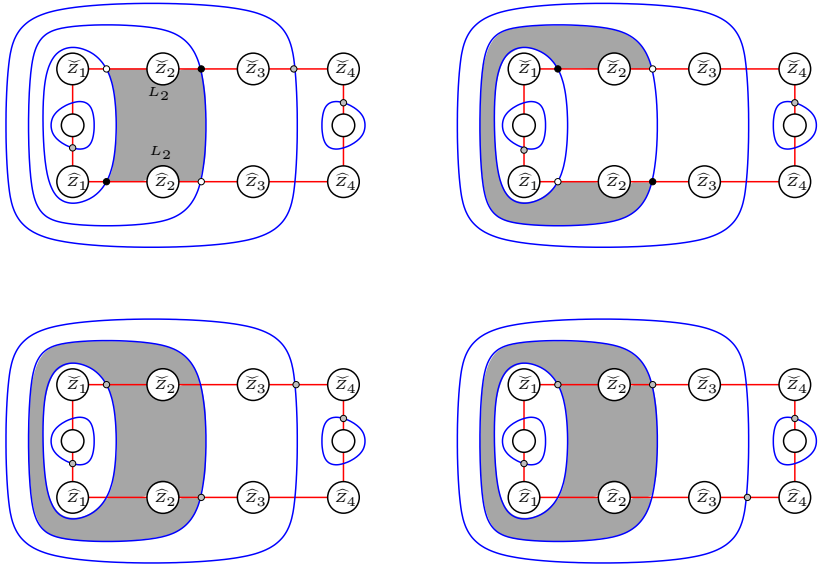}
 \caption{{\bf Some holomorphic disks.} 
   Redrawing the picture from Figure~\ref{fig:IdDiag}.}
 \label{fig:IdDomains}
 \end{figure}

\subsection{A local minimum}
\label{subsec:LocalMin}

 Take the standard middle diagram for the local minimum occuring between
 strands $c$ and $c+1$, and consider its stabilized middle diagram $\Hmin{c}$,
 as pictured in Figure~\ref{fig:MinHDiag}. Equip the strands with a
 matching $\Matching$ with $\{c,c+1\}\not\in\Matching$ (i.e., so that
 $\Matching$ is compatible with $\Hmid$.)

\begin{figure}[h]
 \centering
 \input{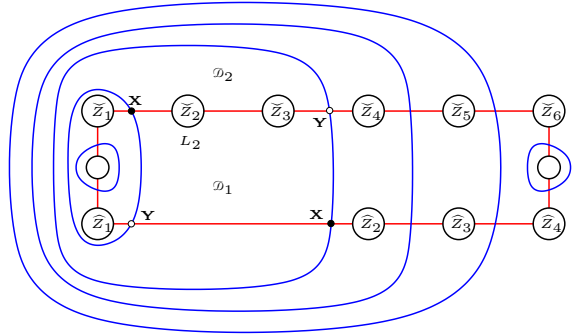}
 \caption{{\bf Heegaard diagram for a minimum, stabilized.} Compare Figure~\ref{fig:MinimumHeeg}.  We have labelled two
   domains.}
 \label{fig:MinHDiag}
 \end{figure}

We now prove the following:
\begin{prop}
  \label{prop:MinimumComputation}
  For each $n$, there is a Heegaard diagram for
  which Equation~\eqref{eq:ComputeMinDD} holds.
\end{prop}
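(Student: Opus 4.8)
The strategy is to compute the type $DA$ bimodule $\lsup{\cBlg_2}\DAmodExt(\Hmin{c})_{\cBlg_1}$ (equivalently, its associated type $DD$ bimodule $\lsup{\cBlg_2}\DAmodExt(\Hmin{c})_{\cBlg_1}\DT\lsup{\cBlg_1,\nDuAlg_1}\CanonDD$) directly from the stabilized Heegaard diagram in Figure~\ref{fig:MinHDiag}, and to match it term-by-term with the algebraically defined curved $DD$ bimodule $\lsup{\cBlg_2,\nDuAlg_1}\Min_c$ whose differential is the element $A''$ of Equation~\eqref{eq:defDDmin}. In view of Proposition~\ref{prop:BimodulesOvernDuAlg} and the remark immediately following it, an identification of the holomorphically defined $DD$ bimodule of $\Hmin{c}$ with $\lsup{\cBlg_2,\nDuAlg_1}\Min_c$ is exactly what Equation~\eqref{eq:ComputeMinDD} asks for; so the real content is this direct computation. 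First I would enumerate the extended partial Heegaard states: away from the crossing region the diagram looks like a stabilized identity diagram, so the states are rigid there, while near the $\alpha$-arcs bounding $\Zin_c$, $\Zin_{c+1}$ and the minimum-cap there are three local configurations, which I will match with the idempotent types $\XX$, $\YY$, $\ZZ$ of the algebraically defined bimodule (via the map $\psi$ of Equation~\eqref{eq:SpecifyPsi}); this gives a bijection of generators.

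Next I would compute $\delta^1_1$ and $\delta^1_2$ by counting the relevant rigid (index $1$) holomorphic curves, using Theorem~\ref{thm:DAEnds} to organize the count and the $\Delta$- and Alexander-grading constraints from Lemma~\ref{lem:MgrAndAgr} to cut down the list of contributing domains. The ``far from the minimum'' contributions $R_j\otimes L_j$, $L_j\otimes R_j$, $U_j\otimes E_j$ are computed exactly as in Proposition~\ref{prop:ComputeIdDD} (polygons for the first two; a twice-punctured annulus realized as a branched double cover, with the boundary-degeneration end of the associated one-dimensional moduli space forcing an odd count, for the third). The new local contributions --- the terms $1\otimes L_cL_{c+1}$, $1\otimes R_{c+1}R_c$ (coming from the $\delta^1_1$ action, i.e.\ the component of $\delta^1(\One)$ that is purely in the output algebra, supported in the two small regions $\cald_1,\cald_2$ adjacent to the cap), the term $1\otimes E_cU_{c+1}$ (another annular double-cover domain straddling the cap), and the curvature term $U_s\otimes E_{c+1}$ (the special boundary-degeneration end, contributing the unique matched-orbit term) --- I would read off from the geometry of the cap region, pairing each with the corresponding arrow in the diagram~\eqref{eq:CritDiag}. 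Parallel to Lemma~\ref{lem:ConstructDeltaTwo}, the $\delta^1_2$ action on a pure algebra element $a$ is given by a surjective map $\Phi$ from the weight-$(v_1,\dots,v_{2n})$ part of the incoming algebra onto the part of the outgoing algebra with $w_{\phi_c(i)}=v_i$ and $w_c=w_{c+1}=0$; the key algebraic identity to verify is the $E_s\mapsto E_{\phi_c(s)}+E_c\llbracket E_{\phi_c(s)},E_{c+1}\rrbracket$-type correction coming from how a Reeb chord on $\Zin$ that ``wraps past'' the matched pair contributes, precisely as in the proof of Theorem~\ref{thm:MaxDA} referenced in Section~\ref{sec:AlgDA}.

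Having identified all the coefficients, I would check that the grading bookkeeping is consistent: the $\Delta$-grading on the $DD$ bimodule, $\Delta((a\otimes b)\otimes\x)=\#(\text{$E$ in $b$})-\weight(a)-\weight(b)$ as in Equation~\eqref{eq:DeltaGradenAlg}, together with the Alexander-grading constraint $\weight(a)=\weight(b)$ (which holds since each contributing domain in $\Hmin{c}$ has equal multiplicity at matched boundary components, by the argument of Lemma~\ref{lem:MgrAndAgr}), shows that the only algebra elements that can appear in the differential are those with $\Delta$-grading $-1$; these are exactly the terms listed in $A''$, and every element of $\Delta$-grading $\le -2$ is excluded. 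This rigidity argument, as in Proposition~\ref{prop:ComputeIdDD}, upgrades the partial computation to a complete determination of the differential, yielding $\lsup{\cBlg_2}\DAmodExt(\Hmin{c})_{\cBlg_1}\DT\lsup{\cBlg_1,\nDuAlg_1}\CanonDD=\lsup{\cBlg_2,\nDuAlg_1}\Min_c$, which combined with the identification $\lsup{\nDuAlg_1}\Min^c_{\nDuAlg_2}\DT\lsup{\nDuAlg_2,\cBlg_2}\CanonDD=\lsup{\cBlg_2,\nDuAlg_1}\Min_c$ from Proposition~\ref{prop:BimodulesOvernDuAlg} gives Equation~\eqref{eq:ComputeMinDD}. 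The main obstacle I anticipate is the bookkeeping near the cap: correctly identifying which annular/branched-double-cover domains are rigid, confirming that the special (unmatched-orbit) boundary degeneration produces exactly the single curvature term $U_s\otimes E_{c+1}$ with coefficient $1$ (via Lemma~\ref{lem:BoundaryDegenerationsDegree1A} and the analogue of Theorem~\ref{thm:DAEnds}\ref{endDA:BoundaryDegeneration}), and handling the boundary cases $c=1$ and $c=2n$ where some of the arrows in~\eqref{eq:CritDiag} degenerate; the rest is a direct translation of the diagram combinatorics already carried out algebraically in Section~\ref{sec:AlgDA}.
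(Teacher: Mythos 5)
The main gap is your rigidity step. You propose to run the $\Delta$-grading/Alexander-grading argument of Proposition~\ref{prop:ComputeIdDD}, concluding that the only algebra elements that can appear have $\Delta$-grading $-1$ and that they are exactly the terms of $A''$; this would yield an \emph{equality} of $DD$ bimodules. That argument does not transfer from the identity diagram to the minimum diagram. In the identity case the Alexander grading forces $\weight(a)=\weight(b)$, and under that constraint the only $\Delta=-1$ elements are $L_i\otimes R_i$, $R_i\otimes L_i$, $U_i\otimes E_i$. In the minimum case the weights of $a\in\Blg_2(n)$ and $b\in\nDuAlg_1(n+1)$ are related by $\phi_c$ and not by identity, so the constraint is weaker, and in fact the computed $DD$ differential of $\Hmin{c}$ genuinely contains terms such as $1\otimes L_c L_{c+1}E_c E_{c+1}$ and $1\otimes R_{c+1}R_c E_c E_{c+1}$ (forced by $\delta\circ\delta=0$ once you have $1\otimes L_cL_{c+1}$ and $1\otimes E_c U_{c+1}$). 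These are \emph{not} in $A''$, so the two $DD$ bimodules are isomorphic but not equal; the paper needs the explicit isomorphism $h^1(\XX)=(1+E_cE_{c+1})\otimes\XX$, $h^1(\YY)=\YY$, $h^1(\ZZ)=(1+E_cE_{c+1})\otimes\ZZ$. Your argument as written would assert these extra terms do not occur, which is false.

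Related to this, Maslov-index considerations alone also permit terms like $L_i\otimes R_iE_cE_{c+1}$, $R_i\otimes L_iE_cE_{c+1}$, $U_i\otimes E_iE_cE_{c+1}$, which are \emph{not} present; the paper excludes them with a $\delta\circ\delta=0$ argument, not a grading argument, and you will need something equivalent. Two smaller points: the terms $1\otimes L_cL_{c+1}$ and $1\otimes R_{c+1}R_c$ arise on the Heegaard-diagram side as genuine $\delta^1_3$ operations of the $DA$ bimodule (e.g.\ $\delta^1_3(\YY,L_{c+1},L_c)=1\otimes\XX$), not as components of a $\delta^1_1$ action; and for the $\ZZ$ generator the annular shadow gives a moduli space whose contribution depends on a chamber structure in the space of almost-complex structures — you need the neck-stretching argument normal to a specific curve to pin down which operations survive, rather than asserting the domain count outright. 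The overall architecture of your proposal (direct $DA$ computation, then compare via $\CanonDD$) is the right one and matches the paper; the issue is that the comparison step requires building an isomorphism, not asserting an identity forced by gradings.
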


\begin{proof}
  It is clear that the Heegaard states are in one-to-one
  correspondence with allowed idempotents for $\y$; the
  correspondence is simply given by $\x\mapsto \alphain(\x)$.

  We verify 
  actions that connect different terms of the following form:
  \[    \begin{tikzpicture}[scale=1.5]%[y=48pt,x=1in]
      \node at (-1.5,0) (X) {$\XX$} ;
      \node at (1.5,0) (Y) {$\YY$} ;
      \node at (0,-2) (Z) {$\ZZ$} ;
      \draw[->] (X) [bend right=7] to node[below,sloped] {\tiny{$1\otimes (R_{c}, R_{c+1})$}}  (Y)  ;
      \draw[->] (Y) [bend right=7] to node[above,sloped] {\tiny{$1\otimes (L_{c+1}, L_{c})$}}  (X)  ;
      \draw[->] (Y) [bend right=7] to node[above,sloped] {\tiny{$L_{c-1}\otimes L_{c-1}$}}  (Z)  ;
      \draw[->] (Z) [bend right=7] to node[below,sloped] {\tiny{$R_{c-1}\otimes R_{c-1}$}}  (Y)  ;
      \draw[->] (Z) [bend right=7] to node[above,sloped] {\tiny{$R_c\otimes R_{c+2}$}}  (X)  ;
      \draw[->] (X) [bend right=7] to node[below,sloped] {\tiny{$L_c\otimes L_{c+2}$}}  (Z)  ;
      \draw[->] (X) [loop above] to node[above,sloped]{\tiny{$1\otimes (R_{c},U_{c+1},L_{c})+ U_t\otimes U_c$}} (X);
      \draw[->] (Y) [loop above] to node[above,sloped]{\tiny{$1\otimes (L_{c+1},U_{c},R_{c+1}) + q \cdot U_s\otimes U_{c+1}$}} (Y);
      \draw[->] (Z) [loop below] to node[below,sloped]{\tiny{$1\otimes (R_{c},U_{c+1},L_{c})+U_t\otimes U_c$}} (Z);
    \end{tikzpicture}
    \]
    (Note, though, that there are other actions as well.)
    Here, $q=0$ or $1$ (though we shall see afterwards that it must be $1$).
    We also prove that $\delta^1_2(\XX,U_{c+1})=0=\delta^1_2(\YY,U_c)=\delta^1_2(\ZZ,U_{c+1})$.

    To set up the computation, we introduce some notation.
    Let $\{\phi_c(s),c\}\in
    \Matching_1$ and $\{c+1,\phi_c(t)\}\in \Matching_2$, and orient the
    strand through the minimum as indicated in Figure~\ref{fig:OrientMin};
    i.e. $c$ corresponds to an even orbit, $c+1$ to an odd orbit,
    and 
    $\tau(c)=\tau(c+1)=t$. (See Figure~\ref{fig:OrientMin}.)

  The six arrows that connect different generator types are represented by
  polygons. For example, the verification $\XX$ appears with non-zero
  multiplicity in $\delta^1_3(\YY,L_{c+1},L_c)$ can be seen by considering
  the quadrialteral (denoted ${\mathcal D}_1$ in
  Figure~\ref{fig:MinHDiag}), and noting that it represents a unique
  rigid holomorphic flow-line. Traversing the $\alpha$-curves, 
  $L_{c+1}$ occurs before $L_c$.

  \begin{figure}[h]
    \centering
    \input{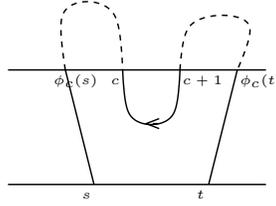}
    \caption{{\bf Orienting a local minimum.}}
    \label{fig:OrientMin}
  \end{figure}

  Next we consider other terms from $\XX$ to $\XX$.  Consider
  holomorphic curves with shadow the annulus, ${\mathcal D}_1 +
  {\mathcal D}_2$.  If we cut exactly to $\Zin_c$, there is a
  holomorphic disk containing $e_{c+1}$ in its interior; if we cut
  further, exactly to $\Zin_{c+1}$, the holomorphic disk is
  interpreted as a $\delta^1_4$. Thus, this homotopy class gives two
  terms
  \[ \begin{CD}
    \XX@>{U_{t}\otimes U_{c}}>{{\mathcal D}_1+{\mathcal D}_2}> \XX
  \end{CD}\qquad{\text{and}}\qquad \begin{CD}
    \XX@>{1\otimes (R_{c},U_{c+1},L_{c})}>{{\mathcal D}_1+{\mathcal D}_2}> \XX.
    \end{CD}
  \]
  Observe that there are no other holomorphic representatives for this
  homology class: the order of the algebra elements $R_c$, $U_{c+1}$, and
  $L_c$ is pre-determined by the order they appear on the
  $\alpha$-arc.
  Moreover, $\delta^1_2(\XX,U_{c+1})=0$.

  Consider next $\YY$. In this case, if we cut exactly to
  $\Zin_{c+1}$, there is a holomorphic disk containing $e_c$ in its
  interior. But this homotopy class alone does not contribute to the
  differential, since $e_c$ is an ``even'' Reeb orbit.

  For this to count in an action for the DA bimodule, there must also be some $v_{s}$ occuring
  at the same time. When $s$ is distinct from $c-1$ and $c+2$, 
  it is easy to find a corresponding action
  \[ \begin{CD}
    \YY@>{U_{s}\otimes U_{c+1}}>{{\mathcal D}_1+{\mathcal D}_2 + {\mathcal D}_s}> \YY
  \end{CD}, \]
  for a choice of ${\mathcal D}_s$ which is an annular domain containing $\Zin_{\phi_c(s)}$ and $\Zout_s$ in its interior.

  Cutting further, we obtain the action 
  \[ 
  \begin{CD}
    \YY@>{1\otimes (L_{c+1}, U_c, R_{c+1})}>{{\mathcal D}_1+{\mathcal D}_2}> \YY,
  \end{CD}
  \]
  (which is obvious for any choice of $s$).
  Moreover, $\delta^1_2(\YY,U_{c})=0$.

  Tensor with the identity type $DD$ bimodule, we obtain terms
  \[\begin{CD}
    \XX @>{U_{t}\otimes E_{c} + 1\otimes (U_c E_{c+1})}>> \XX \\
    \YY @>{q \cdot (U_{s}\otimes E_{c}) + 1\otimes (E_{c} U_{c+1})}>>\YY.
  \end{CD}\]
  So far, we have verified that $q=1$ only under hypotheses on $s$; but
  $q=1$ is forced from algebraic considerations, and the existing  other terms, as follows.
  We refer to the structural equation for a $DD$ bimodule simply as $\delta\circ\delta=0$:
  this includes both terms that multiply terms in $\delta^1$ with other such terms,
  and terms that differentiate terms in $\delta^1$.
  The term in $\delta\circ \delta$ arising from anti-commuting the terms
  $\YY\mapsto (U_{s}\otimes E_{\phi_c(s)})\otimes \YY$ and the above
  $\YY\mapsto((1\otimes (E_{c} U_{c+1}))\otimes\YY$ gives a term
  $\YY\to (U_{s}\otimes U_{c+1})\otimes \YY$. Note that $(1\otimes U_{c+1})\otimes \YY\neq 0$,
  so we need a term in $\delta\circ\delta$ to cancel this term $(U_{s}\otimes U_{c+1})\otimes \YY$. In fact, 
  the only
  possible term that can cancel $U_{s}\otimes U_{c+1}$ is the differential of $U_{s}\otimes E_{c+1}$.

  Starting at $\ZZ$, the space of almost-complex structures has a
  chamber structure: pseudo-holomorphic flows correspond to annuli
  obtained by cutting the annulus $A=\cald_1\cup\cald_2$ in along
  $\alphain_{c-1}$ (from the left) and $\alphain_{c+1}$ (from the
  right). To determine which $\Ainfty$ operations these induce
  involves understanding whether the cut from the left reaches the
  boundary punctures before the cuts on the right.

  \begin{figure}[h]
    \centering
    \input{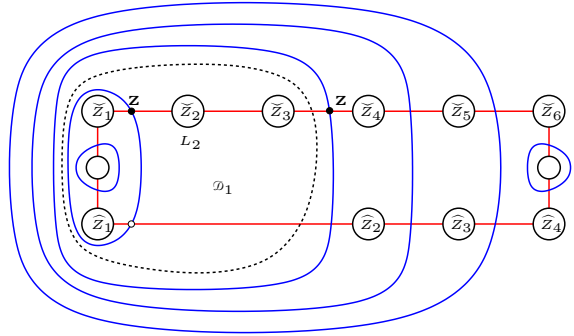}
    \caption{{\bf Stretch along the dotted curve.}}
    \label{fig:MinimumChamber}
  \end{figure}

  Choose a complex structure where the length of the curve
  $(\alphain_{c-1}\cup\alphain_{c})\cap (\cald_1\cup\cald_2)$ is much
  shorter than $(\alphain_{c+1}\cup\alphain_{c})\cap
  (\cald_1\cup\cald_2)$.   (This choice
  is equivalent to stretching sufficiently much normal to the dotted
  curve in Figure~\ref{fig:MinimumChamber}.)
  Thus, for the pseudo-holomorphic flowlines,
  the cuts from the left reach all the punctures before the cuts from
  the right.   In this
  case, the annulus supports exactly two non-trivial operations
  \[\begin{CD}
    \ZZ@>{U_{t}\otimes U_{c}}>{{\mathcal D}_1+{\mathcal D}_2}> \ZZ
  \end{CD}\qquad{\text{and}}\qquad \begin{CD}
    \ZZ@>{1\otimes (R_{c},U_{c+1},L_{c})}>{{\mathcal D}_1+{\mathcal D}_2}> \ZZ
  \end{CD}
  \]
  (which looks like the operations from $\XX$ to itself.)
  Thus, 
  as in the case of generators of type $\XX$, we obtain terms 
  in the $DD$ bimodule of the form.
  \[     \begin{CD}
    \ZZ @>{U_{t}\otimes E_{c} + 1\otimes (U_c E_{c+1})}>> \ZZ 
    \end{CD}
  \]

  Having verified some of the terms in the type $DD$ bimodule 
  $\lsup{\Blg_2}\DAmod(\Hmin{c})_{\Blg_1}\DT \lsup{\Blg_1,\nDuAlg_1}\CanonDD$,
  algebra also forces some additional terms
  \[\begin{CD}
    \XX @>{1\otimes L_{c} L_{c+1}E_{c} E_{c+1}}>> \YY \\
    \YY @>{1\otimes R_{c+1}R_c E_{c} E_{c+1}}>> \XX.
  \end{CD}\] (The first are needed cancel terms in $\delta\circ
  \delta$ arising from composing the terms from $\XX$ to $\YY$
  labelled by $1\otimes L_{c} L_{c+1}$, with terms from $\XX$ to $\XX$
  labelled $1\otimes U_c E_{c+1}$ or (on the other side) terms from
  $\YY$ to $\YY$ labelled $1\otimes E_{c} U_{c+1}$. The second terms
  follow similarly.)

  Maslov index considerations  allow the following
  additional types of terms in the differential: $L_i\otimes R_i E_c
  E_{c+1}$, $R_i\otimes L_i E_c E_{c+1}$, and $U_i\otimes E_i E_c
  E_{c+1}$. But these would contribute terms to $\delta\circ\delta$
  that cannot be cancelled. Consider the case where the generator is
  of type $\XX$.  Then, $d(U_i\otimes E_i E_c E_{c+1})$ contains the
  non-zero term $U_i \otimes E_i E_c U_{c+1}$, which can be factored
  as $(U_i\otimes E_i)\cdot (1\otimes E_{c} U_{c+1})$, but $1\otimes
  E_c U_{c+1}$ does not connect two generators of type $\XX$. The same
  argument for type $\YY$ idempotents, using the other non-zero term
  $U_i\otimes E_i U_c E_{c+1}$, excludes the existence of terms of the
  form $U_i\otimes E_i E_c U_{c+1}$. Similar considerations apply to
  the other types of terms listed above.

  A $DD$ isomomorphism from
  \[\lsup{\Blg_2}\DAmod(\Hmin{c})_{\Blg_1}\DT \lsup{\Blg_1,\nDuAlg_1}\CanonDD \to \lsup{\cBlg_2,\nDuAlg_1}\Min \]
  is now defined  by 
  \[ h^1(\XX)=(1+E_c E_{c+1})\otimes \XX, \qquad h^1(\YY)=\YY,\qquad h^1(\ZZ)= (1+E_c E_{c+1})\otimes \ZZ.\]
\end{proof}
  
\subsection{A positive crossing}
\label{subsec:Pos}

Consider a positive crossing between the $i^{th}$ and $(i+1)^{st}$ strands.

A  Heegaard diagram for this crossing  (stabilized)  is
shown in Figure~\ref{fig:PosCrossDiag}. Our aim here is to verify
Theorem~\ref{thm:ComputeDDmods} for this diagram.

There are two
cases, according to whether or not $i$ and $i+1$ are matched. We
consider these two cases separately.

\begin{figure}[h]
 \centering
 \input{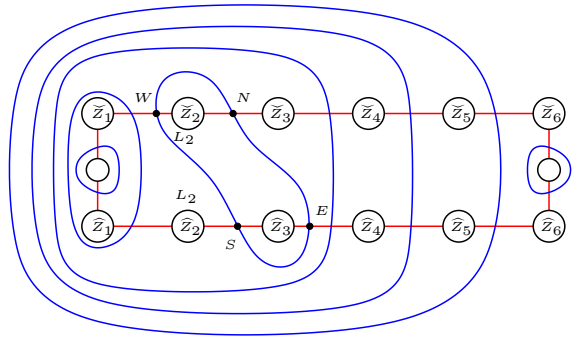}
 \caption{{\bf Positive crossing diagram.} 
   Stabilize Figure~\ref{fig:CrossDiag}.}
 \label{fig:PosCrossDiag}
 \end{figure}

\begin{figure}
    \begin{tikzpicture}[scale=1.85]
    \node at (0,4) (N) {${\mathbf N}$} ;
    \node at (-2,2.5) (W) {${\mathbf W}$} ;
    \node at (2,2.5) (E) {${\mathbf E}$} ;
    \node at (0,-1) (S) {${\mathbf S}$} ;
    \draw[->] (S) [bend left=10] to node[below,sloped] {\tiny{$R_{i} U_{i+1} \otimes E_{i} E_{i+1}+L_{i+1}\otimes R_{i+1}R_{i}
+ R_i U_\beta\otimes 1$}}  (W)  ;
    \draw[->] (W) [bend left=10] to node[above,sloped] {\tiny{$L_{i}\otimes 1$}}  (S)  ;
    \draw[->] (E) [bend right=10] to node[above,sloped] {\tiny{$R_{i+1}\otimes 1$}}  (S)  ;
    \draw[->] (S)[bend right=10] to node[below,sloped] {\tiny{$R_{i} \otimes L_{i} L_{i+1} + L_{i+1} U_{i}\otimes E_{i+1} 
E_{i}+L_{i+1}U_\alpha\otimes 1$}} (E) ;
    \draw[->] (W)[bend right=10]to node[below,sloped] {\tiny{$1\otimes L_{i}$}} (N) ;
    \draw[->] (N)[bend right=10] to node[above,sloped] {\tiny{$U_{i+1}\otimes R_{i} + R_{i+1} R_{i} \otimes L_{i+1}$
}} (W) ;
    \draw[->] (E)[bend left=10]to node[below,sloped]{\tiny{$1\otimes R_{i+1}$}} (N) ;
    \draw[->] (N)[bend left=10] to node[above,sloped]{\tiny{$U_{i}\otimes L_{i+1} + L_{i} L_{i+1}\otimes R_{i}$}} 
(E) ;
    \draw[->] (N) [loop above] to node[above]{\tiny{$U_{i}\otimes E_{i+1} + U_{i+1}\otimes E_{i}$}} (N);
    \draw[->] (W) [loop left] to node[above,sloped]{\tiny{$U_{i+1}\otimes E_{i}$}} (W);
    \draw[->] (E) [loop right] to node[above,sloped]{\tiny{$U_{i}\otimes E_{i+1}$}} (E);
    \draw[->] (E) [bend right=5] to node[above,pos=.3] {\tiny{$R_{i+1} R_{i} \otimes E_{i+1}$}} (W) ;
    \draw[->] (W) [bend right=5] to node[below,pos=.3] {\tiny{$L_{i} L_{i+1}\otimes E_{i}$}} (E) ;
    \draw[->] (S) to node[below,sloped,pos=.3] {\tiny{$L_{i+1}\otimes E_{i} R_{i+1} + R_{i}\otimes L_{i} E_{i+1}$}} (N) ;
    \end{tikzpicture}
    \caption{\label{fig:PosDD} {\bf $DD$ bimodule of a positive crossing.}}
    \end{figure}
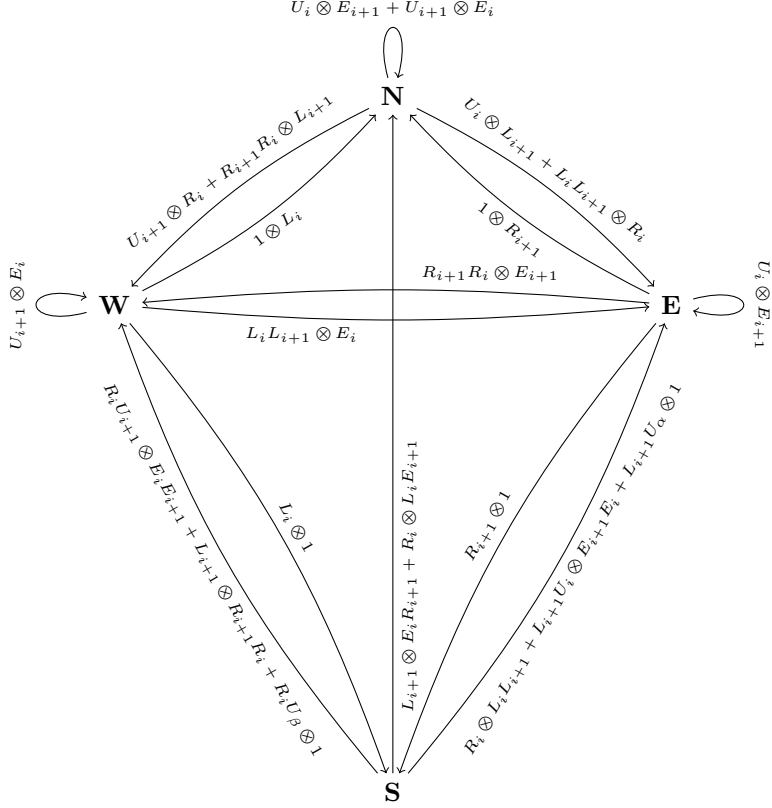

\begin{lemma}
  \label{lem:PosDDchar}
  Suppose that $i$ and $i+1$ are not matched; indeed, $\{\alpha,i\},
  \{i+1,\beta\}\in M_1$. 
  There is a unique type $DD$ bimodule over $\lsup{\Blg_2,\nDuAlg_1}M$ 
  with the following properties:
  \begin{itemize}
    \item The generators of $M$ over the idempotent ring
      are of the four types $\North$, $\South$, $\East$, and $\West$
      as in the bimodule $\lsup{\cBlg_2,\DuAlg}\Pos_i$
      described in Section~\ref{subsec:AlgPosCross}
    \item The bimodule $M$ has a $\Delta$ grading is as specified in Equation~\eqref{eq:DeltaGradingPos}
    \item The bimodule $M$ has an Alexander grading is as specified in 
      Equation~\eqref{eq:AlgGradeCrossing}.
    \item 
      The $\North$ coefficient of $\delta^1_1(\North)$ is $U_i\otimes
      E_{i+1}+U_{i+1}\otimes E_i + \sum_{j\not\in\{i,i+1\}} U_j\otimes
      E_j$.
    \item 
      The $\West$ coefficient of $\delta^1_1(\West)$ is 
      $U_{i+1}\otimes E_i+ \sum_{j\not\in\{i,i+1\}} U_j\otimes E_j$.
    \item 
      The $\East$ coefficient of $\delta^1_1(\East)$ is 
      $U_{i}\otimes E_{i+1}+ \sum_{j\not\in\{i,i+1\}} U_j\otimes E_j$.
    \item 
      The $\South$ coefficient of $\delta^1_1(\South)$ is 
      $\sum_{j\not\in\{i,i+1\}} U_j\otimes E_j$.
    \item 
      The $\East$ coeficient of $\delta^1(\West)$ is $R_{i+1}R_i\otimes E_{i+1}$.
    \item 
      The $\West$ coeficient of $\delta^1(\West)$ is $L_i L_{i+1}\otimes E_i$.
  \end{itemize}
  Moreover, there is an isomorphism
  $\lsup{\cBlg_2,\nDuAlg_1}M\cong~\lsup{\cBlg_2,\nDuAlg_1}\Pos_i$,
  where the right-hand-side bimodule is the one described in
  Section~\ref{subsec:AlgPosCross}.
\end{lemma}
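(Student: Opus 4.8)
\textbf{Proof approach for Lemma~\ref{lem:PosDDchar}.}
The statement is a \emph{rigidity} assertion: the listed data pins down the $DD$ bimodule $M$ up to isomorphism, and the model $\Pos_i$ from Section~\ref{subsec:AlgPosCross} satisfies all the listed properties. So the plan has two halves. First, I would verify that $\lsup{\cBlg_2,\nDuAlg_1}\Pos_i$ indeed meets every bullet: the generator types are exactly $\North,\South,\East,\West$ (Figure~\ref{fig:PosCrossDD}); the $\Delta$-grading is Equation~\eqref{eq:DeltaGradingPos} and the $\MGradingSet$-grading is Equation~\eqref{eq:AlgGradeCrossing}; the loop coefficients at $\North$, $\West$, $\East$, $\South$ are read off from terms of Type~\ref{type:UCP} ($U_j\otimes E_{\tau(j)}$, noting $\tau(i)=i+1$, $\tau(i+1)=i$, and $\tau(j)=j$ otherwise) plus the fact that for $\East$ the $j=i+2$ outgoing term of Type~\ref{type:OutsideLRP} vanishes and for $\West$ the $j=i-1$ one does; and the $\West\to\East$ and $\East\to\West$ coefficients $R_{i+1}R_i\otimes E_{i+1}$ and $L_iL_{i+1}\otimes E_i$ appear in Figure~\ref{fig:PosDD}. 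All of these are direct inspection, so I would state them briefly and move on.

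The real content is uniqueness. The strategy is to propagate the given data through the structure equation $\delta^1\circ\delta^1=0$ for a curved type $DD$ bimodule over $\cBlg_2\otimes\nDuAlg_1$ (i.e.\ a curved type $D$ structure over $\cBlg_2\otimes\nDuAlg_1$ with curvature $\One\otimes\mu_0^{\nDuAlg_1}$, recalling that $\nDuAlg_1$ is the quotient of $\DuAlg$ by $\llbracket E_i,E_j\rrbracket=1$, so the curvature in $\nDuAlg_1$ is $\sum_{\{i,j\}\in\Matching_1}U_iU_j$). The two gradings severely constrain which algebra elements can appear in $\delta^1$: the $\Delta$-grading forces $\Delta(a\otimes b)=-1$ on every differential coefficient, i.e.\ $\#(E\text{ in }b)-\weight(a)-\weight(b)=-1$, while the $\MGradingSet=\Q^{2n}$-grading, combined with the generator gradings in Equation~\eqref{eq:AlgGradeCrossing} and the quotient map $\Pi\colon\Q^{2n}\to\Q^n$, forces $\weight(a)=\weight(b)$ on the matched components. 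Together these leave only finitely many candidate coefficients between each ordered pair of generator types: the ``outside'' terms $L_j\otimes R_j$, $R_j\otimes L_j$, $U_j\otimes E_{\tau(j)}$ for $j\notin\{i,i+1\}$ (forced to be present and not affecting the crossing), and the ``inside'' terms supported on strands $i,i+1$, possibly decorated by $E_i$ and/or $E_{i+1}$ (since $\llbracket E_i,E_j\rrbracket=1$ in $\nDuAlg_1$, higher $E$-powers collapse). I would enumerate these candidates strand by strand.

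Next I would use $\delta^1\circ\delta^1=0$ as a bootstrap, exactly in the style of the proof of Proposition~\ref{prop:MinimumComputation}: the specified loop coefficients and the $\West\rightleftarrows\East$ coefficients, together with the mandatory outside terms, force further terms to cancel the resulting composites, and the grading restrictions then show nothing else can be present. Concretely, differentiating $U_{i+1}\otimes E_i$ at $\West$ (whose image under the $\nDuAlg_1$-differential involves $U_{i+1}\otimes U_i$, which is $\mu_0$-type and must be matched by the curvature contribution of the $DD$ bimodule) forces the $\West\to\North$ and/or $\West\to\South$ coefficients; similar anticommutator arguments at $\North$, $\East$, $\South$ pin down the remaining coefficients in Figure~\ref{fig:PosDD} as the \emph{unique} solution. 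The main obstacle I anticipate is bookkeeping: showing \emph{completeness} of the candidate list and the uniqueness of the solution to $\delta^1\circ\delta^1=0$ requires a careful case analysis of which $E_i,E_{i+1}$-decorations survive in $\nDuAlg_1$, and ruling out spurious extra terms (of the form $L_j\otimes R_j E_i E_{i+1}$, etc.) by the argument used at the end of the proof of Proposition~\ref{prop:MinimumComputation}: such a term would contribute to $\delta^1\circ\delta^1$ a factor that cannot be cancelled because one of its factors connects generators of incompatible types. Once uniqueness is established and $\Pos_i$ is checked to satisfy the hypotheses, the isomorphism $\lsup{\cBlg_2,\nDuAlg_1}M\cong\lsup{\cBlg_2,\nDuAlg_1}\Pos_i$ is immediate (both being \emph{the} bimodule with the listed data, using the identity on generators as the $DD$-morphism, possibly corrected by an $E_iE_{i+1}$-twist on some generators as in the minimum case if the normalizations differ).
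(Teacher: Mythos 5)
Your uniqueness argument is essentially the paper's: bootstrap from the listed coefficients through $\delta^1\circ\delta^1=0$ (including the curvature term $U_iU_\beta\otimes 1$ and the $U_{i+1}\otimes U_i$ composite coming from the self-arrows), with the $\Delta$- and Alexander-gradings cutting the candidate list down to finitely many terms and the ``uncancellable composite'' argument from the proof of Proposition~\ref{prop:MinimumComputation} killing the spurious ones. That half is fine in outline.

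The gap is in your existence step. You propose to establish existence by checking that $\lsup{\cBlg_2,\nDuAlg_1}\Pos_i$ from Section~\ref{subsec:AlgPosCross} satisfies every bullet, but it does not: the characterization includes the nonzero $\West\rightleftarrows\East$ coefficients $R_{i+1}R_i\otimes E_{i+1}$ and $L_iL_{i+1}\otimes E_i$, and no such arrows occur in $\Pos_i$ as defined by Equation~\eqref{eq:PositiveCrossing} (you are reading these off Figure~\ref{fig:PosDD}, which depicts the bimodule $M$ being characterized, not $\Pos_i$ itself; compare also the differing $\South\to\West$ labels). If $\Pos_i$ really satisfied the bullets, uniqueness would force $M=\Pos_i$ on the nose and no twist would be needed; the fact that the final isomorphism is genuinely nontrivial is the signal that your existence check fails. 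The bimodule pinned down by the bootstrap is the one in Figure~\ref{fig:PosDD}, and the isomorphism to $\Pos_i$ is the explicit change of basis that is the identity on $\North$, $\West$, $\East$ and sends $\South\mapsto\South+(L_{i+1}\otimes E_i)\cdot\East+(R_i\otimes E_{i+1})\cdot\West$ — not an $E_iE_{i+1}$-decoration of generators as in the minimum case. So you still need to (a) exhibit this map and verify it intertwines the two differentials (which simultaneously supplies existence, since it transports the $DD$ structure of $\Pos_i$ to one satisfying the bullets), or (b) verify directly that the coefficients produced by your bootstrap close up into a curved $DD$ structure; the bootstrap alone only shows uniqueness conditional on existence.
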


\begin{proof}
  Consider the $\North$ coefficient of $\delta^1\circ \delta^1(\North)$
  There is a term of $U_{i+1}\otimes U_i$ coming from
  differentiating the self-arrows. 
  Gradings now ensure the term can arise
  only from a term of $(1\otimes L_i)\otimes \North$ in $\delta^1(\West)$
  and $(U_{i+1}\otimes R_i)\otimes \West$ in $\delta^1(\North)$.

  The coefficient of $R_i \otimes L_i E_{i+1}$ on the arrow from $\South$ to
  $\North$ is forced from the $\North$
  component of $\delta^1\circ \delta^1(\West)$.

  The term of $R_i U_{i+1}\otimes E_{i+1} E_i$ on the arrow from
  $\South$ to $\West$ is forced from the $\West$ coefficient of
  $\delta^1 \circ \delta^1(\West)$. (Note that degree considerations
  alone allow also for terms $R_i U_{i+1}\otimes E_{i} E_{i+1}$ and
  $R_i U_{i+1}\otimes 1$; but the $DD$ structure relations and our
  hypotheses about the existing coefficients eliminates these
  possibilities.)

  Symmetric arguments give $L_{i+1}\otimes R_{i+1} E_i$ on the arrow
  from $\South$ to north, $L_{i+1} U_i\otimes E_i E_{i+1}$ on the
  arrow from $\South$ to $\East$.

  Now, considering the $\South$ coefficient fo $\delta^1\circ \delta^1(\North)$
  ensures the terms $R_{i+1} R_i \otimes L_{i+1}$ on the arrow from
  $\North$ to $\West$ and $L_i L_{i+1}\otimes R_i$ on the arrow from $\North$
  to $\East$.

  The curvature term in $\delta^1\circ \delta^1(\West)$ (of $U_i
  U_\beta\otimes 1$) ensures the arrow from $\South$ to $\East$
  labelled by $R_i U_\beta$.   

  This constructs all the actions in $\lsup{\cBlg_2,\nDuAlg}M$.
  The isomorphism is provided by the map 
  \[ h^1\colon M \to \Pos_i \]
  defined by
    \[ h^1(X) = \left\{\begin{array}{ll}
        \South+ (L_2\otimes E_1)\cdot \East + (R_1\otimes E_2)\cdot 
        \West & {\text{if $X=\South$}} \\
        X &{\text{otherwise.}}
      \end{array}
    \right.\]
\end{proof}

\begin{prop}
  \label{prop:PosDDUnmatched}
  If $i$ and $i+1$ are unmatched, then Equation~\eqref{eq:ComputePosDD}
  holds.
\end{prop}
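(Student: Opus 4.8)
The strategy is to compute the type $DD$ bimodule $\lsup{\cBlg_2}\DAmodExt(\Hpos{i})_{\cBlg_1}\DT\lsup{\cBlg_1,\nDuAlg_1}\CanonDD$ directly from the Heegaard diagram of Figure~\ref{fig:PosCrossDiag}, and then match it against the algebraic model using the uniqueness characterization of Lemma~\ref{lem:PosDDchar}. First I would identify the Heegaard states of $\Hpos{i}$ with the four generator types $\North$, $\South$, $\East$, $\West$ of $\Pos_i$: exactly as in the corresponding verification in~\cite{BorderedKnots}, the intersection points near the crossing region fall into these four families, and the points away from the crossing are forced, so the identification of the underlying $\IdempRing(2n,n)\otimes\IdempRing(2n,n+1)$-bimodule with that of $\lsup{\cBlg_2,\nDuAlg_1}\Pos_i$ is immediate. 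Next I would transport the gradings: the $\Delta$-grading on the $DD$ bimodule is $\Delta((a\otimes b)\otimes\x)=\#(\text{$E$ in $b$})-\weight(a)-\weight(b)$ as in Equation~\eqref{eq:DeltaGradenAlg}, and combining this with the grading $\Mgr$ and Alexander grading $\Agr$ on $\DAmodExt(\Hpos{i})$ from Lemma~\ref{lem:MgrAndAgr} (and the gradings on $\CanonDD$) one checks the gradings on generators agree with Equations~\eqref{eq:DeltaGradingPos} and~\eqref{eq:AlgGradeCrossing}; this is a bookkeeping computation using $\Agr$ of the elementary domains.

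The analytic core is to compute enough of the actions to invoke Lemma~\ref{lem:PosDDchar}. The self-arrows $\delta^1_1$ on each of the four generator types are computed by recognizing the relevant homology classes: these are the twice-punctured annuli bounded by consecutive $\beta$-circles (exactly the ones appearing in Proposition~\ref{prop:ComputeIdDD}), and the count is again established by the argument there — one looks at the one-dimensional moduli space containing the Reeb orbit, which has a boundary-degeneration end forcing an odd total count of the competing ``cut reaches $\Zin_i$'' ends. The combinatorics of which $U_j\otimes E_j$ terms appear is dictated by the idempotent type of the generator, as in the identity computation; in particular for $\North$ both $U_i\otimes E_{i+1}$ and $U_{i+1}\otimes E_i$ appear, for $\West$ only $U_{i+1}\otimes E_i$, for $\East$ only $U_i\otimes E_{i+1}$, and for $\South$ neither, matching the hypotheses of Lemma~\ref{lem:PosDDchar}. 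The two extra required coefficients — the $\East$-coefficient $R_{i+1}R_i\otimes E_{i+1}$ of $\delta^1(\West)$ and the $\West$-coefficient $L_iL_{i+1}\otimes E_i$ of $\delta^1(\West)$ — come from explicit polygonal (bigon/rectangle) domains in the crossing region, whose rigid holomorphic representatives are counted by the standard Riemann mapping/cut argument; the ordering of the algebra letters $R_{i+1}$, $R_i$ is forced by the order the $\alpha$-arcs are traversed. Everything else — the remaining terms of the full $DD$ bimodule in Figure~\ref{fig:PosDD} — is then forced by Lemma~\ref{lem:PosDDchar} together with the grading constraints, so I would not need to compute those holomorphic curves directly. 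Finally, Lemma~\ref{lem:PosDDchar} provides the isomorphism with $\lsup{\cBlg_2,\nDuAlg_1}\Pos_i$, which by Proposition~\ref{prop:BimodulesOvernDuAlg} is homotopy equivalent to $\lsup{\nDuAlg_1}\Pos^i_{\nDuAlg_2}\DT\lsup{\nDuAlg_2,\cBlg_2}\CanonDD$, giving Equation~\eqref{eq:ComputePosDD}.

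The main obstacle I anticipate is the bookkeeping around the self-arrow $\delta^1_1$ counts in the various idempotent-type cases: one must carefully track, for each of the four generator families, which annular domains have nonnegative multiplicities and which cut configurations give rigid embedded representatives, and confirm the parity of the count is odd via the boundary-degeneration argument. This is the same issue that arises in Proposition~\ref{prop:ComputeIdDD}, but here it must be carried out in the presence of the crossing, where the $\alpha$-arcs $\alphain_i$, $\alphain_{i+1}$, $\alphaout_i$, $\alphaout_{i+1}$ interact; the degenerate cases (e.g.\ when $i=1$ or $i=2n-1$, or when a cut passes through an outgoing boundary circle) will need to be checked separately. Once the self-arrows and the two distinguished polygonal contributions are pinned down, the rest is purely algebraic via Lemma~\ref{lem:PosDDchar}, so I expect the proof to be short modulo this diagrammatic verification. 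I would organize the write-up as: (i) identify generators and idempotents; (ii) verify gradings; (iii) compute $\delta^1_1$ on each type and the two polygonal terms; (iv) apply Lemma~\ref{lem:PosDDchar}; (v) apply Proposition~\ref{prop:BimodulesOvernDuAlg}.
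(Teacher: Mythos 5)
Your overall architecture matches the paper's: identify generators, verify gradings, compute enough coefficients to trigger the uniqueness characterization of Lemma~\ref{lem:PosDDchar}, then apply Proposition~\ref{prop:BimodulesOvernDuAlg}. However, there is a genuine gap in step (iii), in your treatment of the two off-diagonal arrows between $\West$ and $\East$.

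You assert that both the $\East\to\West$ coefficient $R_{i+1}R_i\otimes E_{i+1}$ and the $\West\to\East$ coefficient $L_iL_{i+1}\otimes E_i$ arise from explicit polygonal domains with rigid representatives counted by the Riemann mapping argument. Only the second of these is actually polygonal: in the paper's labeling of Figure~\ref{fig:PosCrossDoms} it corresponds to the domain $\cald_1+\cald_2+\cald_4$, which is a disk and hence exists for every almost-complex structure. The other one has shadow $\cald_1+\cald_3+\cald_5$, which is an \emph{annulus}, and this annulus is ambiguous. Depending on the choice of $J$, the count for this homology class could instead produce the actions
\[ Y' \xrightarrow{U_i\otimes U_{i+1}} Y' \qquad\text{for } Y'\in\{\South,\West\}, \]
which would contaminate the $\delta^1_1$ self-arrows on $\South$ and $\West$ with forbidden $U_i\otimes E_{i+1}$ terms and make Lemma~\ref{lem:PosDDchar} inapplicable. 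This is a chamber-structure phenomenon: the moduli space of cuts of the annulus has a wall, and the paper pins down which side you are on by a neck-stretching degeneration (stretching normal to a specific curve, decomposing $\cald_3+\cald_5$ into $\cald_3'+\cald_5'$ plus a region $\cald''$, and comparing the resulting $s$-coordinates $s_1<s_2$ of the Reeb-orbit evaluation maps). Without this careful choice of $J$ and the accompanying analysis, you cannot conclude which of the competing actions the annular domain contributes, and the proof does not close. Your appeal to the boundary-degeneration parity argument from Proposition~\ref{prop:ComputeIdDD} is the right tool for the $\delta^1_1$ counts away from the crossing, but it does not resolve the $\cald_1+\cald_3+\cald_5$ ambiguity at the crossing, because there the issue is not parity of a count for a fixed shadow but a choice between two different outgoing generators for the same shadow.
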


\begin{proof}
  It is straightforward to see that Lemma~\ref{lem:PosDDchar}
  computes $\lsup{\nDuAlg_1}\Pos^i_{\nDuAlg_2} \DT~
  \lsup{\nDuAlg_2,\cBlg_2}\CanonDD$.

  It remains to verify that there is a complex structure for which
  $\lsup{\cBlg_2}\DAmodExt(\Hpos{i})_{\cBlg_1} \DT~ \lsup{\cBlg_1,\nDuAlg_1}\CanonDD$
  is also computed by Lemma~\ref{lem:PosDDchar}.

  We consider the actions on
  $\lsup{\cBlg_2}\DAmodExt(\Hpos{i})_{\cBlg_1}$ which could pair to give the actions
  required by the lemma.

  \begin{figure}[h]
 \centering
 \input{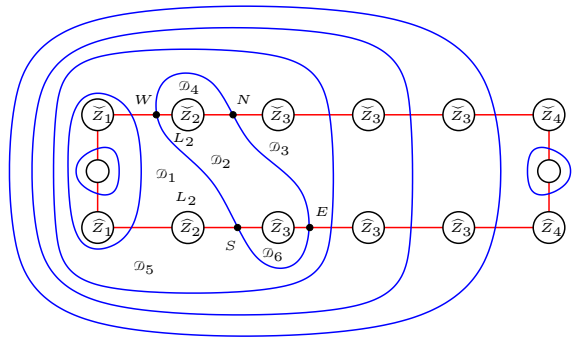}
 \caption{{\bf Labeling domains in the positive crossing diagram.} }
 \label{fig:PosCrossDoms}
 \end{figure}

  Consider labels shown in Figure~\ref{fig:PosCrossDoms}. We have polygons that exhibit
  actions 

  For $X\in\{\North,\West\}$
  \[\begin{CD}
  X@>{U_{i+1}\otimes U_i}>{\cald_2+\cald_4+\cald_6}> X 
  \end{CD}.\]
  For $X'\in\{\South,\East\}$, there is no action 
  \[\begin{CD}
  Y@>{U_{i+1}\otimes U_i}>{\cald_2+\cald_4+\cald_6}> Y 
  \end{CD},\]
  as can be seen from the geometry of that bigon.

  The following action is given by a polygon, and hence
  it exists for all choices of almost-complex structure:
  \[  \begin{CD}
    \West@>{L_i L_{i+1}\otimes U_i}>{\cald_1+\cald_2+\cald_4}>\East
    \end{CD}\]

  Consider possible actions
  \begin{equation}
    \label{eq:ExcludeThis}
    \begin{CD}
    Y'@>{U_{i}\otimes U_{i+1}}>{\cald_1+\cald_3+\cald_5}> Y'.
  \end{CD}
  \end{equation}
  for $Y'\in\{\South,\West\}$.
  We can choose a complex structure
  so that neither exists, but 
  \begin{equation}
    \label{eq:IncludeThis}
    \begin{CD}
    \East@>{R_{i+1} R_i\otimes E_{i+1}}>{\cald_1+\cald_3+\cald_5}> \West
  \end{CD}
  \end{equation}
  exists. This can be seen as follows.

  \begin{figure}[h]
 \centering
 \input{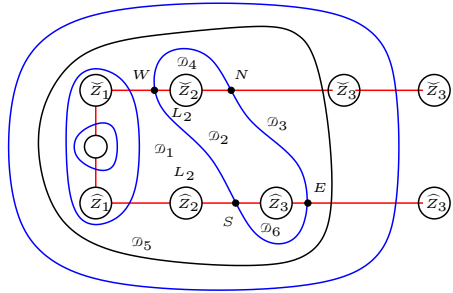}
 \caption{{\bf Stretch normal to the black curve.} }
 \label{fig:StretchPosDoms}
 \end{figure}
 First, we stretch normal to the black curve from
 Figure~\ref{fig:StretchPosDoms}. In the limit, this curve degenerates
 to the point $p$; this is indicated in Figure~\ref{fig:DegenPosDoms}
 (once we identify the two curves labelled $p$ to points).

  \begin{figure}[h]
 \centering
 \input{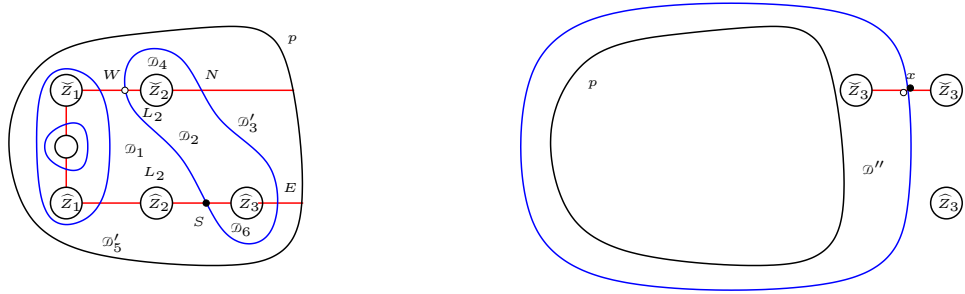}
 \caption{{\bf Degenerating the black curve from
     Figure~\ref{fig:StretchPosDoms}.} }
 \label{fig:DegenPosDoms}
 \end{figure}

 The domain $\cald_3+\cald_5$ can be thought of as the shadow of a
 pseudo-holomorphic flow from $\South$ to $\West$ with Maslov index
 $0$.  As the domain is degenerated, the domain decomposes into two
 regions as shown in Figure~\ref{fig:DegenPosDoms}. One is
 $\cald_3'+\cald_5'$, which is now an index one flow from $\South$ to
 $\West$ on the left; and the remaining region which we denote here
 ${\mathcal D}''$, thought of as connecting a generator to itself, and
 following the Reeb chord covering $\Zin_{i+1}$ once. Both flows
 contain Reeb orbits at $p$ in their interior.
 Let $s_1=s\circ \ev_p(\ModFlow^{\cald_3'+\cald_5'}(\South,\West))$ and 
 $s_2=s\circ\ev_p(\ModFlow^{\cald''}(x,x;\rho))$,
 where $\rho$ is the length one Reeb orbit that covers $\Zin_{i+1}$ once.
 We will choose our complex structure so that $s_1<s_2$.
 
 Consider the maps
 \begin{equation}
   \label{eq:SegmentOne}
   s\circ \ev_p\colon \ModFlow^{\cald_1+\cald_3'+\cald_5'}(Y',Y')\to
 [0,1] \qquad{\text{and}}\qquad s\circ \ev_p\colon
 \ModFlow^{\cald_3'+\cald_5'+\cald_6}(\East,\West)\to [0,1] 
\end{equation} We
claim that these two moduli spaces map degree one to $[0,s_1]$ and
$[s_1,1]$ respectively.  For example, consider the first of these maps 
$Y'=\South$.  The moduli space
$\ModFlow^{\cald_1+\cald_3'+\cald_5'}(\South,\South)$ has two ends:
one is a $\beta$-boundary degeneration, where $s\circ \ev_p$
converges to $0$; and the other is a broken flowline, juxtaposing the
flow with shadow $\cald_1$ from $\South$ to $\West$, followed by the
flowline with shadow $\cald_3'+\cald_5'$ from $\West$ to $\South$. 
Here, $s\circ \ev_p$ converges to $s_1$. 
When $Y'=\West$, the order of the two parts of the broken flowline are reversed.
It follows that the first map from Equation~\eqref{eq:SegmentOne}
has degeree one onto $[0,s_1]$. The analysis of the second map 
from Equation~\eqref{eq:SegmentOne} follows similarly (except in that
case, rather than a boundary degeneration, the
flow degenerates to an index one flowline containing 
a Reeb chord along its $\alpha$-boundary).

 By the usual stretching arguments, the degree of the first map at
 $s_2$ counts the number of representatives of
 Equation~\eqref{eq:ExcludeThis} (for either choice of $Y'$), while
 the degree of the second map counts the number of representatives of
 Equation~\eqref{eq:IncludeThis}. Thus, for our choice of $s_1<s_2$,
 the actions from Equation~\eqref{eq:ExcludeThis} are excluded
 and the one from Equation~\eqref{eq:IncludeThis} is included.

 We can now apply Lemma~\ref{lem:PosDDchar} to
 $\lsup{\cBlg_2}\DAmodExt(\Hpos{i})_{\cBlg_1} \DT~
 \lsup{\cBlg_1,\nDuAlg_1}\CanonDD$
\end{proof}

\begin{prop}
  \label{prop:PosDDMatched}
  Equation~\eqref{eq:ComputePosDD} holds when $i$ and $i+1$ are matched.
\end{prop}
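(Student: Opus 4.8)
\textbf{Proof strategy for Proposition~\ref{prop:PosDDMatched}.}
The plan is to mirror the structure of the unmatched case (Proposition~\ref{prop:PosDDUnmatched}), with the key differences concentrated in the algebra: when $i$ and $i+1$ are matched the curvature element $\mu_0^{\Matching}$ contains the term $U_i U_{i+1}$, and correspondingly the matching on the other side produces the term $C_{i,i+1}$ (or its quotient image) with $\tau_i(\Matching)$ having the two transposed strands matched. First I would isolate the purely algebraic fact: I would establish an analogue of Lemma~\ref{lem:PosDDchar} characterizing the type $DD$ bimodule $\lsup{\nDuAlg_1}\Pos^i_{\nDuAlg_2}\DT\lsup{\nDuAlg_2,\cBlg_2}\CanonDD$ in the matched case by listing its generators (again of the four types $\North,\South,\East,\West$), its $\Delta$- and Alexander-gradings (as in Equations~\eqref{eq:DeltaGradingPos} and~\eqref{eq:AlgGradeCrossing}), and enough of its structure coefficients (the self-arrows and the arrows on the square from Equation~\eqref{eq:PositiveCrossing}) to pin it down uniquely via the $\delta\circ\delta=0$ relation, exactly as in Lemma~\ref{lem:PosDDchar}. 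The main new feature is that now $U_i$ and $U_{i+1}$ are ``the same'' for the purposes of the matching, so that the self-arrow on $\North$ carries $U_i\otimes E_{i+1}+U_{i+1}\otimes E_i$ together with an extra closed loop coming from the curvature $U_i U_{i+1}$; tracking where this extra term forces coefficients is the bookkeeping part.

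Second, I would produce a Heegaard diagram $\Hpos{i}$ (the stabilized standard crossing diagram of Figure~\ref{fig:PosCrossDiag}, but now with a matching that pairs $i$ with $i+1$) and compute $\lsup{\cBlg_2}\DAmodExt(\Hpos{i})_{\cBlg_1}\DT\lsup{\cBlg_1,\nDuAlg_1}\CanonDD$ by the same domain-counting argument used in Proposition~\ref{prop:PosDDUnmatched}. The polygons giving the four ``square'' arrows are unchanged; the annular domains $\cald_2+\cald_4+\cald_6$ and $\cald_1+\cald_3+\cald_5$ of Figure~\ref{fig:PosCrossDoms} still carry the relevant $U_{i+1}\otimes U_i$ and $R_{i+1}R_i\otimes E_{i+1}$ (resp. $L_iL_{i+1}\otimes E_i$) actions, verified by the same neck-stretching as in Figures~\ref{fig:StretchPosDoms}--\ref{fig:DegenPosDoms}. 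The one genuinely new analytic ingredient is the $\beta$-boundary degeneration associated to the matched pair $\{i,i+1\}$: because $i$ and $i+1$ now lie in the same component $\Bjk$ of $\Sigma\setminus\betas$, Theorem~\ref{thm:DAEnds}~\ref{endDA:BoundaryDegeneration} produces a boundary-degeneration end whose algebraic count contributes the curvature term $U_iU_{i+1}$; this is precisely the geometric source of the extra self-loop on $\North$ noted above, and I would check that its parity and chamber placement are as dictated by Lemma~\ref{lem:Walls} and Proposition~\ref{prop:ExtendMainStep}-type gluing. With the coefficients matched against the algebraic characterization, invoking the (matched-case) analogue of Lemma~\ref{lem:PosDDchar} gives the desired isomorphism.

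The step I expect to be the main obstacle is the algebraic uniqueness argument in the matched case, i.e. verifying that the $\delta\circ\delta=0$ relation still rigidifies the bimodule once the curvature term $U_iU_{i+1}$ is present. In the unmatched case the cancellations in $\delta\circ\delta$ that force each coefficient (for instance the terms $R_i U_{i+1}\otimes E_{i+1}E_i$ on the arrow $\South\to\West$, forced by the $\West$-coefficient of $\delta^1\circ\delta^1(\West)$) are clean; when $i$ and $i+1$ are matched, several of these terms acquire companions coming from $\partial C_{i,i+1}=U_iU_{i+1}$, and one must check that no new ambiguity is introduced and that the identifying homotopy $h^1$ (analogous to the one at the end of the proof of Lemma~\ref{lem:PosDDchar}) still works — possibly with an extra correction term involving $E_iE_{i+1}$, as in the minimum computation of Proposition~\ref{prop:MinimumComputation}. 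A secondary difficulty is making sure the chamber structure argument (choosing $s_1<s_2$ in the notation of Proposition~\ref{prop:PosDDUnmatched}) is not disturbed by the presence of the matched boundary degeneration; but since that degeneration lives in its own component of $\Sigma\setminus\betas$ and contributes an honest curvature term rather than interacting with the cut parameter, I expect this to go through essentially verbatim.
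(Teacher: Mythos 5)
Your proposal is correct and takes essentially the same approach as the paper: observe that the characterization from Lemma~\ref{lem:PosDDchar} still pins down the $DD$ bimodule in the matched case (with $\alpha=i+1$ and $\beta=i$), with the only new feature being a curvature term $U_iU_{i+1}\otimes 1$ appearing in $\delta\circ\delta$, and then note that the analytic computation from Proposition~\ref{prop:PosDDUnmatched} applies verbatim. Two minor imprecisions worth fixing: there is no $C_{i,i+1}$ on the $\nDuAlg$ side (that element lives in $\Alg$; the relevant structure is the relation $\llbracket E_i,E_{i+1}\rrbracket=1$ in the quotient), and the curvature does not contribute an ``extra self-loop'' in $\delta^1$ — it is a term in $\delta\circ\delta$ that the existing coefficients must reproduce, exactly as the curved $DD$ structure relation demands.
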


\begin{proof}
  First note that 
  if $i$ and $i+1$ are matched, then the $DD$ bimodule of a positive crossing exhibited in
  Figure~\ref{fig:PosDD}, with $\alpha=i+1$ and $\beta=i$, is uniquely characterized by
  the properties listed in the statement of Lemma~\ref{lem:PosDDchar}. The proof follows exactly
  as in the proof of Lemma~\ref{lem:PosDDchar}. In the present application, note that
  we expect a curvature term of $U_i U_{i+1}\otimes 1$ in $\delta\circ \delta$.

  With this said, the proof of Proposition~\ref{prop:PosDDUnmatched} applies.
\end{proof}

\subsection{A negative crossing}
\label{subsec:Neg}

\begin{prop}
  \label{prop:ComputeNegDD}
  For the extended middle diagram for a negative crossing, 
  Equation~\eqref{eq:ComputeNegDD} holds.
\end{prop}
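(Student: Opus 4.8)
\textbf{Proof plan for Proposition~\ref{prop:ComputeNegDD}.}
The plan is to deduce the negative crossing case by formal manipulation from the positive crossing case (Propositions~\ref{prop:PosDDUnmatched} and~\ref{prop:PosDDMatched}), exactly the way the algebraic side was set up: recall that $\lsup{\cBlg_2}\Neg^i_{\cBlg_1}$ and $\lsup{\nDuAlg_1}\Neg^i_{\nDuAlg_2}$ were both defined as ``opposite modules'' of the corresponding positive crossing bimodules (see Section~\ref{subsec:AlgPosCross} and the paragraph following; compare~\cite[Definition~\ref{BK2:def:NegCrossing} and Subsection~\ref{BK2:subsec:DuAlgNegCross}]{Bordered2}), and that the canonical $DD$ bimodule $\lsup{\cBlg,\nDuAlg}\CanonDD$ is invariant under the symmetry $\Opposite$ taking each algebra to its opposite (this was used already in the proof of Proposition~\ref{prop:BimodulesOvernDuAlg}). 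So the first step is to observe that the desired identity~\eqref{eq:ComputeNegDD} is obtained from~\eqref{eq:ComputePosDD} by applying the opposite-algebra functor to all four algebras in sight, together with the fact that the diagram $\Hneg{i}$ is (by definition, cf. Figure~\ref{fig:CrossDiag} and the discussion in Section~\ref{sec:ExtendDA}) the mirror of $\Hpos{i}$.

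Concretely, I would proceed as follows. First, identify $\lsup{\cBlg_2}\DAmodExt(\Hneg{i})_{\cBlg_1}$ with the opposite bimodule of $\lsup{\cBlg_1^{\opp}}\DAmodExt(\Hpos{i})_{\cBlg_2^{\opp}}$; this is the Heegaard-diagram counterpart of the algebraic ``opposite'' construction, and follows from the fact that reversing the orientation of the Heegaard surface (equivalently, reflecting the crossing picture) interchanges the roles of the $L$ and $R$ Reeb chords and of incoming versus outgoing boundary, while preserving the moduli space counts. Second, as in the proof of Proposition~\ref{prop:PosDDMatched}, I would write down the unique characterization of the $DD$ bimodule for a negative crossing (the mirror of Figure~\ref{fig:PosDD}, with all arrows reversed and $L_j\leftrightarrow R_j$, $U_j$ unchanged, as described just before Section~\ref{subsec:Neg}): it is uniquely determined by its generator types $\North,\South,\East,\West$, its $\Delta$- and Alexander gradings, and the specified diagonal terms in $\delta^1_1$, by the analogue of Lemma~\ref{lem:PosDDchar} obtained by applying $\Opposite$. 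Third, I would check that both sides of~\eqref{eq:ComputeNegDD} satisfy this characterization: the right-hand side by the model computation of the holomorphic DA bimodule $\DAmodExt(\Hneg{i})$ (the same polygon and annulus counts as in Proposition~\ref{prop:PosDDUnmatched}, now in the mirrored diagram, using an appropriately stretched complex structure to resolve the chamber ambiguity exactly as in that proof), and the left-hand side by the algebraic identification of $\lsup{\nDuAlg_1}\Neg^i_{\nDuAlg_2}\DT \lsup{\nDuAlg_1,\cBlg_1}\CanonDD$ with the same model, which is Proposition~\ref{prop:BimodulesOvernDuAlg} combined with the definitions in Lemma~\ref{lem:BimodulesOvernDuAlg}.

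The main obstacle, as in the positive crossing case, is the chamber-structure subtlety: the model computation of $\DAmodExt(\Hneg{i})$ involves an annular domain (the mirror of $\cald_1+\cdots+\cald_6$ in Figure~\ref{fig:PosCrossDoms}) whose holomorphic representatives depend on which of two cuts reaches the Reeb punctures first, and one must choose the complex structure (stretch the neck normal to an appropriate curve, as in Figures~\ref{fig:StretchPosDoms} and~\ref{fig:DegenPosDoms}) so that the correct term $L_iL_{i+1}\otimes E_i$ (mirroring $R_{i+1}R_i\otimes E_{i+1}$) appears and the forbidden $U$-actions do not. This is genuinely a separate computation from the positive case because the mirroring reverses the relevant inequality $s_1<s_2$, but the argument is verbatim the one in the proof of Proposition~\ref{prop:PosDDUnmatched}, degenerating the domain and computing the degrees of the two $s\circ\ev_p$ evaluation maps. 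The matched case $\{i,i+1\}\in M$ is handled exactly as in Proposition~\ref{prop:PosDDMatched}, noting the expected curvature term $U_iU_{i+1}\otimes 1$ in $\delta\circ\delta$; and once both sides are matched against the model, the explicit $DD$-homotopy-equivalence is written down in the same style as in Lemma~\ref{lem:PosDDchar}.
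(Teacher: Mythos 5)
Your proposal lands on the right idea — deduce the negative-crossing case from the positive one via the reflection/opposite-algebra symmetry — and this is exactly what the paper does. But the paper's argument is purely formal and your write-up partially obscures that. The paper observes that reflecting the Heegaard diagram through a vertical axis carries $\Hpos{2n-i}$ to $\Hneg{i}$ (note: the index goes to $2n-i$, not $i$, since the reflection relabels $Z_j\mapsto Z_{2n+1-j}$; you state ``$\Hneg{i}$ is the mirror of $\Hpos{i}$'', which is off by this involution). Since the reflection is an orientation-reversing diffeomorphism of the Heegaard surface, pushing forward the complex structure and conjugating gives a bijection on moduli spaces; hence the $DD$ bimodule of $\Hneg{i}$ is obtained from that of $\Hpos{2n-i}$ by reversing arrows, swapping $\West\leftrightarrow\East$, and relabelling strand $j\mapsto 2n-j$. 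The same formal operation takes $\lsup{\cBlg_2,\nDuAlg_1}\Pos_{2n-i}$ to $\lsup{\cBlg_1,\nDuAlg_2}\Neg_i$, so the proposition follows from Propositions~\ref{prop:PosDDUnmatched}--\ref{prop:PosDDMatched} with no new holomorphic analysis.

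Where your proposal goes astray is in declaring the chamber-structure step a ``main obstacle'' that is ``genuinely a separate computation from the positive case.'' That is inconsistent with your own first step: once you identify $\DAmodExt(\Hneg{i})$ with the (relabelled) opposite of $\DAmodExt(\Hpos{2n-i})$ via the reflection, the complex structure which resolved the chamber ambiguity for the positive crossing transports directly, and the negative-crossing counts are determined with no additional degeneration argument. Redoing the $s_1<s_2$ analysis ``verbatim'' would not be wrong, but it duplicates work the symmetry already did and risks an error in which inequality is needed. Similarly, routing through the unique-characterization lemma (the analogue of Lemma~\ref{lem:PosDDchar}) is not necessary: the reflection gives a direct $DD$-isomorphism, so there is nothing to characterize independently. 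In short, your approach would compile into a correct proof, but it is substantially longer than the paper's and, as written, suggests a misunderstanding of why the symmetry makes the negative case free.
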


\begin{proof}
Observe that there is a symmetry (which is reflection through a
vertical axis in Figure~\ref{fig:PosCrossDiag}) taking the positive
crossing diagram to the negative crossing diagram (shown in
Figure~\ref{fig:NegCrossDiag}) This switches $\West$ and $\East$, $i$
and $2n-i$, and reverses the orientation of the Heegaard diagram,

\begin{figure}[h]
 \centering
 \input{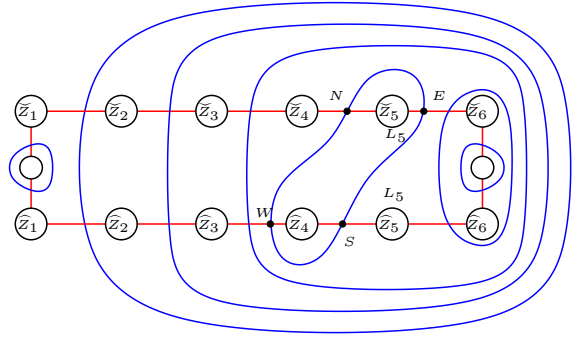}
 \caption{{\bf The (extended) negative crossing diagram.} }
 \label{fig:NegCrossDiag}
 \end{figure}

Thus, the differential in the type $DD$ bimodule associated to
$\Hneg{i}$ is computed by taking the differential the type $DD$
bimodule $\Hpos{2n-i}$, reversing the arrows, switching $\West$ and
$\East$, and switching strand $j$ with strand $2n-j$. This 
operation also transforms $\lsup{\cBlg_2,\nDuAlg_1}\Pos_i$
to the $DD$ bimodule for $\Neg_{2n-i}$. 

With these remarks, the proposition follows immediately from
Proposition~\ref{prop:PosDDUnmatched} or~\ref{prop:PosDDMatched}. 
\end{proof}

\begin{proof}[Proof of Theorem~\ref{thm:ComputeDDmods}]
  The proof follows immediately
    from Propositions~\ref{prop:MinimumComputation}, \ref{prop:PosDDUnmatched}, \ref{prop:PosDDMatched}.
    and~\ref{prop:ComputeNegDD}.
\end{proof}

\section{Computing the type $D$ structures of an upper diagram}
\label{sec:computeD}

Suppose that $\DiagUp$ is an upper knot diagram (thought of as a
diagram in an upper half space). After Reidemeister moves and
isotopies, we can arrange for all the local maxima of $\DiagUp$ to be
global maxima, and for each of the local minima and crossings to occur
at different heights.  We can construct an upper Heegaard diagram for
each upper slice of an {\em acceptable} diagram, as follows. We start from
the standard upper diagram corresponding to all the local maxima, and
then glue on the standard middle diagrams for the remaining crossings
and local mimuma. In this manner we associate the {\em canonical upper
  diagram} $\Hup(\DiagUp)$ associated to the acceptable diagram
$\DiagUp$.

Given an acceptable upper knot diagram $\DiagUp$ with $n$ strands at
the bottom and matching $\Matching$, there is an associated {\em
  algebraically defined type $D$ structure}
$\lsup{\cBlg}\DmodAlg(\DiagUp)$, where $\cBlg=\cBlg(n,\Matching)$,
obtained by starting with $\lsup{\cClg(n,\Matching)}k$ for the global
maxima, and then tensoring with the bimodules
$\lsup{\cClg_2}\Pos^i_{\cClg_1}$, $\lsup{\cClg_2}\Neg^i_{\cClg_1}$,
$\lsup{\cClg_2}\Min^c_{\cClg_1}$ as dictated by the local slices of
$\DiagUp$. (This is very closely related to the algebraic description
appearing in~\cite{Bordered2}; see the proof of
Theorem~\ref{thm:MainTheorem} below.)

\begin{thm}
  \label{thm:ComputeD}
  Let $\DiagUp$ be any acceptable knot diagram $\DiagUp$, and
  let $\Hup$ be its associated  upper Heegaard diagram.
  Then there is an identification
  \begin{equation}
    \label{eq:ComputeD}
    \lsup{\cClg(n,\Matching)}\Dmod(\Hup)\simeq~\lsup{\cClg(n,\Matching)}\DmodAlg(\DiagUp).
  \end{equation}
\end{thm}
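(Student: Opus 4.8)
The plan is to prove Theorem~\ref{thm:ComputeD} by induction on the number of elementary pieces (crossings and local minima) appearing in the acceptable diagram $\DiagUp$, reading the diagram from top (the global maxima) to bottom. The base case is the standard upper diagram with $2n$ local maxima, for which Lemma~\ref{lem:StandardTypeD} gives $\Dmod(\Hup)\cong \lsup{\cClg}k$, which is by definition the algebraically defined type $D$ structure $\DmodAlg$ associated to the empty diagram. For the inductive step, suppose $\DiagUp$ is obtained from a smaller acceptable diagram $\DiagUp'$ (with associated upper Heegaard diagram $\Hup'$, over the algebra $\cClg_1$) by gluing on one more elementary middle piece: a positive crossing, a negative crossing, or a local minimum, corresponding to a standard middle diagram $\Hmid$. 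Then by construction $\Hup = \Hup' \# \Hmid$, so the pairing theorem for bimodules (Theorem~\ref{thm:PairDAwithD}) gives a quasi-isomorphism of type $D$ structures
\[
\lsup{\cClg_2}\Dmod(\Hup) \simeq \lsup{\cClg_2}\DAmod(\Hmid)_{\cClg_1} \DT \lsup{\cClg_1}\Dmod(\Hup').
\]
By the inductive hypothesis, $\lsup{\cClg_1}\Dmod(\Hup')\simeq \lsup{\cClg_1}\DmodAlg(\DiagUp')$, so it suffices to identify the curved $DA$ bimodule $\lsup{\cClg_2}\DAmod(\Hmid)_{\cClg_1}$ associated to the standard middle diagram with the algebraically defined curved bimodule $\lsup{\cClg_2}\Pos^i_{\cClg_1}$, $\lsup{\cClg_2}\Neg^i_{\cClg_1}$, or $\lsup{\cClg_2}\Min^c_{\cClg_1}$ from Lemma~\ref{lem:RestrictIdempotents}, as appropriate.

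The identification of these bimodules is the heart of the argument, and I would carry it out by passing through the $DD$ bimodule picture, where the model computations of Section~\ref{sec:ComputeDDmods} are available. First, extend the $DA$ bimodules over all of $\Blg$ using Proposition~\ref{prop:ExtendDA}: it is enough to match $\lsup{\cBlg_2}\DAmodExt(\Hmid)_{\cBlg_1}$ with the algebraic bimodule over $\Blg$, since restricting idempotents (Lemma~\ref{lem:RestrictIdempotents}) and then tensoring with the inclusion bimodule recovers the statement over $\cClg$. Next, tensor both sides on the right with the canonical type $DD$ bimodule $\lsup{\cBlg_1,\nDuAlg_1}\CanonDD$: since $\CanonDD$ is ``invertible'' in the appropriate sense (tensoring a curved type $D$ structure over $\nDuAlg_1$ with it and then with the dual recovers the original type $D$ structure over $\cBlg_1$, up to quasi-isomorphism), it is equivalent to prove the corresponding statement for the associated type $DD$ bimodules. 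But Theorem~\ref{thm:ComputeDDmods} precisely computes $\lsup{\cBlg_2}\DAmodExt(\Hmid)_{\cBlg_1}\DT \lsup{\cBlg_1,\nDuAlg_1}\CanonDD$ in terms of $\lsup{\nDuAlg_1}X^?_{\nDuAlg_2}\DT \lsup{\nDuAlg_2,\cBlg_2}\CanonDD$ (for $X^? \in \{\Min^c, \Pos^i, \Neg^i\}$), and Proposition~\ref{prop:BimodulesOvernDuAlg} identifies the latter with $\lsup{\cBlg_2}X^?_{\cBlg_1}\DT\lsup{\cBlg_1,\nDuAlg_1}\CanonDD$, where now $X^?$ denotes the algebraically defined curved bimodule over $\cBlg$ from Proposition~\ref{prop:CurvedDABimodules}. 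Combining these two equivalences and then ``cancelling'' the factor of $\CanonDD$ yields $\lsup{\cBlg_2}\DAmodExt(\Hmid)_{\cBlg_1}\simeq \lsup{\cBlg_2}X^?_{\cBlg_1}$, as desired. Descending back to $\cClg$ via Lemma~\ref{lem:RestrictIdempotents} and Proposition~\ref{prop:ExtendDA} completes the inductive step.

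One technical point I would need to address carefully is the grading: Equation~\eqref{eq:ComputeD} asserts an identification that should respect the Alexander and Maslov ($\Delta$) gradings, so at each stage one must check that the quasi-isomorphisms produced by Theorem~\ref{thm:PairDAwithD}, Theorem~\ref{thm:ComputeDDmods}, Proposition~\ref{prop:BimodulesOvernDuAlg}, and the various idempotent restrictions and extensions are all graded maps; this is built into the statements of the pairing theorems and the adapted-bimodule formalism of Section~\ref{subsec:Gradings}, but assembling the grading bookkeeping across all the tensor products requires care, especially in tracking the $\Q^n$-valued Alexander grading under the Mayer--Vietoris maps of Equation~\eqref{eq:MayerVietoris}. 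A second point worth isolating is the invertibility of $\CanonDD$ used in the ``cancellation'' step: one should either cite the standard fact that tensoring with the canonical $DD$ bimodule and then with its (left or right) dual is quasi-isomorphic to the identity functor on type $D$ structures over the relevant algebra, or supply a short argument in the curved setting. I expect the main obstacle to be purely organizational rather than conceptual: each of the needed ingredients is already established in the excerpt, and the proof amounts to chaining together the pairing theorems with the model computations in the correct order, so the real work is verifying that the chain of equivalences is compatible with gradings and with the idempotent restriction/extension maps at every link.
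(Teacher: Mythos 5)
Your overall strategy matches the paper's: induction on the number of elementary pieces, base case via Lemma~\ref{lem:StandardTypeD}, inductive step via the bimodule pairing theorem (Theorem~\ref{thm:PairDAwithD}) together with a comparison of the holomorphic and algebraic $DA$ bimodules carried out in the $DD$ picture (Theorem~\ref{thm:ComputeDDmods}, Proposition~\ref{prop:BimodulesOvernDuAlg}, Proposition~\ref{prop:ExtendDAPrecise}, Lemma~\ref{lem:RestrictIdempotents}). The divergence is exactly the point you flag at the end as a ``second point worth isolating'': the cancellation of the $\CanonDD$ factor.

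The cancellation step as you describe it requires that tensoring with $\lsup{\cBlg,\nDuAlg}\CanonDD$ be an invertible (Koszul-duality) operation in this curved setting, so that from $\lsup{\cBlg_2}\DAmodExt(\Hmid)_{\cBlg_1}\DT\CanonDD\simeq\lsup{\cBlg_2}X^?_{\cBlg_1}\DT\CanonDD$ one may conclude the unconditional equivalence of bimodules $\lsup{\cBlg_2}\DAmodExt(\Hmid)_{\cBlg_1}\simeq\lsup{\cBlg_2}X^?_{\cBlg_1}$. The paper explicitly does \emph{not} establish this: in the proof of Proposition~\ref{prop:CurvedDPlanarIsotopies} it states that the Koszul duality used in~\cite{Bordered2} to pass from $DD$ identities to $DA$ identities ``we have not established here.'' So you cannot simply cite this as a standard fact, and supplying a short argument is not straightforward in the curved setting.

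The paper's workaround is the notion of \emph{relevance} (Definition~\ref{def:Relevant}): a type $D$ structure $\lsup{\cClg}X$ is relevant if $\lsup{\cBlg}i_{\cClg}\DT\lsup{\cClg}X$ factors through $\CanonDD$ as $Z_{\nDuAlg}\DT\lsup{\nDuAlg,\cBlg}\CanonDD$ for some $\Ainfty$ module $Z_{\nDuAlg}$. The inductive hypothesis is strengthened to include relevance (Lemma~\ref{lem:kIsRelevant} handles the base case, which itself requires a nontrivial construction of the module $Z_{\nDuAlg}$ for $\lsup{\cClg}k$), and Proposition~\ref{prop:InductiveStep} then shows that tensoring a relevant module with the algebraic or holomorphic bimodule gives the same result \emph{and} preserves relevance. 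The trick is that relevance lets one slide the $\CanonDD$ factor to where the $DD$ computations apply, without ever needing to ``divide'' by $\CanonDD$ — the identity is proven only after tensoring with a relevant $X$, which is all that the induction requires. Your proposal needs this extra bookkeeping; as written, the unconditional cancellation step has no justification in the paper's framework.
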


We will prove the above theorem at the end of this section.  The
hypothesis that $\DiagUp$ is acceptable is removed in
Section~\ref{sec:Further} (see especially
Theorem~\ref{thm:ComputeD2}); though it is not logically necessary for
the computation of knot Floer homology (Theorem~\ref{thm:MainTheorem},
proved in Section~\ref{sec:Comparison}).

\begin{defn}
  \label{def:Relevant}
  For $\Clg=\Clg(n,\Matching)$,
  a type $D$ structure $\lsup{\cClg}X$ is called {\em relevant} if
  there is an $\Ainfty$ module $Z_{\nDuAlg}$ over $\nDuAlg=\nDuAlg(n,\Matching)$
  with the property that
  \[
    \lsup{\cBlg}i_{\cClg}\DT~ \lsup{\cClg}X=Z_{\nDuAlg}\DT~\lsup{\nDuAlg,\cBlg}\CanonDD.\]
\end{defn}

Consider the type $D$ structure $\lsup{\cClg}k$ from Section~\ref{subsec:ExampleTypeDs} (which is identified
in Lemma~\ref{lem:StandardTypeD} with the
type $D$ structure of the standard upper diagram).
We shall see that $\lsup{\cClg}k$ is relevant, in the sense of
Definition~\ref{def:Relevant}. Since $\lsup{\cClg}k$ is a one-dimensional vector space, so is $Z_{\nDuAlg}$. The 
$\Ainfty$-actions on this module are fairly
complicated. Letting if $\x$ be the generator,
we have that
\[ \x\cdot E_{2i-1} \cdot E_{2i} + \x \cdot E_{2i}\cdot E_{2i-1} =\x;\]
i.e. exactly one of the two following equations holds:
\[ \x\cdot E_{2i-1}\cdot E_{2i}=\x\qquad {\text{or}}\qquad
\x\cdot E_{2i}\cdot E_{2i-1}=\x.\]
Suppose that the first equation holds. Then, applying the $\Ainfty$
relation with $\x$, $E_{2i-1}$ and $E_{2i}$, we can conclude that
exactly one of 
\[
m_3(\x,E_{2i-1}, U_{2i})=\x\qquad 
{\text{or}}\qquad
m_3(\x,U_{2i-1}, E_{2i})=\x\]
holds. The $\Ainfty$ relations then imply many further actions. We
organize this construction in the following:

\begin{lemma}
  \label{lem:kIsRelevant}
  The type $D$ structure $\lsup{\cClg}k$ is relevant.
\end{lemma}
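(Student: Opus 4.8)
The statement asserts that the one-generator curved type $D$ structure $\lsup{\cClg}k$ is \emph{relevant}, i.e. that after applying the idempotent-inclusion transformer $\lsup{\cBlg}i_{\cClg}$ we obtain something of the form $Z_{\nDuAlg}\DT \lsup{\nDuAlg,\cBlg}\CanonDD$ for some $\Ainfty$-module $Z$ over $\nDuAlg=\nDuAlg(n,\Matching)$. The first thing to do is to set up the target. Recall $k=\{2i-1\}_{i=1}^n$, $\mu_0=\sum_{i=1}^n U_{2i-1}U_{2i}$, and the matching $M=\{\{2i-1,2i\}\}$; the transformer $\lsup{\cBlg}i_{\cClg}$ simply includes $\Clg(n)$ into $\Blg(n)$, so $\lsup{\cBlg}i_{\cClg}\DT \lsup{\cClg}k$ is the type $D$ structure over $\cBlg(n)$ with one generator $\x$ carrying the idempotent $\Idemp{k}$, and zero differential. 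I would take $Z_{\nDuAlg}$ to be the one-dimensional $\Field$-vector space with generator $\x$, carrying the idempotent $\Idemp{\{1,\dots,2n\}\setminus k}$ in $\IdempRing(\nDuAlg)$ — i.e. the idempotent complementary to $k$, which is exactly what is needed so that $\Idemp{k}\otimes\Idemp{\{1,\dots,2n\}\setminus k}$ acts as the identity on the corresponding generator $\gen_k$ of $\lsup{\nDuAlg,\cBlg}\CanonDD$ — and then define the $\Ainfty$ operations $m_{1+\ell}$ on $Z$ so that the box-tensor produces exactly the trivial type $D$ structure on $\x$.

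The mechanics of the box-tensor product will dictate the operations on $Z$. Since $\lsup{\nDuAlg,\cBlg}\CanonDD$ has differential $\delta^1(\gen_\y)=\bigl(\sum_i(L_i\otimes R_i+R_i\otimes L_i)+\sum_i U_i\otimes E_i\bigr)\otimes\gen_{\y'}$ (with appropriate idempotent-complementary bookkeeping and the convention that the $\Clg$-side is the left $\Blg$-factor), only the $U_i\otimes E_i$ terms can feed idempotent-$k$ generators back to idempotent-$k$ generators in a way consistent with the idempotents — the $L_i,R_i$ terms would move off the allowed idempotent states for $k$. So the differential on $Z\DT\CanonDD$ collects, for each ordered word $E_{j_1}\cdots E_{j_\ell}$ with all $j_a$ realizing the right idempotent moves, a contribution $U_{j_1}\cdots U_{j_\ell}\otimes(\text{coefficient of }m_{1+\ell}(\x,E_{j_1},\dots,E_{j_\ell}))\otimes\x$. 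For this to equal zero, the $\Ainfty$-module operations $m_{1+\ell}$ must be chosen so that these contributions cancel \emph{after} accounting for the curvature correction built into the tensor product: the curvature relation $0=\mu_0\otimes\Id_X+(\mu_2\otimes\Id)\circ(\Id\otimes\delta^1)\circ\delta^1$ must hold, and the curvature $\mu_0=\sum U_{2i-1}U_{2i}$ must be produced. Concretely, I would define $m_{1+\ell}$ recursively so that, for each matched pair $\{2i-1,2i\}$, the two length-$2$ operations involving $E_{2i-1},E_{2i}$ satisfy $\x\cdot E_{2i-1}\cdot E_{2i}+\x\cdot E_{2i}\cdot E_{2i-1}=\x$ (this is exactly the defining relation $\llbracket E_{2i-1},E_{2i}\rrbracket=1$ in $\nDuAlg$, which is why passing to $\nDuAlg$ rather than $\DuAlg$ is essential), and then let the $\Ainfty$-relations for $Z$ \emph{force} the higher operations $m_3,m_4,\dots$ one at a time, exactly as sketched in the paragraph preceding the lemma statement. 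The key point is that the $\Ainfty$-relations for $Z$ over $\nDuAlg$ are the translation, via the tensor product, of the curved type $D$ relation for the trivial module over $\cBlg$; so a solution exists and is unique given the initial choices.

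The main obstacle is verifying that the recursively-forced operations actually close up into a consistent $\Ainfty$-module — i.e. that the procedure ``use the $\Ainfty$ relation with inputs $E_{j_1},\dots,E_{j_k}$ to solve for $m_{k+1}$'' is not over-determined and terminates at each level. I would handle this by a generating-function / formal argument rather than by brute-force bookkeeping: observe that $\lsup{\nDuAlg,\cBlg}\CanonDD$ is by construction the image of $\lsup{\Alg,\DuAlg}\CanonDD$ under $\lsup{\nDuAlg}q_{\DuAlg}$ composed with the $\Blg$-to-$\Alg$ transformer, so it suffices to produce a type $D$ structure $W$ over $\DuAlg$ (or an $\Ainfty$ module over $\DuAlg$) whose image under $q$ and box-tensor with $\CanonDD$ recovers the trivial module; and over $\DuAlg$ one can write down such a $W$ explicitly using the elements $C_{\{i,j\}}$ with $dC_{\{i,j\}}=U_iU_j$ — indeed, taking $W$ to be one-dimensional with $\delta^1(\x)=\sum_i C_{\{2i-1,2i\}}\otimes\x$ solves the curved type $D$ relation over $\DuAlg$ because $\sum_i U_{2i-1}U_{2i}=\mu_0$, and then quotienting to $\nDuAlg$ and transposing to an $\Ainfty$ module gives $Z$. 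This reduces the ``closing up'' problem to the already-established fact (used throughout Section~\ref{sec:AlgDA}) that standard bimodules and the canonical $DD$ bimodule behave well under $q$ and under the transformer. The only remaining check is the idempotent compatibility, which is immediate, and the boundedness/finiteness needed to form the tensor product, which follows from Proposition~\ref{prop:AdaptedTensorProdnDuAlg} since $Z$ is finite-dimensional over $\Field$ and $\nDuAlg$ has finite-dimensional graded pieces.
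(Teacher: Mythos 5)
Your setup is right — a one‑dimensional $Z_{\nDuAlg}$ carrying the complementary idempotent, with operations governed by the relation $\llbracket E_{2i-1},E_{2i}\rrbracket=1$ in $\nDuAlg$ — and you correctly identify the obstacle: one must actually exhibit a consistent $\Ainfty$‑module structure, not just stipulate it relation by relation. But the way you try to resolve that obstacle does not work. You write down a one‑generator object with $\delta^1(\x)=\sum_i C_{\{2i-1,2i\}}\otimes\x$ and call it a type $D$ structure over $\DuAlg$, but the elements $C_{\{i,j\}}$ with $dC_{\{i,j\}}=U_iU_j$ live in $\Alg$, not in $\DuAlg$ (which has the $E_i$'s instead); and even after correcting this to $\Alg$, what you have produced is a type $D$ structure over $\Alg$ (namely $\lsup{\Alg}T_{\cBlg}\DT\lsup{\cBlg}i_{\cClg}\DT\lsup{\cClg}k$), not an $\Ainfty$ \emph{module} over $\nDuAlg$. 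Passing from the former to the latter — the step you call ``transposing'' — is exactly a Koszul‑duality statement, and the paper explicitly declines to establish Koszul duality (see the remark in the proof of Proposition~\ref{prop:CurvedDPlanarIsotopies}). So the reduction does not reduce anything: the existence of $Z_{\nDuAlg}$ is still unproven.

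The paper's proof supplies this missing ingredient directly. It constructs an explicit right $\nDuAlg$‑module $\BigMod$ (a direct sum of $2^n$ copies of a cyclic $\Blg(2n,n+1)$‑module indexed by subsets $S\subset\{1,\dots,n\}$), equips it with a differential $\partial=\sum_i\partial_i$ built from the $U_{2i-1},U_{2i}$ actions, verifies $\partial^2=0$ by an explicit cancellation, extends the action to the $E_i$'s so as to descend to $\nDuAlg$, and then shows via an explicit chain‑homotopy contraction that $H_*(\BigMod,\partial)$ is one‑dimensional. The homological perturbation lemma then produces $Z_{\nDuAlg}$ with honest $\Ainfty$‑operations. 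Finally, a $\Delta$‑grading count shows that $Z_{\nDuAlg}\DT\lsup{\nDuAlg,\cBlg}\CanonDD$ has trivial differential and the correct idempotent, giving $\lsup{\cBlg}i_{\cClg}\DT\lsup{\cClg}k$. To repair your argument you would need to either carry out this explicit construction, or else prove independently that the recursive scheme for defining $m_{1+\ell}$ closes up — which, without an obstruction‑theoretic framework, is not justified by the observation that each new operation can be ``solved for'' in one $\Ainfty$‑relation, since there are far more relations than operations.
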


\begin{proof}
  Let $\x_1$ denote the idempotent state for $\nDuAlg$ $\x_1$ that
  consists of all odd numbers between $1$ and $2n-1$, together with
  $0$.  Let $\BigMod$ be the right $\Blg(2n,n+1)$-module consisting of $2^{n}$
  copies of $\Idemp{\x_1}\cdot (L_1 L_2\backslash \Blg(2n,n+1))$, indexed by
  subsets $S$ of $\{1,\dots,n\}$; i.e.
  \[ \BigMod = \bigoplus_{S\subset\{1,\dots,n\}} \BigMod_S,\]
  and there are preferred identifications of $\BigMod_S\cong \BigMod_T$ for all
  $S, T\subset \{1,\dots,n\}$.

  We endow $\BigMod$ with a differential, as follows.
  Given $i=1,\dots,n$, if $i\not\in S$, let 
  \[\partial_i \colon \BigMod_S \to \BigMod_{S\cup\{i\}} \]
  be multiplication by $U_{2i-1}$
  (composed with the preferred identification $\BigMod_S\cong \BigMod_{S\cup\{i\}}$) and
  \[\partial_i \colon \BigMod_{S\cup\{i\}} \to \BigMod_{S}; \]
  be multiplication by $U_{2i}$.
  Note that $\partial = \sum_{i=1}^n \partial_i \colon \BigMod \to \BigMod$
  satisfies $\partial\circ\partial =0$. (This follows from the fact
  that $\partial^2$ is multiplication by $z=\sum_{i=1}^{n} U_{2i-1}U_{2i}$,
  and $\Idemp{\x_1}\cdot z = \Idemp{\x_1}\cdot U_1 U_2 = \Idemp{\x_1}
  L_1 L_2 \cdot R_2 R_1$.

  We can extend the action on $\BigMod$ by $\Blg(2n,n+1)$ to all of
  $\nDuAlg$, as follows.
  For fixed $S$ with $i\not\in S$, let
  \[m_2(\cdot,E_{2i-1})\colon \BigMod_{S} \to
  \BigMod_{S\cup\{i\}}\qquad\text{and}\qquad
  m_2(\cdot,E_{2i})\colon \BigMod_{S \cup \{i\}} \to \BigMod_{S}
  \]
  be the zero map, while
  \[m_2(\cdot,E_{2i})\colon \BigMod_{S} \to
  \BigMod_{S\cup\{i\}}\qquad\text{and}\qquad
  m_2(\cdot,E_{2i-1})\colon \BigMod_{S \cup \{i\}} \to \BigMod_{S}
  \]
  are the standard identifications.  On each $\BigMod_{S}$ multiplication by
  exactly one of $E_{2i-1}\cdot E_{2i}$ or $E_{2i}\cdot E_{2i-1}$ is
  non-zero, and the non-zero one is the preferred identification. It
  follows that the action by $\DuAlg$ descends to an action by
  $\nDuAlg$. Moreover, if $\{j,k\}$ are unmatched, then $E_{j}\cdot
  E_{k}$ acts the same as $E_{k} E_{j}$. It follows that there is an
  induced action of $\nDuAlg$ on $\BigMod$.

  We also claim that the homology of $\BigMod$ is one-dimensional, generated
  by
  $P_{\{\}} \left(\prod_{i=1}^{n-1} R_{2i}\right)$. We see this as
  follows.
  Let $Q_j\subset \BigMod$ be the vector subspace generated by elements of
  the form $b \cdot P_S$ with the property that
  \[ b= \prod_{i=1}^{j} R_{2i}\cdot c \] with $w_i(c)=0$ for
  $i=1,\dots,2j-1$ and $\{1,\dots,j\}\cap S=\emptyset$.  In
  particular, $Q_0=\BigMod\supset Q_1\supset\dots\supset Q_{n}$, and $Q_{n}$
  is the one-dimensional vector space spanned by $P_{\{\}}\cdot
  (\prod_{i=1}^{n-1}R_{2i})$.  Observe that $Q_j$ is a subcomplex.
  Define
  \[ H(a\cdot P_S) =\left\{\begin{array}{ll}
      b\cdot (E_{2i-1} \cdot P_S)  & \text{if $a=U_{2i-1} \cdot b$
        and $w_j(b)<1$ for all $j<2i-1$} \\
      b\cdot (E_{2i} \cdot P_S)  & \text{if $a=U_{2i} \cdot b$
      and $w_j(b)<1$ for all $j<2i$} 
    \end{array}\right.\]
  We claim that $\Id + \partial\circ H + H\circ \partial$ maps $Q_j$
  to $Q_{j+1}$.  Thus, iterating the chain map $F=\Id + \partial\circ
  H + H\circ \partial$ $n$ times, we obtain a chain homotopy contraction of
  $\BigMod$ onto the one-dimensional subcomplex spanned by $P_{\{\}}\cdot
  (\prod_{i=1}^{n-1} R_{2i})$, as claimed.
  
  The homological perturbation lemma now endows the one-dimensional
  vector space $Z_{\nDuAlg}=H(\BigMod,\partial)$, with an $\Ainfty$ action
  by $\nDuAlg$.  
  We claim that $Z_{\nDuAlg}\DT
  \lsup{\nDuAlg,\cBlg}\CanonDD$ has trivial differential. 
  This follows from the $\Delta$-gradings: the algebra output in
  $\nDuAlg$ of each element is either $0$ (if it is some $E_i$) 
  $-1/2$ (if it is some $L_i$ or $R_i$). But such sequences of algebra
  elements act trivially on $Z_{\nDuAlg}$, for if
  \[ \Delta(x)=m_{\ell+1}(x,a_1,\dots,a_{\ell})=\Delta(x) + \ell-1 +
  \Delta(a_1)+\dots+\Delta(a_\ell) \geq \Delta(x)+ \frac{\ell}{2}-1
  \] so $\ell=1$. The case where $\ell=1$ is excluded since
  $m_2(\x,E_i)=0$. The case where $\ell=2$ is also excluded by
  direct consideration.
  Moreover,
  \[
  \Idemp{\{0,2,\dots,2i,\dots,2n\}}\cdot \left(Z_{\nDuAlg}\DT\lsup{\nDuAlg,\cBlg}\CanonDD\right)
  =Z_{\nDuAlg}\DT\lsup{\nDuAlg,\cBlg}\CanonDD. \]
  The lemma follows.
\end{proof}

\begin{prop}
  \label{prop:InductiveStep}
  Let $\cClg_1=\cClg(n,\Matching_1)$
  Suppose  $\lsup{\Clg_1}X$ is relevant, then
  \begin{align}
    \lsup{\cClg_2}\Min^c_{\cClg_1} \DT~ \lsup{\cClg_1}X &\simeq
    \lsup{\cClg_2}\DAmodExt(\Hmin{c})_{\cClg_1} \DT~ \lsup{\cClg_1}X \label{eq:MinRel}\\
    \lsup{\cClg_2}\Pos^i_{\cClg_1} \DT~ \lsup{\cClg_1}X &\simeq
    \lsup{\cClg_2}\DAmodExt(\Hpos{i})_{\cClg_1} \DT~ \lsup{\cClg_1}X  \label{eq:PosRel}\\
    \lsup{\cClg_2}\Neg_{\cClg_1} \DT~ \lsup{\cClg_1}X &\simeq
    \lsup{\cClg_2}\DAmodExt(\Hneg{i})_{\cClg_1} \DT~ \lsup{\cClg_1}X; \label{eq:NegRel}
    \end{align}
  and moreover all the type $D$ structures appearing on the left
  are relevant.
\end{prop}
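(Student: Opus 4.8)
\textbf{Proof plan for Proposition~\ref{prop:InductiveStep}.}
The plan is to reduce each of the three identities \eqref{eq:MinRel}--\eqref{eq:NegRel} to the model computations of Theorem~\ref{thm:ComputeDDmods} by passing through the dual algebra $\nDuAlg$, using the relevance hypothesis to convert a statement about type $D$ structures over $\cClg$ into a statement about type $DD$ bimodules. First I would record, using Lemma~\ref{lem:RestrictIdempotents}, that the algebraically defined bimodules $\lsup{\cClg_2}\Pos^i_{\cClg_1}$, $\lsup{\cClg_2}\Neg^i_{\cClg_1}$, $\lsup{\cClg_2}\Min^c_{\cClg_1}$ (obtained from their $\cBlg$-counterparts by restricting idempotents, Proposition~\ref{prop:CurvedDABimodules}) and the holomorphically defined bimodules $\lsup{\cClg_2}\DAmodExt(\Hpos{i})_{\cClg_1}$ etc. (obtained from the $\Blg$-versions of Section~\ref{sec:ExtendDA} via Proposition~\ref{prop:ExtendDA} and then Lemma~\ref{lem:RestrictIdempotents}) are both compatible with the inclusion $\lsup{\cBlg}i_{\cClg}$. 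So it suffices to prove the corresponding statements after tensoring on the left with $\lsup{\cBlg_2}i_{\cClg_2}$, i.e.\ to work over $\cBlg$ throughout.

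Next I would use relevance: since $\lsup{\Clg_1}X$ is relevant, $\lsup{\cBlg_1}i_{\cClg_1}\DT\lsup{\cClg_1}X = Z_{\nDuAlg_1}\DT\lsup{\nDuAlg_1,\cBlg_1}\CanonDD$ for some $\Ainfty$ module $Z_{\nDuAlg_1}$. For any one of the three crossing/minimum bimodules $\lsup{\cBlg_2}Y_{\cBlg_1}$ (standing for $\Pos^i$, $\Neg^i$, or $\Min^c$), I would compute
\[
\lsup{\cBlg_2}Y_{\cBlg_1}\DT \lsup{\cBlg_1}i_{\cClg_1}\DT\lsup{\cClg_1}X
= \lsup{\cBlg_2}Y_{\cBlg_1}\DT Z_{\nDuAlg_1}\DT \lsup{\nDuAlg_1,\cBlg_1}\CanonDD,
\]
and then apply the duality relation from Proposition~\ref{prop:BimodulesOvernDuAlg} (for the algebraic side) or Theorem~\ref{thm:ComputeDDmods} (for the holomorphic side) to slide $Y$ past the canonical $DD$ bimodule, turning it into a bimodule over $\nDuAlg_2$--$\nDuAlg_1$ acting on the left of $Z_{\nDuAlg_1}$, with $\lsup{\nDuAlg_2,\cBlg_2}\CanonDD$ remaining on the right. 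Concretely, Theorem~\ref{thm:ComputeDDmods} gives $\lsup{\nDuAlg_1}Y'_{\nDuAlg_2}\DT\lsup{\nDuAlg_2,\cBlg_2}\CanonDD\simeq \lsup{\cBlg_2}\DAmodExt(\Hmid_Y)_{\cBlg_1}\DT\lsup{\cBlg_1,\nDuAlg_1}\CanonDD$, where $Y'$ is the corresponding bimodule over $\nDuAlg$ from Lemma~\ref{lem:BimodulesOvernDuAlg}; meanwhile Proposition~\ref{prop:BimodulesOvernDuAlg} gives $\lsup{\cBlg_2}Y_{\cBlg_1}\DT\lsup{\cBlg_1,\nDuAlg_1}\CanonDD\simeq\lsup{\nDuAlg_1}Y'_{\nDuAlg_2}\DT\lsup{\nDuAlg_2,\cBlg_2}\CanonDD$. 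Chaining these (and using associativity of $\DT$, plus the boundedness of the $\nDuAlg$-bimodules noted in Proposition~\ref{prop:AdaptedTensorProdnDuAlg} so that all the tensor products make sense) identifies both sides of \eqref{eq:MinRel}--\eqref{eq:NegRel}, after applying $\lsup{\cBlg_2}i_{\cClg_2}\DT$, with $\lsup{\nDuAlg_1}Y'_{\nDuAlg_2}\DT Z_{\nDuAlg_1}\DT\lsup{\nDuAlg_2,\cBlg_2}\CanonDD$. This simultaneously proves the equivalence and exhibits the left-hand type $D$ structures as relevant: the witnessing $\Ainfty$ module over $\nDuAlg_2$ is $Z_{\nDuAlg_2} := \lsup{\nDuAlg_1}Y'_{\nDuAlg_2}\DT Z_{\nDuAlg_1}$ (which again makes sense and is of the right form by the adaptedness/boundedness of $Y'$).

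The main obstacle I expect is bookkeeping rather than a genuine analytic difficulty: one has to be careful that the chain of homotopy equivalences really descends from $\cBlg$ to $\cClg$, i.e.\ that tensoring with $\lsup{\cBlg_2}i_{\cClg_2}$ is faithful enough to conclude the $\cClg$-statement from the $\cBlg$-statement, and that the bimodules $Y'$ over $\nDuAlg$ are bounded/adapted so that no infinite sums appear in $Z_{\nDuAlg_2}$. For the first point I would note that $\lsup{\cBlg}i_{\cClg}\DT-$ is simply extension of scalars along an idempotent inclusion and is injective on the level of the underlying complexes (a generator of $\lsup{\cClg}X$ has nonzero image), so a homotopy equivalence after applying it, combined with the fact that all the bimodules in sight respect the idempotent truncation (Lemma~\ref{lem:RestrictIdempotents}), yields the homotopy equivalence before applying it; for the second point, Proposition~\ref{prop:AdaptedTensorProdnDuAlg} together with Lemma~\ref{lem:BimodulesOvernDuAlg} guarantees the needed finiteness. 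With those two points dispatched, \eqref{eq:MinRel}, \eqref{eq:PosRel}, and \eqref{eq:NegRel} and the relevance of the results follow formally.
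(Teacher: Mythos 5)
Your chain of equivalences is the same as the paper's: lift to $\cBlg$ via $\lsup{\cBlg_2}i_{\cClg_2}$, use relevance to replace $\lsup{\cBlg_1}i_{\cClg_1}\DT\lsup{\cClg_1}X$ by $Z_{\nDuAlg_1}\DT\lsup{\nDuAlg_1,\cBlg_1}\CanonDD$, slide through the canonical $DD$ bimodule with Proposition~\ref{prop:BimodulesOvernDuAlg} on one side and Theorem~\ref{thm:ComputeDDmods} plus Proposition~\ref{prop:ExtendDAPrecise} on the other, and read off relevance from the resulting $Z_{\nDuAlg_1}\DT\lsup{\nDuAlg_1}Y'_{\nDuAlg_2}$. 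That is exactly the paper's argument.

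The one place where your write-up falls short is the descent from $\cBlg$ back to $\cClg$. You assert that $\lsup{\cBlg}i_{\cClg}\DT -$ is ``injective on the level of the underlying complexes,'' and conclude from this that a homotopy equivalence after extension implies one before. That inference does not follow from injectivity alone: a faithful functor on objects need not reflect homotopy equivalences. What is actually true, and what the paper uses, is that the extension functor induces an isomorphism of $\Mor$ \emph{complexes},
\[
\Mor^{\cClg_2}\bigl(\lsup{\cClg_2}P,\lsup{\cClg_2}Q\bigr)
\cong
\Mor^{\cBlg_2}\bigl(\lsup{\cBlg_2}i_{\cClg_2}\DT P,\;\lsup{\cBlg_2}i_{\cClg_2}\DT Q\bigr),
\]
precisely because restricting back along $i$ recovers $\cClg$: $\lsub{\cClg}i^{\cBlg}\DT\Blg\DT\lsup{\cBlg}i_{\cClg}=\Cidemp\cdot\Blg\cdot\Cidemp=\cClg$. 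From this isomorphism of morphism complexes, a homotopy equivalence on the $\cBlg$-side is literally the same data as one on the $\cClg$-side, and the descent is immediate. So your instinct (idempotent truncation; all bimodules compatible with $\Cidemp$ via Lemma~\ref{lem:RestrictIdempotents}) is right, but you should replace the appeal to injectivity by this stronger full-faithfulness statement. With that correction, your proposal matches the paper's proof.
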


\begin{proof}
  Consider Equation~\eqref{eq:MinRel}.
  Combining  Lemma~\ref{lem:RestrictIdempotents}, 
  the relevance of $\lsup{\Blg_1}X$, and the associativity of tensor product,
  we have that
  \begin{align*}
    \lsup{\cBlg_2}i_{\cClg_2}\DT(\lsup{\cClg_2}\Min^c_{\cClg_1}\DT
    \lsup{\cClg_1}X) &\simeq
    \lsup{\cBlg_2}\Min^c_{\cBlg_1}\DT\lsup{\cBlg_2}i_{\cClg_2}\DT
    \lsup{\cClg_1}X \\
    &\simeq
    \lsup{\cBlg_2}\Min^c_{\cBlg_1}
    \DT
    (Z_{\nDuAlg_1}\DT\lsup{\nDuAlg_1,\cBlg_1}\CanonDD)  \\
    &\simeq
    Z_{\nDuAlg_1}
    \DT
    (\lsup{\cBlg_2}\Min^c_{\cBlg_1}\DT\lsup{\cBlg_1,\nDuAlg_1}\CanonDD).
    \end{align*}
  By Proposition~\ref{prop:BimodulesOvernDuAlg}, 
  \[ 
    Z_{\nDuAlg_1}\DT
    (\lsup{\cBlg_2}\Min^c_{\cBlg_1}\DT\lsup{\nDuAlg_1,\cBlg_1}\CanonDD)  
    \simeq
    (Z_{\nDuAlg_1}\DT 
        \lsup{\nDuAlg_1}\Min^c_{\nDuAlg_2})\DT\lsup{\nDuAlg_2,\cBlg_2}\CanonDD;
    \]
    so
    $\lsup{\cClg_2}\Min^c_{\cClg_1}\DT\lsup{\cClg_1}X$ is relevant.

    Applying  Theorem~\ref{thm:ComputeDDmods} and Proposition~\ref{prop:ExtendDAPrecise}, 
    \begin{align*}
    Z_{\nDuAlg_1}\DT
    (\lsup{\cBlg_2}\Min^c_{\cBlg_1}\DT\lsup{\nDuAlg_1,\cBlg_1}\CanonDD)  
    &\simeq
    Z_{\nDuAlg_1}\DT
    (\lsup{\cBlg_2}\DAmodExt(\Hmin{c})_{\cBlg_1}
    \DT \lsup{\cBlg_1,\nDuAlg_1}\CanonDD) \\
    &\simeq
    \lsup{\cBlg_2}\DAmodExt(\Hmin{c})_{\cBlg_1}\DT
    (Z_{\nDuAlg_1}
    \DT \lsup{\nDuAlg_1,\cBlg_1}\CanonDD) \\
    &\simeq \lsup{\cBlg_2}\DAmod(\Hmin{c})_{\cBlg_1}\DT~ \lsup{\cBlg_1}i_{\cClg_1} \DT ~\lsup{\cClg_1}X \\
    &\simeq \lsup{\cBlg_1}i_{\cClg_1} \DT \lsup{\cClg_2}\DAmod(\Hmin{c})_{\cClg_1}\DT ~ \lsup{\cClg_1}X.
    \end{align*}
    It follows that
    \begin{equation}
      \label{eq:MinRel1}
      \lsup{\cBlg_2}i_{\cClg_2}\DT~ \lsup{\cClg_2}\Min^c_{\cClg_1}\DT~ \lsup{\cClg_1}X 
    \simeq \lsup{\cBlg_2}i_{\cClg_2}\DT~
    \lsup{\cClg_2}\DAmodExt(\Hmin{c})_{\cClg_1} \DT~ \lsup{\cClg_1}X.
    \end{equation}
    
    Equation~\eqref{eq:MinRel} follows from Equation~\eqref{eq:MinRel1} and
    the following observation: if $\lsup{\cClg_2}P$ and $\lsup{\cClg}Q$ are any two type $D$ structures, then
    \[ \Mor^{\cClg_2}(\lsup{\cClg_2}P,\lsup{\cClg_2}Q)=
    \Mor^{\cBlg_2}(\lsup{\cBlg_2}i_{\cClg_2}\DT\lsup{\cClg_2}P,\lsup{\cBlg_2}i_{\cClg_2}\DT\lsup{\cClg}Q),\]
    since 
    \[ \lsub{\cClg}i^{\cBlg}\DT\lsub{\cBlg}\Blg_{\cBlg}\DT \lsup{\cBlg}i_{\cClg}=
    {\iota}\cdot \Blg \cdot {\iota}=\cClg\]
    (with $\iota$ as in Equation~\eqref{eq:DefIota});
    so $\lsup{\cClg_2}P\simeq \lsup{\cClg_2}Q$ iff 
    $\lsup{\cBlg_2}i_{\cClg_2}\DT\lsup{\cClg_2}P\simeq 
    \lsup{\cBlg_2}i_{\cClg_2}\DT\lsup{\cClg_2}Q$.
    
    Equations~\eqref{eq:PosRel} and~\eqref{eq:NegRel} follows similarly.
\end{proof}

\begin{proof}[Proof of Theorem~\ref{thm:ComputeD}]
  We prove Theorem~\ref{thm:ComputeD}, together with the statement
  that $\lsup{\Clg}\Dmod(\Hup)$ is homotopy equivalent to a relevant
  type $D$ structure, by induction on the sum of the number of
  crossings and local minima. When this sum is zero, $\Hup$ is the
  standard upper diagram, and the theorem follows from
  Lemma~\ref{lem:StandardTypeD}. Moreover, this module is relevant by
  Lemma~\ref{lem:kIsRelevant}.

  The inductive step is now provided by
  Proposition~\ref{prop:InductiveStep} and
  Theorem~\ref{thm:PairDAwithD}.
\end{proof}

\begin{remark}
  With a little extra work, one can show that the type $D$ structure
  $\Dmod(\Hup)$ is independent of the choice of upper Heegaard diagram $\Hup$.
\end{remark}

\newcommand\nMinGen[1]{{\mathbf S}_{#1}}
\section{Comparing the knot invariants}
\label{sec:Comparison}

An {\em acceptable knot diagram} $\Diag$ is a diagram for
an oriented knot whose local maxima are
at the same level, all of whose other events occur at different
heights, and whose global minimum is the marked edge; thus, an
acceptable knot diagram is obtained from an acceptable upper knot
diagram in the sense of Section~\ref{sec:computeD} by adding a single
global minimum.

The methods of this paper now give the following explicit computation of knot Floer homology:

\begin{thm}
  \label{thm:ComputeHFK}
  Let $\Diag$ be an acceptable knot diagram, obtained by adding a
  global minimum to $\DiagUp$, and let $\Hup$ be upper diagram
  associated to $\DiagUp$.  Then, there is a homotopy equivalence
  \[\lsup{\Ring}\CFKsimp(\orK)\simeq\lsup{\Field[U,V]}[\Psi]_{\cClg(1)}\DT~\lsup{\cClg(1)}\DmodAlg(\Hup),\]
  where 
  $\Psi\colon \Clg(1)\to \Field[U,V]$ is the isomorphism from Equation~\eqref{eq:IsoClg1}.
\end{thm}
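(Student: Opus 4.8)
The plan is to deduce Theorem~\ref{thm:ComputeHFK} by combining the pairing theorem (Theorem~\ref{thm:PairAwithD}) with the two computational inputs already established: the description of the type $D$ structure of an upper diagram (Theorem~\ref{thm:ComputeD}) and the computation of the type $A$ module of the standard lower diagram with one strand pair (Lemma~\ref{lem:GlobalMinimum}). First I would fix a decomposition of the Heegaard diagram $\HD$ for $\orK$. The acceptable knot diagram $\Diag$ is obtained from the acceptable upper diagram $\DiagUp$ by adjoining a single global minimum on the marked edge. Slicing $\Diag$ just above that global minimum decomposes the associated Heegaard diagram (the ``standard Heegaard diagram'' of a knot projection, as in~\cite{AltKnots} and Section~\ref{subsec:Examples}) as $\HD = \Hdown\#_Z \Hup$, where $Z$ consists of $2$ circles, $\Hup = \Hup(\DiagUp)$ is the canonical upper diagram associated to $\DiagUp$, and $\Hdown$ is the standard lower diagram with $n=1$ pictured in Figure~\ref{fig:SmallLower}. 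The matching $\Mup$ induced by $\Hup$ is compatible with the matching $\Mdown$ induced by $\Hdown$, since the knot projection closes up to a single knot; this is precisely the hypothesis needed to invoke the pairing theorem.

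Next I would apply Theorem~\ref{thm:PairAwithD} (in the $n=1$ case, which is handled in Subsection~\ref{subsec:Nequals1}) to obtain a quasi-isomorphism of bigraded chain complexes over $\Ring=\Field[U,V]/UV$,
\[ \CFKsimp(\HD)\simeq \Amod(\Hdown)\DT\Dmod(\Hup). \]
Then I would substitute the two known descriptions of the factors. For the lower factor, Lemma~\ref{lem:GlobalMinimum} identifies $\Amod(\Hdown)$ with the bimodule $\lsup{\Ring}[\Psi]_{\cClg(1)}$ associated to the isomorphism $\Psi\colon\cClg(1)\to\Ring$ from Equation~\eqref{eq:IsoClg1} (note that the curvature of $\cClg(1)$ vanishes, so this is an honest, rather than curved, $A_\infty$ module). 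For the upper factor, Theorem~\ref{thm:ComputeD} gives a homotopy equivalence of type $D$ structures $\lsup{\cClg(1)}\Dmod(\Hup)\simeq\lsup{\cClg(1)}\DmodAlg(\DiagUp)$ (here $\DmodAlg(\Hup)$ and $\DmodAlg(\DiagUp)$ denote the same algebraically defined type $D$ structure, as in the notation of Section~\ref{sec:computeD}). Since the box tensor product is invariant under homotopy equivalence in each variable, and the homotopy equivalence $\CFKsimp(\HD)\simeq\CFKsimp(\orK)$ is standard (invariance of knot Floer homology under choice of Heegaard diagram), these substitutions assemble into the desired homotopy equivalence
\[ \lsup{\Ring}\CFKsimp(\orK)\simeq\lsup{\Field[U,V]}[\Psi]_{\cClg(1)}\DT~\lsup{\cClg(1)}\DmodAlg(\Hup). \]
One should also check that the bigrading (the Alexander grading and the $\Delta$-grading) is respected at each stage, which follows from the grading statements already recorded in Theorems~\ref{thm:DefTypeA} and~\ref{thm:PairAwithD} and the grading conventions on $\DmodAlg$.

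The main subtlety, and where I would spend the most care, is the $n=1$ case of the pairing theorem: the neck-stretching argument behaves differently when the separating circle set $Z$ has only two components, because closed sphere components can appear in Gromov limits (cf.~Lemma~\ref{lem:NoClosedCurves} and the remark following it). This is exactly why Subsection~\ref{subsec:Nequals1} treats $n=1$ separately, via special matched curves, the intermediate complexes $(C,\partial^{(0)})\cong(C,\partial^{(2)})$ of Propositions~\ref{prop:Intermediatesx} and~\ref{prop:nEqualsOne}, and time dilation. For the present application there is a convenient simplification, noted in Subsection~\ref{subsec:Nequals1}: the standard lower diagram $\Hdown$ has the property that any homology class covering both $Z_1$ and $Z_2$ also covers both basepoints $\wpt$ and $\zpt$, hence contributes a multiple of $UV=0$. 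This makes the troublesome closed-component and boundary-degeneration contributions vanish for grading/weight reasons, so in fact one could give a more direct argument in this special case; but since Theorem~\ref{thm:PairAwithD} is already proven in full generality including $n=1$, I would simply cite it. The remaining routine point is to confirm that the algebraically defined type $D$ structure $\DmodAlg(\DiagUp)$ entering Theorem~\ref{thm:ComputeD} is literally the same object (same generators, same $\delta^1$) as the $\DmodAlg(\Hup)$ appearing in the statement of Theorem~\ref{thm:ComputeHFK}; this is immediate from the definitions in Section~\ref{sec:computeD}, since both are built from $\lsup{\cClg(1)}k$ by tensoring with the bimodules $\lsup{\cClg_2}\Pos^i_{\cClg_1}$, $\lsup{\cClg_2}\Neg^i_{\cClg_1}$, $\lsup{\cClg_2}\Min^c_{\cClg_1}$ dictated by the slices of $\DiagUp$.
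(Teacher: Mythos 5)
Your proposal follows exactly the same route as the paper's own (very terse) proof: slice $\Diag$ just above the global minimum to write $\HD=\Hdown\#_Z\Hup$ with $\Hdown$ the standard lower diagram with $n=1$, apply the pairing theorem (Theorem~\ref{thm:PairAwithD}), and then substitute Lemma~\ref{lem:GlobalMinimum} for $\Amod(\Hdown)$ and Theorem~\ref{thm:ComputeD} for $\Dmod(\Hup)$. The extra care you take with the $n=1$ subtlety and the gradings is consistent with the paper (which relegates the $n=1$ remark to after the proof), so the argument is correct and essentially identical.
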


\begin{proof}
  To compute $\lsup{\Ring}\CFKsimp(\orK)$, 
  we use
  By the pairing theorem (Theorem~\ref{thm:PairAwithD})
  \[ \lsup{\Ring}\CFKsimp(\orK)=
  \lsup{\Ring}\Amod(\Hdown)_{\Clg(1)}\DT \lsup{\Clg(1)}\Dmod(\Hup).\]
  The result now follows from Lemma~\ref{lem:GlobalMinimum} with 
  Theorem~\ref{thm:ComputeD}.
\end{proof}

\begin{rem}
  The application of Theorem~\ref{thm:PairAwithD}
  in the above proof in fact uses the
  special case where $n=1$, where we glue to the standard lower diagram.
  This is a particularly simple special case of the pairing theorem;
  the more interesting applications of the pairing theorem
  are contained in the use of Theorem~\ref{thm:ComputeD}.
\end{rem}

The above description of knot Floer homology is not exactly the
construction formulated in~\cite{Bordered2}, but it is close enough
that the proof of Theorem~\ref{thm:MainTheorem} follows quickly:

\begin{proof}[Proof of Theorem~\ref{thm:MainTheorem}]
  In~\cite{Bordered2}, we defined $\Hwz(\orK)$ as the homology of a
  complex $\Cwz(\Diag)$ associated to a diagram.  The complex
  $\Cwz(\Diag)$ is constructed similarly to $\DmodAlg(\DiagUp)$
  described above: the diagram $\Diag$ is broken into pieces, and to
  each elementary piece we associate bimodules, and $\Cwz(\Diag)$ is
  obtained by tensoring together these pieces. Specifically, to an upper diagram
  $\DiagUp$, tensoring together local bimodules
  defines an algebraically defined type $D$ structure
  $\lsup{\Alg(n,\Matching)}\PartInv(\DiagUp)$, where $2n$ is the
  number of strands out of $\DiagUp$ and $\Matching$ is the matching
  induced by $\DiagUp$; and if $\Diag$ is obtained by adding a global
  minimum to $\DiagUp$, then
  $\Cwz(\Diag)=\lsup{\Field[U,V]}\TerMin_{\Alg(1)}\DT\lsup{\Alg(1)}\PartInv(\DiagUp)$,
  where $\lsup{\Field[U,V]}\TerMin_{\Alg(1)}$ is the a bimodule
  described in~\cite[Section~\ref{BK2:subsec:ConstructInvariant}]{Bordered2}.

  We claim that
  \begin{equation}
    \label{eq:IdentifyPartialInvariants}
    \lsup{\Alg}T_{\cBlg}\DT~ \lsup{\cBlg}i_{\cClg}\DT~ \lsup{\cClg}\DmodAlg(\DiagUp)\simeq
  \lsup{\Alg}\PartInv(\DiagUp),
  \end{equation}
  where $\Alg=\Alg(n,\Matching)$, $\Blg=\Blg(n,\Matching)$,
  $\Clg=\Clg(n,\Matching)$ for $\Matching$ as determined by $\DiagUp$.
  This is seen by induction on the number of events in $\DiagUp$. For
  the basic case, where $\DiagUp$ contains only maxima, recall
  from~\cite{Bordered2} that $\lsup{\Alg}\PartInv(\DiagUp)$ is
  generated by a single element $\x$, and $\delta^1(\x)=C\otimes \x$,
  where $C=\sum_{i=1}^{n} C_{\{2i-1,2i\}}$; i.e. there is an
  identification of $\lsup{\Alg}\PartInv(\DiagUp)$ with
  \[ \lsup{\Alg}T_{\cBlg}\DT~ \lsup{\cBlg}i_{\cClg}\DT~ \lsup{\cClg}k = 
  \lsup{\Alg}T_{\cBlg}\DT~ \lsup{\cBlg}i_{\cClg}\DT~ \lsup{\cClg}\DmodAlg(\DiagUp).\]
  For the inductive step, when we add the bimodule associated to one
  more standard piece, we use Proposition~\ref{prop:CurvedDABimodules}
  and Lemma~\ref{lem:RestrictIdempotents}.

  Next, we consider $\lsup{\Ring}\TerMin_{\Alg(1)}$, used in the definition of $\Cwz(\Diag)$.
  The bimodule $\TerMin$  has three generators $\XX$,
  $\YY$, and $\ZZ$, with
  \[ \XX \cdot \Idemp{\{0\}}=\XX, \qquad 
  \YY \cdot \Idemp{\{1\}}=\YY, \qquad 
  \ZZ \cdot \Idemp{\{2\}}=\ZZ.\]
  When $1$ is oriented upwards, $\TerMin$ is the $DA$ bimodule with $\delta^1_k=0$ for $k\neq 2$, and
  all $\delta^1_2$ are determined by
  \[ 
  \begin{array}{lll}
    \delta^1_2(\YY,L_1)= u\otimes \XX, & \delta^1_2(\XX,R_1)= u\otimes \YY,  \\
    \delta^1_2(\YY,R_2)= v\otimes \ZZ, & \delta^1_2(\ZZ,L_2)= v\otimes \YY, \\
    \delta^1_2(\XX,C_{\{1,2\}})=
    \delta^1_2(\YY,C_{\{1,2\}})=
    \delta^1_2(\ZZ,C_{\{1,2\}})=0.
  \end{array}
  \]
  (When $1$ is oriented downwards, we define the actions as above, exchanging the roles of $u$ and $v$.)
  It follows immediately from this description that
  \begin{equation}
    \label{eq:DescribeTerMin}
    \lsup{\Ring}\TerMin_{\Alg(1)}\DT~ \lsup{\Alg(1)}T_{\cBlg(1)}\DT~\lsup{\cBlg(1)}i_{\cClg(1)}=\lsup{\Ring}[\Psi]_{\cClg(1)},
    \end{equation}
  where $\Psi$ is as in Equation~\eqref{eq:IsoClg1}.

  Thus, combining the definition of $\Cwz$ with Equations~\eqref{eq:IdentifyPartialInvariants}, 
  \eqref{eq:DescribeTerMin}, and Theorem~\ref{thm:ComputeHFK}, we find that
  \begin{align*} 
    \Cwz(\Diag)&=\lsup{\Ring}\TerMin_{\Alg(1)}\DT~\lsup{\Alg(1)}\PartInv(\Hup) \\
    &= \lsup{\Ring}\TerMin_{\Alg(1)}\DT~ \lsup{\Alg(1)}T_{\cBlg(1)}\DT~ \lsup{\cBlg(1)}i_{\cClg(1)}\DT~ \lsup{\cClg(1)}\DmodAlg(\DiagUp) \\
    &= \lsup{\Ring}[\Psi]_{\cClg(1)}\DT~ \lsup{\cClg(1)}\DmodAlg(\Hup) \\
    &= \lsup{\Ring}\CFKsimp(\orK),
  \end{align*}
    as required.
\end{proof}

\newcommand\BigMin{\Theta''}
\newcommand\nBlg{\Blg^\circ}
\section{Further remarks on invariance}
\label{sec:Further}

This section is a further elaboration on Theorem~\ref{thm:ComputeD},
wherein we explore the invariance properties of the two sides of
Equation~\eqref{eq:ComputeD}. 

Observe that the object appearing on the right hand side of
Equation~\eqref{eq:ComputeD}, $\lsup{\cClg}\DmodAlg(\DiagUp)$, is
associated to an acceptable upper diagram $\DiagUp$.
We explain first how to define $\lsup{\cClg}\DmodAlg(\DiagUp)$ for a
more generic kind of knot diagram.

By analogy with~\cite[Section~\ref{BK1:sec:ConstructionAndInvariance}]{BorderedKnots},
we say that an upper diagram $\DiagUp$ is in {\em bridge position} if
it is drawn on the $y\geq 0$ plane so that:
\begin{enumerate}
\item all the critical points are local 
  minima or maxima
\item minima, maxima, and crossings all have distinct $y$ coordinates
\item $\DiagUp$ has no closed components
\end{enumerate}

To an upper diagram in bridge position, we can associate an
algebraically defined, curved type $D$ structure
$\lsup{\cClg}\DmodAlg(\DiagUp)$ in the obvious way: we tensor together
the curved DA bimodules associated to the crossings local maxima, and
local minima ($\lsup{\cBlg_2}\Pos^i_{\cBlg_1}$,
$\lsup{\cBlg_2}\Neg^i_{\cBlg_1}$, $\lsup{\cBlg_2}\Max^c_{\cBlg_1}$,
and $\lsup{\cBlg_2}\Min^c_{\cBlg_1}$ from
Proposition~\ref{prop:CurvedDABimodules}) as they appear in $\DiagUp$.

Adapting methods from~\cite[Section~\ref{BK1:sec:ConstructionAndInvariance}]{BorderedKnots}
(see also~\cite[Section~\ref{BK2:sec:Construction}]{Bordered2}), we can show that $\lsup{\cClg}\DmodAlg(\DiagUp)$
depends only on the planar isotopy class of the diagram $\Diag$.

\begin{prop}
  \label{prop:CurvedDPlanarIsotopies}
  If $\DiagUp$ and $\DiagUp'$ are two upper diagrams that differ
  by planar isotopies (fixing $y=0$), then 
  the associated curved type $D$ structures $\lsup{\cClg}\DmodAlg(\DiagUp)$
  and $\lsup{\cClg}\DmodAlg(\DiagUp')$ are homotopy equivalent 
  (as curved type $D$ structures).
\end{prop}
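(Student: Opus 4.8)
The plan is to follow the standard strategy for proving invariance of combinatorially defined tangle invariants: reduce an arbitrary planar isotopy between bridge--position diagrams to a finite sequence of \emph{elementary moves}, and verify that each elementary move induces a homotopy equivalence of the associated curved type $D$ structures.

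First I would carry out the Cerf--theoretic reduction. A generic planar isotopy (fixing the line $y=0$) between two bridge--position upper diagrams $\DiagUp$ and $\DiagUp'$ can, after a small perturbation, be decomposed into a finite composition of moves of the following kinds: (i) isotopies supported in a horizontal slab containing no crossing and no critical point, which leave $\lsup{\cClg}\DmodAlg$ literally unchanged; (ii) \emph{far commutations}, in which two consecutive elementary events (crossings, local maxima, local minima) with disjoint horizontal supports exchange their $y$--coordinates; (iii) \emph{near commutations}, in which the horizontal supports of two consecutive elementary events are adjacent but the events still slide past one another (e.g.\ a cup or cap slides past a crossing); and (iv) \emph{zig--zag moves}, creating or cancelling an adjacent local maximum/minimum pair on a single strand. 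This decomposition is the usual Morse--theory argument, carried out in the analogous setting in \cite[Section~\ref{BK1:subsec:InvarainceUnderBridgeMoves}]{BorderedKnots}; since it concerns only the combinatorics of the diagram, it applies here verbatim.

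Next, for each elementary move of type (ii)--(iv) I would invoke the corresponding \emph{bimodule relation} over $\cBlg$. Each such move asserts that two different orderings (or two different numbers) of the curved building blocks $\lsup{\cBlg_2}\Pos^i_{\cBlg_1}$, $\lsup{\cBlg_2}\Neg^i_{\cBlg_1}$, $\lsup{\cBlg_2}\Max^c_{\cBlg_1}$, $\lsup{\cBlg_2}\Min^c_{\cBlg_1}$ of Proposition~\ref{prop:CurvedDABimodules} become homotopy equivalent after $\DT$; for instance $\lsup{\cBlg_3}\Pos^i_{\cBlg_2}\DT~\lsup{\cBlg_2}\Pos^j_{\cBlg_1}\simeq\lsup{\cBlg_3}\Pos^j_{\cBlg_2}\DT~\lsup{\cBlg_2}\Pos^i_{\cBlg_1}$ for $|i-j|\geq 2$, the analogous commutations of a crossing past a maximum or minimum, and the cup--cap cancellation relation $\lsup{\cBlg_2}\Max_{\cBlg_1}\DT~\lsup{\cBlg_1}\Min_{\cBlg_2}\simeq\lsup{\cBlg_2}[\Id]_{\cBlg_2}$. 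Each of these is the curved counterpart of a relation over $\Alg$ established in \cite{Bordered2} (see \cite[Section~\ref{BK2:sec:BraidRelations}]{Bordered2}), and I would deduce the curved version from the $\Alg$ version by tensoring both sides with the transformer bimodules $\lsup{\Alg_i}T_{\cBlg_i}$ and applying Proposition~\ref{prop:CurvedDABimodules}: one checks that the morphisms realizing the $\Alg$--relations in \cite{Bordered2} are \emph{standard} in the sense recalled in the proof of Proposition~\ref{prop:CurvedDABimodules}, so that, upon dropping the $C$--terms, they descend to morphisms of the curved bimodules (equivalently, one re-runs the explicit computations of \cite{Bordered2} in the curved setting, where the specialization never interferes). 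The restriction--of--idempotent bimodules that appear at the ends of the diagram are handled the same way, using Lemma~\ref{lem:RestrictIdempotents}.

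Finally I would globalize: a bimodule homotopy equivalence between the building blocks of the slab where the move takes place, tensored (by associativity of $\DT$) with the identity bimodules on the unchanged slabs above and below, yields $\lsup{\cClg}\DmodAlg(\DiagUp)\simeq\lsup{\cClg}\DmodAlg(\DiagUp')$; boundedness in these tensor products is controlled as in Proposition~\ref{prop:AdaptedTensorProdnDuAlg}, or directly from the finite--dimensionality over $\Field$ of all the bimodules involved. I expect the main obstacle to be purely organizational: compiling the complete list of elementary moves and, for each, isolating the precise curved bimodule relation and confirming that the witnessing morphism from \cite{Bordered2,BorderedKnots} is standard and hence survives passage to $\cBlg$ (and, for the endmost pieces, to $\cClg$). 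No new holomorphic or algebraic input is needed beyond Proposition~\ref{prop:CurvedDABimodules}, Lemma~\ref{lem:RestrictIdempotents}, and the tangle relations of \cite{Bordered2,BorderedKnots}.
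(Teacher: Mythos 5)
Your proposal diverges from the paper in a way that opens a genuine gap. You propose to establish, directly, curved type $DA$ bimodule equivalences such as
$\lsup{\cBlg_3}\Pos^i_{\cBlg_2}\DT~\lsup{\cBlg_2}\Pos^j_{\cBlg_1}\simeq\lsup{\cBlg_3}\Pos^j_{\cBlg_2}\DT~\lsup{\cBlg_2}\Pos^i_{\cBlg_1}$,
by tensoring the $\Alg$--level relations of~\cite{Bordered2} with the transformer bimodules and arguing that the witnessing morphisms are ``standard'' and so descend to the curved setting after dropping the $C$--terms. The paper explicitly declines to go this route: it notes that in~\cite{Bordered2} the $DA$--level bimodule identities were obtained from $DD$--level identities \emph{via Koszul duality}, which the present paper does not establish for the curved algebras. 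The transformer $\lsup{\Alg}T_{\cBlg}$ is a one--way bimodule, not an invertible one, so from a homotopy equivalence $\phi\colon X\DT T\to Y\DT T$ of $\Alg_2$--$\cBlg_1$ bimodules one cannot automatically extract a homotopy equivalence of the curved bimodules $\lsup{\cBlg_2}X_{\cBlg_1}$ and $\lsup{\cBlg_2}Y_{\cBlg_1}$. ``Standardness'' in the paper is a property of bimodules, not of morphisms between them, so the descent step you invoke is unjustified; moreover, ``re-running the computations of~\cite{Bordered2}'' gives $DD$--level identities, since that is the level at which those computations were performed.

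The paper's actual mechanism is different and sidesteps the missing Koszul duality. It only proves the weaker statements at the $DD$ level — identities of the form
$X^i\DT Y^j\DT\lsup{\cBlg,\nDuAlg}\CanonDD\simeq Y^{j'}\DT X^{i'}\DT\lsup{\cBlg,\nDuAlg}\CanonDD$
(and likewise for the trident and cup--cap relations), which do survive passage to $\cBlg$ since $\CanonDD$ is defined over $\cBlg\otimes\nDuAlg$. It then shows these suffice by introducing the notion of a \emph{relevant} type $D$ structure (Definition~\ref{def:Relevant}) and proving that $\lsup{\cClg}\DmodAlg(\Diag)$ is relevant, by induction using Propositions~\ref{prop:CurvedDABimodules} and~\ref{prop:DualMax}. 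The $DD$--level identities then give the needed homotopy equivalences after tensoring against a relevant type $D$ structure. Your proposal omits the relevance argument entirely, and replaces it with a descent step whose validity is precisely what the paper's Koszul--duality remark warns against assuming. If you wanted to pursue the direct $DA$--level route, you would need to formulate and prove a statement that ``standard morphisms descend'' — or, equivalently, establish a curved Koszul duality — neither of which appears in the paper.
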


The above is proved in Section~\ref{subsec:AlgInvar}, after some
algebraic background is set up in Section~\ref{subsec:LocalMaximum}.

With a little more work, we can show that this curved type
$D$-structure is in fact an invariant of the tangle represented by
$\DiagUp$.

\begin{prop}
  \label{prop:TangleInvariance}
  If $\DiagUp$ and $\DiagUp'$ are two upper diagrams that represent the same tangle in $S^3$, then 
  the associated curved type $D$ structures $\lsup{\cClg}\DmodAlg(\DiagUp)$
  and $\lsup{\cClg}\DmodAlg(\DiagUp')$ are homotopy equivalent.
\end{prop}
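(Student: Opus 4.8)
The plan is to reduce Proposition~\ref{prop:TangleInvariance} to Proposition~\ref{prop:CurvedDPlanarIsotopies} together with invariance of $\lsup{\cClg}\DmodAlg(-)$ under the three Reidemeister moves, following exactly the strategy of~\cite[Section~\ref{BK1:sec:ConstructionAndInvariance}]{BorderedKnots} and~\cite[Section~\ref{BK2:sec:Construction}]{Bordered2}. Indeed, two upper diagrams $\DiagUp$ and $\DiagUp'$ in bridge position represent the same tangle in $S^3$ if and only if they differ by a sequence of planar isotopies (fixing $y=0$), Reidemeister moves I, II, III, and the ``bridge moves'' (pulling a strand over or under a local maximum, or through a local minimum) that change the number of local extrema; these are precisely the moves treated in~\cite[Section~\ref{BK1:subsec:InvarainceUnderBridgeMoves}]{BorderedKnots} in the uncurved setting. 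Planar isotopy invariance is Proposition~\ref{prop:CurvedDPlanarIsotopies}, so it remains to establish invariance under each of the other local moves.

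The key point making this tractable is that all of these local moves are statements about the algebraically-defined curved DA bimodules $\lsup{\cBlg_2}\Pos^i_{\cBlg_1}$, $\lsup{\cBlg_2}\Neg^i_{\cBlg_1}$, $\lsup{\cBlg_2}\Max^c_{\cBlg_1}$, $\lsup{\cBlg_2}\Min^c_{\cBlg_1}$ (and their $\cClg$-versions from Lemma~\ref{lem:RestrictIdempotents}), and can be checked one matching-and-strand-number at a time. The mechanism is Proposition~\ref{prop:CurvedDABimodules}: each curved bimodule $\lsup{\cBlg_2}X_{\cBlg_1}$ is obtained from the corresponding ``standard'' bimodule $\lsup{\Alg_2}X_{\Alg_1}$ of~\cite{Bordered2} by the transformer relation $\lsup{\Alg_2}T_{\cBlg_2}\DT\lsup{\cBlg_2}X_{\cBlg_1}\simeq \lsup{\Alg_2}X_{\Alg_1}\DT\lsup{\Alg_1}T_{\cBlg_1}$. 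Since the relevant relations among the $\Alg$-bimodules — $\lsup{\Alg_2}\Pos^i_{\Alg_1}\DT\lsup{\Alg_1}\Neg^i_{\Alg_0}\simeq \mathrm{Id}$ (Reidemeister~II), the braid relation (Reidemeister~III, \cite[Section~\ref{BK2:sec:BraidRelations}]{Bordered2}), the crossing/curl cancellations (Reidemeister~I), and the maximum/minimum cancellations underlying the bridge moves — are already established in~\cite{Bordered2} and~\cite{BorderedKnots}, each such relation transports across the transformer to give the corresponding homotopy equivalence of curved $\cBlg$-bimodules, hence of $\cClg$-bimodules after applying Lemma~\ref{lem:RestrictIdempotents} exactly as in the proof of Proposition~\ref{prop:InductiveStep}. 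Tensoring both sides with the (remaining, unchanged) pieces of $\DiagUp$ on the left and right, and using associativity of $\DT$, yields $\lsup{\cClg}\DmodAlg(\DiagUp)\simeq\lsup{\cClg}\DmodAlg(\DiagUp')$ for each local move.

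Concretely, I would carry out the argument in the following order. First, record the precise list of local moves relating two bridge-position diagrams of the same tangle, citing~\cite[Section~\ref{BK1:subsec:InvarainceUnderBridgeMoves}]{BorderedKnots}; planar isotopy is handled by Proposition~\ref{prop:CurvedDPlanarIsotopies}. Second, for each of Reidemeister I, II, III, prove the corresponding homotopy equivalence of curved bimodules by transporting the known $\Alg$-bimodule identity of~\cite{Bordered2} across the transformer using Proposition~\ref{prop:CurvedDABimodules}, then descending to $\cClg$ via Lemma~\ref{lem:RestrictIdempotents}. Third, treat the bridge moves: these involve composing a curved maximum bimodule $\lsup{\cBlg_2}\Max^c_{\cBlg_1}$ (or minimum) with a crossing bimodule and checking that the result is homotopy equivalent to a crossing bimodule on fewer strands — again this is a transported version of the corresponding $\Alg$-statement from~\cite{BorderedKnots}. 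Fourth, in each case tensor on the unchanged pieces and invoke associativity to conclude the equivalence for the full diagrams.

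The main obstacle I anticipate is the bridge-move step, for two reasons: (a) unlike Reidemeister II and III, the max/min-cancellation relations change the underlying algebra's strand number and require carefully matching up the grading and curvature data (the matching $\Matching$ and the central element $\mu_0^\Matching$) on both sides, so one must verify that the matchings induced by $\DiagUp$ and $\DiagUp'$ genuinely correspond under the move; and (b) the curved maximum and minimum bimodules, while standard in the sense of~\cite[Section~\ref{BK2:sec:Algebras}]{Bordered2}, require the bookkeeping of Property~\ref{prop:CStandard} (the $C^2$-term) to be threaded through the composition, so the transformer argument needs the more delicate part of Proposition~\ref{prop:CurvedDABimodules}. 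Everything else is a routine transcription of the uncurved invariance proof.
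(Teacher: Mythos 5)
Your reduction to planar isotopies plus Reidemeister moves is correct, and you are right that the relevant local identities are available over $\Alg$ from~\cite{Bordered2}. But there is a genuine gap at the step where you claim ``each such relation transports across the transformer to give the corresponding homotopy equivalence of curved $\cBlg$-bimodules.'' Proposition~\ref{prop:CurvedDABimodules} gives you only the intertwining $\lsup{\Alg_2}T_{\cBlg_2}\DT\lsup{\cBlg_2}X_{\cBlg_1}\simeq \lsup{\Alg_2}X_{\Alg_1}\DT\lsup{\Alg_1}T_{\cBlg_1}$. If you compose such relations for a word of bimodules, you can deduce that $\lsup{\Alg_2}T_{\cBlg_2}\DT(\text{word over }\cBlg)\simeq \lsup{\Alg_2}T_{\cBlg_2}\DT(\text{word}'\text{ over }\cBlg)$ whenever the corresponding $\Alg$-words are equivalent; but to strip off $\lsup{\Alg_2}T_{\cBlg_2}$ and conclude the $\cBlg$-words are equivalent, you need tensoring with $T$ to be faithful on homotopy classes. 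It is not: $T$ is not a restriction-of-idempotents bimodule (its $\delta^1_1$ inserts $\sum C_{i,j}$), so there is no left inverse to $\lsup{\Alg}T_{\cBlg}\DT(-)$ available from the data in this paper. This is precisely the point flagged in the proof of Proposition~\ref{prop:CurvedDPlanarIsotopies}: in~\cite{Bordered2} the passage from $DD$-level identities to $DA$-level ones relied on a Koszul duality, \emph{which has not been established here}. Your own worry about ``the bookkeeping of Property~\ref{prop:CStandard}'' is a different (and comparatively minor) issue; the real obstruction is the lack of a cancellation property for $T$.

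The paper avoids this entirely by never asserting equivalences of curved $DA$ bimodules. Instead it records the Reidemeister identities at the level of $DD$ bimodules --- e.g.\ for Reidemeister~I, $\Pos^c\DT\Max^c\DT\CanonDD\simeq\Pos^c\DT\CanonDD$ --- and then uses the notion of a \emph{relevant} type $D$ structure (Definition~\ref{def:Relevant}): when $\lsup{\cClg}\DmodAlg(\DiagUp)$ is relevant, $\lsup{\cBlg}i_{\cClg}\DT\lsup{\cClg}\DmodAlg(\DiagUp)$ can be rewritten as $Z_{\nDuAlg}\DT\lsup{\nDuAlg,\cBlg}\CanonDD$, and the $DD$-level identities applied to $\CanonDD$ do yield the required equivalence after reassociating, because the inclusion bimodule $\lsup{\cBlg}i_{\cClg}$ (unlike $T$) does satisfy $\lsub{\cClg}i^{\cBlg}\DT\lsub{\cBlg}\Blg_{\cBlg}\DT\lsup{\cBlg}i_{\cClg}=\cClg$ and hence can be cancelled. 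The relevance of $\lsup{\cClg}\DmodAlg(\DiagUp)$ is established by the same induction used in Proposition~\ref{prop:InductiveStep}, combined with Proposition~\ref{prop:DualMax} to cover the maximum bimodule. If you want to repair your argument, replace the ``transport across $T$'' step with the ``tensor with $\CanonDD$ and invoke relevance'' mechanism. (Also note, as a small point, that the bridge moves --- pair creation/annihilation, commutations of distant events, trident moves --- are already absorbed into Proposition~\ref{prop:CurvedDPlanarIsotopies}; the present proposition only needs to supply the Reidemeister moves.)
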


To an upper diagram $\DiagUp$ in bridge position, there is an
associated Heegaard diagram $\HD(\DiagUp)$, obtained by stacking the
middle diagrams associated to the crossings and critical points as
they appear in $\DiagUp$.

We have the following variant of Theorem~\ref{thm:ComputeD}:

\begin{thm}
  \label{thm:ComputeD2}
  Let $\DiagUp$ be an upper diagram in bridge position, and let $\Hup$
  be its associated upper Heegaard diagram.  Then there is an
  identification
  \begin{equation}
    \label{eq:ComputeD2}
    \lsup{\cClg(n,\Matching)}\Dmod(\Hup)\simeq~\lsup{\cClg(n,\Matching)}\DmodAlg(\DiagUp).
  \end{equation}
\end{thm}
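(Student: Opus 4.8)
The plan is to reduce Theorem~\ref{thm:ComputeD2} to Theorem~\ref{thm:ComputeD} by interpolating between an arbitrary bridge-position diagram $\DiagUp$ and an acceptable one. First I would observe that both sides of Equation~\eqref{eq:ComputeD2} are built by stacking local pieces: the right-hand side $\lsup{\cClg}\DmodAlg(\DiagUp)$ is a tensor product of the curved bimodules $\lsup{\cBlg_2}\Pos^i_{\cBlg_1}$, $\lsup{\cBlg_2}\Neg^i_{\cBlg_1}$, $\lsup{\cBlg_2}\Max^c_{\cBlg_1}$, $\lsup{\cBlg_2}\Min^c_{\cBlg_1}$ (restricted to $\cClg$ via Lemma~\ref{lem:RestrictIdempotents}), while the left-hand side $\lsup{\cClg}\Dmod(\Hup)$ is the type $D$ structure of the Heegaard diagram $\HD(\DiagUp)$ obtained by stacking the corresponding middle diagrams. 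The iterated pairing theorem (Theorem~\ref{thm:PairDAwithD}) lets one peel off these middle pieces one at a time, so $\lsup{\cClg}\Dmod(\Hup)$ is homotopy equivalent to the tensor product of the $DA$ bimodules $\lsup{\cClg_2}\DAmodExt(\Hmax{c})_{\cClg_1}$, $\lsup{\cClg_2}\DAmodExt(\Hmin{c})_{\cClg_1}$, $\lsup{\cClg_2}\DAmodExt(\Hpos{i})_{\cClg_1}$, $\lsup{\cClg_2}\DAmodExt(\Hneg{i})_{\cClg_1}$, applied to the type $D$ structure $\lsup{\cClg}k$ of the standard upper diagram (Lemma~\ref{lem:StandardTypeD}). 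Here I would need, in addition to the crossing and minimum computations already present in Theorem~\ref{thm:ComputeDDmods}, the analogous computation for a local maximum, identifying $\lsup{\cClg_2}\DAmodExt(\Hmax{c})_{\cClg_1}$ with the algebraically defined bimodule $\lsup{\cBlg_2}\Max^c_{\cBlg_1}$ from Proposition~\ref{prop:CurvedDABimodules} (restricted to $\cClg$); this runs exactly parallel to the proof of Proposition~\ref{prop:MinimumComputation}, using the $DD$ bimodule description via $\lsup{\cBlg,\nDuAlg}\CanonDD$ and Proposition~\ref{prop:BimodulesOvernDuAlg}. Since the relevance argument of Lemma~\ref{lem:kIsRelevant} and the inductive step of Proposition~\ref{prop:InductiveStep} extend verbatim to include local maxima, this gives Equation~\eqref{eq:ComputeD2} directly: I would in fact prove the statement that $\lsup{\cClg}\Dmod(\Hup)$ is homotopy equivalent to a relevant type $D$ structure, by induction on the total number of crossings, maxima, and minima in $\DiagUp$ (rather than just crossings and minima as in Theorem~\ref{thm:ComputeD}).

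There is a subtlety: in the construction of a general bridge-position diagram the standard upper diagram $\Hup(n)$ for the global maxima is no longer necessarily the bottom layer, since local minima can occur below local maxima. To handle this cleanly I would instead run the induction from the top of the diagram downward, stripping off one event at a time using Theorem~\ref{thm:PairDAwithD}, with the base case being the empty diagram (no strands). Alternatively — and this is probably the more robust route — I would first appeal to Proposition~\ref{prop:CurvedDPlanarIsotopies} and Proposition~\ref{prop:TangleInvariance} on the algebraic side, together with the corresponding invariance of $\lsup{\cClg}\Dmod(\Hup)$ under Heegaard moves (the almost-complex-structure invariance already established, plus invariance under isotopies, handleslides, and (de)stabilizations of the upper diagram, which follows from the equivalence relation discussed in Section~\ref{sec:Heegs} together with standard Heegaard Floer techniques), to reduce from an arbitrary bridge-position $\DiagUp$ to an acceptable one $\DiagUp'$ — at which point Theorem~\ref{thm:ComputeD} applies. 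The key point is that planar isotopies sliding a maximum above all minima, followed by isotopies making all maxima global, realize $\DiagUp$ and $\DiagUp'$ as representing equivalent upper Heegaard diagrams, hence $\lsup{\cClg}\Dmod(\Hup(\DiagUp))\simeq \lsup{\cClg}\Dmod(\Hup(\DiagUp'))\simeq \lsup{\cClg}\DmodAlg(\DiagUp')\simeq \lsup{\cClg}\DmodAlg(\DiagUp)$, where the last equivalence is Proposition~\ref{prop:CurvedDPlanarIsotopies} (or Proposition~\ref{prop:TangleInvariance}).

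The main obstacle I anticipate is verifying the local-maximum $DD$-bimodule computation with the same care as the minimum and crossing cases: one must pin down the chamber structure on the relevant moduli spaces (as in the stretching arguments in the proofs of Propositions~\ref{prop:MinimumComputation} and~\ref{prop:PosDDUnmatched}) and check that the algebraically forced terms in the $DD$ structure relation match the holomorphic count, including the curvature contributions. A secondary obstacle is the Heegaard-move invariance of $\lsup{\cClg}\Dmod(\Hup)$ needed for the reduction in the alternative route; while this is expected and standard, it is not literally stated in the excerpt (only almost-complex-structure invariance is), so if I take that route I would need to either prove it or, preferably, avoid it entirely by running the direct inductive argument via Theorem~\ref{thm:PairDAwithD} stripping events from the top down. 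The latter is cleanest and is the approach I would ultimately write up: extend Lemma~\ref{lem:kIsRelevant}, Proposition~\ref{prop:InductiveStep}, and the proof of Theorem~\ref{thm:ComputeD} to include the local-maximum bimodule, and induct on the number of events in $\DiagUp$.
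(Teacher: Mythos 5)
Your final plan — verify the local-maximum analogue of Theorem~\ref{thm:ComputeDDmods} (parallel to Proposition~\ref{prop:MinimumComputation}), add the corresponding case to Proposition~\ref{prop:InductiveStep}, and re-run the induction from Theorem~\ref{thm:ComputeD} with local maxima included — is exactly what the paper does; the paper's proof is just two sentences to this effect. One small correction: for the relevance/duality input in the new case of Proposition~\ref{prop:InductiveStep}, the needed statement is Proposition~\ref{prop:DualMax} (the local-maximum analogue of Proposition~\ref{prop:BimodulesOvernDuAlg}), not Proposition~\ref{prop:BimodulesOvernDuAlg} itself, which only covers $\Min$, $\Pos$, $\Neg$; the alternative route via Heegaard-move invariance that you correctly set aside is indeed not what the paper uses.
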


Combining Theorem~\ref{thm:ComputeD2} with
Proposition~\ref{prop:CurvedDPlanarIsotopies}, we can conclude thet
the homotopy class of the analytically defined
$\lsup{\cClg(n,\Matching)}\Dmod(\Hup)$, which appears to depend on the
choice of Heegaard diagram, in fact is an invariant for tangles.  (One
could alternatively prove invariance
$\lsup{\cClg(n,\Matching)}\Dmod(\Hup)$ using more analytical methods
by showing invariance under isotopies, handleslides, and
stablizations, in the spirit of~\cite{HolDisk}.)

Theorem~\ref{thm:ComputeD2} is proved in Section~\ref{subsec:ComputeD2}

\subsection{Local maxima}
\label{subsec:LocalMaximum}

Consider the curved DA bimodule $\lsup{\cBlg_1}\Max^{c}_{\cBlg_2}$
from Proposition~\ref{prop:CurvedDABimodules};
i.e. where 
\[\cBlg_1=\Blg(n,\Matching_1)
\qquad {\text{and}} \qquad \cBlg_1=\Blg(n,\Matching_2),\] where
$\Matching_1$ is some matching on $\{1,\dots,2n\}$, and $\Matching_2$
is the matching on $\{1,\dots,2n+2\}$ obtained by adding to
$\phi_c(\Matching_1)$, the additional pair $\{c,c+1\}$, with $\phi_c$
as in Equation~\eqref{eq:DefInsert}.
By construction, $\lsup{\cBlg_2}\Max^{c}_{\cBlg_1}\cdot \iota =
\iota \cdot \lsup{\cBlg_1}\Max^{c}_{\cBlg_2}$
(compare Lemma~\ref{lem:RestrictIdempotents}); i.e.
there is a corresponding bimodule
\[  \lsup{\cBlg_2}i_{\cClg_2} \DT~ \lsup{\cClg_2}\Max^c_{\cClg_1} \simeq~
\lsup{\cBlg_2}\Max^c_{\cBlg_1}\DT~ \lsup{\cBlg_1}i_{\cClg_1}  \]

The key algebraic step required to  adapt
the invariance proof from~\cite{Bordered2} is the existence of a bimodule 
$\lsup{\nDuAlg_1}\Max^c_{\nDuAlg_2}$ that is dual to 
$\lsup{\cBlg_2}\Max^c_{\cBlg_1}$, as follows:

\begin{prop}
  \label{prop:DualMax}
  Given $\cBlg_1$ and $\cBlg_2$ as above,
  there is a bimodule
  $\lsup{\nDuAlg_1}\Max^c_{\nDuAlg_2}$ with the property that
  \[ 
  \lsup{\cBlg_2}\Max^c_{\cBlg_1}\DT \lsup{\cBlg_1,\nDuAlg_1}\CanonDD
  \simeq
  \lsup{\nDuAlg_1}\Max^c_{\nDuAlg_2}\DT \lsup{\nDuAlg_2,\cBlg_2}\CanonDD. \]
\end{prop}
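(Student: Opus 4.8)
The plan is to mirror, in the curved setting over $\nDuAlg$, the construction of the dual bimodule for a local minimum given in Section~\ref{subsec:DDmin}; indeed the local maximum for the ``in'' strands is the same cobordism as the local minimum for the ``out'' strands, up to the symmetry which reverses the direction of the diagram. First I would build $\lsup{\nDuAlg_1}\Max^c_{\nDuAlg_2}$ directly, imitating the explicit description of $\lsup{\DuAlg_1}\Min^c_{\DuAlg_2}$: generators indexed by appropriately ``allowed'' idempotent states for $\nDuAlg_1$, with a $\delta^1_1$ given by a length-two chord term plus a curvature-type term $U_\cdot E_\cdot$, and $\delta^1_2$ governed by an explicit map $\Phi_\x$ on the algebra (the analogue of Lemma~\ref{lem:ConstructDeltaTwo}, with the roles of the $L$'s and $R$'s interchanged according to the opposite orientation). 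Alternatively, and perhaps more cleanly, I would define it as the opposite bimodule of the already-constructed $\lsup{\nDuAlg}\Min^c$ under the algebra anti-involution $\Opposite$, exactly as the negative-crossing bimodules are obtained from the positive ones in Section~\ref{subsec:AlgPosCross}; the curvature terms behave correctly under $\Opposite$ since $\llbracket E_i, E_j\rrbracket$ is $\Opposite$-symmetric. Either way, once the bimodule is written down, checking that it satisfies the curved $DA$ bimodule relations (Equations~\eqref{eq:CurvedDAbimoduleRelation1} and~\eqref{eq:CurvedDAbimoduleRelation2}) is a finite algebraic verification, entirely parallel to the one behind the construction of $\lsup{\DuAlg_1}\Min^c_{\DuAlg_2}$.

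The substance of the proposition is the homotopy equivalence. Here the plan is to observe that both sides are $DD$ bimodules over $\cBlg_2$ and $\nDuAlg_1$, and to identify each with a single explicit $DD$ bimodule (the ``canonical $DD$ bimodule of a local maximum''), just as in Proposition~\ref{prop:BimodulesOvernDuAlg} for the minimum and crossing cases. For the left-hand side I would expand $\lsup{\cBlg_2}\Max^c_{\cBlg_1}\DT \lsup{\cBlg_1,\nDuAlg_1}\CanonDD$ using the explicit description of $\lsup{\cBlg_2}\Max^c_{\cBlg_1}$ from~\cite{Bordered2} (which underlies Proposition~\ref{prop:CurvedDABimodules}): since that bimodule is ``standard,'' tensoring with $\CanonDD$ gives a bimodule whose generators are indexed by idempotent pairs and whose differential is read off termwise, dropping the $C_{\{i,j\}}$-terms and passing to $\nDuAlg$. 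For the right-hand side I would expand $\lsup{\nDuAlg_1}\Max^c_{\nDuAlg_2}\DT \lsup{\nDuAlg_2,\cBlg_2}\CanonDD$ directly from the explicit $\delta^1_1, \delta^1_2$. One then writes down a grading-preserving homotopy equivalence between the two resulting $DD$ bimodules; as in the proof of Proposition~\ref{prop:MinimumComputation}, the map will be the identity on most generators and of the form $(1 + E_c E_{c+1})\otimes(\,\cdot\,)$ on the generators near the maximum, and the $\Delta$-grading (together with the Alexander grading, which forces $\weight(a)=\weight(b)$ in any differential term) will pin down which higher-order terms can appear and hence make the verification that $h^1$ is a chain map a bounded computation.

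\textbf{Main obstacle.}
I expect the main difficulty to be bookkeeping rather than conceptual: getting the orientation conventions consistent. The maximum bimodule over $\cBlg$ has its ``new'' strands $c, c+1$ matched with each other, which is the opposite situation to the minimum (where the two new strands are split between ``in'' and ``out''), so the curvature $\mu_0$ picks up the term $U_c U_{c+1}$ on the relevant side, and the $E$-variables entering $\CanonDD$ must be tracked carefully through $\phi_c$. Keeping straight which of $L_c L_{c+1}$ versus $R_{c+1} R_c$ appears in $\delta^1_1$, and the sign/placement of the $E_c E_{c+1}$ correction in $h^1$, will require care; but the structure is completely parallel to Sections~\ref{subsec:DDmin}--\ref{subsec:LocalMin}, so no genuinely new analytic or algebraic input is needed. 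An alternative that sidesteps some of this is to note that the local maximum is dual to the local minimum also at the level of the already-established equivalences: one can deduce Proposition~\ref{prop:DualMax} formally from Proposition~\ref{prop:BimodulesOvernDuAlg} (the minimum case) by applying the symmetry of $\CanonDD$ under $\Opposite$ together with the fact that $\lsup{\cBlg_2}\Max^c_{\cBlg_1}$ and $\lsup{\cBlg_2}\Min^c_{\cBlg_1}$ are interchanged by this symmetry, in which case the only thing to check is that $\lsup{\nDuAlg_1}\Max^c_{\nDuAlg_2}$ as defined is indeed the $\Opposite$-image of $\lsup{\nDuAlg_1}\Min^c_{\nDuAlg_2}$.
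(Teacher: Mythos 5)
Your proposal diverges from the paper's argument in two places, and the alternative you flag as ``cleaner'' has a concrete obstruction. The $\Opposite$-duality shortcut cannot work: for the local maximum, the $(2n+2)$-strand matching $\Matching_2$ contains the pair $\{c,c+1\}$ (the two new strands are connected through the cup itself), whereas the local minimum of Subsection~\ref{subsec:DDmin} is defined only under the standing hypothesis $\{c,c+1\}\notin\Matching_1$ on its $(2n+2)$-strand side (see Theorem~\ref{thm:ComputeDDmods}, which requires $\{c,c+1\}\notin\MatchIn$). So there is no choice of $\Opposite$-identification under which one of these families is carried to the other; the maximum and minimum are not related the way $\Pos_i$ and $\Neg_i$ are. (Relatedly, $\Opposite$ applied to a $DA$ bimodule produces an $AD$ bimodule; the negative crossing is produced in the paper by taking opposites at the $DD$ level, so even the template you cite does not transfer directly.) Your first alternative---imitating the explicit description of $\lsup{\DuAlg_1}\Min^c_{\DuAlg_2}$ by giving generators indexed by allowed idempotents with only $\delta^1_1$ and $\delta^1_2$ nonzero---glosses over the point the paper is careful about: because $\llbracket E_c,E_{c+1}\rrbracket = 1$ holds in the large input algebra $\nDuAlg_2$ here, the resulting bimodule need not have only two nonzero $\delta^1_\ell$'s. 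The paper correspondingly does not write an explicit formula but instead builds $\lsup{\nDuAlg_1}\Max^1_{\nDuAlg_2}$ by applying the homological perturbation lemma to the auxiliary bimodule $\BigMin$ of Lemma~\ref{lem:BigDAGens2}; the asserted $DD$ equivalence for $c=1$ (Lemma~\ref{lem:MaxDual}) is then a calculation internal to that perturbation.

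There is also a structural difference you should flag: the paper does not establish the equivalence for all $c$ by a single explicit comparison, but by induction on $c$. It \emph{defines} the general-$c$ bimodule as the tensor product
\[
  \lsup{\nDuAlg_1}\Max^c_{\nDuAlg_2}=\lsup{\nDuAlg_1}\Max^{c-1}_{\nDuAlg_4}\DT~\lsup{\nDuAlg_4}\Pos^c_{\nDuAlg_3}\DT~\lsup{\nDuAlg_3}\Pos^{c-1}_{\nDuAlg_2},
\]
and the $DD$ equivalence for general $c$ then follows from the $c=1$ case together with the already-established crossing equivalences in Proposition~\ref{prop:BimodulesOvernDuAlg}. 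Your sketch of writing down a ``canonical $DD$ bimodule of a local maximum'' and a correction term $(1+E_c E_{c+1})$ on the nearby generators by analogy with Proposition~\ref{prop:MinimumComputation} treats every $c$ uniformly and presupposes an explicit bimodule description that is not available. If you want to proceed along your lines, you would need to actually produce the explicit model and verify the $DA$ relations directly (including a check that no higher $\delta^1_\ell$ survive), rather than asserting parallelism with the minimum.
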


The proof occupies the rest of this subsection.

The construction of $\lsup{\nDuAlg_1}\Max^c_{\nDuAlg_2}$ is similar to
the constructions of the bimodules associated to local minimum
from~\cite{Bordered2} and~\cite{BorderedKnots}.  As
in~\cite[Section~\ref{BK2:sec:Min}]{Bordered2}, we start with the
construction of $\Max^c$ in the case where $c=1$.

A {\em preferred idempotent state} for
$\nDuAlg_2$ is an idempotent state $\x$ with
\[ \x\cap \{0,1,2\}\in\{\{0\},\{2\},\{0,2\}\}.\] We have a map $\psi$
from preferred idempotent states $\x=\{x_1,\dots,x_{k+1}\}$ of $\nDuAlg_2$ to idempotents of
$\nDuAlg_1$ defined byLet $\nDuAlg_1=\DuAlg(n+1,k+1,M_1)$ and
$\nDuAlg_2=\DuAlg(n,k,M_2)$
\[
\psi(\x) = \left\{
\begin{array}{ll}
\{0,x_3-2,\dots,x_{k+1}-2\} &{\text{if $|\x\cap\{0,1,2\}|=2$}} \\
\{x_2-2,\dots,x_{k+1}-2\} &{\text{if $|\x\cap\{0,1,2\}|=1$}} \\
\end{array}\right.
\]
Generators of $\Max^1=\lsup{\nDuAlg_1}\Max^1_{\nDuAlg_2}$ correspond to preferred idempotent
states. Letting $\nMinGen{\x}$ denote the generator corresponding to the preferred
idempotent state $\x$, the bimodule action is specified by 
\[ \Idemp{\x}\cdot \nMinGen{\x}\cdot \Idemp{\psi(\x)}=\nMinGen{\x}.\]

The module is constructed via the homological perturbation lemma, 
following~\cite[Section~\ref{BK2:subsec:AltConstr}]{Bordered2}.

Fix a matching $M_2$ on $\{1,\dots,2n\}$, and let $M_1$
be the matching $\{1,2\}\cup \phi_1(M_1)$.
Let
\begin{align*} \nBlg_1=\Blg(2n+2,n+2)\qquad{\text{and}}\qquad
\nBlg_2=\Blg(2n,n+1). 
\end{align*}

Consider the idempotent in $\nBlg_1$ given by
\[ {\mathbf I}=\sum_{\{\x\big|x_1=1\}} \Idemp{\x}.\]
Consider the right $\nBlg$-module
\[ M= (L_1 L_2 \nBlg\backslash {\mathbf I}\cdot\nBlg)\oplus (L_1 L_2
\nBlg\backslash {\mathbf I}\cdot\nBlg),\] which can also be viewed as a left module
over the subalgebra of ${\mathbf I}\cdot \nBlg\cdot {\mathbf I}$ consisting of
elements with $\weight_1=\weight_2=0$, which in turn is identified
with $\nBlg2$. Denote this identification
\[\phi\colon \nBlg_2\to {\mathbf I}\cdot \nBlg\cdot {\mathbf I}.\]

Let $\XX$ and $\YY$ be the generators of the two summands of $M$. 
Let
\[ m_{1|1|0}(b_2,\XX\cdot b_1)=\XX \cdot \phi(b_2)\cdot b_1 \qquad 
m_{1|1|0}(b_2,\YY\cdot b_1)=\YY \cdot \phi(b_2)\cdot b_1,\]
where
$b_2\in\nBlg_2$ and $b_1\in{ {\mathbf I}\cdot \nBlg}\backslash{L_1 L_2 \cdot \nBlg}$.
Equip $M$ with the differential
\[ m_{0|1|0}(\XX)= \YY\cdot U_2 \qquad m_{0|1|0}(\YY)= \XX\cdot U_1.\] 
Think of the right $\nBlg$-action as inducing further operations 
\[ m_{0|1|1}(\XX\cdot b_1,b_1')=\XX\cdot (b_1\cdot b_1') \qquad
m_{0|1|1}(\YY\cdot b_1,b_1')=\YY\cdot (b_1\cdot b_1').\] All the
operations described above give $M$ the structure of a $\nBlg_2-\nBlg$
bimodule, written $\lsub{\nBlg_2}M_{\nBlg}$.

As explained in~\cite[Lemma~\ref{BK2:lem:BigDAGens}]{Bordered2}, $M$
can be viewed as arising from a type $DA$ bimodule
$\lsup{\nBlg_2}\Theta_{\nBlg_1}$ via
$\lsub{\nBlg_2}M_{\nBlg_1}=\lsub{\nBlg_2}(\nBlg_2)_{\nBlg_2}\DT~\lsup{\nBlg_2}\Theta_{\nBlg_1}$. (The
key notational difference here is twe have restricted the incoming
algebra from the algebra $\Blg$ of
\cite[Lemma~\ref{BK2:lem:BigDAGens}]{Bordered2} to its subalgenra
$\nBlg_1$ where the idempotent states have $n+2$ elements; and
correspondingly the output algebra has a similar constraint.)

In~\cite{Bordered2}, this DA bimodule $\Theta$ is promoted to a 
bimodule over versions of $\Alg$. Instead, we promote here the
incoming algebra to $\nDuAlg_1$ (and the outgoing one to $\nDuAlg_2$).
This is done by introducing the  following actions:
\[\begin{array}{rl}
  \delta^1_2(\XX,E_1)=1\otimes \YY &
  \delta^1_2(\YY,E_2)=1\otimes \XX\\
  \delta^1_2(\XX,E_2)=0 &
  \delta^1_2(\YY,E_1)=0\\
  \delta^1_2(\XX,E_{\phi_c(i)})=E_{i}\otimes \XX & 
  \delta^1_2(\YY,E_{\phi_c(i)})=E_{i}\otimes \YY \\
  \delta^1_2(\XX,E_1 E_2)= 1\otimes \XX  &\qquad
  \delta^1_2(\YY,E_2 E_1)= 1\otimes \YY \\
  \delta^1_2(\XX,E_2 E_1)= 0& \qquad
  \delta^1_2(\YY,E_1 E_2)= 0 
  \end{array}
\]

These actions are illustrated in the following picture:
\begin{equation}
  \label{eq:ModuleVersionMin}
    \mathcenter{\begin{tikzpicture}[scale=1.5]
    \node at (0,0) (X) {$\XX$} ;
    \node at (6,0) (Y) {$\YY$} ;
    \draw[->] (X) [bend left=15] to node[above,sloped] {\tiny{$U_2 + 1 \otimes E_{1}$}}  (Y)  ;
    \draw[->] (Y) [bend left=15]to node[below,sloped]  {\tiny{$U_1+ 1\otimes E_{2}$}}  (X);
    \draw[->] (X) [loop] to node[above,sloped]{\tiny{$1\otimes E_{1} E_{2}+\sum_{i=1}^{2n} E_i\otimes E_{\phi_c(i)}$}} (X);
    \draw[->] (Y) [loop] to node[above,sloped]{\tiny{$1\otimes E_{2}E_{1}+\sum_{i=1}^{2n} E_i\otimes E_{\phi_c(i)}$}} (Y);
\end{tikzpicture}} 
\end{equation}
(Here the arrow labels with $U_2$ and $U_1$
alone represent $\delta^1_1$, actions where the outgoing algebra element
is $1$.)

We denote the result by $\lsup{\nDuAlg_2}{\BigMin}_{\nDuAlg_1}$.

\begin{lemma}
  \label{lem:BigDAGens2}
  The operations described above make
  $\lsup{\nDuAlg_2}{\BigMin}_{\nDuAlg_1}$ into a type $DA$ bimodule.
\end{lemma}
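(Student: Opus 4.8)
The plan is to verify directly the curved type $DA$ bimodule relations \eqref{eq:CurvedDAbimoduleRelation1} and \eqref{eq:CurvedDAbimoduleRelation2} for the operations displayed above; organized correctly, this is a finite check. First I would observe that the operations of $\BigMin$ that do not involve the generators $E_i$ of $\nDuAlg_1$ on the incoming side are precisely the operations of the type $DA$ bimodule $\Theta$ over $\nBlg_2,\nBlg_1$ from \cite[Lemma~\ref{BK2:lem:BigDAGens}]{Bordered2} (equivalently, the repackaging of the honest $\nBlg_2$--$\nBlg_1$-bimodule $M$, whose only nontrivial point is that $m_{0|1|0}$ squares to zero, which is immediate from the defining relations of $M$ together with the identity $U_1U_2 = L_1L_2R_2R_1$ in $\nBlg$). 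Hence every instance of \eqref{eq:CurvedDAbimoduleRelation2} all of whose inputs lie in $\nBlg_1\subset\nDuAlg_1$ already holds, and it remains only to treat input sequences containing at least one $E$-generator.

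For those, I would use two structural facts: every pure element of $\nDuAlg_1$ is, up to an element of $\nBlg_1$ and idempotents, of the form $b$, $E_jb$ or $E_jE_kb$ with $b\in\nBlg_1$, subject to $\llbracket E_j,E_k\rrbracket=1$ for $\{j,k\}\in\Matching$, $[E_j,E_k]=0$ for unmatched pairs, and $E_j^2=0$; and the only operations of $\BigMin$ carrying an $E$-input are the $\delta^1_2$ in \eqref{eq:ModuleVersionMin}, there being no $\delta^1_\ell$ with $\ell\geq 3$ and an $E$-input. Thus the relations left to check involve only sequences with one or two $E$-generators interspersed with elements of $\nBlg_1$, and I would verify these term by term. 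The key cancellations are: (i) $\delta^1_2(\delta^1_2(\XX,E_1),E_2)+\delta^1_2(\XX,E_1E_2)=0$ and its counterpart on $\YY$ with $E_2E_1$, which use $\delta^1_2(\XX,E_2)=0=\delta^1_2(\YY,E_1)$; (ii) for unmatched $i,j$, the relation $[E_{\phi_1(i)},E_{\phi_1(j)}]=0$ in $\nDuAlg_1$ matches the commutation, through the outgoing $\nDuAlg_2$-factors $E_i,E_j$, of the self-loops $\delta^1_2(\,\cdot\,,E_{\phi_1(i)})$ and $\delta^1_2(\,\cdot\,,E_{\phi_1(j)})$; (iii) in \eqref{eq:CurvedDAbimoduleRelation1}, the composite $(\mu_2\otimes\Id)\circ(\Id\otimes\delta^1_1)\circ\delta^1_1$ on $\XX$ produces the monomial of the ``new'' matched pair $\{1,2\}$ of $M_1$ (via the loop $\XX\to\YY\to\XX$), cancelling the $\{1,2\}$-summand of $\delta^1_2(\XX,\mu_0^{\nDuAlg_1})$, while the remaining summands of $\delta^1_2(\XX,\mu_0^{\nDuAlg_1})$ reproduce $\mu_0^{\nDuAlg_2}\otimes\XX$ and so cancel the explicit curvature term (and symmetrically for $\YY$).

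The step I expect to be the main obstacle is (iii): keeping straight the interaction of the curvature of $\nDuAlg_1$ on the incoming side, the curvature of $\nDuAlg_2$ on the outgoing side, and the differential $m_{0|1|0}$ of $M$, and in particular checking that the pair distinguishing $M_1$ from $\phi_1(M_2)$ is counted exactly once, and that the passage from $M$ (with its right multiplications by $U_1,U_2,E_1,E_2$) to the two-generator bimodule $\BigMin$ is consistent with the identification $\phi\colon\nBlg_2\to{\mathbf I}\cdot\nBlg\cdot{\mathbf I}$. Once the case $c=1$ is in hand, the general $c$ follows by relabeling strands along $\phi_c$ exactly as in \cite[Section~\ref{BK2:sec:Min}]{Bordered2}, requiring no further argument.
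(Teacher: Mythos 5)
Your overall strategy — verify the structure relations directly, isolating the operations that carry $E$-generators from $\nDuAlg_1$ — is the right shape for a statement the paper dispatches with ``This is straightforward,'' but step (iii), which you yourself flag as the main obstacle, attributes curvature to $\nDuAlg_1$ and $\nDuAlg_2$, and they have none. The only algebras in this paper carrying a designated $\mu_0$ are the $\cBlg$'s; the dual-type algebras $\nDuAlg$ are differential graded algebras with $\mu_1(E_j)=U_j$ and $\mu_0=0$. So the terms $\delta^1_2(\XX,\mu_0^{\nDuAlg_1})$ and $\mu_0^{\nDuAlg_2}\otimes\XX$ that you describe as cancelling are both zero, the ``curvature cancellation'' mechanism is vacuous, and your identification of $(\mu_2\otimes\Id)\circ(\Id\otimes\delta^1_1)\circ\delta^1_1(\XX)$ with $U_1U_2\otimes\XX$ relies on reading the $U_2$ arrow as $\delta^1_1(\XX)=U_2\otimes\YY$, which contradicts the paper's own parenthetical after \eqref{eq:ModuleVersionMin} (``outgoing algebra element is $1$''). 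With the intended reading $\delta^1_1(\XX)=1\otimes\YY U_2$, the double application yields $1\otimes\XX U_1U_2$, which vanishes because $U_1U_2=L_1L_2R_2R_1$ lies in the left ideal $L_1L_2\nBlg$ killed in $M$ — the no-input relation closes via the module identity, with no curvature in sight.

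The genuinely nontrivial relations come from the differentials, not curvature: for example the one-input relation with $b=E_1$ requires $\delta^1_2(\XX,\mu_1(E_1))=\delta^1_2(\XX,U_1)=1\otimes\XX U_1$ to cancel the iterated term $\mu_2\circ(\Id\otimes\delta^1_1)\circ\delta^1_2(\XX,E_1)=1\otimes\XX U_1$, and relations whose output contains some $E_i$ must account for $(\mu_1\otimes\Id)\circ\delta^1_2$ on the $\nDuAlg_2$ side. Replacing (iii) by a systematic check of this form would repair the argument. Two smaller quibbles: your classification of pure elements of $\nDuAlg_1$ as $b$, $E_jb$, or $E_jE_kb$ is too coarse — the relations $\llbracket E_i,E_j\rrbracket=1$ and $E_j^2=0$ shorten but do not eliminate products of three or more distinct $E$'s, so the case analysis must be organized differently — and the passage from $c=1$ to general $c$ in this paper is carried out by an inductive composition with crossing bimodules, as in the proof of Proposition~\ref{prop:DualMax}, not by a bare relabelling.
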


\begin{proof}
  This is straightforward.
\end{proof}

Consider the map $h^1\colon \BigMin \to \BigMin$
determined on the generating set $\XX\cdot a$ and $\YY\cdot a$ for $a\in \Gamma$) by 
\begin{align*}
h^1(\XX a)&=\left\{\begin{array}{ll}
      \YY a' &{\text{if there is a  $a'\in \Gamma$ with $a=U_1 a'$}} \\
      0 &{\text{otherwise}}
      \end{array}\right. \\
    h^1(\YY a)&=\left\{\begin{array}{ll}
      \XX a' &{\text{if there is a pure $a'\in \Gamma$ with $a=U_2 a'$}} \\
      0 &{\text{otherwise}}
      \end{array}\right. 
\end{align*}

Let $Q\subset \BigMin$ be the two-dimensional vector space $\XX L_1$
and $\YY R_2$. There are inclusions $i^1\colon Q \to \BigMin\subset \nDuAlg_2\otimes \BigMin$ and a projection $\pi^1\colon \BigMin\to Q$

\begin{lemma}
  For the above operators, we have the identities:
\[  \begin{array}{lllll}
    (\pi^1\circ i^1) =\Id_{Q}, & i^1\circ \pi^1=\Id_{\BigMin}+ dh^1, &
    h^1\circ h^1=0, & h^1\circ i^1=0, & \pi^1\circ h^1 = 0.
    \end{array}\]
\end{lemma}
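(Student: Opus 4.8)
The plan is to make the complex $(\BigMin,d)$ and the three maps completely explicit on a basis, and then verify the five identities term by term. Recall that $\BigMin$ has the $\Field$-basis $\{\XX\cdot a,\ \YY\cdot a\}$ with $a$ running over a fixed monomial basis of the quotient $\Gamma = {\mathbf I}\cdot\nBlg\big/ L_1L_2\cdot\nBlg$, that the differential $d=m_{0|1|0}$ is given by $d(\XX\cdot a)=\YY\cdot(U_2 a)$ and $d(\YY\cdot a)=\XX\cdot(U_1 a)$, that $h^1$ strips off a factor of $U_1$ (on $\XX$-generators) or $U_2$ (on $\YY$-generators) and interchanges $\XX\leftrightarrow\YY$, being zero on a monomial with no such factor, and that $Q$ is spanned by $\XX\cdot L_1$ and $\YY\cdot R_2$, with $i^1$ the inclusion and $\pi^1$ the projection killing every other basis element.

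The single algebraic input that drives everything is the identity ${\mathbf I}\cdot U_1U_2={\mathbf I}\cdot L_1L_2R_2R_1\in L_1L_2\cdot\nBlg$, which uses only that the $U_i$ are central and that $L_iR_i=U_i$ on the relevant idempotents; thus $U_1U_2=0$ in $\Gamma$. This immediately gives $d\circ d=0$, and it gives $h^1\circ h^1=0$, since applying $h^1$ twice to $\XX\cdot a$ would force $a=U_1U_2a'$, which vanishes in $\Gamma$. The two side conditions $h^1\circ i^1=0$ and $\pi^1\circ h^1=0$ are weight computations: neither $L_1$ nor $R_2$ is divisible in $\Gamma$ by $U_1$, resp.\ $U_2$ (its weight at the relevant strand is $\tfrac12<1$), so $h^1$ annihilates $i^1(Q)$; and every basis element in the image of $h^1$ has strictly smaller total $U$-weight than $\XX\cdot L_1$ and $\YY\cdot R_2$, hence lies in $\ker\pi^1$. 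The relation $\pi^1\circ i^1=\Id_Q$ holds by construction.

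This leaves the homological-perturbation identity $i^1\circ\pi^1=\Id_{\BigMin}+dh^1$, i.e.\ that $(Q,0)\hookrightarrow(\BigMin,d)$, together with $h^1$, forms a (one-sided) deformation retract. I would verify it by splitting the basis according to which of $\XX\cdot a$, $\YY\cdot a$ it is and whether $a$ is divisible by the relevant variable, the point being that $d$ and $h^1$ exhibit the complement of $Q$ in $\BigMin$ as an acyclic complex assembled from two-step mapping cones $\XX\cdot a\xrightarrow{\ \sim\ }\YY\cdot U_2a$, with $h^1$ the evident contracting homotopy. The one genuine check — and the place where I expect the main obstacle — is that $d$ annihilates $\XX\cdot L_1$ and $\YY\cdot R_2$, so that $Q$ really is a floating subcomplex that is neither a source nor a target of $d$: concretely $U_2L_1=L_1L_2R_2\in L_1L_2\cdot\nBlg$ and $U_1R_2={\mathbf I}L_1R_1R_2=0$ (using that $R_1R_2$ lies in the defining ideal of $\nBlg$), so both differentials vanish. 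Tracking these absorption identities carefully through the quotient $\Gamma$, and checking that no basis monomial escapes the two-step decomposition, is the only delicate bookkeeping; once it is in place all five identities are formal, and the homological perturbation lemma then applies to produce $\lsup{\nDuAlg_1}\Max^{1}_{\nDuAlg_2}$.
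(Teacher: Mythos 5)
Your strategy — reduce each identity to explicit monomial computations, driven by $U_1U_2 = L_1L_2R_2R_1 \in L_1L_2\nBlg$ and the absorptions $U_2L_1 = L_1L_2R_2\in L_1L_2\nBlg$ and $U_1R_2 = L_1R_1R_2 = 0$ in $\nBlg$ — is correct in spirit, and the verifications of $h^1\circ h^1 = 0$ and $h^1\circ i^1 = 0$ go through as you say. But the deformation-retract identity is not established. The two-step-mapping-cone picture you invoke, in which $d$ maps $\XX a$ onto $\YY\cdot U_2 a$ and $h^1$ strips the $U$-factor back off, produces $i^1\circ\pi^1 = \Id_{\BigMin} + dh^1 + h^1 d$: on a cone $\XX a\to \YY U_2 a$ the term $h^1 d$ cancels the identity on the source and $dh^1$ cancels it on the target, and both are needed. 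You do not explain why $h^1 d$ should drop out, and it does not: $h^1 d(\XX\Idemp{\x}) = h^1(\YY\cdot U_2\Idemp{\x}) = \XX\Idemp{\x}$, since $U_2\Idemp{\x}$ has weight zero at position $1$ and therefore survives modulo $L_1L_2\nBlg$. The analogous contraction in the proof of Lemma~\ref{lem:kIsRelevant} is written $\Id + \partial H + H\partial$, so the statement here is likely carrying a typo you should flag, unless you can exhibit a convention under which $h^1 d$ vanishes.

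Two further points need tightening. Your weight argument for $\pi^1\circ h^1 = 0$ fails as stated — $h^1(\XX a) = \YY(a/U_1)$ can land on a monomial of arbitrarily large total weight, for instance with $a=U_1 U_3^k$; what actually forces the vanishing is that $\pi^1(\YY a')\neq 0$ requires $a' = R_2$, hence $a = U_1 R_2 = 0$, the same absorption you already used. And the "no monomial escapes the two-step decomposition" check you rightly single out as the delicate point genuinely fails for a two-dimensional $Q$ when $n\geq 1$: the monomial $L_1 U_3$ has $\weight_1 = \tfrac12$, is killed by $d$ (since $U_2 L_1 U_3\in L_1 L_2\nBlg$) and by $h^1$, and is distinct from $L_1$, so $\XX L_1 U_3$ is a fixed point of $\Id + dh^1 + h^1 d$ lying outside $\Field\langle\XX L_1, \YY R_2\rangle$. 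The contraction only retracts onto the full subspace of generators with $\weight_1 = \tfrac12$ (resp.\ $\weight_2 = \tfrac12$) — which is what matches the preferred idempotent states of $\Max^1$ — and your proposal defers precisely this identification; it is where the argument is not yet a proof.
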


The homological perturbation lemma, we can gives $\XX L_1 \oplus \YY
R_2$ the structure of a type $DA$ bimodule.

\begin{defn}
  Let $\lsup{\nDuAlg_1}\Max^c_{\nDuAlg_2}$ be the $DA$ bimodule
  structure on $\XX L_1 \oplus \YY R_2$ induced by the homological
  perturbation lemma.
\end{defn}

We can now verify duality: 
\begin{lemma}
  \label{lem:MaxDual}
  The bimodule $\lsup{\nDuAlg_1}\Max^{1}_{\nDuAlg_2}$ as defined above
  satisfies the requirements of Proposition~\ref{prop:DualMax} when $c=1$.
\end{lemma}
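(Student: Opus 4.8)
The statement to be proved is Lemma~\ref{lem:MaxDual}: that the bimodule $\lsup{\nDuAlg_1}\Max^1_{\nDuAlg_2}$ constructed via the homological perturbation lemma from $\lsup{\nDuAlg_2}\BigMin_{\nDuAlg_1}$ satisfies the duality relation
\[ \lsup{\cBlg_2}\Max^1_{\cBlg_1}\DT \lsup{\cBlg_1,\nDuAlg_1}\CanonDD
\simeq
\lsup{\nDuAlg_1}\Max^1_{\nDuAlg_2}\DT \lsup{\nDuAlg_2,\cBlg_2}\CanonDD. \]
The overall approach is to compute both sides explicitly as type $DD$ bimodules over $\cBlg_2$ and $\nDuAlg_1$ and exhibit a $DD$ bimodule homotopy equivalence between them. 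This is entirely parallel to the proof that $\lsup{\DuAlg_1}\Min_{\DuAlg_2}\DT \lsup{\DuAlg_2,\Alg_2}\CanonDD\simeq \lsup{\Alg_2,\DuAlg_1}\Min$ referred to after Proposition~\ref{prop:DualityRelationship}, and to the local-minimum computation in Proposition~\ref{prop:MinimumComputation}; the main difference is purely bookkeeping, since we are working with $E_i$-type extensions to $\nDuAlg$ rather than $C_{i,j}$-type extensions to $\Alg$, and the curvature is absorbed by the quotient relation $\llbracket E_i,E_j\rrbracket=1$.

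First I would compute the left-hand side. By construction $\lsup{\cBlg_2}\Max^1_{\cBlg_1}$ is the curved DA bimodule of Proposition~\ref{prop:CurvedDABimodules}, so $\lsup{\cBlg_2}\Max^1_{\cBlg_1}\DT \lsup{\cBlg_1,\nDuAlg_1}\CanonDD$ is a type $DD$ bimodule whose generators correspond to preferred idempotent states, and whose differential is read off by composing the $\delta^1_k$ of $\Max^1$ with the terms $L_i\otimes R_i$, $R_i\otimes L_i$, $U_i\otimes E_i$ in $\CanonDD$, together with $\mu_0$-contributions. Second, I would compute the right-hand side: $\lsup{\nDuAlg_1}\Max^1_{\nDuAlg_2}$ is obtained from $\lsup{\nDuAlg_2}\BigMin_{\nDuAlg_1}$ of Lemma~\ref{lem:BigDAGens2} by the homological perturbation lemma applied to the deformation retract $Q=\XX L_1\oplus\YY R_2\subset \BigMin$ with the homotopy $h^1$, $i^1$, $\pi^1$ recorded just above; tensoring the resulting $DA$ bimodule with $\lsup{\nDuAlg_2,\cBlg_2}\CanonDD$ again produces an explicit $DD$ bimodule. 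The key is that both computations are finite: the $\Delta$-grading (which is preserved by all operations, with $E_i$ contributing $0$ and $L_i,R_i$ contributing $-1/2$ and $U_i$ contributing $-1$) forces only finitely many algebra elements to appear in each differential, exactly as in the proof of Proposition~\ref{prop:ComputeIdDD}, so the bimodule structures are pinned down by a bounded amount of checking.

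Third, I would exhibit the homotopy equivalence. As in Proposition~\ref{prop:MinimumComputation}, I expect that after a small change of basis absorbing factors of the form $(1+E_1 E_2)$ on the appropriate generators, the two $DD$ bimodules become literally equal; the change of basis is the direct analogue of the maps $h^1(\XX)=(1+E_c E_{c+1})\otimes \XX$ appearing at the end of that proof. Alternatively, one can invoke uniqueness: both sides are $DD$ bimodules over $\cBlg_2$ and $\nDuAlg_1$ with the same underlying idempotent bimodule, the same $\Delta$- and Alexander gradings, and enough of the differential forced by the $DD$ structure relation $\delta\circ\delta=0$ together with a few explicitly verified low-weight terms, so they are isomorphic. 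Finally, the case of general $c$ follows from the $c=1$ case by the standard argument: one conjugates by the inclusion bimodules that reorder strands (i.e. tensoring with identity-cobordism bimodules between the appropriately permuted algebras), exactly as the general local minimum in~\cite{Bordered2} is reduced to $c=1$.

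\textbf{Main obstacle.} The hard part will be the careful verification of the right-hand side: tracking all the terms produced by the homological perturbation lemma applied to $\BigMin$ with the homotopy $h^1$, since $h^1$ is an infinite-order operator (its iterates strip off powers of $U_1$ and $U_2$) and one must confirm that the induced $\delta^1_k$ on $Q$ stabilize and match the expected answer. The $\Delta$-grading bound is what makes this tractable — it caps the length of the relevant inputs — but organizing the induction on the number of $U$-factors stripped, and checking that the curvature terms $1\otimes E_1 E_2$ versus $1\otimes E_2 E_1$ on $\XX$ versus $\YY$ interact correctly with the $DD$-differential after the change of basis, is where the real work lies. This is the same subtlety that appears in Proposition~\ref{prop:MinimumComputation} in forcing $q=1$, and I expect it to be resolved the same way: by using the $\delta\circ\delta=0$ relation to propagate from a few directly-checked terms to the rest.
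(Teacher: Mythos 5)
Your approach is essentially the paper's approach — a computation via the homological perturbation lemma, referred to the parallel computation for the local minimum in \cite[Lemma~\ref{BK2:lem:MinDual}]{Bordered2} — so in broad strokes the plan is correct. Two remarks that would sharpen it.

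First, a more efficient route sidesteps the ``main obstacle'' you identify. Rather than extracting $\lsup{\nDuAlg_1}\Max^1_{\nDuAlg_2}$ from the perturbation lemma first (which, as you note, involves tracking the a priori unbounded sum over iterates of $\delta^1 \circ h^1$) and then tensoring with $\CanonDD$, one can tensor $\lsup{\nDuAlg_2}\BigMin_{\nDuAlg_1}$ with $\CanonDD$ directly. The point is that a deformation retraction of type $DA$ bimodules persists under $\DT$ with a type $D$ structure, so $\lsup{\nDuAlg_1}\Max^1_{\nDuAlg_2}\DT \lsup{\nDuAlg_2,\cBlg_2}\CanonDD \simeq \lsup{\nDuAlg_1}\BigMin_{\nDuAlg_2}\DT \lsup{\nDuAlg_2,\cBlg_2}\CanonDD$; and the right-hand side is directly computable because the operations on $\BigMin$ are entirely explicit and low-order (only $\delta^1_1$, $\delta^1_2$, and the module actions). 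Comparing against $\lsup{\cBlg_2}\Max^1_{\cBlg_1}\DT\lsup{\cBlg_1,\nDuAlg_1}\CanonDD$ (also explicit and finite rank) is then the ``simple computation'' the paper is invoking; you never need to confront the infinite-order sums. Also, a small correction: $h^1$ itself is nilpotent (the side conditions include $h^1 \circ h^1 = 0$); what is a priori unbounded is the perturbation-lemma sum, not the iterates of $h^1$.

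Second, the closing paragraph on reducing general $c$ to $c=1$ by conjugating with crossing bimodules is not part of this lemma — the lemma is stated only for $c=1$, and the inductive reduction from general $c$ is precisely the content of the proof of Proposition~\ref{prop:DualMax} (which follows and cites this lemma as its base case). Including it here duplicates that argument.
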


\begin{proof}
  This is a simple computation using the homological perturbation lemma,
  exactly as in~\cite[Lemma~\ref{BK2:lem:MinDual}]{Bordered2}.
\end{proof}
  
\begin{proof}[Proof of Proposition~\ref{prop:DualMax}]
  By analogy with~\cite[Section~\ref{BK2:subsec:GenMin}]{Bordered2},
  we can define $\lsup{\nDuAlg_1}\Max^c_{\nDuAlg_2}$ inductively in $c$,
  using the action of the bimodules of crossings; i.e.
  \[ \lsup{\nDuAlg_1}\Max^c_{\nDuAlg_2} =
  \lsup{\nDuAlg_1}\Max^{c-1}_{\nDuAlg_4}\DT~\lsup{\nDuAlg_4}\Pos^c_{\nDuAlg_3}
  ~\DT\lsup{\nDuAlg_3}\Pos^{c-1}_{\nDuAlg_2},\] for suitably chosen
  $\nDuAlg_3$ and $\nDuAlg_4$.  Property~\ref{prop:DualMax} is then
  proved by induction, with the basic case given by
  Lemma~\ref{lem:MaxDual}, and the inductive step using
  Proposition~\ref{prop:BimodulesOvernDuAlg} (for the positive
  crossing). (The steps are as in the proof
  of~\cite[Theorem~\ref{BK2:thm:MinDual}]{Bordered2}.)
\end{proof}

\subsection{Algebraic invariance}
\label{subsec:AlgInvar}

\begin{proof}[Proof of Proposition~\ref{prop:CurvedDPlanarIsotopies}]
Following terminology
from~\cite[Section~\ref{BK1:subsec:InvarainceUnderBridgeMoves}]{BorderedKnots},
Proving that the invariant $\lsup{\cClg}\DmodAlg(\DiagUp)$ is an isotopy invariant amounts to proving invariance under bridge moves,
which are:
\begin{enumerate}
\item Commutations of distant crossings
\item Trident moves
\item Critical points commute with distant crossings
\item Commuting distant critical points
\item Pair creation and annihilation.
\end{enumerate}

In~\cite{BorderedKnots,Bordered2}, these are verified
as obtained identities between corresponding $DA$
bimodules. To expedite computations, we worked in stead
with $DD$ bimodules.

In the notation of the present paper, the requisite identities for the
$DD$
module versions of commutation moves (for distant crossings and
critical points) are of the form
\[ 
X^i \DT Y^j \DT \CanonDD  \simeq Y^{j'}\DT X^{i'}\DT \CanonDD 
\]
where $X$, $Y$ can be either of $\Pos$, $\Neg$, $\Max$ or $\Min$,
$|i-j|>1$ (and the superscripts $i'$ and $j'$ have to be chosen
accordingly), curved modules over $\cBlg$ (as in
Subsection~\ref{subsec:FormalModules}), and
$\CanonDD=\lsup{\cBlg,\nDuAlg}\CanonDD$.  Trident moves come from
identities
\begin{align*}
  \Min^c\DT \Pos^{c+1}\DT \CanonDD& \simeq
  \Pos^{c+1}\DT \Min^c\DT \CanonDD \\
  \Pos^{c+1}\DT \Max^c\DT \CanonDD& \simeq
  \Min^c\DT \Pos^{c+1} \DT \CanonDD
\end{align*}
Pair creation and annihilation invariance from the identities  
\[ \Max^{c+1}\DT \Min^c\simeq \Id \simeq 
\Max^c\DT \Min^{c+1}. \]

The verifications from~\cite{Bordered2} (dropping $C_{i,j}$, and
viewing the algebras over $\Blg$ as curved) now appy verbatim to give
the stated identities.

In~\cite{Bordered2}, the type $DD$ identities implied corresponding
corresponding identities of type $DA$ modules, due to a Koszul
duality, which we have not established here. Nonethless, the above
identities do establish corresponding identities where the $DA$
modules act on any ``relevant'' type $D$ structure, in the sense of
Definition~\ref{def:Relevant}. Thus, the needed invariance follows
once we know that $\lsup{\cClg}\DmodAlg(\Diag)$ is relevant.  

Relevance
of $\lsup{\Clg}\DmodAlg(\Diag)$ is proved by induction on the number
of crossings and critical points in $\Diag$ (as in the proof of
Proposition~\ref{prop:InductiveStep}), with the inductive step
furnished by the fact that for $X$ is $\Pos$, $\Neg$, $\Max$, or
$\Min$ (curved DA bimodules over $\Blg$), there are corresponding DA
bimodules $X''$ so that
\[ \lsup{\Blg_2}X_{\Blg_1}\DT \lsup{\Blg_1,\nDuAlg_1}\CanonDD \simeq
\lsup{\nDuAlg_1}X_{\nDuAlg_2}\DT \lsup{\Blg_1,\nDuAlg_1}\CanonDD. \]
For $X\in\{\Pos,\Neg,\Min\}$, this was verified in
Proposition~\ref{prop:CurvedDABimodules}, while for $\Max$, it was
Proposition~\ref{prop:DualMax}.
\end{proof}

\begin{proof}[Proof of Proposition~\ref{prop:TangleInvariance}]
  In view of Proposition~\ref{prop:CurvedDPlanarIsotopies},
  it suffices to verify invariance of $\lsup{\cClg}\DmodAlg(\Diag)$
  under the Reidemeister moves.
  The corresponding $DD$ module identities for $\Alg$ and $\DuAlg$
  
  Reidemeister $1$ invariance follows from the identity
  $\Pos^c\DT\Max^c\DT \CanonDD \simeq \Pos^c\DT \CanonDD$, whereas the
  other Reidemeister moves follow from the braid relations
  from~\cite[Section~\ref{BK2:sec:BraidRelations}]{Bordered2}.  The
  identities are verified exactly as they are done there, appealing to
  the fact that $\lsup{\cClg}\Dmod(\Diag)$ is relevant, as shown in the proof
  of Proposition~\ref{prop:CurvedDPlanarIsotopies} above.
\end{proof}

\subsection{Computing the holomorphically defined invariant}
\label{subsec:ComputeD2}

\begin{proof}[Proof of Theorem~\ref{thm:ComputeD2}]
  The proof of Theorem~\ref{thm:ComputeD} applies, once we have shown
  the analogue of Theorem~\ref{thm:ComputeDDmods} for a local
  maximum; i.e. 
\[ 
\lsup{\cClg_2}\Max^c_{\cClg_1} \DT~ \lsup{\cClg_1,\nDuAlg_1}\CanonDD \simeq
\lsup{\cClg_2}\DAmodExt(\Hmax{c})_{\cClg_1} \DT~
\lsup{\cClg_1,\nDuAlg_1}\CanonDD. \]
This verification is very similar to the proof of Proposition~\ref{prop:MinimumComputation}, and is left to the reader.
\end{proof}

\bibliographystyle{plain}
\bibliography{biblio}

\end{document}